\tikzstyle{every picture}+=[remember picture]
\tikzstyle{math node}=[execute at begin node={\ifmmode\else$\fi},
\tikzstyle{normal math node}=[math node, execute at begin node=\textstyle]
\tikzstyle{small math node}=[math node, execute at begin node=\scriptstyle]
\tikzstyle{very small math node}=[math node, execute at begin node=\scriptscriptstyle]
\tikzstyle{object}=[normal math node]
\tikzstyle{object alias}=[small math node]
\tikzstyle{math arrow}=[->, every node/.style = small math node, auto]
\tikzstyle{equal}=[math arrow,-,double distance = 1.25pt, semithick,
\tikzstyle{iso}=[math arrow, to path = {-- node[normal math node,sloped,
\tikzstyle{iso arrow}=[math arrow, 
\tikzstyle{dashed arrow}=[math arrow, dashed]
\tikzstyle{math arrows}=[every path/.append style = math arrow]
\tikzstyle{iso arrows}=[every path/.append style = iso arrow]
\tikzstyle{dashed arrows}=[every path/.append style = dashed arrow]
\tikzstyle{equals}=[every path/.append style = equal]
\tikzstyle{isos}=[every path/.append style = iso]
\tikzstyle{math nodes}=[every node/.append style = math node]
\tikzstyle{small math nodes}=[every node/.append style = small math node]
\newcommand{\N}{\mathbb{N}} 
\newcommand{\Z}{\mathbb{Z}} 
\newcommand{\Q}{\mathbb{Q}} 
\newcommand{\C}{\mathbb{C}} 
\newcommand{\lb}{[\mspace{-2mu}[} 
\newcommand{\rb}{]\mspace{-2mu}]} 
\newcommand{\lp}{(\mspace{-3.5mu}(} 
\newcommand{\rp}{)\mspace{-3.5mu})} 
\newcommand{\incl}{\hookrightarrow} 
\newcommand{\farg}{\bullet} 
\newcommand{\OO}{\sh{O}} 
\newcommand{\KK}{\mathcal{K}}
\newcommand{\id}{\mathrm{id}} 
\newcommand{\Aff}{\mathbb{A}} 
\newcommand{\Prj}{\mathbb{P}} 
\newcommand{\DD}{\mathbb{D}} 
\newcommand{\Gm}{\avar{G_m}} 
\newcommand{\invlim}{\varprojlim\nolimits} 
\newcommand{\dirlim}{\varinjlim\nolimits} 
\renewcommand{\emptyset}{\varnothing} 
\newcommand{\op}{{\mathrm{op}}} 
\renewcommand{\epsilon}{\varepsilon}
\mathchardef\mhyphen="2D
\newcommand{\ULA}{\text{ULA}}
\newcommand{\Psiun}{\Psi^{\mathrm{un}}}
\newcommand{\Phiun}{\Phi^{\mathrm{un}}}
\newcommand{\SC}{\text{sc}}
\newcommand{\Perv}{\cat{Perv}}
\newcommand{\Sph}{\cat{Sph}}
\newcommand{\avar}[1]{\operatorname{\mathbf{#1}}}
\newcommand{\sh}[1]{\mathcal{#1}} 
\newcommand{\stack}[1]{\mathscr{#1}} 
\newcommand{\cat}[1]{\mathbf{#1}} 
\newcommand{\pideal}[1]{\mathfrak{#1}} 
\newcommand{\abs}[1]{\left\lvert #1 \right\rvert} 
\newcommand{\card}[1]{\##1} 
\renewcommand{\bar}[1]{\overline{#1}} 
\renewcommand{\hat}[1]{{\widehat{#1}}} 
\newcommand{\vect}[1]{\vec{#1}} 
\newcommand{\genby}[1]{\langle #1 \rangle} 
\newcommand{\map}[3]{#1 \colon #2 \to #3} 
\renewcommand{\tilde}[1]{\widetilde{#1}}
\newcommand{\csheaf}[1]{\underline{#1}} 
\newcommand{\lie}[1]{\mathfrak{#1}}
\newcommand{\on}{\operatorname}
\DeclareMathOperator{\shHom}{\sh{H}om} 
\DeclareMathOperator{\stHom}{\stack{H}\hspace{-1.5pt}om} 
\DeclareMathOperator*{\tensor}{\otimes}
\DeclareMathOperator{\tbtimes}{\tilde{\boxtimes}}
\DeclareMathOperator*{\overtimes}{\otimes}
\newcommand{\xtimes}{\overtimes^!}
\DeclareRobustCommand{\SkipTocEntry}[5]{}
\let\chapter\part
\numberwithin{section}{part}
\numberwithin{equation}{part}
\title[Twisted factorizable Satake equivalence]{Twisted geometric Satake equivalence via gerbes\texorpdfstring{\\}{ }on the factorizable grassmannian}
\author{Ryan Cohen Reich}
\date{\today}
\address{U.\ Michigan Mathematics Department, 530 Church Street, Ann Arbor MI 48109}
\subjclass[2010]{Primary: 22E57}
\email{ryancr@umich.edu}
\begin{document}

\begin{abstract}
 The \emph{geometric Satake equivalence} of Ginzburg and Mirkovi\'c--Vilonen, for a complex reductive group $G$, is a realization of the tensor category of representations of its Langlands dual group ${}^L G$ as a category of ``spherical'' perverse sheaves on the \emph{affine grassmannian} $\on{Gr}_G = G(\C\lp t\rp)/G(\C\lb t \rb)$.  Since its original statement it has been generalized in two directions: first, by Gaitsgory, to the \emph{Beilinson--Drinfeld} or \emph{factorizable grassmannian}, which for a smooth complex curve $X$ is a collection of spaces over the powers $X^n$ whose general fiber is isomorphic to $\on{Gr}_G^n$ but with the factors ``fusing'' as they approach points with equal coordinates, allowing a more natural description of the structures and properties even of the Mirkovi\'c--Vilonen equivalence.  The second generalization, due recently to Finkelberg--Lysenko, considers perverse sheaves twisted in a suitable sense by a root of unity, and obtains the category of representations of a 
group other than the Langlands dual.  This latter result can be considered as part of ``Langlands duality for quantum groups''.
 
 In this work we obtain a result simultaneously generalizing all of the above.  We consider the general notion of twisting by a gerbe and define the natural class of ``factorizable'' gerbes by which one can twist in the context of the Satake equivalence.  These gerbes are almost entirely described by the quadratic forms on the weight lattice of $G$.  We show that a suitable formalism exists such that the methods of Mirkovi\'c--Vilonen can be applied directly in this general context virtually without change and obtain a Satake equivalence for twisted perverse sheaves.  In addition, we present new proofs of the properties of their structure as an abelian tensor category.
\end{abstract}

\maketitle
\tableofcontents

\addtocontents{toc}{\SkipTocEntry}
\chapter*{Introduction}

The main result (\ref{main theorem}) of this paper is a synthesis of the geometric Satake equivalence, as proved in by Mirkovi\'c--Vilonen in \cite{MV_Satake}, the twisted Satake equivalence, as proved in Finkelberg--Lysenko \cite{FL_twistedSatake}, and Gaitsgory's factorizable Satake equivalence \cite{G_deJong}*{Theorem 2.6}.  A historical overview of the Satake equivalence up until its proof in the first citation can be found there; here, we will give a brief description of how this paper's techniques are understood by the author and how and why they differ from those of the latter two citations.

Let $G$ be a connected, reductive algebraic group over $\C$; the restriction to the complex field is dictated by the central role played by its \emph{root data}, namely, the quadruple $(X^*, X_*, \Psi^*, \Psi_*)$ consisting of the weights (characters) and coweights of some maximal torus of $G$ and the roots and coroots contained in these lattices.  The basic Satake equivalence describes the category of representations of the \emph{Langlands dual group} ${}^L G$ whose root data is $(X_*, X^*, \Psi_*, \Psi^*)$; the twisted Satake equivalence generalizes this duality operation by including the data of a certain integer $q$ (the manner of which is described in \ref{s:relation to the result of Finkelberg--Lysenko}) and arriving at a ``dual'' group $\check{G}_q$.  In both cases, the description is as a certain tensor category of perverse sheaves on the \emph{affine grassmannian} $\on{Gr}_G$; the details of this correspondence can of course be found in \ref{c:relative twisted geometric Satake}.  As for $G$, we 
define $\on{Gr}_G$ over $\C$ and will use topological concepts from the classical topology.  We assume some familiarity with the affine grassmannian and the proof of the Satake equivalence in this introduction.

Over time, the Satake equivalence has come to be seen less as a theorem about representation theory as it occurs in certain cohomology groups and more as a geometric theorem about the structure of the affine grassmannian.  The single most important event in that shift was probably Mirkovi\'c--Vilonen's use of the \emph{factorizable grassmannian} to state and prove the fusion product formula (\ref{convolution properties}\ref{en:convolution is perverse}).  The factorizable grassmannian (\ref{factorizable grassmannian}) was elevated by Gaitsgory to the fundamental setting of the theorem and in this paper we use it exclusively except for the specific technical computations of \ref{s:absolute twisted Satake root data} (and the supporting preceding section).

We introduce two main technical tools in this paper.  By far the more prevalent is that of \emph{symmetric factorizable (sf) gerbes} (\ref{sf gerbe}), which replace the twisting integer $q$ of Finkelberg--Lysenko.  There is a simple explanation for why gerbes should appear at all: the manner of twisting in that paper is by considering sheaves on a $\Gm$-bundle $\det_G$ over $\on{Gr}_G$ that satisfy a form of ``twisted decent'': their restrictions to each fiber are not constant but are local systems (representations of $\pi_1(\Gm) \cong \Z$) of monodromy $q$.  If $\det_G$ is trivialized on an open set $U \subset \on{Gr}_G$, such sheaves are unnaturally identifiable with sheaves on $U$, but such data on $\on{Gr}_G$ does not glue.  Instead, it can be seen (over a sufficiently fine covering) to constitute a \emph{\v{C}ech $2$-cocycle} with values in $\C^*$, and this concept of twisted gluing corresponds to twisting by a gerbe (\ref{s:twisting categories by a gerbe}).

The advantage of using gerbes on the grassmannian itself to twist is that there is a clear extension of the formalisms of sheaves to gerbe-twisted sheaves.  It is also that it clarifies the aspects of the proof of the Satake equivalence that are ``natural'' and those that are ``noncanonical''; for example, global cohomology is noncanonical from the gerbe perspective.  This precipitates a complete adoption of the geometric point of view, and in fact, there are no cohomology computations anywhere in this paper.  The ultimate validation of this point of view is in Theorems \plainref{sf gerbes exact sequence} and \plainref{existence of equivariance}, which show that gerbes can be classified and given the essential structure of $G(\smash{\hat{\OO}})$-equivariance (see \ref{arc and loop groups}) purely on the basis of the geometric property of factorizability.  We find it significant that these theorems closely mirror the Satake equivalence for sheaves; together, they appear to constitute a Satake equivalence for 
gerbes.

Our other technical tool is the use of \emph{ULA perverse sheaves} (\ref{ULA}) through the essential \ref{devissage}, which is based on the very useful nearby-cycles gluing theorem of Beilinson.  Through it we are able to give short and conceptual proofs of all the main ingredients of the Satake equivalence: convolution product (\ref{convolution properties}), commutativity constraint (via the fusion product), compatibility of the fiber functor (\ref{fiber functor properties}), and also the generalization from the ``absolute'' (on $\on{Gr}_G$) theorem (\ref{absolute twisted satake}) to the ``relative'' (factorizable) theorem (\ref{main theorem}).  We are also pleased to have been able to demonstrate the semisimplicity theorem on $\on{Gr}_G$ (\ref{absolute semisimple}) without the use of \cite{L_singularities}*{\S11}, which was previously the only known method.

The present article was prepared on the basis of my thesis, for which I give my deepest gratitude to my advisor, Dennis Gaitsgory.  His insights into the use of nearby cycles (literally and figuratively) led to the two fundamental technical results of this work: \ref*{2-line bundle of a Cartier divisor} and \ref*{devissage}, as well as numerous other essential contributions.


\chapter{Prelude on gerbes}
\label{c:prelude on gerbes}

As we use gerbes in an essential and sometimes elaborate way throughout this paper, we will need to introduce their basic properties in this chapter before moving on.

\section{Categorification of cohomology}
\label{s:categorification of cohomology}

Let $X$ be a topological space; except for \ref{s:construction of gerbes}, it could also be an arbitrary site.  Let $\sh{G}$ be a sheaf of abelian groups on $X$, and fix the notation
\begin{equation*}
 \stack{H}^1(X, \sh{G}) = X/\sh{G}
\end{equation*}
for the fibered category with descent (i.e.\ ``stack'', or ``sheaf of categories'') of $\sh{G}$-torsors on $X$; for a detailed description of basic facts about stacks, we recommend the chapter by Vistoli in \cite{FGA}.  Since $\sh{G}$ is abelian, this stack is in fact an abelian group stack, or ``Picard stack''.  We assume that a stack comes with a splitting (choice of pullbacks); since all our fibered categories will be of sheaf-theoretic origin, this will always be true.

\begin{theorem}{defn}{definition of gerbe}
 Let $\stack{G}$ be a stack with an action of the stack $\stack{H}^1(X, \sh{G})$.  We say that this action is a \emph{gerbe} for $\sh{G}$ if for some (in fact, every) cover of $X$ by open sets $U$ in which each $U$ admits a section $s \in \stack{G}_U$, the map of stacks
 \begin{equation*}
  \stack{H}^1(U, \sh{G}) \xrightarrow{\sh{T} \mapsto \sh{T} \cdot s} \stack{G}|_U
 \end{equation*}
 is an equivalence.  We will denote by $\cat{H}^2(X, \sh{G})$ the $2$-category of gerbes on $X$, in which morphisms are $\sh{G}$-equivariant maps of stacks.  We call the \emph{trivial gerbe} $\stack{G}^0$ the regular action of $\stack{H}^1(X, \sh{G})$ on itself.
\end{theorem}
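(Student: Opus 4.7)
The substantive content hidden inside this definition is the parenthetical ``(in fact, every)'': one must check that if the map $\sh{T}\mapsto\sh{T}\cdot s$ is an equivalence of stacks on each piece of \emph{some} cover of $X$ by section-admitting opens, then the same holds on each piece of \emph{any} such cover. I would reduce this immediately to a pointwise statement: for any single open $V\subset X$ and any section $t\in\stack{G}(V)$, the map $\stack{H}^1(V,\sh{G})\to\stack{G}|_V$ sending $\sh{T}\mapsto\sh{T}\cdot t$ is an equivalence of stacks over $V$.

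The main tool is the standard fact that equivalences of stacks can be checked locally: a morphism of stacks over $V$ is an equivalence iff it is so after restriction to every open of some cover of $V$. Given the hypothesis on a cover $\{U_i\}$ with sections $s_i$, I would cover $V$ by the opens $V\cap U_i$ and analyze the map built from $t$ on each intersection by comparing it with the map built from (the restriction of) $s_i$, which remains an equivalence on $V\cap U_i$ since fully faithfulness and essential surjectivity are themselves local and thus inherited by restriction.

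The key identification on each $V\cap U_i$ is that $t$ and $s_i$ differ by a torsor. Because $\sh{T}\mapsto\sh{T}\cdot s_i$ is essentially surjective on $V\cap U_i$, there exists a $\sh{G}$-torsor $\sh{T}_i$ on $V\cap U_i$ with $\sh{T}_i\cdot s_i\cong t|_{V\cap U_i}$. Associativity of the action of the group stack $\stack{H}^1(-,\sh{G})$ then yields a natural isomorphism $\sh{T}\cdot t\cong(\sh{T}\otimes\sh{T}_i)\cdot s_i$, exhibiting the map $\sh{T}\mapsto\sh{T}\cdot t$ as the composition of the auto-equivalence ``tensor with $\sh{T}_i$'' of $\stack{H}^1(V\cap U_i,\sh{G})$ followed by the equivalence $\sh{T}'\mapsto\sh{T}'\cdot s_i$. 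Both factors are equivalences, so the composite is, and local-to-global gives the statement on $V$.

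The only real obstacle is bookkeeping: one needs the Picard-stack structure on $\stack{H}^1(-,\sh{G})$ to make sense of ``tensor with $\sh{T}_i$'' as a self-equivalence, and one must cite (or verify) the locality of equivalences of stacks and the compatibility of the action with restriction. None of this uses anything specific to the topological setting, so the proof works verbatim on an arbitrary site.
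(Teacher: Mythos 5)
The paper itself offers no proof of the parenthetical ``(in fact, every)'' claim: it is stated as part of the definition and treated as standard, with a deferral to Giraud's thesis for a more complete development. You have correctly identified this as the nontrivial content of the definition, and your argument is the standard one, sound in all its steps. The reduction to a single open $V$ with section $t$, the use of locality of equivalences of stacks, the identification of $\sh{T}_i$ with $\sh{T}_i\cdot s_i\cong t|_{V\cap U_i}$ via essential surjectivity, and the factorization $\sh{T}\cdot t\cong(\sh{T}\otimes\sh{T}_i)\cdot s_i$ through the auto-equivalence ``tensor with $\sh{T}_i$'' of the Picard stack $\stack{H}^1(-,\sh{G})$ are exactly the right ingredients. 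One minor remark: since you invoke the invertibility of $\sh{T}_i$ in $\stack{H}^1(V\cap U_i,\sh{G})$, you are implicitly using that the Picard structure exists prior to \ref{gerbe tensor product}; this is fine because the tensor product of torsors is elementary and independent of the gerbe formalism, but it is worth being explicit that the torsor-level Picard structure, not the gerbe-level one defined later, is what is needed here.
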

An even more general theory of gerbes is given in Giraud's thesis \cite{Gir_Cohomologie}, but we will sketch the relevant aspects in this setting.  Actual consultation of this (much more complete) work is not necessary to understand the present one, and its focus on non-abelian cohomology is actually somewhat orthogonal to the ends to which we put our gerbes.

The following proposition is immediate from the definition.

\begin{theorem}{prop}{gerbes are a 2-groupoid}
 Any map of gerbes is an equivalence. \qed
\end{theorem}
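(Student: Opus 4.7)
The plan is to prove that any $\sh{G}$-equivariant map $f \colon \stack{G}_1 \to \stack{G}_2$ is an equivalence by working locally and exploiting the defining local triviality of a gerbe. Since being an equivalence of stacks is a local property on $X$, it suffices to check the claim after restriction to the opens of some cover of $X$, so I may choose a cover fine enough that both $\stack{G}_1$ and $\stack{G}_2$ admit sections $s_1 \in (\stack{G}_1)_U$ and $s_2 \in (\stack{G}_2)_U$ over each $U$ in the cover.

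Fix such a $U$. By \ref{definition of gerbe}, the action maps $\alpha_i \colon \stack{H}^1(U, \sh{G}) \to \stack{G}_i|_U$ sending $\sh{T} \mapsto \sh{T} \cdot s_i$ are equivalences of stacks. Composing $f|_U$ with these equivalences, the assertion reduces to showing that the induced $\sh{G}$-equivariant endomorphism
\begin{equation*}
 \phi \colon \stack{H}^1(U, \sh{G}) \to \stack{H}^1(U, \sh{G}), \qquad \phi = \alpha_2^{-1} \circ f|_U \circ \alpha_1,
\end{equation*}
is an equivalence. Here $\phi$ is equivariant for the regular action of $\stack{H}^1(U, \sh{G})$ on itself: that is, one has a coherent natural isomorphism $\phi(\sh{T} \otimes \sh{S}) \cong \sh{T} \otimes \phi(\sh{S})$.

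Setting $\sh{T}_0 = \phi(\OO_U)$ (the image of the trivial torsor), the equivariance specializes to a natural isomorphism $\phi(\sh{T}) \cong \sh{T} \otimes \sh{T}_0$ for every torsor $\sh{T}$. Since $\sh{T}_0$ is a $\sh{G}$-torsor it is invertible in the Picard stack $\stack{H}^1(U, \sh{G})$, so the functor $\sh{S} \mapsto \sh{S} \otimes \sh{T}_0^{-1}$ furnishes a quasi-inverse to $\phi$. Hence $\phi$, and therefore $f|_U$, is an equivalence.

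The only genuinely delicate point is the bookkeeping in the previous paragraph: the ``equivariance'' in the definition of a map of gerbes is not a strict equality but a coherent system of natural isomorphisms, and one must verify that $\phi$ is really isomorphic to $\sh{T} \mapsto \sh{T} \otimes \sh{T}_0$ as a $1$-morphism of stacks (not merely pointwise). This, however, is exactly the content of the coherence of the equivariance datum together with the fact that $\phi$ is a morphism of fibered categories, and it poses no real difficulty; all other steps are formal.
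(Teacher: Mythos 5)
Your proof is correct, and it is exactly the argument the paper is implicitly invoking when it declares the proposition ``immediate from the definition'' and supplies no further justification: reduce locally via the defining trivializations $\stack{H}^1(U,\sh{G}) \xrightarrow{\sim} \stack{G}_i|_U$, observe the resulting equivariant endofunctor of $\stack{H}^1(U,\sh{G})$ is (naturally isomorphic to) tensoring by the fixed torsor $\phi(\sh{T}^0)$, and use that torsors are invertible. The only cosmetic point is that the paper denotes the trivial torsor $\sh{T}^0$ rather than $\OO_U$, which you may prefer for consistency since $\OO$ is reserved for structure sheaves elsewhere.
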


\begin{theorem}{prop}{gerbe tensor product}
 There is a \emph{tensor product} of gerbes $(\stack{G}_1, \stack{G}_2) \mapsto \stack{G}_2 \otimes
 \stack{G}_2$ which is associative, commutative, unital, and admits inverses all up to natural
 equivalence.
\end{theorem}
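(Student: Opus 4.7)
The plan is to define $\stack{G}_1 \otimes \stack{G}_2$ as a contracted product that absorbs one copy of the diagonal $\stack{H}^1(X, \sh{G})$-action using the abelian group structure on $\sh{G}$, and then to verify all four properties by reducing, via the local triviality of gerbes, to the familiar case of contracted products of $\sh{G}$-torsors, which has all the desired structure.

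Concretely, I would build $\stack{G}_1 \otimes \stack{G}_2$ as the fibered category whose objects over $U$ are pairs $(s_1, s_2)$ with $s_i \in \stack{G}_i(U)$, and whose morphisms $(s_1, s_2) \to (s_1', s_2')$ are equivalence classes of data $(\sh{T}, \alpha_1, \alpha_2)$ with $\sh{T}$ a $\sh{G}$-torsor on $U$, $\alpha_1 \colon s_1 \cdot \sh{T} \xrightarrow{\sim} s_1'$, and $\alpha_2 \colon s_2 \xrightarrow{\sim} s_2' \cdot \sh{T}$, modulo the evident equivalence twisting $\sh{T}$ by isomorphisms of torsors. Descent for this fibered category reduces to descent for the $\stack{G}_i$, so it is a stack. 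Its residual $\stack{H}^1(X, \sh{G})$-action is $(s_1, s_2) \cdot \sh{T} = (s_1 \cdot \sh{T}, s_2)$, which is naturally equivalent via the defining morphisms to $(s_1, s_2 \cdot \sh{T})$. To see that this is a gerbe, one chooses a cover where both $\stack{G}_i$ admit sections $s_i$; then $\sh{T} \mapsto (s_1 \cdot \sh{T}, s_2)$ is the required local equivalence $\stack{H}^1(U, \sh{G}) \xrightarrow{\sim} (\stack{G}_1 \otimes \stack{G}_2)|_U$.

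For the algebraic properties, the unit is the trivial gerbe $\stack{G}^0$, and the map $\stack{G} \otimes \stack{G}^0 \to \stack{G}$ sending $(s, \sh{T}) \mapsto s \cdot \sh{T}$ is an equivalence by \ref{gerbes are a 2-groupoid} applied locally. Commutativity $\stack{G}_1 \otimes \stack{G}_2 \simeq \stack{G}_2 \otimes \stack{G}_1$ is given by $(s_1, s_2) \mapsto (s_2, s_1)$; here I use that $\sh{G}$ is abelian so that the interchange of the left and right actions by $\sh{T}$ is canonical. Associativity is witnessed by the obvious map sending $((s_1, s_2), s_3)$ to $(s_1, (s_2, s_3))$. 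For inverses, I would define $\stack{G}^{-1}$ to have the same underlying stack as $\stack{G}$ but with the $\stack{H}^1(X, \sh{G})$-action precomposed with the inversion autoequivalence of $\stack{H}^1(X, \sh{G})$ coming from $\sh{T} \mapsto \sh{T}^{-1}$; the pairing $(s, s') \mapsto \shHom(s, s')$ (the $\sh{G}$-torsor of isomorphisms between two local sections) gives an equivalence $\stack{G} \otimes \stack{G}^{-1} \xrightarrow{\sim} \stack{G}^0$.

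The main obstacle is not the existence of the structure but the 2-categorical coherence: one must check that the associator, the unitors, and the commutativity isomorphism satisfy the pentagon, triangle, and hexagon axioms up to canonical 2-isomorphism, and that these in turn form a consistent system. I expect to handle this by observing that all the relevant diagrams become strictly commutative for trivial gerbes (where everything is a contracted product of $\sh{G}$-torsors, and the axioms are classical), and then invoking the local nature of gerbes: any two parallel 1-morphisms of gerbes that agree locally agree globally, so each coherence 2-cell is forced to be the unique such locally-trivial one. This is the categorified analogue of the fact that $H^2(X, \sh{G})$ is an honest abelian group, with $\otimes$ realizing the Baer-sum addition on cocycles.
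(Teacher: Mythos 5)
Your route is genuinely different from the paper's. You construct $\stack{G}_1 \otimes \stack{G}_2$ as a contracted product of pairs $(s_1,s_2)$ modulo the anti-diagonal torsor action, realizing the Baer sum of cocycles at the level of stacks. The paper instead packages everything through the internal hom: it first defines $\stHom_\sh{G}(\stack{G}_1,\stack{G}_2)$ as the stack of $\stack{H}^1(X,\sh{G})$-equivariant cartesian functors, then puts $\stack{G}^{-1} = \stHom_\sh{G}(\stack{G},\stack{G}^0)$ and $\stack{G}_1 \otimes \stack{G}_2 = \stHom_\sh{G}(\stack{G}_1^{-1},\stack{G}_2)$. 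The $\stHom$-based definition is automatically a stack, and commutativity and unitality fall out of the involution $(\stack{G}^{-1})^{-1} \cong \stack{G}$ together with $\stHom(\stack{G}_1,\stack{G}_2) = \stHom(\stack{G}_1^{-1},\stack{G}_2^{-1})$, without ever choosing local trivializations. Your contracted product is more hands-on and connects directly to the cohomological picture, but it pays for that with a preliminary stackification step that you omit — and that omission is a real error, not just a technicality left implicit.

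Concretely, the fibered category whose objects over $U$ are bare pairs $(s_1,s_2)$ with $s_i \in (\stack{G}_i)_U$ does \emph{not} satisfy effective descent for objects, so the claim ``Descent for this fibered category reduces to descent for the $\stack{G}_i$, so it is a stack'' is false. A counterexample is forced by the theory itself: take $\stack{G}_1 = \stack{G}$ a nontrivial gerbe and $\stack{G}_2 = \stack{G}^{-1}$. Then $\stack{G} \otimes \stack{G}^{-1}$ must be trivial, hence have a global section. But a global object of your prestack over $X$ would be a pair of global sections of $\stack{G}$ and $\stack{G}^{-1}$, which do not exist. The ``diagonal'' descent datum $(s_i,s_i)$ over a trivializing cover, glued via the canonical transition torsors $\shHom(s_i,s_j)$, is a perfectly good cocycle with no solution among pairs. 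The fix is to replace your prestack by its associated stack; after that, everything else you describe — local triviality making the result a gerbe, the pairing $(s,s') \mapsto \shHom(s,s')$ trivializing $\stack{G}\otimes\stack{G}^{-1}$, and the reduction of the coherence diagrams to the locally trivial case — goes through unchanged, and gives an equivalent tensor operation to the paper's.
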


\begin{proof}
 We define $\stHom_\sh{G}(\stack{G}_1, \stack{G}_2)$ to be the stack whose sections on a neighborhood $U$ are the category of $\stack{H}^1(X, \sh{G})$-equivariant (and by definition, cartesian) functors from $\stack{G}_1$ to $\stack{G}_2$, and whose morphisms are natural transformations of such functors. It admits an $\stack{H}^1(X, \sh{G})$-action on the second variable (equivalently, the inverse action on the first variable), which is easily seen to be trivialized wherever the $\stack{G}_i$ both are.  We set
 \begin{align*}
  \stack{G}^{-1} = \stHom_\sh{G}(\stack{G}, \stack{G}^0)
  &&
  \stack{G}_1 \otimes \stack{G}_2
   = \stHom_\sh{G}(\stack{G}_1^{-1}, \stack{G}_2).
 \end{align*}
 One checks that $\stack{G}^{-1}$ has the same trivializations as $\stack{G}$ but with the inverse gluing data, and thus that $(\stack{G}^{-1})^{-1} \cong \stack{G}$. This immediately gives $\stack{G}^0$ as both a left and a right identity.  For inverses, both $\stack{G}^{-1} \otimes \stack{G} = \stHom_\sh{G}(\stack{G}, \stack{G})$ and $\stack{G} \otimes \stack{G}^{-1} = \stHom_\sh{G}(\stack{G}^{-1}, \stack{G}^{-1})$ are globally trivial.  More generally, from this interpretation of the inverse we see that $\stHom(\stack{G}_1, \stack{G}_2) = \stHom(\stack{G}_1^{-1}, \stack{G}_2^{-1})$, so that inversion is an involution which is both a homomorphism and (by definition) an anti-homomorphism, proving commutativity of $\otimes$.  To prove associativity one argues directly, which is straightforward if tedious.
\end{proof}

This proof is valid without modification for the definition of the tensor product of two torsors as well, which gives the Picard category structure on sections of $\stack{H}^1(X, \sh{G})$.

Suppose we have a map of sheaves of groups $\map{\phi}{\sh{G}}{\sh{H}}$, inducing a change-of-group functor which we denote $\map{{}^1 \phi}{\stack{H}^1(X, \sh{G})}{\stack{H}^1(X, \sh{H})}$.  A similar $2$-functor exists for gerbes:
\begin{equation}
 \label{eq:gerbe change of group}
 \map{{}^2 \phi}{\cat{H}^2(X, \sh{G})}{\cat{H}^2(X,\sh{H})},
\end{equation}
where denoting $\stack{H}^0_\phi = \stack{H}^1(X, \sh{H})$ with the action of $\stack{H}^1(X, \sh{G})$ given by ${}^1 \phi$, we have
\begin{equation*}
 {}^2 \phi(\stack{G})
  = \stHom_\sh{G}(\stack{G}^{-1}, \stack{H}^0_\phi)
  = \stack{G} \otimes \stack{H}^0_\phi.
\end{equation*}
Here, $\stack{H}^1(X, \sh{H})$ acts on this via the second factor; since $\stack{G}$ is locally trivial, ${}^2 \phi(\stack{G})$ is locally $\stHom_\sh{G}(\stack{G}^0, \stack{H}^0) \cong \stack{H}^0$, so is also locally trivial, hence an $\sh{H}$-gerbe.  As for $\otimes$, this can be taken as an inductive definition of ${}^1 \phi$ as well.  Since $\stack{H}^0$ is the unit in $\cat{H}^2(X, \sh{H})$ (so idempotent), we see by associativity of $\otimes$ that ${}^2\phi$ is a tensor homomorphism.

\begin{theorem}{lem}{base change triviality}
 For a $\sh{G}$-gerbe $\stack{G}$, we have ${}^2 \phi(\stack{G}) \cong \stack{H}^0$ if and only if there exists a $\stack{G}$-equivariant map $\stack{G} \to \stack{H}^1(X, \sh{H})$.
\end{theorem}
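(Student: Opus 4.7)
The plan is to reduce the question of triviality of ${}^2\phi(\stack{G})$ to the existence of a global section, and then to exploit the fact that ${}^2\phi(\stack{G})$ is by construction an inner-hom stack whose global sections are equivariant functors out of $\stack{G}^{-1}$. The actual content of the lemma is the passage from $\stack{G}^{-1}$ back to $\stack{G}$ on the domain side, which will be achieved by a direct formula using the Picard structure on $\stack{H}^1(X, \sh{H})$.

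First, I will invoke \plainref{gerbes are a 2-groupoid} together with the defining local-triviality property of a gerbe to conclude that an $\sh{H}$-gerbe is equivalent to $\stack{H}^0$ if and only if it admits a global section; in particular, ${}^2\phi(\stack{G}) \cong \stack{H}^0$ if and only if ${}^2\phi(\stack{G})$ has a global section. Unwinding the definition ${}^2\phi(\stack{G}) = \stHom_\sh{G}(\stack{G}^{-1}, \stack{H}^0_\phi)$, such a global section is the same datum as a $\sh{G}$-equivariant cartesian functor $\stack{G}^{-1} \to \stack{H}^1(X, \sh{H})$, where $\sh{G}$ acts on the codomain via ${}^1\phi$. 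The lemma thus reduces to producing a natural bijection between $\sh{G}$-equivariant functors from $\stack{G}^{-1}$ and from $\stack{G}$ to $\stack{H}^1(X, \sh{H})$.

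To construct this bijection, given an equivariant $f \colon \stack{G} \to \stack{H}^1(X, \sh{H})$, I will define $F \colon \stack{G}^{-1} \to \stack{H}^1(X, \sh{H})$ on a local object $\alpha$ of $\stack{G}^{-1}$—which, by the definition $\stack{G}^{-1} = \stHom_\sh{G}(\stack{G}, \stack{G}^0)$, is a $\sh{G}$-equivariant functor $\alpha \colon \stack{G}|_U \to \stack{H}^1(U, \sh{G})$—by choosing a local section $s$ of $\stack{G}$ and setting $F(\alpha) = f(s)^{-1} \otimes {}^1\phi(\alpha(s))$, with inversion taken in the Picard category of $\sh{H}$-torsors. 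A direct check using the equivariance of both $f$ and $\alpha$, plus the commutativity of $\sh{H}$, shows that $F(\alpha)$ is independent of the choice of $s$ and hence glues to a global assignment, and that $F$ is itself $\sh{G}$-equivariant: the inverse in the first factor converts the ``opposite'' $\sh{G}$-action on $\stack{G}^{-1}$ (inherited from the second-variable action of $\sh{G}$ on $\stack{G}^0$ inside $\stHom_\sh{G}(\stack{G},\stack{G}^0)$) into a standard $\sh{G}$-action on the output. The inverse construction uses the canonical identification $(\stack{G}^{-1})^{-1} \cong \stack{G}$ and is entirely symmetric.

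The main difficulty is one of bookkeeping rather than substance: one must correctly identify the $\sh{G}$-action on $\stack{G}^{-1}$ (it is the inverse of the action on $\stack{G}$) and account for the anti-equivariance of the torsor-inversion involution on $\stack{H}^1(X, \sh{H})$. Because both $\sh{G}$ and $\sh{H}$ are abelian, these two ``signs'' commute and precisely cancel, producing genuine equivariance rather than anti-equivariance; once these conventions are kept straight, no further conceptual input is required.
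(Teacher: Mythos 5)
Your proposal is correct, and it arrives at the result by a route that is recognizably a variant of the paper's but with the ``inversion'' step done at a different point.

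The paper's proof is a one-liner: rather than unwinding $\smash{{}^2\phi(\stack{G})}$, it unwinds $\smash{{}^2\phi(\stack{G}^{-1})} = \stHom_{\sh{G}}((\stack{G}^{-1})^{-1}, \stack{H}^0_\phi) \cong \stHom_{\sh{G}}(\stack{G}, \stack{H}^0_\phi)$, whose global sections are \emph{already} $\sh{G}$-equivariant maps $\stack{G} \to \stack{H}^1(X,\sh{H})$. Then one applies the abstract fact that a gerbe is trivial iff its inverse is, together with ${}^2\phi$ being a tensor homomorphism. You instead keep ${}^2\phi(\stack{G})$, observe that its sections are equivariant maps out of $\stack{G}^{-1}$ rather than out of $\stack{G}$, and then supply an explicit bijection between the two, via $F(\alpha) = f(s)^{-1}\otimes {}^1\phi(\alpha(s))$. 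Your computation is correct: the independence of $s$ and the $\sh{G}$-equivariance of $F$ both check out against the paper's definition of the action on $\stHom_{\sh{G}}(-,-)$ (on the second variable, equivalently inverse on the first), and the inverse construction is symmetric using $(\stack{G}^{-1})^{-1}\cong\stack{G}$. What your explicit formula is really doing is witnessing, at the level of sections, the abstract equivalence $\stHom_{\sh{G}}(\stack{G}^{-1},\stack{H}^0_\phi)^{-1} \cong \stHom_{\sh{G}}(\stack{G},\stack{H}^0_\phi)$, which is precisely the step the paper dispatches by ``inverse gerbes are simultaneously trivial.'' The paper's approach is slicker, buying brevity at the cost of relying more heavily on the formal Picard-category properties; yours is more elementary and makes the underlying torsor bookkeeping visible, at the cost of some care with the sign of the $\sh{G}$-action on $\stack{G}^{-1}$, which you handle correctly.
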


\begin{proof}
 This is literally the criterion for ${}^2 \phi(\stack{G}^{-1})$ to have a global section, hence be trivial, and this occurs if and only if the inverse gerbe ${}^2 \phi(\stack{G})$ is trivial.
\end{proof}

The tensor product has a more natural description in terms of the group product:

\begin{theorem}{lem}{gerbe product relation}
 The following equivalences hold:
 \begin{enumerate}
  \item \label{en:gerbe product category}
  $\cat{H}^2(X, \sh{G} \times \sh{H}) \cong \cat{H}^2(X, \sh{G}) \times \cat{H}^2(X, \sh{H})$. 
  
  \item \label{en:gerbe product relation}
  Writing $\map{m}{\sh{G} \times \sh{G}}{\sh{G}}$ for the product, we have $\stack{G}_1 \otimes \stack{G}_2 = {}^2 m(\stack{G}_1 \times \stack{G}_2)$.
 \end{enumerate}
\end{theorem}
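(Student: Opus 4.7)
The plan is to prove the two parts in sequence, reducing everything to local trivializations of gerbes and how they interact with change of group.

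For the first equivalence, I would construct mutually inverse $2$-functors. In the forward direction, the coordinate projections $p_1, p_2$ give the functor $\stack{K} \mapsto ({}^2 p_1(\stack{K}), {}^2 p_2(\stack{K}))$. In the reverse direction, given $(\stack{G}_1, \stack{G}_2)$, I define $\stack{G}_1 \boxtimes \stack{G}_2$ to be the stack whose sections over $U$ are pairs $(s_1, s_2)$ with $s_i \in \stack{G}_i(U)$, with torsor-morphisms given by the factor-wise action of $\stack{H}^1(U, \sh{G}) \times \stack{H}^1(U, \sh{H})$. This is a $(\sh{G} \times \sh{H})$-gerbe because of the canonical splitting $\stack{H}^1(U, \sh{G} \times \sh{H}) \cong \stack{H}^1(U, \sh{G}) \times \stack{H}^1(U, \sh{H})$, obtained by reducing the structure group of any product-group torsor along each projection. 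Mutual inverseness follows from the observation that the composites $p_j \circ i_k$ of projections with coordinate inclusions are either the identity or zero, so ${}^2 p_j (\stack{G}_1 \boxtimes \stack{G}_2) \cong \stack{G}_j$; the other direction amounts to applying the same splitting to local sections of $\stack{K}$.

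For the second part, I would unpack the definition ${}^2 m(\stack{G}_1 \times \stack{G}_2) = (\stack{G}_1 \boxtimes \stack{G}_2) \otimes \stack{H}^0_m$, where $\stack{H}^0_m$ is $\stack{H}^1(X, \sh{G})$ with the $(\sh{G} \times \sh{G})$-action pulled back along $m$. On a cover trivializing both $\stack{G}_i$, the left-hand side reduces to the stack of $(\sh{G} \times \sh{G})$-equivariant maps from the trivial $(\sh{G} \times \sh{G})$-gerbe $\stack{H}^1(U, \sh{G} \times \sh{G})$ to $\stack{H}^0_m$, and such a map is by construction the change-of-group functor along $m$, producing $\stack{H}^1(U, \sh{G})$ with its standard action. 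On the other hand, the local form of $\stack{G}_1 \otimes \stack{G}_2 = \stHom_{\sh{G}}(\stack{G}_1^{-1}, \stack{G}_2)$ in the same cover is also $\stack{H}^1(U, \sh{G})$, with a local ``product'' of sections $s_1 \otimes s_2$ corresponding to the pair $(s_1, s_2)$ under the identification. The two gluing cocycles are then both the pointwise product in $\sh{G}$ of the individual cocycles of $\stack{G}_1$ and $\stack{G}_2$.

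The main obstacle is the bookkeeping of the multiple $\sh{G}$-actions hidden in the definitions. In $\stHom_{\sh{G}}(\stack{G}_1^{-1}, \stack{G}_2)$ there are two independent $\sh{G}$-actions---by pre-composition on the first argument and by post-composition on the second---and what must be checked is that these two actions are precisely the pullback under $m$ of the single $\sh{G}$-action on the tensor product viewed as a gerbe. I would verify this by choosing local trivializations and comparing the resulting formulas directly, using the identity $\stack{G}_1 \otimes \stack{G}_2 = \stHom_\sh{G}(\stack{G}_1^{-1}, \stack{G}_2)$ from the preceding proposition together with the formula ${}^2 \phi(\stack{G}) = \stack{G} \otimes \stack{H}^0_\phi$.
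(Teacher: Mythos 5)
Your plan is sound in outline and your part (1) is essentially the paper's: the external product $\stack{G}_1 \boxtimes \stack{G}_2$ is the paper's $\stack{G} \times \stack{H}$, and the paper also reduces the gerbe axiom for the product to the torsor-level splitting, so you are implicitly invoking the same lemma one categorical level down.  However, the claimed mutual inverseness ``from $p_j \circ i_k$ being identity or zero'' does no real work: ${}^2 p_j(\stack{G}_1 \boxtimes \stack{G}_2) \cong \stack{G}_j$ is not an immediate formal consequence of $p_j i_k = \delta_{jk}$, since the box product is not \emph{a priori} expressed as anything ${}^2 i_k$ is applied to; you would need to either expand ${}^2 p_j$ as the $\stHom$ it is defined to be and run the argument there, or first establish $\stack{G}_1 \boxtimes \stack{G}_2 \cong {}^2 i_1(\stack{G}_1) \otimes {}^2 i_2(\stack{G}_2)$, which itself is not a one-liner.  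For part (2), your cocycle-comparison route is genuinely different from the paper's, which instead constructs the equivalence at the level of $\stHom$-stacks directly: to a homomorphism $\phi \in \stHom(\stack{G}_1^{-1} \times \stack{G}_2^{-1}, \stack{G}^0_m)$ it associates $\psi \in \stHom(\stack{G}_1^{-1}, \stack{G}_2)$ by the implicit-function rule $\phi(s, \psi(s)) = \sh{T}^0$, and checks equivariance.  That construction is manifestly canonical and independent of covers; the cocycle route only shows the underlying cohomology classes agree on a chosen cover and would leave you the additional (tedious) task of assembling the local identifications into a single natural equivalence and checking cover-independence, which is what the lemma is subsequently used to provide.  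Also beware: what you call ``two independent $\sh{G}$-actions'' on $\stHom_\sh{G}(\stack{G}_1^{-1}, \stack{G}_2)$ is in fact a single action presented two equivalent ways (see the remark inside the proof of \ref{gerbe tensor product}); the actual content is that this one action agrees with the $\sh{G}$-action induced on ${}^2 m(\stack{G}_1 \times \stack{G}_2)$ via the second factor of $\stack{G}^0_m$.
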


\begin{proof}
 \mbox{For \ref{en:gerbe product category}}:
 Let $\map{\on{pr}_1, \on{pr}_2}{\sh{G} \times \sh{H}}{\sh{G}, \sh{H}}$ respectively be the projections.  We claim that for gerbes $\stack{G}, \stack{H}$ on the right, $\stack{G} \times \stack{H}$ is a gerbe for $\sh{G} \times \sh{H}$.  Assuming this for the moment, both directions of the equivalence and their two composition isomorphisms are
 \begin{align*}
  \begin{gathered}
   \stack{P} \mapsto (\on{{}^2 pr}_1 \stack{P}, \on{{}^2 pr}_2 \stack{P}) \\
   \stack{P} \to \on{{}^2 pr}_1 \stack{P} \times \on{{}^2 pr}_2 \stack{P}
  \end{gathered}
  &&
  \begin{gathered}
   (\stack{G}, \stack{H}) \mapsto \stack{G} \times \stack{H} \\
   \stack{G} \to \on{{}^2 pr}_1 (\stack{G} \times \stack{H}), \text{ etc.}
  \end{gathered}
 \end{align*}
 where each component of the one on the left comes from the trivialization in $\stHom(\stack{P} \otimes \stack{P}^{-1}, \sh{G}^0)$ and the one on the right comes from the same for $\stack{G}$ after forgetting $\stack{H}$.

 To prove the deferred claim, apply the preceding construction to torsors rather than gerbes and then establish the claim directly; this shows that the lemma holds for $\stack{H}^1(X, \sh{G} \times \sh{H})$.  Now replace $\sh{G}$ and $\sh{H}$ by $\stack{H}^1(X, \sh{G})$ and $\stack{H}^1(X, \sh{H})$ and repeat the argument to prove the claim for gerbes.

 \mbox{For \ref{en:gerbe product relation}}:  We have
 \begin{equation}
  \label{eq:gerbe product pushforward}
  {}^2 m(\stack{G}_1 \times \stack{G}_2)
   = \stHom(\stack{G}_1^{-1} \times \stack{G}_2^{-1}, \stack{G}^0_m)
 \end{equation}
 and we identify this with $\stHom(\stack{G}_1^{-1}, \stack{G}_2)$ by defining a map which, for any homomorphism $\phi$ in \ref{eq:gerbe product pushforward} defines a map $\map{\psi}{\stack{G}_1^{-1}}{\stack{G}_2}$ sending a section $s$ of $\stack{G}_1$ to the unique section $t$ of $\stack{G}_2$ such that $\phi(s,t) = \sh{T}^0$ is the trivial torsor in $\stack{G}^0 = \stack{H}^1(X, \sh{G})$.  One checks that this is $\stack{H}^1(X, \sh{G})$-antiequivariant in $s$ and equivariant in $t$ and so gives an isomorphism of gerbes.
\end{proof}

The change-of groups functors are homomorphic in any way that makes sense:

\begin{theorem}{prop}{gerbe change of group composition}
 We have ${}^2(\psi \circ \phi) = {}^2\phi \circ {}^2 \psi$ for composable maps $\map{\phi}{\sh{K}}{\sh{G}}$ and $\map{\psi}{\sh{G}}{\sh{Q}}$, and ${}^2(\phi \cdot \psi) = {}^2 \phi \otimes {}^2 \psi$ for maps $\map{\phi, \psi}{\sh{G}}{\sh{H}}$.
\end{theorem}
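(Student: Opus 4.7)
The plan is to derive both identities from the defining formula ${}^2\phi(\stack{G}) = \stack{G} \otimes \stack{H}^0_\phi$, the associativity and symmetry of $\otimes$ already established in \ref{gerbe tensor product}, and the translation $\stack{G}_1 \otimes \stack{G}_2 \cong {}^2 m(\stack{G}_1 \times \stack{G}_2)$ from \ref{gerbe product relation}\ref{en:gerbe product relation}. The composition identity will be proved first, and will then do most of the work for the product identity. (I read the composition in the statement in the only order in which the functors' sources and targets match, namely ${}^2(\psi\circ\phi) \cong {}^2\psi\circ{}^2\phi$.)

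For the composition identity, I would unfold both sides: the right-hand side ${}^2\psi({}^2\phi(\stack{G}))$ is $(\stack{G}\otimes\stack{H}^0_\phi)\otimes\stack{Q}^0_\psi$, which by associativity of $\otimes$ is $\stack{G}\otimes(\stack{H}^0_\phi\otimes\stack{Q}^0_\psi)$, while the left-hand side is $\stack{G}\otimes\stack{Q}^0_{\psi\circ\phi}$. So the content of the identity collapses to the case $\stack{G} = \stack{G}^0$, namely to showing $\stack{H}^0_\phi\otimes\stack{Q}^0_\psi \cong \stack{Q}^0_{\psi\circ\phi}$ as $\sh{K}$-gerbes equipped with a $\sh{Q}$-action. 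This can be verified directly: the $\sh{H}$-gerbe $\stack{H}^0_\phi$ is globally trivial when its $\sh{K}$-equivariance is forgotten, so ${}^2\psi(\stack{H}^0_\phi) \cong \stHom_\sh{H}(\stack{H}^0,\stack{Q}^0_\psi) \cong \stack{Q}^0_\psi$, and the residual $\sh{K}$-action is the one obtained by pulling the $\sh{G}$-action on $\stack{Q}^0_\psi$ (i.e.\ the map ${}^1\psi$) back along ${}^1\phi$—which is by definition the $\sh{K}$-action on $\stack{Q}^0_{\psi\circ\phi}$.

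For the product identity, I would factor $\phi\cdot\psi = m \circ (\phi\times\psi) \circ \Delta$, where $\map{\Delta}{\sh{G}}{\sh{G}\times\sh{G}}$ is the diagonal, $\phi\times\psi$ is the component map into $\sh{H}\times\sh{H}$, and $\map{m}{\sh{H}\times\sh{H}}{\sh{H}}$ is multiplication, and then apply the composition identity three times. Under the equivalence of \ref{gerbe product relation}\ref{en:gerbe product category}, ${}^2\Delta$ sends $\stack{G}$ to $(\stack{G},\stack{G})$ (its components are ${}^2\on{pr}_i\circ{}^2\Delta = {}^2(\on{pr}_i\circ\Delta) = {}^2\id$ by the composition identity just proved), while ${}^2(\phi\times\psi)$ acts componentwise as $({}^2\phi,{}^2\psi)$ directly from the construction used in the proof of \ref{gerbe product relation}\ref{en:gerbe product category}, and ${}^2m$ realizes the tensor product by \ref{gerbe product relation}\ref{en:gerbe product relation}. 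Composing these three steps sends $\stack{G}$ to ${}^2\phi(\stack{G})\otimes{}^2\psi(\stack{G})$, which is the desired right-hand side.

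The main obstacle I anticipate is the bookkeeping in the reduction to $\stack{H}^0_\phi\otimes\stack{Q}^0_\psi \cong \stack{Q}^0_{\psi\circ\phi}$: one must keep straight which variable in the Hom-stack description of $\otimes$ carries each of the three actions ($\sh{K}$ via $\phi$, $\sh{H}$, and $\sh{Q}$ via $\psi$), and check that the compatibilities glue to identify the resulting composite of change-of-groups with the direct one through $\psi\circ\phi$. The other steps—associativity, the product decomposition, the pair identification under the diagonal—are then purely formal invocations of the preceding propositions and definitions.
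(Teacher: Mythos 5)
Your proof is correct and follows essentially the same route as the paper: the paper's (extremely terse) proof of the first identity just says it is ``another variant of associativity of $\stHom$,'' which is exactly the reduction $(\stack{G} \otimes \stack{H}^0_\phi) \otimes \stack{Q}^0_\psi \cong \stack{G} \otimes (\stack{H}^0_\phi \otimes \stack{Q}^0_\psi)$ together with the identification $\stack{H}^0_\phi \otimes \stack{Q}^0_\psi \cong \stack{Q}^0_{\psi \circ \phi}$ that you spell out (note that the ``associativity'' invoked here is the mixed-group variant of \ref{gerbe tensor product}, not the statement there verbatim, which is why the paper calls it a \emph{variant}). For the second identity the paper writes $\phi\cdot\psi = m\circ(\phi,\psi)$ and appeals to \ref{gerbe product relation} and the composition identity, which is the same argument you give after absorbing the intermediate factorization through the diagonal.
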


\begin{proof}
 The first is another variant of associativity of $\stHom$ and the second is a consequence of $\phi \cdot \psi = m \circ (\phi, \psi)$, where $\map{(\phi, \psi)}{\sh{G}}{\sh{H} \times \sh{H}}$, the previous lemma, and the first claim of this proposition.
\end{proof}

There is a categorified long exact sequence of cohomology up to degree $2$.  Here, a sequence $\cat{A} \to \cat{B} \to \cat{C}$ of $2$-Picard categories is exact if the induced sequence on equivalence classes of objects is exact and the induced sequence $\cat{Aut}(1_\cat{A}) \to \cat{Aut}(1_\cat{B}) \to \cat{Aut}(1_\cat{C})$ is an exact sequence of Picard categories (which is defined inductively in the same way).

\begin{theorem}{prop}{gerbe exact sequence}
 If $1 \to \sh{K} \to \sh{G} \to \sh{Q} \to 1$ is a short exact sequence of sheaves of abelian groups, then we have a long exact sequence of groups/Picard categories/$2$-Picard categories, where $\cat{H}^1(X, \sh{G})$ is the category of $\sh{G}$-torsors on $X$:
 \begin{multline*}
  1 \to \sh{K}(X) \xrightarrow{\phi} \sh{G}(X) \xrightarrow{\psi} \sh{Q}(X) \\
    \overset{\delta^1}{\longrightarrow} \cat{H}^1(X, \sh{K})
    \xrightarrow{{}^1 \phi} \cat{H}^1(X, \sh{G})
    \xrightarrow{{}^1 \psi} \cat{H}^1(X, \sh{Q}) \\
    \overset{\delta^2}{\longrightarrow} \cat{H}^2(X, \sh{K})
    \xrightarrow{{}^2 \phi} \cat{H}^2(X, \sh{G})
    \xrightarrow{{}^2 \psi} \cat{H}^2(X, \sh{Q})
 \end{multline*}
\end{theorem}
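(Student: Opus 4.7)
My approach would mirror the classical construction of the long exact sequence in sheaf cohomology, lifted one categorical level. The work splits into three stages: construct the connecting functors $\delta^1$ and $\delta^2$; verify exactness at each of the nine positions on equivalence classes of objects; and deduce the automorphism-level exactness by an inductive reduction to the statement one step to the left, via the canonical identification $\cat{Aut}(1_{\cat{H}^{n+1}(X,\sh{F})}) \cong \cat{H}^n(X,\sh{F})$.

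For $\delta^1$, I would send $q \in \sh{Q}(X)$ to the sheaf-theoretic preimage $\psi^{-1}(q) \subset \sh{G}$, which is a $\sh{K}$-torsor because $\psi$ is surjective with kernel exactly $\phi(\sh{K})$. For $\delta^2(\sh{T})$, I would build the stack whose sections over $U$ are pairs $(\sh{T}', \alpha)$ consisting of a $\sh{G}|_U$-torsor $\sh{T}'$ and an isomorphism $\alpha \colon {}^1\psi(\sh{T}') \xrightarrow{\sim} \sh{T}|_U$. The $\stack{H}^1(U, \sh{K})$-action by change of group along $\phi$ makes this an $\sh{K}$-gerbe: local sections exist because one can trivialize $\sh{T}$ and lift the transition data to $\sh{G}$, and once such a section is fixed the gerbe equivalence to $\stack{H}^1(U, \sh{K})$ is immediate from $\phi = \ker\psi$.

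For exactness I would dispatch the easy positions (at the group level) by hand, and use \ref{base change triviality} to handle all the rest uniformly. At $\sh{Q}(X)$: $q$ lifts iff $\delta^1(q)$ has a global section. At $\cat{H}^1(X, \sh{K})$: a trivialization of ${}^1\phi(\sh{T})$ is a global section $g$ of ${}^1\phi(\sh{T})$, and then $\delta^1(\psi(g)) \cong \sh{T}$. At $\cat{H}^1(X,\sh{Q})$: $\delta^2(\sh{T})$ is trivial iff it has a global section iff $\sh{T}$ lifts to a $\sh{G}$-torsor. At $\cat{H}^2(X,\sh{K})$: by \ref{base change triviality}, ${}^2\phi(\stack{G}) \cong \stack{H}^0$ iff $\stack{G}$ admits a $\stack{G}$-equivariant map into $\stack{H}^1(X, \sh{G})$; passing such a map to $\sh{Q}$-torsors via ${}^1\psi$ produces a canonical $\sh{Q}$-torsor $\sh{T}$ with $\stack{G} \cong \delta^2(\sh{T})$. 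At $\cat{H}^2(X,\sh{G})$: the composite ${}^2\psi \circ {}^2\phi = {}^2(\psi\phi) = {}^2 0$ collapses everything to $\stack{H}^0$ by \ref{gerbe change of group composition}, while a trivialization of ${}^2\psi(\stack{H})$ (an equivariant map $\stack{H} \to \stack{H}^1(X,\sh{Q})$) can be pulled back along ${}^1\psi \colon \stack{H}^1(X,\sh{G}) \to \stack{H}^1(X,\sh{Q})$ to give a $\sh{K}$-gerbe mapping to $\stack{H}$.

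The hardest part will be the technical bookkeeping around $\delta^2$ and the compatibility with the inductive automorphism-level exactness: one must check that the automorphism category of $1_{\cat{H}^2(X,\sh{F})}$ really is $\cat{H}^1(X,\sh{F})$ under an identification natural in $\sh{F}$, and that under this identification the induced maps agree with the ${}^1\phi$, ${}^1\psi$, $\delta^1$ already constructed. Granted this compatibility, the 2-Picard exactness at each of the three top-right positions follows from the Picard exactness at the three middle positions, which in turn follows by the same argument applied one level down from the exactness of sections. The base-change lemma \ref{base change triviality} together with \ref{gerbe change of group composition} does essentially all the conceptual work; everything else is unwinding definitions.
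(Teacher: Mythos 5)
Your proposal is correct and follows essentially the same route as the paper: the construction of $\delta^2(\sh{T})$ as the stack of pairs $(\tilde{\sh{T}}, f)$ with $f \colon {}^1\psi(\tilde{\sh{T}}) \xrightarrow{\sim} \sh{T}$ is identical, the exactness checks are the same in spirit, and your argument at $\cat{H}^2(X,\sh{G})$ is precisely the content of the paper's \ref{gerbe base change trivialization}. The paper's proof is even sketchier than yours (it verifies $\delta^2\circ{}^1\psi=1$ and ${}^2\phi\circ\delta^2=1$, defers the one exactness claim it needs later to that lemma, and explicitly omits the rest), so you have simply filled in more of the same picture.
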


\begin{proof}
 Since $\psi \circ \phi = 1$, we also have ${}^1 \phi \circ {}^1 \psi = 1$ and ${}^2 \phi \circ {}^2 \psi = 1$.

 We define $\smash{\delta^2(\sh{T})}$ for a $\sh{Q}$-torsor $\sh{T}$ to be the sheaf of categories whose category of sections over a neighborhood $U$ has objects the set of pairs $\smash{(\tilde{\sh{T}}, f)}$ where $\smash{\tilde{\sh{T}}}$ is a $\sh{G}$-torsor and $f$ is an isomorphism of ${}^1 \psi(\smash{\tilde{\sh{T}}})$ with $\sh{T}$; its morphisms are maps of the $\smash{\tilde{\sh{T}}}$-term inducing a commutative triangle after application of ${}^2 \psi$.  Evidently, $\stack{H}^1(X, \sh{K})$ acts on $\delta^2(\sh{T})$ through ${}^2 \phi$, since ${}^2 \psi$ annihilates its image, and since $\sh{T}$ is locally trivial we apply exactness at $\cat{H}^1(X, \sh{G})$ (inductively) to see that this in fact defines an $\sh{H}$-gerbe.  Clearly we have $\delta^2 \circ {}^1 \psi = 1$; we also have ${}^2 \phi \circ \delta^2 = 1$, by producing a map $\delta^2(\sh{T}) \to \stack{G}^0_\phi$ sending $(\smash{\tilde{\sh{T}}}, f)$ to $\smash{\tilde{\sh{T}}}$, a $\stack{G}$-torsor.
 
 Except for exactness at $\cat{H}^2(X, \sh{G})$, which we prove in a separate lemma, this proposition is not important to the rest of the paper, so we omit the further details.
\end{proof}

\begin{theorem}{lem}{gerbe base change trivialization}
 Every trivialization of ${}^2 \psi(\stack{G})$ is induced by an isomorphism $\stack{G} = {}^2 \phi (\stack{K})$ for a $\sh{K}$-gerbe $\stack{K}$, and likewise for torsors.
\end{theorem}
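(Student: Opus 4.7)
The plan is to convert the trivialization of ${}^2\psi(\stack{G})$ into a $\sh{G}$-equivariant map $\map{F}{\stack{G}}{\stack{H}^1(X, \sh{Q})}$ via Lemma \ref{base change triviality}, and then to produce $\stack{K}$ as a kind of ``$\psi$-fiber'' of $\stack{G}$ over the trivial $\sh{Q}$-torsor. Specifically, I define $\stack{K}$ to be the stack whose sections over an open $U$ are pairs $(s, \alpha)$ with $s \in \stack{G}(U)$ and $\alpha$ an isomorphism of $F(s)$ with the trivial $\sh{Q}$-torsor on $U$; morphisms are the morphisms of $s$ in $\stack{G}$ that intertwine the trivializations $\alpha$.

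Next I verify that $\stack{K}$ is a $\sh{K}$-gerbe. Local sections exist because $\stack{G}$ is locally trivial and every $\sh{Q}$-torsor is locally trivial on a refinement. The stack $\stack{H}^1(X, \sh{K})$ acts on $\stack{K}$ by the rule $\sh{T} \cdot (s, \alpha) = ({}^1\phi(\sh{T}) \cdot s, \alpha')$, where $\alpha'$ is obtained by transporting $\alpha$ along the $\sh{G}$-equivariance of $F$ together with the canonical trivialization of ${}^1\psi({}^1\phi(\sh{T}))$ coming from $\psi \circ \phi = 1$. Given a section $(s, \alpha)$ on $U$, any other section $(s', \alpha')$ satisfies $s' = \sh{T} \cdot s$ for a unique $\sh{G}$-torsor $\sh{T}$, and comparing $\alpha'$ with $\alpha$ via $F$ produces a trivialization of ${}^1\psi(\sh{T})$; by the torsor level of \ref{gerbe exact sequence} (which is the inductive hypothesis), such data is equivalent to a reduction of $\sh{T}$ to a $\sh{K}$-torsor, giving the required local equivalence $\stack{H}^1(U, \sh{K}) \simeq \stack{K}|_U$.

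Third, I construct the equivalence $\stack{G} \cong {}^2\phi(\stack{K})$. By \ref{gerbes are a 2-groupoid} it suffices to produce a single $\stack{H}^1(X,\sh{G})$-equivariant functor in either direction. The forgetful rule $(s, \alpha) \mapsto s$ gives a $\sh{K}$-equivariant functor $\stack{K} \to \stack{G}$, where $\sh{K}$-torsors act on $\stack{G}$ via $\phi$. By the universal property implicit in the definition ${}^2\phi(\stack{K}) = \stack{K} \otimes \stack{H}^0_\phi$ of \ref{eq:gerbe change of group}, this extends canonically to a $\sh{G}$-equivariant functor ${}^2\phi(\stack{K}) \to \stack{G}$ which on local trivializations sends $((s, \alpha), \sh{T})$ to $\sh{T} \cdot s$. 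The torsor version of the lemma is proved by the same construction one categorical level down, and is what fuels the inductive step used in the gerbe case.

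The main obstacle will be the $2$-categorical coherence bookkeeping in the third step. Because $\otimes$ is defined via $\stHom$ rather than as an explicit quotient in \ref{gerbe tensor product}, the informal description ``$((s, \alpha), \sh{T}) \mapsto \sh{T} \cdot s$'' must be unpacked into an honest $\sh{K}$-equivariant functor $\stack{K}^{-1} \to \stack{H}^0_\phi$ attached to each local object of $\stack{G}$, using the identification $\stack{K}^{-1} = \stHom_\sh{K}(\stack{K}, \stack{H}^1(X,\sh{K}))$. Once done, local triviality of both $\stack{K}$ and $\stack{G}$ reduces the verification that the result is an equivalence to the tautological formula on trivializations, where both sides become $\stack{H}^1(U, \sh{G})$.
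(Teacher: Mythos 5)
Your proposal follows essentially the same route as the paper's proof: you form $\stack{K}$ as the (2-categorical) fiber of the equivariant map $\stack{G}\to\stack{H}^1(X,\sh{Q})$ over the trivial torsor, verify it is a $\sh{K}$-gerbe by the torsor-level statement applied inductively, and read off the equivalence ${}^2\phi(\stack{K})\cong\stack{G}$ from the forgetful map. Your unpacking of the fiber as pairs $(s,\alpha)$ and of the $\stack{H}^1(X,\sh{K})$-action is exactly what the paper's terse ``let $\stack{K}$ be the fiber $f^{-1}(\sh{Q}^0)$'' means, so the argument is the same, just more explicit.
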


\begin{proof}
 If we are given a map $\map{f}{\stack{G}}{\stack{Q}^0_\psi}$, let $\stack{K}$ be the fiber $f^{-1}(\sh{Q}^0)$.  This is evidently a sheaf of categories; $\stack{H}^1(X, \sh{K})$ acts on it through ${}^2 \phi$ because ${}^2 \psi$ kills the image of that map, and if $\stack{G}$ is trivial then (inductively) by the torsor version of the lemma this fiber is just $\stack{H}^1(X, \sh{K})$, so $\stack{K}$ is a $\sh{K}$-gerbe.  By definition, it admits a map to $\stack{G}$, which induces an equivalence ${}^2 \phi(\stack{K}) = \stack{G}$, as desired.
\end{proof}

Further evidence of the ``correctness'' of this theory is given by the connection with cohomology:

\begin{theorem}{prop}{gerbes and cech cohomology}
 There is a natural identification of equivalence classes of objects in $\cat{H}^2(X, \sh{G})$ with sheaf \v{C}ech cohomology classes in $H^2(X, \sh{G})$ (and likewise for $H^1$) which agrees with the product operations, change of group, and long exact sequences.
\end{theorem}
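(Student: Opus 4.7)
My plan is to directly extract a \v{C}ech $2$-cocycle from a gerbe by a two-step trivialization, and conversely to reconstruct a gerbe from such a cocycle by gluing trivial gerbes. First, I would fix a gerbe $\stack{G}$ and a cover $\{U_i\}$ on which sections $s_i \in \stack{G}_{U_i}$ exist. On each double intersection, the defining equivalence $\stack{H}^1(U_{ij}, \sh{G}) \cdot s_i \simeq \stack{G}|_{U_{ij}}$ produces a unique (up to unique isomorphism) $\sh{G}$-torsor $\sh{T}_{ij}$ equipped with an isomorphism $s_j \cong \sh{T}_{ij} \cdot s_i$, and on triple overlaps the string $(\sh{T}_{jk} \otimes \sh{T}_{ij}) \cdot s_i \cong \sh{T}_{jk} \cdot s_j \cong s_k \cong \sh{T}_{ik} \cdot s_i$ yields a canonical isomorphism $\alpha_{ijk}\colon \sh{T}_{jk} \otimes \sh{T}_{ij} \xrightarrow{\sim} \sh{T}_{ik}$. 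After passing to a refinement on which each $\sh{T}_{ij}$ admits a trivialization $\tau_{ij}$, the $\alpha_{ijk}$ translate to elements $g_{ijk} \in \sh{G}(U_{ijk})$, and the associativity of $\otimes$ on quadruple intersections forces $\delta g = 1$.

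I would then check independence of choices. Refining the cover, replacing $s_i$ by $\sh{T}_i \cdot s_i$ for torsors $\sh{T}_i$, or changing the trivializations $\tau_{ij}$ each alters the cocycle $g_{ijk}$ by an explicit \v{C}ech $1$-coboundary, so the class $[g_{ijk}] \in H^2(X,\sh{G})$ depends only on the equivalence class of $\stack{G}$. For the inverse construction, given a cocycle $g_{ijk}$ I would form $\stack{G}$ by taking the disjoint union of the trivial gerbes $\stack{H}^1(U_i,\sh{G})$ and gluing $\stack{H}^1(U_j,\sh{G})|_{U_{ij}} \to \stack{H}^1(U_i,\sh{G})|_{U_{ij}}$ via the identity on underlying torsors, but declaring the triple composition on $U_{ijk}$ to differ from the identity by translation by $g_{ijk}$; the cocycle condition is precisely what is needed for this to descend to a stack, and that stack is a gerbe by construction. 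A direct check shows that these two constructions are mutually inverse on equivalence classes, and the argument specializes verbatim (one categorical level down) to the case of torsors and $H^1$.

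Finally, I would verify the compatibility statements. Under the correspondence, $\stack{G}_1 \otimes \stack{G}_2$ corresponds to the sections $(s_i^{(1)}, s_i^{(2)})$ and hence to the pointwise product $g_{ijk}^{(1)} g_{ijk}^{(2)}$, matching the cup product on $H^2$. For change of group $\map{\phi}{\sh{G}}{\sh{H}}$, the gerbe ${}^2\phi(\stack{G}) = \stack{G} \otimes \stack{H}^0_\phi$ has local sections induced from those of $\stack{G}$ and cocycle $\phi(g_{ijk})$, matching the induced map on \v{C}ech cohomology. Finally, the connecting maps $\delta^1$, $\delta^2$ from Proposition \plainref{gerbe exact sequence} are built by lifting torsors locally and recording the failure of the lifts to glue, which in cocycle language is exactly the standard Bockstein boundary used to define $\delta\colon H^i(X,\sh{Q}) \to H^{i+1}(X,\sh{K})$.

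The main obstacle will be bookkeeping: tracking coherence of the $2$-categorical data (associators for $\otimes$, pseudofunctoriality of restriction) through the two-step trivialization, and checking that each of the several kinds of ambiguity in choices produces precisely a coboundary. Once the torsor case is handled in detail as a warm-up, the gerbe case is formally parallel, and the operational compatibilities reduce to reading off the cocycles from the constructions in the preceding lemmas.
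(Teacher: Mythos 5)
Your proposal is correct and follows essentially the same route as the paper: extract transition torsors from local sections, refine until those torsors trivialize, read off the failure of the triple cocycle identity as a $\sh{G}$-valued $2$-cocycle, and for the inverse assemble a gerbe from the cocycle. The only cosmetic difference is in the converse: you describe it as abstractly gluing the trivial gerbes $\stack{H}^1(U_i,\sh{G})$ with a twisted triple-overlap constraint, while the paper directly writes down the resulting stack as the category of twisted descent data $(\{\sh{T}_i\}, \{\phi_{ij}\})$ with $\phi_{ij}\circ\phi_{jk}\circ\phi_{ki}=t_{ijk}$; these are the same construction, with the paper's version being the concrete realization of the gluing you sketch, and your treatment of the compatibility claims (tensor, change of group, connecting maps) fills in detail that the paper explicitly omits.
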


Since this proposition is not essential for the rest of the paper, we will not give the complete details of the proof.

\begin{proof}
 Here is how the cohomology class of a gerbe is obtained: for any gerbe $\stack{G}$, choose an open cover of $X$ by neighborhoods $U_i$ such that $\stack{G}|_U$ is trivial, and pick trivializations $t_i$; on $U_{ij} = U_i \cap U_j$, we have $t_{ij} = t_j^{-1} t_i \in \cat{H}^1(U_{ij}, \sh{G})$, and hence the differences are locally trivial, so we refine the $U_i$ to assume that they are totally trivial; we pick trivializations. Then we have on $U_{ijk} = U_i \cap U_j \cap U_k$ two trivializations of $t_{ijk} = t_{ij} t_{jk} t_{ki}$: the given one and the natural one coming from cancellation.  They thus differ by an element of $\sh{G}(U_{ijk})$ which is easily seen to be a \v{C}ech 2-cocycle.  One checks that this cocycle depends on the choices of trivializations and the cover $U_i$ only up to coboundary, so gives a well-defined cohomology class.
   
 Conversely, given an open cover and a 2-cocycle $t_{ijk}$, for any neighborhood $V$ let $\stack{G}_V$ be the category of data consisting of $\sh{G}$-torsors $\sh{T}_i$ on the $V \cap U_i$ and isomorphisms $\map{\phi_{ij}}{\sh{T}_i|_{U_{ij}}}{\sh{T}_j|_{U_{ij}}}$ such that $\phi_{ij} \circ \phi_{jk} \circ \phi_{ki} = t_{ijk}$ as automorphisms of $\sh{T}_i|_{U_{ijk}}$; it is easy to check that this is a $\sh{G}$-gerbe and that this construction inverts the one of the previous paragraph.  The remaining claims can all be proved using similar reasoning by choosing sufficiently fine trivializations of the gerbes involved.
\end{proof}

Our remaining goal for this section is to prove the existence of pullback $2$-functors of gerbes along maps of spaces, as for sheaves.  This entails a great deal of $2$-categorical technicalities, some of which we will leave to the reader.

We may rephrase the above lemma in a more inductive way using the concept of a ``\v{C}ech 1-cocycle with values in torsors''.  By definition, such a thing (on $X$, for a sheaf of groups $\sh{G}$) is given by an open cover $\{U_i\}$ of $X$ together with torsors $\sh{T}_{ij} \in \cat{H}^1(U_{ij}, \sh{G})$ and a trivialization $t_{ijk}$ of the ``$2$-coboundary'' $\sh{T}_{ijk} = \sh{T}_{ij} \otimes \sh{T}_{jk} \otimes \sh{T}_{ki}$ on the triple intersection $U_{ijk}$, satisfying the cocycle condition on quadruple intersections.  Such data is equivalent (using the section-picking arguments of the lemma) to a single \v{C}ech 2-cocycle, and we are motivated to introduce the following definition:

\begin{theorem}{defn}{torsor cocycle 2-category}
 We set $\cat{H}^1(X, \stack{H}^1(X,\sh{G}))$ to be the $2$-category such that
 \begin{itemize}
  \item
  Its objects are 1-cocycles $(\{U_i\}, \{\sh{T}_{ij}\}, \{t_{ijk}\})$ in $\sh{G}$-torsors.

  \item
  A 1-morphism from $(\{U_i\}, \{\sh{T}_{ij}\}, \{t_{ijk}\})$ to $(\{U'_{i'}\}, \{\sh{T}'_{i'j'}\}, \{t'_{i'j'k'}\})$ is a common refinement $\{U''_{i''}\}$ of $\{U_i\}$ and $\{U'_{i'}\}$ together with torsors $\sh{S}_{i''}$ on the $U_{i''}$ and isomorphisms of torsors $s_{ij}$ as follows.  For simplicity of notation we assume $U_i = U'_{i'} = U''_{i''}$; then $s_{ij}$ is an isomorphism of $\sh{T'}_{ij} \otimes \sh{S}_i$ with $\sh{T}_{ij} \otimes \sh{S}_j$ on $U_{ij}$, compatible with the $t_{ijk}$ and $t'_{ijk}$.  (In other words, it is a ``1-cohomology'' in torsors.)
 
  \item
  A 2-morphism between two 1-morphisms 
  \begin{equation*}
   (\{U_i\}, \{\sh{S}_i\}, \{s_{ij}\}) \text{ and } (\{U'_{i'}\}, \{\sh{S}'_{i'}\}, \{s'_{i'j'}\})
  \end{equation*}
  is a common refinement of $\{U_i\}$ and $\{U'_{i'}\}$ together with maps of torsors (using the above notational convention) $r_i \colon \sh{S}_i \to \sh{S}'_i$ compatible with the $s_{ij}$ and $s'_{ij}$.
 \end{itemize}
\end{theorem}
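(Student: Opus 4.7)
The statement here is a definition, so the proof burden is to verify that the data described assembles into a genuine 2-category and that (as implicitly promised by the notation) it is 2-equivalent to $\cat{H}^2(X, \sh{G})$. The plan is first to verify the 2-categorical axioms, then to construct the equivalence in both directions.

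For the 2-category structure, composition of 1-morphisms $(\{\sh{S}_i\},\{s_{ij}\})$ followed by $(\{\sh{S}'_i\},\{s'_{ij}\})$ on a common refinement is defined by $(\{\sh{S}_i \otimes \sh{S}'_i\}, \{s_{ij} \otimes s'_{ij}\})$; compatibility of the tensored isomorphisms with the source, middle, and target 2-cocycles follows immediately from tensoring the two given compatibilities. Identity 1-morphisms use trivial torsors with identity gluings. Vertical composition of 2-morphisms $r_i \colon \sh{S}_i \to \sh{S}'_i$ and $r'_i \colon \sh{S}'_i \to \sh{S}''_i$ is $r'_i \circ r_i$; horizontal composition is the tensor product of torsor maps. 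The coherence data --- associator, unitor, and interchange law --- all come from the Picard structure on $\stack{H}^1(X, \sh{G})$ already in hand, so the verification is tedious but routine.

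The substantive content is the equivalence with $\cat{H}^2(X, \sh{G})$. In one direction, send a 1-cocycle $(\{U_i\}, \{\sh{T}_{ij}\}, \{t_{ijk}\})$ to the stack whose sections over $V$ are systems $(\sh{L}_i, \lambda_{ij})$ with $\sh{L}_i$ a $\sh{G}$-torsor on $V \cap U_i$ and $\lambda_{ij} \colon \sh{L}_i \otimes \sh{T}_{ij} \xrightarrow{\sim} \sh{L}_j$ on $V \cap U_{ij}$ satisfying the $t_{ijk}$-twisted cocycle identity on triples --- this is precisely the construction appearing at the end of the proof of Proposition \ref{gerbes and cech cohomology}. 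A 1-morphism $(\{\sh{S}_i\}, \{s_{ij}\})$ induces the functor $(\sh{L}_i, \lambda_{ij}) \mapsto (\sh{L}_i \otimes \sh{S}_i, \lambda_{ij} \otimes s_{ij})$, whose target compatibility is guaranteed by the defining condition on $s_{ij}$; 2-morphisms induce natural transformations by the corresponding pointwise torsor maps. The quasi-inverse sends a gerbe $\stack{G}$ to the 1-cocycle obtained by choosing local trivializations $t_i$ on a sufficiently fine cover and setting $\sh{T}_{ij} = t_j^{-1} t_i$ with $t_{ijk}$ the canonical trivialization of the triple composite; a map of gerbes descends to the system of comparison torsors between chosen trivializations on either side.

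The main obstacle will be verifying that these functors are quasi-inverse in the full 2-categorical sense: every construction on the gerbe side depends on a choice of cover and of trivializations, and the comparison 2-isomorphisms $\Phi \Psi \simeq \id$, $\Psi \Phi \simeq \id$ must themselves be coherent with both vertical and horizontal composition. As in the proof of Proposition \ref{gerbes and cech cohomology}, every such ambiguity lies in the Picard structure of $\stack{H}^1(X, \sh{G})$, which is precisely the structure that the 2-morphism data of $\cat{H}^1(X, \stack{H}^1(X, \sh{G}))$ is designed to absorb. Once enough common refinements are chosen, each coherence diagram commutes after the obvious tensor cancellations, and no genuinely new idea is required beyond the section-picking reasoning already visible in Proposition \ref{gerbes and cech cohomology}.
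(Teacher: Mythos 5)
The statement you have been given is a bare definition; what your sketch actually argues is the subsequent corollary, $\cat{H}^2(X, \sh{G}) \cong \cat{H}^1(X, \stack{H}^1(X, \sh{G}))$, together with a routine check (left implicit in the paper) that the data assembles into a 2-category. Your constructions of the two functors are correct and match the ingredients visible in the paper's Proposition \ref{gerbes and cech cohomology}, but your proof strategy for the equivalence is heavier than the paper's. You propose to build explicit functors in both directions and then verify that they are 2-quasi-inverse, which, as you acknowledge, forces you to chase coherence through all the arbitrary choices of covers and trivializations that the inverse functor makes. The paper sidesteps this entirely: it first replaces $\cat{H}^2(X, \sh{G})$ by the 2-equivalent 2-category $\cat{C}$ of triples $(\stack{G}, \{U_i\}, \{t_i\})$ of a gerbe together with a \emph{chosen} trivializing cover, so that the forward 2-functor $F$ to cocycles becomes tautological and choice-free. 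Essential surjectivity is then gluing, and full faithfulness is reduced — using the 2-Picard-groupoid structure, where any two objects admitting a map are equivalent — to a single statement about automorphisms of the unit object, namely that sf $A$-torsor cocycles glue. This cuts out the entire quasi-inverse-coherence argument dominating the end of your sketch, replacing it with two one-line claims. Your route would work in principle, but the paper's move of hardcoding the choices into $\cat{C}$ and then invoking the unit-reduction trick for full faithfulness is what makes the proof short; without it you would have to discharge the coherence obligations you flag at the end, which are nontrivial to write down.
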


\begin{theorem}{cor}{gerbes and torsor cocycles}
 We have $\cat{H}^2(X, \sh{G}) \cong \cat{H}^1(X, \stack{H}^1(X, \sh{G}))$.
\end{theorem}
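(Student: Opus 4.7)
My plan is to construct explicit $2$-functors in both directions and show they are mutually quasi-inverse, following the argument sketched in the proof of \plainref{gerbes and cech cohomology} but recast in the language of torsors rather than group-valued cochains.

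In one direction, given a $\sh{G}$-gerbe $\stack{G}$, I choose a sufficiently fine open cover $\{U_i\}$ on which $\stack{G}$ admits sections $s_i$. The isomorphism of \plainref{definition of gerbe} identifies $\stack{H}^1(U_{ij}, \sh{G})$ with $\stack{G}|_{U_{ij}}$ via the action on $s_i$, so there is a unique $\sh{G}$-torsor $\sh{T}_{ij}$ on $U_{ij}$ together with an isomorphism $\sh{T}_{ij} \cdot s_i \cong s_j$. On $U_{ijk}$, concatenating these three isomorphisms and applying the identification again yields a canonical trivialization $t_{ijk}$ of $\sh{T}_{ij} \otimes \sh{T}_{jk} \otimes \sh{T}_{ki}$; the cocycle condition on quadruple intersections follows because both ways of associating the fourfold composite simplify to the identity on $s_i$. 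Different choices of $\{U_i\}$ and $\{s_i\}$ give naturally isomorphic cocycles in the sense of the $1$-morphisms in \plainref{torsor cocycle 2-category}, where the intermediate torsors $\sh{S}_i$ encode the discrepancy $s'_i \cdot \sh{S}_i \cong s_i$ between two choices of local section.

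In the other direction, given a cocycle $(\{U_i\}, \{\sh{T}_{ij}\}, \{t_{ijk}\})$, I define a stack $\stack{G}$ whose section category over an open $V \subset X$ consists of collections of $\sh{G}$-torsors $\sh{R}_i$ on $V \cap U_i$ equipped with isomorphisms $\phi_{ij} \colon \sh{R}_i \otimes \sh{T}_{ij} \cong \sh{R}_j$ on $V \cap U_{ij}$ satisfying the compatibility $\phi_{jk} \circ (\phi_{ij} \otimes \id) = \phi_{ik} \circ (\id \otimes t_{ijk})$ on $V \cap U_{ijk}$; morphisms are compatible families of torsor maps. This carries an action of $\stack{H}^1(V, \sh{G})$ by simultaneous twisting of all $\sh{R}_i$, and is locally trivial because on any $V \subset U_{i_0}$ the datum is determined by $\sh{R}_{i_0}$, giving an equivalence with $\stack{H}^1(V, \sh{G})$. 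This makes $\stack{G}$ a $\sh{G}$-gerbe in the sense of \plainref{definition of gerbe}.

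It remains to promote these constructions to $2$-functors and check they are mutually inverse on all three levels. On objects, the round trip cocycle $\to$ gerbe $\to$ cocycle recovers the original by taking $s_i$ to be the section of $\stack{G}$ with $\sh{R}_i$ the trivial torsor (and $\sh{R}_j = \sh{T}_{ij}$ for $j \neq i$); the other composite is handled by choosing trivializations and identifying a local section of $\stack{G}$ with the system of torsors $\sh{T}_{ij} \cdot (s_i^{-1} s)$. For morphisms, a $\sh{G}$-equivariant functor $F \colon \stack{G} \to \stack{G}'$ between gerbes, given local sections $s_i$ of $\stack{G}$ and $s'_i$ of $\stack{G}'$ over a common refinement, determines torsors $\sh{S}_i$ via $F(s_i) \cong \sh{S}_i \cdot s'_i$, and the compatibility $\sh{T}'_{ij} \otimes \sh{S}_i \cong \sh{T}_{ij} \otimes \sh{S}_j$ comes from applying $F$ to $\sh{T}_{ij} \cdot s_i \cong s_j$. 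A $2$-morphism (natural transformation) translates similarly to a compatible family of maps $r_i \colon \sh{S}_i \to \sh{S}'_i$.

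The main obstacle is not any single step, which is more or less formal, but rather the bookkeeping required to handle refinements of covers coherently: the equivalence is only an equivalence of $2$-categories once one passes to a colimit over covers, and the $2$-categorical axioms (identity $2$-morphisms, horizontal and vertical composition, coherence of associators) must be shown to match under the translation. I would organize this by first fixing a single cover trivializing all gerbes in sight whenever possible, reducing to a strictly functorial assignment, and then verifying that the resulting construction is independent of the choice of cover up to canonical $2$-isomorphism — which is the content of the $1$- and $2$-morphisms in \plainref{torsor cocycle 2-category} by design.
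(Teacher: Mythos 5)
Your proposal is correct in substance and the forward construction (local sections $s_i$, transition torsors $\sh{T}_{ij}$ via the torsor structure of $\stHom(s_i, s_j)$, coboundary trivializations $t_{ijk}$) agrees with the paper's. Where you diverge is in how you verify that this is an equivalence of $2$-categories, and the paper's route is appreciably lighter. First, rather than confronting the colimit over covers directly — the bookkeeping you flag as the main obstacle — the paper replaces $\cat{H}^2(X,\sh{G})$ by an auxiliary $2$-category $\cat{C}$ of triples $(\stack{G},\{U_i\},\{t_i\})$ whose morphisms are those of $\stack{G}$ alone, which is tautologically equivalent to $\cat{H}^2(X,\sh{G})$; the forward functor $F$ is then defined on a fixed cover with no refinement issues. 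Second, rather than building an explicit quasi-inverse and checking mutual inverseness on $0$-, $1$-, and $2$-morphisms, the paper uses that gerbes form a Picard $2$-groupoid (\plainref{gerbes are a 2-groupoid}): two objects are equivalent, and two $1$-morphisms isomorphic, iff a map exists between them. Full faithfulness therefore reduces to two local gluing lemmas — the kernel of $F$ on objects is trivial, and $F$ induces an equivalence $\cat{Aut}(\stack{G}^0,\{X\},\{\id\}) \cong \cat{Aut}(\{X\},\{\sh{T}^0\},\{\id\})$ — while essential surjectivity is your inverse construction (which also appears in the proof of \plainref{gerbes and cech cohomology}). Your explicit-inverse plan would succeed, but at the cost of the coherence-of-refinements verification you were already worried about; the paper's two shortcuts are worth internalizing precisely because they make that verification unnecessary.
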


\begin{proof}
 We replace $\cat{H}^2(X, \sh{G})$ with the 2-category $\cat{C}$ of triples $(\stack{G}, \{U_i\}, \{t_i\})$, where $\stack{G}$ is a gerbe, $\{U_i\}$ is an open cover, and $t_i$ is a trivialization of $\stack{G}|_{U_i}$.  The morphisms are morphisms of $\stack{G}$ alone, so $\cat{C}$ is equivalent to $\cat{H}^2(X, \sh{G})$.  It admits a natural 2-functor $F$ to $\cat{H}^1(X, \stack{H}^1(X, \sh{G}))$ sending such a triple to
 \begin{equation*}
  F(\stack{G}, \{U_i\}, \{t_i\})
   = (\{U_i\}, \{\sh{T}_{ij} = t_j^{-1} t_i\}, \{t_{ijk}\}),
 \end{equation*}
 where $t_{ijk}$ is the natural trivialization of $t_i^{-1} t_k t_k^{-1} t_j t_j^{-1} t_i$.  One can check that this extends to a tensor $2$-functor, essentially surjective by gluing (the comments following \ref{gerbes and cech cohomology}).  Since two objects are equivalent (and 1-morphisms isomorphic) if and only if they admit a map, to show that $F$ is fully faithful it suffices to show the following two facts:
 \begin{enumerate}
  \item
  If $F(\stack{G}, \{U_i\}, \{t_i\})$ is equivalent to the identity object, then $\stack{G}$
  is trivial.  This follows by gluing compatible trivializations of $\stack{G}$.
  
  \item
  The map $\cat{Aut}(\stack{G}^0, \{X\}, \{\id\}) \to \cat{Aut}(\{X\}, \{\sh{T}^0\}, \{\id\})$ induced by $F$ is an equivalence of categories.  This follows by gluing torsors. \qedhere
 \end{enumerate}
\end{proof}

Finally, one can go further still.  Suppose $f \colon Y \to X$ is any map of spaces; then, as we can pull back torsors (being sheaves), we have a pullback functor
\begin{equation*}
 f^* \colon \cat{H}^1(X, \stack{H}^1(X, \sh{G})) \to \cat{H}^1(Y, \stack{H}^1(Y, f^* \sh{G}))
\end{equation*}
and therefore a pullback functor
\begin{equation}
 \label{eq:pullback of gerbes}
 f^* \colon \cat{H}^2(X, \sh{G}) \to \cat{H}^2(Y, f^* \sh{G})
\end{equation}
which, by definition, agrees with the pullback of cohomology classes.

\section{Twisting categories by a gerbe}
 \label{s:twisting categories by a gerbe}

We have already defined the tensor product of two gerbes and the change of groups along a homomorphism in terms of the ``stack of homomorphisms''.  In general, suppose $\sh{G}$ is a sheaf of abelian groups and that $\stack{H}^1(X, \sh{G})$ acts on a sheaf of categories $\stack{F}$; then for any $\sh{G}$-gerbe $\stack{G}$, we may form
\begin{equation*}
 \stack{F}(\stack{G}) = \stack{G} \otimes \stack{F} = \stHom(\stack{G}^{-1}, \stack{F}),
\end{equation*}
the stack of \emph{$\stack{G}$-twisted objects of $\stack{F}$}.  Sometimes, if we denote $\cat{F} = \stack{F}_X$, we will write $\stack{G} \otimes \cat{F}$ by abuse of notation to mean $\stack{F}(\stack{G})_X$.

We can give two concrete descriptions of a section $s \in \stack{F}(\stack{G})_Y$ of a twisted sheaf of categories:
\begin{enumerate}
 \item \label{en:torsor trivialization}
 Using \ref{gerbes and torsor cocycles}:
 Suppose that $\stack{G}$ is trivialized on an open cover $U_i$; then a section $s \in \stack{F}(\stack{G})_X$ is given by sections $s_i \in \stack{F}_{U_i}$ together with isomorphisms $s_j \cong \sh{T} \otimes s_i$ on $U_{ij}$ (using $\otimes$ to denote the action of a torsor on sections of $\stack{F}$) satisfying the $1$-cocycle condition with respect to the cocycle constraint of $\sh{T}$.

 \item \label{en:cocycle trivialization}
 Using \ref{gerbes and cech cohomology}:
 Suppose that the $U_i$ are chosen so that all the transition torsors are trivial and denote by $t_{ijk} \in \sh{G}_{U_{ijk}}$ the \v{C}ech $2$-cocycle corresponding to $\stack{G}$; then $s$ is given by $s_i \in \stack{F}_{U_i}$ and isomorphisms $s_i \cong s_j$ whose coboundary on $U_{ijk}$ is equal to the action of $t_{ijk}$.
\end{enumerate}
From either of these descriptions, it is clear that if we have a group homomorphism $\map{\phi}{\sh{G}}{\sh{H}}$ and an $\sh{H}$-action on $\stack{F}$, then for any $\sh{G}$-gerbe $\stack{G}$ we have a functor $\stack{F}(\stack{G}) \to \stack{F}({}^2 \phi\,\stack{G})$, where the first twisting is with respect to $\sh{G}$ and the second with respect to $\sh{H}$.

Despite the intuitive appeal of \ref{en:cocycle trivialization}, it is not as useful as the coarser \ref{en:torsor trivialization} because sometimes, one does not have the freedom to refine $U$ to the point that the torsors are trivial.  For example, the following construction/theorem, whose proof is implicit in the statement:

\begin{theorem}{prop}{twisted pushforward functor}
 Suppose that $\stack{F}$ has \emph{pushforwards}: for any $\map{g}{Z}{Y}$ the pullback map $\map{g^*}{\stack{F}_Y}{\stack{F}_Z}$ admits a right adjoint such that $g^* g_* \to \id$ is an isomorphism of functors, and $g_*$ satisfies a projection formula with respect to the action of $\stack{H}^1(X, \sh{G})$: for any torsor $\sh{T}$ on $Y$ and $t \in \stack{F}_Z$, the natural map
 \begin{equation*}
  \sh{T} \otimes g_* t \to g_*(g^* \sh{T} \otimes t)
 \end{equation*}
 is an isomorphism.  Let $\map{f}{Y}{X}$ and suppose that the $U_i^0$ cover $X$, with $U_i$ the pullback cover of $Y$; let $\stack{G}$ be specified on the $U_i^0$ as in \ref{en:torsor trivialization} above.  For any $s \in \stack{F}(\stack{G})_Y$ specified with respect to the $U_i$, there exists a pushforward $f_*(s) \in \stack{F}(\stack{G})_X$, specified by the $f_*(s_i)$, extending to a pushforward functor of the above description. \qed
\end{theorem}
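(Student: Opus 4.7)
The plan is to define $f_*(s)$ componentwise from the given pushforwards on $\stack{F}$ and then to promote that data to a twisted section by means of the projection formula. Since $\stack{G}$ is specified on the cover $\{U_i^0\}$ of $X$, there are torsors $\sh{T}_{ij}^0$ on $U_{ij}^0$ whose pullbacks to $U_{ij}$ are the torsors $\sh{T}_{ij}$ that glue $s$; writing $\map{f_i}{U_i}{U_i^0}$ for the restriction of $f$, I would set $(f_*s)_i := (f_i)_*(s_i) \in \stack{F}_{U_i^0}$ as the components, leaving only the production of compatible gluing data on double intersections and the verification of the cocycle condition on triples.

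The central computation is the chain
\begin{align*}
 (f_*s)_j|_{U_{ij}^0}
  &\cong (f_{ij})_*(s_j|_{U_{ij}})
   \cong (f_{ij})_*(\sh{T}_{ij} \otimes s_i|_{U_{ij}}) \\
  &= (f_{ij})_*(f_{ij}^* \sh{T}_{ij}^0 \otimes s_i|_{U_{ij}})
   \cong \sh{T}_{ij}^0 \otimes (f_{ij})_*(s_i|_{U_{ij}})
   \cong \sh{T}_{ij}^0 \otimes (f_*s)_i|_{U_{ij}^0},
\end{align*}
where $\map{f_{ij}}{U_{ij}}{U_{ij}^0}$ is the further restriction of $f$, the outer isomorphisms are base change of $(f_i)_*$ along the open inclusions $U_{ij}^0 \hookrightarrow U_i^0$ and $U_{ij}^0 \hookrightarrow U_j^0$, the second isomorphism is the given gluing datum for $s$, the equality is the identification $\sh{T}_{ij} = f_{ij}^* \sh{T}_{ij}^0$, and the remaining nontrivial step is the projection formula assumed in the hypotheses.

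The cocycle condition on triple intersections $U_{ijk}^0$ would then be checked by restricting the above chain further to $U_{ijk}^0$, using naturality of base change and projection formula to reduce the threefold composition to the image under $(f_{ijk})_*$ of the cocycle identity on $U_{ijk}$ that is part of the definition of $s$. Naturality in morphisms of twisted sections is automatic from the componentwise functoriality of each $(f_i)_*$, yielding a functor $f_* \colon \stack{F}(\stack{G})_Y \to \stack{F}(\stack{G})_X$ as claimed.

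The main obstacle I expect is coherence bookkeeping: the several instances of projection formula and base change entering the gluing must be compatible enough to upgrade the cocycle identity on $U_{ijk}$ to a strict identity (not merely an identity up to an implicit $2$-automorphism) after pushforward. A secondary issue is that open-immersion base change for $f_*$ is not stated explicitly among the hypotheses but is needed throughout; this is presumably a standing assumption on the pushforward formalism, and in any case can be extracted from the hypothesis that $g^* g_* \to \id$ is an isomorphism, applied to the relevant open inclusions.
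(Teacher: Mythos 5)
Your approach is essentially the same as the paper's: the text preceding this proposition explicitly declares the proof ``implicit in the statement,'' and defining $f_*(s)$ componentwise as $(f_i)_*(s_i)$ and then producing the gluing data $\psi_{ij}$ from the projection formula is exactly what is meant.  Your chain of isomorphisms on $U_{ij}^0$ is correct, and verifying the cocycle condition by restricting to $U_{ijk}^0$ and invoking naturality of the base change and projection formula isomorphisms is the expected (if tedious) bookkeeping.

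The one point worth tightening is your closing remark that open-immersion base change for $f_*$ ``can be extracted from the hypothesis that $g^* g_* \to \id$ is an isomorphism, applied to the relevant open inclusions.''  That inference does not quite go through as stated.  Writing $j \colon U_{ij}^0 \hookrightarrow U_i^0$, $j' \colon U_{ij} \hookrightarrow U_i$, and $\beta \colon j^* (f_i)_* \to (f_{ij})_*\, (j')^*$ for the base change transformation, the hypothesis does show that $(f_{ij})^* \beta$ is an isomorphism — it identifies, via $(f_{ij})^* j^* = (j')^* (f_i)^*$, with $(j')^*$ applied to the counit $(f_i)^* (f_i)_* \to \id$ — but one can only deduce from this that $\beta$ itself is an isomorphism if pullback along $f_{ij}$ is conservative, which is not among the hypotheses.  (Applying the hypothesis instead with $g$ equal to the open inclusion $j$ only yields $j^* j_* \cong \id$, which constructs the natural map $\beta$ via adjunction but does not make it invertible.)  The tidier reading is that compatibility of $g_*$ with restriction to opens should simply be taken as part of what ``$\stack{F}$ has pushforwards'' is supposed to mean; this is an omission in the proposition's hypothesis rather than a defect in your argument, and in every instance the paper would actually apply it, the pushforward formalism in play (e.g.\ for constructible sheaves) does satisfy open base change.
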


It is with the introduction of the geometric functors that large and small topologies make an appearance; by the difference we mean that large topologies generally include \emph{all} objects in the site and merely specify covers, whereas the small topologies also restrict the objects.  Large topologies are useful for theoretical purposes but not in practice, and we will not use them outside this prologue.  Our reason for using them here is that the existence of pullbacks of sheaves of categories is a theorem of type ``small to large''; it says that a sheaf of categories, and in particular a gerbe on a small topology over an object $X$ naturally induces a unique one on the corresponding large topology.  Then the geometric functors have the following interpretation:

Let $\map{f}{Y}{X}$ be a map and $\stack{G}$ be a gerbe on the small topology of $X$, while $\stack{F}$ is a sheaf of categories on the large topology (we have in mind here that of perverse sheaves).  We will call $\stack{F}|_X$ and $\stack{F}|_Y$ its restrictions to the small sites.  We employ the above extension of $\stack{G}$ to the large topology in order to form $\stack{F}(\stack{G})$, and in this language we find that there is a pullback functor of sections twisted on the small categories
\begin{equation*}
 \map{f^*}{\stack{F}|_X(\stack{G})_X}{\stack{F}|_Y(f^* \stack{G})_Y}
\end{equation*}
and possibly a pushforward functor going the other way.

The stack $\stack{F}$ may enjoy many categorical properties, among them its categories of sections being abelian or triangulated.  We will term a sheaf of such categories one in which the pullback maps respect all the structures imposed on the categories; for example, a sheaf of abelian categories is one in which all pullbacks are exact, and one of triangulated categories has triangulated pullbacks.  This necessitates the following comments.

\subsection*{Sheaves of abelian categories}
   
The usual abelian categories of sheaves are \emph{not} sheaves of abelian categories in the large topologies, because general pullbacks tend only to be right exact.  This is true in particular for perverse sheaves (where pullbacks are not even right exact).  However, we will only ever need the large topology for the theoretical purposes of this section, and in the rest of the work the classical site whose objects over a space $X$ are simply open sets in $X$ will suffice, and for such a small topology, pullbacks \emph{are} exact.  If, as in the case of this particular sheaf of abelian categories, tensoring with a $\sh{G}$-torsor is exact, then $\stack{F}(\stack{G})$ is again a sheaf of abelian categories in the obvious way.

\subsection*{Twisted derived category}
   
The fibered category of derived categories of sheaves on $X$ is not a stack because it does not enjoy descent (also, there is ambiguity as to what its pullbacks are, since there are two kinds). Some of our constructions in \ref{c:relative twisted geometric Satake} require mention of functors that are \emph{a priori} merely derived, so in order to handle this rigorously we introduce the sheaf $\stack{D}$ of categories to be the \emph{sheafification} of the aforementioned fibered category; the operation of ``associated stack'' is given in \cite{Gir_Cohomologie}*{\S II.2} and associates with a fibered category $\stack{F}$ the universal stack whose sections are locally isomorphic to those of $\stack{F}$.

Then $\stack{D}$ has not only $f^*$ but also $f^!$, as well as the corresponding lower-indexed adjoints.  If in the topology we use, all open covers are of a class of morphisms for which $f^* = f^!$, then it is a stack for both structures of fibered category; this will be the case either for the classical or the \'etale topologies, though we only use the former.  By definition, the duality functor is either a map $\map{\DD}{(\stack{D}^*)^\op}{\stack{D}^!}$ or in the other direction, where the superscript indicates which kind of pullbacks ($f^!$ or $f^*$) are the cartesian morphisms in the fibered category. In this situation, we will be taking $\sh{G}$ to be the constant sheaf on a complex variety with values in the multiplicative group of some field of characteristic zero, so its torsors are local systems, and tensor product with these is respected by $\DD$. We conclude that all of $f^*$, $f^!$, $f_*$, $f_!$, and $\DD$ have twisted analogues.
   
\subsection*{Twisted t-structures}

Objects of $\stack{D}(\stack{G})_X$ are not important for us insofar as the ones we will use will always turn out to be twisted perverse sheaves. In order to effect this, we will need to impose the perverse t-structure on $\stack{D}(\stack{G})$, which will simply be piecewise: a section $s$ is in ${}^p\stack{D}(\stack{G})_X^{\leqslant 0}$ if $u_i^* s$ is, where the $\map{u_i}{U_i}{X}$ are a trivializing open cover for $\stack{G}$, and likewise will be in ${}^p\stack{D}(\stack{G})_X^{\geqslant 0}$ if $u_i^! s$ is (or equivalently, if $\DD(s) \in {}^p \stack{D}(\stack{G})_X^{\leqslant 0}$). This is sensible since twisting by a local system respects perversity; the core of this t-structure is the twisted category of perverse sheaves (that is, taking core and twisting by $\stack{G}$ commute as operations).

\section{Equivariance and multiplicativity}
\label{s:equivariance and multiplicativity}

The two most important structures on gerbes, for us, are equivariance and multiplicativity.  As before, let $X$ be any space, $Y \to X$ any map, and let $G$ be an $X$-group with multiplication map $\map{m}{G \times_X G}{G}$.  Since all our spaces are relative to $X$, we will drop the subscripts and just write $\times$.

\subsection*{The definitions}
   
An equivariant gerbe is the analogue of a $G$-invariant map between spaces with a $G$-action.

\begin{theorem}{defn}{equivariant gerbe}
 Let $\map{a}{G \times Y}{Y}$ be a $G$-action, let $\stack{G}$ be a gerbe on $Y$, and let $\map{\on{pr}}{G \times Y}{Y}$ be the projection map. A \emph{$G$-equivariance structure} for $\stack{G}$ is the following data:
 \begin{enumerate}
  \item (Equivariance) An equivalence of gerbes on $G \times Y$:
  \begin{equation*}
   \map{\on{Eq}}{a^* \stack{G}}{\on{pr}^* \stack{G}}.
  \end{equation*}

  \item (Compatibilities) Isomorphisms
  \begin{gather*}
   \map{\on{As}}{\on{Eq}_{11} = \bigl(\on{pr}_{G \times Y}^* \on{Eq}\bigr)
                                \circ \bigl((G \times a)^* \on{Eq}\bigr)}
                                {(m \times Y)^* \on{Eq}} \\
   \map{\on{Id}}{\on{id}}{(1 \times Y)^* \on{Eq}}
  \end{gather*}
  referring to the following diagrams: for $\on{As}$ (over $G \times G \times Y$),
  \begin{equation}
   \label{eq:equivariant gerbe}
   \tikzsetnextfilename{equivariant-gerbe}
   \begin{tikzpicture}
    [every matrix/.style = {nodes = {anchor = center}, row sep = 1.2em},
     node distance = 6em and 5em,math nodes, baseline = (current bounding box)]
    \matrix (upper left block)
     {
      & 
        \node[object alias] (upper left -- above)  {(G \times a)^* a^* \stack{G}}; \\
        \node[object alias] (upper left -- left)   {(m \times Y)^* a^* \stack{G}};
      & \node[object]       (upper left)           {a_2^* \stack{G}};              \\
     };
    \matrix (upper right block) [right = of upper left block]
     {
        \node[object alias] (upper right -- above) {(G \times a)^* \on{pr}^* \stack{G}}; \\
        \node[object]       (upper right)          {a_{0,1}^* \stack{G}};
      & \node[object alias] (upper right -- right) {\on{pr}_{G \times Y}^* a^* \stack{G}}; \\
     };
    \matrix (lower right block) [below = of upper right block]
     {
        \node[object]       (lower right)          {\on{pr}^* \stack{G}};
      & \node[object alias] (lower right -- right) {\on{pr}_{G \times Y}^* \on{pr}^* \stack{G}}; \\
        \node[object alias] (lower right -- below) {(m \times Y)^* \on{pr}^* \stack{G}};         \\
     };
    { [math arrows]
     \draw (upper left)  -- (upper right);
     \draw (upper right) -- (upper right |- lower right.north);
     \draw (upper left)  -- coordinate (middle) (lower right);
    }
    { [isos]
     \path (upper left -- left)   to (upper left)  to (upper left -- above);
     \path (upper right -- above) to (upper right) to (upper right -- right);
     \path (lower right -- below) to (lower right) to (lower right -- right);
    }
    { [dashed arrows]
     \draw (upper left -- above)
           -- node {(G \times a)^* \on{Eq}}
           (upper left -- above -| upper right -- above.west);
     \draw (upper right -- right) 
             -- node {\on{pr}_{G \times Y}^* \on{Eq}}
           (upper right -- right |- lower right -- right.north);
     \draw (upper left -- left) 
            to node[swap] {(m \times Y)^* \on{Eq}}
           (lower right -- below.north west);
    }
    \path (upper right) [every node/.style = math node]
     to node[sloped,allow upside down,
             scale=2,xscale=1.5,transform shape,
             label=above:\on{As}]
       (middle arrow) {\Rightarrow}
     (middle);
   \end{tikzpicture}
  \end{equation}
  and for $\on{Id}$,
  \begin{equation*}
   \map{(1 \times Y)^* \on{Eq}}
    {(1 \times Y)^* a^* \stack{G} = \stack{G}}
    {(1 \times Y)^* \on{pr}^* \stack{G} = \stack{G}}.
  \end{equation*}

  \item (Cocycle condition) $\on{As}$ is required to satisfy the 2-cocycle condition on $G \times G \times G \times X$, which we will not write but which is an identity on $\on{As}$ corresponding to the two ways of using  associativity to rewrite the parenthesized expression:
  \begin{alignat*}{3}
   ((wx)y)z &={} &(w(xy))z =&\; w((xy)z)&&= w(x(yz)) \\
   ((wx)y)z &={} &     (wx)(&yz)      &&= w(x(yz)).
  \end{alignat*}
 \end{enumerate}
\end{theorem}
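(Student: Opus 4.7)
The statement is a definition rather than a theorem, so my ``proof proposal'' will describe how one would verify that the stated data are coherent and make the implicit cocycle condition precise. The overall plan is: (i) check that diagram \ref{eq:equivariant gerbe} is well-posed, (ii) formulate the 2-cocycle condition on $G \times G \times G \times Y$ explicitly, and (iii) observe that everything assembled is a simplicial descent datum for $\stack{G}$ in the 2-category of gerbes.

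First I would verify diagram \ref{eq:equivariant gerbe}. Every vertex is a gerbe on $G \times G \times Y$ obtained by applying the pullback 2-functor \ref{eq:pullback of gerbes} to $\stack{G}$; the unlabeled equivalences on the corners are the canonical isomorphisms of iterated pullbacks arising from associativity of the $G$-action ($a \circ (G \times a) = a \circ (m \times Y)$) together with the obvious identities relating $\on{pr}$ and $a$ on double products. The three dashed arrows are then pullbacks of $\on{Eq}$ along $G \times a$, $\on{pr}_{G \times Y}$, and $m \times Y$, so $\on{As}$ is a 2-cell in $\cat{H}^2(G \times G \times Y, \sh{G})$ comparing the two composite equivalences $a_2^* \stack{G} \to \on{pr}^* \stack{G}$ running along the top-right and bottom-left of the pentagon. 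The analogous $\on{Id}$ on $Y$ compares the identity self-equivalence of $\stack{G}$ with the restriction of $\on{Eq}$ along the unit section $1 \times Y \hookrightarrow G \times Y$; both sides of that equation are visibly equal as pullbacks, so it only requires giving a 2-isomorphism.

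For the cocycle condition on $G \times G \times G \times Y$, I would construct the analogue of MacLane's associativity pentagon: its five vertices are the iterated pullbacks of $\stack{G}$ indexed by the five parenthesizations of $g_1 g_2 g_3 \cdot y$, its edges are pulled-back copies of the three dashed arrows of \ref{eq:equivariant gerbe} along the relevant maps $G^3 \times Y \to G^2 \times Y$, and each of three pulled-back copies of $\on{As}$ fills a region between adjacent edges. The cocycle condition asserts that the two ways of pasting these three 2-cells together to produce a 2-morphism between the two outermost composite equivalences agree. The main obstacle is entirely notational: keeping track of which projection or multiplication each pullback operates through, so that the 2-cells paste into matching slots. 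Once indices are consistently chosen, the remaining checks are routine diagram chases in the 2-category of gerbes, using that pullback commutes with composition up to coherent equivalence. Conceptually, this amounts to the statement that $(\on{Eq}, \on{As}, \on{Id})$ together with the cocycle condition constitute a descent datum for $\stack{G}$ along the simplicial nerve $[G^\bullet \times Y]$, and no higher coherence is needed because gerbes form a 2-category rather than a higher one.
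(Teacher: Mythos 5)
This statement is a definition, so the paper offers no proof to compare against; you correctly recognize this and instead verify well-posedness. Your account is sound: the unlabeled corner equivalences in \ref{eq:equivariant gerbe} are indeed the canonical compatibilities of iterated pullbacks arising from the action identity $a \circ (G \times a) = a \circ (m \times Y)$, the cocycle condition on $G \times G \times G \times Y$ is precisely the MacLane pentagon whose five vertices index the five parenthesizations listed in the definition, and it is a \emph{condition} (an equation between 2-morphisms) rather than additional data exactly because gerbes form a 2-category. Your closing observation that the triple $(\on{Eq}, \on{As}, \on{Id})$ constitutes a descent datum along the simplicial nerve of the action groupoid is also exactly the interpretation the paper later makes explicit in \ref{gerbe descent along a torsor} and \ref{equivariant gerbe descends}, where a $G$-equivariant gerbe is identified with a gerbe on the quotient stack $Y/G$.
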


Likewise, we may speak of the gerbe analogue of a group homomorphism:

\begin{theorem}{defn}{multiplicative gerbe}
 Let $\stack{M}$ be a gerbe on $G$ itself; then a \emph{multiplicative structure} for $\stack{M}$ is an equivalence
 \begin{equation*}
  \map{\on{Mul}}{m^* \stack{M}}{\stack{M} \boxtimes \stack{M}}
 \end{equation*}
 with associativity and identity constraints as for equivariance.  This structure is \emph{commutative} if in addition, denoting by $\map{\on{sw}}{G \times G}{G \times G}$ the coordinate swap, we are given an isomorphism between
 \begin{equation*}
  \on{sw}^*(\stack{M} \boxtimes \stack{M})
   \xrightarrow{\on{sw}^* \on{Mul}^{-1}}
  \on{sw}^* m^* \stack{M} = m^* \stack{M}
   \xrightarrow{\on{Mul}}
  \stack{M} \boxtimes \stack{M}
 \end{equation*}
 and the natural equivalence of the first and last terms.

 Given such an $\stack{M}$, the structure of \emph{twisted equivariance} on a gerbe $\stack{G}$ on $Y$ is an equivalence
 \begin{equation*}
  \map{\alpha}{a^* \stack{G}}{\stack{M} \boxtimes \stack{G}}
 \end{equation*}
 with an associativity constraint $\on{As}$ and identity constraint $\on{Id}$ as before, where $\on{As}$ and $\on{Id}$ make reference to the associativity and identity constraints of both $\on{Mul}$ and of the tensor product of gerbes. By definition, an equivariance structure is just twisted equivariance with respect to the trivial multiplicative gerbe (see \ref{trivial equivariance and multiplicativity}), and any multiplicative gerbe is also twisted equivariant on both sides, or equivalently, $G \times G^\op$-twisted-equivariant.
\end{theorem}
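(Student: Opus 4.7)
The plan is to prove the observation embedded at the end of \ref{multiplicative gerbe}: that any multiplicative gerbe $\stack{M}$ on $G$ carries a canonical $G \times G^\op$-twisted equivariance structure, equivalently a pair of twisted equivariance structures for the left and right translation actions of $G$ on itself that are mutually compatible.

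First I would specialize the general setting of \ref{multiplicative gerbe} by taking $Y = G$ with action $a = m$ (left translation) and $\stack{G} = \stack{M}$. The required equivalence $\alpha \colon a^*\stack{G} \to \stack{M} \boxtimes \stack{G}$ is then literally $\on{Mul} \colon m^*\stack{M} \to \stack{M} \boxtimes \stack{M}$. Its associativity constraint $\on{As}_\alpha$ over $G \times G \times Y$ is obtained by reinterpreting $\on{As}_{\on{Mul}}$ on $G^3$: the three $G$ factors play the roles of $(G, G, Y)$, and the two compositions appearing in \ref{eq:equivariant gerbe} become the two decompositions $(g_1 g_2) h$ and $g_1 (g_2 h)$ governed by the multiplicative associator. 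The identity constraint $\on{Id}_\alpha$ reduces in the same way to $\on{Id}_{\on{Mul}}$, and the 2-cocycle condition on $G^4$ specializes directly to the cocycle condition on $G \times G \times G \times Y$.

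By precomposing with $\on{sw}$, one obtains a symmetric twisted equivariance for right translation, with structure equivalence $m^*\stack{M} = \on{sw}^*m^*\stack{M} \xrightarrow{\on{sw}^* \on{Mul}} \on{sw}^*(\stack{M} \boxtimes \stack{M}) = \stack{M} \boxtimes \stack{M}$ — note that this uses only $\on{Mul}$ and not the (optional) commutativity structure. To combine the left and right equivariances into a single $G \times G^\op$-twisted equivariance, consider the action $a \colon (G \times G^\op) \times G \to G$ given by $(g_1, g_2; h) \mapsto g_1 h g_2$. Decomposing $a$ in the two ways $(g_1 h) g_2$ and $g_1 (h g_2)$ and applying $\on{Mul}$ iteratively expresses $a^*\stack{M}$ in two ways as a triple external product; the associativity constraint $\on{As}_{\on{Mul}}$ supplies a canonical equivalence between them, which is exactly the datum defining the combined $(G \times G^\op)$-twisted equivariance structure on $\stack{M}$.

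The main obstacle will be the coherence bookkeeping: verifying that the cocycle condition for the combined $\alpha$ on $(G \times G^\op)^2 \times G$ is implied by the pentagon-type 2-cocycle condition for $\on{As}_{\on{Mul}}$ on $G^4$ (and its extension to $G^5$ when verifying the combined cocycle), and that the resulting left and right twisted equivariances commute strictly as specified. Nothing is deep — all the required compatibilities are restrictions or reinterpretations of the single pentagon coherence bundled into \ref{multiplicative gerbe}(3) — but the abundance of projections, pullbacks, and opposite factors makes the diagrammatic verification the only nontrivial part of the argument.
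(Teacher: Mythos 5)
Your proposal addresses the closing observation of \ref{multiplicative gerbe} (that a multiplicative gerbe is automatically $G \times G^\op$-twisted-equivariant), which the paper asserts without proof as part of the definition; there is therefore no proof in the text to compare against. Your argument is correct and is essentially the only natural unpacking of the claim: taking $Y = G$, $a = m$, $\stack{G} = \stack{M}$ makes the twisted equivariance datum $\alpha$ coincide tautologically with $\on{Mul}$, with $\on{As}_{\on{Mul}}$ and $\on{Id}_{\on{Mul}}$ supplying the constraints; precomposing with $\on{sw}$ gives the right-translation analogue (and you are right that this uses only $\on{Mul}$, not the optional commutativity, since the equivalence $\on{sw}^*(\stack{M} \boxtimes \stack{M}) \cong \stack{M} \boxtimes \stack{M}$ is the canonical symmetry of the external tensor product, not the multiplicative commutativity constraint); and the combined $G \times G^\op$-action uses $\on{As}_{\on{Mul}}$ to reconcile the two ways of reading $g_1 h g_2$. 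The coherence bookkeeping you defer is indeed routine, being restrictions of the single 2-cocycle condition on $G^4$ to the relevant cube faces over $(G \times G^\op)^2 \times G$.
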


This definition can be understood concretely in the situation where $G$ is a discrete group relative to a base space $X$, and we consider it as its group of connected components: ``$G = X \times G$''.

\begin{theorem}{defn}{multiplicative lattice gerbe}
 Let $G$ be a group (in sets).  A \emph{multiplicative gerbe} on $X \times G$ is the data of, for each $g \in G$, a gerbe $\stack{G}^g$ on $X$; for each pair $g, h$ an equivalence $\stack{G}^{gh} \cong \stack{G}^g \otimes \stack{G}^h$; for each triple $g$, $h$, $k$, we have an isomorphism of
 \begin{equation*}
  \stack{G}^g \otimes (\stack{G}^h \otimes \stack{G}^k)
  \cong \stack{G}^{g  h  k} \cong
  (\stack{G}^g \otimes \stack{G}^h) \otimes \stack{G}^k
 \end{equation*}
 with the natural associativity of gerbe products, satisfying the cocycle condition.  If $G$ is commutative, we also require an isomorphism of equivalences between the one:
 \begin{equation*}
  \stack{G}^g \otimes \stack{G}^h \cong \stack{G}^{gh} =
  \stack{G}^{hg} \cong \stack{G}^h \otimes \stack{G}^g
 \end{equation*}
 and the natural commutativity of the tensor product of gerbes.
\end{theorem}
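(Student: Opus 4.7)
The statement in question is a definition, but its content is an implicit claim: the concrete data listed is equivalent to specifying a multiplicative gerbe on $X \times G$ in the sense of Definition \ref{multiplicative gerbe}. The plan is to verify this equivalence by unpacking every piece of the general definition under the decomposition $X \times G = \coprod_{g \in G} X \times \{g\}$.

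First, I would note that since $G$ is discrete, a gerbe $\stack{M}$ on $X \times G$ is the same datum as a collection $\{\stack{G}^g\}_{g \in G}$ of gerbes on $X$, one for each connected component; this supplies the first piece of data. Under this decomposition, the fiber product $(X \times G) \times_X (X \times G)$ decomposes as $\coprod_{g,h} X \times \{(g,h)\}$, and the multiplication $m$ restricts on each component to the identity map $X \to X \times \{gh\}$. Consequently $m^* \stack{M}$ has component $\stack{G}^{gh}$ over $X \times \{(g,h)\}$. On the other hand, the external product $\stack{M} \boxtimes \stack{M}$ over $X$, restricted to the same component, is literally $\stack{G}^g \otimes \stack{G}^h$ by the description of $\boxtimes$ over $X$ in terms of the two pullbacks and Lemma \ref{gerbe product relation}. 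Therefore the multiplicative structure $\on{Mul}$ decomposes into a family of equivalences $\stack{G}^{gh} \cong \stack{G}^g \otimes \stack{G}^h$, exactly as claimed.

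Next, the associativity constraint $\on{As}$ of the general definition lives over $(X \times G)^3_{/X} = \coprod_{g,h,k} X \times \{(g,h,k)\}$ and, by the same componentwise analysis, becomes on each component an isomorphism between the two composites
\begin{equation*}
 (\stack{G}^g \otimes \stack{G}^h) \otimes \stack{G}^k \cong \stack{G}^{ghk} \cong \stack{G}^g \otimes (\stack{G}^h \otimes \stack{G}^k)
\end{equation*}
and the natural associator of the tensor product of gerbes (Proposition \ref{gerbe tensor product}). The cocycle condition over $G^4$ then reduces precisely to the cocycle condition stated in the definition. When $G$ is commutative, the commutativity isomorphism in the general definition restricts on each component $X \times \{(g,h)\}$ to the stated compatibility between $\stack{G}^{gh} \cong \stack{G}^{hg}$ and the commutator of $\otimes$.

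The main obstacle is purely bookkeeping: one must carefully track how the geometric operation $\boxtimes$ over $X \times G \times G$ corresponds to the pointwise $\otimes$ of gerbes on $X$ under the component decomposition, and then match the pentagon and hexagon axioms of Definition \ref{multiplicative gerbe} against the cocycle and commutativity conditions listed here. None of this is conceptually hard once the decomposition is in hand; the role of discreteness is exactly to reduce every structure to a componentwise statement on $X$.
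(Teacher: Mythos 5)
Your unpacking is correct, and it matches the paper's intention exactly: the paper presents this as a definition with only a one-sentence remark that it is the discrete-group specialization of Definition \ref{multiplicative gerbe}, and the componentwise decomposition of $X \times G$, $X \times G \times G$, etc., you describe is precisely the bookkeeping the paper leaves implicit. Nothing is missing.
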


The most obvious examples of these structures are of course the trivial ones.  For later use we state the definition in a manner which is in fact not entirely trivial.  Recall that all our spaces are over a fixed base $X$.

\begin{theorem}{defn}{trivial equivariance and multiplicativity}
 We say that a gerbe $\stack{G}$ on $Y$ is \emph{$X$-trivialized} if it is given descent data to $X$; i.e.\ if we denote by $p$ the structure map to $X$, we have an equivalence $\stack{G} \cong p^* \stack{G}_X$ constituting 2-descent data.  Any $X$-trivialized gerbe $\stack{M}$ on $G$ has a natural multiplicative structure, called the \emph{trivial multiplicative structure}, as follows: if $\stack{M}_X = \stack{M}|_{1_X}$ is the gerbe on $X$ descending $\stack{M}$ (the restriction to the identity section), then on $G \times_X G$ we have (overloading the notation $p$): \begin{equation*}
 m^* \stack{M}
   \cong p^* \stack{M}_X
   \cong p^* \stack{M}_X \boxtimes_X p^* \stack{M}_X
   \cong \stack{M} \boxtimes_X \stack{M},
 \end{equation*}
 with $\boxtimes_X$ denoting the outer tensor product relative to the fiber product $G \times_X G$. Similar measures express pullbacks to higher fibered Cartesian powers of $G$ and fill out the multiplicative structure.  Likewise, we have the \emph{trivial equivariance structure} on a $X$-trivialized gerbe $\stack{G}$ on $Y$.
\end{theorem}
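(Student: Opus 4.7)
The plan is to verify that the natural chains of equivalences described in the statement satisfy the associativity, identity, cocycle, and (in the commutative case) swap constraints of Definitions~\ref{multiplicative gerbe} and~\ref{equivariant gerbe}. Since every piece of data is produced by pullback from the descent datum $\stack{M} \cong p^* \stack{M}_X$ (respectively $\stack{G} \cong p^* \stack{G}_X$), the verification reduces to the $2$-functoriality of the gerbe pullback~\eqref{eq:pullback of gerbes} together with the coherence of the tensor product established in \ref{gerbe tensor product} and \ref{gerbe product relation}.

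For the multiplicative structure on $\stack{M}$, I would take $\on{Mul} \colon m^* \stack{M} \to \stack{M} \boxtimes_X \stack{M}$ to be the composite displayed in the statement. The crucial observation is that on $G \times_X G$, both $p \circ m$ and $p \circ \on{pr}_i$ agree with the common structure map $\pi$ to $X$, so every term in the chain is canonically a pullback of $\stack{M}_X$ along $\pi$. The associativity constraint $\on{As}$, living on $G \times_X G \times_X G$, then arises by comparing the two bracketings of the threefold multiplication: both descend to the common structure map, and the resulting diagram of pullback identifications commutes up to the natural $2$-isomorphism supplied by the $2$-functoriality of $p^*$. The identity constraint comes from restriction along $1_X \colon X \to G$, where descent tautologically identifies $1_X^* \stack{M}$ with $\stack{M}_X$; the $4$-fold cocycle condition is automatic since all the relevant vertices are canonical pullbacks of $\stack{M}_X$ along a single structure map to $X$. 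For commutativity, the coordinate swap is an $X$-automorphism and hence commutes with $\pi$; combined with the commutativity constraint for $\otimes$ from \ref{gerbe tensor product}, this yields the required identification.

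The trivial equivariance structure on $\stack{G}$ is verified along exactly the same lines: take $\alpha \colon a^* \stack{G} \to \on{pr}^* \stack{G}$ to be the canonical equivalence resulting from $p_Y \circ a = p_Y \circ \on{pr}$, and check the diagram~\eqref{eq:equivariant gerbe} by observing that each of its corners is a canonical pullback of $\stack{G}_X$ along the common structure map $G \times G \times Y \to X$, with the dashed arrows providing the various canonical identifications. The only real bookkeeping subtlety, and the step most at risk of becoming tedious, is to ensure that the ``canonical'' equivalences between iterated pullbacks are genuinely natural and that their composites satisfy the cocycle identities up to the coherence $2$-isomorphisms built into \eqref{eq:pullback of gerbes}; but no substantive new argument is required beyond invoking the general coherence of pullback for sheaves of categories, since everything is formally induced by a single descent datum.
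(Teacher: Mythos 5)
For the \emph{equivariance} half of the definition your reasoning is sound: on $G \times Y$ the two composites $p_Y \circ a$ and $p_Y \circ \on{pr}$ are the same structure map $\pi$ to $X$, so pulling back the descent equivalence $\stack{G} \cong p_Y^* \stack{G}_X$ along $a$ and along $\on{pr}$ identifies both $a^* \stack{G}$ and $\on{pr}^* \stack{G}$ with $\pi^* \stack{G}_X$, and the cocycle checks on higher products of $G$ with $Y$ reduce to the $2$-functoriality of pullback exactly as you describe. This is also the fact actually invoked downstream (e.g.\ in \ref{trivial equivariance multiplicative torsor}).

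Your ``crucial observation'' — that every term of the multiplicative chain is canonically a pullback of $\stack{M}_X$ along $\pi$ — is, however, false for the $\boxtimes_X$ terms, and this is a genuine gap in the multiplicativity half. Unwinding the descent data, $m^* \stack{M}$ is identified with $\pi^* \stack{M}_X$, but
\begin{equation*}
  \stack{M} \boxtimes_X \stack{M}
  \;=\; \on{pr}_1^* \stack{M} \otimes \on{pr}_2^* \stack{M}
  \;\cong\; \pi^* \stack{M}_X \otimes \pi^* \stack{M}_X
  \;=\; \pi^*\bigl(\stack{M}_X \otimes \stack{M}_X\bigr),
\end{equation*}
so the middle equivalence $p^* \stack{M}_X \cong p^* \stack{M}_X \boxtimes_X p^* \stack{M}_X$ in the displayed chain asserts $\pi^* \stack{M}_X \cong \pi^*(\stack{M}_X^{\otimes 2})$. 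Since $G \to X$ has a section (the identity), this forces $\stack{M}_X$ to be trivial; a nontrivial $\stack{M}_X$ gives no canonical multiplicative structure on $p^* \stack{M}_X$. (Compare the case of line bundles: $m^* p^* \sh{L}_X$ and $(p^*\sh{L}_X) \boxtimes_X (p^*\sh{L}_X)$ have classes $\pi^*[\sh{L}_X]$ and $2\pi^*[\sh{L}_X]$ respectively.) So the trivial multiplicative structure exists only when $\stack{M}_X$ is trivial — which is in fact the only situation the paper later uses, cf.\ the reduction in \ref{multiplicative gerbe} to the ``trivial multiplicative gerbe'' and the word ``$X$-trivial'' in \ref{trivial equivariance multiplicative torsor}. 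Your coherence argument — all vertices are pullbacks along a common map to $X$, compatibilities come from $2$-functoriality of pullback — does go through once this restriction is imposed, but as stated it silently conflates $\pi^* \stack{M}_X$ with $\pi^*(\stack{M}_X^{\otimes 2})$. You should either restrict the multiplicativity claim to the genuinely trivial case, or make the extra hypothesis on $\stack{M}_X$ explicit.
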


\begin{theorem}{lem}{trivial equivariance multiplicative torsor}
 Let $\stack{G}$ be a $X$-trivial gerbe on $Y$; then the $G$-equivariance structures on $\stack{G}$ are identified with multiplicative torsors on the $Y$-group $G \times_X Y$.
\end{theorem}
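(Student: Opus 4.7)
The plan is to convert the equivariance data into torsor data by using the $X$-trivialization to rigidify the two pullbacks, and then applying the general identification of auto-equivalences of a gerbe with torsors. First, since $Y \to X$ is the structure map $p$ and both $a, \on{pr} \colon G \times_X Y \to Y$ are morphisms over $X$ (i.e., the action is $X$-linear), the two compositions $p \circ a$ and $p \circ \on{pr}$ coincide with the structure map $q \colon G \times_X Y \to X$. The $X$-trivialization $\stack{G} \cong p^* \stack{G}_X$ therefore yields canonical identifications
\[
  a^* \stack{G} \cong q^* \stack{G}_X \cong \on{pr}^* \stack{G},
\]
so that the data of an equivariance equivalence $\on{Eq} \colon a^*\stack{G} \to \on{pr}^*\stack{G}$ becomes the data of an auto-equivalence of the $\sh{G}$-gerbe $q^* \stack{G}_X$ on $G \times_X Y$.

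Next, by \ref{gerbe tensor product} the stack $\stHom_\sh{G}(\stack{H}, \stack{H}) \cong \stack{H} \otimes \stack{H}^{-1}$ is canonically the trivial gerbe of $\sh{G}$-torsors. Hence auto-equivalences of any $\sh{G}$-gerbe on a space $Z$ are naturally identified with $\sh{G}$-torsors on $Z$, in a way that turns pullback of auto-equivalences into pullback of torsors and composition of auto-equivalences into the tensor product of torsors. Applying this to $q^* \stack{G}_X$, the equivalence $\on{Eq}$ is encoded by a canonical $\sh{G}$-torsor $\sh{T}$ on $G \times_X Y$, and the assignment $\on{Eq} \leftrightarrow \sh{T}$ is bijective with both directions natural.

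It then remains to match the higher constraints. The associativity datum $\on{As}$ on $G \times_X G \times_X Y$ compares $(m \times Y)^* \on{Eq}$ with the composition $\on{pr}_{G \times Y}^* \on{Eq} \circ (G \times a)^* \on{Eq}$; under the dictionary this becomes an isomorphism
\[
  (m \times Y)^* \sh{T} \cong (G \times a)^* \sh{T} \otimes \on{pr}_{G \times Y}^* \sh{T}.
\]
But the three maps $G \times a$, $\on{pr}_{G \times Y}$, $m \times Y \colon G \times_X G \times_X Y \to G \times_X Y$ are exactly the ``first arrow'', ``second arrow'', and ``composition'' of the action-groupoid (equivalently, $Y$-group) structure on $G \times_X Y$, so this is precisely the multiplicativity equation for $\sh{T}$. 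The identity constraint $\on{Id}$ transports to a trivialization of $\sh{T}|_{\{1\} \times_X Y}$, and the 2-cocycle axiom on $\on{As}$ over $G \times_X G \times_X G \times_X Y$ becomes the associativity (pentagon) axiom for the multiplicative structure on $\sh{T}$. The inverse construction reads this dictionary backwards, producing $\on{Eq}$, $\on{As}$, $\on{Id}$ from a multiplicative torsor.

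The work is essentially organizational: the main obstacle is just tracking the numerous pullback identifications through the torsor correspondence. Since every such identification arises functorially from the two canonical inputs — the $X$-trivialization of $\stack{G}$ and the equivalence $\stHom(\stack{H}, \stack{H}) \cong \stack{H}^0$ — naturality takes care of all compatibilities, and no substantive calculation is needed beyond writing out the relevant commutative diagrams.
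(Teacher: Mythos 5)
Your proof is correct and takes essentially the same route as the paper: both arguments use the $X$-trivialization to identify $a^*\stack{G}$ and $\on{pr}^*\stack{G}$ with the common pullback from $X$, read off $\on{Eq}$ as a $\sh{G}$-torsor on $G\times_X Y$ (you make the underlying mechanism explicit via $\stHom_\sh{G}(\stack{H},\stack{H})\cong\stack{G}^0$), and then transcribe $\on{As}$, $\on{Id}$, and the cocycle axiom into the multiplicative-torsor axioms under the groupoid identification of the three pullback maps. The paper's version is more condensed but makes exactly the same identifications.
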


\begin{proof}
 Referring to the notation of \ref{equivariant gerbe}, both $a^* \stack{G}$ and $\on{pr}^* \stack{G}$ are identified with $p^* \stack{G}_X$, where $p\colon G \times_X Y \to X$ is the structure map.  Therefore, the equivalence $\on{Eq}$ is identified with a torsor on $G \times_X Y$.  The isomorphism $\on{As}$ on $G \times_X G \times_X Y$, which is the same as $(G \times_X Y) \times_Y (G \times_X Y)$, then expresses
 \begin{equation*}
  \on{Eq} \boxtimes_Y \on{Eq} \cong m_{G \times_X Y}^* \on{Eq},
 \end{equation*}
 where $m_{G \times_X Y}$ is the same as $m \times Y$ but considered as the group operation on $G \times_X Y$.  Likewise, the unwritten cocycle condition expresses associativity of this multiplicative structure.
\end{proof}

\subsection*{Descent properties}

Outside of this chapter, we will use equivariance only as ``that which gives descent'':

\begin{theorem}{lem}{gerbe descent along a torsor}
 Suppose $\sh{G}$ is a sheaf of groups on $Y$, and let $\pi \colon \tilde{Y} \to Y$ be a $G$-torsor. Then $G$-equivariant $\pi^*(\sh{G})$-gerbes on $\tilde{Y}$ are equivalent to $\sh{G}$-gerbes on $Y$.
\end{theorem}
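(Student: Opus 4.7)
The plan is to exhibit mutually inverse $2$-functors between the two $2$-categories, using the standard philosophy that $G$-equivariance data along a $G$-torsor is the same as descent data along its projection. The crucial input is the canonical isomorphism $\tilde{Y} \times_Y \tilde{Y} \cong G \times \tilde{Y}$ sending $(\tilde{y}_1, \tilde{y}_2)$ to $(g, \tilde{y}_2)$ where $g \cdot \tilde{y}_2 = \tilde{y}_1$; under this identification the two projections $\on{pr}_1, \on{pr}_2 \colon \tilde{Y} \times_Y \tilde{Y} \rightrightarrows \tilde{Y}$ correspond respectively to the action map $a$ and the projection $\on{pr} \colon G \times \tilde{Y} \to \tilde{Y}$. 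Hence an equivalence $a^* \stack{G}' \to \on{pr}^* \stack{G}'$ is literally a first-level descent datum for a gerbe $\stack{G}'$ on $\tilde{Y}$ along $\pi$, and the constraint $\on{As}$ of \ref{equivariant gerbe} becomes exactly the $2$-cocycle condition on the triple fiber product.

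First I would define the pullback $2$-functor. For a $\sh{G}$-gerbe $\stack{G}$ on $Y$, the pullback of \ref{eq:pullback of gerbes} produces a $\pi^* \sh{G}$-gerbe $\pi^* \stack{G}$ on $\tilde{Y}$, and the canonical chain of equivalences $a^* \pi^* \stack{G} \cong (\pi \circ a)^* \stack{G} = (\pi \circ \on{pr})^* \stack{G} \cong \on{pr}^* \pi^* \stack{G}$ supplies the equivariance $\on{Eq}$. The associativity and identity constraints come from the coherence of pullback along $m \times Y$ and $1 \times Y$ respectively, and the cocycle condition on $G \times G \times G \times Y$ is automatic by the same token.

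In the other direction, given a $G$-equivariant $\pi^* \sh{G}$-gerbe $\stack{G}'$ on $\tilde{Y}$, I would work in the torsor cocycle $2$-category of \ref{torsor cocycle 2-category}. Choose an open cover $\{V_i\}$ of $Y$ over which $\pi$ admits sections $\sigma_i \colon V_i \to \tilde{Y}$; the pullbacks $\sigma_i^* \stack{G}'$ are $\sh{G}|_{V_i}$-gerbes. Writing $g_{ij} \colon V_{ij} \to G$ for the transition elements $\sigma_i = g_{ij} \cdot \sigma_j$, the equivariance $\on{Eq}$ pulled back along $(g_{ij}, \sigma_j) \colon V_{ij} \to G \times \tilde{Y}$ furnishes equivalences $\sigma_i^* \stack{G}' \cong \sigma_j^* \stack{G}'$ on each $V_{ij}$, and $\on{As}$ specialized to the relation $g_{ij} g_{jk} = g_{ik}$ on $V_{ijk}$ supplies the $2$-isomorphism required of a $1$-cocycle with values in torsors in the sense of \ref{torsor cocycle 2-category}. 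By \ref{gerbes and torsor cocycles} this is precisely the data of a $\sh{G}$-gerbe on $Y$. Independence of the choice of sections is automatic because any two differ by an element of $G(V_i)$, the action of which is trivialized by $\on{Eq}$.

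The main obstacle is the $2$-categorical bookkeeping needed to verify that these functors are mutually inverse at the level of objects, $1$-morphisms, and $2$-morphisms. I would organize this by pushing the entire construction through the cocycle description of \ref{torsor cocycle 2-category}: in that language, descent along $\pi$ and $G$-equivariance become literally the same data, as the $\on{Eq}$--$\on{As}$ pair decodes termwise into the $\{\sh{T}_{ij}\}$--$\{t_{ijk}\}$ data on $Y$. The analogous statements for $1$- and $2$-morphisms live one categorical level down (descent for torsors, then for sheaves of sets) and reduce to the classical fact that $G$-equivariant sheaves on $\tilde{Y}$ descend to sheaves on $Y$. Once this dictionary is in place the required coherences are routine.
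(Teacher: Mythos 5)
Your proposal is correct and takes essentially the same approach as the paper: both directions descend the equivariant gerbe to torsor-cocycle data on $Y$, using $\on{Eq}$ to produce the transition torsors and $\on{As}$ for the cocycle constraint. The paper organizes the converse by equivariantly trivializing $\stack{G}|_{\pi^{-1}(U_i)}$ and invoking descent of $G$-equivariant torsors, whereas you pull back directly along local sections $\sigma_i$ and evaluate $\on{Eq}$ at the transition functions $g_{ij}$; these are minor reorganizations of the same idea.

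Two small points to shore up. First, what you obtain from the $\sigma_i$ is $2$-descent data for gerbes ($\sigma_i^* \stack{G}'$ on $V_i$ with equivalences and $2$-coherences), not yet the torsor cocycles of \ref{torsor cocycle 2-category}; you should refine the cover so that each $\sigma_i^* \stack{G}'$ is trivial before citing \ref{gerbes and torsor cocycles}. Second, verifying that the descended gerbe pulls back to $\stack{G}'$ needs the observation that over a trivialized piece $\pi^{-1}(V_i) \cong G \times V_i$ the restriction of $\on{Eq}$ to the slice $G \times \{1\} \times V_i$ identifies $\stack{G}'$ with $\pi^*(\stack{G}'|_{\sigma_i})$ — this is precisely sub-claim (1) of the paper's proof, and is the non-tautological coherence you defer to at the end.
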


\begin{proof}
 One direction is simple: if $\stack{G}$ is a gerbe on $Y$, then the natural equivalences of the pullback make $\pi^* \stack{G}$ an equivariant gerbe on $\tilde{Y}$.  
 
 Conversely, we will descend gerbes in the form given by \ref{gerbes and torsor cocycles}.  We claim that if $\stack{G}$ is a $G$-equivariant gerbe on $\tilde{Y}$, then there is an open cover $\{U_i\}$ of $Y$ and trivializations of $\stack{G}|_{\pi^{-1}(U_i)}$ whose transition torsors are themselves $G$-equivariant.  Knowing this, the lemma follows from the descent of $G$-equivariant sheaves (or the analogous argument for torsors).
 
 To prove the claim, we begin with any cover $\{U_i\}$ trivializing $\tilde{Y}$, and pick trivializations $\pi^{-1}(U_i) \cong G \times U_i$.  Then the restrictions $\stack{G}|_{1 \times U_i}$, being gerbes on the $U_i$, are locally trivial and we may refine the cover so that they are trivial.  It therefore suffices to prove that:
 \begin{enumerate}
  \item \label{en:gerbe descent claim 1}
  If $\tilde{Y} = G \times Y$ is the trivial torsor, then $\stack{G} = \pi^*(\stack{G}|_{1 \times Y})$.  Furthermore, if $\stack{G}|_{1 \times Y}$ is trivial, then $\stack{G}$ is equivariantly trivial.
  
  \item \label{en:gerbe descent claim 2}
  Writing $V_i = \tilde{Y}|_{U_i}$ and $V_j = \tilde{Y}|_{U_j}$, if $\stack{G}|_{V_i}$, $\stack{G}|_{V_j}$ are equivariantly trivialized then the transition 1-morphism is an equivariant $\sh{G}$-torsor.
  
  \item \label{en:gerbe descent claim 3}
  Given $U_i$, $U_j$, and $U_k$, if $\stack{G}$ is trivialized over all of these opens and if the transition torsors are also equivariantly trivialized, then the $2$-morphism (section of $\sh{G}$) witnessing the cocycle condition is $G$-invariant.
 \end{enumerate}
 
 We consider \ref{en:gerbe descent claim 1}.  When $\tilde{Y} \cong G \times Y$ is trivialized, the action of $G$ is by multiplication $m$ on the first factor, and on $G \times G \times Y$ we have the equivariance datum $\on{Eq} \colon \on{pr}_{G \times Y}^* \stack{G} \cong (m \times \id)^* \stack{G}$.  If we restrict to the slice $G \times 1 \times Y$ we get an equivalence between $\pi^*(\stack{G}|_{1 \times Y})$ and $\stack{G}$, as desired.
 
 Suppose that $\stack{G}|_{1 \times Y}$ is trivial, so that this is a trivialization of $\stack{G}$; we claim that it extends to a trivialization of the equivariance structure.  Note that now $\on{Eq}$ is an automorphism of the trivial gerbe on $G \times G \times Y$, hence a $\sh{G}$-torsor.  Indeed, considering the first of the (Compatibilities) of \ref{equivariant gerbe} restricted to $G \times G \times (1 \times Y) \subset G \times G \times \tilde{Y}$, we get $\on{Eq} \otimes \on{Eq} \cong \on{Eq}$, so $\on{Eq}$ is trivialized.  By definition, the unit map $\on{Un}$ is also trivialized.
 
 Now we consider \ref{en:gerbe descent claim 2}.  Denote by $\phi$ the transition torsor from $\stack{G}|_{V_i} \cong \stack{G}^0$ to $\stack{G}|_{V_j} \cong \stack{G}^0$, so that on $G \times \smash{\tilde{Y}}$, the pullback $\on{pr}_{\tilde Y}^* \stack{G}$ is glued from two trivial gerbes on $G \times V_i$ and $G \times V_j$ by $\on{pr}_{\tilde Y}^* \phi$, and likewise for $a^* \stack{G}$. By assumption, the equivariance data $\on{Eq}$ is trivial on both $G \times V_i$ and $G \times V_j$, and therefore consists merely of a $2$-isomorphism identifying $\on{Eq} \circ a^* \phi$ and $\on{pr}_{V_i \cap V_j}^* \phi \circ \on{Eq}$, where the $\on{Eq}$ terms may be neglected.  This is the equivariance structure for $\phi$.  The proof for \ref{en:gerbe descent claim 3} is similar. 
\end{proof}

The lemma can be rephrased informally as ``a $G$-equivariant gerbe on $Y$ is the same as a gerbe on the stack $Y/G$''.  More precisely, we make the following definition:

\begin{theorem}{defn}{gerbe on stack}
 Let $G$ act on $Y$; a \emph{gerbe on the quotient stack} $Y/G$ is the following data: for every space $S$, $G$-torsor $\tilde{S}$, and $G$-map $s \colon \tilde{S} \to Y$, a gerbe $\stack{G}_{\tilde{S},s}$ on $S$. Furthermore, if $f \colon T \to S$ and we write $\tilde{T} = f^* \tilde{S} = \tilde{S} \times_S T$ with its induced map $t \colon \tilde{T} \to Y$, then we must have equivalences $f^* \stack{G}_{\tilde{S}, s} \cong \stack{G}_{\tilde{T}, t}$. If $U \to T \to S$ is a triple of maps, then both equivalences from $S$ to $U$ must be isomorphic, and these isomorphisms must satisfy a cocycle condition for triple nestings.
\end{theorem}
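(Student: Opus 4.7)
The final ``statement'' is a definition, but paired with the informal claim it follows (``a $G$-equivariant gerbe on $Y$ is the same as a gerbe on the stack $Y/G$''), it carries an implicit assertion: the data specified in the definition gerbe on stack are equivalent, as a $2$-category, to the $G$-equivariant gerbes on $Y$ treated by the preceding lemma gerbe descent along a torsor. My plan is to produce that equivalence directly and then note how its coherences match up.

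The plan is to build the equivalence in both directions. In one direction, starting from a $G$-equivariant $\sh{G}$-gerbe $\stack{G}$ on $Y$, I assign to each datum $(S, \tilde{S}, s \colon \tilde{S} \to Y)$ the gerbe obtained by first forming $s^*\stack{G}$ on $\tilde{S}$, which inherits a $G$-equivariance structure because $s$ is $G$-equivariant and pullback preserves all the structure in definition equivariant gerbe, and then descending it along $\tilde{S} \to S$ using the preceding lemma to get a $\sh{G}_S$-gerbe $\stack{G}_{\tilde{S}, s}$ on $S$. The base-change data $f^* \stack{G}_{\tilde{S}, s} \cong \stack{G}_{\tilde{T}, t}$ and the triple-nesting cocycle are induced by the uniqueness clause of descent applied to the evident natural isomorphism $f^* s^* \stack{G} \cong t^* \stack{G}$.

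In the reverse direction, I apply a gerbe $\{\stack{G}_{\tilde{S}, s}\}$ on $Y/G$ to the universal datum $S = Y$, $\tilde{S} = G \times Y$ (trivial $G$-torsor), $s = a\colon G\times Y \to Y$ the action, obtaining a gerbe on $Y$. To produce the equivariance equivalence $\on{Eq}\colon a^*\stack{G} \cong \on{pr}^*\stack{G}$, I apply the base-change part of the definition to the two natural maps $G\times G\times Y \rightrightarrows G\times Y$ (one descending through the action on the first factor, the other through a different $G$-structure on $\tilde{S}$), and the associativity constraint $\on{As}$ with its cocycle identity then come from the triple-nesting cocycle for $U\to T \to S$ applied to a tower $G\times G \times G \times Y \to G \times G \times Y \to G \times Y$. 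Finally, to see these two assignments are inverse, I use that every $(S, \tilde{S}, s)$ is, locally on $S$, a pullback of the universal datum, and invoke the uniqueness of descent again.

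The main obstacle will be coherence bookkeeping: showing that the associativity constraint $\on{As}$ and its $2$-cocycle condition on $G\times G\times G\times Y$ correspond precisely to the triple-nesting cocycle in definition gerbe on stack, and that the identity constraint $\on{Id}$ matches the normalization. The cleanest way to handle this, which I would adopt, is to reorganize both $2$-categories as categories of ``cartesian'' gerbes on the simplicial nerve of the action groupoid $[\,\cdots G\times G\times Y \rightrightarrows G\times Y \rightrightarrows Y\,]$, i.e., a gerbe on each level plus pullback-compatibility data; the descent lemma says $G$-equivariant gerbes on $Y$ form such a thing after restricting to the first two levels, and the definition organizes exactly the same data in ``test-object'' language. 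Matching the two descriptions then becomes formal rather than computational.
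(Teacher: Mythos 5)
The item you are addressing is a \emph{definition}, so the paper gives no proof of it; the genuine assertion is the one you correctly identified (that it is equivalent to $G$-equivariance on $Y$), which the paper records as the subsequent \ref{equivariant gerbe descends} and then leaves with an unproved \texttt{\textbackslash qed}.  Your two-directional construction --- pull back along $s$, descend along $\tilde{S}\to S$ via \ref{gerbe descent along a torsor} to get $\stack{G}_{\tilde{S},s}$; evaluate at $(Y, G\times Y, a)$ to recover the gerbe on $Y$ and extract $\on{Eq}$, $\on{As}$ from the base-change coherences --- is indeed the argument the paper implicitly intends, so in substance you are filling in the omitted proof by the intended route.  Your simplicial-nerve reorganization for the coherence bookkeeping is a cleaner framework than a direct unwinding of \ref{equivariant gerbe}, and would genuinely make the $2$-cocycle-versus-triple-nesting matching routine rather than painful; that is a legitimate improvement over a hand verification.

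Two small cautions.  First, as literally written, \ref{gerbe on stack} only furnishes equivalences $f^*\stack{G}_{\tilde{S},s}\cong\stack{G}_{\tilde{T},t}$ for \emph{pullback} torsors $\tilde{T}=f^*\tilde{S}$, not for arbitrary isomorphisms of $G$-torsors over a fixed base; your reverse direction implicitly needs the latter to identify $a^*(G\times Y)$ and $\on{pr}^*(G\times Y)$ (with their maps $hgy$ and $hy$ to $Y$) as giving the same gerbe on $G\times Y$.  This is a laxity in the paper's formulation that your argument inherits rather than a flaw you introduce, but you should make the needed extra datum explicit (it is forced once one insists, as one should, that the assignment be a $2$-functor out of the groupoid of pairs $(\tilde{S},s)$).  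Second, what you prove is the clean informal gloss ``gerbe on $Y/G$ $=$ $G$-equivariant gerbe on $Y$,'' whereas \ref{equivariant gerbe descends} is phrased relative to a \emph{given} $\stack{G}$ on $Y$, with the extra compatibilities (1) and (2) recording the identification $s^*\stack{G}\cong\pi^*\stack{G}_{\tilde{S},s}$.  These are equivalent formulations, but if you want to land exactly on the paper's corollary you must add that your reverse construction, evaluated on $(Y,G\times Y,a)$, recovers the given $\stack{G}$ via the supplied compatibility (1), not merely \emph{some} equivariant gerbe.
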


\begin{theorem}{cor}{equivariant gerbe descends}
 Let $G$ act on $Y$ and let $\stack{G}$ be a gerbe on $Y$.  A structure of $G$-equivariance on $\stack{G}$ is equivalent to giving a gerbe on the quotient stack as in \ref{gerbe on stack}, such that if for $S$ and $s \colon \tilde{S} \to Y$ as there, writing $\pi \colon \tilde{S} \to S$ for the torsor structure map, we also have:
 \begin{enumerate}
  \item Equivalences $s^* \stack{G} \cong \pi^* \stack{G}_{\tilde{S},s}$.
  
  \item For $f \colon T \to S$ and $\tilde{T} = f^* \tilde{S}$ as above (and denoting $f \colon \tilde{T} \to \tilde{S}$), 2-isomorphisms making the triangle of maps of gerbes on $\tilde{T}$ commute.  These isomorphisms must satisfy the cocycle condition for a triple of maps. \qed
 \end{enumerate}
\end{theorem}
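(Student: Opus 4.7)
The plan is to realize this corollary as a direct packaging of \ref{gerbe descent along a torsor} in the language of the quotient stack, with the tautological datum $(Y, G \times Y, a)$ playing the role of the universal presentation of $Y/G$. Both directions of the equivalence come from choosing appropriate pullback data and invoking the descent lemma, and the whole argument is $2$-categorical bookkeeping on top of it.

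For the forward direction, I would start with a $G$-equivariant gerbe $\stack{G}$ on $Y$ and, given a datum $(S, \tilde{S}, s)$, pull $\stack{G}$ back along $s$ to obtain $s^* \stack{G}$ on $\tilde{S}$. The $G$-equivariance of $s$ (i.e.\ the factorization $s \circ a_{\tilde{S}} = a_Y \circ (\id_G \times s)$) allows the data $\on{Eq}$, $\on{As}$, $\on{Id}$ of $\stack{G}$ to pull back via $\id_G \times s$ and equip $s^* \stack{G}$ with an induced $G$-equivariance on $\tilde{S}$. Applying \ref{gerbe descent along a torsor} to the torsor $\pi \colon \tilde{S} \to S$ then descends $s^* \stack{G}$ to a gerbe $\stack{G}_{\tilde{S}, s}$ on $S$ together with the canonical equivalence $\pi^* \stack{G}_{\tilde{S}, s} \cong s^* \stack{G}$ required by item~(1). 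For a map $f \colon T \to S$, functoriality of pullback and of descent produces the equivalences and $2$-isomorphisms of item~(2), whose cocycle condition under triple composition ultimately reduces to the $2$-cocycle $\on{As}$ of the original equivariance.

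For the reverse direction, suppose we are given the stack-gerbe data on $Y/G$ with the compatibilities~(1) and~(2). Specializing item~(1) to the tautological datum $(Y, G \times Y, a)$, where $G \times Y$ is the trivial $G$-torsor on $Y$ via $\on{pr}_2$ and $a$ is the action map (which is $G$-equivariant for the left action on $G \times Y$), yields
\begin{equation*}
 a^* \stack{G} \cong \on{pr}_2^* \stack{G}_{G \times Y, a}
\end{equation*}
on $G \times Y$; restricting to $\{1\} \times Y$ identifies $\stack{G}_{G \times Y, a}$ with $\stack{G}$, so that the displayed equivalence becomes the desired equivariance $\on{Eq} \colon a^* \stack{G} \cong \on{pr}_2^* \stack{G}$. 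The associativity constraint $\on{As}$ then arises from applying item~(2) on $G \times G \times Y$ to the three natural maps $m \times \id_Y$, $\id_G \times a$, and $\on{pr}_{G \times Y}$ to $G \times Y$; the unit constraint $\on{Id}$ comes from restriction to the identity section; and the $2$-cocycle condition on $G^{\times 3} \times Y$ unwinds to the triple-nesting cocycle of \ref{gerbe on stack}.

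The two constructions are mutually inverse by uniqueness of descent: forward-then-reverse returns the equivariance structure on $\stack{G}$ via the unique descent equivalence of \ref{gerbe descent along a torsor}, and reverse-then-forward on any $(S, \tilde{S}, s)$ recovers $\stack{G}_{\tilde{S}, s}$ because pulling back the reconstructed equivariant gerbe along $s$ gives, by item~(1), precisely $\pi^* \stack{G}_{\tilde{S}, s}$ whose descent is $\stack{G}_{\tilde{S}, s}$. The main obstacle is the $2$-categorical bookkeeping: matching the $\on{As}$ cocycle produced from the stack-gerbe compatibilities with the triple-nesting cocycle in \ref{gerbe on stack} means tracking $2$-morphisms through a three-dimensional diagram of pullbacks along the various compositions of $a$, $m$, and projections. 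This is of the same character as the cocycle verifications in the proof of \ref{gerbe descent along a torsor}, and I would handle it by the same strategy of reducing to trivialized torsors where the constraints become explicit identities among $\sh{G}$-torsors and their sections.
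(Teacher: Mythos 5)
The paper supplies no proof for this corollary (the \qed appears in the statement), and your argument is exactly the natural one: both directions are packagings of \ref{gerbe descent along a torsor}, with the forward direction pulling back along $s$ (using \ref{pullback of equivariant objects}) and descending along $\pi$, and the reverse direction specializing item (1) to the tautological datum $(Y, G\times Y, a)$ and restricting to the identity section to recover $\on{Eq}$, then extracting $\on{As}$ and $\on{Id}$ from item (2) and the unit restriction. This is essentially the same approach the author had in mind. One small remark on your reverse direction: item (2) of the corollary is phrased in terms of a map $f\colon T\to S$, so to recover $\on{As}$ on $G\times G\times Y$ you should take $T=G\times Y$, $S=Y$, $f=a$ (so that $\tilde{T}=a^*\tilde{S}\cong G\times G\times Y$), rather than applying item (2) directly to the three simplicial maps $m\times Y$, $G\times a$, $\on{pr}_{G\times Y}$; after this identification, the three maps you list reappear as the various compositions in the resulting triangle, and the rest of your analysis goes through unchanged.
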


All of the notions of equivariance, (commutative) multiplicativity, triviality, and descent are related via the following proposition:

\begin{theorem}{prop}{multiplicative gerbe descent}
 Let $\tilde{G}$ be a group, $G$ a subgroup, $\stack{M}$ a multiplicative gerbe on $\tilde{G}$. Then to give a multiplicative trivialization of $\stack{M}|_G$ is the same as to give an $\stack{M}$-twisted equivariant gerbe $\stack{G}$ on $Y = \tilde{G}/G$ descending $\stack{M}$:
 \begin{center}
  \tikzsetnextfilename{multiplicative-gerbe-descent}
  \begin{tikzpicture}
   \matrix [matrix of math nodes] (objects) [anchor = base, row sep = 3em, column sep = 3.5em]
   {
    \tilde{G} \times \tilde{G} & \tilde{G} \\
    \tilde{G} \times    Y      &    Y      \\
   };
   { [small math nodes, anchor = base, x = 1em, y = 1ex]
     \node (gerbes-1-1) at ($(objects-1-1.north west) + (-1,1)$)
           {m^* \stack{M} \;\cong\; \stack{M} \,\boxtimes\, \stack{M}};
     \node (gerbes-1-2) at ($(objects-1-2.north east) + ( 1,1)$)
           {\stack{M}};
     \node (gerbes-2-1) at ($(objects-2-1.south west -| gerbes-1-1) + (0,-1.5)$)
           {a^* \stack{G} \;\cong\; \stack{M} \,\boxtimes\, \stack{G}};
     \node (gerbes-2-2) at ($(objects-2-2.south east -| gerbes-1-2) + (0,-1.5)$)
           {\stack{G}};
   }
   { [math arrows]
     \draw (objects-1-1.mid east) -- node {m} (objects-1-2.mid west);
     \draw (objects-1-2)          --          (objects-2-2);
     \draw (objects-1-1)          --          (objects-2-1);
     \draw (objects-2-1.mid east) -- node {a} (objects-2-2.mid west);
   }
   { [dashed arrows]
     \draw (gerbes-2-2) -- (gerbes-1-2);
     \draw (gerbes-2-1) -- (gerbes-1-1);
   }
  \end{tikzpicture}
 \end{center}
\end{theorem}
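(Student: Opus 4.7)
The plan is to reduce both sides of the claimed equivalence to $G$-equivariance data on $\stack{M}$ itself. By \ref{gerbe descent along a torsor} applied to the right $G$-torsor $\pi\colon \tilde{G} \to Y$, a gerbe $\stack{G}$ on $Y$ together with descent data $\pi^*\stack{G} \cong \stack{M}$ is the same as a $G$-equivariance structure on $\stack{M}$ for the right multiplication action (this is the content of \ref{equivariant gerbe descends}). The $\stack{M}$-twisted equivariance of $\stack{G}$, pulled back along $\id \times \pi\colon \tilde{G} \times \tilde{G} \to \tilde{G} \times Y$, becomes an identification of $m^*\stack{M}$ with $\stack{M} \boxtimes \stack{M}$, so the additional datum is that the already-given multiplicativity of $\stack{M}$ is compatible with our $G$-equivariance in the second factor. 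The task therefore reduces to matching multiplicative trivializations of $\stack{M}|_G$ with such multiplicativity-compatible $G$-equivariances of $\stack{M}$.

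In the forward direction, given a multiplicative trivialization $\tau\colon \stack{M}|_G \cong \stack{M}^0$, I define the $G$-equivariance on $G \times \tilde{G}$ as the composition
\begin{equation*}
 a^*\stack{M} = m^*\stack{M}|_{G \times \tilde{G}}
 \cong \stack{M}|_G \boxtimes \stack{M}
 \xrightarrow{\tau \boxtimes \id} \stack{M}^0 \boxtimes \stack{M}
 \cong \on{pr}_2^*\stack{M},
\end{equation*}
where the middle isomorphism is the multiplicativity of $\stack{M}$.  The associativity constraint $\on{As}$ on $G \times G \times \tilde{G}$ is then assembled from the associativity of multiplicativity of $\stack{M}$ together with the multiplicative coherence of $\tau$, and $\on{Id}$ from the unit axiom; compatibility with the ambient multiplicativity on $\tilde{G} \times \tilde{G}$ is automatic because the equivariance is literally built from it.

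Conversely, given a multiplicativity-compatible $G$-equivariance $\on{Eq}\colon a^*\stack{M} \cong \on{pr}_2^*\stack{M}$, restrict to the slice $G \times \{1\} \subset G \times \tilde{G}$. There $a$ is the inclusion $G \hookrightarrow \tilde{G}$ and $\on{pr}_2$ is the constant map to $1$, so $\on{Eq}|_{G \times \{1\}}$ is an equivalence between $\stack{M}|_G$ and the constant gerbe on $G$ at $\stack{M}|_1$; the unit constraint of the multiplicative structure of $\stack{M}$ canonically trivializes $\stack{M}|_1$, yielding a trivialization $\tau\colon \stack{M}|_G \cong \stack{M}^0$.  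The multiplicativity of $\tau$ is extracted by restricting $\on{As}$ to $G \times G \times \{1\}$ and reading the resulting identity through the postulated multiplicativity-compatibility of $\on{Eq}$.

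The two constructions are inverse essentially by inspection: the equivariance built in the forward direction is determined, via multiplicativity-compatibility, by its restriction to $G \times \{1\}$, which by construction is $\tau$; conversely, the trivialization extracted from $\on{Eq}$ rebuilds the same $\on{Eq}$ when multiplicativity is applied. The main obstacle is the $2$-categorical coherence bookkeeping, since associativity and identity constraints coming from the multiplicative structure of $\stack{M}$, from the equivariance data, from $\tau$, and from the descent all interact simultaneously. I would organize this verification by repeatedly applying \ref{gerbe product relation}\ref{en:gerbe product relation} to recast tensor products of gerbes as pushforwards along the multiplication map, and \ref{gerbe change of group composition} to reduce compositions of change-of-group $2$-functors; the relevant coherence squares then become manifest instances of $2$-categorical associativity of $\stHom$, which was the core computation in the proof of \ref{gerbe tensor product}.
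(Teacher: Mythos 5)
Your proposal is correct and follows essentially the same route as the paper: use the multiplicative trivialization of $\stack{M}|_G$ to produce a $G$-equivariance on $\stack{M}$ via the restricted multiplicativity, descend along the torsor $\tilde{G}\to Y$ by \ref{gerbe descent along a torsor}, let the associativity of multiplicativity supply the twisted $\tilde{G}$-equivariance, and in the converse restrict to the unit slice and cancel to recover the trivialization. The only material difference is presentational — you pull the twisted equivariance back to $\tilde{G}\times\tilde{G}$ before extracting $\tau$, while the paper reads it off directly over $1\in Y$ — and your closing remarks on coherence bookkeeping supply detail the paper elides.
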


\begin{proof}
 The trivialization of $\stack{M}|_G$ gives the data of $G$-equivariance for $\stack{M}$, via
 \begin{equation*}
  m|_{G \times \tilde{G}}^* \stack{M}
   \cong \stack{M}|_G \boxtimes \stack{M}
   \cong \stack{G}^0 \boxtimes \stack{M}
   \cong \on{pr}_{\tilde{G}}^* \stack{M}.
 \end{equation*}
 Thus, by \ref{gerbe descent along a torsor}, $\stack{M}$ descends to a gerbe $\stack{G}$ on $Y = \tilde{G}/G$.  By associativity of multiplication, the multiplicative structure on $\stack{M}$ descends to twisted equivariance of $\stack{G}$ over $\smash{\tilde{G}} \times Y$.  Note that this reduces to just $G$-equivariance when the action is restricted to $G \subset \tilde{G}$, since $\stack{M}|_G$ is explicitly trivialized.  Conversely, if $\stack{G}$ is an equivariant gerbe on $Y$ whose equivariance structure descends the multiplicative structure of $\stack{M}$, then over $1 = 1_{\tilde{G}}/G \in Y$, the isomorphism
 \begin{equation*}
  \stack{M}|_G \boxtimes \stack{G}|_1
   \cong a|_{G \times 1}^* \stack{G}|_1
   \cong \on{pr}_{G \times 1}^* \stack{G}|_1
   \cong \stack{G}|_1
 \end{equation*}
 cancels to a trivialization of $\stack{M}|_G$.
\end{proof}

\subsection*{Equivariant twisted objects}

Now we complicate the picture by describing the same structures on sections of a sheaf of
categories twisted by a gerbe.

\begin{theorem}{defn}{equivariant object}
 Let $G$ act on $Y$, let $\stack{G}$ be an equivariant gerbe on $Y$, and let $\stack{F}$ be a sheaf
 of categories of $Y$ (the latter in the large topology); we define the notion of
 \emph{$G$-equivariance of an object} $s \in \stack{F}(\stack{G})_Y$ to be the following.  Recall
 the pullback functors
 \begin{gather*}
  \map{a^*}{\stack{F}|_Y(\stack{G})_Y}{\stack{F}|_{G \times Y}(a^* \stack{G})_{G \times Y}} \\
  \map{\on{pr}^*}{\stack{F}|_Y(\stack{G})_Y}{\stack{F}|_{G \times Y}(\on{pr}^* \stack{G})_{G \times
  Y}}
 \end{gather*}
 and an equivalence $\map{\on{Eq}}{a^* \stack{G}}{\on{pr}^* \stack{G}}$. The equivariant structure
 on $s$ is then:
 \begin{enumerate}
  \item (Equivariance) An isomorphism
  \begin{equation*}
   \map{\epsilon}{(\on{Eq} \otimes \id)(a^* s)}{\on{pr}^* s}
  \end{equation*}
  of sections of $\on{pr}^* \stack{G} \otimes \stack{F}|_{G \times Y}$.

  \item (Identity condition) $\epsilon$ must agree with the unit:
  \begin{equation*}
   (1 \otimes Y)^* \epsilon \circ (\on{Id} \otimes \id) = \id
  \end{equation*}
  as maps from $s$ to itself.

  \item (Cocycle condition) The following diagram must commute:
  \begin{center}
   \tikzsetnextfilename{equivariant-twisted-object-cocycle-condition}
   \begin{tikzpicture}
    \matrix [matrix of math nodes] (objects) [row sep = 3em, column sep = 3em]
    {
       (\on{Eq}_{11} \tensor \on{id})(a_2^* s)
     & (\on{pr}_{G \times Y}^* \on{Eq} \tensor \on{id})(a_{0,1}^* s) \\
       ((m \times Y)^* \on{Eq} \tensor \on{id})(a_2^* s)
     & \on{pr}^* s \\
    };
    { [iso arrows]
      \draw (objects-1-1) to node {(G \times a)^* \epsilon}         
            (objects-1-1 -| objects-1-2.west);
      \draw (objects-1-2) to node {\on{pr}_{G \times Y}^* \epsilon} (objects-2-2);
      \draw (objects-1-1) to node {\on{As} \tensor \on{id}}         (objects-2-1);
      \draw (objects-2-1) to node {(m \times Y)^* \epsilon}         (objects-2-2);
    }
   \end{tikzpicture}
  \end{center}
  where the vertices are all sections of $\on{pr}^* \stack{G} \otimes \stack{F}|_{G \times G \times
  Y}$:
  \begin{center}
   \tikzsetnextfilename{equivariant-twisted-object-cocycle-condition-(objects)}
   \begin{tikzpicture}
     [row sep = 1.2em, node distance = 4em,
      every matrix/.style = {nodes = {anchor = center}},
      baseline = (current bounding box)]
    \matrix (upper left block)
    {
       \node[object]       (upper left element)      
         {a_2^* s};
     & \node[object alias] (upper left top alias)
         {(G \times a)^* a^* s}; \\
       \node[object alias] (upper left left alias)
         {(m \times Y)^* a^* s};
     & \node[object]       (upper left sheaf)
         {a_2^* \stack{G} \tensor \stack{F}_{G \times G \times Y}}; \\
    };
    \matrix (upper right block) [right = of upper left block]
    {
       \node[object alias] (upper right top alias)
         {(G \times a)^* \on{pr}^* s};
     & \node[object]       (upper right element)
         {a_{0,1}^* s}; \\
       \node[object]       (upper right sheaf)
         {a_{0,1}^* \stack{G} \tensor \stack{F}_{G \times G \times Y}};
     & \node[object alias] (upper right right alias)
         {\on{pr}_{G \times Y}^* a^* s}; \\
    };
    \matrix (lower right block) [below = of upper right block]
    {
       \node[object]       (lower right sheaf)
         {\on{pr}^* \stack{G} \tensor \stack{F}_{G \times G \times Y}};
     & \node[object alias] (lower right right alias)
         {\on{pr}_{G \times Y}^* \on{pr}^* s}; \\
       \node[object alias] (lower right bottom alias) 
         {(m \times Y)^* \on{pr}^* s};
     & \node[object]       (lower right element)
         {\on{pr}^* s}; \\
    };
    { [math arrows]
      \draw (upper left sheaf.mid east)  -- (upper right sheaf.mid west);
      \draw (upper right sheaf) -- (upper right sheaf |- lower right sheaf.north);
      \draw (upper left sheaf)  -- coordinate (middle) (lower right sheaf);
    }
    \path (upper right sheaf) -- node {\ref{eq:equivariant gerbe}} (middle);
    { [dashed arrows]
      \draw (upper left top alias) --
        node {(G \times a)^* \epsilon}
          (upper left top alias -| upper right top alias.west);
      \draw (upper right right alias) --
        node {\on{pr}_{G \times Y}^* \epsilon}
          (upper right right alias |- lower right right alias.north);
      \draw (upper left left alias) to
        node[swap] {(m \times Y)^* \epsilon}
          (lower right bottom alias.north west);
    }
    { [equals]
      \draw (upper left left alias)   -- (upper left element)  -- (upper left top alias);
      \draw (upper right top alias)   -- (upper right element) -- (upper right right alias);
      \draw (lower right right alias) -- (lower right element) -- (lower right bottom alias);
    }
    { [every path/.style = |->]
      \draw (upper left element)  -- (upper left sheaf);
      \draw (upper right element) -- (upper right sheaf);
      \draw (lower right element) -- (lower right sheaf);
    }
   \end{tikzpicture}
  \end{center}
 \end{enumerate}
\end{theorem}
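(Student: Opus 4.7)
Since \ref{equivariant object} is a definition, the only content to verify is that each piece of declared data is well-typed: that the arrows $\epsilon$, $(1\times Y)^*\epsilon \circ (\on{Id} \otimes \id)$, and the four edges of the cocycle diagram have source and target living in the same category of twisted sections, and that the three conditions therefore express meaningful equalities of $1$- and $2$-morphisms. I would organize the verification in three stages, paralleling the three clauses.

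First, I would construct the ambient categories of pullbacks. Because $\stack{G}$ is a gerbe on the small site of $Y$, the discussion culminating in \ref{eq:pullback of gerbes} extends it canonically to the large topology, so that for any map $f\colon Z \to Y$ the pullback $f^*\stack{G}$ is a gerbe on $Z$ and the twisting formalism of \ref{s:twisting categories by a gerbe} supplies a functor $f^*\colon \stack{F}|_Y(\stack{G})_Y \to \stack{F}|_Z(f^*\stack{G})_Z$. Applied to $a,\on{pr}\colon G \times Y \to Y$, this gives precisely the two functors displayed in the definition, so $a^* s$ lies in $\stack{F}|_{G\times Y}(a^*\stack{G})_{G\times Y}$ and $\on{pr}^* s$ lies in $\stack{F}|_{G\times Y}(\on{pr}^*\stack{G})_{G\times Y}$. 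The gerbe equivalence $\on{Eq}\colon a^*\stack{G} \to \on{pr}^*\stack{G}$ supplied by the equivariant structure of $\stack{G}$ induces, via the ``change of twisting gerbe'' discussed after the definition of $\stack{F}(\stack{G})$, a functor $(\on{Eq}\otimes\id)\colon \stack{F}|_{G\times Y}(a^*\stack{G}) \to \stack{F}|_{G\times Y}(\on{pr}^*\stack{G})$. Thus $(\on{Eq}\otimes\id)(a^* s)$ and $\on{pr}^* s$ live in the same category, and an isomorphism $\epsilon$ between them is a well-posed datum.

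For the (Identity condition), pulling back along $1 \times Y\colon Y \to G\times Y$ yields $(1\times Y)^*\epsilon$ between $(1\times Y)^*(\on{Eq}\otimes\id)(a^* s)$ and $(1\times Y)^*\on{pr}^* s$. Commutation of $f^*$ with the twisting formalism identifies the latter canonically with $s$, and the constraint $\on{Id}\colon \id \Rightarrow (1\times Y)^*\on{Eq}$ from \ref{equivariant gerbe} identifies the former with $s$ as well. Hence $(1\times Y)^*\epsilon \circ (\on{Id}\otimes\id)$ is an endomorphism of $s \in \stack{F}(\stack{G})_Y$ and the requirement that it equal $\id$ is meaningful. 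For the (Cocycle condition), I would check that each of the four vertices of the diagram, as unfolded in the auxiliary figure, is (after the canonical identifications from \ref{eq:equivariant gerbe}) an object of $\stack{F}|_{G\times G\times Y}(\on{pr}^*\stack{G})_{G\times G\times Y}$ for the common outer projection. The three pullback maps $G\times G\times Y \to G\times Y$, namely $G\times a$, $\on{pr}_{G\times Y}$ and $m\times Y$, act on $\epsilon$ to produce the three unlabelled-by-$\on{As}$ edges, while the top-left edge is $\on{As}\otimes\id$, built from the associativity $2$-isomorphism of the equivariance structure transported through the twisting functor. All four edges are therefore morphisms between the specified vertices, and the equality of the two composites is a sensible equation of $2$-morphisms.

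The main subtlety is bookkeeping: at every vertex one must track which pullback of which gerbe carries the twisting, and check that the canonical equivalences coming from functoriality of pullback, from $\on{Eq}$, and from $\on{As}$ and $\on{Id}$ compose coherently so that the four edges actually close up. The essential point that makes this possible, and which I would emphasize as the heart of the verification, is that the $2$-cocycle condition imposed on $\on{As}$ in the third clause of \ref{equivariant gerbe} (over $G\times G\times G\times Y$) is precisely what guarantees that the cocycle diagram for $\epsilon$ is not overdetermined; without it, the two natural ways of interpreting a triple composition would not be comparable and the equality demanded in (Cocycle condition) would be ill-posed rather than just false. Granting that cocycle on $\on{As}$, the present condition is a well-defined equation of $2$-morphisms in $\stack{F}|_{G\times G\times Y}(\on{pr}^*\stack{G})_{G\times G\times Y}$.
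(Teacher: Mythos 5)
The statement you were assigned is a definition, not a theorem: the paper gives no proof of it, and nothing needs proving beyond the well-posedness bookkeeping you carry out. Your type-checking of the three clauses --- that $a^* s$ and $\on{pr}^* s$ lie in the indicated twisted categories via the large-topology extension of $\stack{G}$ and the pullback functors of \ref{s:twisting categories by a gerbe}, that $\on{Eq} \otimes \id$ transports one twisting to the other so that $\epsilon$ is a sensible $1$-morphism, and that the identity and cocycle clauses are equations between well-defined morphisms after the canonical identifications coming from $\on{Id}$, $\on{As}$ and functoriality of pullback --- is consistent with \ref{equivariant gerbe} and with how the definition is used later (e.g.\ in \ref{pullback of equivariant objects} and \ref{twisted pullback of gerbe}).

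One correction to your closing claim: the $2$-cocycle condition imposed on $\on{As}$ over $G \times G \times G \times Y$ is \emph{not} what makes the (Cocycle condition) for $\epsilon$ well-posed. That diagram lives over $G \times G \times Y$ and uses $\on{As}$ only as one of its edges; for its four edges to have matching sources and targets one needs only the existence of the $2$-isomorphism $\on{As}$ identifying $\on{Eq}_{11}$ with $(m \times Y)^* \on{Eq}$, together with $\on{Eq}$ and the pullback identifications of \ref{eq:equivariant gerbe}. The cocycle condition on $\on{As}$ is a separate, higher coherence: it governs the comparison of the two bracketings over quadruple products and ensures that iterated applications of the equivariance data are mutually consistent, but without it the equation demanded of $\epsilon$ would still be perfectly well-defined --- it simply might not propagate coherently to $G^3 \times Y$. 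So the ``heart of the verification'' you identify is really a coherence property of the underlying equivariant gerbe, not a prerequisite for formulating the condition on $s$.
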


\subsection*{Constructions and restrictions}
   
Equivariance data becomes dramatically simplified when the group in question acts transitively.

\begin{theorem}{lem}{constant sheaf multiplicativity}
 Let $X$ be a connected space, $G$ a connected $X$-group, $A$ an abelian group, $\sh{M}$ an $A$-torsor on $G$ trivialized over the unit section $\{1\} = 1_X$; then $\sh{M}$ has at most one multiplicative structure.  Furthermore, if $\map{m}{G \times_X G}{G}$ and we have an isomorphism $m^* \sh{M} \cong \sh{M} \boxtimes_X \sh{M}$, for this to constitute a multiplicative structure it suffices that its restriction to $\{1\} \times G$ be the identity.
\end{theorem}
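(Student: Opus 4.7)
The plan is to reduce everything to one general principle: since $A$ is a constant sheaf of abelian groups and each fibered power of $G$ over $X$ (in particular $G$, $G \times_X G$, and $G \times_X G \times_X G$) is connected, any automorphism of an $A$-torsor on such a space is a single element of $A$. Consequently, two isomorphisms between the same pair of torsors that agree at one point agree identically.

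Uniqueness of the multiplicative structure is then immediate: if $\mu, \mu'$ are two such structures, then $\mu' \mu^{-1}$ is an automorphism of $m^* \sh{M}$, hence a constant in $A$; both $\mu$ and $\mu'$ restrict to the identity on $\{1\} \times G$ by the identity constraint built into a multiplicative structure, so this constant is $1$.

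For the sufficiency assertion, suppose we are given an isomorphism $\mu \colon m^* \sh{M} \to \sh{M} \boxtimes_X \sh{M}$ with $\mu|_{\{1\} \times G} = \id$ (using the trivialization of $\sh{M}|_{\{1\}}$ to identify source and target there). The identity constraint on the other side comes for free: the ratio $\mu|_{G \times \{1\}} / \id$ is a constant in $A$, which at $(1,1) \in G \times \{1\}$ equals $1$ because $(1,1)$ also lies in $\{1\} \times G$. For associativity, one must show that the two composites $m_3^* \sh{M} \to \sh{M} \boxtimes_X \sh{M} \boxtimes_X \sh{M}$ on $G \times_X G \times_X G$ agree, where $m_3 = m \circ (m \times \id) = m \circ (\id \times m)$ is the triple product; their ratio is once again a constant in $A$. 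Restricted to $\{1\} \times G \times G$, the composite involving $m \times \id$ reduces to the pullback of $\mu$ along the obvious projection $(1,h,k) \mapsto (h,k)$, since the subsequent $\mu \boxtimes \id$ is pulled back from $\{1\} \times G \subset G \times_X G$ where $\mu = \id$; symmetrically, the composite involving $\id \times m$ reduces to the same map, because its first step $(\id \times m)^* \mu$ is pulled back from $(1, hk) \in \{1\} \times G$ and so is the identity, leaving $\id \boxtimes \mu$ to supply the nontrivial part. The two composites therefore agree on the slice $\{1\} \times G \times G$, and hence globally.

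The only real obstacle is careful bookkeeping: tracking the canonical identifications $\sh{M}|_{\{1\}} \cong A$ and $A \otimes \sh{M} \cong \sh{M}$ that make ``$\mu|_{\{1\} \times G} = \id$'' meaningful, and using them consistently to check how the two associativity composites reduce on the unit slice. Conceptually the proof is just repeated application of the uniqueness-at-a-point principle stated at the outset.
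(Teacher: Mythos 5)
Your proposal is correct and rests on exactly the same principle as the paper's proof: any automorphism of an $A$-torsor on a connected space (here $G$, $G \times_X G$, or $G \times_X G \times_X G$) is a single constant in $A$, so two torsor isomorphisms agreeing at one point agree identically. The small differences are cosmetic: for uniqueness the paper first reduces (via the group structure on multiplicative torsors) to the case $\sh{M}$ trivial and then compares with the identity, whereas you compare $\mu'\mu^{-1}$ directly, which is slightly more economical; and for associativity you restrict to the slice $\{1\} \times G \times G$ while the paper restricts to $\{1\} \times G \times \{1\}$, either of which does the job. You also explicitly verify the identity constraint over $G \times \{1\}$, which the paper's proof leaves tacit — a sound addition.
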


\begin{proof}
 Any multiplicative structure on $\sh{M}$ is an isomorphism $\mu \colon m^* \sh{M} \cong \sh{M} \boxtimes_X \sh{M}$ which is the identity map over $G \times \{1\}$ and $\{1\} \times G$.  Clearly the product of two multiplicative torsors is again multiplicative, as is the inverse, so it suffices to show that if $\sh{M}$ is trivial, then $\mu$ is the identity map, identifying both sides with the trivial $A$-torsor $\sh{T}^0$ on $G \times_X G$.  Picking some section of $m^* \sh{M}$ we have a noncanonical identification of $\mu$ with an element of $A$ since $G$ is connected, and since $\mu|_{\{1\} \times G} = 1$, that element is $1$, as desired.

 For the second claim, we must verify associativity: that both the induced isomorphisms of $(m \times \id)^* m^* \sh{M} = (\id \times m)^* m^* \sh{M}$ with $\sh{M}^{\boxtimes 3}$ are equal. Let $\sh{T}$ be the difference between the two torsors, thus given two trivializations which must necessarily differ by some $a \in A$.  Restricting to $\{1\} \times G \times \{1\}$ we find that both are equal, so $a = 1$, as desired.
\end{proof}

\begin{theorem}{lem}{transitive equivariant objects}
 Let $X$ be a connected space, $G$ a connected $X$-group acting relatively transitively on the $X$-space $Y$; let $k$ be a field and $\stack{G}$ be a $k^*$-gerbe on $Y$ and $\sh{F}$ a $\stack{G}$-twisted locally constant sheaf of $k$-vector spaces on $Y$.  If $\stack{G}$ and $\sh{F}$ are $G$-equivariant, and if for some section $X \subset Y$ we have $\on{Stab}_G(X)$ connected, then both $\stack{G}$ and $\sh{F}$ are $X$-trivial.
\end{theorem}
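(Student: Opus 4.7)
The plan is to pull back everything along the orbit map $\alpha \colon G \to Y$ sending $g \mapsto g \cdot X$, where $X \subset Y$ is the given section. By relative transitivity, $\alpha$ is a torsor for $H := \on{Stab}_G(X)$ acting on $G$ by right translation, and by hypothesis $H$ is a connected $X$-group. I will use the left $G$-equivariance of $(\stack{G}, \sh{F})$ to trivialize the pullbacks along $\alpha$ as coming from $X$, then descend back to $Y$; the obstruction to descent is a multiplicative $k^*$-torsor on $H$ that the existence of $\sh{F}$ forces to be trivial.

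First, I restrict the equivariance datum $\on{Eq}$ of $\stack{G}$ (and the analogous datum for $\sh{F}$) from $G \times_X Y$ to the subspace $G \times_X X \cong G$, on which the action map specializes to $\alpha$ and the projection specializes to the composition $G \to X \hookrightarrow Y$. This yields equivalences
\begin{equation*}
 \alpha^* \stack{G} \;\cong\; p_G^*(\stack{G}|_X) \qquad\text{and}\qquad \alpha^* \sh{F} \;\cong\; p_G^*(\sh{F}|_X)
\end{equation*}
on $G$. Both sides are canonically right-$H$-equivariant: the left hand sides because $\alpha$ is an $H$-torsor, and the right hand sides because $p_G$ is $H$-invariant. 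The failure of the above equivalences to intertwine these two $H$-equivariances is measured by the further restriction of the equivariance data to $H \times_X X \subset G \times_X Y$; by the cocycle constraint $\on{As}$ and identity constraint $\on{Id}$ of \ref{equivariant gerbe}, this gives a multiplicative $k^*$-torsor $\stack{M}$ on $H$ trivialized at the identity section, together with an $\stack{M}$-twisted action of $H$ on the locally constant sheaf $\sh{F}|_X$ on $X$.

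The main obstacle is to show that $\stack{M}$ is trivial as a multiplicative torsor. By \ref{constant sheaf multiplicativity}, any torsor on the connected group $H$ trivialized at $1$ admits at most one multiplicative structure, so it suffices to exhibit a multiplicative trivialization of $\stack{M}$. The existence of $\sh{F}$ supplies it: the $\stack{M}$-twisted $H$-action on the locally constant sheaf $\sh{F}|_X$ should be interpreted as a representation of the central $k^*$-extension $\widetilde{H} \to H$ classified by $\stack{M}$, on which the central $k^*$ acts by the tautological character. Starting from the identity trivialization of $\stack{M}$ at $1 \in H$ and parallel-transporting along paths in the connected group $H$ using this action, one obtains a trivialization of $\stack{M}$ compatible with its multiplicative structure; equivalently, the central extension $\widetilde{H}$ is forced to split.

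Once $\stack{M}$ is trivial, the equivalences on $G$ become $H$-equivariant and hence descend via \ref{gerbe descent along a torsor} to equivalences $\stack{G} \cong p^*(\stack{G}|_X)$ and $\sh{F} \cong p^*(\sh{F}|_X)$ on $Y$, giving the required $X$-triviality of both. The delicate aspect is the penultimate step: the monodromy argument converting an $\stack{M}$-twisted action on a locally constant sheaf into a trivialization of $\stack{M}$ relies essentially on the connectedness of both $H$ and $X$, and this is exactly where the input of \ref{constant sheaf multiplicativity} on uniqueness of the multiplicative structure is used to promote a pointwise trivialization to a multiplicative one.
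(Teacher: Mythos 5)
The overall structure of your argument matches the paper's: you pull back along the orbit map $\alpha\colon G \to Y$ (the paper's $\pi\colon G \to G/H$), use $G$-equivariance to trivialize the pullbacks over $X$, identify the descent obstruction as a multiplicative $k^*$-torsor on $H$ (the paper's $\on{Eq}$ on $H \times_X G$), and then invoke \ref{constant sheaf multiplicativity}. However, there is a genuine gap in the key step where you claim $\stack{M}$ is multiplicatively trivial.

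The problem is the assertion that "the central extension $\widetilde{H}$ is forced to split" because it has a representation on $\sh{F}|_X$ in which the central $k^*$ acts by the tautological character. This is simply false: nontrivial central extensions admit such representations all the time (the $\Gm$-version of your setup already gives $\on{GL}_n \to \on{PGL}_n$ with the standard representation; analogous examples exist for discrete $k^*$-extensions). Your "parallel transport along paths in $H$ using this action" does not circumvent this: parallel transport of a trivialization of a local system around a loop picks up the monodromy, and nothing in the abstract existence of the representation kills that monodromy. What \ref{constant sheaf multiplicativity} gives you is only \emph{uniqueness} of a multiplicative structure once the torsor is known to be trivial as a plain torsor — it does not manufacture the trivialization, so your appeal to it is circular at this point.

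The correct argument, which is what the paper is actually doing, is to write out what the $\stack{M}$-twisted equivariance of $\sh{F}|_X$ literally says: since $H$ acts trivially on $X$, the action map and projection $H \times_X X \to X$ coincide, so the datum is an isomorphism $p^*(\sh{F}|_X) \cong \stack{M} \otimes p^*(\sh{F}|_X)$ on $H$, where $p\colon H \to X$ is the structure map. The crucial point — which your argument does not use — is that $p^*(\sh{F}|_X)$ is \emph{constant} along the fibers of $p$. On any fiber $H_x$, the isomorphism reads $V^{\mathrm{const}} \cong \stack{M}|_{H_x} \otimes V^{\mathrm{const}}$ for a nonzero vector space $V$; but the left side has trivial monodromy while the right side has scalar monodromy $\chi \cdot \on{Id}$ where $\chi$ is the monodromy character of $\stack{M}|_{H_x}$, and these can only be conjugate if $\chi = 1$. (The "faithfulness" the paper cites is this fact about constant sheaves — it fails for general local systems, as $\sh{L}_{-1} \otimes \sh{F} \cong \sh{F}$ for $\sh{F}$ of rank 2 with permutation monodromy on $\Gm$ shows, so the constancy along fibers is essential.) Combined with the trivialization over the identity section and connectedness of $H$, this forces $\stack{M}$ to be trivial as a torsor, and \emph{then} \ref{constant sheaf multiplicativity} promotes that to multiplicative triviality. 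The final descent step, where you conclude both $\stack{G}$ and $\sh{F}$ are $X$-trivial, also requires a small extra observation (the paper's last paragraph): once $\stack{M}$ is trivial, the residual discrepancy for $\sh{F}$ is a locally constant $\on{GL}_n(k)$-valued function on a connected space that is the identity on a section, hence is the identity.
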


\begin{proof}
 Picking $X \subset Y$ and taking $H = \on{Stab}_G(X)$, we have $Y \cong G/H$; let $\map{\pi}{G}{G/H}$ be the quotient.  Then $\pi^* \stack{G}$ is an $G$-equivariant gerbe on $G$ and is thus $X$-trivial by \ref{gerbe descent along a torsor}; likewise, $\pi^* \sh{F}$ is $X$-trivial.  To show that these trivializations descend to $Y$, it suffices to show that the natural $H$-equivariance structures on $\pi^* \stack{G}$ and $\pi^* \sh{F}$ are the trivial ones.

 By \ref{trivial equivariance multiplicative torsor}, the $H$-equivariance of $\pi^* \stack{G}$ is the same as a multiplicative $k^*$-torsor $\on{Eq}$ on the $G$-relative group $H \times_X G$ (where $G$ is considered without its group structure).  Likewise, the equivariance of $\pi^* \sh{F}$ is an isomorphism
 \begin{equation}
  \label{eq:equivariant torsor}
  m_H^*(\pi^* \sh{F}) \cong \on{Eq} \otimes \on{pr}_G^* (\pi^* \sh{F}).
 \end{equation}
 In the notation of \ref{trivial equivariance and multiplicativity}, both pullbacks are $p^* \sh{F}_X$; since the action of $k^*$-torsors on locally constant sheaves of $k$-vector spaces is faithful, this isomorphism makes $\on{Eq}$ a trivial torsor.  It is therefore \textit{a fortiori} $G$-trivial (on $H \times_X G$) so by \ref{constant sheaf multiplicativity} it is multiplicatively trivial since $H$ is connected.  Consequently, $\pi^* \stack{G}$ is indeed $H$-equivariantly trivial, as desired.
 
 Since $\on{Eq}$ is trivial, in \ref{eq:equivariant torsor} the isomorphism is given by a section of $\on{GL}_n(k)$ ($n$ being the rank of $\sh{F}$) over $H \times G$ which is the identity on $\{1\} \times G$. Since $H$ is connected, this section is equal to 1.  Thus, $\sh{F}$ is $X$-trivially $K$-equivariant, as desired.
\end{proof}

We can apply descent along torsors to the construction of ``twisted pullbacks''.

\begin{theorem}{defn}{twisted pullback of gerbe}
 Let $G$ act on $Y$ and let $\tilde{X} \to X$ be a $G$-torsor; set $\tilde{X}_Y = \tilde{X} \times^G Y = (\tilde{X} \times Y)/G$, so $\tilde{X} \times Y$ is a $G$-torsor over $\tilde{X}_Y$. The \emph{twisted pullback} of a $G$-equivariant gerbe $\stack{G}$ on $Y$ is the gerbe on $\tilde{X}_Y$ descending $\on{pr}_Y^* \stack{G}$ from $\tilde{X} \times Y$.  When $\tilde{X}$ is understood, we will denote the twisted pullback by $\tilde{\stack{G}}$.  The same procedure defines the twisted pullback of an equivariant section of some sheaf of categories twisted by $\stack{G}$.
\end{theorem}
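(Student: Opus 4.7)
The statement being offered is a construction/definition, so the task is to verify that the proposed descent actually makes sense; the plan is to reduce it cleanly to the already-established Lemma \ref{gerbe descent along a torsor}.

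First I would fix the $G$-action structure: $G$ acts on $\tilde{X}$ by the torsor structure and on $Y$ by the given action, and on $\tilde{X} \times Y$ by the diagonal action. Then $\tilde{X}_Y = (\tilde{X} \times Y)/G$ and the quotient map $q \colon \tilde{X} \times Y \to \tilde{X}_Y$ is a $G$-torsor (this is automatic from $\tilde{X} \to X$ being one, since the action on the first coordinate is already free and transitive on fibers). The crucial observation is that the projection $\on{pr}_Y \colon \tilde{X} \times Y \to Y$ is $G$-equivariant, since $\on{pr}_Y(g\cdot x, g \cdot y) = g \cdot y = g \cdot \on{pr}_Y(x,y)$.

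Next, pullback of gerbes along a $G$-equivariant map preserves the $G$-equivariant structure: given the equivariance datum $\on{Eq}\colon a_Y^* \stack{G} \cong \on{pr}_Y^* \stack{G}$ on $G \times Y$ together with its associativity and identity constraints \on{As}, \on{Id}, one pulls back along the map $G \times (\tilde{X} \times Y) \to G \times Y$ induced by $\on{pr}_Y$, which intertwines the action and projection maps on source and target because $\on{pr}_Y$ is equivariant. The resulting triple $(\on{Eq}', \on{As}', \on{Id}')$ satisfies the cocycle condition of Definition \ref{equivariant gerbe} by functoriality of pullback. Thus $\on{pr}_Y^* \stack{G}$ is a $G$-equivariant gerbe on $\tilde{X} \times Y$.

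Having equipped $\on{pr}_Y^* \stack{G}$ with $G$-equivariance, Lemma \ref{gerbe descent along a torsor} applied to the torsor $q \colon \tilde{X} \times Y \to \tilde{X}_Y$ produces a gerbe $\tilde{\stack{G}}$ on $\tilde{X}_Y$ with a canonical equivalence $q^* \tilde{\stack{G}} \cong \on{pr}_Y^* \stack{G}$; this is the definition of the twisted pullback. The construction is well-defined up to the essentially unique equivalences produced by the descent lemma. The only step requiring any care is the verification in the previous paragraph that the equivariance, associativity, and identity data for $\stack{G}$ genuinely produce the corresponding data for $\on{pr}_Y^* \stack{G}$, but this is a formal consequence of functoriality and the fact that the relevant diagrams on $G \times G \times (\tilde{X} \times Y)$ and $G \times G \times Y$ are compatible under pullback. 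For an equivariant section $s \in \stack{F}(\stack{G})_Y$, Definition \ref{equivariant object} supplies exactly the analogous datum that makes $\on{pr}_Y^* s$ descend along $q$, giving the twisted pullback $\tilde{s} \in \stack{F}(\tilde{\stack{G}})_{\tilde{X}_Y}$; the argument is verbatim the same, replacing Lemma \ref{gerbe descent along a torsor} by ordinary equivariant descent for sections of sheaves of categories.
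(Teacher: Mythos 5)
Since the statement in question is a \emph{definition} rather than a theorem, the paper offers no proof; what needs checking is that the prescribed descent actually makes sense, and your verification is correct and does exactly this. You correctly identify that the (anti-)diagonal $G$-action on $\tilde{X} \times Y$ makes $\tilde{X} \times Y \to \tilde{X}_Y$ a $G$-torsor (free because $G$ acts freely on $\tilde{X}$), that $\on{pr}_Y$ is $G$-equivariant for this action, and that therefore $\on{pr}_Y^* \stack{G}$ inherits a $G$-equivariance structure to which \ref{gerbe descent along a torsor} applies. The only remark worth adding is that the middle step — pullback of an equivariance structure along an equivariant map — is already recorded as \ref{pullback of equivariant objects}, so you could have cited it directly rather than re-deriving it, but the argument you give is a correct unwinding of that proposition and is exactly the justification the definition tacitly relies on.
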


Equivariance of gerbes and their twisted objects is compatible with pullback along equivariant maps, as formalized in the following proposition.  

\begin{theorem}{prop}{pullback of equivariant objects}
 Let $G$ act on $X$ and $H$ on $Y$, let $\map{\phi}{H}{G}$ be a homomorphism, let $\map{f}{X}{Y}$ be $(G,H)$-equivariant, let $\stack{G}$ be a $G$-equivariant gerbe on $X$, and let $\stack{F}$ be a sheaf of categories on $X$ with an action of $\stack{H}^1(Y,\sh{G})$.  Then $f^* \stack{G}$ has an $H$-equivariance structure, and for any $G$-equivariant object $s \in \stack{F}|_X(\stack{G})_Y$, there is an associated structure of $H$-equivariance on $f^* s \in f^*\stack{F}|_Y(f^* \stack{G})_Y$. \qed
\end{theorem}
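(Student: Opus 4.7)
The plan is to transport every piece of the $G$-equivariance structure on $\stack{G}$ through the pullback functors induced by $\phi$ and $f$, treating pullback as a $2$-functor of stacks of gerbes. The $(G,H)$-equivariance of $f$ says exactly that the map $\phi \times f$ intertwines the action and projection squares: $a_G \circ (\phi \times f) = f \circ a_H$ and $\on{pr}_X \circ (\phi \times f) = f \circ \on{pr}_Y$. Taking the pullback of gerbes (as in \ref{eq:pullback of gerbes}) therefore yields canonical equivalences $a_H^*(f^*\stack{G}) \cong (\phi \times f)^* a_G^* \stack{G}$ and $\on{pr}_Y^*(f^*\stack{G}) \cong (\phi \times f)^* \on{pr}_X^* \stack{G}$.

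First I would define the candidate equivariance equivalence for $f^*\stack{G}$ to be $(\phi \times f)^* \on{Eq}_\stack{G}$, composed with the canonical identifications above. I would then verify the associativity and identity constraints by pulling back the corresponding constraints from $G \times G \times X$ to $H \times H \times Y$ along $\phi \times \phi \times f$; the hypothesis that $\phi$ is a group homomorphism ensures this map is compatible with the multiplications $m_G$ and $m_H$, and equivariance of $f$ ensures compatibility with the actions, so every vertex and edge of the diagram \ref{eq:equivariant gerbe} transports to its counterpart on $H \times H \times Y$. The $3$-cocycle condition on $G^3 \times X$ then implies the desired one on $H^3 \times Y$ by the same pullback argument.

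Second, for the $G$-equivariant object $s$, the isomorphism $\epsilon$ witnessing its equivariance is a map of twisted sections over $G \times X$. I would pull it back along $\phi \times f$ to a map over $H \times Y$, and identify its source and target with the expected twisted sections for $f^*s$ using compatibility of pullback with the tensor product of gerbes (\ref{gerbe change of group composition}) and with the change-of-groups $2$-functor implicit in the $\stack{H}^1(X,\sh{G})$-action on $\stack{F}$. The identity and cocycle conditions for the resulting $\epsilon'$ then reduce to the corresponding conditions for $\epsilon$, transported by pullback.

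The main obstacle is bookkeeping rather than substance: there are many $2$-categorical compatibilities between pullback, tensor product of gerbes, and change of groups that must line up coherently. I would tame this by working in the concrete presentation by \v{C}ech $1$-cocycles in torsors (\ref{gerbes and torsor cocycles}), after choosing a common refinement of trivializing covers compatible with $f$; there every piece of data becomes an honest torsor or torsor isomorphism on a given open set, and the required compatibilities reduce to the naturality of ordinary sheaf pullback — an essentially mechanical check once the notation is in place.
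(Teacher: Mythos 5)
The paper gives no proof for this proposition — it carries a \verb|\qed| in the statement, indicating the author considered it a routine consequence of functoriality of pullback and the definitions in \ref{equivariant gerbe} and \ref{equivariant object}. Your proposal is precisely the natural fleshing-out of that omitted argument: pull back the equivalence $\on{Eq}$ along $\phi \times f$, use the on-the-nose commutativity of the action and projection squares (which is what $(G,H)$-equivariance of $f$ means) to identify sources and targets, and then transport the associativity, identity, and cocycle constraints by pulling them back along the correspondingly higher maps $\phi^k \times f$. The same transport applied to $\epsilon$ gives the $H$-equivariance of $f^*s$. Your observation that everything can be reduced to explicit \v{C}ech data using \ref{gerbes and torsor cocycles} is a sound way to handle the $2$-categorical bookkeeping, though it is not strictly necessary since all the compatibility squares commute strictly at the level of spaces. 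One small note: the map should run $f \colon Y \to X$ for $f^*\stack{G}$ to make sense (and for your formula $a_G \circ (\phi \times f) = f \circ a_H$ to typecheck); the paper's statement appears to have a typo there, which you have implicitly corrected.
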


\section{Twisting structures by a gerbe}
\label{s:twisting structures by a gerbe}
 
We now consider the interaction between twisting a sheaf of categories and structures defined on that sheaf. The basic theme is that if a sheaf of categories possesses a structure, a necessary and sufficient condition for it to pass to the twisting by some gerbe is that it be preserved by the action of torsors.

\subsection*{Twisted pairings}

One simple example is the twisting of a pairing: suppose we have three sheaves of categories $\stack{F}_1, \stack{F}_2, \stack{F}_3$ and a pairing (e.g.\ tensor product)
\begin{equation*}
 \map{P}{\stack{F}_1 \times \stack{F}_2}{\stack{F}_3}.
\end{equation*}

\begin{theorem}{defn}{G-biequivariant pairing}
 Let the $\stack{F}_i$ have actions of $\stack{H}^1(X, \sh{G})$; then the structure of \emph{$\sh{G}$-{}bi\-equi\-vari\-ance} is, for any sections $s \in (\stack{F}_1)_U, t \in (\stack{F}_2)_U$, and any $\sh{G}$-torsor $\sh{T}$, natural isomorphisms
 \begin{align*}
  P(\sh{T} \otimes s, t) \cong \sh{T} \otimes P(s,t) &&
  P(s, \sh{T} \otimes t) \cong \sh{T} \otimes P(s,t).
 \end{align*}
 Thus, for any $\sh{G}$-torsors $\sh{T}_1, \sh{T}_2$, we have
 \begin{equation*}
  P(\sh{T}_1 \otimes s, \sh{T}_2 \otimes t) = (\sh{T}_1 \otimes \sh{T}_2) \otimes P(s,t).
 \end{equation*}
\end{theorem}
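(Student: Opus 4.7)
The statement's displayed equation is a formal consequence of the two given natural isomorphisms, so the plan is simply to compose them. First I would apply the left-equivariance isomorphism (with the ``$t$'' argument replaced by $\sh{T}_2 \otimes t$) to obtain
\begin{equation*}
 P(\sh{T}_1 \otimes s, \sh{T}_2 \otimes t) \cong \sh{T}_1 \otimes P(s, \sh{T}_2 \otimes t),
\end{equation*}
then apply $\sh{T}_1 \otimes (-)$ to the right-equivariance isomorphism to get $\sh{T}_1 \otimes P(s, \sh{T}_2 \otimes t) \cong \sh{T}_1 \otimes (\sh{T}_2 \otimes P(s,t))$, and finally invoke the associator of the $\sh{G}$-torsor tensor product (the Picard structure on $\stack{H}^1(X,\sh{G})$ from \plainref{gerbe tensor product}) to rewrite this as $(\sh{T}_1 \otimes \sh{T}_2) \otimes P(s,t)$. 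Naturality in $s$, $t$, $\sh{T}_1$, and $\sh{T}_2$ is immediate from the naturality of the two building-block isomorphisms and of the associator.

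The one thing that is not truly automatic, and which I would either fold into the definition or verify as the content of the ``Thus'', is that the composite above does not depend on which side we slide the torsor out of first. In other words, performing right-equivariance before left-equivariance produces a second isomorphism landing in $(\sh{T}_2 \otimes \sh{T}_1) \otimes P(s,t)$, and compatibility with the symmetry isomorphism $\sh{T}_1 \otimes \sh{T}_2 \cong \sh{T}_2 \otimes \sh{T}_1$ of the Picard category is required for the two routes to agree. I expect this hexagon-style diagram to be the main (and essentially only) obstacle: one checks it locally by choosing sections trivializing $\sh{T}_1$ and $\sh{T}_2$, at which point both composites reduce to the identity and the condition becomes a constraint on the two natural isomorphisms themselves, most naturally stated as a further axiom in the definition of $\sh{G}$-biequivariance. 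Once this hexagon is imposed (or verified in the example at hand), the ``$=$'' in the displayed formula is unambiguous, and the claim holds as stated.
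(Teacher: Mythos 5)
Your reading is correct, and since the item is a definition rather than a proposition the displayed ``Thus'' equation is exactly the formal consequence you describe: apply the left-equivariance isomorphism, then the right one inside $\sh{T}_1 \otimes (-)$, then the associator of the Picard structure on torsors. The paper offers no separate argument, so there is nothing more to compare.

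Your remark about the order of the two slides is a genuine coherence point, and it is fair to say the paper suppresses it. Implicitly the definition is of an action of $\stack{H}^1(X, \sh{G}) \times \stack{H}^1(X, \sh{G})$ on the pairing, i.e.\ a bi-module structure, and the hexagon you write down is part of what makes the two isomorphisms cohere into such an action rather than two unrelated one-sided actions; in every instance the paper uses (outer tensor product of (twisted) sheaves, convolution) this condition holds for trivial reasons, which is presumably why it is not spelled out. If one wanted to be fully precise one would, as you suggest, add it as an axiom of $\sh{G}$-biequivariance, and then the displayed equation is an unambiguous isomorphism rather than an equality that secretly depends on a choice of route.
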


If $\stack{G}_1$ and $\stack{G}_2$ are two $\sh{G}$-gerbes, then the action of $\stack{H}^1(X, \sh{G})$ on their tensor product is through the tensor product of torsors, so we have the following result:

\begin{theorem}{prop}{twisted pairing}
 If $P$ is $\sh{G}$-biequivariant, it admits a twisting:
 \begin{equation*}
  \map{\tilde{P}}{\stack{F}_1(\stack{G}_1) \times \stack{F}_2(\stack{G}_2)}
      {\stack{F}_3(\stack{G}_1 \otimes \stack{G}_2)}. \qed
 \end{equation*}
\end{theorem}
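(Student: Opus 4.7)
The plan is to construct $\tilde{P}$ using the \v{C}ech-style local description \ref{en:torsor trivialization} of twisted sections, with biequivariance supplying exactly the compatibility needed for the local values of $P$ to reglue against the product cocycle of $\stack{G}_1 \otimes \stack{G}_2$.

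First I would fix sections $s_1 \in \stack{F}_1(\stack{G}_1)_U$ and $s_2 \in \stack{F}_2(\stack{G}_2)_U$ and choose a common open cover $\{U_i\}$ of $U$ on which both $\stack{G}_1$ and $\stack{G}_2$ are trivialized. By \ref{en:torsor trivialization}, $s_1$ is then presented as local sections $s_1^i \in (\stack{F}_1)_{U_i}$ with gluing isomorphisms $\alpha_{ij} \colon s_1^j \cong \sh{T}_1^{ij} \otimes s_1^i$ on $U_{ij}$, satisfying the $1$-cocycle condition relative to the transition torsors $\sh{T}_1^{ij}$ of $\stack{G}_1$; similarly $s_2$ is given by $s_2^i$ and $\beta_{ij} \colon s_2^j \cong \sh{T}_2^{ij} \otimes s_2^i$. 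Define the local pieces of the twisted pairing by $t^i = P(s_1^i, s_2^i) \in (\stack{F}_3)_{U_i}$. On $U_{ij}$, applying $P$ to $(\alpha_{ij},\beta_{ij})$ and invoking $\sh{G}$-biequivariance successively in both variables yields a canonical isomorphism
\begin{equation*}
 t^j = P(s_1^j, s_2^j) \cong P(\sh{T}_1^{ij} \otimes s_1^i, \sh{T}_2^{ij} \otimes s_2^i) \cong (\sh{T}_1^{ij} \otimes \sh{T}_2^{ij}) \otimes t^i.
\end{equation*}
By the construction of the tensor product of gerbes (and of the accompanying product of torsor cocycles in \ref{torsor cocycle 2-category}), the tensor product $\sh{T}_1^{ij} \otimes \sh{T}_2^{ij}$ is a choice of transition torsor for $\stack{G}_1 \otimes \stack{G}_2$, so these data present a candidate twisted section of $\stack{F}_3(\stack{G}_1 \otimes \stack{G}_2)$ on $U$.

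The key step is then verifying the $1$-cocycle condition for the collection $\{t^i\}$ against the cocycle of $\stack{G}_1 \otimes \stack{G}_2$. On a triple overlap $U_{ijk}$, both the associativity/naturality of the biequivariance isomorphisms in each variable and the cocycle conditions on the $\alpha_{ij}$ and $\beta_{ij}$ assemble into a commuting diagram whose outer boundary is precisely the cocycle identity for the product transition data $\sh{T}_1^{ij} \otimes \sh{T}_2^{ij}$. This is the only genuinely delicate point, and it reduces to the naturality square for biequivariance composed with the hexagonal coherence of the bifunctor structure; it is routine but notationally heavy. Once this is in place, the resulting object $\tilde{P}(s_1,s_2)$ is well-defined up to canonical isomorphism, and one extends $\tilde{P}$ to morphisms in the obvious way using the bifunctoriality of $P$ (morphisms of twisted sections are compatible families of morphisms of the local pieces, which $P$ pairs cleanly).

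Finally, I would verify independence of the choice of trivializing cover: any two choices have a common refinement, on which the two presentations of $\tilde{P}(s_1, s_2)$ differ by the restrictions of the $\alpha_{ij}$ and $\beta_{ij}$, and biequivariance again produces a canonical isomorphism between the refined local values. One also checks (again immediately from biequivariance) that the construction is compatible with further refinement and with restriction to smaller opens, so that $\tilde{P}$ genuinely defines a functor of sheaves of categories. The main conceptual obstacle is not any one of these checks individually but rather the observation that every compatibility one needs is simply a shadow of biequivariance plus the definition of $\otimes$ on gerbes; this is what makes the proposition a ``tautology with bookkeeping,'' and also why the biequivariance hypothesis is exactly what is required.
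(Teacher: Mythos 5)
Your proof is correct and matches the paper's intended argument; the paper marks the proposition with \qed in the statement because it regards the claim as immediate from the preceding observation that the action of $\stack{H}^1(X,\sh{G})$ on $\stack{G}_1 \otimes \stack{G}_2$ is through tensor product of torsors. Your \v{C}ech-style unwinding via \ref{en:torsor trivialization} is exactly the explicit version of that one-line observation, correctly identifying biequivariance as what makes the local values of $P$ reglue against the product cocycle.
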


This construction is a special case of a more general operation in which we have two sheaves of groups $\sh{G}_i$ whose torsors act on the $\stack{F}_i$ ($i = 1, 2$), while $\stack{F}_3$ has an action of
\begin{equation*}
 \stack{H}^1(X, \sh{G}_1 \times \sh{G}_2) 
  \cong \stack{H}^1(X, \sh{G}_1) \times \stack{H}^1(X, \sh{G}_2)
\end{equation*}  
and for which the above pairing of $\stack{F}_1$ and $\stack{F}_2$ is biequivariant.  Then we obtain a twisted pairing, where the $\stack{G}_i$ are now gerbes for the $\sh{G}_i$:
\begin{equation}
 \label{eq:general twisted pairing}
 \tilde{P} \colon \stack{F}_1(\stack{G}_1) \times \stack{F}_2(\stack{G}_2)
           \to    \stack{F}_3(\stack{G}_1 \times \stack{G}_2),
\end{equation}
invoking \ref{gerbe product relation}\ref{en:gerbe product category}.  \ref{twisted pairing} can be recovered when $\sh{G}_1 = \sh{G}_2$ by pushing forward along the product map, by \ref{gerbe product relation}\ref{en:gerbe product relation}.

A special case of this construction is in the relative situation of a map $\map{f}{Y}{X}$, where we have a sheaf of categories $\stack{F}$ in the large topology on $X$ with a tensor product and a gerbe $\stack{G}$ on $Y$:

\begin{theorem}{cor}{outer product of twisted sections}
 In the above setup, we may form products $s \otimes f^* t$, where $s \in \stack{F}(\stack{G})_Y$ and $t \in \stack{F}_X$.
\end{theorem}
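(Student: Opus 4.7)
The plan is to realize $s \otimes f^* t$ as an instance of the twisted pairing construction of \ref{twisted pairing}, where one of the two gerbes involved is trivial. The key observation is that an untwisted section may always be regarded as a section twisted by the trivial gerbe, and pullback of gerbe-twisted sections sends trivial gerbes to trivial gerbes.

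Concretely, I would first reinterpret $t \in \stack{F}_X$ as a section of $\stack{F}(\stack{G}^0_X)_X$, where $\stack{G}^0_X$ is the trivial $\sh{G}$-gerbe on $X$. Using the pullback of twisted sections (the direct extension to the twisted setting of the pullback functor \ref{eq:pullback of gerbes}, or equivalently the cocycle description of \ref{s:twisting categories by a gerbe}\ref{en:torsor trivialization}), we obtain $f^* t \in \stack{F}(f^* \stack{G}^0_X)_Y = \stack{F}(\stack{G}^0_Y)_Y$, the last equality because pullback sends trivial gerbes to trivial gerbes. Now we invoke \ref{twisted pairing} applied to the tensor product $P = (\farg \otimes \farg) \colon \stack{F} \times \stack{F} \to \stack{F}$ with $\stack{G}_1 = \stack{G}$ and $\stack{G}_2 = \stack{G}^0_Y$, yielding a section
\[
 s \otimes f^* t \;\in\; \stack{F}(\stack{G} \otimes \stack{G}^0_Y)_Y \;\cong\; \stack{F}(\stack{G})_Y,
\]
where the final identification uses that $\stack{G}^0$ is the unit for the tensor product of gerbes, as recorded in \ref{gerbe tensor product}.

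The only nontrivial point to verify is the hypothesis of \ref{twisted pairing}, namely the $\sh{G}$-biequivariance of $P$ in the sense of \ref{G-biequivariant pairing}: tensoring either argument with a $\sh{G}$-torsor $\sh{T}$ must commute naturally with the tensor product on $\stack{F}$. This is a projection-formula-type compatibility, and I expect this to be the only real content of the proof; in the geometric situations of later chapters it follows from the standard projection formulas, and more abstractly it is part of the package of "sheaf of tensor categories with a compatible $\stack{H}^1(X,\sh{G})$-action" implicit in the hypotheses.

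Finally, one should note that the construction is manifestly functorial in $s$ and $t$, extending the pairing to a bifunctor $\stack{F}(\stack{G})_Y \times \stack{F}_X \to \stack{F}(\stack{G})_Y$. No further verification is needed, since all coherence data is inherited from the twisted pairing of \ref{twisted pairing} and the tensor unit identification of \ref{gerbe tensor product}.
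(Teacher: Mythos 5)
Your proof is correct and matches the paper's in substance; you even identify exactly the same key facts (pullback preserves trivial gerbes, the trivial gerbe is the unit for $\otimes$, and $\sh{G}$-biequivariance of the pairing). The only difference is procedural: the paper forms the outer product $s \boxtimes t$ on $Y \times X$ twisted by $\stack{G} \boxtimes \stack{G}^0$ and then restricts along the graph $\Gamma_f \colon Y \to Y \times X$, computing $\Gamma_f^*(\stack{G} \boxtimes \stack{G}^0) = \stack{G} \otimes f^* \stack{G}^0 \cong \stack{G}$, whereas you pull $t$ back to $Y$ first and then apply \ref{twisted pairing} on $Y$. These are two equivalent orderings of the same two operations, and the paper's route has the minor advantage of being the direct specialization of the ``more general operation'' \ref{eq:general twisted pairing} with disjoint coefficient groups (which is why it is phrased as a special case of that discussion). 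Either way the argument is one line, and nothing is missing.
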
 
 
\begin{proof}
 Let $\map{\Gamma_f}{Y}{Y \times X}$ be the graph of $f$, so $s \otimes f^* t = \Gamma_f^*(s \boxtimes t)$, where $s \boxtimes t$ makes sense as an object twisted by $\stack{G} \boxtimes \stack{G}^0$ on $Y \times X$, and $\Gamma_f^* (\stack{G} \boxtimes \stack{G}^0) = \stack{G} \otimes f^* \stack{G}^0 \cong \stack{G}$.
\end{proof}

\subsection*{Twisted equivariant pairings}
   
The contents of this section are unlikely to appear motivated until at least the advent of \ref{multiplicative factorizable}, and probably not until \ref{variant tensor category}. However, they embody an important (if obscure) principle which we might state as follows.  Suppose $S$ is a type of structure (for example, grading by some group together with a graded tensor product) and suppose that it can be specified on a category, or sheaf of categories, in many \textit{a priori} inequivalent ways all different, however, only in the morphisms specified by the structure (rather than the functors).  Then these differences can be reproduced via twisting by trivial gerbes having structure $S$.

Let $G$ be an $X$-group and let $\stack{F}$ be a sheaf of categories on $G$; as usual, we denote $\map{\on{pr}_1, \on{pr}_2, m}{G \times_X G}{G}$. 

\begin{theorem}{defn}{G-equivariant pairing}
 A \emph{$G$-equivariant pairing} on $\stack{F}$ is a map of stacks
 \begin{equation*}
  \map{\otimes}{\on{pr}_1^* \stack{F} \times \on{pr}_2^* \stack{F}}{m^* \stack{F}};
 \end{equation*}
 i.e.\ a pairing $(\on{pr}_1^* s, \on{pr}_2^* t) \mapsto m^*(s \otimes t)$ for any sections $s, t \in \stack{F}_G$.  This should be associative in a manner compatible with the associativity of $G$. If $G$ is commutative, then we have a natural associated notion of \emph{commutative $G$-equivariant pairing} in which we impose a commutativity constraint isomorphism on this pairing. Explicitly, it is an isomorphism
 \begin{equation*}
  m^*(s \otimes t) \cong m^* (t \otimes s)
 \end{equation*}
 for every $s, t \in \stack{F}_G$, compatible with associativity in triple products.
\end{theorem}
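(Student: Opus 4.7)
Strictly speaking the boxed item is a definition, not a theorem, so there is no proof; the only content one could ``prove'' is the implicit claim that the axioms bundled under the phrases ``associative in a manner compatible with the associativity of $G$'' and ``compatible with associativity in triple products'' form a coherent, well-typed package. My plan is therefore to unpack those phrases into pentagon and hexagon conditions pulled back along the structure maps of $G$, and to observe that they are the evident transcriptions of the usual monoidal and symmetric-monoidal axioms on $\stack{F}$.

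For associativity one works over $G \times_X G \times_X G$, writing $m_{123}$ for the triple multiplication and $\on{pr}_1, \on{pr}_2, \on{pr}_3$ for the three projections. Iterating $\otimes$ in the two possible orderings produces, after pullback of the vertex-wise associator along $m_{123}$, two $1$-morphisms from $\on{pr}_1^* s \otimes \on{pr}_2^* t \otimes \on{pr}_3^* u$ to $m_{123}^*(s \otimes t \otimes u)$; the associativity datum is a $2$-isomorphism between them, subject to the standard pentagon axiom pulled back to the fourfold fibre product $G \times_X G \times_X G \times_X G$. For commutativity one adds a hexagon on $G \times_X G \times_X G$ whose two composites are maps between pullbacks of $s \otimes t \otimes u$ along $m_{123}$, each built from two applications of the commutor and one of the associator in the usual pattern.

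I do not see a genuine obstacle at this stage: the axioms are just the monoidal (resp.\ symmetric monoidal) axioms transcribed so that the tensor of two sections of $\stack{F}$ over $G$ lives over $m \colon G \times_X G \to G$ rather than over a point. The real work is deferred to later sections, where an actual $G$-equivariant pairing must be produced --- presumably from a multiplicative or factorizable gerbe in the sense of \ref{multiplicative gerbe} --- at which point the verification of the pentagon and hexagon should reduce to routine bookkeeping over the various powers of $G$ and their projection and multiplication maps. If any subtlety does appear there, I would expect it to be the usual one for symmetric-monoidal structures, namely tracking the interaction of the commutor with the multi-fold associators, rather than anything specific to the relative setting over $G$.
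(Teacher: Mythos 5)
You are right that this is a definition and there is no proof to supply; the paper itself simply states the axioms informally (``associative in a manner compatible with the associativity of $G$,'' ``compatible with associativity in triple products'') without spelling them out as pentagon and hexagon coherences. Your unpacking over the fibre powers of $G$ is the intended reading, and the observation that the real content is deferred to \ref{twisted G-equivariant pairing} and its later use (\ref{variant tensor category}) is correct.
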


An example of such a pairing would be, as suggested above, a $G$-graded tensor product if $G$ happens to be a discrete (abelian) group.  Now let $\stack{G}$ be a (possibly commutative) multiplicative $\sh{G}$-gerbe on $G$; if $\stack{F}$ has an action of $\stack{H}^1(X, \sh{G})$ and if the above pairing is $\sh{G}$-biequivariant (in the sense of \ref{G-biequivariant pairing}) then we have a twisting
\begin{equation}
 \label{eq:twisted equivariant pairing}
 \on{pr}_1^* \stack{F}(\on{pr}_1^* \stack{G}) \times \on{pr}_2^* \stack{F}(\on{pr}_2^* \stack{G})
  \to m^* \stack{F}(\on{pr}_1^* \stack{G} \otimes_X \on{pr}_2^* \stack{G})
  \cong m^* \stack{F}(m^* \stack{G}).
\end{equation}
Thus, the stack $\stack{F}(\stack{G})$ has a (commutative) multiplicative $G$-equivariant pairing as well.  If $\stack{G}$ is \emph{trivial} as a multiplicative gerbe, then its \emph{commutative} multiplicative structure consists of the additional data of a single isomorphism which is identified with a section of $\sh{G}(G \times G)$.

\begin{theorem}{prop}{twisted G-equivariant pairing}
 The twisting of a commutative $G$-equivariant pairing $\otimes$ on $\stack{F}$ by a commutative trivially multiplicative gerbe $\stack{G}$ is equivalent to multiplying the commutativity constraint of $\otimes$ by the above section of $\sh{G}(G \times G)$. \qed
\end{theorem}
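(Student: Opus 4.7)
The plan is to unpack the construction in equation \ref{eq:twisted equivariant pairing} after fixing a trivialization of $\stack{G}$ as a multiplicative gerbe, and identify the sole nontrivial piece of data contributed by the commutative structure.

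First I would fix a global trivialization $\stack{G} \cong \stack{G}^0$ of $\stack{G}$ as a multiplicative gerbe, so that $m^* \stack{G}^0 \cong \stack{G}^0 \boxtimes \stack{G}^0$ is the canonical identity map of the trivial gerbe on $G \times G$. Under this trivialization, a section $\tilde{s}$ of $\stack{F}(\stack{G})_G$ is just a section $s$ of $\stack{F}$, and the twisted pairing $\tilde{\otimes}$ of equation \ref{eq:twisted equivariant pairing} becomes literally $\otimes$: the twisting gerbes $\on{pr}_i^* \stack{G}$ and $m^* \stack{G}$ are all now trivial, and the multiplicative comparison $\on{pr}_1^* \stack{G} \otimes \on{pr}_2^* \stack{G} \cong m^* \stack{G}$ is the identity. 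What survives of the commutative structure is the isomorphism between the composite $\on{sw}^*(\stack{G}^0 \boxtimes \stack{G}^0) \to m^* \stack{G}^0 \to \stack{G}^0 \boxtimes \stack{G}^0$ and the naive swap of $\stack{G}^0 \boxtimes \stack{G}^0$; this is now an automorphism of the trivial gerbe on $G \times G$, equivalently the section $\beta \in \sh{G}(G \times G)$ referred to just before the proposition statement.

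Next I would write out the commutativity constraint $\tilde{c}$ of $\tilde{\otimes}$ as prescribed by the twisted pairing construction and compare it with the original constraint $c$ of $\otimes$. Under the twisting recipe, $\tilde{c}$ decomposes into three isomorphisms living in $m^* \stack{F}(m^* \stack{G})$: the $\on{sw}$-pullback identification of the two twisting-gerbe inputs (the canonical symmetry of tensor products of torsors), the original $c$ obtained from the $G$-equivariant commutativity of $\otimes$, and the comparison $m^* \stack{G} \to m^* \stack{G}$ induced by the commutative multiplicative structure of $\stack{G}$. With the trivializations fixed above, the first piece is tautological, the second is $c$, and the third is precisely $\beta$, which then acts on $c$ via the $\sh{G}$-biequivariance of $\otimes$ (through the construction of \ref{twisted pairing}). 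The composite is therefore $\beta \cdot c$, as claimed.

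The main obstacle is the purely combinatorial bookkeeping of which factors in the twisted commutativity constraint are canonical and which encode the commutative multiplicative structure of $\stack{G}$; once the trivializations are fixed, the only remaining degree of freedom in the entire construction is exactly $\beta$, so its contribution is forced to appear in the one isomorphism not pinned down by the trivialization. The associativity/pentagon compatibilities of the modified constraint are automatic from the cocycle condition built into the definition of a commutative multiplicative gerbe, so no further verification is needed.
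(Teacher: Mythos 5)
The paper gives no proof of this proposition (it is stated with \texttt{\textbackslash qed} as an immediate consequence of the preceding constructions), and your argument correctly unwinds that construction: after trivializing the multiplicative structure, the only datum that survives in the commutativity constraint of the twisted pairing beyond the original constraint $c$ is the section $\beta\in\sh{G}(G\times G)$ carried by the commutative structure of $\stack{G}$, acting via biequivariance. This is exactly the implicit reasoning the paper relies on, so your proposal is correct and takes the same approach.
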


\section{Construction of gerbes from algebraic geometry}
\label{s:construction of gerbes}

Our main source of concretely-specified gerbes will be those given as ``group-valued Cartier divisors''.  Let $Y$ be a scheme over $\C$; we say that a closed subspace $Z \subset Y$ has \emph{local tubular neighborhoods} if it admits a covering by open sets $V$ in $Y$ such that each $V$ can be written as the product $C \times \C$, where $C$ is contractible, with $Z \cap V = C \times \{0\}$.  This notion clearly requires the ``analytic'' topology, which is why we restrict to schemes over $\C$.  For brevity, we will refer to $Z$ as a ``smooth divisor''.

\subsection*{Order of a ``meromorphic'' map of gerbes}

\begin{theorem}{lem}{2-line bundle of a Cartier divisor}
 Let $D \subset Y$ be a connected, smooth divisor and let $U$ be its complement.  Let $A$ be an abelian group (written multiplicatively) also denoting the constant sheaf with group $A$, let $\stack{G}$ and $\stack{H}$ be $A$-gerbes on $Y$, and let $\map{f}{\stack{G}|_U}{\stack{H}|_U}$ be a map of gerbes; we refer to this as a \emph{meromorphic map} from $\stack{G}$ to $\stack{H}$.  Then $f$ has an associated element $\on{ord}_D(f) \in A$, the \emph{$A$-valued order} of $f$ about $D$, with the following properties:
 \begin{enumerate}
  \item \label{en:order local}
  $\on{ord}_D(f)$ is local on $D$ in that if $Z \subset Y$ is any subspace transverse to $D$ and $C$ is a component of $Z \cap D$, then $\on{ord}_D(f) = \on{ord}_C(f|_{Z \cap U})$.
  
  \item \label{en:order multiplicative}
  If $\map{g}{\stack{H}|_U}{\stack{K}|_U}$, then $\on{ord}_D(g \circ f) = \on{ord}_D(g) \on{ord}_D(f)$, and if $\map{f'}{\stack{G}'|_U}{\stack{H}'|_U}$, then $\on{ord}_D(f \otimes f') = \on{ord}_D(f) \on{ord}_D(f')$.
  
  \item \label{en:order gerbe}
  For any $a \in A$, there exists a unique gerbe $\OO(D)^{\log a}$ and trivialization $\sh{G}^0 \to \OO(D)^{\log a}$ with order  $a$; for any map $f$ with $\on{ord}_D(f) = a$, the induced map $\stack{F}|_U \otimes \OO(D)^{\log a} \to \stack{H}|_U$ with order $1$ extends to an equivalence $\stack{H} \cong \stack{F} \otimes \OO(D)^{\log a}$.
 \end{enumerate}
\end{theorem}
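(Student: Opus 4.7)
The plan is to define $\on{ord}_D(f)$ locally by the monodromy of an $A$-torsor on a tubular neighborhood punctured along $D$, globalize using connectedness of $D$, and then construct $\OO(D)^{\log a}$ explicitly by \v{C}ech-gluing trivial gerbes with a prescribed transition torsor. Concretely, I would fix a local tubular neighborhood $V \cong C \times \C$ of a point of $D$, with $D \cap V = C \times \{0\}$ and $C$ contractible. Since $V$ is contractible, $H^2(V, A) = 0$, and by \ref{gerbes and cech cohomology} both $\stack{G}|_V$ and $\stack{H}|_V$ are trivial; pick trivializations $t_{\stack{G}}, t_{\stack{H}}$. Over $V \setminus D = C \times \C^*$ the composite $t_{\stack{H}}^{-1} \circ f \circ t_{\stack{G}}$ is an automorphism of the trivial gerbe, hence an $A$-torsor $\sh{T}_f$; because $C$ is contractible, $\sh{T}_f$ is determined by its monodromy around $\{0\} \subset \C$, an element of $A$. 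This is $\on{ord}_D(f)$.

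For well-definedness, two choices of $t_{\stack{G}}$ differ by an $A$-torsor on $V$, whose restriction to $V \setminus D$ has trivial monodromy (as $V$ is simply connected); the same for $t_{\stack{H}}$. Two tubular neighborhoods around the same point of $D$ agree on an overlap still containing a loop around $D$. Connectedness of $D$, together with a covering by tubular neighborhoods, then propagates this to a single global invariant. Property \ref{en:order local} is immediate, since a transverse subspace $Z$ inherits a tubular neighborhood of $Z \cap D$ inside one for $D$, and monodromy pulls back. Property \ref{en:order multiplicative} holds because composition of meromorphic maps over $V \setminus D$ corresponds to the tensor product of the associated torsors, under which monodromy is multiplicative; the external tensor case is analogous via \ref{gerbe product relation}.

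For \ref{en:order gerbe} I would construct $\OO(D)^{\log a}$ via \ref{gerbes and torsor cocycles}. Cover $Y$ by $U$ and a family $\{V_i\}$ of tubular neighborhoods of $D$, put the trivial gerbe on each open, and declare the transition torsor on $V_i \cap U$ to be the $A$-torsor with monodromy $a$ around $D$, with compatible trivial transition torsors on $V_i \cap V_j$; the cocycle condition on triple overlaps holds by construction. By design the canonical trivialization $\sh{G}^0 \to \OO(D)^{\log a}|_U$ has order $a$. Finally, for any $f$ with $\on{ord}_D(f) = a$ the induced equivalence $\stack{G}|_U \otimes \OO(D)^{\log a}|_U \to \stack{H}|_U$ has order $1$, and the key torsor-extension fact---that an $A$-torsor on $C \times \C^*$ with trivial monodromy extends uniquely to $C \times \C$---lets us glue this across $D$ to the desired global equivalence $\stack{G} \otimes \OO(D)^{\log a} \cong \stack{H}$.

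I expect the main obstacle to be this extension step and the bookkeeping around it: one must fix an orientation convention so that ``monodromy $a$'' is unambiguous, check canonicity of the extension (two extensions across $D$ differ by an $A$-torsor on $V$ that is trivial on $V \setminus D$, hence trivial), and verify compatibility with the Picard structure from \ref{gerbe tensor product}. Once this is settled, the rest reduces to routine manipulations with 1-cocycles in torsors.
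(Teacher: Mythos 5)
Your proposal is correct and follows essentially the same route as the paper's proof: define the order via the monodromy of the difference torsor on a punctured tubular neighborhood, check independence of choices using contractibility of $V$, and build $\OO(D)^{\log a}$ by \v{C}ech gluing trivial gerbes on $U$ and on tubular neighborhoods with the order-$a$ torsor as transition data. One small imprecision worth noting: you ``declare'' the transition torsors on $V_i \cap V_j$ to be trivial, but different tubular-neighborhood projections on overlapping $V_i$, $V_j$ can produce order-$a$ torsors on $V_i \cap U$, $V_j \cap U$ that differ on $(V_i \cap V_j) \cap U$ by a torsor that is only guaranteed to have trivial monodromy about $D$ — it need not be the trivial torsor unless the cover is refined to be a good cover; alternatively (as the paper does) take the $V_i \cap V_j$ transition torsor to be the unique extension across $D$ of this difference, and track the resulting 2-isomorphisms on triple overlaps, which is cleaner and avoids any refinement hypothesis.
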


The notation $\OO(D)^{\log a}$ is chosen to match point \ref{en:order multiplicative}, since we then have (formally) $\OO(D)^{\log a} \otimes \OO(D)^{\log b} = \OO(D)^{\log a + \log b} = \OO(D)^{\log (ab)}$, as claimed there.

\begin{proof}
 Let $V = C \times \C$ be a tubular neighborhood on $D$, so $D \cap V = C \times \{0\}$ and $U \cap V = C \times \C^*$, and let $\map{t}{\stack{G}^0|_V}{\stack{G}|_V}$, $\map{u}{\stack{G}^0|_V}{\stack{H}|_V}$ be trivializations.  Since $V$ is contractible, in fact there are no nontrivial automorphisms ($A$-torsors) of $\stack{G}^0|_V$, so this choice is canonical and, most notably, compatible with refinement of $V$.  We have two maps $\map{u t^{-1}, f}{\stack{G}|_{U \cap V}}{\stack{H}|_{U \cap V}}$ which, thus, differ by the action of some $A$-torsor $\sh{T}$ on $U \cap V$.  Since $C$ is contractible, this is the same as giving just an $A$-torsor on $\C^*$, and hence an element of $A$ which we define to be $\on{ord}_D(f)$.  Since $t$ and $u$ are compatible with refinement of $V$, this number is well-defined and locally constant on $D$, hence constant since $D$ is connected.
 
 To show \ref{en:order local}, we note that $\on{ord}_D(f)$ depends only on the transverse portion of $V$.  Point \ref{en:order multiplicative} is clear, since tensor product of maps is the same as tensor product of their corresponding torsors $\sh{T}$ (from the previous paragraph), and likewise for composition.  For point \ref{en:order gerbe}, it is immediate from the definition that any map of order $1$ extends uniquely, since this is true of the torsors $\sh{T}$.  We must construct $\OO(D)^{\log a}$, for which there are several options.
 
 We give the most direct here.  Given any tubular neighborhood $V$, let $\OO(D)^{\log a}|_V$ be the trivial gerbe and let its trivialization $t_V$ on $U \cap V$ be given by the torsor $\sh{T}_V$ on $\C^*$ with order $a$ about $0$, extended to a torsor on $C \times \C^*$.  For two neighborhoods $V_1$ and $V_2$, on $(V_1 \cap V_2) \cap U$, the composition $t_{V_2} t_{V_1}^{-1}$ has order $1$ about $D$ and therefore extends to an equivalence $\phi$ of $\OO(D)^{\log a}|_{V_1}$ with $\OO(D)^{\log a}|_{V_2}$ on $V_1 \cap V_2$.  By definition, the $\phi$'s have a cocycle constraint (a $2$-isomorphism in $\cat{H}^2(V_1 \cap V_2 \cap V_3, A)$) on triple intersections which themselves form a $2$-cocycle, so we may glue the $\OO(D)^{\log a}|_V$ over all $V$ (including those which do not intersect $U$ at all) to get $\OO(D)^{\log a}$ on $Y$ and a trivialization $t$ on $U$ with order $a$, by definition.
\end{proof}

There is a generalization of \ref{2-line bundle of a Cartier divisor} to reducible divisors, although the proof does not generalize since such divisors do not have local tubular neighborhoods at the crossings.

\begin{theorem}{lem}{reducible order}
 Let $D = \bigcup_i D_i$ be a union (not necessarily disjoint) of connected, smooth divisors, $f$ as before.  Then $f$ has orders $a_i$ about the $D_i$ inducing an equivalence $\stack{H} \cong \stack{G} \otimes \bigotimes_i \OO(D_i)^{\log a_i}$.
\end{theorem}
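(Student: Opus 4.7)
The plan is to bootstrap from \ref{2-line bundle of a Cartier divisor} by defining the orders on the smooth stratum of each component and then extending the resulting ``order $1$'' map across the crossings by a direct topological argument. For each $i$, set $D_i^\circ = D_i \setminus \bigcup_{j \neq i} D_j$; this is a connected, dense open subset of $D_i$, and is a connected smooth divisor in the open subset $Y_i = Y \setminus \bigcup_{j \neq i} D_j \subset Y$. Restricting $f$ to $Y_i \setminus D_i^\circ$ and applying \ref{2-line bundle of a Cartier divisor} to $D_i^\circ \subset Y_i$ yields an element $a_i := \on{ord}_{D_i}(f) \in A$.

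Now form the candidate gerbe $\stack{G}' = \stack{G} \otimes \bigotimes_i \OO(D_i)^{\log a_i}$ using the gerbes produced by \ref{2-line bundle of a Cartier divisor}\ref{en:order gerbe} applied to each $D_i$ individually. Each factor carries its canonical trivialization away from $D_i$, so together they give a trivialization of $\bigotimes_i \OO(D_i)^{\log a_i}$ on $U$, which combined with $f$ produces a meromorphic map $\tilde{f}\colon \stack{G}'|_U \to \stack{H}|_U$. By the multiplicativity of orders (\ref{2-line bundle of a Cartier divisor}\ref{en:order multiplicative}), the order of $\tilde{f}$ about each $D_i^\circ$ is $a_i \cdot a_i^{-1} = 1$. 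It therefore suffices to show that a meromorphic map of order $1$ about every $D_i^\circ$ extends to a global map $\stack{G}' \to \stack{H}$ on $Y$, which will then automatically be an equivalence by \ref{gerbes are a 2-groupoid}.

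Existence of the extension is local on $Y$. Near points of $D_i^\circ$ it is provided directly by \ref{2-line bundle of a Cartier divisor}\ref{en:order gerbe}, so the only new case is a neighborhood $V$ of a crossing of $k \geqslant 2$ of the components. Locally $V$ has the form $C \times \C^k$ with $C$ contractible and the participating components meeting $V$ as the coordinate hyperplanes $C \times \{z_i = 0\}$, so $U \cap V = C \times (\C^*)^k$. Pick trivializations of $\stack{G}'$ and $\stack{H}$ on $V$, which exist because gerbes are locally trivial and are canonical up to unique isomorphism because $V$ is contractible. With respect to these, $\tilde{f}|_{U \cap V}$ is given by an $A$-torsor $\sh{T}$ on $C \times (\C^*)^k$, and the hypothesis that $\tilde{f}$ has order $1$ about each component translates to triviality of the monodromy of $\sh{T}$ about each coordinate loop. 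Since $\pi_1(C \times (\C^*)^k) \cong \Z^k$ is freely generated by these loops and $A$ is abelian, $\sh{T}$ is trivial on $U \cap V$ and extends uniquely to a trivial $A$-torsor on $V$, giving the desired local extension of $\tilde{f}$. Uniqueness of these extensions guarantees that they agree on overlaps and glue to a global map $\stack{G}' \to \stack{H}$.

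The main obstacle is precisely the crossing analysis in the previous paragraph: \ref{2-line bundle of a Cartier divisor} was proved using the local tubular neighborhood assumption, which breaks down exactly at points where two or more $D_i$ meet. The resolution is the elementary topological observation that a local system on $(\C^*)^k$ with trivial monodromy about each coordinate axis is globally trivial, which rests solely on the presentation of $\pi_1((\C^*)^k)$ as the free abelian group on those $k$ loops. Everything else is formal bookkeeping with the constructions of the previous lemma.
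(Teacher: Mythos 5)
You take a genuinely different route from the paper.  You define each $a_i$ directly on the smooth locus $D_i^\circ$, form the candidate gerbe $\stack{G}' = \stack{G} \otimes \bigotimes_i \OO(D_i)^{\log a_i}$, observe that the induced meromorphic map $\stack{G}'|_U \to \stack{H}|_U$ has order $1$ along every $D_i^\circ$, and then try to extend it across the crossings by a local topological computation.  The paper instead inducts on the number of components: set $Y = X \setminus D_n$, apply the lemma inductively to the divisors $D_i \setminus D_n$ inside $Y$ (each is still connected, being a nonempty open subset of the irreducible $D_i$), note that $\OO(D_i)^{\log a_i}$ restricts to the corresponding $\OO_Y$ gerbe, and then a single further application of \ref{2-line bundle of a Cartier divisor} to $D_n \subset X$ finishes.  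The induction never looks at a crossing point; every invocation of the single-divisor lemma is for a divisor that already satisfies the tubular-neighborhood hypothesis.

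The crossing step is where your argument has a real gap.  You assert that a neighborhood $V$ of a point where $k \geq 2$ of the $D_i$ meet can be written as $C \times \C^k$ with those components appearing as coordinate hyperplanes, so that $U \cap V \cong C \times (\C^*)^k$ and $\pi_1(U \cap V) \cong \Z^k$.  That local model holds when the $D_i$ are mutually transverse, but the lemma's hypotheses only make each $D_i$ individually connected and smooth; they do not impose a normal-crossings condition.  (Two smooth plane curves meeting tangentially already break the product structure and the $(\C^*)^2$ homotopy type: the local complement has nonabelian fundamental group.)  What your argument actually needs is that $\pi_1(V \setminus D)$ is generated by meridian loops around the local branches of $D$ --- which is true, but it is a nontrivial fact about germs of analytic hypersurfaces, not a consequence of a coordinate presentation, and is not established by what you wrote.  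As stated, the extension across a non-transverse crossing does not follow.  The paper's induction avoids the issue entirely, which is exactly the point of organizing the proof that way: it never has to understand the complement of more than one smooth divisor at once.  If you want to keep the direct approach, you should either add a transversality hypothesis or cite the generation-by-meridians fact explicitly in place of the product model; otherwise, the inductive argument is the cleaner fix.
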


\begin{proof}
 We proceed by induction on the number $n$ of components of $D$.  If $n = 1$, then this is just \ref{2-line bundle of a Cartier divisor}.  In general, let $Y = X \setminus D_n$, which has the divisor $D' = \bigcup_{i < n} D_i$, and so by induction, the lemma holds on $Y$.  Thus, we have
 \begin{equation*}
  \stack{H}|_Y \cong \stack{G}|_Y \otimes \bigotimes_{i < n} \OO_Y(D_i \setminus D_n)^{\log a_i}
 \end{equation*}
 for some $a_i$. We write $\OO_Y$ rather than $\OO$ to emphasize that the latter gerbes are specific to $Y$; however, we have $\OO(D_i)^{\log a_i}|_Y \cong \OO_Y(D_i \setminus D_n)^{\log a_i}$. Thus, the above equation can be written
 \begin{equation*}
  \stack{H}|_Y \cong \Bigl(\stack{G} \otimes \bigotimes_{i < n} \OO(D_i)^{\log a_i}\Bigr)\Bigr|_Y
 \end{equation*}
 and so again by \ref{2-line bundle of a Cartier divisor}, we deduce the existence of some order $a_n$ and the lemma follows.
\end{proof}

It should be noted that the success of this lemma is due to the fact that $a_i$ is constant along $D_i$, and so extends from the complement of several points.  In the context of a possible theory of higher gerbes, the analogue of \ref{2-line bundle of a Cartier divisor} should be expected to hold for ``orders'' valued in other higher gerbes (but not as high) but the above lemma will fail since these orders will not necessarily extend.

\subsection*{Geometric construction of the twisting gerbe}

An alternative and much nicer construction of $\OO(D)^{\log a}$ can be extracted from the line bundle $\OO(D)$, when $D$ is a Cartier divisor.  Let $L$ be the total space of $\OO(D)$, so that each fiber of $L$ over a point of $Y$ is isomorphic to $\Gm$ or, topologically, to $\C^*$.  For each $V \subset Y$, let $\OO(D)^{\log a}_V$ be the category of $A$-torsors on $L|_V$ such that, on each fiber, they have monodromy $a$.  More precisely, let $\sh{L}_a$ be the local system on $\Gm$ with fiber $A$ and monodromy $a$, which is a multiplicative, or ``character'', sheaf in the sense of \ref{multiplicative gerbe}: if $\map{m}{\Gm \times \Gm}{\Gm}$, we have
\begin{equation*}
 m^*(\sh{L}_a) \cong \sh{L}_a \boxtimes \sh{L}_a,
\end{equation*}
compatibly with the group structure of $m$.  We consider $L$ as a $\Gm$-torsor with action map $\map{c}{\Gm \times L}{L}$ and define $\OO(D)^{\log a}_V$ to be the category of twisted equivariant $A$-torsors $\sh{T}$ on $L|_V$: as in \ref{multiplicative gerbe}, we are given
\begin{equation*}
 c^* \sh{T} \cong \sh{L}_a \boxtimes \sh{T},
\end{equation*}
compatibly with the unit section of $\Gm \times L$ and the multiplicativity of $\sh{L}_a$.

\begin{theorem}{prop}{line bundle order}
 Suppose $D$ has multiplicity $1$ as a connected Cartier divisor.  Then the above prescription defines an $A$-gerbe ``$\OO(D)^{\log a}$'' with a trivialization on $U$ of order $a$.
\end{theorem}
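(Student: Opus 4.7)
The plan is to verify three things in turn: that $\OO(D)^{\log a}$ is a stack, that it is locally trivial as an $A$-gerbe, and that the canonical section of $\OO(D)$ yields a trivialization on $U$ of order $a$. That $V \mapsto \OO(D)^{\log a}_V$ is a stack follows because the category of $A$-torsors on $L|_V$ is a stack in $V$ and the twisted-equivariance datum $c^*\sh{T} \cong \sh{L}_a \boxtimes \sh{T}$ (with its associativity/identity constraints) satisfies descent. The $\stack{H}^1(V, A)$-action is by tensoring with the pullback of an $A$-torsor from $V$ to $L|_V$, which is automatically trivially twisted-equivariant since such pullbacks are $\Gm$-invariant.

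For local triviality, pick $V \subset Y$ on which $\OO(D)$ is trivial as a line bundle and fix a trivialization $L|_V \cong \Gm \times V$. I claim the functor
\begin{equation*}
 \map{\Psi_V}{\stack{H}^1(V, A)}{\OO(D)^{\log a}_V},
 \qquad
 \sh{S} \mapsto \sh{L}_a \boxtimes \sh{S},
\end{equation*}
is an equivalence, with quasi-inverse given by restriction along the section $\{1\} \times V$: the multiplicativity of $\sh{L}_a$ verifies twisted-equivariance of the objects in the image, while the identity constraint of $\sh{L}_a$ shows that restriction to $\{1\} \times V$ inverts $\Psi_V$. Equivariance under $\stack{H}^1(V, A)$ is built into the construction. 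Since such neighborhoods $V$ cover $Y$, this makes $\OO(D)^{\log a}$ into an $A$-gerbe.

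Because $D$ is a multiplicity-one Cartier divisor, $\OO(D)$ has a canonical global section $s_D$ vanishing exactly on $D$, so $s_D|_U \colon U \to L|_U$ trivializes $L|_U$ as a $\Gm$-torsor; applying the previous step with this particular trivialization produces the desired equivalence $\map{\Psi_U}{\stack{H}^1(U, A)}{\OO(D)^{\log a}|_U}$. To read off $\on{ord}_D(\Psi_U)$ in the sense of \ref{2-line bundle of a Cartier divisor}, take a tubular neighborhood $V \cong C \times \C$ of a point of $D$ with local defining function $z$, which trivializes $\OO(D)|_V$. Under the induced identification $L|_V \cong \Gm \times V$, the canonical section reads $s_D|_{U \cap V} \colon v \mapsto (z(v), v)$, whereas the reference section used in $\Psi_V$ is $v \mapsto (1, v)$; hence the trivial twisted-equivariant torsor on $L|_{U \cap V}$ corresponds under $\Psi_V$ to the trivial $A$-torsor but under $\Psi_U$ to its pullback $z^*\sh{L}_a$ along $s_D$. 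Thus the two trivializations differ on $U \cap V = C \times \C^*$ by the $A$-torsor $z^*\sh{L}_a$, whose monodromy about $D = \{z = 0\}$ is $a$ by the very construction of $\sh{L}_a$. This monodromy is precisely $\on{ord}_D(\Psi_U)$, which is therefore equal to $a$.

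I expect the main subtlety to lie in the local triviality step: verifying that restriction to $\{1\} \times V$ genuinely furnishes a quasi-inverse to $\Psi_V$ as functors between categories, including matching the associativity and identity 2-cocycle data of twisted equivariance with the multiplicative constraints of $\sh{L}_a$. Once this bookkeeping is under control, the order computation reduces to comparing the two explicit trivializing sections of $L|_{U \cap V}$ and reading off the monodromy of $z^*\sh{L}_a$, which is immediate.
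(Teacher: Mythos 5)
Your argument is correct and follows essentially the same route as the paper: you produce the local sections $\sh{L}_a \boxtimes \sh{S}$, check the gerbe axioms (the paper verifies freeness and transitivity of the $\stack{H}^1(V,A)$-action via the observation that the ratio of two twisted torsors is $\Gm$-equivariant and hence descends, while you give the equivalent explicit quasi-inverse by restriction along $\{1\}\times V$), and then compute the order by comparing the trivialization induced by the canonical section $s_D$ (i.e.\ the function $1 \in \OO(D)$) with the local one coming from $z^{-1}$, reading off the discrepancy torsor $z^*\sh{L}_a$ of monodromy $a$. The two write-ups differ only in presentation, not in content.
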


\begin{proof}
 If $\OO(D)$ is trivialized on $V$, then we have $L|_V \cong \Gm \times V$; then for any $A$-torsor $\sh{T}$ on $V$, we can take $\tilde{\sh{T}} = \sh{L}_a \boxtimes \sh{T}$, where $\sh{L}_a$ is the unique $A$-torsor on $\Gm$ with order $a$.  Thus, ``$\OO(D)^{\log a}$'' has local sections on $Y$.  It is obvious that we can glue sections on $Y$, since we can glue them as $A$-torsors on $L$, so ``$\OO(D)^{\log a}$'' is a sheaf of categories. The action of $\stack{H}^1(Y, A)$ is given by: for every $A$-torsor $\sh{S}$ on $V$ and every $a$-order $A$-torsor $\sh{T}$ on $L|_V$, we set $\sh{S} \cdot \sh{T} = \smash{\on{pr}_Y^*}(\sh{S}) \otimes \sh{T}$, which is still $a$-twisted since the pullback is plain equivariant. If $\sh{T}_1$, $\sh{T}_2$ are two sections of ``$\OO(D)^{\log a}_V$'', then $\sh{T}_2 \otimes \sh{T}_1^{-1}$ is $\Gm$-equivariant and, hence, is of the form $\on{pr}_Y^*(\sh{S})$ for some $\sh{S}$ on $V$; then $\sh{T}_2 = \sh{S} \cdot \sh{T}_1$, and this identification is obviously free, so ``$\OO(D)
^{\log a}$'' is a gerbe.
 
 We identify $\OO(D)$ with the sheaf of meromorphic functions on $Y$ with poles only at $D$ and of order at most $1$; thus, on any open set $V$ on which $D$ is given by the equation $z = 0$, $\OO(D)$ admits a trivialization $\map{t}{\OO}{\OO(D)}$ sending $1$ to $z^{-1}$.  Let $f$ be the trivialization on $U$ sending $1$ to $1$.  These trivializations give rise to two isomorphisms $\map{\tau,\phi}{\Gm \times V}{L|_V}$; here, $\tau^{-1} \phi = (z, \id)$ is multiplication of the first coordinate by the parameter $z$ on $\Gm$.  Let $\tilde{\sh{T}} = \sh{L}_a \boxtimes \sh{T}$ be a section of ``$\OO(D)^{\log a}$'' relative to the trivialization $\tau$; then relative to $\phi$, $\tilde{\sh{T}} = z^* \sh{L}_a \boxtimes \sh{T}$.  But $z^* \sh{L}_a = \sh{L}_a \otimes \sh{L}_a$, so if we call the induced trivializations of ``$\OO(D)^{\log a}$'' $T$ and $F$, then $F$ differs from $T$ by the action of $\sh{L}_a$ and hence $F$ has order $a$ (and is defined on all of $U$), as desired.
\end{proof}

Excluding the trivialization, this construction also furnishes a gerbe $\sh{L}^{\log a}$ for every line bundle $\sh{L}$ on $Y$ and every $a \in A$, which will be our main (even only) source of interesting gerbes.  We note the following property, obvious from the definition and the computation in the preceding proof:

\begin{theorem}{prop}{log power functorial}
 The construction $\sh{L} \mapsto \sh{L}^{\log a}$ is a $2$-functor from line bundles to $A$-gerbes.  (This means that given maps $\sh{L}_1 \to \sh{L}_2 \to \sh{L}_3$ of line bundles, the map $\sh{L}_1^{\log a} \to \sh{L}_3^{\log a}$ is given an isomorphism with the composition of the other two, and this isomorphism is compatible with associativity.)  We have $(\sh{L}^n)^{\log a} \cong \sh{L}^{\log a^n}$ and, for any map $\map{f}{Z}{Y}$, we have $(f^* \sh{L})^{\log a} \cong f^* \sh{L}^{\log a}$.  In fact, we have $(\sh{L} \otimes \sh{M})^{\log a} \cong \sh{L}^{\log a} \otimes \sh{M}^{\log a}$ for any line bundles $\sh{L}$, $\sh{M}$.
\end{theorem}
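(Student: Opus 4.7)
My plan is as follows. The construction $\sh{L}^{\log a}$ depends only on the total-space $\Gm$-torsor $L$ of $\sh{L}$ together with the character sheaf $\sh{L}_a$ on $\Gm$, so all four assertions will follow by tracking how $L$ transforms under the relevant operation and combining that with the corresponding transformation of $\sh{L}_a$. In each case I would construct an explicit comparison functor via pullback or descent of twisted equivariant $A$-torsors, and verify that it is an equivalence by a local check on a trivialization of the line bundles involved, reducing to the explicit computation at the end of the proof of \ref{line bundle order}.

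For functoriality and the base-change claim I would argue as follows. An isomorphism $\sh{L}_1 \to \sh{L}_2$ of line bundles induces a $\Gm$-equivariant isomorphism of their associated $\Gm$-torsors; pulling back twisted equivariant $A$-torsors along this isomorphism yields the desired functor $\sh{L}_1^{\log a} \to \sh{L}_2^{\log a}$, and the $2$-coherence for composable arrows is inherited from the analogous structure on sheaf pullbacks. For $\map{f}{Z}{Y}$, the $\Gm$-torsor of $f^* \sh{L}$ is the fiber product of $L$ with $Z$ over $Y$, and base change of twisted equivariant torsors along this fiber square yields the equivalence $(f^* \sh{L})^{\log a} \cong f^* \sh{L}^{\log a}$.

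For $(\sh{L}^n)^{\log a} \cong \sh{L}^{\log a^n}$, I would use the canonical morphism $p \colon L \to L_n$ from the $\Gm$-torsor of $\sh{L}$ to that of $\sh{L}^n$, given fiberwise by $z \mapsto z^n$ and satisfying $p(\lambda l) = \lambda^n p(l)$. Pullback of a twisted equivariant $A$-torsor of monodromy $a$ on $L_n$ then produces one of monodromy $a^n$ on $L$, because the pullback of $\sh{L}_a$ along the $n$-th power map on $\Gm$ is $\sh{L}_{a^n}$; a local check on a trivialization of $\sh{L}$ confirms this is an equivalence. For $(\sh{L} \otimes \sh{M})^{\log a} \cong \sh{L}^{\log a} \otimes \sh{M}^{\log a}$, the key observation is that the $\Gm$-torsor of $\sh{L} \otimes \sh{M}$ is $(L \times_Y M)/\Gm$ under the antidiagonal action. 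Given $\sh{S} \in \sh{L}^{\log a}$ and $\sh{T} \in \sh{M}^{\log a}$, the external product $\sh{S} \boxtimes \sh{T}$ on $L \times_Y M$ is trivialized along this antidiagonal, since its pullback is $\sh{L}_a \otimes \sh{L}_{a^{-1}} \cong \sh{L}_1$, and hence descends to a torsor on $L \otimes M$ of remaining monodromy $a$. This pairing is $\stack{H}^1(X, A)$-biequivariant, so \ref{twisted pairing} promotes it to the required equivalence of gerbes.

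The main obstacle will not be any of the individual constructions, each of which is geometrically evident, but rather the uniform bookkeeping of the $2$-categorical coherences (associativity, compatibility with composition) needed to confirm that these equivalences assemble into an actual $2$-functor and that the isomorphisms in the four claims are mutually compatible. In each instance this reduces to the corresponding coherences for ordinary pullback and descent of torsors, so I expect the verification to be routine but tedious.
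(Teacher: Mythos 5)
Your proposal is correct and carries out what the paper evidently intends: the paper offers no proof, declaring the result ``obvious from the definition and the computation in the preceding proof,'' and your plan simply makes that explicit by tracking the total-space $\Gm$-torsor $L$ and the character sheaf $\sh{L}_a$ through each operation. The geometric constructions you give — pullback along the isomorphism of torsors, fiber product for base change, the $n$-th power map $p(\lambda l) = \lambda^n p(l)$ pulling $\sh{L}_a$ back to $\sh{L}_{a^n}$, and descent of $\sh{S} \boxtimes \sh{T}$ along the antidiagonal — are all exactly right, and your computation that the descended torsor has $a$-twisted equivariance on $L \otimes M$ checks out.

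One small imprecision: at the end of the tensor-product argument you cite \ref{twisted pairing} to promote your biequivariant pairing $P \colon \sh{L}^{\log a} \times \sh{M}^{\log a} \to (\sh{L} \otimes \sh{M})^{\log a}$ to an equivalence. That proposition is about twisting a \emph{pre-existing} pairing of sheaves of categories by gerbes, not about a biequivariant pairing of gerbes factoring through their tensor product. What you actually want is the standard fact that, in view of \ref{gerbe product relation}\ref{en:gerbe product relation} (which presents $\stack{G}_1 \otimes \stack{G}_2 = {}^2 m(\stack{G}_1 \times \stack{G}_2)$, i.e.\ as the change of group along the multiplication map), a $\sh{G}$-biequivariant map out of $\stack{G}_1 \times \stack{G}_2$ is the same as a map out of $\stack{G}_1 \otimes \stack{G}_2$; and any such map of gerbes is automatically an equivalence by \ref{gerbes are a 2-groupoid}. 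The idea is right, the citation is just slightly off target.
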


\subsection*{Order and equivariant gerbes}

We note the corollary of the proof of \ref{2-line bundle of a Cartier divisor}:

\begin{theorem}{cor}{order and covering maps}
 Let $\map{c}{\tilde{Y}}{Y}$ be a map which is a $d$-fold covering map away from $\tilde{D} = c^{-1}(D)$, and let $\map{\tilde{f}}{c^* \stack{G}|_{\tilde{U}}}{c^* \stack{H}|_{\tilde{U}}}$ be the pullback of $f$.  If $c|_{\tilde{D}}$ is an isomorphism, then $\on{ord}_{\tilde{D}}(\tilde{f}) = \on{ord}_D(f)^d$; if $c$ extends to a covering map of $Y$, then for every component $E$ of $\tilde{D}$, we have $\on{ord}_E(\tilde{f}) = \on{ord}_D(f)$.
\end{theorem}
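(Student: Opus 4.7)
The plan is to trace through the explicit local construction of $\on{ord}_D(f)$ from the proof of Lemma~\ref{2-line bundle of a Cartier divisor} and observe how each ingredient transforms under pullback along $c$. To set up, I would fix a point $\tilde{y} \in \tilde{D}$ lying over $y = c(\tilde{y})$ and choose a tubular neighborhood $V = C \times \C$ of $y$ with $D \cap V = C \times \{0\}$; contractibility of $V$ forces unique trivializations $t$ of $\stack{G}|_V$ and $u$ of $\stack{H}|_V$, and $\on{ord}_D(f)$ is the monodromy about $0$ of the $A$-torsor $\sh{T}$ on $C \times \C^*$ measuring the failure of $f$ to equal $u t^{-1}$.

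For the first claim, the hypothesis that $c|_{\tilde{D}}$ is an isomorphism while $c$ is a $d$-fold cover off $\tilde{D}$ pins down the local geometry: after shrinking, there is a tubular neighborhood $\tilde{V} = C \times \C$ of $\tilde{y}$ on which $c$ is modeled by $(x,z) \mapsto (x, z^d)$. Since $\tilde{V}$ is again contractible, $c^* t$ and $c^* u$ must coincide with the canonical trivializations of $c^* \stack{G}|_{\tilde{V}}$ and $c^* \stack{H}|_{\tilde{V}}$, so $\on{ord}_{\tilde{D}}(\tilde{f})$ is computed as the monodromy about $0$ of $c^* \sh{T}$. A generator of $\pi_1$ of the punctured disk in $\tilde{V}$ projects to $d$ times a generator of $\pi_1$ of the punctured disk in $V$, giving $\on{ord}_{\tilde{D}}(\tilde{f}) = \on{ord}_D(f)^d$.

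For the second claim, when $c$ is a genuine covering map on all of $Y$, it is a local homeomorphism at each point $\tilde{y}$ of any component $E$ of $\tilde{D}$. I would choose $\tilde{V}$ a tubular neighborhood of $\tilde{y}$ mapping homeomorphically onto a tubular neighborhood $V$ of $y = c(\tilde{y})$ and compatibly with the $C \times \C$ product structures. Then $c^* \sh{T}$ has the same monodromy as $\sh{T}$, giving $\on{ord}_E(\tilde{f}) = \on{ord}_D(f)$.

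The only real point to verify is that the canonical trivializations commute with pullback by $c$, but this is immediate from contractibility of $\tilde{V}$ and the uniqueness of trivializations on a space with no nontrivial $A$-torsors, so I do not anticipate any genuine obstacle; the entire content is the identification of the correct local model ($(x,z) \mapsto (x,z^d)$ in the branched case, a local homeomorphism in the unramified case) combined with the standard fact that pulling back a rank-one local system of monodromy $a$ under the $d$-th power map yields one of monodromy $a^d$.
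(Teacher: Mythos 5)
Your proof is correct and follows the same reduction as the paper's (which consists of the single sentence ``These facts are true of $A$-torsors on $\C^*$''); you have simply spelled out the details, in particular the local model $(x,z)\mapsto (x,z^d)$ near a ramification component, the uniqueness of trivializations on contractible sets, and the effect of pullback along the $d$-th power map or a local homeomorphism on the monodromy of a rank-one $A$-local system.
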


\begin{proof}
 These facts are true of $A$-torsors on $\C^*$.
\end{proof}

This corollary is descriptive, in that the gerbes on $Y$ must exist.  Morally, such gerbes are equivalent to gerbes on $\tilde{Y}$ with suitable descent data, and we will need the following simple special case.  To state it, we need to make a definition that will figure prominently later.

\begin{theorem}{defn}{S_2 equivariance}
 Let $S_2$ act on a space $Y$, and let $s$ be the nonidentity element.  The structure of \emph{$S_2$-equivariance} for an $A$-gerbe $\stack{G}$ on $Y$, for an action of $S_2$ on $Y$, is the the data of an equivalence $\map{\phi}{\stack{G}}{s^* \stack{G}}$ and a $2$-isomorphism $\map{i}{\phi}{s^* \phi^{-1}}$ such that $s^* i \circ i = \id$ as a $2$-automorphism of $\phi$.
 
 Let $\map{f}{\stack{G}}{\stack{H}}$ be a map of $S_2$-equivariant gerbes ($\stack{H}$ with structure $\psi$, $j$).  The structure of \emph{$S_2$-equivariance} on it is a $2$-isomorphism $\map{c}{\psi \circ f}{s^* f \circ \phi}$ of maps $\stack{G} \to s^* \stack{H}$ such that in the diagram below:
 \begin{equation}
  \label{eq:S_2 map equivariance}
  \tikzsetnextfilename{S_2-equivariance}
  \begin{tikzpicture}[row sep = 4em, column sep = 4em, baseline = (current bounding box)]
   \matrix [matrix of math nodes] (objects)
   {
    \stack{H} & s^* \stack{H} & \stack{H} \\
    \stack{G} & s^* \stack{G} & \stack{G} \\
   };
   { [math arrows]
    \draw (objects-1-1) -- node[swap] {\psi}     (objects-1-2);
    \draw (objects-1-2) -- node[swap] {s^* \psi} (objects-1-3);
    \draw (objects-2-1) -- node       {\phi}     (objects-2-2);
    \draw (objects-2-2) -- node       {s^* \phi} (objects-2-3);
    \draw               (objects-2-1) -- node               {f}     (objects-1-1);
    \draw[auto = false] (objects-2-2) -- node[fill = white] {s^* f} (objects-1-2);
    \draw               (objects-2-3) -- node               {f}     (objects-1-3);
    \draw (objects-1-1) to[bend left = 15]  node       {\id} (objects-1-3);
    \draw (objects-2-1) to[bend right = 15] node[swap] {\id} (objects-2-3);
   }
  \end{tikzpicture}
 \end{equation}
 the $2$-automorphism of $f$ obtained by applying $j$, $c$, $s^* c$, and $i$ to go from the topmost path to the bottommost one, is the identity.
 
 If $S_2$ acts trivially on $Y$, then we define the \emph{trivial $S_2$-equivariance} structure on $\stack{G}$ to be that with $\phi = \id$ and $i = \id$; a \emph{trivialization} of any $S_2$-equivariance structure is an equivalence with this one.
\end{theorem}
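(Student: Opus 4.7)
The plan is to treat this definition as the implicit assertion that its coherence data is internally consistent and that the designated trivial $S_2$-equivariance structure genuinely satisfies it; everything will reduce to $2$-categorical bookkeeping using the canonical strictness $(s^*)^2 = \id$ coming from $s^2 = \id_Y$ together with the $2$-functoriality of $s^*$ on gerbes established by \ref{eq:pullback of gerbes}.

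First I will unpack the datum $i$: since $i \colon \phi \Rightarrow s^* \phi^{-1}$ is a $2$-cell between $1$-morphisms $\stack{G} \to s^* \stack{G}$, pasting with $\phi$ produces a $2$-isomorphism $s^* \phi \circ \phi \Rightarrow \id_{\stack{G}}$, exhibiting $\phi$ as a self-inverse equivalence. Applying $s^*$ to this trivialization gives a second such, and the asserted condition $s^* i \circ i = \id$ is precisely the statement that the two trivializations agree --- the correct $\Z/2$-coherence for an involution acting up to specified $2$-cells on the $2$-category of $A$-gerbes on $Y$.

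Next I will carry out the analogous bookkeeping for the diagram \ref{eq:S_2 map equivariance} governing equivariance of maps: the pasting of $j$, $c$, $s^* c$, and $i$ around the outer path is a $2$-automorphism of $f$ that measures the discrepancy between the $S_2$-structures of source and target, and demanding that it equal the identity is the corresponding cocycle condition on morphisms. Finally, I will verify that $\phi = \id$, $i = \id$ tautologically give $s^* i \circ i = \id \circ \id = \id$, and that the diagram for maps between trivially $S_2$-equivariant gerbes collapses after removing identity $1$-cells so that any $c$ satisfies it automatically; this confirms the trivial structure is legitimate and shows that a trivialization of an arbitrary $S_2$-equivariance is a well-defined notion inside the $2$-groupoid of such structures.

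The main obstacle will be tracking the typing of each composition: making $s^* i \circ i$ into an unambiguous $2$-automorphism of $\phi$ is not visible from the notation and requires precise use of $(s^*)^2 = \id$ to identify targets with sources, and reading the diagram \ref{eq:S_2 map equivariance} as a definite pasting of $2$-cells takes the same kind of care. Once the conventions --- in particular, whether $(s^*)^2 = \id$ strictly or only via a specified $2$-cell --- are fixed, the verifications are formal; the only substantive input is the strict involutivity of $s^*$ on gerbes, which is built into the setup via the pullback construction of \ref{s:categorification of cohomology}.
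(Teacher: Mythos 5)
This statement is a \emph{definition}, not a theorem or proposition; the paper supplies no proof for it, and none is required, so there is nothing to compare your argument against. What you have written is a coherence check on the definition itself together with a verification that the trivial $S_2$-equivariance structure satisfies it; both observations are sensible and essentially correct, but they are not content the paper supplies or invokes.

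Your instinct to flag the typing issue in $s^* i \circ i$ is the one substantive point. As written, $i \colon \phi \Rightarrow s^* \phi^{-1}$ and $s^* i \colon s^* \phi \Rightarrow (s^*)^2 \phi^{-1} = \phi^{-1}$ are not vertically composable, so the expression $s^* i \circ i$ is an abuse of notation: one must first pass from $s^* i$ to the induced $2$-isomorphism $s^* \phi^{-1} \Rightarrow \phi$ (inverting both $1$-cells), and only then does the composite land on a $2$-automorphism of $\phi$. This is the standard coherence condition for an involution acting on a $2$-groupoid, and your reading of it as compatibility of the two trivializations of $s^*\phi \circ \phi$ is equivalent once $s^*$ is applied consistently. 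The remaining verifications — that the condition for maps is a cocycle condition, and that $\phi = \id$, $i = \id$ trivially satisfy both — are correct but carry no mathematical weight beyond sanity-checking the definition, which is all one can do here.
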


\begin{theorem}{lem}{covering order quotient}
 Let $s$ be an involution on $Y$ with fixed-point set a connected smooth divisor $D$ with and
 complement $U$, and let $\stack{G}$ be an $S_2$-equivariant gerbe on $Y$
 together with a trivialization of this structure on $\stack{G}|_D$.  Let $\stack{H}$ be
 another $S_2$-equivariant gerbe and $\map{f}{\stack{G}|_U}{\stack{H}|_U}$ an $S_2$-equivariant
 meromorphic map between them with $\on{ord}_D(f) = a$.  Then the choice of an element $b \in A$
 with $b^2 = a$ is equivalent to the choice of a trivialization of the $S_2$-equivariance on
 $\stack{H}|_D$.
\end{theorem}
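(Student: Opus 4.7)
The plan is to reduce the statement, via Lemma \ref{2-line bundle of a Cartier divisor}, to an analysis of the $S_2$-equivariance induced on $\OO(D)^{\log a}|_D$, and then to identify its trivializations with square roots of $a$ using the geometric model of Proposition \ref{line bundle order}.

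First I would apply \ref{2-line bundle of a Cartier divisor}\ref{en:order gerbe} to extend $f$ to an equivalence $\stack{H} \cong \stack{G} \otimes \OO(D)^{\log a}$ on all of $Y$. Because $\stack{G}$, $\stack{H}$, and $f$ are $S_2$-equivariant, this induces an $S_2$-equivariance on $\OO(D)^{\log a}$; uniqueness comes from the fact that on $U$ the canonical trivialization of $\OO(D)^{\log a}$ admits no nontrivial automorphisms, so the equivariance is rigid there. Tensoring against the given trivialization of $S_2$-equivariance on $\stack{G}|_D$ then identifies trivializations of $S_2$-equivariance on $\stack{H}|_D$ with those on $\OO(D)^{\log a}|_D$, reducing the lemma to computing the latter.

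Next I would make the induced equivariance on $\OO(D)^{\log a}|_D$ explicit via Proposition \ref{line bundle order}: it is the gerbe of $a$-twisted equivariant $A$-torsors on the $\Gm$-bundle $L|_D$, which is the nonzero part of the normal bundle $N = \OO(D)|_D$. Since $D$ is the fixed locus of $s$, the differential of $s$ acts by $-1$ on $N$, so $s$ acts on $L|_D$ fiberwise by $z \mapsto -z$, and the induced equivariance on the gerbe is pullback of twisted torsors along this fiberwise involution. A trivialization of it is then descent data for twisted torsors along the double cover $q \colon L|_D \to L|_D / \{\pm 1\}$, which fiberwise is the squaring map $\Gm \to \Gm$, $z \mapsto z^2$. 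Under $q^*$, a local system $\sh{L}_b$ of monodromy $b$ on the quotient pulls back to $\sh{L}_{b^2}$ on $L|_D$, so the $a$-twisted equivariance descends through $q$ precisely when we specify a $b \in A$ with $b^2 = a$, and each such $b$ produces a canonical trivialization.

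The main obstacle will be matching the $2$-categorical coherence of Definition \ref{S_2 equivariance} --- in particular the $2$-isomorphism $i$ and its constraint $s^* i \circ i = \id$ --- with the numerical condition $b^2 = a$, and confirming that distinct $b$ give inequivalent trivializations. The cleanest way to handle this is structurally: both sides form torsors under $A[2]$ when nonempty (trivializations of $S_2$-equivariance differ by multiplicative $A$-torsors on $S_2$, namely characters $S_2 \to A$, which form $A[2]$; the set of square roots of $a$ is obviously an $A[2]$-torsor). It then suffices to construct one element of the bijection explicitly from the descent picture above and verify $A[2]$-equivariance, with nonemptiness of the two sides matching automatically.
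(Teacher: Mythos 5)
Your proposal is correct, and it takes a genuinely different route to the crux of the lemma than the paper does. Both proofs begin the same way: use \ref{2-line bundle of a Cartier divisor}\ref{en:order gerbe} to rewrite $\stack{H}$ as $\stack{G} \otimes \OO(D)^{\log a}$ with its canonical $S_2$-equivariance, then tensor against the given trivialization over $D$ to reduce to classifying trivializations of the $S_2$-equivariance on $\OO(D)^{\log a}|_D$. Where you diverge is in the analysis of that last object. The paper works purely in a local chart: it reduces to $Y = \C$, $s$ rotation by $\pi$, $D = \{0\}$, all gerbes trivial, so that $\phi$, $\psi$, $i$, $j$, $c$ collapse to elements of $A$ and the constraint $s^* c \circ c = \id$ becomes an explicit monodromy computation with the path $\gamma$ from $1$ to $-1$ and its image $s(\gamma)$. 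You instead invoke the geometric model of $\OO(D)^{\log a}$ from \ref{line bundle order}, observe that $s$ acts by $-1$ on the normal bundle $N = \OO(D)|_D$ since $D$ is the fixed divisor of an involution, and interpret a trivialization of the $S_2$-equivariance as descent data for $a$-twisted torsors on $L|_D$ along the fiberwise squaring cover $L|_D \to L|_D/\{\pm 1\}$, which visibly forces the appearance of $b$ with $b^2 = a$. This is a more intrinsic argument that avoids choosing a coordinate and a path; what it costs you, as you yourself flag, is that the identification of the $2$-cell constraint $s^* i \circ i = \id$ with the numerical equation $b^2 = a$ is no longer a one-line computation. Your fallback --- show both sides are $A[2]$-torsors when nonempty, match nonemptiness, and construct one element of the bijection --- is a sound way to close that gap; the paper's local reduction is just the faster route to the same sign check, since in the local model the relevant $2$-cells are literally elements of $A$. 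One small caution: your uniqueness remark ``on $U$ the canonical trivialization of $\OO(D)^{\log a}$ admits no nontrivial automorphisms'' is not quite right as stated (its automorphisms are $A$-torsors on $U$, and its equivariance automorphisms form $A[2]$, neither of which is trivial in general); what you actually need, and what does hold, is that $f$ being an $S_2$-equivariant map pins down the induced equivariance on $\OO(D)^{\log a}|_U$ to be the trivial one relative to the natural trivialization, whence by the extension uniqueness of \ref{2-line bundle of a Cartier divisor}\ref{en:order gerbe} it agrees with the canonical one on all of $Y$.
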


The statement of this lemma is motivated by the observation that if $Y/S_2$ existed and $\stack{G}$
descended, then it would have an $S_2$-equivariance structure, and since $Y \to Y/S_2$ would be
isomorphism on $D$, this structure would be trivial there.  Likewise, an $S_2$-equivariant gerbe
$\stack{H}$ would descend only if the same held for it, and then the $S_2$-equivariant
meromorphic map $f$ would descend as well and we could apply \ref{order and covering maps} to
get the claimed order.

\begin{proof}[Proof of Lemma]
 Suppose we had such a trivialization for $\stack{H}|_D$.  Since order is defined locally on $D$, we assume that $Y = \C$ and $s$ is rotation by $\pi$, fixing the origin $D = \{0\}$.  Then we may also assume that $\stack{G}$ and $\stack{H}$ are the trivial gerbes, so that $\phi$ and $\psi$ are identified with $A$-torsors on $\C$, hence also trivial, and so $i$ and $j$ are identified with square roots of $1$ in $A$; the trivializations of the $S_2$-structures over $D$ give $i = j = 1$.  Likewise, $f$ is identified (by definition) with the $A$-torsor $\sh{L}_a$ having monodromy $a$, so $c$ is an isomorphism of $\sh{L}_a$ with $s^* \sh{L}_a$ and the commutativity of diagram \ref{eq:S_2 map equivariance} becomes the identity $s^*c \circ c = \id$.
 
 We claim that such an isomorphism $c$ gives a square root of $a$.  Indeed, consider the germ of $c$ acting as $(\sh{L}_a)_1 \to (s^* \sh{L}_a)_1 = (\sh{L}_a)_{-1}$.  These two stalks are already identified by means of a counterclockwise path $\gamma$ from $1$ to $-1$ in $\C$, so fixing this identification, $c$ is just some element $b \in A$.  Likewise, $s^* c$ is multiplication by $b$ when compared to the isomorphism $(\sh{L}_a)_{-1} \cong (\sh{L}_a)_1$ induced by $s(\gamma)$. Thus, $s^* c \circ c = b^2$ when compared to $s(\gamma) \circ \gamma$, whose associated automorphism of $(\sh{L}_a)_1$ is, by definition, multiplication by $a$.  Since $s^* c \circ c = \id$, we must have $b^2 = a$.
 
 Conversely, suppose we have the square root $b$, and let $\OO(D)^{\log a}$ have the $S_2$-equivariance structure extending that of the trivial gerbe on $U$, using \ref{2-line bundle of a Cartier divisor}\ref{en:order gerbe}.  Using $b$ and the logic from the previous paragraph, we extract a trivialization of the $S_2$-equivariance structure of $\OO(D)^{\log a}$ on $D$.  Then the isomorphism $\stack{G} \otimes \OO(D)^{\log a} \to \stack{H}$ induced by $f$ is in fact an isomorphism of $S_2$-equivariant gerbes; in particular, the trivialization of that structure on the left-hand side over $D$ induces a trivialization of the equivariance structure on $\stack{H}|_D$, as desired.
\end{proof}

\chapter[sf gerbes and quadratic forms]
        {Symmetric factorizable gerbes and their classification by quadratic forms}
\label{c:symmetric factorizable gerbes and classification}

This chapter is devoted to investigating properties of gerbes on a certain kind of space, one with the property of \emph{factorizability}.  The principal goal is to show that the natural data on a gerbe which makes it compatible with this structure renders the category of all such gerbes easily described by simple algebraic objects.  In this chapter, we restrict our attention to those properties of factorizable gerbes which can be related directly to the topology of the underlying factorizable space, while in the next, we will add additional data and derive additional structure.

\section{The affine grassmannian and factorizability}
\label{s:the affine grassmannian and factorizability}

We will use the following notation throughout the rest of the chapter or even the rest of the paper.  The scheme $X$ will be a smooth complex algebraic curve (not necessarily complete).  For any scheme $S$ and any $S$-point $\vect{x} \in X^n(S)$, let $\bar{x} \subset X_S = S \times X$ be the scheme-theoretic union of the graphs of its coordinates, a union of Cartier divisors since $X$ is smooth; we will not need the scheme structure, however.

\begin{theorem}{defn}{factorizable grassmannian}
 Let $G$ be a complex algebraic group.  Taken over all integers $n$, the following functors from schemes to sets constitute the \emph{relative} or \emph{factorizable} or \emph{Beilinson--Drinfeld affine grassmannian}:
 \begin{equation*}
  \on{Gr}_{G,X^n}(S) = \left\{(\vect{x}, \sh{T}, \phi) \middle\vert
    \begin{gathered}
     \text{$\vect{x} \in X^n(S)$, $\sh{T}$ is a $G$-torsor on $X_S$}, \\
     \text{$\phi$ is a trivialization of $\sh{T}$ on $X_S \setminus \bar{x}$}
    \end{gathered}
    \right\}.
 \end{equation*}
 \emph{Only} in this definition, by $G$-torsor, we mean local triviality in the \'etale topology.  By definition, $\on{Gr}_{G,X^n}$ is naturally a functor on schemes over $X^n$. We will write $\on{Gr}_n$ when neither $G$ nor $X$ need be mentioned.
\end{theorem}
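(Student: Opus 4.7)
The final statement is definitional: it introduces the functors $\on{Gr}_{G,X^n}$ and packages them collectively as the Beilinson--Drinfeld grassmannian. The substantive assertions buried in it are (i) that the formula really defines a functor from schemes to sets, and (ii) that this functor is naturally a functor over $X^n$. The plan is to verify these two points directly from the definitional data, and then to sketch the factorization isomorphism that justifies the name, since this is the content the subsequent chapter will rely on.

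For functoriality, given $f \colon S' \to S$ I pull back an $S$-triple to the $S'$-triple $(\vect{x} \circ f, (f \times \id_X)^* \sh{T}, (f \times \id_X)^* \phi)$. The only nontrivial compatibility is that the divisor $\bar{x}'$ attached to $\vect{x} \circ f$ agrees with $(f \times \id_X)^{-1}(\bar{x})$; this is automatic because $\bar{x}$ is the scheme-theoretic union of the graphs of the coordinates of $\vect{x}$, and both the formation of graphs and of scheme-theoretic unions commute with base change. Consequently the pulled-back trivialization still lives on $X_{S'} \setminus \bar{x}'$, and \'etale local triviality of $G$-torsors is preserved by pullback. For (ii), the forgetful map $(\vect{x}, \sh{T}, \phi) \mapsto \vect{x}$ is patently functorial in $S$ and provides the structural morphism to $X^n$.

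The deeper (and harder) content, which is really what the name ``factorizable'' encodes, is the factorization isomorphism: for any partition $\{1,\ldots,n\} = I \sqcup J$ there should be a canonical equivalence
\begin{equation*}
 \on{Gr}_{G,X^n}|_{(X^I \times X^J)_{\on{disj}}} \cong
 \bigl(\on{Gr}_{G,X^I} \times \on{Gr}_{G,X^J}\bigr)|_{(X^I \times X^J)_{\on{disj}}},
\end{equation*}
where the subscript denotes the open locus on which the $I$- and $J$-coordinates have disjoint supports, together with higher compatibilities for refinements of the partition. I would prove this by a Beauville--Laszlo-style gluing argument: given a $G$-torsor on $X_S$ trivialized away from $\bar{x}_I \sqcup \bar{x}_J$ with the two supports disjoint, one reconstructs the torsor uniquely from its restrictions to formal neighborhoods of $\bar{x}_I$ and of $\bar{x}_J$ together with the common trivialization on the overlap. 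I expect the main obstacle to be precisely this descent step; for $G = \GL{N}$ it is the classical Beauville--Laszlo theorem, and for a general reductive $G$ one reduces to $\GL{N}$ via a faithful representation and checks that the tensorial data cutting out the $G$-reduction descends along the same flat cover.
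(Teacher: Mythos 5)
This is a definition, not a theorem, so the paper has no proof for you to match; it simply records the functor of points and then cites \cite{BD_quantization} for ind-representability. Your routine verifications — that pullback commutes with formation of $\bar{x}$, hence the trivialization pulls back, and that the forgetful map to $X^n$ makes the whole thing a functor over $X^n$ — are correct and are exactly the kind of unstated bookkeeping the paper takes for granted.

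Where you diverge substantively from the paper is in the factorization isomorphism, which is the content of \ref{grassmannian is factorizable}\ref{en:factorizable open}, not of this definition. There you propose a Beauville--Laszlo argument: reconstruct the $G$-torsor from its restrictions to formal neighborhoods of $\bar{x}_I$ and $\bar{x}_J$ plus the trivialization on the punctured complement, reducing to $\GL{N}$ by a faithful representation to handle general $G$. That is correct, but it is heavier machinery than the paper uses. The paper's proof notices that on the locus where the $\bar{x}_i$ are pairwise disjoint, the open sets $V_i = \bigcap_{j \neq i}(X_S \setminus \bar{x}_j)$ already form an honest Zariski cover of $X_S$, with $V_i \cap V_j = X_S \setminus \bar{x}$. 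One then glues the torsors $\sh{T}_i$ (each trivial away from its own $\bar{x}_i$) along the trivializations $\phi_i\phi_j^{-1}$ on the overlaps; no flat descent or fppf/fpqc gluing is needed, and no faithful representation is needed, because Zariski gluing of torsors works for any group $G$. Beauville--Laszlo \emph{is} invoked in the paper, but later and for a different purpose, namely to identify $\on{Gr}_{G,X^n}$ with the quotient $G(\hat{\KK})_n / G(\hat{\OO})_n$ in \ref{arc and loop group actions}, where formal neighborhoods genuinely cannot be avoided. So your approach proves a correct statement by a valid but stronger method; the elementary Zariski gluing is what the paper actually wants here, and is worth internalizing because the same gluing pattern recurs in \ref{convolution diagram factorization} and \ref{convolution product twisting}.
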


As mere presheaves of sets, these functors satisfy the following well-known theorem (see e.g.\ \cite{BD_quantization}*{Theorem 4.5.1}):

\begin{theorem*}{prop}
 The functors $\on{Gr}_{G,X^n}$ are representable by ind-schemes; they are ind-projective if and only if $G$ is reductive.
\end{theorem*}

We denote, as is usual, the ordinary affine grassmannian $G\lp t \rp/G\lb t \rb$ by $\on{Gr}_G$; it is obtained as the fiber of $\on{Gr}_{G,X}$ over any point of $X$ (but not canonically).  In fact, it is easy to see that $\on{Gr}_{G,X^n}$ is a bundle over the complement of all the diagonals in $X^n$, where the fiber is $\on{Gr}_G^n$; this will be a special case of \ref{grassmannian is factorizable}.

We will return to the grassmannian of a general group later.  Now, let $T$ be a complex algebraic torus; since $T$ is a commutative group, its torsors admit a commutative group operation which we denote by the symbol $\otimes$; tensor product of torsors makes $\on{Gr}_{T,X^n}$ a group functor over $X^n$. In fact, we have a pairing over $X^{n + m}$:
\begin{equation}
 \label{eq:torus grassmannian multiplication}
 \on{Gr}_{T,X^n} \times \on{Gr}_{T,X^m} \to \on{Gr}_{T,X^{n + m}}.
\end{equation}

We can give a more detailed description of the grassmannian of a torus. We denote the weight and coweight lattices of $T$ respectively by $\Lambda^T = X^*(T)$ and $\Lambda_T = X_*(T)$; we use the notation $\map{\langle \farg, \farg \rangle}{\Lambda^T \otimes \Lambda_T}{\Z}$ for the natural pairing.  In the second part of the following proposition, we refer to certain \emph{diagonals} in $X^n$ corresponding to partitions $p$ of $\{1, \dots, n\}$ into subsets $p_i$:
\begin{equation*}
 \Delta^n_p = \bigcap_i \{x_k = x_l \mid k, l \in p_i\}.
\end{equation*}
We will also sometimes write $\Delta^n_{n_1, n_2, \dots}$, in which case we mean that $p$ should have parts $\{n_1, n_2, \dots\}$ and singletons consisting of all unmentioned indices.  That is, $\Delta^3_{1,2} = \{x_1 = x_2\}$, and not $\{x_2 = x_3\}$, as might be imagined if the 2 is taken to mean ``group the last two numbers of $\{1, 2, 3\}$''.

\begin{theorem}{prop}{torus grassmannian properties}
 The complex points $\on{Gr}_{T,X^n}(\C)$ have the following structure as ind-complex varieties:
 \begin{enumerate}
  \item \label{en:torus grassmannian 1}
  For $\lambda \in \Lambda_T$ and $x \in X(\C)$, let $\OO(\lambda x)$ be the $T$-torsor on $X$ together with a trivialization $\phi_{\lambda,x}$ on $X \setminus \{x\}$ characterized by the following  property: for any $\mu \in \Lambda^T = \on{Hom}(T, \Gm)$, the line bundle ${}^1 \mu\, \OO(\lambda  x)$ together with its trivialization ${}^1 \mu\,(\phi_{\lambda, x})$ is isomorphic to $\OO(\langle  \mu, \lambda  \rangle x)$.  The map $X(\C) \times \Lambda_T \to \on{Gr}_{T,X}(\C)$ given by
  \begin{equation*}
   (x, \lambda) \mapsto (x, \OO(\lambda x), \phi_{\lambda, x})
  \end{equation*}
  is an isomorphism of groups over $X(\C)$.
  
  \item \label{en:torus grassmannian n}
  The multiplication \ref{eq:torus grassmannian multiplication} identifies the irreducible components of $\on{Gr}_{T,X^n}(\C)$ with the products of those of $\on{Gr}_{T,X}(\C)$; they are thus indexed by $(\Lambda_T)^n$ and isomorphic to $X^n(\C)$.   Let $p$ be a partition of $\{1,\dots,n\}$ into parts $p_j = \{i_1, \dots, i_{n_j}\}$ ($1 \leq i_k \leq n$), and let $\Delta^n_p$ be the corresponding diagonal of $X^n$.  Then the intersection of two components  $\on{Gr}_n^{\lambda_1, \dots, \lambda_n}$, $\on{Gr}_n^{\mu_1, \dots, \mu_n}$ is $\Delta^n_p$ (identifying $X^n$ with both components) if and only if for each $j$, we have $\lambda_{i_1} + \dots + \lambda_{i_{n_j}} = \mu_{i_1} + \dots + \mu_{i_{n_j}}$. \qed
 \end{enumerate}
\end{theorem}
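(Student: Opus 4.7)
The plan is to establish part \ref{en:torus grassmannian 1} by reducing to the case $T = \Gm$ via the character lattice, and then to deduce part \ref{en:torus grassmannian n} by combining part 1 with the fusion multiplication \ref{eq:torus grassmannian multiplication} on the grassmannian.

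For part \ref{en:torus grassmannian 1}, I would use that specifying a $T$-torsor $\sh{T}$ on a scheme is equivalent to specifying a tensor functor $\Lambda^T \to \cat{Pic}$, namely $\mu \mapsto {}^1\mu\,\sh{T}$. Under this correspondence, a trivialization of $\sh{T}$ off $x$ corresponds to a compatible system of trivializations of the line bundles ${}^1\mu\,\sh{T}$ off $x$. Since a line bundle on $X$ together with a trivialization off $x$ is determined up to unique isomorphism by its order at $x$, the full datum is exactly a homomorphism $\Lambda^T \to \Z$, i.e., an element $\lambda \in \Lambda_T$. To construct $\OO(\lambda x)$ explicitly I would take any cocharacter $\lambda\colon \Gm \to T$ and form the pushforward $\lambda_* \OO(x)$, with the characterizing identity automatic since $\mu \circ \lambda\colon \Gm \to \Gm$ is the $\langle\mu,\lambda\rangle$-th power map. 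Tensor product of torsors matches addition of orders, giving an isomorphism of groups, and letting $x$ vary gives the isomorphism over $X(\C)$.

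For part \ref{en:torus grassmannian n}, I first observe that over the open complement $U \subset X^n$ of all diagonals the divisor $\bar{x}$ is a disjoint union of $n$ graphs, so part \ref{en:torus grassmannian 1} applied coordinate-by-coordinate yields $\on{Gr}_{T,X^n}|_U \cong U \times (\Lambda_T)^n$; the open stratum therefore has $(\Lambda_T)^n$-many components. For each $\vect{\lambda} = (\lambda_1, \dots, \lambda_n)$, iterating the multiplication \ref{eq:torus grassmannian multiplication} produces a section
\begin{equation*}
 s_{\vect\lambda}\colon X^n \to \on{Gr}_{T,X^n}, \qquad (x_1, \dots, x_n) \mapsto \textstyle\bigotimes_i \OO(\lambda_i x_i),
\end{equation*}
which is defined everywhere since tensor product of torsors together with their off-$\bar{x}$ trivializations remains defined when coordinates coincide. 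As a section of the structure morphism, $s_{\vect\lambda}$ is automatically a closed embedding, and its image contains the corresponding open component densely (since $X^n$ is irreducible), so it realizes the closure of that component as a copy of $X^n$.

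For the intersection statement, two components meet at $(x_1, \dots, x_n)$ iff the tensor products $\bigotimes_i \OO(\lambda_i x_i)$ and $\bigotimes_i \OO(\mu_i x_i)$ agree as torsors with their trivializations. Grouping indices by equal coordinates, let $y_j$ be the distinct values and $q_j = \{i : x_i = y_j\}$; the agreement condition collapses to
\begin{equation*}
 \textstyle\bigotimes_j \OO\bigl((\sum_{i\in q_j} \lambda_i)\, y_j\bigr) \;\cong\; \bigotimes_j \OO\bigl((\sum_{i\in q_j} \mu_i)\, y_j\bigr)
\end{equation*}
with matching trivializations, which by part \ref{en:torus grassmannian 1} applied separately at the now-distinct points $y_j$ holds iff $\sum_{i\in q_j} \lambda_i = \sum_{i\in q_j} \mu_i$ for each $j$. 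Thus $\Delta^n_p$ lies in the intersection iff the stated sum condition holds for the blocks of $p$, giving the claim. The main subtle point along the way is the closure identification; the key observation is that the global tensor-product formula automatically specializes correctly under degeneration of distinct points to coincident ones, which is precisely the factorizability of the grassmannian.
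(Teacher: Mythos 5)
Your proposal is correct, and it fills a gap the paper itself leaves: the proposition is stated with a \verb|\qed| and no proof, with the only hint being the remark after \ref{grassmannian is factorizable} that for $G = T$ the factorization gluing is realized by tensor product of torsors and their trivializations. Your argument takes exactly this route — reducing part~\ref{en:torus grassmannian 1} to the case of $\Gm$ via the tensor-functor description of $T$-torsors, pushing $\OO(x)$ along cocharacters, and then building the sections $s_{\vect\lambda}$ by iterated tensor product so that the degeneration across diagonals is handled automatically by the global formula. The only place you are (deliberately) more precise than the paper's phrasing is the intersection statement: you show that $\Delta^n_p$ \emph{is contained in} the intersection iff the block sums agree, rather than that it \emph{equals} the intersection; the latter as literally written in the proposition can fail, since the intersection is in general a union of incomparable diagonals (for instance $\Lambda_T = \Z$, $n = 4$, $\vect\lambda = (1,-1,1,-1)$, $\vect\mu = (-1,1,-1,1)$, where the block-sum condition holds for both $p = (12)(34)$ and $p' = (14)(23)$). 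Your formulation is the correct one and is in fact what is used throughout the rest of the paper.
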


We will only ever use the topological structure of the grassmannians, so we will not mention the
$\C$-points subsequently.  The structure exhibited in the above proposition is formalized the the
following definition:

\begin{theorem}{prop}{grassmannian is factorizable}
 The $\on{Gr}_n$ are \emph{factorizable}:
 \begin{enumerate}
  \item \label{en:factorizable diagonal}
  For $n, m \in \N$, let $p$ be a partition of $[1, n]$ into $m$ parts and $\Delta^n_p$ be the corresponding copy of $X^m$ inside $X^n$.  Then there are isomorphisms
  \begin{equation*}
   \on{Gr}_n|_{\Delta^n_p} \cong \on{Gr}_m
  \end{equation*}
  which are compatible with refinement of the partition $p$;
 
  \item \label{en:factorizable open}
  Let $p$ be a partition as above and suppose its parts $p_i$ have sizes $n_i$; let $X^n_p$ be the open subset of $X^n$ consisting of coordinates $\vect{x} = (x_1, \dots, x_n)$ such that if $x_i = x_j$, then $i, j$ are in the same part of $p$.  Then there are isomorphisms
  \begin{equation*}
   \on{Gr}_n|_{X^n_p} \cong \left.\left(\prod \on{Gr}_{n_i}\right)\right|_{X^n_p}
  \end{equation*}
  compatible with refinement of the partition $p$ (together with, of course, further restrictions to finer $X^n_p$'s).  Furthermore, these isomorphisms are compatible with those above when restricting both \emph{to} some diagonal, and \emph{away from} others, in either order.
  
  \item \label{en:factorizable symmetry}
  For any $n$, we have an equivariance structure for the action of the symmetric group $S_n$ on $X^n$ which is compatible with both of the above classes of isomorphisms.
 \end{enumerate}
\end{theorem}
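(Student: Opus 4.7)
The plan is to reduce all three claims to the single observation that $\on{Gr}_{G,X^n}$ is defined purely in terms of the closed subset $\bar x \subset X_S$ together with a $G$-torsor trivialized on its complement. The divisor $\bar x$ depends only on the unordered set of coordinates, it agrees with the analogous divisor for the underlying $m$-tuple along any diagonal $\Delta^n_p$, and it splits as a disjoint union $\bigsqcup_i \bar x_i$ over $X^n_p$. Each of the three claims will then follow from an appropriate descent or gluing argument for $G$-torsors.

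For \ref{en:factorizable diagonal}, the argument is immediate. Given a partition $p$ of $\{1, \dots, n\}$ with $m$ parts and an $S$-point $\vect x$ in $\Delta^n_p$, write $\vect y \in X^m(S)$ for the underlying $m$-tuple of distinct coordinates; then $\bar x = \bar y$ as closed subsets of $X_S$. Consequently the moduli of $G$-torsors on $X_S$ trivialized outside $\bar x$ literally coincides with that outside $\bar y$, yielding the desired isomorphism $\on{Gr}_n|_{\Delta^n_p} \cong \on{Gr}_m$. Compatibility with further refinement of $p$ is automatic, since any such refinement leaves the underlying divisor unchanged.

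The heart of the argument, and what I expect to be the main obstacle, is \ref{en:factorizable open}. Over $X^n_p$ the divisors $\bar x_i$ associated to the different parts $p_i$ are pairwise disjoint. Setting
\[
V_i = X_S \setminus \bigsqcup_{j \ne i} \bar x_j, \qquad W_i = X_S \setminus \bar x_i,
\]
the pair $\{V_i, W_i\}$ is a Zariski open cover of $X_S$ with intersection $X_S \setminus \bar x$, and further $V_i \cap V_j = X_S \setminus \bar x$ for $i \ne j$. Given $(\vect x, \sh T, \phi) \in \on{Gr}_{G,X^n}(S)$, I would produce its factorization $(\vect x|_{p_i}, \sh T_i, \phi_i)$ by gluing $\sh T|_{V_i}$ to the trivial torsor on $W_i$ along $\phi|_{V_i \cap W_i}$, taking $\phi_i$ to be the tautological trivialization on $W_i$. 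In the opposite direction, from a tuple $((\vect x|_{p_i}, \sh T_i, \phi_i))_i$ I would recover a $G$-torsor on $X_S$ by gluing the $\sh T_i|_{V_i}$ over pairwise overlaps $V_i \cap V_j$ via the trivializations $\phi_i$ and $\phi_j$; the triple-overlap cocycle conditions are automatic since all pairwise overlaps meet only in $X_S \setminus \bar x$, where every torsor in sight is trivialized. That these two operations are mutually inverse follows from descent for $G$-torsors along the cover $\{V_i, W_i\}$, and functoriality in $S$ is built into the construction. Compatibility with refinement of $p$ amounts to regrouping the parts of the disjoint decomposition of the same divisor, while compatibility with \ref{en:factorizable diagonal} follows by noting that restricting first to a diagonal amounts to identifying certain coordinates before performing the same gluing.

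Finally, for \ref{en:factorizable symmetry}, permuting the coordinates of $\vect x$ does not change $\bar x$, so the definition of $\on{Gr}_n$ is tautologically $S_n$-equivariant over the permutation action on $X^n$. Compatibility with both \ref{en:factorizable diagonal} and \ref{en:factorizable open} is automatic since both isomorphisms were formulated using only the (unordered) divisor $\bar x$ together with the disjoint decomposition induced by the partition, both of which are $S_n$-equivariant. In summary, once the gluing in \ref{en:factorizable open} is verified, the remaining assertions are formal consequences of the set-theoretic nature of $\bar x$.
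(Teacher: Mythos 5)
Your proposal is correct and takes essentially the same route as the paper's proof: reduce points \ref{en:factorizable diagonal} and \ref{en:factorizable symmetry} to the observation that $\bar{x}$ depends only on the unordered multiset of graphs, and prove point \ref{en:factorizable open} by gluing $G$-torsors along the Zariski open cover determined by the pairwise-disjoint divisors $\bar{x}_i$ (your $V_i, W_i$ are precisely the paper's $V_i, U_i$, and the two mutually inverse constructions match). One small clarification worth making explicit: for the reverse direction it is the cover $\{V_i\}_i$ alone that is used, and the cocycle condition holds because every double and triple overlap $V_i \cap V_j$ ($i \neq j$) already lies in $X_S \setminus \bar{x}$, where all the $\phi_i$ are defined.
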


The details of the second point will be relevant later, so we describe them fully.

\begin{proof}
 Points \ref{en:factorizable diagonal} and \ref{en:factorizable symmetry} are obvious, so we consider only \ref{en:factorizable open}.  Suppose that we have $m$ groups $p_i$ of coordinates' indices such that their graphs are disjoint: $\Gamma(x_k) \cap \Gamma(x_l) = \emptyset$ for $x_k \in p_i$ and $x_l \in p_j$ with $i \neq j$.  Let the coordinate groups be denoted $\vect{x}_i$, let $\bar{x}_i$ be the union of the graphs of the coordinates in each group, let $U_i$ be their complements in $X_S$, and let $V_i = \bigcap_{j \neq i} U_j$.  We establish mutually inverse isomorphisms of $\on{Gr}_n|{X^n_p}$ with $\prod_i \on{Gr}_{n_i}$ as follows:
 \begin{itemize}
  \item 
  For $(\vect{x}, \sh{T}, \phi) \in \on{Gr}_n|_{X^n_p}$, let $\sh{T}_i$ be the $G$-torsor on $X_S$ with $\sh{T}_i|_{U_i} = \sh{T}^0$ the trivial torsor and $\sh{T}_i|_{V_i} \cong \sh{T}|_{V_i}$, with the isomorphism on $U_i \cap V_i = X_S \setminus \bar{x}$ given by $\phi$; thus, there is a natural trivialization $\phi_i$ of $\sh{T}_i|_{U_i}$.  Then $(\vect{x}_i, \sh{T}_i, \phi_i) \in \on{Gr}_{n_i}(S)$, as desired.
  
  \item Given points $(\vect{x}_i, \sh{T}_i, \phi_i) \in \on{Gr}_{n_i}(S)$ with all the $\vect{x}_i$ disjoint from one another, let $\sh{T}$ be the $G$-torsor with $\sh{T}|_{V_i} \cong \sh{T}_i|_{V_i}$ for each $i$, where on $V_i \cap V_j = \bigcap_k U_k$, the isomorphisms are given by $\phi_i \phi_j^{-1}$, which are clearly compatible on triple intersections.  There is a natural trivialization $\phi$ of $\sh{T}$ on $\bigcap V_i = \bigcap U_i = X^n_p$ induced by any of the $\phi_i$'s. Taking $\vect{x}$ to  be the union of the $\vect{x}_i$, we have $(\vect{x}, \sh{T}, \phi) \in \on{Gr}_n|{X^n_p}(S)$, as desired.
 \end{itemize}
 It is easy to see that these constructions are inverse to one another.
\end{proof}

When $G = T$ is a torus, the gluing of point \ref{en:factorizable open} can be realized by taking the tensor product of $T$-torsors as well.  That is, given points $(\vect{x}_i, \sh{T}_i, \phi_i) \in \on{Gr}_{m_i}(S)$, we can simply define $\sh{T} = \bigotimes_i \sh{T}_i$ and, on $X_S \setminus \bar{x}$, trivialize it via $\phi = \bigotimes_i \phi_i|_{X_S \setminus \bar{x}}$.  This shows that the description of \ref{torus grassmannian properties} is indeed the same as the factorizable structure just given.

An example of point \ref{en:torus grassmannian n} is when $\Lambda_T = \Z$ and $n = 3$; then we can say that $\on{Gr}_3^{0,4,-1}$ and $\on{Gr}_3^{2,2,-1}$ intersect in the divisor $\Delta^3_{1,2}$ where $x_1 = x_2$, while $\on{Gr}_3^{1,1,1}$ intersects either of them only in the diagonal $\Delta^3_{1,2,3}$ where $x_1 = x_2 = x_3$.  To illustrate the factorizable structure, we generalize this to say that the $\on{Gr}_3^{a,b,c}$ are divided into groups, indexed by $(a + b, c)$, which intersect over $\Delta^3_{1,2}$; denote the irreducible components of the fiber $\on{Gr}_3|_{\Delta^3_{1,2}}$ by $C^{d,e}$.  If we identify $\Delta^3_{1,2} \cong X^2$ in the obvious way, then $\Delta^3_{1,2} \cap \Delta^3_{1,2,3} \cong \Delta$ is the diagonal in $X^2$, and the $C^{d,e}$ are divided into groups, indexed by $d + e$, which intersect over $\Delta$.  This is exactly the pattern of intersections of the components $\on{Gr}_2^{d,e}$, which are furthermore isomorphic to $C^{d,e}$. We dwell on this point because it will 
become important later.

\section{Gerbes and factorizability}
\label{s:gerbes and factorizability}

In this section, we make the main definition, that of a \emph{symmetric factorizable gerbe} for an
abelian group $A$ on the $\on{Gr}_n$.  Our goal is to associate with every such gerbe $\stack{G} =
\{\stack{G}_n\}$ a quadratic form $Q = Q(\stack{G})$ on the coweight lattice $\Lambda_T$, where
$\map{Q}{\Lambda_T}{A}$ (in particular, when $A$ is the multiplicative group of a field). We will
construct specific examples of such gerbes and show how most such forms (in particular, the Killing
form) can be obtained explicitly.

Here is the main definition of this work.

\begin{theorem}{defn}{sf gerbe}
 Let $A$ be a multiplicative abelian group.  A \emph{symmetric factorizable gerbe} over $A$, or \emph{sf $A$-gerbe}, is a collection of $A$-gerbes $\stack{G}_n$ on $\on{Gr}_n$ with the following properties reminiscent of \ref{grassmannian is factorizable}:
 \begin{enumerate}
  \item \label{en:factorizable gerbe closed}
  For a partition $p$ of $n$ into $m$ parts, we have equivalences
  \begin{equation*}
   \stack{G}_n|_{\Delta^n_p} \cong \stack{G}_m;
  \end{equation*}
  these come with compatibility $2$-isomorphisms for refinements of partitions which  are themselves compatible with further refinement.
  
  \item \label{en:factorizable gerbe open}
  If $p$ has parts of size $n_i$, then we have equivalences
  \begin{equation*}
   \stack{G}_n|_{X^n_p} \cong \Bigl(\bigotimes_i \stack{G}_{n_i}\Bigr)\Bigr|_{X^n_p}
  \end{equation*}
  which also come with compatibility $2$-isomorphisms which are suitably compatible with refinement.
  
  \item \label{en:factorizable gerbe symmetry}
  $\stack{G}_n$ has a structure of $S_n$-equivariance which is compatible with both of the above equivalences and their compatibilities.
 \end{enumerate}
\end{theorem}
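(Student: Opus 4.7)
The final block of the excerpt is Definition \ref{sf gerbe}, which introduces the notion of a symmetric factorizable gerbe; as a definition it asserts no theorem and admits no proof. The content in the statement is purely structural: a list of data (the family $\stack{G}_n$, closed-stratum equivalences, open-stratum equivalences, $S_n$-equivariance) together with compatibility $2$-isomorphisms that must cohere in a manner paralleling \ref{grassmannian is factorizable}. This coherence is not to be established; it is what one is prescribing by calling such an object an sf gerbe, and it mirrors exactly the coherence already holding for the $\on{Gr}_n$ themselves.

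If one reads the definition as an implicit promise that such objects exist and are tractable, the real theorems are still ahead and the route to them is visible from the preceding machinery. To produce nontrivial examples, the plan would be to start from a natural line bundle $\sh{L}_n$ on each $\on{Gr}_n$ (most naturally the determinant line bundle) and form the gerbes $\sh{L}_n^{\log a}$ via \ref{line bundle order}; then \ref{log power functorial} shows that any factorizable gluing of the $\sh{L}_n$'s lifts to the required equivalences among $\stack{G}_n = \sh{L}_n^{\log a}$, while the $S_n$-equivariance descends from that of the underlying line bundles. To classify sf gerbes, one would associate to each $\stack{G}$ a function on $\Lambda_T$ with values in $A$ by restricting $\stack{G}_1$ to the components over a fixed point $x \in X$ and then using the component structure of $\on{Gr}_{T,X^2}$ described in \ref{torus grassmannian properties} to detect the failure of additivity under \ref{en:factorizable gerbe open}.

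The hard part in that downstream program---again, not addressed by the present definition---will be to show that this deviation is a symmetric bilinear form, i.e.\ that the associated $Q \colon \Lambda_T \to A$ is genuinely quadratic, and that its polarization controls $\stack{G}$ up to a well-understood remainder. This will require a careful diagram chase through the associator constraints on triple factorizations together with the $S_n$-equivariance data; essentially every compatibility $2$-isomorphism tacitly bundled into items \ref{en:factorizable gerbe closed}--\ref{en:factorizable gerbe symmetry} of \ref{sf gerbe} will be exercised. But that is work for the coming sections promised by the chapter heading, not for the bare definition under consideration here.
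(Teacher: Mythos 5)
You are right that \ref{sf gerbe} is a definition and carries no proof obligation, and your sketch of the downstream program (constructing examples via $\sh{L}_n^{\log a}$ from determinant line bundles and classifying sf gerbes by the quadratic form extracted from the orders of the factorization equivalences on $\on{Gr}_{T,X^2}$) matches what the paper actually does in the following sections. Nothing further is needed here.
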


The vague restrictions imposed above can be made precise as follows: let $p$ be a partition of $\{1, \dots, n\}$ into $m$ parts, let $q$ be a refinement of $p$ into $l$ parts, and let $r$ be a refinement of $q$ into $k$ parts.  Then for \ref{en:factorizable gerbe closed}, we are given the above equivalences ($1$-morphisms) together with $2$-isomorphisms between the following $1$-morphisms:
\begin{align*}
 \stack{G}_n|_{\Delta^n_p}|_{\Delta^m_q}
  \cong \stack{G}_m|_{\Delta^m_q}
  \cong \stack{G}_l &&
 \stack{G}_n|_{\Delta^n_q} \cong \stack{G}_l
\end{align*}
and likewise for restriction through $q$ and then $r$.  Furthermore, the equivalence
\begin{equation*}
 \stack{G}_n|_{\Delta^n_r} \cong \stack{G}_k
\end{equation*}
can be realized in four ways: (1) directly, considering $r$ as a partition of $\{1, \dots, n\}$; (2) passing first through $p$ and then considering $r$ as a refinement of it; (3) passing first through $q$ and then considering $r$ as a refinement of it; (4) passing through $p$, $q$, and $r$ in order.  Let these numbers denote the resulting $1$-morphism obtained by composition of the maps described, so that we have a diagram of $2$-isomorphisms
\begin{center}
 \tikzsetnextfilename{factoriable_gerbe_closed}
 \begin{tikzpicture}
  \node (1) {(1)};
  \node (2) [above right = of 1] {(2)};
  \node (3) [below right = of 1] {(3)};
  \node (4) [below right = of 2] {(4)};
  { [math arrows]
   \draw (1) -- (2);
   \draw (2) -- (4);
   \draw (1) -- (3);
   \draw (3) -- (4);
  }
 \end{tikzpicture}
\end{center}
which we require to be commutative.  For \ref{en:factorizable gerbe open}, we need to observe that as a refinement of $p$, $q$ induces a partition of each part of $p$; i.e.\ if $p$ has parts $p_i$ (of size $n_i$, in the notation of the definition) then $q$ gives a partition of each set $p_i$ into subsets $q_{ij}$ of size $n_{ij}$, giving an overall partition of $\{1, \dots, n\}$ into parts of size $n_{i'}$.  Then we have two equivalences
\begin{align*}
 \stack{G}_n|_{X^n_p}|_{X^n_q}
  \cong \Bigl(\bigotimes_i \stack{G}_{n_i}\Bigr)\Bigr|_{X^n_p}\Bigr|_{X^n_q}
  \cong \Bigl(\bigotimes_i \bigotimes_j \stack{G}_{n_{ij}}\Bigr)\Bigr|_{X^n_q}, &&
 \stack{G}_n|_{X^n_q} \cong \Bigl(\bigotimes_{i'} \stack{G}_{n_{i'}}\Bigr)\Bigr|_{X^n_q}
\end{align*}
where both pairs of termina are naturally identified; the factorization data then requires a $2$-isomorphism between these $1$-morphisms, and that these $2$-isomorphisms are compatible in the same way as above.  Finally, \ref{en:factorizable gerbe symmetry} is actually a special case of \ref{en:factorizable gerbe open} where the parts of $p$ are reordered singletons, and the fact that this is an $S_n$-action is encoded in the above morphisms and compatibilities.

The most important special case of this definition is when $n = 2$; then, we have the following translations of the three points:
\begin{enumerate}
 \item \label{en:n = 2 factorizable closed}
 $\stack{G}_2|_\Delta \cong \stack{G}_1$;
 
 \item \label{en:n = 2 factorizable open}
 $\stack{G}_2|_{X^2 \setminus \Delta} \cong (\stack{G}_1 \boxtimes \stack{G}_1)|_{X^2
 \setminus \Delta}$;
 
 \item \label{en:n = 2 factorizable symmetry}
 There is an $S_2$-equivariance structure (as in \ref{S_2 equivariance}) on $\stack{G}_2$ whose restriction to $\Delta$ is trivialized and whose restriction to $X^2 \setminus \Delta$, via \ref{en:n = 2 factorizable open}, is made isomorphic to the natural commutativity of the tensor product.
\end{enumerate}

It should be noted that since $S_n$ is generated by transpositions, the compatibility of the general case of point \ref{sf gerbe}\ref{en:factorizable symmetry} with \ref{en:factorizable gerbe open} can be reduced to point \ref{en:n = 2 factorizable symmetry} above.

\section{Classification: the torus case}
\label{s:split exact sequence}

Let $\stack{T}_n$ be an sf gerbe for $T$.  Using \ref{torus grassmannian properties}, we see that $\on{Gr}_2$ breaks up into irreducible components $\on{Gr}_2^{\lambda, \mu}$ isomorphic to $X^2$. Then on each, the above part \ref{en:n = 2 factorizable symmetry} of the factorizable structure is largely redundant: by \ref{2-line bundle of a Cartier divisor}\ref{en:order gerbe}, the $S_2$-equivariance is that unique equivariance structure extending the natural one on $\stack{G}_1 \boxtimes \stack{G}_1$ over $X^2 \setminus \Delta$, using the equivalence of \ref{en:n = 2 factorizable open}.  Thus, the only provision remaining is that this structure be trivialized on $\Delta$.

For each $\lambda \in \Lambda_T$, on $\on{Gr}_2^{\lambda,\lambda} = X^2$ we have a factorization isomorphism
\begin{equation*}
 \map{f^{\lambda,\lambda}}
     {(\stack{T}_1^\lambda \boxtimes \stack{T}_1^\lambda)|_{X^2 \setminus \Delta}}
     {\stack{T}_2^{\lambda,\lambda}|_{X^2 \setminus \Delta}}
\end{equation*}
as above.  This is in fact a meromorphic map of gerbes on $X^2$ having some order $a^{\lambda,\lambda}$; by \ref{covering order quotient} and the above comments on point \ref{en:n = 2 factorizable symmetry}, we have a uniquely specified square root $b^\lambda$ of $a^{\lambda,\lambda}$.  More generally, on the component $\on{Gr}_2^{\lambda, \mu} \cong X^2$, the factorization equivalence
\begin{equation*}
 \map{f^{\lambda,\mu}}
     {(\stack{T}_1^\lambda \boxtimes \stack{T}_1^\mu)|_{X^2 \setminus \Delta}}
     {\stack{T}_2^{\lambda,\mu}|_{X^2 \setminus \Delta}}
\end{equation*}
has some order $a^{\lambda, \mu}$.

\begin{theorem}{defn}{quadratic and bilinear forms}
 The quadratic form $Q = Q(\{\stack{T}_n\})$ associated with the sf gerbe $\stack{T}_n$ is the map $\map{Q}{\Lambda_T}{A}$ with $Q(\lambda) = b^\lambda$ above.  The bilinear form $\kappa = \kappa(\{\stack{T}_n\})$ associated with $\stack{T}_n$ is the map $\map{\kappa}{\Lambda_T \times \Lambda_T}{A}$ with $\kappa(\lambda,\mu) = a^{\lambda, \mu}$.
\end{theorem}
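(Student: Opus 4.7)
The implicit content to prove is that $\kappa$ is a symmetric bilinear form on $\Lambda_T$, $Q$ is well-defined, and $Q$ is a quadratic form whose polarization is $\kappa$. For well-definedness of $Q$, I observe that $f^{\lambda,\lambda}$ is a meromorphic map of $A$-gerbes across the smooth divisor $\Delta \subset X^2$, so $a^{\lambda,\lambda} = \on{ord}_\Delta(f^{\lambda,\lambda})$ exists by \ref{2-line bundle of a Cartier divisor}. The coordinate swap on $X^2$ fixes $\Delta$; the gerbes $\stack{T}_1^\lambda \boxtimes \stack{T}_1^\lambda$ and $\stack{T}_2^{\lambda,\lambda}$ carry natural $S_2$-equivariance structures (the first from commutativity of $\boxtimes$, the second from \ref{sf gerbe}\ref{en:factorizable gerbe symmetry}); the map $f^{\lambda,\lambda}$ is $S_2$-equivariant on $X^2 \setminus \Delta$ by the sf compatibility of symmetry with the open factorization, and the closed factorization trivializes the $S_2$-equivariance of $\stack{T}_2^{\lambda,\lambda}$ on $\Delta$. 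Hence \ref{covering order quotient} uniquely specifies $b^\lambda = Q(\lambda)$ with $(b^\lambda)^2 = a^{\lambda,\lambda}$.

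For bilinearity of $\kappa$, consider $\on{Gr}_3^{\lambda,\lambda',\mu} \cong X^3$ with pairwise diagonals $D_{ij}$. By compatibility of sf equivalences with refinement, on the stratum $X^3 \setminus (D_{13} \cup D_{23})$ the open factorization passes through the $\on{Gr}_2$-factorization on the first two coordinates; by locality of order \ref{2-line bundle of a Cartier divisor}\ref{en:order local} applied to transverse slices, the map $\stack{T}_1^\lambda \boxtimes \stack{T}_1^{\lambda'} \boxtimes \stack{T}_1^\mu \to \stack{T}_3^{\lambda,\lambda',\mu}$ has order $\kappa(\lambda,\lambda')$ along $D_{12}$, and analogously $\kappa(\lambda,\mu), \kappa(\lambda',\mu)$ along $D_{13}, D_{23}$ by $S_n$-equivariance. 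By \ref{reducible order},
\begin{equation*}
\stack{T}_3^{\lambda,\lambda',\mu} \cong \stack{T}_1^\lambda \boxtimes \stack{T}_1^{\lambda'} \boxtimes \stack{T}_1^\mu \otimes \OO(D_{12})^{\log \kappa(\lambda,\lambda')} \otimes \OO(D_{13})^{\log \kappa(\lambda,\mu)} \otimes \OO(D_{23})^{\log \kappa(\lambda',\mu)}.
\end{equation*}
Restricting to $\Delta^3_{1,2} \cong X^2$ (collided coordinate $y$, free $z$), both $D_{13}$ and $D_{23}$ pull back transversely to the same diagonal $\Delta = \{y=z\}$, contributing $\OO(\Delta)^{\log \kappa(\lambda,\mu)\kappa(\lambda',\mu)}$. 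The closed factorization \ref{sf gerbe}\ref{en:factorizable gerbe closed} identifies $\stack{T}_3^{\lambda,\lambda',\mu}|_{\Delta^3_{1,2}}$ with $\stack{T}_2^{\lambda+\lambda',\mu} \cong \stack{T}_1^{\lambda+\lambda'} \boxtimes \stack{T}_1^\mu \otimes \OO(\Delta)^{\log \kappa(\lambda+\lambda',\mu)}$; the residual $\OO(D_{12})^{\log \kappa(\lambda,\lambda')}|_{\Delta^3_{1,2}}$ is the normal-bundle gerbe that comprises the difference between $\stack{T}_1^\lambda \otimes \stack{T}_1^{\lambda'}$ and $\stack{T}_1^{\lambda+\lambda'}$ (obtained by the same decomposition applied to $\stack{T}_2^{\lambda,\lambda'}$ and restricted to $\Delta$), and so cancels. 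Matching the remaining $\OO(\Delta)$-exponents yields $\kappa(\lambda+\lambda',\mu) = \kappa(\lambda,\mu)\kappa(\lambda',\mu)$. Symmetry $\kappa(\lambda,\mu) = \kappa(\mu,\lambda)$ follows from $S_2$-equivariance of $\stack{T}_2$ exchanging $\on{Gr}_2^{\lambda,\mu}$ and $\on{Gr}_2^{\mu,\lambda}$, which preserves orders by \ref{order and covering maps}.

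Given bilinearity and symmetry, $\kappa(\lambda+\mu,\lambda+\mu) = Q(\lambda)^2 Q(\mu)^2 \kappa(\lambda,\mu)^2$, so $Q(\lambda+\mu)^2 = (Q(\lambda) Q(\mu) \kappa(\lambda,\mu))^2$. To promote this to the unsquared polarization identity $Q(\lambda+\mu) = Q(\lambda) Q(\mu) \kappa(\lambda,\mu)$, one must show that the right-hand side coincides with the canonical square root selected by \ref{covering order quotient} on $\on{Gr}_2^{\lambda+\mu,\lambda+\mu}$; i.e., that the $S_2$-equivariance trivialization on $\stack{T}_2^{\lambda+\mu,\lambda+\mu}|_\Delta$ matches the product trivialization naturally assembled from those on $\stack{T}_2^{\lambda,\lambda}, \stack{T}_2^{\mu,\mu}$, and $\stack{T}_2^{\lambda,\mu}$. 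The cleanest setting is $\on{Gr}_4^{\lambda,\mu,\lambda,\mu}$ equipped with the block-swap involution and the partial diagonal $\Delta^4_{(1,2)(3,4)} \cong X^2$: pulling back the sf compatibilities through the various refinements realizes both candidate trivializations as faces of the same cube of sf data.

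This sign-tracking is the main obstacle, being the only step not reducible to a mechanical order computation; the well-definedness and bilinearity follow directly from the order calculus of \ref{2-line bundle of a Cartier divisor} and \ref{reducible order} together with the defining compatibilities of \ref{sf gerbe}.
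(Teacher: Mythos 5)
Your treatment of well-definedness of $Q$ (via \ref{covering order quotient}), of symmetry, and of bilinearity of $\kappa$ is substantively the same as the paper's (\ref{form is bilinear}): the $\on{Gr}_3$ order computation, the restriction to $\Delta^3_{1,2}$, and the cancellation via \ref{bilinear form multiplicativity} and \ref{eq:order gerbe restriction} are all as in the text. One small difference: you obtain the squared polarization identity $Q(\lambda+\mu)^2 = Q(\lambda)^2 Q(\mu)^2 \kappa(\lambda,\mu)^2$ formally from bilinearity and symmetry, whereas the paper re-derives it by a fresh order computation on $\on{Gr}_4^{\lambda,\mu,\lambda,\mu}$. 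Your route is a little leaner, but it buys nothing in the end, because the full $\on{Gr}_4$ order picture is exactly what is needed for the square-root step anyway.

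The genuine gap is in that square-root step, and you essentially flag it yourself. You correctly identify the setting — $\on{Gr}_4^{\lambda,\mu,\lambda,\mu}$ with the block-swap involution and the partial diagonal $\Delta_{1,2}\cap\Delta_{3,4}$ — but then replace the argument with the assertion that \emph{``pulling back the sf compatibilities through the various refinements realizes both candidate trivializations as faces of the same cube of sf data.''} This is not a proof; in an arbitrary abelian group $A$ the 2-torsion ambiguity is a real obstruction and must be resolved by an explicit comparison of $S_2$-equivariance trivializations. What the paper actually does (\ref{form is quadratic}) is track how the block-swap involution permutes the six exceptional diagonals: it exchanges $\Delta_{1,4}$ and $\Delta_{2,3}$ (both of order $\kappa(\lambda,\mu)$, so by \ref{order and covering maps} each contributes a single factor $\kappa(\lambda,\mu)$ rather than its square) and fixes $\Delta_{1,3}$ and $\Delta_{2,4}$, to which \ref{covering order quotient} applies exactly as in the definition of $Q$, yielding $Q(\lambda)$ and $Q(\mu)$. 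That bookkeeping is the content; without it, the ``cleanest setting'' remark leaves the decisive step unexecuted.
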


The notation $\kappa$ is chosen because this form is closely related to the Killing form on $\Lambda$ when $G$ is taken to be a more general reductive group.  The terminology ``quadratic form'' and ``bilinear form'' is chosen because we will presently prove that these functions are exactly that.  Before beginning the proof, we observe that
\begin{equation}
 \label{eq:order gerbe restriction}
 \OO_{X^2}(\Delta)^{\log a}|_\Delta
  = \sh{T}_X^{\log a}.
\end{equation}
Indeed, $\Delta^* \OO_{X^2}(\Delta) \cong \sh{T}_X$, where $\sh{T}_X$ is the tangent bundle on $X$.

\begin{theorem}{lem}{bilinear form multiplicativity}
 We have $\stack{T}_1^{\lambda + \mu} \cong \stack{T}_1^\lambda \otimes \stack{T}_1^\mu \otimes \sh{T}_X^{\log \kappa(\lambda,\mu)}$.
\end{theorem}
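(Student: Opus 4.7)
The plan is to extract the identity by restricting the ``extended'' factorization equivalence on the component $\on{Gr}_2^{\lambda,\mu} \cong X^2$ to the diagonal and comparing both sides there.

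First I would set up the geometric picture. By \ref{torus grassmannian properties}, the component $\on{Gr}_2^{\lambda,\mu}$ is isomorphic to $X^2$, and its diagonal $\Delta \subset X^2$ is identified, inside $\on{Gr}_2|_{\Delta_{1,2}^2}$, with the component $\on{Gr}_1^{\lambda+\mu} \subset \on{Gr}_1$. Indeed, the closed factorization property \ref{sf gerbe}\ref{en:factorizable gerbe closed} together with its compatibility with the open factorization \ref{sf gerbe}\ref{en:factorizable gerbe open} forces this identification at the level of components (the component indices have to add up over a coincidence of points). Consequently
\begin{equation*}
 \stack{T}_2^{\lambda,\mu}\big|_\Delta \cong \stack{T}_1^{\lambda+\mu}.
\end{equation*}

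Next I would use the meromorphic factorization equivalence
\begin{equation*}
 f^{\lambda,\mu} \colon (\stack{T}_1^\lambda \boxtimes \stack{T}_1^\mu)|_{X^2 \setminus \Delta}
    \xrightarrow{\sim} \stack{T}_2^{\lambda,\mu}|_{X^2 \setminus \Delta}
\end{equation*}
from \ref{sf gerbe}\ref{en:factorizable gerbe open}. By definition this has order $\kappa(\lambda,\mu)$ about $\Delta$, so applying \ref{2-line bundle of a Cartier divisor}\ref{en:order gerbe} extends $f^{\lambda,\mu}$ to an equivalence of $A$-gerbes on the whole of $X^2$:
\begin{equation*}
 \stack{T}_2^{\lambda,\mu}
  \cong (\stack{T}_1^\lambda \boxtimes \stack{T}_1^\mu) \otimes \OO_{X^2}(\Delta)^{\log \kappa(\lambda,\mu)}.
\end{equation*}

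Finally I would restrict this equivalence to $\Delta$. The left-hand side becomes $\stack{T}_1^{\lambda+\mu}$ by the first step. On the right-hand side, $(\stack{T}_1^\lambda \boxtimes \stack{T}_1^\mu)|_\Delta \cong \stack{T}_1^\lambda \otimes \stack{T}_1^\mu$, and by formula \ref{eq:order gerbe restriction} we have $\OO_{X^2}(\Delta)^{\log \kappa(\lambda,\mu)}|_\Delta \cong \sh{T}_X^{\log \kappa(\lambda,\mu)}$. Combining, the lemma follows.

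The main (and essentially only) obstacle is the first step: one has to be sure that the identification $\stack{T}_2^{\lambda,\mu}|_\Delta \cong \stack{T}_1^{\lambda+\mu}$ (as opposed to some other component of $\stack{T}_1$) really is forced by the axioms. This is where the consistency between the closed factorization on $\Delta^2_{1,2}$ and the open factorization on $X^2 \setminus \Delta$ enters, but it amounts to noticing that passing from two distinct points with indices $(\lambda,\mu)$ to their collision is the geometric realization of addition $\lambda + \mu$ in $\Lambda_T$. Everything after that is a mechanical combination of \ref{2-line bundle of a Cartier divisor} and \ref{eq:order gerbe restriction}.
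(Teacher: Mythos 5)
Your proposal is correct and takes exactly the same approach as the paper, which compresses the whole argument into the single sentence ``we restrict to $\Delta$ the defining equivalence $\stack{T}_2^{\lambda,\mu} \cong \stack{T}_1^\lambda \boxtimes \stack{T}_1^\mu \otimes \OO(\Delta)^{\log \kappa(\lambda,\mu)}$.'' You have merely unpacked the implicit steps: the extension of $f^{\lambda,\mu}$ across $\Delta$ via \ref{2-line bundle of a Cartier divisor}\ref{en:order gerbe}, the component identification $\stack{T}_2^{\lambda,\mu}|_\Delta \cong \stack{T}_1^{\lambda+\mu}$ coming from \ref{sf gerbe}\ref{en:factorizable gerbe closed} together with \ref{torus grassmannian properties}\ref{en:torus grassmannian n}, and the restriction formula \ref{eq:order gerbe restriction}.
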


\begin{proof}
 We restrict to $\Delta$ the defining equivalence $\stack{T}_2^{\lambda, \mu} \cong \stack{T}_1^\lambda \boxtimes \stack{T}_1^\mu \otimes \OO(\Delta)^{\log \kappa(\lambda,\mu)}$.
\end{proof}

\begin{theorem}{prop}{form is bilinear}
 $\kappa$ is a symmetric bilinear form on $\Lambda_T$.
\end{theorem}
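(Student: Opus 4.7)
The plan is to extract both bilinearity and symmetry from the factorizable structure on $\stack{T}_3$ and on $\stack{T}_2$ respectively, by computing the orders of meromorphic maps of gerbes produced by the factorization equivalences. Since $A$ is written multiplicatively, bilinearity means $\kappa(\lambda+\lambda',\mu)=\kappa(\lambda,\mu)\,\kappa(\lambda',\mu)$.

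\textbf{Bilinearity in the first variable.} Fix $\lambda,\lambda',\mu\in\Lambda_T$ and work on the component $\on{Gr}_3^{\lambda,\lambda',\mu}\cong X^3$. On the open set $X^3$ minus the three diagonals $\Delta^3_{1,2},\Delta^3_{1,3},\Delta^3_{2,3}$, the factorizable structure \ref{sf gerbe}\ref{en:factorizable gerbe open} gives an equivalence
\begin{equation*}
\stack{T}_3^{\lambda,\lambda',\mu}\big|_{\mathrm{open}}\cong \bigl(\stack{T}_1^{\lambda}\boxtimes\stack{T}_1^{\lambda'}\boxtimes\stack{T}_1^{\mu}\bigr)\big|_{\mathrm{open}}.
\end{equation*}
By further applying \ref{en:factorizable gerbe open} on the larger opens obtained by only removing two diagonals at a time and using the definition of $\kappa$, this meromorphic map of gerbes on $X^3$ has order $\kappa(\lambda,\lambda')$ along $\Delta^3_{1,2}$, order $\kappa(\lambda,\mu)$ along $\Delta^3_{1,3}$ and order $\kappa(\lambda',\mu)$ along $\Delta^3_{2,3}$. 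Invoking \ref{reducible order}, this gives an equivalence on all of $X^3$:
\begin{equation*}
\stack{T}_3^{\lambda,\lambda',\mu}\cong \stack{T}_1^{\lambda}\boxtimes\stack{T}_1^{\lambda'}\boxtimes\stack{T}_1^{\mu}\otimes \OO(\Delta^3_{1,2})^{\log\kappa(\lambda,\lambda')}\otimes \OO(\Delta^3_{1,3})^{\log\kappa(\lambda,\mu)}\otimes \OO(\Delta^3_{2,3})^{\log\kappa(\lambda',\mu)}.
\end{equation*}

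Now restrict to $\Delta^3_{1,2}\cong X^2$, parametrized by $(y,x_3)$ where $y=x_1=x_2$. Under this identification the restrictions of the three order gerbes are: $\sh{T}_X^{\log\kappa(\lambda,\lambda')}$ pulled back from the first coordinate (by \ref{eq:order gerbe restriction}), and $\OO(\Delta)^{\log\kappa(\lambda,\mu)}$ and $\OO(\Delta)^{\log\kappa(\lambda',\mu)}$ for the other two, since both $\Delta^3_{1,3}$ and $\Delta^3_{2,3}$ cut $\Delta^3_{1,2}$ transversally in the same divisor $\{y=x_3\}=\Delta\subset X^2$. Combining the first three factors using \ref{bilinear form multiplicativity} (which identifies $\stack{T}_1^{\lambda}\otimes\stack{T}_1^{\lambda'}\otimes\sh{T}_X^{\log\kappa(\lambda,\lambda')}$ with $\stack{T}_1^{\lambda+\lambda'}$), we obtain
\begin{equation*}
\stack{T}_3^{\lambda,\lambda',\mu}\big|_{\Delta^3_{1,2}}\cong \stack{T}_1^{\lambda+\lambda'}\boxtimes\stack{T}_1^{\mu}\otimes \OO(\Delta)^{\log(\kappa(\lambda,\mu)\kappa(\lambda',\mu))}.
\end{equation*}
On the other hand, \ref{sf gerbe}\ref{en:factorizable gerbe closed} and the defining equivalence of $\kappa(\lambda+\lambda',\mu)$ give
\begin{equation*}
\stack{T}_3^{\lambda,\lambda',\mu}\big|_{\Delta^3_{1,2}}\cong \stack{T}_2^{\lambda+\lambda',\mu}\cong \stack{T}_1^{\lambda+\lambda'}\boxtimes\stack{T}_1^{\mu}\otimes \OO(\Delta)^{\log\kappa(\lambda+\lambda',\mu)}.
\end{equation*}
Comparing orders along $\Delta$, which are well-defined by \ref{2-line bundle of a Cartier divisor}, yields $\kappa(\lambda+\lambda',\mu)=\kappa(\lambda,\mu)\kappa(\lambda',\mu)$.

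\textbf{Symmetry.} Let $s\colon X^2\to X^2$ be the coordinate swap. The $S_2$-equivariance on $\stack{T}_2$ in \ref{sf gerbe}\ref{en:factorizable gerbe symmetry} provides an equivalence $\stack{T}_2^{\lambda,\mu}\cong s^*\stack{T}_2^{\mu,\lambda}$ on all of $X^2$ (hence of order $1$ along $\Delta$), under which the factorization isomorphism $f^{\lambda,\mu}$ on $X^2\setminus\Delta$ is identified with the composition of $s^*f^{\mu,\lambda}$ with the natural commutativity $\stack{T}_1^{\lambda}\boxtimes\stack{T}_1^{\mu}\cong s^*(\stack{T}_1^{\mu}\boxtimes\stack{T}_1^{\lambda})$, the latter being defined on all of $X^2$ and so also of order $1$. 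Using \ref{2-line bundle of a Cartier divisor}\ref{en:order multiplicative} and the pullback compatibility of order along the isomorphism $s$ from \ref{order and covering maps}, this forces $\kappa(\lambda,\mu)=\kappa(\mu,\lambda)$. Combined with bilinearity in the first variable, this also gives bilinearity in the second.

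\textbf{Main obstacle.} The only delicate step is the restriction computation in the bilinearity argument: one must track the local analytic identifications so that the two pieces of factorizable data (on the open set and along the diagonals) are genuinely comparable as trivializations of the same gerbe on $X^2$, so that the order comparison is legitimate. Once this bookkeeping is set up, both claims reduce to the order calculus of \ref{2-line bundle of a Cartier divisor} and \ref{reducible order}.
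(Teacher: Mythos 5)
Your proof is correct and follows essentially the same route as the paper: symmetry is read off from the $S_2$-equivariance of the factorization equivalence on $\on{Gr}_2^{\lambda,\mu}$, and bilinearity is obtained by computing the orders of the factorization equivalence on $\on{Gr}_3^{\lambda,\lambda',\mu}$ about the three diagonals, restricting to $\Delta^3_{1,2}$, and applying \ref{eq:order gerbe restriction} and \ref{bilinear form multiplicativity}. The paper states the symmetry step more tersely and your symmetry argument spells out the order calculus via \ref{order and covering maps}, but the underlying mechanism is identical.
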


\begin{proof}
 We will show that $\kappa(\lambda + \mu, \nu) = \kappa(\lambda, \nu)\kappa(\mu,\nu)$ and that $\kappa$ is symmetric.  The latter is easy: the action of $S_2$ on $\on{Gr}_2$ switches the irreducible components $\on{Gr}_2^{\lambda, \mu}$ and $\on{Gr}_2^{\mu, \lambda}$ and the $S_2$-equivariance of the factorization equivalence \ref{en:n = 2 factorizable open} ensures that the order is the same on both.
 
 For the first, we consider the factorization equivalence on $\on{Gr}_3^{\lambda, \mu, \nu}$,
 \begin{equation}
  \label{eq:bilinear order}
  (\stack{T}_1^\lambda \boxtimes \stack{T}_1^\mu \boxtimes \stack{T}_1^\nu)|_{X^3_p}
   \to \stack{T}_3^{\lambda, \mu,\nu}|_{X^3_p},
 \end{equation}
 where $p$ is the partition $\{1,2,3\} = \{1\} \cup \{2\} \cup \{3\}$, and compute its order about the divisors $\Delta_{2,3}$, $\Delta_{1,3}$, and $\Delta_{1,2}$.  For $\Delta_{2,3}$, now let $q$ be the partition $\{1,2,3\} = \{1\} \cup \{2,3\}$ and consider the decomposition of \ref{grassmannian is factorizable}\ref{en:factorizable open}, restricted to $\on{Gr}_3^{\lambda, \mu, \nu}$:
 \begin{equation*}
  (\on{Gr}_1^{\lambda} \times \on{Gr}_2^{\mu,\nu})|_{X^3_q} = \on{Gr}_3^{\lambda,\mu,\nu}|_{X^3_q}
 \end{equation*}
 This decomposition takes place over $X \times X^2$ and in particular the diagonal $\Delta_{2,3}$ lies below the second factor.  We get the more specific factorization equivalence
 \begin{equation*}
  (\stack{T}_1^\lambda \boxtimes \stack{T}_2^{\mu,\nu})|_{X^3_q}
    \to \stack{T}_3^{\lambda,\mu,\nu}|_{X^3_q}
 \end{equation*}
 which, therefore, has order $a^{\mu,\nu} = \kappa(\mu,\nu)$ about $\Delta_{2,3}$; by the compatibilities of \ref{sf gerbe}\ref{en:factorizable gerbe open}, this is the same as the order of \ref{eq:bilinear order} about $\Delta_{2,3}$. The same argument works for $\Delta_{1,3}$, giving order $\kappa(\lambda,\nu)$, and for $\Delta_{1,2}$, giving order $\kappa(\lambda,\mu)$. We find that
 \begin{equation*}
  \stack{T}_3^{\lambda,\mu,\nu}
   \cong \stack{T}_1^\lambda \boxtimes \stack{T}_1^\mu \boxtimes \stack{T}_1^\nu
         \otimes \OO(\Delta_{13})^{\log \kappa(\lambda, \nu)}
         \otimes \OO(\Delta_{23})^{\log \kappa(\mu, \nu)}
         \otimes \OO(\Delta_{12})^{\log \kappa(\lambda,\mu)}.
 \end{equation*}
 We restrict to $\Delta_{12} \cong X^2$ and use equation \ref{eq:order gerbe restriction} and
 \ref{bilinear form multiplicativity}:
 \begin{align*}
  \stack{T}_2^{\lambda + \mu, \nu}
   &\cong (\stack{T}_1^\lambda \otimes \stack{T}_1^\mu) \boxtimes \stack{T}_1^\nu
         \otimes \OO(\Delta)^{\log \kappa(\lambda,\nu) \kappa(\mu,\nu)}
         \otimes \on{pr}_1^* \sh{T}_X^{\kappa(\lambda,\mu)} \\
   &\cong \stack{T}_1^{\lambda + \mu} \boxtimes \stack{T}_1^\nu
         \otimes \OO(\Delta)^{\log \kappa(\lambda,\nu) \kappa(\mu,\nu)}.
 \end{align*}
 This equivalence extends the factorization equivalence defined on $X^2 \setminus \Delta$ and the notation includes an equality of monodromies, so we have $\kappa(\lambda + \mu,\nu) = \kappa(\lambda,\nu) \kappa(\mu,\nu)$.
\end{proof}

\begin{theorem}{prop}{form is quadratic}
 $Q$ is a quadratic form on $\Lambda_T$.  More precisely, its associated bilinear form is $\kappa$, in that we have
 \begin{equation*}
  Q(\lambda + \mu) = Q(\lambda)Q(\mu) \kappa(\lambda, \mu).
 \end{equation*}
\end{theorem}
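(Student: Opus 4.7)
The plan is to mimic the bilinearity proof, replacing $\on{Gr}_3^{\lambda,\mu,\nu}$ by $\on{Gr}_4^{\lambda,\mu,\lambda,\mu}$ so as to expose an $S_2$-symmetry compatible with the factorization to $\on{Gr}_2^{\lambda+\mu,\lambda+\mu}$.  By bilinearity of $\kappa$ just established, $\kappa(\lambda+\mu,\lambda+\mu) = \kappa(\lambda,\lambda)\kappa(\mu,\mu)\kappa(\lambda,\mu)^2 = (Q(\lambda)Q(\mu)\kappa(\lambda,\mu))^2$, so both $Q(\lambda+\mu)$ and $Q(\lambda)Q(\mu)\kappa(\lambda,\mu)$ are square roots of $\kappa(\lambda+\mu,\lambda+\mu)$.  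By \ref{covering order quotient} they agree if and only if they correspond to the same trivialization of the sf $S_2$-equivariance of $\stack{T}_2^{\lambda+\mu,\lambda+\mu}$ on $\Delta$, and it suffices to factor this trivialization as a product of three pieces matching $Q(\lambda)$, $Q(\mu)$, and $\kappa(\lambda,\mu)$.

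I consider $\on{Gr}_4^{\lambda,\mu,\lambda,\mu}$ on $X^4$, preserved by the involution $h = (1\,3)(2\,4) \in S_4$ (which preserves the label tuple).  The closed subvariety $D = \{x_1 = x_2,\, x_3 = x_4\} \cong X^2$ is $h$-stable, with $h$ acting on it as the coordinate swap.  By the compatibility of \ref{sf gerbe}\ref{en:factorizable gerbe closed} and \ref{sf gerbe}\ref{en:factorizable gerbe symmetry}, $\stack{T}_4^{\lambda,\mu,\lambda,\mu}|_D$ is $\langle h \rangle$-equivariantly equivalent to $\stack{T}_2^{\lambda+\mu,\lambda+\mu}$ carrying its sf $S_2$-equivariance, and the sf trivialization on the full diagonal of $X^4$ descends to the sf trivialization on $\Delta \subset X^2$.

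On the other hand, the full open factorization on $X^4_p$ (with $p$ the all-singletons partition) together with \ref{reducible order} gives an $\langle h \rangle$-equivariant meromorphic equivalence
\begin{equation*}
  \stack{T}_4^{\lambda,\mu,\lambda,\mu}
   \cong \stack{T}_1^\lambda \boxtimes \stack{T}_1^\mu \boxtimes \stack{T}_1^\lambda \boxtimes \stack{T}_1^\mu
    \otimes \bigotimes_{i<j} \OO(\Delta_{ij})^{\log \kappa_{ij}}
\end{equation*}
extending over $X^4$, with $\kappa_{ij}$ the value of $\kappa$ on the labels at positions $i,j$.  Restricting to $D$: the factors $\OO(\Delta_{12})^{\log\kappa(\lambda,\mu)}$ and $\OO(\Delta_{34})^{\log\kappa(\lambda,\mu)}$ become $\on{pr}_i^*\sh{T}_X^{\log\kappa(\lambda,\mu)}$ by \ref{eq:order gerbe restriction}, and exactly cancel the $\sh{T}_X^{-\log\kappa(\lambda,\mu)}$-contributions produced by \ref{bilinear form multiplicativity} when one rewrites $(\stack{T}_1^\lambda \otimes \stack{T}_1^\mu)^{\boxtimes 2}$ as $(\stack{T}_1^{\lambda+\mu})^{\boxtimes 2}$.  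The four ``cross'' divisors $\Delta_{13}, \Delta_{14}, \Delta_{23}, \Delta_{24}$ all restrict transversely to the diagonal $\Delta \subset X^2$, yielding the $\langle h \rangle$-equivariant equivalence
\begin{equation*}
  \stack{T}_2^{\lambda+\mu,\lambda+\mu}
   \cong \stack{T}_1^{\lambda+\mu} \boxtimes \stack{T}_1^{\lambda+\mu}
     \otimes \OO(\Delta)^{\log\kappa(\lambda,\lambda)}
     \otimes \OO(\Delta)^{\log\kappa(\mu,\mu)}
     \otimes \OO(\Delta)^{\log\kappa(\lambda,\mu)^2},
\end{equation*}
the three $\OO(\Delta)^{\log \cdot}$ factors arising respectively from $\Delta_{13}$, $\Delta_{24}$, and the $h$-swapped pair $\{\Delta_{14},\Delta_{23}\}$.

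By multiplicativity of the square-root-to-trivialization bijection of \ref{covering order quotient}, $Q(\lambda+\mu)$ decomposes as the product of three contributions.  The $\Delta_{13}$-factor carries exactly the $\OO(\Delta)^{\log\kappa(\lambda,\lambda)}$-structure appearing in the defining setup of $Q(\lambda)$ via $\stack{T}_2^{\lambda,\lambda}$ --- in both places the $S_2$-equivariance is characterized by being trivial on the complement of $\Delta$, via the canonical trivialization of $\OO(\Delta_{13})$ (resp.~$\OO(\Delta)$) on that complement --- so its contribution is $Q(\lambda)$; symmetrically $\Delta_{24}$ yields $Q(\mu)$.  The contribution of the $h$-swapped pair is the square root of the tensor square $\OO(\Delta)^{\log\kappa(\lambda,\mu)^2} = (\OO(\Delta)^{\log\kappa(\lambda,\mu)})^{\otimes 2}$ with $S_2$-structure which is trivial on the complement, so by multiplicativity it is the square of the analogously determined square root of $\kappa(\lambda,\mu)$, which equals $\kappa(\lambda,\mu)$ itself regardless of the sign-of-square-root ambiguity.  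Multiplying yields $Q(\lambda+\mu) = Q(\lambda)Q(\mu)\kappa(\lambda,\mu)$.  The main technical obstacle is precisely this bookkeeping of $\langle h \rangle$-equivariance through the chain of factorizations and bilinearity identifications: one must verify that each tensor factor inherits the ``trivial on $X^2 \setminus \Delta$'' $S_2$-structure, so that the contributions to the trivialization on $\Delta$ split cleanly as a product of the three square roots identified above.
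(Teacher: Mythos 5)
Your proof is correct and takes essentially the same route as the paper: both exploit $\on{Gr}_4^{\lambda,\mu,\lambda,\mu}$, compute orders about the six divisors $\Delta_{ij}$, restrict to $D = \Delta_{12}\cap\Delta_{34}$, and extract the square root via the pair-swapping involution $(1\,3)(2\,4)$ together with \ref{covering order quotient}.

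The paper's final step is terse: it records the six orders, restricts to $D$, and disposes of the square root in a sentence by observing that the swap fixes $\Delta_{13},\Delta_{24}$ and exchanges $\Delta_{14},\Delta_{23}$, so the contributions are $Q(\lambda)$, $Q(\mu)$, $\kappa(\lambda,\mu)$ respectively. You go somewhat further than the paper's exposition in spelling out the $\langle h\rangle$-equivariance bookkeeping: you make explicit the use of \ref{reducible order}, the cancellation of the $\Delta_{12},\Delta_{34}$ restrictions (which give $\on{pr}_i^*\sh{T}_X^{\log\kappa(\lambda,\mu)}$ by \ref{eq:order gerbe restriction}) against the $\sh{T}_X^{-\log\kappa(\lambda,\mu)}$ coming from \ref{bilinear form multiplicativity}, and the observation that the implied multiplicativity of \ref{covering order quotient} permits the square root to be split into the three pieces. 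Your identification of the $\Delta_{13}$-contribution with $Q(\lambda)$ --- characterizing both $S_2$-structures by being trivial on the complement of the diagonal, hence equal by uniqueness in \ref{2-line bundle of a Cartier divisor}\ref{en:order gerbe} --- is a genuine improvement in clarity over the paper's ``since that is how $Q$ is defined.'' One small awkwardness: your treatment of the $h$-swapped pair $\{\Delta_{14},\Delta_{23}\}$ as ``the square of an analogously determined square root'' is somewhat roundabout; the direct reading is that for a freely-swapped pair the germ computation in the proof of \ref{covering order quotient} produces precisely the common order $\kappa(\lambda,\mu)$, which is how the paper phrases it. Either way the conclusion is the same and the proof is sound.
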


\begin{proof}
 We analyze the factorization equivalence
 \begin{equation}
  \label{eq:quadratic order}
  (\stack{T}_1^\lambda \boxtimes \stack{T}_1^\mu \boxtimes \stack{T}_1^\lambda
    \boxtimes \stack{T}_1^\mu)|_{X^4_p} \to \stack{T}_4^{\lambda,\mu,\lambda,\mu}|_{X^4_p}
 \end{equation}
 where $p$ is the partition $\{1,2,3,4\} = \{1\} \cup \{2\} \cup \{3\} \cup \{4\}$.  There are six exceptional divisors where any two of the four coordinates come together, and we compute the order of this equivalence about each one, then restrict appropriately as in the proof of \ref{form is bilinear}.
 
 The cases of the divisors $\Delta_{1,2}$, $\Delta_{1,4}$, $\Delta_{2,3}$, and $\Delta_{3,4}$ are essentially the same so we do the first one and state the results for the others.  For $\Delta_{1,2}$ we form the partition $q \colon \{1,2,3,4\} = \{1,2\} \cup \{3\} \cup \{4\}$ and consider the more specific factorization equivalence (through which \ref{eq:quadratic order} factors),
 \begin{equation*}
  (\stack{T}_2^{\lambda,\mu} \boxtimes \stack{T}_1^\lambda \boxtimes \stack{T}_2^\mu)|_{X^4_q}
   \to \stack{T}_4^{\lambda,\mu,\lambda,\mu}|_{X^4_q}
 \end{equation*}
 which evidently shows that \ref{eq:quadratic order} has order $\kappa(\lambda, \mu)$ about $\Delta_{1,2}$.  The same analysis shows that in fact we get this order for all four of the named divisors.
 
 The two remaining divisors $\Delta_{1,3}$ and $\Delta_{2,4}$ give, respectively, monodromies of $\kappa(\lambda, \lambda)$ and $\kappa(\mu,\mu)$ by the same reasoning, but we prefer to write them as $Q(\lambda)^2$ and $Q(\mu)^2$ in preparation for what is to come.  Thus, \ref{eq:quadratic order} becomes
 \begin{multline}
  \label{eq:new quotient order}
  \stack{T}_4^{\lambda,\mu,\lambda,\mu}
   \cong \stack{T}_1^\lambda \boxtimes \stack{T}_1^\mu \boxtimes \stack{T}_1^\lambda
         \boxtimes \stack{T}_1^\mu
         \otimes \OO(\Delta_{1,2})^{\log \kappa(\lambda,\mu)}
         \otimes \OO(\Delta_{3,4})^{\log \kappa(\lambda,\mu)} \\
         \otimes \OO(\Delta_{2,3} \cup \Delta_{1,4})^{\log \kappa(\lambda,\mu)}
         \otimes \OO(\Delta_{1,3})^{\log Q(\lambda)^2}
         \otimes \OO(\Delta_{2,4})^{\log Q(\mu)^2}.
 \end{multline}
 We restrict to the diagonal $\Delta_{1,2} \cap \Delta_{3,4} \cong \on{Gr}_2^{\lambda + \mu, \lambda + \mu} \cong X^2$, by \ref{torus grassmannian properties}.  Then all but $\Delta_{1,2}$ and $\Delta_{3,4}$ become the main diagonal $\Delta$, while we have
 \begin{equation*}
  \OO(\Delta_{1,2})^{\log \kappa(\lambda,\mu)}|_{\Delta_{1,2} \cap \Delta_{3,4}}
   = \OO(\Delta_{1,2})^{\log \kappa(\lambda,\mu)}|_{\Delta_{1,2}}|_{\Delta_{3,4}}
   = \on{pr}_1^* \sh{T}_X^{\kappa(\lambda,\mu)}
 \end{equation*}
 and likewise we have $\OO(\Delta_{3,4})^{\log \kappa(\lambda,\mu)}|_{\Delta_{1,2} \cap \Delta_{3,4}} \cong \on{pr}_2^* \sh{T}_X^{\kappa(\lambda,\mu)}$.  Therefore, after restricting and applying \ref{bilinear form multiplicativity}, equation \ref{eq:new quotient order} becomes
 \begin{equation}
  \label{eq:restricted quadratic order}
  \stack{T}_2^{\lambda + \mu, \lambda + \mu}
   \cong \stack{T}_1^{\lambda + \mu} \boxtimes \stack{T}_1^{\lambda + \mu}
         \otimes \OO(\Delta)^{\log Q(\lambda)^2 Q(\mu)^2 \kappa(\lambda,\mu)^2}.
 \end{equation}
 As in \ref{form is bilinear}, this is an equality of orders, and so by definition we have
 \begin{equation*}
  Q(\lambda + \mu)^2 = Q(\lambda)^2 Q(\mu)^2 \kappa(\lambda,\mu)^2.
 \end{equation*}
 In order to take the square root, we simply apply \ref{covering order quotient} to the coordinate-swapping action of $S_2$ along with provision \ref{sf gerbe}\ref{en:factorizable gerbe symmetry}, as follows: consider the larger action of $S_2$ on $X^4$ which swaps the first and last \emph{pairs} of coordinates; it exchanges $\Delta_{1,4}$ with $\Delta_{2,3}$ and fixes both $\Delta_{1,3}$ and $\Delta_{2,4}$, which are the four divisors whose corresponding monodromies appear in equation \ref{eq:restricted quadratic order}.  Thus, the square root of $\kappa(\lambda, \mu)^2$ extracted is indeed $\kappa(\lambda,\mu)$ (the order about either one of the first pair) and those of $Q(\lambda)^2$ and $Q(\mu)^2$ are respectively $Q(\lambda)$ and $Q(\mu)$, since that is how $Q$ is defined.  This completes the proof.
\end{proof}

Now that we have proved that $Q$ is a quadratic form, we can come full circle and show that it effectively determines the entire factorizable structure of the $\stack{T}_n$.  To state this, we make the definition:

\begin{theorem}{defn}{sf category}
 Let $\cat{H}^2_\text{sf}(T,X,A)$ be the $2$-Picard category whose objects are sf $A$-gerbes on $\on{Gr}_{T,X^n}$.  Let $Q(\Lambda_T, A)$ be the group of $A$-valued quadratic forms on $\Lambda_T$.
\end{theorem}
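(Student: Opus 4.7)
The final statement is a definition rather than a theorem, so the proof obligation reduces to checking that the two structures asserted — a $2$-Picard category $\cat{H}^2_\text{sf}(T,X,A)$ on the one hand, and a group $Q(\Lambda_T,A)$ on the other — are genuinely well-defined. The plan has two essentially independent parts; I would handle the category first and the group second, since the latter is elementary.

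For $\cat{H}^2_\text{sf}(T,X,A)$ I would begin by describing the $1$-morphisms and $2$-morphisms (which the definition leaves implicit): a $1$-morphism is a compatible system of equivalences of $A$-gerbes $\stack{T}_n \to \stack{T}'_n$ intertwining all the factorization equivalences of \ref{sf gerbe}\ref{en:factorizable gerbe closed}, \ref{en:factorizable gerbe open}, and the $S_n$-equivariances of \ref{en:factorizable gerbe symmetry}; a $2$-morphism is a natural transformation respecting all these compatibilities. Next I would verify that the componentwise tensor product $(\{\stack{T}_n\} \otimes \{\stack{T}'_n\})_n := \{\stack{T}_n \otimes \stack{T}'_n\}$ carries a canonical sf structure: each axiom of \ref{sf gerbe} transports through $\otimes$ using the tensor product of gerbes \ref{gerbe tensor product}, pullback compatibility of $\otimes$ under the restrictions $|_{\Delta^n_p}$ and $|_{X^n_p}$, and the fact that the $S_n$-equivariance of a tensor product is the tensor product of the $S_n$-equivariances. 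The trivial sf gerbe $\{\stack{G}^0_n\}$ serves as unit, and inverses are constructed componentwise as $\{\stack{T}_n^{-1}\}$, with the inverse sf structure obtained by dualizing each factorization equivalence. Associativity, commutativity, and unit $2$-isomorphisms (together with pentagon and hexagon coherences making the structure $2$-Picard) are inherited from the ambient $2$-Picard categories $\cat{H}^2(\on{Gr}_n, A)$.

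For $Q(\Lambda_T,A)$ the plan is even shorter: recall that an $A$-valued quadratic form on a lattice $\Lambda$ is a map $Q \colon \Lambda \to A$ with $Q(-\lambda) = Q(\lambda)$ and such that the symmetric function $\kappa_Q(\lambda,\mu) := Q(\lambda + \mu) Q(\lambda)^{-1} Q(\mu)^{-1}$ is bilinear. Such forms are closed under pointwise multiplication in $A$, with unit the constant function $1$ and inverse $Q^{-1}(\lambda) := Q(\lambda)^{-1}$; closure under the operation follows because the associated bilinear form of a product is the product of the associated bilinear forms.

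The main obstacle, such as it is, is purely bookkeeping: confirming that the coherence data (the $2$-isomorphisms implementing refinement of partitions and their compatibility under iterated refinement, spelled out following \ref{sf gerbe}) behaves well under $\otimes$. No new idea is required beyond the functoriality of the gerbe tensor product established in \ref{gerbe tensor product}, together with \ref{gerbe change of group composition} to handle the interaction of $\otimes$ with the various restriction and equivariance functors. Everything else is formal.
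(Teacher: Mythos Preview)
Your proposal is correct: the statement is a definition, and the paper provides no proof for it, treating the $2$-Picard structure on $\cat{H}^2_\text{sf}(T,X,A)$ and the group structure on $Q(\Lambda_T,A)$ as self-evident. Your verification that the componentwise tensor product, unit, and inverse carry sf structure, and that quadratic forms are closed under pointwise multiplication, is exactly the implicit content the paper leaves to the reader; nothing more is required.
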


In this language, \ref{form is quadratic} together with the defintion of $Q$ shows that we have a homomorphism $\cat{H}^2_\text{sf}(T,X,A) \to Q(\Lambda_T, A)$.  This extends to the following theorem:

\begin{theorem}{prop}{torus exact sequence}
 The above homomorphism fits into a split short exact sequence of $2$-Picard categories,
 \begin{equation*}
  1 \to \cat{Hom}(\Lambda_T, \cat{H}^2(X,A))
    \to \cat{H}^2_\text{sf}(T,X,A)
    \to Q(\Lambda_T, A)
    \to 1.
 \end{equation*}
\end{theorem}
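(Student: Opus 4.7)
The sequence is built from three maps: the quotient $q \colon \cat{H}^2_\text{sf}(T,X,A) \to Q(\Lambda_T,A)$ already produced by \ref{form is quadratic}; an inclusion $h \colon \cat{Hom}(\Lambda_T, \cat{H}^2(X,A)) \to \cat{H}^2_\text{sf}(T,X,A)$ witnessing the kernel; and a splitting $s \colon Q(\Lambda_T,A) \to \cat{H}^2_\text{sf}(T,X,A)$ with $q \circ s \cong \id$. For $h$, a homomorphism $\lambda \mapsto \sh{G}^\lambda$ is sent to the sf gerbe with component $\stack{T}_n^{\vec\lambda} = \boxtimes_i \sh{G}^{\lambda_i}$ on $\on{Gr}_n^{\vec\lambda} \cong X^n$; the factorization equivalences are tautological, the closed-restriction equivalences to the diagonal $\Delta^n_p$ are given by the homomorphism structure isomorphisms $\sh{G}^{\lambda+\mu} \cong \sh{G}^\lambda \otimes \sh{G}^\mu$ (with their cocycle), and the $S_n$-equivariance is the natural commutativity of $\otimes$. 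No $\OO(\Delta)^{\log \bullet}$ correction appears, so $q \circ h = 1$.

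For the splitting, let $Q$ have polarization $\kappa(\lambda,\mu) = Q(\lambda+\mu)Q(\lambda)^{-1}Q(\mu)^{-1}$ and set
\[
 s(Q)_n^{\vec\lambda}
  := \Bigl(\boxtimes_i \sh{T}_X^{\log Q(\lambda_i)}\Bigr)
     \otimes \bigotimes_{i<j} \OO_{X^n}(\Delta_{i,j})^{\log \kappa(\lambda_i,\lambda_j)}
\]
on the component $\on{Gr}_n^{\vec\lambda} \cong X^n$. The factorization equivalence on $X^n_p$ comes from the triviality of $\OO(\Delta_{i,j})$ away from $\Delta_{i,j}$, while the closed restriction to $\Delta^n_p$ is computed by iterated use of $\OO_{X^2}(\Delta)|_\Delta \cong \sh{T}_X$ together with the identity $Q(\sum_i \lambda_i) = \prod_i Q(\lambda_i) \prod_{i<j} \kappa(\lambda_i,\lambda_j)$, which is precisely the expansion of a quadratic form with polarization $\kappa$; \ref{log power functorial} then collapses the result to $s(Q)_m^{\vec\nu}$ with $\nu_i = \sum_{j \in p_i} \lambda_j$, matching \ref{torus grassmannian properties}\ref{en:torus grassmannian n}. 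The $S_n$-equivariance is constructed via \ref{covering order quotient}, the square root $b^\lambda = Q(\lambda)$ of $\kappa(\lambda,\lambda)$ providing exactly the trivialization of the $S_2$-equivariance across the relevant diagonals demanded by \ref{sf gerbe}\ref{en:factorizable gerbe symmetry}. By construction, $q \circ s = \id$.

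Exactness at $\cat{H}^2_\text{sf}(T,X,A)$ is the remaining substantive step. Given $\{\stack{T}_n\}$ with $q(\stack{T}) = 1$, the quadratic form is trivial, so by polarization $\kappa = 1$ too; \ref{bilinear form multiplicativity} then yields canonical equivalences $\stack{T}_1^{\lambda+\mu} \cong \stack{T}_1^\lambda \otimes \stack{T}_1^\mu$, and their cocycle is extracted by restricting the associator for $\stack{T}_3^{\lambda,\mu,\nu}$ to the full diagonal in $X^3$ and invoking the compatibility of closed restrictions in \ref{sf gerbe}\ref{en:factorizable gerbe closed}. This defines a homomorphism $\Lambda_T \to \cat{H}^2(X,A)$. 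To see that $h$ of this homomorphism recovers $\stack{T}$, note that on each open stratum $X^n_p$ the factorization equivalence identifies $\stack{T}_n^{\vec\lambda}$ with $\boxtimes_i \stack{T}_1^{\lambda_i}$; since every order about every diagonal vanishes, \ref{reducible order} says this equivalence extends uniquely across the diagonals to an equivalence on all of $\on{Gr}_n^{\vec\lambda}$, and these extensions are automatically compatible with the sf structure.

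The main obstacle is bookkeeping rather than any genuinely new idea: one must verify that the $2$-isomorphisms associated with refinements of partitions—the coherence data spelled out in the discussion following \ref{sf gerbe}—really do commute on the constructions produced by $h$, $s$, and the exactness argument. Each individual diagram reduces to a manipulation of $\OO(D)^{\log a}$ and $\sh{T}_X^{\log a}$ via \ref{log power functorial}, \ref{2-line bundle of a Cartier divisor}, and \ref{reducible order}, but the proliferation of diagonals in $X^n$ and the interaction of the symmetric-group action with diagonal restrictions make this the part requiring the most care.
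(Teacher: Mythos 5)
Your proposal is correct and follows essentially the same route as the paper: the formula $\stack{T}_n^{\vec\lambda} = \bigl(\boxtimes_i \stack{T}_1^{\lambda_i}\bigr) \otimes \bigotimes_{i<j} \OO(\Delta_{i,j})^{\log\kappa(\lambda_i,\lambda_j)}$ with $\stack{T}_1^\lambda = \stack{T}^\lambda \otimes \sh{T}_X^{\log Q(\lambda)}$ is exactly the paper's construction, as is the exactness argument via \ref{bilinear form multiplicativity} and the identification of $\cat{Aut}(1)$ with commutative multiplicative $\Lambda_T$-torsors. The only cosmetic difference is that you define $h$ and the splitting $s$ as separate maps while the paper builds the single combined homomorphism $\cat{Hom}(\Lambda_T,\cat{H}^2(X,A)) \times Q(\Lambda_T,A) \to \cat{H}^2_\text{sf}(T,X,A)$; your explicit invocation of \ref{covering order quotient} for the $S_2$-trivialization and \ref{reducible order} for the extension in the exactness step make the gluing slightly more transparent than the paper's terser treatment.
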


Here, the first term $\cat{Hom}(\Lambda_T, \cat{H}^2(X,A))$ refers to the category of homomorphisms from $\Lambda$ into the $2$-Picard category $\cat{H}^2(X,A)$, or in other words of ``commutative multiplicative $A$-gerbes'', as given in \ref{multiplicative lattice gerbe}.  The concept of an exact sequence of 2-Picard categories was described before \ref{gerbe exact sequence}.

\begin{proof}
 The easiest part of the proposition is that the automorphism category of the trivial sf gerbe is identified with that of commutative multiplicative $A$-torsors for $\Lambda_T$.  Indeed, it is (by definition) the category of ``sf $A$-torsors'' on $\on{Gr}_T$, but since all the diagonals in $X^n$ are Zariski-closed, the factorization isomorphisms extend across them and become a multiplicative structure, commutative by virtue of $S_n$-equivariance.  Likewise for their 2-morphisms (i.e.\ sf maps to $A$).

 Suppose we are given such a multiplicative gerbe $\{\stack{T}^\lambda\}$ and a quadratic form $Q$; we will construct an sf gerbe $\{\stack{T}_n\}$ in a manner compatible with multiplication.  We set $\kappa$ to be the bilinear form defined by $Q$ as in \ref{form is quadratic}.  First, let
 \begin{equation}
  \label{eq:gerbe construction}
  \stack{T}_1^\lambda = \stack{T}^\lambda \otimes \sh{T}_X^{\log Q(\lambda)}.
 \end{equation}
 This defines $\stack{T}_1$ by defining it on each connected component of $\on{Gr}_{T,X}$.  In general, to define $\stack{T}_n$ we need only define its restriction to each component copy $\on{Gr}_{T,X^n}^{\lambda_1, \dots, \lambda_n} \cong X^n$ and give isomorphisms of these restrictions on the diagonals described in \ref{torus grassmannian properties}.  Here, we take
 \begin{equation}
  \label{eq:torus gerbe diagonal orders}
  \stack{T}_n^{\lambda_1, \dots, \lambda_n}
   = \stack{T}_1^{\lambda_1} \boxtimes \dots \boxtimes \stack{T}_1^{\lambda_n}
     \otimes \bigotimes_{i,j} \OO(\Delta_{i,j})^{\log \kappa(\lambda_i, \lambda_j)},
 \end{equation}
 where as in the previous proofs, $\Delta_{i,j}$ is the divisor where $x_i = x_j$.  We show how to define the factorization equivalences for $n = 2$: there, we take
 \begin{multline}
  \label{eq:diagonal factorization construction}
  \stack{T}_2^{\lambda,\mu}|_\Delta
   = \stack{T}_1^\lambda \otimes \stack{T}_1^\mu
     \otimes \OO(\Delta)^{\log \kappa(\lambda,\mu)}|_\Delta
   = \stack{T}^\lambda \otimes \stack{T}^\mu
     \otimes \sh{T}_X^{\log Q(\lambda)Q(\mu)\kappa(\lambda,\mu)} \\
   \cong \stack{T}^{\lambda + \mu} \otimes \sh{T}_X^{\log Q(\lambda + \mu)}
   = \stack{T}_1^{\lambda + \mu},
 \end{multline}
 and let the equivalences
 \begin{equation*}
  (\stack{T}_1^\lambda \boxtimes \stack{T}_1^\mu)|_{X^2 \setminus \Delta}
  \cong \stack{T}_2^{\lambda + \mu}|_{X^2 \setminus \Delta}
 \end{equation*}
 be the natural ones induced by the trivialization of $\OO(\Delta)^{\log \kappa(\lambda,\mu)}$.  As noted after \ref{sf gerbe}, this suffices to define the $S_2$-equivariance of $\stack{T}_2$ as well: it is the unique equivalence $\map{\phi}{\stack{T}_2}{s^* \stack{T}_2}$ (where $s$ is the involution defined by swapping coordinates in $X^2$) extending the natural one on tensor products using the above factorization on $X^2 \setminus \Delta$; this is the same as taking the product of the natural symmetry of the tensor product with the $S_2$-equivariance of $\OO(\Delta)^{\log \kappa(\lambda, \mu)}$ induced by the invariance of the trivial gerbe on $X^2 \setminus \Delta$.  We define its trivialization on $\Delta$ to be that arising (in reference to the equations \ref{eq:diagonal factorization construction}) from the equivalence of $S_2$-equivariant gerbes on $X^2$:
 \begin{equation*}
  \OO(\Delta)^{\log \kappa(\lambda,\mu)} \otimes \OO(\Delta)^{\log Q(\lambda)}
   \otimes \OO(\Delta)^{\log Q(\mu)}
   \cong \OO(\Delta)^{\log Q(\lambda + \mu)}.
 \end{equation*}
 With these definitions, it is clear that $\stack{T}_2$ defines the bilinear form $\kappa$ and quadratic form $Q$. As an example of the higher factorizations, we verify the diagonal restrictions for $\stack{T}_3$:
 \begin{equation*}
  \stack{T}_3^{\lambda, \mu, \nu}|_{\Delta_{1,2}}
   = (\stack{T}_1^\lambda \otimes \stack{T}_1^\mu) \boxtimes \stack{T}_1^\nu
     \otimes \OO(\Delta_{1,2})^{\log \kappa(\lambda,\mu)}|_{\Delta_{1,2}}
     \otimes \OO(\Delta)^{\log \kappa(\lambda, \nu)\kappa(\mu,\nu)}.
 \end{equation*}
 Since $\OO(\Delta_{1,2})^{\log \kappa(\lambda,\mu)}|_{\Delta_{1,2}} = \on{pr}_1^* \sh{T}_X^{\log \kappa(\lambda,\mu)}$, the first factor becomes $\stack{T}_1^{\lambda + \mu}$ as in \ref{eq:diagonal factorization construction}, while for the remaining $\OO(\Delta)$, we simplify the exponent using the bilinearity of $\kappa$.  Similar considerations give the factorizations away from diagonals.
 
 It is easy to see that the construction $(\{\stack{T}^\lambda\}, Q) \mapsto \{\stack{T}_n\}$ is a homomorphism 
 \begin{equation*}
  \cat{Hom}(\Lambda_T, \cat{H}^2(X, A)) \times Q(\Lambda_T, A) \to \cat{H}^2_\text{sf}(T,X,A).
 \end{equation*}
 The resulting sequence
 \begin{equation*}
  1 \to \cat{Hom}(\Lambda_T, \cat{H}^2(X, A)) \to \cat{H}^2_\text{sf}(T,X,A)
    \to Q(\Lambda_T, A) \to 1
 \end{equation*}
 is evidently exact at both the left and (by this construction) the right.  If we have an sf gerbe $\stack{T}_n$ with $Q(\stack{T}_n) = 1$ the trivial form, then \ref{bilinear form multiplicativity} shows that the $\stack{T}_1^\lambda$ are themselves a commutative multiplicative gerbe on $X \times \Lambda$, and so $\stack{T}_n$ is in fact defined by this multiplicative gerbe by the above construction.  This gives exactness in the middle and completes the proof.
\end{proof}

\section{Multiplicative factorizable gerbes}
\label{s:multiplicative factorizable gerbes}

It is evident that the concepts of factorizability and multiplicativity are closely related.  In this section we combine the two and discuss their interactions.

\begin{theorem}{defn}{sf multiplicative gerbe}
 An sf $A$-gerbe $\stack{T}_n$ on $\on{Gr}_{T,X^n}$ forms a \emph{multiplicative sf gerbe} if the factorization equivalences of \ref{sf gerbe}\ref{en:factorizable gerbe open} extend across the exceptional diagonals using the multiplicative structure \ref{eq:torus grassmannian multiplication} of $\on{Gr}_{T,X^n}$.
\end{theorem}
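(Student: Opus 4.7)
The final statement is a definition, so strictly there is nothing to prove. In place of a proof proposal I will outline what the definition is demanding and how one would recognize when an sf gerbe satisfies it.

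By \ref{torus grassmannian properties}\ref{en:torus grassmannian n}, the multiplication map $m\colon \on{Gr}_{T,X^n}\times\on{Gr}_{T,X^m}\to\on{Gr}_{T,X^{n+m}}$ realizes the product of components $\on{Gr}_n^{\vect{\lambda}}\times\on{Gr}_m^{\vect{\mu}}$, each identified with a power of $X$, as the single component $\on{Gr}_{n+m}^{\vect{\lambda},\vect{\mu}}\cong X^{n+m}$. On the open locus $X^{n+m}_p\subset X^{n+m}$ corresponding to the partition $p = \{1,\dots,n\}\sqcup\{n+1,\dots,n+m\}$, the factorization equivalence of \ref{sf gerbe}\ref{en:factorizable gerbe open} gives a canonical identification
\begin{equation*}
m^*\stack{T}_{n+m}|_{X^{n+m}_p} \;\cong\; (\stack{T}_n\boxtimes\stack{T}_m)|_{X^{n+m}_p}.
\end{equation*}
The definition asks that this identification extend across the exceptional diagonals $\Delta_{i,n+j}$ (where a coordinate from the first block collides with one from the second) to an equivalence on all of $\on{Gr}_{T,X^n}\times\on{Gr}_{T,X^m}$.

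To check multiplicativity for a given sf gerbe $\{\stack{T}_n\}$, I would localize the obstruction using \ref{2-line bundle of a Cartier divisor}: the equivalence extends across $\Delta_{i,n+j}$ if and only if its order about that divisor is trivial. By the computation in the proof of \ref{form is bilinear}, this order is precisely $\kappa(\lambda_i,\mu_j)$ on the component labelled $(\vect{\lambda},\vect{\mu})$, so the multiplicative condition amounts to the vanishing of the bilinear form $\kappa$ associated with $\{\stack{T}_n\}$ — equivalently, by \ref{form is quadratic}, to the quadratic form $Q$ being a group homomorphism $\Lambda_T\to A$. The main subtlety I anticipate in working with the definition lies not in existence of the extension but in its compatibilities: the extended equivalences must respect associativity of $m$, the higher factorizations of \ref{sf gerbe}\ref{en:factorizable gerbe open}, and the symmetric-group equivariance of \ref{sf gerbe}\ref{en:factorizable gerbe symmetry}. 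As in the proof of \ref{torus exact sequence}, each of these reduces to uniqueness of extensions of gerbe equivalences across smooth divisors, which is \ref{2-line bundle of a Cartier divisor}\ref{en:order gerbe}.
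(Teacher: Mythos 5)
Since the statement is a definition, there is nothing to prove, and your commentary matches the paper's own treatment: the paper notes immediately after the definition that uniqueness of extensions (\ref{2-line bundle of a Cartier divisor}) makes multiplicativity a condition rather than a structure, and \ref{multiplicative factorizable} establishes exactly your criterion, that the extension across the exceptional diagonals exists precisely when the associated bilinear form $\kappa$ (the order of the factorization equivalence about $\Delta_{ij}$, computed as in \ref{form is bilinear} and \ref{eq:torus gerbe diagonal orders}) is trivial. Your reading is correct and takes essentially the same route as the paper.
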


Note that, by \ref{2-line bundle of a Cartier divisor}, if this extension exists it is unique, so this is indeed a condition rather than a structure.  Also note that, denoting by $m_n$ the multiplication on $\on{Gr}_{T,X^n}$, the induced equivalences $\stack{T}_n \boxtimes \stack{T}_n \cong m_n^* \stack{T}_n$ are the structure of a commutative multiplicative $A$-gerbe (in particular, $\stack{T}_1$ is such a gerbe as made explicit above).  This definition is inspired by the following phenomenon for sections of $A$:

\begin{theorem}{lem}{sf multiplicative functions}
 Let $f \colon \Lambda_T \to A$ be a function.  Then the following are equivalent:
 \begin{enumerate}
  \item \label{en:sf mult homomorphism}
  $f$ is a homomorphism.
 
  \item \label{en:sf mult factorizable}
  There exists a unique factorizable collection $f_n \colon \on{Gr}_{T,X^n} \to A$ of locally constant functions such that $f_1|\on{Gr}_{T,X}^\lambda = f(\lambda)$.
  
  \item \label{en:sf mult multiplicative}
  The above $f_n$ are actually multiplicative factorizable.
 \end{enumerate}
\end{theorem}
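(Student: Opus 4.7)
The plan is to argue \ref{en:sf mult multiplicative} $\Rightarrow$ \ref{en:sf mult factorizable} $\Rightarrow$ \ref{en:sf mult homomorphism} $\Rightarrow$ \ref{en:sf mult multiplicative}. The first implication is immediate from the definitions, since multiplicative factorizability is by construction a refinement of factorizability. (Uniqueness in \ref{en:sf mult factorizable} is in any case automatic: the components $\on{Gr}_{T,X^n}^{\lambda_1,\dots,\lambda_n}$ are connected by \ref{torus grassmannian properties}\ref{en:torus grassmannian n}, so a locally constant $f_n$ is determined by its restriction to the open subset where all coordinates are distinct, on which factorizability pins it down in terms of $f_1$.)

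For \ref{en:sf mult factorizable} $\Rightarrow$ \ref{en:sf mult homomorphism}, I would specialize to $n = 2$. Each component $\on{Gr}_{T,X^2}^{\lambda,\mu} \cong X^2$ is connected, so $f_2$ takes a single value on it. The open factorization equality $f_2 = f_1 \cdot f_1$ on $X^2 \setminus \Delta$ forces this value to equal $f(\lambda) f(\mu)$, while the closed factorization \ref{sf gerbe}\ref{en:factorizable gerbe closed} identifies $\on{Gr}_{T,X^2}^{\lambda,\mu}|_\Delta$ with the component $\on{Gr}_{T,X}^{\lambda+\mu}$, forcing it to equal $f(\lambda + \mu)$. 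Equating the two values yields the homomorphism property.

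For \ref{en:sf mult homomorphism} $\Rightarrow$ \ref{en:sf mult multiplicative}, given a homomorphism $f$ I would set $f_n$ to be the constant $f(\lambda_1)\cdots f(\lambda_n) = f(\lambda_1+\cdots+\lambda_n)$ on each component $\on{Gr}_{T,X^n}^{\lambda_1,\dots,\lambda_n}$. Closed factorizability reduces to the observation that a diagonal intersection lands in a component whose coweight labels are partial sums of $(\lambda_i)$, still summing to $\sum_i \lambda_i$; open factorizability is the analogous statement on a product of components with disjoint coordinate supports; $S_n$-equivariance is immediate from the symmetry of the product; and the multiplication \ref{eq:torus grassmannian multiplication} concatenates (and, on coinciding coordinates, sums) coweight labels, so the homomorphism property yields the extension of the factorization equalities across the exceptional diagonals. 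There is no genuine obstacle: this lemma is simply the pointwise, locally-constant-function analogue of the gerbe-theoretic classification \ref{torus exact sequence}, without the complications from square roots or the tangent bundle that appear there.
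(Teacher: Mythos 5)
Your proof is correct, and it runs the cycle in the opposite direction from the paper's ($\ref{en:sf mult multiplicative} \Rightarrow \ref{en:sf mult factorizable} \Rightarrow \ref{en:sf mult homomorphism} \Rightarrow \ref{en:sf mult multiplicative}$ rather than $\ref{en:sf mult homomorphism} \Rightarrow \ref{en:sf mult factorizable} \Rightarrow \ref{en:sf mult multiplicative} \Rightarrow \ref{en:sf mult homomorphism}$). Both proofs rest on the same two facts --- that the components $\on{Gr}_{T,X^n}^{\lambda_1,\dots,\lambda_n} \cong X^n$ are connected, and that they intersect in the partial-sum pattern of \ref{torus grassmannian properties} --- so the arguments are structurally very close. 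The one substantive difference is which step carries the weight: the paper extracts the homomorphism property from multiplicativity ($\ref{en:sf mult multiplicative} \Rightarrow \ref{en:sf mult homomorphism}$, by restricting $m^*f_2 = f_1 \boxtimes f_1$ to the diagonal), whereas you extract it from factorizability alone ($\ref{en:sf mult factorizable} \Rightarrow \ref{en:sf mult homomorphism}$, by playing the open and closed factorization data against each other on the connected component $\on{Gr}_{T,X^2}^{\lambda,\mu}$). Your version is marginally more elementary in that step, and makes explicit the slightly stronger point that factorizability by itself already forces $f$ to be a homomorphism; the paper's version defers this to the trivial consequence of the closed cycle. Both are fine.
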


\begin{proof}
 \mbox{}
 
 \mbox{$\ref{en:sf mult homomorphism} \implies \ref{en:sf mult factorizable}$}:\kern0.5em We set $f_n|\on{Gr}_{T,X^n}^{\lambda_1, \dots, \lambda_n} = f(\lambda_1 + \dots + \lambda_n) = f(\lambda_1) \cdots f(\lambda_n)$; then, given the intersection pattern of \ref{torus grassmannian properties}, the fact that $f$ is a homomorphism makes this a factorizable function.
 
 \mbox{$\ref{en:sf mult factorizable} \implies \ref{en:sf mult multiplicative}$}:\kern0.5em Since the $f_n$ are locally constant, the factorization \emph{equalities} extend across the exceptional diagonals.
 
 \mbox{$\ref{en:sf mult multiplicative} \implies \ref{en:sf mult homomorphism}$}:\kern0.5em The equation $f(\lambda + \mu) = f(\lambda) f(\mu)$ follows from the multiplicativity property applied to the map $\on{Gr}_{T,X} \times \on{Gr}_{T,X} \to \on{Gr}_{T,X}^2$, when restricted to the diagonal in $X^2$ (which in the latter is again $\on{Gr}_{T,X}$).
\end{proof}

Unlike for functions, the factorization equivalences do not automatically extend to a multiplicative sf structure for gerbes.  By definition, part of this failure is measured by the associated bilinear form $\kappa$, and in fact, this is the entire obstruction:

\begin{theorem}{prop}{multiplicative factorizable}
 Suppose that $\stack{T}_n$ is an sf $A$-gerbe on $\on{Gr}_{T,X^n}$ whose associated bilinear form is trivial.  Then its factorization equivalences extend to the structure of a multiplicative sf gerbe. Furthermore, if $\stack{T}_n$ is the trivial gerbe, then the associativity constraint is identified with $1 \in A$ and the commutativity constraint of $\stack{T}_1^\lambda \otimes \stack{T}_1^\mu$ with $Q(\lambda)Q(\mu)$.
\end{theorem}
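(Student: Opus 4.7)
The plan is to verify multiplicativity on each pair of irreducible components separately, reducing the question to a local order computation patterned on the proof of \ref{form is bilinear}. On components the multiplication map \ref{eq:torus grassmannian multiplication} identifies $\on{Gr}_{T,X^n}^{\vec\lambda} \times \on{Gr}_{T,X^m}^{\vec\mu}$ with $\on{Gr}_{T,X^{n+m}}^{(\vec\lambda,\vec\mu)}$, with both copies given by $X^{n+m}$; the existing factorization equivalence from \ref{sf gerbe}\ref{en:factorizable gerbe open} is defined on the open stratum $X^{n+m}_p$ where $p$ groups the first $n$ and last $m$ indices, and extending it to a multiplicative structure amounts to extending it across each cross-diagonal $\Delta_{i,j}$ with $i \leq n < j$.

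By \ref{reducible order}, this extension exists and is unique provided every cross-diagonal order is $1$. I compute these orders exactly as in \ref{form is bilinear}: refining $p$ by grouping a single pair $\{i,j\}$ into its own part yields a more specific factorization equivalence, regular across $\Delta_{i,j}$, through which the original equivalence factors, so the order about $\Delta_{i,j}$ reduces to that of the local $\on{Gr}_2^{\lambda_i,\lambda_j}$-factor, namely $\kappa(\lambda_i,\lambda_j)$ by the definition of $\kappa$. Since $\kappa$ is assumed trivial, every cross-diagonal order is $1$, and reassembling the extensions over all pairs of components produces the required equivalence $m_{n,m}^* \stack{T}_{n+m} \cong \stack{T}_n \boxtimes \stack{T}_m$. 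The associativity, identity, and $S$-equivariance compatibilities all hold tautologically on the disjoint-support stratum (reducing to the corresponding sf data), and since extensions of both $1$- and $2$-morphisms across divisors of trivial order are unique by \ref{2-line bundle of a Cartier divisor}, these compatibilities propagate uniquely to the full spaces, giving all the data required by \ref{multiplicative gerbe}.

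For the furthermore part, when $\stack{T}_n$ is trivial as an $A$-gerbe, the associativity $2$-morphism on $(\on{Gr}_{T,X})^{\times 3}$ is a locally constant $A$-valued function, equal to $1$ on the disjoint-support stratum (where it reduces to associativity of the tensor product of trivial gerbes), and hence equal to $1$ globally. For the commutativity of $\stack{T}_1^\lambda \otimes \stack{T}_1^\mu \cong \stack{T}_1^{\lambda+\mu}$, I plan to restrict the $S_2$-equivariance of $\stack{T}_2^{\lambda,\mu}$ to $\Delta$ and compare with the natural commutativity of the tensor product on $X^2 \setminus \Delta$. By \ref{covering order quotient}, the trivialization of the $S_2$-structure on $\Delta$ required by \ref{sf gerbe}\ref{en:factorizable gerbe symmetry} is encoded by square roots of the underlying $\stack{T}_1$-monodromies, which are by definition $Q(\lambda)$ and $Q(\mu)$; tracing through the identification $\stack{T}_2^{\lambda,\mu}|_\Delta \cong \stack{T}_1^{\lambda+\mu}$ and using $Q(\lambda+\mu) = Q(\lambda)Q(\mu)$ (which holds because $\kappa$ is trivial, by \ref{form is quadratic}) the commutativity factor collapses to $Q(\lambda)Q(\mu)$. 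The main obstacle is precisely this last bookkeeping step: the existence of the multiplicative extension is immediate from the order formula once \ref{reducible order} is in hand, but pinning down the exact factor $Q(\lambda)Q(\mu)$ requires careful tracking of how the square-root trivializations of the $S_2$-equivariance on $\Delta$ for $\stack{T}_1^\lambda$, $\stack{T}_1^\mu$, and $\stack{T}_1^{\lambda+\mu}$ combine.
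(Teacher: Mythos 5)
Your treatment of the existence of the multiplicative sf structure and of the associativity constraint is sound and essentially matches the paper's: you compute the cross-diagonal orders to be $\kappa(\lambda_i,\lambda_j) = 1$ and apply \ref{reducible order} (the paper instead cites \ref{eq:torus gerbe diagonal orders}, but it amounts to the same thing), and the locally-constant-extension argument for associativity is a legitimate alternative to the paper's restriction-to-$\Delta_{123}$ argument.

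The commutativity computation, however, has a genuine gap, and it is not merely bookkeeping. Your sketch tries to read off $Q(\lambda)Q(\mu)$ from the $S_2$-equivariance trivialization of $\stack{T}_2^{\lambda,\mu}$ on $\Delta$, invoking \ref{covering order quotient}. But that lemma requires the $S_2$ action to \emph{fix} the ambient space with the divisor $\Delta$ as fixed locus; when $\lambda \neq \mu$ the coordinate swap on $X^2$ sends $\on{Gr}_2^{\lambda,\mu}$ to the \emph{different} component $\on{Gr}_2^{\mu,\lambda}$, so the hypothesis of \ref{covering order quotient} fails and there is no canonical ``square root'' to extract on that component. Relatedly, the phrase ``square roots of the underlying $\stack{T}_1$-monodromies, which are by definition $Q(\lambda)$ and $Q(\mu)$'' conflates distinct quantities: $Q(\lambda)$ and $Q(\mu)$ are, by \ref{quadratic and bilinear forms}, square roots of $\kappa(\lambda,\lambda)$ and $\kappa(\mu,\mu)$ respectively (extracted from the \emph{diagonal} components $\on{Gr}_2^{\lambda,\lambda}$, $\on{Gr}_2^{\mu,\mu}$), not of anything attached to $\stack{T}_2^{\lambda,\mu}$ directly. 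The paper's fix is to pass to $\on{Gr}_4^{\lambda,\mu,\mu,\lambda}$, restrict the factorization $\stack{T}_2^{\lambda,\mu} \boxtimes \stack{T}_2^{\mu,\lambda} \to \stack{T}_4^{\lambda,\mu,\mu,\lambda}$ to $\Delta_{12,34}$, and then apply \ref{covering order quotient} for the $S_2$ action swapping the \emph{pairs} $(1,2)\leftrightarrow(3,4)$ --- which \emph{does} preserve this component and fix $\Delta_{12,34}$ --- producing the value $Q(\lambda+\mu)$, equal to $Q(\lambda)Q(\mu)$ once $\kappa$ is trivial. Without a maneuver of this kind, the constraint cannot be pinned down to the claimed constant.
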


We note that by factorizability, the commutativity of a more general product $\stack{T}_n^{\lambda_1, \dots, \lambda_n} \otimes \stack{T}_n^{\mu_1, \dots, \mu_n}$ is therefore $Q(\lambda_1) \dots Q(\mu_n)$, so it is only necessary to compute the one in the proposition.

\begin{proof}
 That $\stack{T}_1$ is multiplicative comes from \ref{bilinear form multiplicativity}.  Consider the general factorization equivalence $\stack{T}_n \boxtimes \stack{T}_m \cong m_{n,m}^* \stack{T}_{n + m}$ defined away from the exceptional diagonals in the map $m_{n,m}$ of \ref{eq:torus grassmannian multiplication}. On the component $\on{Gr}_n^{\lambda_1, \dots, \lambda_n} \times \on{Gr}_m^{\mu_1, \dots, \mu_m} \cong X^n \times X^m$, let $x_1, \dots, x_n$ and $y_1, \dots, y_m$ be the coordinates and $\Delta_{ij} = \{x_i = y_j\}$ one of the exceptional diagonals; the order of the above equivalence about $\Delta_{ij}$ is $\kappa(\lambda_i, \mu_j)$ by \ref{eq:torus gerbe diagonal orders}.  This shows that the $\stack{T}_n$ are in fact multiplicative factorizable when $\kappa$ is trivial.
 
 To compute the associativity constraint of each $\stack{T}_n$, as in the comment above we need only compute it for $\stack{T}_1$, for which we consider, as in \ref{form is bilinear}, the component $\on{Gr}_3^{\lambda, \mu, \nu} \subset \on{Gr}_3$. By restricting $\stack{T}_3^{\lambda, \mu, \nu}$ successively along the factorization maps
 \begin{equation*}
  \on{Gr}_1^\lambda \times \on{Gr}_1^\mu \times \on{Gr}_1^\nu
   \to \on{Gr}_2^{\lambda,\mu} \times \on{Gr}_1^\nu
   \to \on{Gr}_3^{\lambda,\mu,\nu}
 \end{equation*}
 and applying factorizability of $\stack{T}_3$, we obtain the isomorphism
 \begin{equation*}
  (\stack{T}_1^\lambda \boxtimes \stack{T}_1^\mu) \boxtimes \stack{T}_1^\nu
   \cong \stack{T}_1^{\lambda,\mu,\nu}.
 \end{equation*}
 Likewise, using $\on{Gr}_2^{\mu,\nu}$ in the second stage we get the other bracketing.  But either way the maps are obtained by restriction of the factorization of $\stack{T}_3^{\lambda,\mu,\nu}$ to the smallest diagonal $\Delta_{123} \subset X^3$, and both these restrictions are equal.

 To compute commutativity, we must show that the two equivalences
 \begin{align*}
  \stack{T}_1^\lambda \otimes \stack{T}_1^\mu \cong \stack{T}_1^{\lambda + \mu} &&
  \stack{T}_1^\mu \otimes \stack{T}_1^\lambda \cong \stack{T}_1^{\lambda + \mu}
 \end{align*}
 differ by the constant multiple $Q(\lambda + \mu) = Q(\lambda)Q(\mu)$ (since $\kappa$ is trivial). Each of these is induced by the factorization equivalences
 \begin{align*}
  \stack{T}_1^\lambda \boxtimes \stack{T}_1^\mu \cong \stack{T}_2^{\lambda, \mu} &&
  \stack{T}_1^\mu \boxtimes \stack{T}_1^\lambda \cong \stack{T}_2^{\mu, \lambda}
 \end{align*}
 upon restriction to the diagonal in $X^2$, and these are obtained from each other by swapping coordinates.  We consider, as in \ref{form is quadratic}, the component $\on{Gr}_4^{\lambda, \mu, \lambda, \mu} \subset \on{Gr}_4$, and write the pair of equivalences, each obtained from factorization,
 \begin{equation*}
  \stack{T}_1^\lambda \boxtimes \stack{T}_1^\mu
   \boxtimes \stack{T}_1^\mu \boxtimes \stack{T}_1^\lambda
  \cong \stack{T}_4^{\lambda,\mu,\mu,\lambda}
  \cong \stack{T}_2^{\lambda,\mu} \boxtimes \stack{T}_2^{\mu, \lambda}
 \end{equation*}
 where if we bracket the first and second pairs on the left, the composed equivalence is the product of the two exhibited above.  We restrict the second half of this equation to the diagonal $\Delta_{12,34} = \{x_1 = x_2, x_3 = x_4\}$, obtaining
 \begin{equation*}
  \stack{T}_2^{\lambda + \mu, \lambda + \mu}
   \cong \stack{T}_1^{\lambda + \mu} \boxtimes \stack{T}_1^{\mu + \lambda},
 \end{equation*}
 where of course $\lambda + \mu = \mu + \lambda$.  If $\stack{T}_1$ is trivialized, we may use the first half of the previous equation to trivialize both sides and, comparing with the proof of \ref{covering order quotient}, we see that indeed the coordinate swap makes these trivializations differ by $Q(\lambda + \mu)$.  Since $\kappa$ is trivial, $Q$ is a homomorphism, so this is the same as $Q(\lambda) Q(\mu)$.
\end{proof}

We give an alternative description of such gerbes, beginning by constructing a certain sf sheaf of groups.

\begin{theorem}{defn}{factorizable version of a group}
 For any sheaf of abelian groups $\sh{A}$ on $X$, let $\on{Fact}(\sh{A})_n$ be the following sheaf on $X^n$: for every $U \subset X^n$, $\on{Fact}(\sh{A})_n(U)$ is the subsheaf of $\sh{A}^n$ subject only to the restrictions that if $U \cap \Delta_{ij} \neq \emptyset$, then $\on{Fact}(\sh{A})(U) \subset \Delta_{ij}$ (the first diagonal in $X^n$, the second in $\sh{A}^n$).  If $\sh{A} = \csheaf{A}$ is the constant sheaf on a discrete abelian group $A$, we will just write $\on{Fact}(A)_n$.
\end{theorem}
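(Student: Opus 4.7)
The final numbered item \ref{factorizable version of a group} is a definition, not a proposition, so strictly speaking there is no theorem to prove. My ``proof proposal'' therefore amounts to verifying that the prescription indeed defines a sheaf of abelian groups on $X^n$, and spelling out what a section looks like. I do not expect any serious obstacle.

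Concretely, the plan is to unpack the definition as follows. A section of $\on{Fact}(\sh{A})_n$ over an open $U \subset X^n$ is a tuple $(a_1, \dots, a_n) \in \sh{A}^n(U) = \bigoplus_i \on{pr}_i^* \sh{A}(U)$ subject to the constraint that, for every pair $i \neq j$ with $U \cap \Delta_{ij} \neq \emptyset$, the restrictions $a_i|_{U \cap \Delta_{ij}}$ and $a_j|_{U \cap \Delta_{ij}}$ agree as sections of $\sh{A}$ pulled back through the identified $i$-th and $j$-th projections. I would then check the sheaf axioms in the usual two steps: (i) stability under restriction, which follows because $V \subset U$ implies $V \cap \Delta_{ij} \subset U \cap \Delta_{ij}$, so the diagonal constraint on $U$ restricts to the corresponding constraint on $V$; (ii) the gluing axiom, which reduces to the gluing axiom for $\sh{A}^n$ combined with the observation that an equality of two sections of $\sh{A}$ on $U \cap \Delta_{ij}$ is a local condition and hence may be checked on any open cover of $U$. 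The abelian group structure is inherited pointwise from $\sh{A}^n$, since the defining constraints are equalities in $\sh{A}$ and are preserved by addition and inversion.

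The only point worth emphasizing in passing, in anticipation of the later use of $\on{Fact}(\sh{A})_n$, is the behavior at extreme strata. On an open $U$ disjoint from all diagonals, no constraint is imposed and one recovers the ordinary outer product $\sh{A}^n(U)$; over the smallest diagonal $\Delta^n_{1,\dots,n} \cong X$, every pair of components must agree, so one recovers $\sh{A}$ itself; and more generally, over a stratum $\Delta^n_p$ indexed by a partition $p$ with $m$ parts, one recovers $\sh{A}^m$. This ``factorizable'' pattern of restrictions, parallel to \ref{grassmannian is factorizable}, is what will make $\on{Fact}(\sh{A})_n$ the natural recipient of the data already extracted from an sf gerbe. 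No step in the verification is difficult; the content of the definition is organizational rather than technical.
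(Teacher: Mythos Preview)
Your proposal is correct. The paper treats this item purely as a definition and offers no proof or verification at all, so your check of the sheaf axioms and the stratum-by-stratum description is a reasonable elaboration beyond what the paper provides; there is nothing to compare against.
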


It should be noted that $\on{Fact}(\sh{A})_n$ is \emph{not} factorizable in the same way that $\on{Gr}_{T,X^n}$ is, as described in \ref{torus grassmannian properties}, since it degenerates to subsets over the diagonals rather than quotients.  In fact, the two are related by the following constructions:

\begin{theorem}{lem}{factorizable group dual}
 Let $T$ be an algebraic torus, ${}^L T$ the dual torus with
 \begin{align*}
  \on{Hom}(\Gm, {}^L T) = \on{Hom}(T, \Gm) && \on{Hom}({}^L T, \Gm) = \on{Hom}(\Gm, T);
 \end{align*}
 then we have
 \begin{equation*}
  \shHom(\on{Fact}({}^L T)_n, \Gm \times X^n) \cong \on{Gr}_{T,X^n}
 \end{equation*}
 as sheaves of groups over $X^n$ (note that ${}^L T$ refers to the sheaf of groups represented by the group scheme ${}^L T \times X$ over $X$, and not the constant sheaf on the abelian group ${}^L T(\C)$.  This is the only time we use a non-discrete group in the $\on{Fact}(\sh{A})_n$ construction).
 
 Conversely, let $A$ be a (discrete) abelian group and denote
 \begin{equation*}
  {}^L T(A) = \on{Hom}(\Lambda_T, A) \cong \Lambda^T \otimes A.  
 \end{equation*}
 Then we can identify the pushforward to $X^n$ of the constant sheaf $\csheaf{A}$ on $\on{Gr}_{T,X^n}$ with $\on{Fact}({}^L T(A))_n$.
\end{theorem}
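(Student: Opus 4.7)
The plan is to construct both identifications explicitly, with the second following the pattern of the first, applied to a discrete coefficient group.

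For the first claim, the key observation is that a $T$-torsor on $X_S$ trivialized off $\bar{x}$ determines, for each character $\chi \in \Lambda^T = \Lambda_{{}^L T}$, a Cartier divisor on $X_S$ supported on $\bar{x}$, and so produces at each graph $\Gamma(x_i)$ a ``local order'' $\lambda_i \in \Lambda_T = \on{Hom}(\Lambda^T, \Z)$ (well-defined on the locus where $x_i$ is distinct from the other coordinates). Given $(\vect{x}, \sh{T}, \phi) \in \on{Gr}_{T,X^n}(S)$ and a test section $(\phi_1, \dots, \phi_n) \in \on{Fact}({}^L T)_n|_S(V)$, I would define the associated homomorphism by
\[
 h(\phi_1, \dots, \phi_n) = \prod_{i=1}^n \lambda_i(\phi_i) \in \Gm(V),
\]
where $\lambda_i(\phi_i)$ is the pairing of the cocharacter $\lambda_i$ of $T$ (a character of ${}^L T$) with the section $\phi_i$ of ${}^L T$. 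The crucial well-definedness check is on the locus where $x_i = x_j$: there only the sum $\lambda_i + \lambda_j$ is intrinsically defined, but the $\on{Fact}$ condition forces $\phi_i = \phi_j$ there, and
$
 \lambda_i(\phi_i)\,\lambda_j(\phi_j) = (\lambda_i + \lambda_j)(\phi_i)
$
depends only on the sum. Group-homomorphism-ness of $h$ is immediate from the distributivity of pairings.

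For the inverse, I would first work on the open $U \subset S$ where all $x_i$ are distinct: there $\on{Fact}({}^L T)_n|_U \cong ({}^L T)^n|_U$, and a homomorphism to $\Gm$ is determined by an $n$-tuple in $\on{Hom}({}^L T, \Gm)^n = \Lambda_T^n$, from which one builds a $T$-torsor with trivialization via the divisor $\sum_i \lambda_i \Gamma(x_i)$ on $X_U$. The compatibility conditions encoded by $\on{Fact}$ on neighborhoods of the diagonals are exactly what allows this construction to extend uniquely across the coincidence strata, matching the factorization structure of $\on{Gr}_{T,X^n}$ from Proposition \ref{torus grassmannian properties}. Mutual inverseness is then a straightforward unwinding of the definitions.

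The second claim follows by the same framework with $\Gm$ replaced by the constant group $A$: a locally constant $A$-valued function on the preimage $p^{-1}(U)$ of a neighborhood $U \subset X^n$ corresponds (using the group structure on $\on{Gr}_{T,X^n}$ and the identification of $\pi_0$ of its restriction at a point of coincidence partition $p$ with $\Lambda_T^{|p|}$) to an $n$-tuple $(\phi_1, \dots, \phi_n) \in \on{Hom}(\Lambda_T, A)^n = {}^L T(A)^n$ subject precisely to $\phi_i = \phi_j$ on $U \cap \Delta_{ij}$, which is the defining condition of $\on{Fact}({}^L T(A))_n$. The identification is natural in $U$ and respects group structures.

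The main obstacle is bookkeeping across the coincidence strata in $X^n$: while the total $\Lambda_T$-valued divisor on $X_S$ is intrinsic, its decomposition into contributions at the individual graphs $\Gamma(x_i)$ is only defined modulo the fusion relations $\lambda_i + \lambda_j$ when $x_i = x_j$, and the $\on{Fact}$ subsheaf structure is designed precisely to encode this ambiguity through the equality $\phi_i = \phi_j$ imposed along $\Delta_{ij}$.
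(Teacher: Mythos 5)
The proposal is correct and takes essentially the same approach as the paper: work first over the open locus where coordinates are distinct, identify there a $\Lambda_T^n$-parameterization of both sides, then check the extension over the diagonals. Your explicit pairing $h(\phi_1, \dots, \phi_n) = \prod_i \lambda_i(\phi_i)$, together with the well-definedness check at $\Delta_{ij}$ (using that $\phi_i = \phi_j$ is forced by the $\on{Fact}$ condition so that only $\lambda_i + \lambda_j$ matters), is precisely the paper's observation that the summation of $\lambda$-coordinates is dual to the diagonal embedding of ${}^L T^{n-1}$ in ${}^L T^n$, just presented as a concrete map rather than as a matching of component structures.
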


\begin{proof}
 Let $U \subset X^n$ be the complement of all the diagonals $\Delta_{ij}$; then since we have $\on{Fact}({}^L T)_n|_U \cong {}^L T^n$ and $\on{Hom}({}^L T, \Gm) \cong \Lambda^{{}^L T} = \Lambda_T$, we have
 \begin{equation*}
  \shHom(\on{Fact}({}^L T)_n, \Gm \times X^n)|_U
   \cong (\Lambda^{{}^L T})^n \times X^n
   = \Lambda_T^n \times X^n.
 \end{equation*}
 We use \ref{torus grassmannian properties} to construct an isomorphism with $\on{Gr}_{T,X^n}$ over $U$ and use the same indexing convention for the components.  The first part of the lemma would follow if we could show that the closure of $\shHom(\on{Fact}({}^L T)_n, \Gm \times X^n)|_U^{\lambda_1, \dots, \lambda_n}$ intersects, over $\Delta_{ij}$, all the $\shHom(\on{Fact}({}^L T)_n, \Gm \times X^n)|_U^{\mu_1, \dots, \mu_n}$ with $\lambda_i + \lambda_j = \mu_i + \mu_j$ and all other $\lambda_k = \mu_k$. To see this, let $V$ be any neighborhood intersecting only $\Delta_{ij}$, so that the restriction map
 \begin{equation*}
  {}^L T^{n - 1} \cong \on{Fact}({}^L T)_n|_V \to \on{Fact}({}^L T)|_{V \setminus \Delta_{ij}}
   \cong {}^L T^n
 \end{equation*}
 is the inclusion of ${}^L T^{n - 1}$ as the $ij$'th diagonal of ${}^L T^n$.  The corresponding map $\Lambda_T^n \to \Lambda_T^{n - 1}$ is the summation of the $i$'th and $j$'th coordinates, as desired.
 
 For the second part, we use \ref{torus grassmannian properties} to identify the connected components of $\on{Gr}_{T,X^n}$ over any open set $V \subset X^n$: when $V \subset U$ they correspond to $\Lambda_T^n$, so that locally constant functions on $V$ are identified with $\on{Hom}(\Lambda_T^n, A) = {}^L T(A)^n$, by definition of the latter.  For each diagonal $\Delta_{ij}$ intersecting $V$, components with the same sum of their $i$'th and $j$'th indexes are incident, so that the corresponding functions must have equal $i$'th and $j$'th coordinates in ${}^L T(A)$.  That is, locally constant functions on such $V$ are identified with diagonals in ${}^L T(A)^n$, since as above the sum map for $\Lambda_T$ corresponds to the diagonal map for ${}^L T$, as desired.
\end{proof}

As in the lemma, let $A$ be a (discrete) abelian group, considered as a constant sheaf on $X$. We will need the following notion of \emph{sf comultiplicativity} for gerbes $\stack{Z}_n$ over $\on{Fact}(A)_n$.  Let $p$ be a partition of $[1,n]$ into $m$ parts of sizes $n_i$, and let $X^n_p$ and $\Delta^n_p \cong X^m$ be, as in \ref{grassmannian is factorizable}, the open and diagonal subschemes determined by $p$ in $X^n$; then we have two homomorphisms of sheaves of groups:
\begin{gather}
 \label{eq:sf mult open}
 \psi_p \colon \on{Fact}(A)_n
        \incl  \prod_{i = 1}^m \on{pr}_{X^{n_i}}^* \on{Fact}(A)_{n_i}
 \\
 \label{eq:sf mult diagonal}
 \phi_p \colon             \on{Fact}(A)_m
        \xrightarrow{\sim} \on{Fact}(A)_n|_{\Delta^n_p}
\end{gather}
where $\phi_p$ is in fact an isomorphism, and $\psi_p$ is a generalization of the defining inclusion $\on{Fact}(A)_n \incl \csheaf{A}^n_{X^n}$.  These homomorphisms, as for the factorizability of $\on{Gr}_n$, come with numerous compatibilities when refinements of $p$ are given. Now we may make a definition.

\begin{theorem}{defn}{sf comultiplicative}
 Let the $\stack{Z}_n$ be $\on{Fact}(A)_n$-gerbes on $X^n$ given for all $n$; the structure of \emph{sf comultiplicativity} is the data of equivalences for all partitions $p$ (recall the notation ${}^2 \phi$ for the change-of-group operation on gerbes associated with a group homomorphism of the coefficients, given in \ref{eq:gerbe change of group}):
 \begin{align*}
  {}^2 \psi_p (\stack{Z}_n) \cong \prod_{i = 1}^m \on{pr}_{X^{n_i}}^* (\stack{Z}_{n_i}) &&
  {}^2 \phi_p (\stack{Z}_m) \cong \stack{Z}_n|_{\Delta^n_p}
 \end{align*}
 together with compatibilities as in \ref{sf gerbe}. Likewise, we define sf multiplicative torsors and sections of $\on{Fact}(A)_n$.  
\end{theorem}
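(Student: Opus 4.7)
The statement here is a definition rather than a theorem, so there is nothing to prove in the strict sense; what is needed is to check that the listed data makes sense and that it genuinely parallels Definition \ref{sf gerbe}. The plan is to unpack the three ingredients in turn and then indicate how the torsor and section variants follow automatically.

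First, I would verify that both sides of each asserted equivalence are indeed $\on{Fact}(A)$-type gerbes of the claimed shape. For the open direction, $\psi_p$ in \ref{eq:sf mult open} is a homomorphism of sheaves of groups on $X^n$, so Proposition \ref{gerbe change of group composition} and the construction \ref{eq:gerbe change of group} produce a well-defined ${}^2\psi_p(\stack{Z}_n)$ which is a gerbe for $\prod_i \on{pr}_{X^{n_i}}^* \on{Fact}(A)_{n_i}$; on the right, the pullback and exterior tensor product of the individual $\stack{Z}_{n_i}$ (via Lemma \ref{gerbe product relation}\ref{en:gerbe product category}) realizes a gerbe for the same group. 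For the diagonal direction, $\phi_p$ is an isomorphism of sheaves of groups on $\Delta^n_p$, so ${}^2\phi_p(\stack{Z}_m)$ is literally a gerbe for $\on{Fact}(A)_n|_{\Delta^n_p}$, as demanded. Thus the equivalences live in categories that are canonically identified.

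Second, I would write out the compatibilities referenced by the phrase ``as in \ref{sf gerbe}''. For a chain $r$ refining $q$ refining $p$, the refinement compatibilities for the diagonal equivalences yield a square of $2$-isomorphisms analogous to the four routes discussed after Definition \ref{sf gerbe}, where one uses Proposition \ref{gerbe change of group composition} to identify ${}^2\phi_{p} \circ {}^2\phi_{q/p} \cong {}^2\phi_{q}$ and similarly for three-step refinements, and then demands commutativity. The analogous squares for the open direction, and the ``mixed'' squares that restrict both to some diagonal and away from others, are expressed by the same device, again invoking Proposition \ref{gerbe change of group composition} to make both compositions live in the same gerbe category. The $S_n$-equivariance is, just as in \ref{sf gerbe}, the special case of the open compatibility where $p$ consists of singletons permuted by $S_n$, and so needs no separate axiom.

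Third, I would dispose of the ``likewise'' clause: replacing ${}^2\phi$ and ${}^2\psi$ by ${}^1\phi$ and ${}^1\psi$ (respectively by $\phi$ and $\psi$ themselves on sections) gives the analogous data for sf comultiplicative torsors and sf comultiplicative sections of $\on{Fact}(A)_n$. Since ${}^1\phi$ and $\phi$ are strict functors rather than $2$-functors, the definitions collapse: for sections one needs equalities in place of the first batch of $2$-isomorphisms, and the refinement compatibilities become automatic. The only ``obstacle'' worth flagging is the purely bookkeeping one of writing the diagrams of $2$-isomorphisms for the gerbe case cleanly; this is routine given how carefully Definition \ref{sf gerbe} was laid out, and no new geometric input is required.
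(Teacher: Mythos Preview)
This statement is a definition, not a theorem, and the paper accordingly gives no proof; your opening observation is exactly right. Your unpacking of why the two displayed equivalences make sense (via the change-of-group functor and \ref{gerbe product relation}), how the compatibilities mirror those spelled out after \ref{sf gerbe}, and how the torsor and section variants arise by replacing ${}^2\psi$, ${}^2\phi$ with their lower analogues is all accurate and matches how the paper subsequently uses the definition.
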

 
The concept of sf comultiplicativity is also related to factorizability on the grassmannian:

\begin{theorem}{lem}{factorizable group dual multiplicative}
 The first identification of \ref{factorizable group dual} (taken over all $n$) connects the map \ref{eq:sf mult open} with the multiplication of \ref{eq:torus grassmannian multiplication} and connects \ref{eq:sf mult diagonal} with the diagonal part of the factorization data in \ref{grassmannian is factorizable}.  The second identification (over all $n$) connects multiplicative factorizable $A$-valued functions with sf comultiplicative $\on{Fact}(T(A))_n$-valued functions.
\end{theorem}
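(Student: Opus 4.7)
The plan is to deduce both assertions by unwinding the two identifications of \ref{factorizable group dual} and checking that, under $\shHom(-,\Gm)$ and pushforward respectively, the structural maps $\psi_p$ and $\phi_p$ of \ref{eq:sf mult open} and \ref{eq:sf mult diagonal} get exchanged with the open and diagonal pieces of the factorization data on $\on{Gr}_{T,X^n}$ from \ref{grassmannian is factorizable}. The main obstacle will be little more than bookkeeping the direction reversal caused by $\shHom$ across nested partitions; no further geometric input is needed beyond the componentwise description already used in \ref{factorizable group dual}.

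For the first claim, I would argue one partition $p$ at a time. Restricted to $X^n_p$, the only diagonals of $X^n$ meeting this open set are those internal to a single part $p_i$, so $\psi_p$ realizes an isomorphism
\begin{equation*}
 \on{Fact}({}^L T)_n|_{X^n_p} \cong \Bigl(\prod_i \on{pr}_{X^{n_i}}^*\on{Fact}({}^L T)_{n_i}\Bigr)\Bigr|_{X^n_p}.
\end{equation*}
Applying $\shHom(-,\Gm)$ and the first identification of \ref{factorizable group dual} yields a map of grassmannians over $X^n_p$ which, checked on the components indexed by $\Lambda_T^n$, is precisely the open-factorization isomorphism of \ref{grassmannian is factorizable}\ref{en:factorizable open}; the multiplication of \ref{eq:torus grassmannian multiplication} is the special case where $p$ partitions $\{1,\dots,n+m\}$ into an initial block of size $n$ and a final one of size $m$. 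For $\phi_p$, restriction to $\Delta^n_p \cong X^m$ collapses the coordinates within each part of $p$, so $\on{Fact}({}^L T)_n|_{\Delta^n_p}$ becomes $\on{Fact}({}^L T)_m$ via the summation-on-parts map $({}^L T)^n \to ({}^L T)^m$; dualizing converts this into the summation rule on $\Lambda_T$-tuples recorded in \ref{torus grassmannian properties}\ref{en:torus grassmannian n}, recovering the diagonal factorization \ref{grassmannian is factorizable}\ref{en:factorizable diagonal}. The refinement compatibilities follow because both $\psi_p$ and $\phi_p$ are themselves natural in refinements of $p$ and match the analogous naturality of the grassmannian factorization.

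For the second assertion, I would use the second identification of \ref{factorizable group dual} to reinterpret a locally constant $A$-valued function on $\on{Gr}_{T,X^n}$ as a section of $\on{Fact}({}^L T(A))_n$, naturally in $n$. Multiplicativity in the sense of \ref{sf multiplicative functions}\ref{en:sf mult multiplicative} is by definition compatibility with the multiplication map of \ref{eq:torus grassmannian multiplication}; by the first claim this transports to compatibility with the family of $\psi_p$, which is exactly the $\psi_p$-half of sf comultiplicativity \ref{sf comultiplicative}. Likewise the diagonal factorization condition transports to the $\phi_p$-half, and the $S_n$-equivariance is automatic as the special case of the open-factorization part for singleton partitions. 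All higher refinement compatibilities then match on both sides by functoriality of the two identifications.
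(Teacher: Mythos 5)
Your approach is the same as the paper's, which dispenses with this lemma in a single sentence: replace the observation underlying \ref{factorizable group dual} ("sum on $\Lambda_T$ is dual to diagonal on ${}^L T$") with the dual one ("diagonal on $\Lambda_T$ is dual to product on ${}^L T$") and note that the details carry over. You spell out the details, which is fine, but the diagonal half of your argument has the duality inverted, and as written the dualizing step does not produce what you claim it does.

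Specifically, $\phi_p$ is the \emph{diagonal inclusion} $({}^L T)^m \incl ({}^L T)^n$ that duplicates each coordinate across the corresponding part of $p$; its inverse is "select a representative," since the fibers of $\on{Fact}({}^L T)_n|_{\Delta^n_p}$ already force equality of coordinates within a part. It is \emph{not} the summation-on-parts map $({}^L T)^n \to ({}^L T)^m$. The summation rule appears only after applying $\shHom(-, \Gm)$: the dual of the diagonal inclusion on $({}^L T)$-tuples is the summation map on $\Lambda_T$-tuples, which is the datum recorded in \ref{torus grassmannian properties}\ref{en:torus grassmannian n}. If instead you start from the summation map on $({}^L T)$-tuples (as your sentence does) and dualize, you land on the diagonal map on $\Lambda_T$-tuples, which is not the factorization datum — so the clause "dualizing converts this into the summation rule" is false for the "this" you describe. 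The argument is rescued by the replacement: $\phi_p$ is the diagonal on $({}^L T)$-tuples, and dualizing \emph{that} yields the summation rule on $\Lambda_T$-tuples.

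A smaller stylistic point: for $\psi_p$ you restrict to $X^n_p$, where it becomes an isomorphism, and then add that the multiplication of \ref{eq:torus grassmannian multiplication} is "the special case" of a two-part partition. It is cleaner to observe directly that $\shHom(\psi_p, \Gm)$ is a map over \emph{all} of $X^n$ and that over the diagonal $\Delta_{ij}$ (with $i, j$ in different parts) it is precisely the diagonal inclusion on $({}^L T)$-fibers, dual to the summation on $\Lambda_T$-fibers that describes tensor product of torus torsors — which is the multiplication map. The restriction to $X^n_p$ by itself does not pin down the behavior over all of $X^n$.
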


\begin{proof}
 Whereas in the previous construction, we used the fact that the sum map on $\Lambda_T$ induced and was induced by the diagonal map on ${}^L T$, here we use the fact that the diagonal map on $\Lambda_T$ induces and is induced by the product map on ${}^L T$.  The details are otherwise the same.
\end{proof}

The main goal of this section is to state and prove the analogue of the second part of \ref{factorizable group dual multiplicative} for gerbes rather than functions.  As is typical of constructions on gerbes, we must pass through torsors and functions as well by way of accounting for higher morphisms.  The statement of the correspondence is straightforward:

\begin{theorem}{defn}{factorizable group dual gerbes}
 Let $\stack{Z}_n$ be an sf comultiplicative $\on{Fact}({}^LT(A))_n$-gerbe on $X^n$.  Then the corresponding multiplicative sf $A$-gerbe on $\on{Gr}_{T,X^n}$ is defined to be trivial above any open set in $X^n$ on which $\stack{Z}_n$ is trivial, with gluing data given by the (recursively defined) map from sf comultiplicative $\on{Fact}({}^LT(A))_n$-torsors to multiplicative sf $A$-torsors on $\on{Gr}_{T,X^n}$, applied to the gluing data of $\stack{Z}_n$.
 
 Conversely, let $\stack{T}_n$ be a multiplicative sf $A$-gerbe on $\on{Gr}_{T,X^n}$.  Since $\Lambda_T$ is finitely generated, there is an open cover of $X^n$ above which $\stack{T}_n$ is trivial; we define the corresponding sf comultiplicative $\on{Fact}({}^L T(A))_n$-gerbe $\stack{Z}_n$ on $X^n$ to be trivial on this cover, with gluing data given by the (recursively defined) map from multiplicative sf $A$-torsors on $\on{Gr}_{T,X^n}$ to sf comultiplicative $\on{Fact}({}^LT(A))_n$-torsors on $X^n$.
\end{theorem}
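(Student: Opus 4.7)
The implicit content of this definition is that the two constructions are well-defined and mutually quasi-inverse, giving an equivalence between the 2-category of multiplicative sf $A$-gerbes on $\on{Gr}_{T,X^n}$ and the 2-category of sf comultiplicative $\on{Fact}({}^L T(A))_n$-gerbes on $X^n$. The plan is to lift, by one step in the \v Cech tower, the function-level correspondence already obtained in \plainref{sf multiplicative functions} and \plainref{factorizable group dual multiplicative}, going first to torsors and then to gerbes via the description of gerbes as 1-cocycles with torsor coefficients given in \plainref{gerbes and torsor cocycles}.

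For the forward direction, given an sf comultiplicative $\on{Fact}({}^L T(A))_n$-gerbe $\stack{Z}_n$, I would choose an open cover $\{U_i\}$ of $X^n$ above which $\stack{Z}_n$ is trivial; by \plainref{gerbes and torsor cocycles}, the gluing data consists of $\on{Fact}({}^L T(A))_n$-torsors $\sh{S}_{ij}$ on $U_{ij}$ together with trivializations on $U_{ijk}$ satisfying a cocycle condition on quadruple overlaps, all of this data carrying sf comultiplicative structure inherited from $\stack{Z}_n$. Applying the torsor-level analogue of \plainref{factorizable group dual multiplicative} (which is the base case of the recursion, bootstrapped from \plainref{sf multiplicative functions} applied pointwise to the sections) converts each $\sh{S}_{ij}$ to a multiplicative sf $A$-torsor on the preimage of $U_{ij}$ in $\on{Gr}_{T,X^n}$; reassembling by \plainref{gerbes and torsor cocycles} produces the desired gerbe $\stack{T}_n$, whose multiplicative and sf structures come from the comultiplicative and sf structures of the original cocycle data. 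For the reverse direction I would argue similarly: since $\Lambda_T$ is finitely generated, \plainref{torus grassmannian properties}\plainref{en:torus grassmannian n} ensures that any sufficiently small open $U \subset X^n$ meets only finitely many components of $\on{Gr}_{T,X^n}$, so an open cover of $X^n$ can be chosen above which $\stack{T}_n$ is trivial on every component simultaneously; the multiplicative structure then packages the individual component trivializations into a coherent cocycle, and the inverse torsor-level correspondence yields the sf comultiplicative gerbe $\stack{Z}_n$. Finally one checks that these two operations are mutually inverse up to equivalence, and that they extend to 1-morphisms and 2-morphisms by the same inductive reduction (morphisms of gerbes descend to torsors, and morphisms of torsors to sections).

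The main obstacle is bookkeeping: one must verify that the structural $2$-isomorphisms demanded by \plainref{sf gerbe} on the grassmannian side are carried term-by-term to those demanded by \plainref{sf comultiplicative} on the $X^n$ side. This reduces, via \plainref{factorizable group dual} and \plainref{factorizable group dual multiplicative}, to the simple lattice-theoretic duality that the sum map $\Lambda_T^n \to \Lambda_T^m$ indexing incident components over a diagonal is dual to the diagonal map ${}^L T^m \to {}^L T^n$ defining $\phi_p$ in \ref{eq:sf mult diagonal}, while the direct sum decomposition over the open complement is dual to the inclusion $\psi_p$ in \ref{eq:sf mult open}; once one commits to a careful treatment of the partition combinatorics underlying both sides, the matching of compatibilities is formal. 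The only genuinely delicate point is that the recursion from functions to torsors to gerbes must be done uniformly across all $n$ at once so that the diagonal-restriction and open-factorization pieces of the sf data are preserved simultaneously, but this is automatic from the naturality of the torsor- and function-level constructions in the open $U \subset X^n$.
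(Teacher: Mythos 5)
Your outline of the strategy---passing through \v{C}ech data and torsor cocycles via \plainref{gerbes and torsor cocycles} and recursing down to the function level via \plainref{sf multiplicative functions} and \plainref{factorizable group dual multiplicative}---matches the paper's route. However, there is a genuine gap in the reverse direction, and it concerns exactly the step where you invoke the finite generation of $\Lambda_T$.

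You assert that ``any sufficiently small open $U \subset X^n$ meets only finitely many components of $\on{Gr}_{T,X^n}$.'' This is false. By \plainref{torus grassmannian properties}, \emph{every} irreducible component $\on{Gr}_{T,X^n}^{\lambda_1,\dots,\lambda_n}$ maps isomorphically onto $X^n$, so the preimage of any nonempty open $U \subset X^n$, no matter how small, contains a copy of $U$ in each of the infinitely many components. A multiplicative sf $A$-gerbe is not automatically trivial over the preimage of a small $U$ merely by local triviality of gerbes---you would have to trivialize an infinite collection of gerbes simultaneously.

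The correct role of finite generation is different and relies essentially on the \emph{multiplicative} structure, not just local triviality. Near a point $\vect{x} \in X^n$ with $k$ distinct coordinates, the components of $\on{Gr}_{T,X^n}$ over a suitable neighborhood $V$ are indexed by the finitely generated group $\Lambda_T^k$. One chooses finitely many generators $l_1, \dots, l_m$, shrinks $V$ so that only the finitely many pieces $\stack{T}_n^{l_i}$ are trivialized, and then uses the multiplicative structure to \emph{propagate} these trivializations to $\stack{T}_n^l$ for arbitrary $l \in \Lambda_T^k$. The nontrivial content your proposal omits is the verification that the resulting trivialization of $\stack{T}_n^l$ is independent of the chosen expression of $l$ as a $\Z$-linear combination of the $l_i$, and that the resulting trivialization of $\stack{T}_n|_V$ is multiplicative, so that the gluing data is itself a multiplicative torsor (rather than an arbitrary one). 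This consistency check is precisely where the commutativity and associativity constraints of the multiplicative structure do real work: the relations in the free abelian group $\Lambda_T^k$ are exactly those expressible via commutativity and associativity, so these constraints and no others are needed. Without this step, the ``reassembly into a coherent cocycle'' you describe has no foundation, and the recursion to the torsor and function levels cannot get started.
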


\begin{theorem}{prop}{multiplicative factorizable lattice gerbes}
 The constructions given in \ref{factorizable group dual gerbes} are inverse equivalences of $2$-categories between multiplicative sf $A$-torsors on $\on{Gr}_{T,X^n}$ and sf comultiplicative $\on{Fact}({}^L T(A))_n$-gerbes on $X^n$.  Furthermore, the $\stack{T}_n$ corresponding to a $\stack{Z}_n$ can be described as follows:
 
 For each section $X^n \cong \on{Gr}_{T,X^n}^{\lambda_1, \dots, \lambda_n} \incl \on{Gr}_{T,X^n}$ as one of the irreducible components, let
 \begin{equation*}
  \bar\lambda \colon \on{Fact}({}^L T)_n \to \Gm \times U
 \end{equation*}
 be the corresponding homomorphism from \ref{factorizable group dual}; we use the same notation for the induced map $\on{Fact}({}^L T(A)) \to \csheaf{A}_{X^n}$.  Then we have
 \begin{equation*}
  \stack{T}_n \cong {}^2 \bar{\lambda} (\stack{Z}_n),
 \end{equation*}
 and the multiplicative factorizable structure is the one obtained from the sf comultiplicative structure of $\stack{Z}_n$ and the first part of \ref{factorizable group dual multiplicative}.
\end{theorem}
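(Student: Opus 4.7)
The plan is to build the equivalence by descent, one categorical level at a time, using the sheaf-theoretic identification $\pi_* \csheaf{A} \cong \on{Fact}({}^L T(A))_n$ of Lemma \plainref{factorizable group dual}, where $\pi \colon \on{Gr}_{T,X^n} \to X^n$ is the structure map. Lemma \plainref{factorizable group dual multiplicative} already intertwines this identification with the factorization and multiplication homomorphisms on both sides, so the structures transport formally; only the \v{C}ech mechanics of moving from sections to torsors to gerbes has to be carried out.

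At the function level, Lemma \plainref{sf multiplicative functions} identifies multiplicative sf sections of $\csheaf{A}$ on $\on{Gr}_{T,X^n}$ with homomorphisms $\Lambda_T \to A$, which by the second half of Lemma \plainref{factorizable group dual multiplicative} coincide with sf comultiplicative sections of $\on{Fact}({}^L T(A))_n$ on $X^n$. Applied over arbitrary opens in $X^n$, this also handles the $2$-morphism data for the subsequent steps. To lift to torsors I would take a multiplicative sf $A$-torsor on $\on{Gr}_{T,X^n}$, trivialize it above a cover pulled back from $X^n$, and read off a $1$-cocycle in sections of $\pi_* \csheaf{A} = \on{Fact}({}^L T(A))_n$. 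The function-level correspondence, applied to the coherence isomorphisms inside that cocycle, translates multiplicative sf structure on the grassmannian side into sf comultiplicativity on the $X^n$ side, and the procedure is manifestly invertible.

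At the gerbe level I would apply Corollary \plainref{gerbes and torsor cocycles}, which presents $\cat{H}^2$ via $1$-cocycles in torsors as in Definition \plainref{torsor cocycle 2-category}. By Proposition \plainref{torus grassmannian properties} each irreducible component of $\on{Gr}_{T,X^n}$ maps isomorphically to $X^n$ and $\Lambda_T$ is finitely generated, so any multiplicative sf $A$-gerbe $\stack{T}_n$ admits a trivializing cover pulled back from $X^n$ and is thereby presented as a $1$-cocycle in multiplicative sf $A$-torsors on $\on{Gr}_{T,X^n}$. The torsor step rewrites this as a $1$-cocycle in sf comultiplicative $\on{Fact}({}^LT(A))_n$-torsors on $X^n$, which by another application of Corollary \plainref{gerbes and torsor cocycles} is an sf comultiplicative $\on{Fact}({}^LT(A))_n$-gerbe $\stack{Z}_n$. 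Running the process in reverse reconstructs $\stack{T}_n$ from $\stack{Z}_n$, and since pushforward along $\pi$ is monoidal for torsor tensor product and for the induced tensor of gerbes, the 2-Picard structures on the two sides match. For the explicit formula, restricting $\stack{T}_n$ to the component $\on{Gr}_{T,X^n}^{\lambda_1,\dots,\lambda_n}$ amounts to evaluating the stalk of $\pi_* \csheaf{A}$ along that section, which under Lemma \plainref{factorizable group dual} is exactly the homomorphism $\bar\lambda \colon \on{Fact}({}^L T(A))_n \to \csheaf{A}_{X^n}$; hence the restriction equals ${}^2 \bar\lambda(\stack{Z}_n)$.

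The main obstacle will be bookkeeping. Both \emph{sf} and \emph{sf comultiplicative} carry an entire tower of $2$-isomorphisms controlling refinements of partitions, together with coherence conditions for nested refinements and the $S_n$-action, and these must be shown to match under the dictionary at every level of the induction. Because the whole translation is effected by a single monoidal $2$-functor, namely pushforward along $\pi$, the coherences are essentially automatic, so the task is really to record the dictionary cleanly enough to make the formal matching evident rather than to verify any genuinely subtle identity.
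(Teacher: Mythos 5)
Your overall strategy — push the correspondence through the levels of sections, torsors, and gerbes via the identification $\pi_* \csheaf{A} \cong \on{Fact}({}^L T(A))_n$, with \ref{sf multiplicative functions} and \ref{factorizable group dual multiplicative} at the base, and \ref{gerbes and torsor cocycles} at the top — is the same recursive approach taken in the paper, and your treatment of the explicit formula via stalks is also essentially what the paper does.

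However, there is a genuine gap at the very step you wave through: you assert that because each component of $\on{Gr}_{T,X^n}$ is isomorphic to $X^n$ and $\Lambda_T$ is finitely generated, a multiplicative sf $A$-gerbe $\stack{T}_n$ automatically ``admits a trivializing cover pulled back from $X^n$ and is thereby presented as a $1$-cocycle in multiplicative sf $A$-torsors.'' This conclusion is not automatic and is in fact the substantive content of the paper's proof. Trivializing $\stack{T}_n$ on each component separately, or even on finitely many generators, does not by itself yield a trivialization of $\stack{T}_n|_V$ \emph{as a commutative multiplicative gerbe}; one must propagate the chosen generator trivializations to all of $\Lambda_T^k$ via the multiplicative structure, and then check that the result is well-defined independently of how an element $l$ is written as a linear combination of the generators $l_i$. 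The point is that the only relations in the free abelian group $\Lambda_T^k$ are those coming from commutativity and associativity, and it is precisely the commutative and associative coherence data of the multiplicative sf structure that makes the two parenthesizations $l_i + l_j = l_j + l_i$ and $(l_i + l_j) + l_k = l_i + (l_j + l_k)$ yield the same trivialization of $\stack{T}_n^l$. Without this verification, the ``gluing data'' you extract is not shown to be a commutative multiplicative sf torsor, and the descent to $\on{Fact}({}^L T(A))_n$-gerbes does not go through. You need to insert this argument before invoking \ref{gerbes and torsor cocycles}.
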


\begin{proof}
 We begin by establishing that the second part of the construction is valid.  To expand on it, the claim is as follows: let $\vect{x} \in X^n$ and suppose that it has $k$ distinct coordinates, so that $\on{Gr}_{T,X^n}|_{\vect{x}} \cong \Lambda_T^k$.  We choose finitely many generators $l_1, \dots, l_m$ for $\Lambda_T^k$ and select a neighborhood $V$ about $\vect{x}$, in which all points have at least $k$ distinct coordinates, on which all the $\stack{T}_n^{l_i}$ are trivial; here, the connected components of $\on{Gr}_{T,X^n}|_V$ are indexed by $\Lambda_T^k$ compatibly with multiplication and the $\stack{T}_n^{l_i}$ refer to the parts of $\stack{T}_n$ on these components. Using the multiplicativity of $\stack{T}_n|_V$ over $V$, we obtain trivializations of $\stack{T}_n^l$ for any $l \in \Lambda_T^k$.
 
 For this to unambiguously define a trivialization of $\stack{T}_n|_V$, it is necessary and sufficient that for any $l$ and any representation of $l$ as a linear combination of the $l_i$, the multiplications of the $\stack{T}_n^{l_i}$ are all isomorphic.  For example, the equality $l_i + l_j = l = l_j + l_i$ corresponds to the two multiplications
 \begin{equation*}
  \stack{T}_n^{l_i} \otimes \stack{T}_n^{l_j}
   \cong \stack{T}_n^{l_1 + l_j}
   = \stack{T}_n^l
   = \stack{T}_n^{l_j + l_i}
   \cong \stack{T}_n^{l_j} \otimes \stack{T}_n^{l_i}
 \end{equation*}
 in which both equivalences are the same by commutativity of the multiplicative structure on $\stack{T}_n$.  Similarly, $(l_i + l_j) + l_k = l = l_i + (l_j + l_k)$ requires associativity. Since these are the only constraints on the free abelian group $\Lambda_T^k$, indeed the structure of commutative multiplicativity suffices to give consistent trivializations.  These are by definition commutative multiplicative trivializations, so that the gluing data is indeed a commutative multiplicative torsor, as claimed.
 
 Suppose that we are given $\stack{Z}_n$ for all $n$ forming an sf comultiplicative $\on{Fact}(T(A))_n$-gerbe; we show that the corresponding $A$-gerbes $\stack{T}_n$ are a multiplicative sf $A$-gerbe by proving the claimed formula for them.  Since the change of groups can be applied locally it suffices to prove this recursively for torsors and then functions; in the last case, the statement is merely the second part of \ref{factorizable group dual multiplicative} combined with \ref{sf multiplicative functions}.
 
 Finally, we must show that the constructions invert each other.  Since they are given recursively, it suffices to show this for functions, which is exactly the content of \ref{factorizable group dual multiplicative}.
\end{proof}

In the wake of the apparently facile reduction of the theorem to the trivial case of functions, it must be noted that although multiplicative $A$-valued locally constant functions on $\on{Gr}_{T,X}$ are, by definition, identified with sections of ${}^L T(A) = \on{Fact}({}^L T(A))_1$, it is not true that multiplicative $A$-gerbes on $\on{Gr}_{T,X}$ are identified with ${}^L T(A)$-gerbes on $X$; the reason is that multiplicativity is an additional structure on an $A$-gerbe that is not reflected in any such structure on the corresponding ${}^L T(A)$-gerbe.  (In the above proof, this structure was used in restricting the gluing data of $\stack{Z}_n$ from being an arbitrary torsor to being sf multiplicative, so that the recursive reduction could apply.) The correct analogue of sf comultiplicativity which turns this incorrect theorem into a correct one is simply the removal of factorizability, as it was removed from the multiplicative $A$-gerbe:

\begin{theorem}{defn}{comultiplicative gerbe}
 Let $\sh{A}$ be a sheaf of abelian groups on $X$, as in \ref{factorizable version of a group}. Given an $\sh{A}$-gerbe $\stack{Z}$ on $X$, the structure of \emph{comultiplicativity} is, denoting by $\psi \colon \sh{A} \to \sh{A} \times \sh{A}$ the diagonal map, an equivalence
 \begin{equation*}
  {}^2 \psi (\stack{Z}) \cong \stack{Z} \times \stack{Z}
 \end{equation*}
 together with associativity constraints with the natural compatibilities.  The further structure of \emph{commutativity} is the data of an isomorphism of the composition
 \begin{equation*}
  \stack{Z}_{(1)} \times \stack{Z}_{(2)}
   \cong {}^2 \psi(\stack{Z})
   \cong {}^2 \on{sw} {}^2 \psi (\stack{Z})
   \cong \stack{Z}_{(2)} \times \stack{Z}_{(1)},
 \end{equation*}
 (where the subscripts denote logical labeling of the factors and we denote by $\on{sw} \colon \sh{A} \times \sh{A} \to \sh{A} \times \sh{A}$ the factor-switching map) with the natural auto-equivalence of $\stack{Z} \times \stack{Z}$.  It should have natural compatibilities with itself and with the associativity constraint.  Likewise, we define (commutative) comultiplicative torsors for and sections of $\sh{A}$ (the latter of which are, commutative or not, all sections of $\sh{A}$).
\end{theorem}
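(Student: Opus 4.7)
The final statement is a definition rather than a theorem, so my ``proof'' has two aims: to justify that the data asked for is $2$-categorically coherent, and to indicate what content-bearing theorem this definition is built to enable. The motivation, flagged in the paragraph immediately preceding it, is that multiplicative $A$-gerbes on $\on{Gr}_{T,X}$ are \emph{not} identified with ${}^L T(A)$-gerbes on $X$, even though their associated locally constant functions are so identified, because the multiplicative structure on the grassmannian side carries additional $2$-categorical data. Comultiplicativity with respect to the diagonal $\psi \colon \sh{A} \to \sh{A} \times \sh{A}$ is the mechanism for capturing that data on the $X$-side.

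For the coherence part, I would spell out the natural compatibilities as mirrors of those of \ref{multiplicative gerbe} and \ref{multiplicative lattice gerbe}, with the group structure transferred from the underlying space to the coefficient sheaf: an associativity $2$-isomorphism between the two composites ${}^2(\psi \times \id) \circ {}^2\psi(\stack{Z})$ and ${}^2(\id \times \psi) \circ {}^2\psi(\stack{Z})$ corresponding to the two bracketings of the triple diagonal $\sh{A} \to \sh{A}^3$, subject to a pentagon cocycle over $\sh{A}^4$; and a commutativity $2$-isomorphism against the swap ${}^2 \on{sw}$, subject to a hexagon relation over $\sh{A}^3$ together with the involution condition $s^{*}c \circ c = \id$ of the type recorded in \ref{S_2 equivariance}. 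For comultiplicative torsors the same pattern of data holds with $2$-isomorphisms replaced by $1$-isomorphisms, and for sections by equalities; in particular every section of $\sh{A}$ is trivially comultiplicative in a unique way, which is the parenthetical at the end of the definition.

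The content-bearing theorem that I expect this definition to enable is the non-factorizable counterpart of \ref{multiplicative factorizable lattice gerbes}: commutative multiplicative $A$-gerbes on $\on{Gr}_{T,X}$ are equivalent to commutative comultiplicative ${}^L T(A)$-gerbes on $X$. I would prove this along the same inductive template, reducing to torsors and then to sections via change-of-group along each character $\bar\lambda \colon {}^L T(A) \to A$; on sections the assertion becomes that a multiplicative locally constant function $\on{Gr}_{T,X} \to A$ is exactly a homomorphism $\Lambda_T \to A$, which by \ref{sf multiplicative functions} is the same datum as a diagonal-supported ${}^L T(A)$-valued function on $X$. The hard step, as typically in this flavor of $2$-categorical descent, will be to verify that under ${}^2\bar\lambda$ the hexagon on $\sh{A}^3$ transports to the commutativity constraint of the tensor product of multiplicative gerbes on the grassmannian side, and similarly for associativity; these checks are mechanical but easy to misread, and they are what justify the phrase ``natural compatibilities'' left unspelled in the definition.
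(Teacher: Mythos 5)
Your proposal correctly reads the statement as a definition (there is no proof to reproduce), and it matches the paper's own treatment: the coherence data you spell out (pentagon for associativity via iterated diagonals, hexagon and involution for commutativity) are the conventional Picard-$2$-category conditions the paper leaves implicit with the phrase ``natural compatibilities,'' and the content-bearing result you identify is exactly \ref{multiplicative lattice gerbes}, which the paper proves by the same recursive descent through torsors and sections that you describe.

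One minor calibration: your base-case appeal to \ref{sf multiplicative functions} belongs to the factorizable variant; for the $n=1$ statement the base case is simply that a multiplicative locally constant function on $\on{Gr}_{T,X}$ is a section of $\shHom(\Lambda_T, \csheaf{A}) = {}^L T(A)$, no factorizability needed. Also note that \ref{multiplicative lattice gerbes} is stated for an arbitrary sheaf of abelian groups $\sh{A}$, not just a constant sheaf $\csheaf{A}$, so the definition you are justifying is deliberately general in that respect. Neither of these affects the substance of what you wrote.
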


\begin{theorem}{lem}{non-sf restriction}
 When $\sh{A} = \csheaf{A}$ is the constant sheaf on an abelian group, if the $\stack{Z}_n$ form an sf comultiplicative gerbe for $\on{Fact}(A)_n$ over $X^n$, then for any $x \in X$, the fiber $\stack{Z} = \stack{Z}_1|_x$ is commutative comultiplicative.
\end{theorem}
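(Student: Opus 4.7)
The plan is to extract the commutative comultiplicative structure on $\stack{Z} = \stack{Z}_1|_x$ by specializing the sf comultiplicative data for $\stack{Z}_n$ to the totally diagonal point $(x, \dots, x) \in X^n$; the case $n = 2$ yields the basic comultiplication and commutativity, while $n = 3$ yields associativity.

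For $n = 2$, consider first the trivial ``open'' partition $p = \{\{1\}, \{2\}\}$: here $\psi_p \colon \on{Fact}(A)_2 \incl \csheaf{A \times A}$ is the defining inclusion, and the structure provides an equivalence ${}^2 \psi_p(\stack{Z}_2) \cong \stack{Z}_1 \boxtimes \stack{Z}_1$ on $X^2$. Consider also the total ``closed'' partition $q = \{\{1,2\}\}$, whose isomorphism $\phi_q \colon \on{Fact}(A)_1 \xrightarrow{\sim} \on{Fact}(A)_2|_\Delta$ produces $\stack{Z}_2|_\Delta \cong \stack{Z}_1$. The key observation is that every neighbourhood of $(x, x)$ meets $\Delta$, so the stalk of $\on{Fact}(A)_2$ at $(x,x)$ is the diagonal copy of $A$ in $A \times A$, and the stalk of $\psi_p$ at that point is precisely the diagonal inclusion $\psi \colon A \to A \times A$ of \ref{comultiplicative gerbe}. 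Since change of coefficient groups commutes with restriction to a point (by the formula ${}^2\phi(\stack{G}) = \stack{G} \otimes \stack{H}^0_\phi$), taking stalks of the two equivalences at $(x,x)$ gives
\begin{equation*}
 {}^2 \psi(\stack{Z}) \cong {}^2 \psi(\stack{Z}_2|_{(x,x)}) \cong (\stack{Z}_1 \boxtimes \stack{Z}_1)|_{(x,x)} = \stack{Z} \times \stack{Z},
\end{equation*}
which is the required comultiplicative structure on $\stack{Z}$.

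For commutativity, I use the $S_2$-equivariance of $\stack{Z}_2$ built into the sf structure. The swap $s$ on $X^2$ fixes $\Delta$ pointwise, and under $\phi_q$ the induced $S_2$-action on $\on{Fact}(A)_2|_\Delta$ is trivial (the diagonal of $A \times A$ is fixed by the coordinate swap), so the sf compatibility forces the equivariance of $\stack{Z}_2|_\Delta$ to match the trivial equivariance of $\stack{Z}_1$. Under the open factorization the same equivariance is identified with the natural swap of $\stack{Z}_1 \boxtimes \stack{Z}_1$. Evaluating at the fixed point $(x,x)$ of $s$ and combining these two identifications produces precisely the commutativity datum of \ref{comultiplicative gerbe}: the composite through ${}^2 \on{sw}$ is the natural swap on $\stack{Z} \times \stack{Z}$. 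Associativity comes from the identical argument for $\stack{Z}_3$ at $(x, x, x)$, using the required refinement compatibility between the partitions $\{\{1,2\}, \{3\}\}$ and $\{\{1\}, \{2,3\}\}$ to identify the two bracketings of the triple comultiplication.

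The only genuine content is this stalk computation; the rest is 2-categorical bookkeeping, namely verifying that the associativity and identity coherences in \ref{sf comultiplicative} descend to those of \ref{comultiplicative gerbe}. This reduces to a formal diagram chase using the mutual compatibility of change of groups, external products, stalk restriction, and $S_n$-equivariance, and presents no substantive obstacle.
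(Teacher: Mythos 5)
Your proof is correct and matches the paper's approach: the paper's one-sentence proof just records the key observation you make — that the stalk of $\psi_{1,1} \colon \on{Fact}(A)_2 \incl \csheaf{A}^2$ at a diagonal point of $X^2$ is the diagonal inclusion $A \to A \times A$ — and leaves the remaining coherence checks (commutativity from $S_2$-equivariance restricted to $\Delta$, associativity from $n=3$, compatibility of ${}^2(-)$ with stalks) implicit. You have spelled those out correctly but added no new idea.
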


\begin{proof}
 The map $\psi_{1,1}$ of \ref{eq:sf mult open} is just the diagonal map $A \to A \times A$ over any
 point of the diagonal in $X^2$.
\end{proof}

In the next proposition, we use the notation ${}^L T(\sh{A}) = \shHom(\Lambda_T, \sh{A}) \cong \Lambda_T \otimes \sh{A}$, which agrees with the previous definition given in \ref{factorizable group dual}.

\begin{theorem}{prop}{multiplicative lattice gerbes}
 For any sheaf of abelian groups $\sh{A}$ on $X$, the constructions of \ref{factorizable group dual gerbes}, for $n = 1$ alone, are inverse equivalences of the $2$-categories of commutative multiplicative $A$-gerbes $\stack{T}$ on $\on{Gr}_{T,X}$ and commutative comultiplicative ${}^L T(A)$-gerbes $\stack{Z}$ on $X$.  Given $\stack{Z}$ and any $\lambda \in \Lambda_T = \on{Hom}({}^L T, \Gm)$, if we also consider $\lambda \in \on{Hom}({}^L T(A), A)$ then the corresponding $\stack{T}^\lambda$ is ${}^2 \lambda(\stack{Z})$; the multiplicative structure is that obtained from $\stack{Z}$.
\end{theorem}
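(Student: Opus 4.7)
The plan is to follow the same template as the proof of Proposition \ref{multiplicative factorizable lattice gerbes}, stripping the sf bookkeeping and replacing the $S_n$-equivariance compatibilities with the explicit commutativity constraint of \ref{comultiplicative gerbe}.

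First I would construct the forward direction: given a commutative comultiplicative ${}^L T(\sh{A})$-gerbe $\stack{Z}$ on $X$, define $\stack{T}^\lambda := {}^2\lambda(\stack{Z})$ for each $\lambda \in \Lambda_T$, regarded as a homomorphism ${}^L T(\sh{A}) \to \sh{A}$. Since $\on{Gr}_{T,X} \cong X \times \Lambda_T$ over $X$ (by \ref{torus grassmannian properties}), these collectively constitute an $\sh{A}$-gerbe $\stack{T}$ on $\on{Gr}_{T,X}$. The multiplicative equivalence $m^*\stack{T} \cong \stack{T} \boxtimes \stack{T}$, restricted to the component $\on{Gr}_{T,X}^\lambda \times_X \on{Gr}_{T,X}^\mu \cong X$, amounts to producing $\stack{T}^{\lambda + \mu} \cong \stack{T}^\lambda \otimes \stack{T}^\mu$; this is obtained by factoring $\lambda + \mu$ as $m_{\sh{A}} \circ (\lambda \times \mu) \circ \psi$, where $\psi$ is the diagonal of ${}^L T(\sh{A})$, and applying the functoriality of \ref{gerbe change of group composition} together with the comultiplicative equivalence ${}^2 \psi (\stack{Z}) \cong \stack{Z} \times \stack{Z}$ and the fact that ${}^2 m_{\sh{A}}(\stack{Z}_1 \times \stack{Z}_2) = \stack{Z}_1 \otimes \stack{Z}_2$ (Lemma \ref{gerbe product relation}). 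The associativity constraint on $\stack{T}$ comes from the comultiplicative associativity of $\stack{Z}$, and the commutativity of $\stack{T}$ comes from the commutativity constraint of $\stack{Z}$.

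For the inverse direction, I would mirror the gluing argument of \ref{factorizable group dual gerbes} and \ref{multiplicative factorizable lattice gerbes}. Since $\Lambda_T$ is finitely generated, I can cover $X$ by opens on which $\stack{T}^\lambda$ is trivial for $\lambda$ in a generating set, hence by commutative multiplicativity for every $\lambda \in \Lambda_T$. The transition data on double intersections then forms a commutative multiplicative $\sh{A}$-torsor on $\on{Gr}_{T,X}$, which by the torsor version of the theorem (one level lower in the $2$-categorical tower) corresponds to a commutative comultiplicative ${}^L T(\sh{A})$-torsor on $X$; the $2$-morphism data on triple intersections descends to an $\sh{A}$-valued cocycle identified with an ${}^L T(\sh{A})$-valued cocycle. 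Recursing further to the level of sections, one arrives at the tautological identification $\on{Hom}(\Lambda_T, \sh{A}) = {}^L T(\sh{A})$, under which sums of homomorphisms correspond to compositions through the diagonal of $\sh{A}$.

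The main obstacle will be verifying the coherence of this correspondence across all three levels of the $2$-categorical data (gerbe, torsor, section) and in particular checking that what becomes a comultiplicative structure under the correspondence satisfies the associativity and commutativity conditions of \ref{comultiplicative gerbe}; this is a straightforward but tedious exercise in naturality of the change-of-group functors ${}^2(-)$ and ${}^1(-)$ and their compatibility with restriction to open covers (all collected in \ref{gerbe change of group composition} and \ref{gerbe product relation}). Mutual inverseness then propagates upward from the tautological base case of sections through the recursive definitions.
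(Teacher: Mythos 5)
Your proposal is correct and matches the approach the paper takes (which simply cites the proof of \ref{multiplicative factorizable lattice gerbes} as being ``exactly the same''): establish local trivializations via commutative multiplicativity and finite generation of the weight lattice, then descend recursively through gluing data on torsors down to sections, where the base case is the tautological identification $\on{Hom}(\Lambda_T, \sh{A}) = {}^L T(\sh{A})$. Your explicit spelling-out of the forward direction via the factorization $\lambda + \mu = m_{\sh{A}} \circ (\lambda \times \mu) \circ \psi$ and \ref{gerbe change of group composition}, \ref{gerbe product relation} is exactly what the construction in \ref{factorizable group dual gerbes} amounts to when stripped of the sf bookkeeping.
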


The proof is exactly the same as before.  As a final remark, suppose $\stack{F}$ is a sheaf of categories over $\on{Gr}_{T,X}$ with a $\Lambda_T$-equivariant pairing as in \ref{G-equivariant pairing} which is biequivariant for an action of $A$-torsors on objects of $\stack{F}$.  Thus, if $\stack{T}$ is a commutative multiplicative gerbe, $\stack{F}(\stack{T})$ has a twisted pairing.

Alternatively, we consider $\stack{F}$ as a sheaf of categories $p_* \stack{F}$ (writing $p \colon \on{Gr}_{T,X} \to X$) over $X$ whose objects have $\Lambda_T$-gradings and thus ${}^L T$-actions, and which has a pairing which is biequivariant for an action of ${}^L T(A)$-torsors.  Thus, we may form the twisted category $p_* \stack{F}(\stack{Z})$, where $\stack{Z}$ is the commutative comultiplicative ${}^L T(A)$-gerbe corresponding to $\stack{T}$.  By definition of the product of $\Lambda_T$-gradings, the pairing on $p_* \stack{F}$ can be written as a composition
\begin{equation*}
 p_* \stack{F} \times p_* \stack{F}
  \to \stack{F}_2
  \to p_* \stack{F},
\end{equation*}
where $\stack{F}_2$ consists of $\Lambda_T$-\emph{bigraded} objects; the second map is taking the total grading.  In terms of ${}^L T$-actions, it is restriction along the diagonal map $\psi \colon {}^L T \to {}^L T \times {}^L T$.  By \ref{eq:general twisted pairing}, we can twist the first map
\begin{equation}
 \label{eq:comultiplication twisting}
 p_* \stack{F}(\stack{Z}) \times p_* \stack{F}(\stack{Z})
  \to \stack{F}_2(\stack{Z} \times \stack{Z})
  \cong \stack{F}_2({}^2 \psi \stack{Z})
  \to p_* \stack{F}(\stack{Z})
\end{equation}
by comultiplicativity of $\stack{Z}$.  Thus, we obtain a twisted product on $p_* \stack{F}(\stack{Z})$.

The following proposition is obvious from the proof of \ref{multiplicative lattice gerbes}:

\begin{theorem}{prop}{two kinds of twisting}
 There is a natural equivalence of sheaves of categories $p_* \stack{F}(\stack{Z})$ and $\stack{F}(\stack{T})$ identifying their respective pairings. \qed
\end{theorem}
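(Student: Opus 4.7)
The plan is to deduce both the equivalence and the compatibility of pairings directly from Proposition \ref{multiplicative lattice gerbes}, which identifies $\stack{T}^\lambda$ with the change-of-group ${}^2\lambda(\stack{Z})$ for $\lambda \in \on{Hom}({}^LT(A), A)$. Using the isomorphism $\on{Gr}_{T,X} \cong X \times \Lambda_T$ from \ref{torus grassmannian properties}\ref{en:torus grassmannian 1}, a section of $p_*\stack{F}$ over $U \subset X$ is simply a $\Lambda_T$-indexed family of sections of $\stack{F}$ over the components of $\on{Gr}_{T,X}|_U$; this identification is what mediates between the two twistings.

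First I would work locally. Over an open $U$ on which $\stack{Z}|_U$ admits a trivialization $z$, Proposition \ref{multiplicative lattice gerbes} produces compatible trivializations $z^\lambda = {}^2\lambda(z)$ of each $\stack{T}^\lambda|_U$. Relative to these trivializations, both $\stack{F}(\stack{T})$ and $p_*\stack{F}(\stack{Z})$ reduce to the untwisted sheaf of categories $p_*\stack{F}|_U$, and this reduction is the desired local equivalence. To glue, note that two trivializations $z, z'$ differ by a ${}^LT(A)$-torsor $\sh{T}$; the induced transition for $\stack{T}^\lambda$ is ${}^1\lambda(\sh{T})$, which governs the gluing of $\stack{F}(\stack{T})$ on the $\lambda$-component, while $\sh{T}$ itself governs the gluing of $p_*\stack{F}(\stack{Z})$ by acting componentwise in precisely the same manner. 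The higher coherences descend from the corresponding $2$-isomorphisms built into the equivalence of $2$-categories in \ref{multiplicative lattice gerbes}.

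Next I would match the pairings. By construction, the pairing on $p_*\stack{F}(\stack{Z})$ is the composition \ref{eq:comultiplication twisting}, which uses the comultiplicative equivalence ${}^2\psi(\stack{Z}) \cong \stack{Z} \times \stack{Z}$ (with $\psi$ the diagonal on ${}^LT(A)$) applied to the bigraded intermediate sheaf $\stack{F}_2$, followed by restriction along the total-grading map. The pairing on $\stack{F}(\stack{T})$ instead twists the $\Lambda_T$-equivariant biequivariant pairing of $\stack{F}$ by the multiplicative equivalence $\stack{T}^\lambda \otimes \stack{T}^\mu \cong \stack{T}^{\lambda+\mu}$. These two procedures match because applying change-of-group along the summation map $\Lambda_T \times \Lambda_T \to \Lambda_T$ to the comultiplicative structure on $\stack{Z}$ yields exactly the multiplicative structure on $\stack{T}$: this is the $2$-gerbe-level incarnation of the correspondence, in \ref{multiplicative lattice gerbes}, between commutative multiplicative and commutative comultiplicative structures, dual to the passage between the summation on $\Lambda_T$ and the diagonal on ${}^LT$ already exploited in the proofs of \ref{factorizable group dual} and \ref{factorizable group dual multiplicative}.

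The main obstacle is purely bookkeeping. The conceptual input --- the equivalence of commutative multiplicative $A$-gerbes on $\on{Gr}_{T,X}$ with commutative comultiplicative ${}^LT(A)$-gerbes on $X$ --- is already contained in \ref{multiplicative lattice gerbes}, and the identification of sections is tautological once the trivializations are organized as above. What remains is to verify, diagram by diagram, that associativity and commutativity constraints of the multiplicative and comultiplicative structures are respected by the local-to-global gluing and by the factorization of the pairing through $\stack{F}_2$. None of these checks should present essential difficulty, which is presumably why the author judges the proposition obvious.
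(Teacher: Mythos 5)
Your argument is correct and is exactly the unpacking the paper has in mind: the paper gives no proof, declaring the proposition ``obvious from the proof of \ref{multiplicative lattice gerbes},'' and your account --- local trivialization of $\stack{Z}$ inducing compatible trivializations $\stack{T}^\lambda = {}^2\lambda(\stack{Z})$, gluing via the observation that a ${}^LT(A)$-torsor acts on the $\lambda$-graded component as ${}^1\lambda$ of itself, and the pairings matching because the multiplicative structure on $\stack{T}$ and the comultiplicative structure on $\stack{Z}$ are the two faces of the sum/diagonal duality --- is precisely what that reference supplies. No gaps.
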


\section{The affine grassmannian of any group}
\label{s:the affine grassmannian of any group}

In this section we discuss the relationships among the affine grassmannians of more general algebraic groups and their individual structures.

\subsection*{The grassmannian of a torus}

A basic fact about the $\on{Gr}_{G,X^n}$ is that their formation is functorial in $G$, in that any regular homomorphism $G \to H$ induces a map of ind-schemes $\on{Gr}_{G,X^n} \to \on{Gr}_{H,X^n}$ constructed by induction of $G$-torsors to $H$-torsors.  In particular, if we choose a maximal torus $T \subset G$, then we get an induced map
\begin{equation}
 \label{eq:torus section}
 \map{i}{\on{Gr}_{T,X^n}}{\on{Gr}_{G,X^n}}.
\end{equation}
We continue to denote by $\on{Gr}_{T,X^n}^{\lambda_1, \dots, \lambda_n}$ the images of these components under $i$. Given the description of the $\on{Gr}_{T,X}^\lambda$ in \ref{torus grassmannian properties}, it is easy to see that the $\on{Gr}_{T,X^n}^{\lambda_1, \dots, \lambda_n}$ can be described explicitly in $\on{Gr}_{G,X^n}$:

\begin{theorem}{prop}{torus section moduli}
 Let $\lambda_1, \dots, \lambda_n \in \Lambda_T$ be coweights of $G$ and let $p = (\vect{x}, \sh{T}, \phi) \in \on{Gr}_{G,X^n}(S)$ for some scheme $S$.  Then $p \in \on{Gr}^{\lambda_1, \dots, \lambda_n}_{T,X^n}(S)$ if and only if, for every $G$-representation $V$ and every vector $v \in V$ of some weight $\mu \in \Lambda^\vee$, the inclusion over $X_S \setminus \bar{x}$,
 \begin{equation*}
  \genby{v} \otimes \OO_{X_S \setminus \bar{x}} \incl V \otimes \OO_{X_S \setminus \bar{x}}
   \xrightarrow[\sim]{\phi} V_{\sh{T}}|_{X_S \setminus \bar{x}}
 \end{equation*}
 extends to an inclusion of vector bundles $\OO\bigl(\sum_i \langle \mu, \lambda_i \rangle
 \bar{x}_i\bigr) \to V_{\sh{T}}$.
\end{theorem}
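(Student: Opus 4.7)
The plan is to argue Tannakianly: a $G$-torsor $\sh{T}$ on $X_S$ is determined by the tensor functor $V \mapsto V_\sh{T}$ from $\on{Rep}(G)$ to vector bundles on $X_S$, and $\sh{T}$ is induced from a $T$-torsor $\sh{S}$ if and only if this functor factors through the restriction functor $\on{Rep}(G) \to \on{Rep}(T)$. The candidate $T$-torsor is specified by requiring that, for each character $\mu \in \Lambda^T$, the associated line bundle on $X_S$ be
\begin{equation*}
 L_\mu := \OO\bigl(\textstyle\sum_i \langle \mu, \lambda_i\rangle \bar{x}_i\bigr)
\end{equation*}
with its canonical trivialization on $X_S \setminus \bar{x}$. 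Since $\mu \mapsto L_\mu$ is multiplicative in $\mu$, this does define a $T$-torsor $\sh{S}$ on $X_S$ by Tannakian duality for tori, and \ref{torus grassmannian properties}\ref{en:torus grassmannian 1} (extended via the factorization of $\on{Gr}_{T,X^n}$ across diagonals) identifies the resulting $S$-point as lying in the component $\on{Gr}^{\lambda_1, \dots, \lambda_n}_{T,X^n}$.

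The forward direction is then immediate: if $\sh{T} = \sh{S} \times^T G$, then for any $G$-representation $V$ the $T$-weight decomposition $V = \bigoplus_\mu V^\mu$ yields
\begin{equation*}
 V_\sh{T} = \sh{S} \times^T V = \bigoplus_\mu V^\mu \otimes L_\mu,
\end{equation*}
and a weight vector $v \in V^\mu$ gives a direct summand $\genby{v} \otimes L_\mu \cong L_\mu \incl V_\sh{T}$ extending the stated inclusion on $X_S \setminus \bar{x}$.

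For the converse I turn the hypothesis into a tensor natural isomorphism between the two functors $V \mapsto \bigoplus_\mu V^\mu \otimes L_\mu$ (the one induced from $\sh{S}$) and $V \mapsto V_\sh{T}$. For each $v \in V^\mu$ the hypothesis furnishes an extension $L_\mu \to V_\sh{T}$ of the canonical map on $X_S \setminus \bar{x}$; torsion-freeness of $V_\sh{T}$ makes this extension unique, and uniqueness forces its dependence on $v$ to be linear (linearity holds on the dense open $X_S \setminus \bar{x}$, hence globally). Summing, we obtain natural maps
\begin{equation*}
 \map{\Phi_V}{\bigoplus_\mu V^\mu \otimes L_\mu}{V_\sh{T}}
\end{equation*}
that, read through $\phi$, restrict to the identity on $X_S \setminus \bar{x}$. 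The same uniqueness principle makes $\Phi$ functorial in $V$ (any $G$-morphism $V \to W$ restricts to $T$-equivariant maps $V^\mu \to W^\mu$ and the naturality square holds on the dense open) and tensor-compatible (for $v \in V^\mu$ and $w \in W^\nu$, the extension of $v \otimes w \in (V \otimes W)^{\mu + \nu}$ agrees with the tensor product of the individual extensions, again by matching on $X_S \setminus \bar{x}$).

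It remains to show each $\Phi_V$ is an isomorphism, which I handle by a clean determinant argument. Apply the hypothesis to the one-dimensional representation $\det V$, of $T$-character $\chi := \sum_\mu (\dim V^\mu)\mu$: it yields an inclusion of vector bundles $L_\chi \incl (\det V)_\sh{T}$. Since both are line bundles, the phrase ``inclusion of vector bundles'' means nowhere vanishing on fibers, which forces the map to be an isomorphism (the corresponding section of $L_\chi^{-1} \otimes (\det V)_\sh{T}$ is a trivialization). By tensor naturality of $\Phi$ this isomorphism coincides with $\det \Phi_V = \Phi_{\det V}$, so $\Phi_V$ is a map of vector bundles of equal rank with invertible determinant, hence itself invertible. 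The tensor isomorphism $\Phi$ then yields an isomorphism of $G$-torsors $\sh{S} \times^T G \xrightarrow{\sim} \sh{T}$, compatible with trivializations on $X_S \setminus \bar{x}$ by construction, exhibiting $p$ as the image under $i$ of a point in $\on{Gr}^{\lambda_1, \dots, \lambda_n}_{T,X^n}(S)$. The main technical hurdle is the tensor-naturality of $\Phi$ on arbitrary tensor products; the remaining steps are standard Tannakian bookkeeping plus the elementary line bundle observation that underlies the determinant argument.
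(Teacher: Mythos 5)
The paper states this proposition without a proof — it is presented as "easy to see" given the description in \ref{torus grassmannian properties} — so there is no argument in the source to compare against. Your Tannakian argument is correct and is essentially a careful writeup of what the paper is implicitly gesturing at.

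A few things I checked. The forward direction is routine once one knows the weight decomposition of $V_{\sh{T}} = \sh{S} \times^T V$, together with the identification ${}^1\mu\,\sh{S} \cong L_\mu$ from \ref{torus grassmannian properties}\ref{en:torus grassmannian 1} (extended across diagonals via factorization, as you say). For the converse, the key analytic input is that a map from a line bundle into the locally free $V_{\sh{T}}$ is determined by its restriction to the complement of the Cartier divisor $\bar{x}$, which follows since that complement is schematically dense and $V_{\sh{T}}$ has no sections supported on a proper closed subscheme; this is what yields uniqueness of each extension $\iota_v$ and propagates to linearity in $v$, functoriality in $V$, and tensor-compatibility, all by comparing on $X_S \setminus \bar{x}$. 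Finally, the determinant argument is the right way to promote the map $\Phi_V$ to an isomorphism: the hypothesis applied to the weight vector of the one-dimensional $G$-representation $\det V$ forces the induced map on top exterior powers to be nowhere-vanishing, and since $\Lambda^{\dim V}$ is a direct summand of $V^{\otimes \dim V}$ in characteristic zero, the tensor-naturality of $\Phi$ does imply $\Phi_{\det V} = \det \Phi_V$. Note that without this argument the hypothesis would not immediately give injectivity of the assembled map $\bigoplus_\mu V^\mu \otimes L_\mu \to V_{\sh{T}}$ (each $\iota_v$ is a sub-bundle inclusion, but linear independence of the images is not automatic), so the determinant step is genuinely necessary, not just a convenience.
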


\subsection*{The grassmannian of a Borel subgroup}

Now say we choose a Borel subgroup $B$, and let $T = B/N$, where $N$ is the unipotent part of $B$; we do not choose an inclusion of $T$ into $G$.  Then the pair of maps $B \to G$ and $B \to T$ induce a diagram
\begin{equation}
 \label{eq:borel diagram}
 \on{Gr}_{G,X^n} \xleftarrow{b} \on{Gr}_{B,X^n} \xrightarrow{t} \on{Gr}_{T,X^n}
\end{equation}
in which $b$ is surjective (proof: a reduction of a $G$-torsor to $B$ is the same as a trivialization of the induced $G/B$-bundle.  This exists away from a divisor by assumption, so by the valuative criterion of properness for $G/B$, it exists globally) and, on each component of $\on{Gr}_{B,X^n}$, is injective. Although we do not choose an embedding $T \subset G$, for every splitting of $B \to T$, the induced map $i$ makes the above diagram commute.

We will require the following description of the components of $\on{Gr}_G$:

\begin{theorem}{prop}{borel components}
 (\cite{MV_Satake})
 \begin{enumerate}
  \item \label{en:borel orbit components}
  Each inverse image $t^{-1}(\on{Gr}_T^\lambda)$ is connected, so that the components of $\on{Gr}_B$ are also indexed by $\Lambda_T$.

  \item \label{en:borel orbit boundaries}
  There is an embedding of $\on{Gr}_G$ in $\Prj^\infty = \dirlim_n \Prj^n$ such that for any Borel subgroup $B$, each boundary $\partial \on{Gr}_B^{\lambda} = \bigcup_{\mu < \lambda} \on{Gr}_B^\mu$ is a hyperplane section of $\bar{\on{Gr}}_B^\lambda$.  In fact, $\on{Gr}_B^\lambda \cong \Aff^\infty$.

  \item \label{en:borel orbit closures}
  In $\on{Gr}_G$, we have $\bar{\on{Gr}}_B^\lambda = \bigcup_{\mu \leq \lambda} \on{Gr}_B^\mu$.
 \end{enumerate}
\end{theorem}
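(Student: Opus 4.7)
I would prove the three parts in concert, exploiting the action of the ind-unipotent group $N(\KK)$ on $\on{Gr}_G$ together with an embedding into projective space coming from a faithful representation of $G$. Fix a splitting $T \incl B$, inducing a map $i \colon \on{Gr}_T \to \on{Gr}_G$ compatible with diagram \ref{eq:borel diagram}; for each coweight $\lambda$ let $L^\lambda = i(\on{Gr}_T^\lambda) \in \on{Gr}_G$ be the corresponding $\C$-point. The central identification, $\on{Gr}_B^\lambda = t^{-1}(\on{Gr}_T^\lambda) = N(\KK) \cdot L^\lambda$, follows from the Iwasawa decomposition $G(\KK) = N(\KK) \cdot T(\KK) \cdot G(\hat{\OO})$ together with the equivalence between $B$-reductions of a $G$-bundle and sections of the associated $G/B$-bundle (used already in the surjectivity discussion following \ref{eq:borel diagram}). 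For part (1), $N(\KK)$ is a connected ind-scheme since $N$ is isomorphic as a variety to an affine space; hence its orbits are connected, and since the $\on{Gr}_T^\lambda$ are already the components of $\on{Gr}_T$ by \ref{torus grassmannian properties}, the $\on{Gr}_B^\lambda$ are exactly the components of $\on{Gr}_B$, indexed by $\Lambda_T$.

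For part (2), the embedding $\on{Gr}_G \incl \Prj^\infty$ I would construct from a faithful representation $G \incl \on{GL}(V)$: the induced closed embedding into the Sato grassmannian of lattices in $V \otimes \C\lp t \rp$ further embeds into the projectivization of the semi-infinite wedge $\bigwedge^{\infty/2} V \otimes \C\lp t \rp$ via its determinant line bundle. For the isomorphism $\on{Gr}_B^\lambda \cong \Aff^\infty$, I would identify the stabilizer of $L^\lambda$ in $N(\KK)$ as $N(\KK) \cap \on{Ad}_{L^\lambda} N(\hat{\OO})$ and show that the quotient is an inductive product of root subgroups, each a copy of $\Aff^1$, indexed by the affine positive roots not preserved by $\on{Ad}_{L^{-\lambda}}$. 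For the hyperplane section claim, I would use the Pl\"ucker section of the pulled-back $\sh{O}(1)$ associated to the semi-infinite wedge at $L^\lambda$: being a highest-weight-like vector under the central extension of the loop group, it is nonvanishing on $\on{Gr}_B^\lambda$ and cuts out $\partial \on{Gr}_B^\lambda$ on $\bar{\on{Gr}}_B^\lambda$.

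Part (3) then follows from (2): a degeneration along a cocharacter $s \colon \Gm \to T$ pairing non-negatively against $\lambda - \mu$ shows $L^\mu \in \bar{\on{Gr}}_B^\lambda$ whenever $\mu \leq \lambda$, and the hyperplane section description of (2) forbids any additional orbits in the closure.

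\textbf{Main obstacle.} The delicate step is the scheme-theoretic hyperplane section property: a single Pl\"ucker section should cut out $\partial \on{Gr}_B^\lambda$ as a Cartier divisor in $\bar{\on{Gr}}_B^\lambda$ rather than merely as a set-theoretic vanishing locus. Establishing this requires either a direct highest-weight computation in the semi-infinite wedge, or an identification of (a connected component of) $\on{Gr}_G$ with a partial flag variety of the affine Kac--Moody group attached to $G$, together with the hyperplane section theorem of Kumar and Mathieu for affine Schubert varieties.
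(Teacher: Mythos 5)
The paper does not prove this proposition; it cites \cite{MV_Satake} and uses the result as a black box, so there is no in-paper argument to compare against. Your outline is the same strategy that Mirkovi\'c--Vilonen use: identify $\on{Gr}_B^\lambda$ (via the injective map $b$) with the semi-infinite orbit $N(\hat{\KK}) \cdot t^\lambda$, deduce connectedness and the affine-space structure from the ind-unipotence of $N(\hat{\KK})$ (with the stabilizer $t^\lambda N(\hat{\OO}) t^{-\lambda}$ being exactly what you write, the extra intersection with $N(\hat{\KK})$ is automatic since $T(\hat{\KK})$ normalizes $N(\hat{\KK})$), and then produce the boundary divisor as the zero locus of a Pl\"ucker coordinate with respect to an ample, $G(\C)$-equivariant projective embedding.

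Your self-diagnosis of the main obstacle is accurate. The Cartier-divisor statement in item (2) is precisely where the weight must be carried, and you leave it at the level of a pointer (semi-infinite wedge computation, or Kumar--Mathieu for the Kac--Moody flag variety) rather than a proof. Two smaller points worth tightening: first, you should commit to a line bundle for which the chosen section has degree one --- an arbitrary very ample power of the determinant bundle would cut the boundary out by a higher-degree form, not a hyperplane --- so one must use the positive generator of $\on{Pic}$ (equivalently, the basic representation of the affine Kac--Moody group). Second, the logic between (2) and (3) is slightly circular as written: establishing that the boundary is set-theoretically $\bigcup_{\mu < \lambda} \on{Gr}_B^\mu$ already requires both the cocharacter degeneration (for $\supseteq$) and the Pl\"ucker nonvanishing on $\on{Gr}_B^\lambda$ together with some control of the other orbits (for $\subseteq$); you invoke the degeneration a second time inside (3), where it is by then redundant. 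None of this is a wrong step, but the argument is a sketch of the known proof rather than a self-contained one.
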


By the second point, the components $\on{Gr}_B^\lambda$ contained, via $b$, in each component of $\on{Gr}_G$ are indexed by the cosets of the coroot lattice $\Lambda_{T,r}$ of $G$, so that the set of connected components of $\on{Gr}_G$ is identified with $\pi_1(G) = \Lambda_T/\Lambda_{T,r}$.  More precisely, each component of $\on{Gr}_G$ contains exactly one $\on{Gr}_T^\lambda$ where $\lambda$ is minimal dominant with respect to the partial ordering by positive coroots.  These statements hold identically for $\on{Gr}_{G,X}$, etc.\ as well.

\subsection*{The grassmannians of subminimal parabolics}

The preceding constructions for $B$ and $T$ admit natural generalizations where $B$ is replaced by any parabolic subgroup $P$ containing $T$ and $T$ is replaced by its Levi quotient $L$.  Then we have maps
\begin{equation}
 \label{eq:parabolic diagram}
 \on{Gr}_{G,X^n} \xleftarrow{p} \on{Gr}_{P,X^n} \xrightarrow{l} \on{Gr}_{L,X^n}
\end{equation}
and for any splitting of $P \to L$, a map $\map{i}{\on{Gr}_{L,X^n}}{\on{Gr}_{G,X^n}}$.  We will apply this construction when $L$ is a Levi subgroup of semisimple rank $1$ as follows. Inside of $\on{Gr}_{G,X^n}$, the components of $\on{Gr}_{T,X^n}$ are connected by projective line bundles with good factorizability properties.  We will only need these for $n = 1, 2$ and will in fact obtain them first when $G$ has semisimple rank $1$ and then push them forward along the map $i$ as $L$ varies over all the Levi factors of the subminimal parabolics of $G$.

\begin{theorem}{defn}{projective line bundles}
 Let $G = \on{GL}_2$.  We identify $\Lambda_T \cong \Z^2$ and the simple coroot $\check\alpha = (1, -1)$; let $V$ be the standard representation of $\on{GL}_2$, with weights $(1,0)$ and $(0,1)$, and let $V(1,0), V(0,1)$ be its weight spaces.  Also, let $\Omega$ be the determinant representation, with weight $(1,1)$. Let $\lambda = (\lambda_1, \lambda_2)$ and $\mu = (\mu_1, \mu_2)$ be coweights and define the following subfunctors of $\on{Gr}_{\on{GL}_2,X^n}$ ($n = 1,2$):
 \begin{align*}
  \Prj^\lambda_1(S) &= \left\{
   \begin{aligned}
    (&x, \sh{T}, \phi) \\ &\in \on{Gr}_{\on{GL}_2, \rlap{$\scriptstyle X$}\phantom{X^2}}
   \end{aligned}
   \middle\vert
   \begin{aligned}
    V(1,0) \otimes \OO_{X_S} &\incl V_\sh{T}(-\lambda_1\bar{x})  \\
    V(0,1) \otimes \OO_{X_S} &\incl V_\sh{T}(-(\lambda_2 - 1)\bar{x}) \\
    \Omega \otimes \OO_{X_S} &\incl \Omega_{\sh{T}}(-(\lambda_1 + \lambda_2)\bar{x})
   \end{aligned}
  \right\} \\
  \Prj^{\lambda,\mu}_{2,1}(S) &= \left\{
   \begin{aligned}
    (&x,y, \sh{T}, \phi) \\ &\in \on{Gr}_{\on{GL}_2,X^2}
   \end{aligned}
   \middle\vert
   \begin{aligned}
    V(1,0) \otimes \OO_{X_S} &\incl V_\sh{T}(-\lambda_1\bar{x} - (\mu_1 + 1) \bar{y})\\
    V(0,1) \otimes \OO_{X_S} &\incl V_\sh{T}(-(\lambda_2 - 1) \bar{x} - (\mu_2 - 1)\bar{y})\\
    \Omega \otimes \OO_{X_S} &\incl \Omega_{\sh{T}}(-(\lambda_1 + \lambda_2)\bar{x}
                                                   -(\mu_1 + \mu_2) \bar{y})
   \end{aligned}
  \right\}
 \end{align*}
 where each inclusion is an inclusion of \emph{coherent sheaves} (not vector bundles), and the maps refer, like those in \ref{torus section moduli}, to the ones given by the trivialization $\phi$ on $X \setminus \bar{x}$ or $X \setminus (\bar{x} \cup \bar{y})$.
\end{theorem}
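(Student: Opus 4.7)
The final block labeled \plainref{projective line bundles} is a \emph{definition} rather than a theorem: it introduces the subfunctors $\Prj^\lambda_1 \subset \on{Gr}_{\on{GL}_2,X}$ and $\Prj^{\lambda,\mu}_{2,1} \subset \on{Gr}_{\on{GL}_2,X^2}$ by spelling out their $S$-points in moduli-theoretic language. There is accordingly no proof obligation attached to the statement itself; the content is simply that a point of the factorizable grassmannian lies in the subfunctor iff the trivialization $\phi$ extends three specified coherent-sheaf inclusions with prescribed pole/zero orders along $\bar{x}$ (and, in the two-variable case, $\bar{y}$). Everything on the right-hand side of each brace is a well-posed subfunctor condition in the sense of \plainref{torus section moduli}, so no well-definedness question arises.

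What \emph{will} require proof — presumably in subsequent results that the excerpt cuts off before — are the implicit assertions carried by the notation $\Prj$. Namely, one expects that $\Prj^\lambda_1 \to X$ is a $\Prj^1$-bundle interpolating between adjacent $\on{Gr}_{T,X}^{\lambda'}$ (with $\lambda' = \lambda$ and $\lambda' = \lambda + \check\alpha$, where $\check\alpha = (1,-1)$ is the simple coroot of $\on{GL}_2$), and that $\Prj^{\lambda,\mu}_{2,1} \to X^2$ is a $\Prj^1$-bundle factorizing compatibly: away from the diagonal in $X^2$ it should restrict to a product $\Prj^\lambda_1 \boxtimes \on{Gr}_{T,X}^\mu$ (or the other factorization pattern), while over the diagonal it should degenerate to $\Prj^{\lambda + \mu}_1$. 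These assertions are the natural generalization to the factorizable setting of \plainref{borel components}\plainref{en:borel orbit components}--\plainref{en:borel orbit closures} applied to the unique subminimal parabolic of $\on{GL}_2$, namely $B$ itself.

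When such a proposition is stated and needs proof, the plan would be: first, interpret the three inclusions as specifying a length-one modification of the trivial rank-two bundle along $\bar{x}$, with determinant line bundle fixed by the third inclusion and with the flag of subline bundles allowed to vary; second, identify the moduli of such modifications with the projectivization of the fiber of a rank-two coherent sheaf on $\bar{x}$ (or on $\bar{x} \cup \bar{y}$), which gives the $\Prj^1$-bundle structure directly; third, read off the factorization by restricting to $X^2 \setminus \Delta$ and to $\Delta$ and matching the induced modifications against \plainref{factorizable grassmannian}. The main technical point to watch would be keeping the asymmetric shifts (the $-1$ on $V(0,1)$ and the $+1$ on $\mu_1$) consistent across the restriction to the diagonal, since these shifts encode the choice of which boundary stratum $\on{Gr}_T^{\lambda'}$ the $\Prj^1$ degenerates to at each of its poles. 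But as the statement as given is purely definitional, no step of this plan is actually being executed here.
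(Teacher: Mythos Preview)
Your assessment is correct: the block is a definition (note the \texttt{\{defn\}} tag), and the paper attaches no proof to it. The properties you anticipate---that $\Prj^\lambda_1$ and $\Prj^{\lambda,\mu}_{2,1}$ are $\Prj^1$-bundles, the identification of their zero and infinity sections with torus components, and the factorization behavior---are indeed deferred to the immediately following lemma, whose proof proceeds by explicit matrix reduction (row operations on $2\times 2$ matrices over local rings) rather than the moduli-of-modifications argument you sketch, but your plan would work as well.
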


We require these subspaces for their excellent properties vis \`a vis factorization, giving $\on{Gr}_{G,X^2}$ some of the flavor of \ref{torus grassmannian properties} which it, in general, lacks.

\begin{theorem}{lem}{projective line bundle properties}
 We have $\Prj_{2,1}^{\lambda,\mu} \cong \Prj_1^\lambda \times \on{Gr}_{T,X}^{\mu + \alpha}$, and this isomorphism agrees with the factorization isomorphism over $X^2 \setminus \Delta$.  Define
 \begin{align*}
  U^0_1 &= \Prj_1^\lambda \cap \on{Gr}_{B,X}^\lambda &
  U^0_{2,1} &= \Prj_{2,1}^{\lambda,\mu} \cap \on{Gr}_{B,X}^{\lambda, \mu + \check\alpha} \\
  U^\infty_1 &= \Prj_1^\lambda \cap \on{Gr}_{B^\op,X}^\lambda &
  U^\infty_{2,1} &= \Prj_{2,1}^{\lambda,\mu} \cap \on{Gr}_{B^\op,X}^{\lambda, \mu + \check\alpha}
 \end{align*}
 (where $B^\op$ is the Borel subgroup opposite to $B$); then $U_1^0 \cong U_1^\infty \cong \Aff^1 \times X$, $U_{2,1}^0 \cong U_{2,1}^\infty \cong \Aff^1 \times X^2$, and these make $\Prj_1^\lambda$ and $\Prj_{2,1}^{\lambda,\mu}$ into $\Prj^1$-bundles on $X$ and $X^2$, respectively. From these formulas, $\Prj_1^\lambda$ has $\on{Gr}_{T,X}^\lambda$ and $\on{Gr}_{T,X}^{\lambda + \check\alpha}$ as zero and infinity sections, respectively, and likewise $\Prj_{2,1}^{\lambda,\mu}$ has $\on{Gr}_{T,X^2}^{\lambda, \mu + \check\alpha}$ and $\on{Gr}_{T,X^2}^{\lambda + \check\alpha, \mu + \check\alpha}$ as zero and infinity sections.

 Finally, suppose $\mu$ is a dominant weight, so $\mu_1 - \mu_2 \geq 0$.  Let
 \begin{equation*}
  \Aff^1 \times (X^2 \setminus \Delta) \to \Aff^1 \times (X^2 \setminus \Delta)
 \end{equation*}
 be the restriction of the factorization map $\Prj_1^\lambda \times \on{Gr}_{T,X}^{\mu + \check\alpha} \to \Prj_{2,1}^{\lambda,\mu}$ to $U_1^0 \times \on{Gr}_{T,X}^{\mu + \check\alpha} \to U_{2,1}^0$.  Then the corresponding map $\Aff^1 \times (X^2 \setminus \Delta) \to \Aff^1$ has a zero of order $(\mu_1 - \mu_2) + 2 = \langle \alpha, \mu \rangle + 2$ along $\Delta$. Likewise, on $U_1^\infty \times \on{Gr}_{T,X}^{\mu + \check\alpha}$ it has a pole of this order.
\end{theorem}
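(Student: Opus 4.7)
The plan is to reduce everything to explicit calculations on the standard representation $V$ and the determinant $\Omega$ of $\on{GL}_2$, using the moduli descriptions in \ref{projective line bundles} together with the factorization result \ref{grassmannian is factorizable} and the torus-section moduli \ref{torus section moduli}.

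First I would establish the factorization isomorphism. Over $X^2\setminus\Delta$, any $(\vect x,\sh T,\phi)\in\on{Gr}_{\on{GL}_2,X^2}$ factorizes uniquely into two grassmannian points at $x$ and $y$. Substituting this splitting into the coherent-sheaf inclusions defining $\Prj_{2,1}^{\lambda,\mu}$ decouples them into conditions at $x$ (giving exactly $\Prj_1^\lambda$) and at $y$. On the $y$-side, the inclusions $V(1,0)\otimes\OO\incl V_\sh T(-(\mu_1+1)\bar y)$, $V(0,1)\otimes\OO\incl V_\sh T(-(\mu_2-1)\bar y)$ must be equalities of line bundles by rank count, pinning the $T$-torsor at $y$ to have type $(\mu_1+1,\mu_2-1)=\mu+\check\alpha$, which via \ref{torus section moduli} identifies the $y$-factor with $\on{Gr}_{T,X}^{\mu+\check\alpha}$. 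Both sides of the claimed isomorphism make sense as moduli over all of $X^2$, so it extends from $X^2\setminus\Delta$ to $X^2$ by gluing the coherent-sheaf data.

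Next, for the $\Prj^1$-bundle structure: a point of $\on{Gr}_{B,X}^\lambda\cap\Prj_1^\lambda$ is a $B$-reduction of $\sh T$, i.e., a short exact sequence $0\to L\to V_\sh T\to V_\sh T/L\to 0$ subject to the constraints of the moduli definition. These pin $L$ and $V_\sh T/L$ as specific line bundles on $X_S$, so the remaining datum is an extension class in $\on{Ext}^1(V_\sh T/L,L)$, which locally around $x$ is a single $\Aff^1$. This yields $U_1^0\cong\Aff^1\times X$, and the same argument with $B^\op$ gives $U_1^\infty$, where the torus type shifts by the simple coroot (cf.\ \ref{borel components}), producing $\on{Gr}_{T,X}^{\lambda+\check\alpha}$ as the section at infinity. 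Gluing $U_1^0\cup U_1^\infty$ produces the $\Prj^1$-bundle structure on $\Prj_1^\lambda\to X$, and relativizing over a second point $y$ handles $\Prj_{2,1}^{\lambda,\mu}\to X^2$ identically.

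The hard part is the order computation in the final claim. I would pick a local coordinate $z$ centered at $x$ and $w$ near $y$, and track the $\Aff^1$-coordinate on $U_1^0$ as an extension class arising from the lift of the $(1,0)$-generator of $V$ near $x$. Since the second factor sits in $\on{Gr}_{T,X}^{\mu+\check\alpha}$, there is no non-trivial extension at $y$; but in the target $U_{2,1}^0$, the moduli of $\Prj^{\lambda,\mu}_{2,1}$ impose additional twists of order $\mu_1+1$ and $\mu_2-1$ along $\bar y$ on the $(1,0)$- and $(0,1)$-weight pieces. Factorizing $V_\sh T$ as the gluing of $V_{\sh T_x}$ and $V_{\sh T_y}$ via $\phi$, and matching the two local extension classes through the transition functions — which are monomials in $z-w$ — produces a rescaling of the $\Aff^1$-coordinate by $(z-w)^{(\mu_1+1)-(\mu_2-1)}=(z-w)^{\langle\alpha,\mu\rangle+2}$. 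Dominance of $\mu$ ensures this exponent is nonnegative, giving a zero of the stated order; on $U_1^\infty$ the opposite Borel reverses the direction of the inclusion chain, turning this zero into a pole of the same order. Keeping track of the $\pm 1$ shifts built into the definitions of the $\Prj^\bullet$ moduli is where the bookkeeping is delicate, and is the essential content of the lemma.
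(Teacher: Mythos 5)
Your overall structure mirrors the paper's, but two of the key steps are asserted rather than derived, and in one place the conceptual reinterpretation you offer doesn't hold up as stated.

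First, the claim that the residual datum in $U_1^0$ is an extension class in $\on{Ext}^1(V_{\sh T}/L,L)$ is not correct: over a small disk (or any affine curve) around $x$, with $L$ and $V_{\sh T}/L$ line bundles, that $\on{Ext}^1$ vanishes, so it cannot be the source of the $\Aff^1$. What parametrizes $U_1^0$ is the moduli of the \emph{trivialization} $\phi$ near $x$, compatible with the pole constraints of $\Prj_1^\lambda$ --- a gluing parameter, not a vector-bundle extension class on $X_S$. The paper makes this precise by showing that for a unique trivialization of $V_{\sh T}$ the map $\phi$ takes one of the two normal forms
\begin{equation*}
A_+ = \begin{bmatrix} z_x^{\lambda_1} & s\, z_x^{\lambda_2-1} \\ 0 & z_x^{\lambda_2}\end{bmatrix},
\qquad
A_- = \begin{bmatrix} z_x^{\lambda_1+1} & 0 \\ t\, z_x^{\lambda_1} & z_x^{\lambda_2-1}\end{bmatrix},
\end{equation*}
with $s$ or $t$ in $\C$ and $s = t^{-1}$ on the overlap. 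That every point of $\Prj_1^\lambda$ admits exactly one of these forms is also where one learns $U_1^0 \cup U_1^\infty = \Prj_1^\lambda$ (which your $B$-reduction argument doesn't address), and setting $s=0$ (resp. $t=0$) is what identifies the $0$- and $\infty$-sections as $\on{Gr}_{T,X}^\lambda$ and $\on{Gr}_{T,X}^{\lambda+\check\alpha}$; the direction of the coroot shift is not immediate from citing \ref{borel components}. Any precise version of your $\on{Ext}$ heuristic will end up being this same normal-form computation.

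Second, and more seriously, the order computation is the hard content of the lemma and your proposal states the answer rather than proving it. The assertion that the transition functions are ``monomials in $z-w$'' and that the relevant exponent is the difference of the two weight twists is the \emph{conclusion} of an explicit frame computation, not an input to it. In the paper one constructs a global basis $f_1, f_2$ for the glued torsor, compares the resulting matrix against the normal form $A_+$ for $\Prj_{2,1}^{\lambda,\mu}$, and reads off $s' = s\, z_y^{\mu_1-\mu_2+2}$; only then does one pass to the uniformizer $z_{y-x}$ along $\Delta$. Without carrying out this gluing one cannot justify why the off-diagonal entry acquires a $z_y$-order of $\mu_1+1$ where the target form requires $\mu_2-1$ (the difference is your exponent), nor why dominance of $\mu$ gives a zero on $U_1^0$ and a pole on $U_1^\infty$. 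To complete the proposal you would need to write out that frame computation, at which point you have reproduced the paper's argument rather than avoided it.
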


\begin{proof}
 It is obvious that $\Prj_{2,1}^{\lambda,\mu} \cong \Prj_1^\lambda \times \on{Gr}_{T,X}^{\lambda + \check\alpha}$, using \ref{torus section moduli} as a description of the latter, and from the construction of the factorization isomorphism in the proof of \ref{grassmannian is factorizable} that this isomorphism agrees with factorization away from $\Delta$.  For the rest of the computations, we will compute the $\C$-points and show that they have the desired structure, leaving the generalization to $S$-families to the imagination.  We use the fact that a $\on{GL}_2$-torsor $\sh{T}$ is equivalent to its associated vector bundle $V_{\sh{T}}$, with $V$ the standard representation.

 We first prove that $\Prj_1^\lambda$ has the desired form; namely, we will show that if $p = (x,\sh{T}, \phi) \in \Prj_1^\lambda(\C)$, then in any sufficiently small neighborhood of $x$, $V_{\sh{T}}$ may be trivialized so that $\phi$ is given by one (possibly either) of the following forms, and that this trivialization is unique (for each form):
 \begin{equation}
  \label{eq:matrix forms 1}
  A_+ =
   \begin{bmatrix} z_x^{\lambda_1} & s z_x^{\lambda_2 - 1} \\ 0 & z_x^{\lambda_2} \end{bmatrix}
  \qquad
  A_- =
   \begin{bmatrix} z_x^{\lambda_1 + 1} & 0 \\ t z_x^{\lambda_1} & z_x^{\lambda_2 - 1} \end{bmatrix}
 \end{equation}
 for some $s,t \in \C$; here, $z_x$ is a generator of the maximal ideal $\pideal{m}_x$.  We pick any trivialization of $V_{\sh{T}}$, so that the maps $V(1) \otimes \OO_X \incl V_{\sh{T}}$, $V(-1) \otimes \OO_X \incl V_{\sh{T}}(x)$ are given by a matrix with entries in $K(X)$,
 \begin{equation*}
  A = \begin{bmatrix} a & b \\ c & d \end{bmatrix},
 \end{equation*}
 in which $a,c \in \pideal{m}_x^{\lambda_1}$ and $b,d \in \pideal{m}_x^{\lambda_2 - 1}$.  Two such matrices determine equivalent trivializations $\phi$ if they differ by the action of $\on{GL}_2(\OO_X)$ on the left, so we may perform row operations with coefficients in $\OO_X$. Unless both $a,c \in \pideal{m}_x^{\lambda_1 + 1}$, we may perform row operations to put $a \in \pideal{m}_x^{\lambda_1}$, and then put $A$ in the form
 \begin{equation*}
  \begin{bmatrix} a & b \\ 0 & d \end{bmatrix}
  = \begin{bmatrix} z_x^{\lambda_1} & s z_x^{\lambda_2 - 1} \\ 0 & z_x^{\lambda_2} \end{bmatrix}
 \end{equation*}
 where we have applied the determinant condition to find the order of vanishing of $d$ and chosen some regular function $s$ according to $b \in \pideal{m}_x^{\lambda_2 - 1}$.  After one more row operation we may assume $s \in \C$, so $A$ is in the form $A_+$. On the contrary, suppose $a, c \in \pideal{m}_x^{\lambda_1 + 1}$; then we do not have both $b,d \in \pideal{m}_x^{\lambda_2}$ by the determinant condition, so the same logic applies to the second column, and after some row operations $A$ takes the form in $A_-$:
 \begin{equation*}
  \begin{bmatrix} a & 0 \\ c & d \end{bmatrix} =
  \begin{bmatrix} z_x^{\lambda_1 + 1} & 0 \\ tz_x^{\lambda_1} & z_x^{\lambda_2 - 1} \end{bmatrix}.
 \end{equation*}
 No row operation preserves the forms $A_+$ or $A_-$, so the trivialization inducing them is unique. The points $p$ in which $\phi$ admits the form $A_+$ are by definition $U_1^0$ and clearly isomorphic to $\Aff^1 \times X$, while those of the form $A_-$ are $U_1^\infty \cong \Aff^1 \times X$.  Clearly in $U_1^0 \cap U_1^\infty$, we have the relation $s = t^{-1}$ between their coordinates, so that $U_1^0 \cup U_1^\infty \cong \Prj^1 \times X$.

 A similar analysis shows that any point $p = (x,y,\sh{T}, \phi)$ in $\Prj_{2,1}^{\lambda,\mu}$ can, in any sufficiently small neighborhood containing both $x$ and $y$ (if one such exists) and for a unique trivialization of $V_{\sh{T}}$, take either of the forms:
 \begin{equation}
  \label{eq:matrix forms 2}
  A_+ =
   \begin{bmatrix}
    z_x^{\lambda_1} z_y^{\mu_1 + 1} & s z_x^{\lambda_2 - 1} z_y^{\mu_2 - 1} \\
    0 & z_x^{\lambda_2} z_y^{\mu_2 - 1}
   \end{bmatrix} \quad
  A_- =
   \begin{bmatrix}
    z_x^{\lambda_1 + 1} z_y^{\mu_1 + 1} & 0 \\
    t z_x^{\lambda_1} z_y^{\mu_1 + 1} & z_x^{\lambda_2 - 1} z_y^{\mu_2 - 1}
   \end{bmatrix}
 \end{equation}
 with $s,t \in \C$.  As before, choose any trivialization of $V_{\sh{T}}$ to bring $\phi$ into the form $A$ as above, with $a,c \in \OO_X(-\lambda_1 x - (\mu_1 + 1)y) $ and $b,d \in \OO_X(-(\lambda_2 - 1) x -(\mu_2 - 1) y)$. Suppose we do not have both $a,c \in \pideal{m}_x^{\lambda_1 + 1}$; by a suitable linear combination (row operation), we may assume that $a \notin \pideal{m}_x^{\lambda_1 + 1} $ and that $c$ has a higher-order zero at $y$, so that $c/a \in \OO_X$ in a sufficiently small neighborhood of $x,y$.  Thus, row operations bring $A$ into the form
 \begin{equation*}
  \begin{bmatrix} a & b \\ 0 & d \end{bmatrix}
  = \begin{bmatrix} z_x^{\lambda_1} z_y^{\mu_1 + 1} & b \\
                    0 & z_x^{\lambda_2} z_y^{\mu_2 - 1} \end{bmatrix}
 \end{equation*}
 by the determinant condition.  Then $b$ must be of the form $s z_x^{\lambda_2 - 1} z_y^{\mu_2 - 1}$ for some regular function $s$, and further row operations bring $s$ into $\C$, and hence $A$ into the form $A_+$.  If, conversely, we have both $a,c \in \pideal{m}_x$, then the same logic applies to $b,d$ by the determinant condition, and row operations bring $A$ into the form $A_+$. As above, these matrices are obtained by unique trivializations of $V_{\sh{T}}$ and form, respectively, $U_{2,1}^0$ and $U_{2,1}^\infty$, both isomorphic to $\Aff^1 \times X^2$, with $s = t^{-1}$ on $U_{2,1}^0 \cap U_{2,1}^\infty$, so that $\Prj_{2,1}^{\lambda,\mu} \cong \Prj^1 \times X^2$ (locally).

 We now apply the construction of the factorization isomorphism in the proof of \ref{grassmannian is factorizable} to compute it explicitly in these coordinates.  Let $(x, \sh{T}_x, \phi_x)$ and $(y, \sh{T}_y, \phi_y)$ be points in $U_1^0(\C)$ and $\on{Gr}_{T,X}^{\mu + \check\alpha}(\C)$, and let
 \begin{equation*}
  \sh{T} = \sh{T}_x|_{X \setminus \{y\}} \cup_{X \setminus \{x,y\}} \sh{T}_x|_{X \setminus \{x\}}
 \end{equation*}
 be the glued torsor.  Suppose we have chosen trivializations of $V_{\sh{T}_x}$ and $V_{\sh{T}_y}$ such that $\phi_x$ is in the form $A_+$ given in \ref{eq:matrix forms 1}, and $\phi_y$ is given by the diagonal matrix $\on{diag}(z_y^{\mu_1 + 1}, z_y^{\mu_2 - 1})$; denote their bases $e_{1,x}, e_{2,x}$ and $e_{1,y}, e_{2,y}$. We define a global basis $f_1, f_2$ of $V_{\sh{T}}$ such that:
 \begin{align*}
  f_1|_{X \setminus \{y\}} &= z_y^{-(\mu_1 + 1)} e_{1,x} &
  f_2|_{X \setminus \{y\}} &= z_y^{-(\mu_2 + 1)} e_{2,x} \\
  f_1|_{X \setminus \{x\}} &= z_x^{-\lambda_1} e_{1,y} &
  f_2|_{X \setminus \{x\}}
   &= -sz_x^{-(\lambda_1 + 1)} z_y^{(\mu_1 - \mu_1) + 2} e_{1,y} + z_x^{-\lambda_2}e_{2,y}.
 \end{align*}
 Let $e_1, e_2$ be the basis of weight vectors in $V$, so that on $X \setminus \{x,y\}$, accounting for the form of the trivializations $\phi_x, \phi_y$, we have
 \begin{gather*}
  \begin{bmatrix} f_1 \\ f_2 \end{bmatrix} =
  \begin{bmatrix}
   z_x^{-\lambda_1} z_y^{-(\mu_1 + 1)} & -s z_x^{-(\lambda_1 + 1)} z_y^{-(\mu_2 + 1)} \\
   0 & z_x^{-\lambda_2} z_y^{-(\mu_2 + 1)}
  \end{bmatrix}
  \begin{bmatrix} e_1 \\ e_2 \end{bmatrix}
  \\
  \begin{bmatrix} e_1 \\ e_2 \end{bmatrix} =
  \begin{bmatrix}
   z_x^{\lambda_1} z_y^{\mu_1 + 1} & s z_x^{\lambda_2 - 1} z_y^{\mu_1 + 1} \\
   0 & z_x^{\lambda_2} z_y^{\mu_2 + 1} \end{bmatrix}
  \begin{bmatrix} f_1 \\ f_2 \end{bmatrix}
 \end{gather*}
 Comparing this expression to that for $A_+$ in \ref{eq:matrix forms 2}, we find that the $s$ there is given by $s z_y^{\mu_1 - \mu_2 + 2}$.  Let $z_{y - x} = z_y - z_x$ be the uniformizer along the diagonal in $X^2$, so $z_y - z_{y - x} \in \pideal{m}_x$.  After a row operation, we find that the $\Aff^1$-coordinate in \ref{eq:matrix forms 2} has a zero of order $(\mu_1 - \mu_2) + 2$ along $\Delta$, as claimed.
\end{proof}

Having constructed the above projective line bundles for $\on{GL}_2$, we obtain them in particular for $\on{SL}_2$ and $\on{PGL}_2$, as we have an inclusion $\on{Gr}_{\on{SL}_2,X^n} \to \on{Gr}_{\on{GL}_2, X^n}$ and a projection $\on{Gr}_{\on{GL}_2, X^n} \to \on{Gr}_{\on{PGL}_2, X^n}$ which are isomorphisms on each component.  Then for any group $L$ of semisimple rank $1$, let $\bar{L}$ be its semisimple quotient, isomorphic to either one of these two, and the map $\on{Gr}_{L,X^n} \to \on{Gr}_{\bar{L},X^n}$ is an isomorphism on each component, so we define $\Prj_1^\lambda$ and $\smash{\Prj_{2,1}^{\lambda,\mu}}$ in $\on{Gr}_{L,X^n}$ by transport of structure. Finally, let $G$ be any reductive group, $\check\alpha$ any simple coroot, and $L$ the corresponding Levi subgroup of $G$, which has the same torus $T$ and thus the same coweights.  The maps $\on{Gr}_{L,X^n} \to \on{Gr}_{G,X^n}$ are inclusions on each component of the former, so we may define:

\begin{theorem}{defn}{general projective line bundles}
 Let $\Prj_{1;\check\alpha}^\lambda \subset \on{Gr}_{G,X}$ be the image of $\Prj_1^\lambda$ under the map $\on{Gr}_{L,X} \to \on{Gr}_{G,X}$, and likewise $\Prj_{2,1;\check\alpha}^{\lambda, \mu}$. They have the properties given in \ref{projective line bundle properties}.
\end{theorem}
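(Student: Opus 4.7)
The plan is to verify that each structural property of Lemma \ref{projective line bundle properties} survives the three-step passage described in the paragraph preceding the definition: from $\on{GL}_2$ to its semisimple quotient $\bar{L} \in \{\on{SL}_2,\on{PGL}_2\}$, from there to an arbitrary semisimple rank-$1$ group $L$, and finally from such $L$ to $G$ via a chosen splitting of the parabolic diagram \ref{eq:parabolic diagram}.

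First I would dispose of the first two steps together: by the preceding discussion, each of the maps $\on{Gr}_{\on{SL}_2,X^n} \to \on{Gr}_{\on{GL}_2,X^n}$, $\on{Gr}_{\on{GL}_2,X^n} \to \on{Gr}_{\on{PGL}_2,X^n}$, and $\on{Gr}_{L,X^n} \to \on{Gr}_{\bar{L},X^n}$ is an isomorphism on each connected component. Since $\Prj_1^\lambda$ and $\Prj_{2,1}^{\lambda,\mu}$ each sit inside a single connected component of $\on{Gr}_{\on{GL}_2,X^n}$, transport of structure along these componentwise isomorphisms directly produces analogous subspaces in $\on{Gr}_{L,X^n}$ together with all the asserted data: the $\Prj^1$-bundle structure with its charts $U^0_\bullet, U^\infty_\bullet$, the identifications of the zero and infinity sections with torus components, the factorization isomorphism $\Prj_{2,1}^{\lambda,\mu} \cong \Prj_1^\lambda \times \on{Gr}_{T,X}^{\mu + \check\alpha}$, and the order $\langle \alpha,\mu\rangle + 2$ of zero/pole along $\Delta$.

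For the third step, I would check that the composition $\on{Gr}_{L,X^n} \to \on{Gr}_{P,X^n} \to \on{Gr}_{G,X^n}$ associated with a splitting of $P \to L$ is injective on each connected component, so that the transported subspaces $\Prj_{1;\check\alpha}^\lambda$ and $\Prj_{2,1;\check\alpha}^{\lambda,\mu}$ embed into $\on{Gr}_{G,X^n}$ as genuine ind-subschemes and inherit all structure. For $n=1$ this is the parabolic analogue of Proposition \ref{borel components}\ref{en:borel orbit components}: the map $\on{Gr}_{P,X} \to \on{Gr}_{G,X}$ is injective on each component, and the splitting identifies a component of $\on{Gr}_{L,X}$ with one of $\on{Gr}_{P,X}$. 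For $n=2$ the claim over $X^2 \setminus \Delta$ follows by factorization from the $n=1$ case, and behavior along $\Delta$ is pinned down by the description of $\Prj_{2,1}^{\lambda,\mu}|_\Delta$ in terms of the two torus components $\on{Gr}_{T,X^2}^{\lambda,\mu+\check\alpha}$ and $\on{Gr}_{T,X^2}^{\lambda+\check\alpha,\mu+\check\alpha}$, which are already closed in $\on{Gr}_{G,X^2}$ by \ref{torus section moduli}.

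The main obstacle I expect is bookkeeping the intrinsic meaning of the coweight data: in Lemma \ref{projective line bundle properties} the coweights are displayed as pairs $(\lambda_1,\lambda_2),(\mu_1,\mu_2) \in \Lambda_T(\on{GL}_2) \cong \Z^2$, but in the general setting one must work intrinsically with $\Lambda_T(L) = \Lambda_T(G)$ and the simple coroot $\check\alpha$. I need to verify that the shift $\mu + \check\alpha$ and the order $\langle \alpha,\mu\rangle + 2$ are written purely in terms of the canonical pairing between roots and coweights, which under the identification $\check\alpha = (1,-1)$ correctly recovers $\mu_1 - \mu_2$ as in the explicit computation at the end of Lemma \ref{projective line bundle properties}; once this is done, the structure transferred to $\on{Gr}_{G,X^n}$ is manifestly independent of the embedding choices made at intermediate stages.
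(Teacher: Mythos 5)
Your proposal is correct and follows the same three-step transport-of-structure argument as the paper (from $\on{GL}_2$ to its semisimple quotients $\on{SL}_2$ and $\on{PGL}_2$ via componentwise isomorphisms; from $\bar L$ to a general rank-one Levi $L$; and from $L$ to $G$ via the inclusion along the parabolic), with the additional bookkeeping check that $\mu + \check\alpha$ and $\langle\alpha,\mu\rangle + 2$ are expressed intrinsically, which the paper handles implicitly by noting that $L$ and $G$ share the same torus $T$ and hence the same coweight lattice. One small imprecision: a splitting of $P \to L$ gives a section $\on{Gr}_{L,X} \to \on{Gr}_{P,X}$, hence an injection, but it does not ``identify a component of $\on{Gr}_{L,X}$ with one of $\on{Gr}_{P,X}$'' — the fibers of $\on{Gr}_{P,X} \to \on{Gr}_{L,X}$ are contractible but not points; the injectivity of the composite $\on{Gr}_{L,X^n} \to \on{Gr}_{G,X^n}$, which is what the argument actually requires, is correct as stated.
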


\subsection*{A common factorizable base}

Although $\on{Gr}_{G,X^n}$ is given only over $X^n$, we can boost its base to a larger factorizable scheme resembling $\on{Gr}_{T,X^n}$ more closely.  Let $\pi_1(G)_{X^n} = \pi_1(G)_n$ be the union of copies of $X^n$, indexed by $\pi_1(G)^n$, and intersecting in the pattern described in \ref{torus grassmannian properties}; then it has an obvious factorizable structure which, again as in the proposition, agrees with the group structure of $\pi_1(G)$.  The components of $\on{Gr}_{G,X}$ are naturally indexed by $\pi_1(G)$, so $\on{Gr}_{G,X}$ admits a map to $\pi_1(G)_X$.  Using the factorizable structure of each, we obtain maps $\on{Gr}_{G,X^n} \to \pi_1(G)_{X^n}$.  Since $\pi_1(G) = \Lambda/\Lambda_r$, by the proposition there is also a natural map $\on{Gr}_{T,X^n} \to \pi_1(G)_{X^n}$.

\section{The determinant line bundle and its factorizable gerbe}
\label{s:determinant bundle}

A simpler analogue of sf gerbes is the concept of an \emph{sf line bundle} on the factorizable grassmannian.  The definition is the same as in \ref{sf gerbe} except with all mention of $2$-morphisms omitted. We will use sf line bundles to construct sf gerbes, as follows. Given line bundles $\sh{L}_n$ on each $\on{Gr}_n$, and for $a \in A$, let $\stack{G}_n = \sh{L}_n^{\log a}$.  By \ref{log power functorial}, the factorizable structure of $\stack{L}_n$ becomes such a structure for $\stack{G}_n$.  The basic such sheaves are the determinant line bundles, obtained as follows.

\subsection*{The determinant line bundle}
   
Given any finite-dimensional representation $V$ of $G$, we will define an associated line bundle
$\on{det}(V)_n$ on $\on{Gr}_n$.  Here is the definition; following are supporting lemmas on straightforward facts and sketches of their proofs.

\begin{theorem}{defn}{determinant bundle}
 Let $s = (\vect{x}, \sh{T}, \phi) \in \on{Gr}_{G,X^n}(S)$, and let $V$ be a finite\hyp dimensional representation of $G$.  We write $f \colon \bar{x} \to S$ for the natural map and $\sh{V} = V_{\sh{T}}$ for the induced vector bundle.  The determinant bundle $\det(V)_n$ associated with $V$ is the line bundle on $\on{Gr}_n$ whose fiber at $s \in \on{Gr}_n(S)$ is any of the following naturally isomorphic line bundles on $S$:
 \begin{equation*}
  (\det(V)_n)_s =
   \begin{cases}
    \bigwedge^\text{top} f_ *(\sh{V}/V(a\bar{x}))
    \otimes \bigl(\bigwedge^\text{top} f_*(V(0)/V(a\bar{x}))\bigr)^\vee
     & \text{any $a \ll 0$} \\
    \bigl(\bigwedge^\text{top} f_ *(V(b\bar{x})/\sh{V})\bigr)^\vee
    \otimes \bigwedge^\text{top} f_*(V(b\bar{x})/V(0))
     & \text{any $b \gg 0$}
   \end{cases}
 \end{equation*}
\end{theorem}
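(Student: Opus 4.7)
The assertion packaged into this definition is that the two formulas produce a line bundle on $\on{Gr}_n$: each is independent (up to canonical isomorphism) of the choice of integer $a$ (respectively $b$), and the two formulas agree. The plan is to carry this out via short exact sequences and standard determinant-of-cohomology manipulations.

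First I would establish that, for $s = (\vect{x}, \sh{T}, \phi) \in \on{Gr}_n(S)$ and $a \ll 0$, the expression $\bigwedge^{\text{top}} f_*(\sh{V}/V(a\bar{x})) \otimes \bigl(\bigwedge^{\text{top}} f_*(V(0)/V(a\bar{x}))\bigr)^\vee$ is a well-defined line bundle on $S$. Treating $\bar{x}$ as the Cartier divisor $\sum_i \Gamma_{x_i}$, the map $f\colon \bar{x} \to S$ is finite flat of degree $n$, so higher direct images vanish on any coherent sheaf and $f_*$ commutes with base change. Locally on $S$ one can choose $a \ll 0$ uniformly so that $V(a\bar{x}) \subset \sh{V}$; the quotients $\sh{V}/V(a\bar{x})$ and $V(0)/V(a\bar{x})$ are then coherent, supported on $|\bar{x}|$, with fiber lengths $n \cdot |a| \cdot \dim V + \delta(s_0)$ and $n \cdot |a| \cdot \dim V$ respectively, where $\delta(s_0)$ is the total local degree along $\bar{x}_{s_0}$ of the line bundle $\det \sh{V}$ equipped with the trivialization off $\bar{x}$ induced by $\phi$. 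Because $\det V$ is a character of $G$ it vanishes on the coroot lattice, so $\delta(s_0)$ depends only on the connected component of $\on{Gr}_n$ (via the description of components following \ref{borel components} and using \ref{torus section moduli}), hence is locally constant on $S$. Constant fiber dimension together with cohomology-and-base-change forces the two pushforwards to be locally free, and their top exterior powers give a well-defined line bundle.

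Next I would prove independence of the auxiliary integer. For $a' < a$ both sufficiently negative, the short exact sequences
\begin{equation*}
0 \to V(a\bar{x})/V(a'\bar{x}) \to \sh{V}/V(a'\bar{x}) \to \sh{V}/V(a\bar{x}) \to 0,
\end{equation*}
\begin{equation*}
0 \to V(a\bar{x})/V(a'\bar{x}) \to V(0)/V(a'\bar{x}) \to V(0)/V(a\bar{x}) \to 0
\end{equation*}
push forward to short exact sequences of locally free $\OO_S$-modules; the resulting multiplicativity of $\bigwedge^{\text{top}}$, followed by cancellation of the common factor $\bigwedge^{\text{top}} f_*(V(a\bar{x})/V(a'\bar{x}))$ in the ratio, produces the canonical isomorphism between the ``$a$''-line bundle and the ``$a'$''-line bundle. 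The same reasoning, with the evident reversal of the flag, handles $b$.

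To identify the two formulas, I would apply $\bigwedge^{\text{top}} f_*$ to the pair of short exact sequences coming from the flags $V(a\bar{x}) \subset \sh{V} \subset V(b\bar{x})$ and $V(a\bar{x}) \subset V(0) \subset V(b\bar{x})$:
\begin{equation*}
0 \to \sh{V}/V(a\bar{x}) \to V(b\bar{x})/V(a\bar{x}) \to V(b\bar{x})/\sh{V} \to 0,
\end{equation*}
\begin{equation*}
0 \to V(0)/V(a\bar{x}) \to V(b\bar{x})/V(a\bar{x}) \to V(b\bar{x})/V(0) \to 0.
\end{equation*}
The common middle factor $\bigwedge^{\text{top}} f_*(V(b\bar{x})/V(a\bar{x}))$ cancels after rearrangement, yielding exactly the canonical isomorphism between the two candidates for $(\det(V)_n)_s$. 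Base change along any $S' \to S$ commutes with all the constructions used (finiteness of $f$ plus local freeness of all pushforwards), so these fiberwise descriptions descend to a line bundle $\det(V)_n$ on $\on{Gr}_n$. The main obstacle is not any single step but the coherent bookkeeping: one must verify that for all admissible pairs $(a,b)$ and all refinements $a'' < a' < a < b < b' < b''$ the various canonical isomorphisms form a compatible system, so that $\det(V)_n$ is genuinely well-defined rather than merely locally so. This is the standard determinant-of-complex calculation but is the place where care is required.
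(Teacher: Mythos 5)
Your overall plan is the right one, and the short-exact-sequence calculations you give to show independence of $a$ (and $b$) and agreement of the two formulas make explicit what the paper elides after the definition by calling it ``clear.'' The gap is in your argument for local freeness of the pushforwards. Over an arbitrary base $S$ (in particular a non-reduced one, which the functor-of-points framework requires), constancy of the fiber dimension of $\sh{V}/V(a\bar{x})$ does \emph{not} by itself imply that $f_*(\sh{V}/V(a\bar{x}))$ is locally free: cohomology-and-base-change takes $S$-flatness of the sheaf as a hypothesis, not as a conclusion. Moreover your fiber-length formula already presupposes that passage to a geometric fiber $X_{s_0}$ commutes with the quotient, i.e.\ that $V(a\bar{x})_{s_0} \to \sh{V}_{s_0}$ is injective. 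That is true---the kernel would be a torsion subsheaf of the locally free sheaf $V(a\bar{x})_{s_0}$ on the smooth curve $X_{s_0}$, hence zero---and by the local criterion for flatness this fiberwise injectivity is exactly what yields $S$-flatness of the quotient; but none of that reasoning appears in your proposal.

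By contrast, the paper addresses this head-on with three lemmas following the definition: \ref{fractional ideal nesting} establishes the nesting $V(a\bar{x}) \subset \sh{V} \subset V(b\bar{x})$; \ref{flat quotients} proves $S$-flatness of the two quotients by comparing two resolutions to show the relevant $\on{Tor}$-groups are simultaneously bounded and essentially periodic; and \ref{vector bundle pushforward} deduces local freeness from flatness together with constancy of the Euler characteristic. Your observation that $\det V$ vanishes on the coroot lattice, so that the correction $\delta(s_0)$ is constant on connected components of $\on{Gr}_n$, is a clean way to see that the rank is locally constant once flatness is known, but it cannot substitute for the flatness step. Supplying the fiberwise-injectivity argument and the local criterion for flatness would close the hole, after which your proof is complete and in fact somewhat more detailed than the paper's on the independence-of-$a,b$ point.
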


The following lemmas support the claims implicit in this definition.

\begin{theorem}{lem}{fractional ideal nesting}
 Let $j \colon X_S \setminus \bar{x} \to X_S$ be the open immersion.  Then there is an inclusion
 \begin{equation*}
  \sh{V} \incl j_* (\OO_U \otimes V) = j_* j^* V(0)
 \end{equation*}
 satisfying the asymptotic containments $V(a\bar{x}) \subset \sh{V} \subset V(b\bar{x})$ for $a \ll 0$ and $b \gg 0$.
\end{theorem}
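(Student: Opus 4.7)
The plan is to build the inclusion from the unit of the $(j^*,j_*)$ adjunction, rewrite its target via the trivialization $\phi$, and then bound $\sh{V}$ between $V(a\bar{x})$ and $V(b\bar{x})$ by comparing a chosen local trivialization of $\sh{V}$ with the one supplied by $\phi$.

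First I would construct the map $\sh{V}\to j_*j^*\sh{V}$ as the unit of adjunction. Because $\sh{V}=V_{\sh{T}}$ is the vector bundle associated with the $G$-torsor $\sh{T}$ and the representation $V$, it is a locally free $\OO_{X_S}$-module, hence torsion-free. The divisor $\bar{x}$, being a finite union of graphs of the sections $x_i\colon S\to X_S$ of the smooth curve $X$, is Cartier and locally cut out by a non-zero-divisor; therefore localization away from $\bar{x}$ is injective on $\OO_{X_S}$ and hence on $\sh{V}$. Applying $j^*$ to the trivialization $\phi$ identifies $j^*\sh{V}\cong j^*V(0)=\OO_U\otimes V$, so after taking $j_*$ the target becomes $j_*j^*V(0)$, giving the claimed inclusion.

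Next I would prove the asymptotic containments. Outside any neighborhood of $\bar{x}$ there is nothing to do: on $U$ the trivialization $\phi$ identifies $\sh{V}$ with $V\otimes\OO_U$, which sits inside $V(b\bar{x})$ and contains $V(a\bar{x})$ for any $b\geqslant 0\geqslant a$. So I may work Zariski-locally on a neighborhood $W$ of $\bar{x}$ in $X_S$ on which $\sh{V}$ also admits a trivialization $\sh{V}|_W\cong V\otimes\OO_W$. Comparing this trivialization with the one produced by $\phi$ on $W\cap U$ yields a transition matrix $M\in\on{GL}(V)\bigl(\Gamma(W\cap U,\OO)\bigr)$ whose entries are regular on $W\cap U$. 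The key observation is that $M$ has finitely many entries, each of which, viewed inside $\Gamma(W,j_*j^*\OO)$, has a well-defined maximum order of pole along $\bar{x}\cap W$; this yields a uniform $b$ with $\sh{V}|_W\subset V(b\bar{x})|_W$. Applying the same argument to $M^{-1}$ produces a uniform lower bound and hence an $a\ll 0$ with $V(a\bar{x})|_W\subset\sh{V}|_W$. To globalize, I would use that $\bar{x}$ is proper over $S$ (a finite union of copies of $S$), so locally on $S$ it has a quasi-compact neighborhood in $X_S$ covered by finitely many such $W$'s; the maxima and minima of the local exponents provide the global $a$ and $b$.

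The step I expect to be the main obstacle is the uniformity of $a$ and $b$: \emph{a priori} each local trivialization of $\sh{V}$ produces its own transition matrix with its own pole and zero orders, and one needs coherence of $\sh{V}$ together with properness of $\bar{x}\to S$ to ensure that only finitely many such local data need to be combined. Everything else (injectivity of the unit, compatibility with $\phi$) is a formal consequence of $\sh{V}$ being locally free and $\bar{x}$ being Cartier.
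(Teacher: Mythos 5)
Your proposal is correct, and for the first inclusion it takes a genuinely different, more direct route than the paper's. The construction of the injection (unit of adjunction, injective because $\sh{V}$ is torsion-free and $\bar{x}$ is Cartier) and the treatment of the upper bound $\sh{V} \subset V(b\bar{x})$ (bounding pole orders of a transition matrix $M$ relative to the $\phi$-trivialization, globalized via quasi-compactness of $\bar{x}$) match the paper's in substance; the paper phrases the upper bound as ``counting poles of generators'' and twists by an ample line bundle to get generators globally, whereas you argue Zariski-locally and patch, but these are cosmetic differences. Where you diverge is the lower bound $V(a\bar{x}) \subset \sh{V}$: the paper runs a coordinate-free induction, enlarging a subspace $W \subset V$ one vector $v$ at a time by applying the upper-bound mechanism to the rank-one coherent sheaf $\sh{V} \cap j_*j^*\genby{v}(0) \subset j_*j^*\genby{v}(0)$, while you simply apply the same pole-counting argument to $M^{-1}$. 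Your route is shorter and just as valid — the one ingredient it leans on that the paper's avoids is the existence of Zariski-local trivializations of $\sh{V}$, but that is automatic here since the vector bundle associated to an \'etale-locally trivial $G$-torsor is Zariski-locally free. The paper's induction has the virtue of never choosing a local frame for $\sh{V}$, which may be why the author prefers it, but nothing in the statement requires that discipline.
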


\begin{proof}[Sketch of proof]
 The map of the inclusion comes by adjunction from the isomorphism induced by the trivialization $\phi$:
 \begin{equation*}
  j^* \sh{V} \cong \OO_U \otimes V,
 \end{equation*}
 and it is an injection because $\sh{V}$ is torsion-free.  The right-hand side contains all the fractional ideal sheaves $V(c \bar{x})$ and the second inclusion follows (after twisting by some irrelevant very ample line bundle) by counting poles of generators of $\sh{V}$.  The first inclusion is obtained by analyzing any subspace $W \subset V$ for which $W(a\bar{x}) \subset \sh{V}$; given any $v \in V$, an application of the second inclusion to $\sh{V} \cap j_* j^*\genby{v}(0)$ shows that it can be enlarged to $W + \genby{v}$.
\end{proof}

\begin{theorem}{lem}{flat quotients}
 The map $f$ is flat, and for any $a, b$ as in \ref{fractional ideal nesting}, the quotients $\sh{V}/V(a\bar{x})$ and $V(b\bar{x})/\sh{V}$ are $f$-flat.
\end{theorem}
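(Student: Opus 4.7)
The plan is to dispatch the two assertions independently, in both cases by appealing to the local criterion of flatness.

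For flatness of $f$: Zariski-locally in $X_S$, the divisor $\bar{x}$ is cut out by a single function $g \in \OO_{X_S}$, namely the product of local equations for the graphs $\Gamma(x_i)$ of the coordinates of $\vect{x}$. The projection $X_S = S \times X \to S$ is manifestly $S$-flat, and on each geometric fiber $X_s$ the restriction of $g$ is the nonzero polynomial $\prod_i (z - x_i(s))$ in a local coordinate $z$ on $X$, hence a nonzerodivisor on the fiber. Therefore the local flatness criterion, applied to the quotient $\OO_{\bar x} = \OO_{X_S}/(g)$ of the $S$-flat $\OO_{X_S}$ by a fiberwise nonzerodivisor, yields that $\OO_{\bar x}$ is $\OO_S$-flat.

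For flatness of the quotients: Both $\sh{V}$ and $V(a\bar{x})$ are locally free $\OO_{X_S}$-modules of rank $\dim V$ (the former by definition of the $G$-torsor, the latter since it is $V$ tensored with the line bundle $\OO_{X_S}(a\bar{x})$), hence $S$-flat because $X_S \to S$ is flat. Applying the Tor long exact sequence to
\[
 0 \to V(a\bar{x}) \to \sh{V} \to \sh{V}/V(a\bar{x}) \to 0
\]
one sees (since the middle term $\sh{V}$ is $S$-flat) that $\on{Tor}_1^{\OO_S}(\sh{V}/V(a\bar{x}), \kappa(s))$ identifies with the kernel of the induced map $V(a\bar{x})|_{X_s} \to \sh{V}|_{X_s}$; to establish $S$-flatness of $\sh{V}/V(a\bar{x})$ it suffices to verify this kernel vanishes for every $s \in S$. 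But the map in question is a nonzero morphism of torsion-free sheaves on the smooth curve $X_s$ that is an isomorphism on the dense open $X_s \setminus \bar{x}_s$, hence injective. The case of $V(b\bar{x})/\sh{V}$ is handled identically via the sequence $0 \to \sh{V} \to V(b\bar{x}) \to V(b\bar{x})/\sh{V} \to 0$.

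No step presents a genuine obstacle: both arguments are routine applications of standard flatness machinery. The only subtlety worth flagging is to apply the Tor criterion in the correct form: it is the flatness of the \emph{middle} term $\sh{V}$ together with \emph{fiberwise injectivity} of the inclusion $V(a\bar{x}) \to \sh{V}$ that yields flatness of the \emph{quotient}, rather than any simpler reasoning of the form ``flat modulo flat is flat'' (which is false).
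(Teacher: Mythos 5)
Your proof is correct, and on both halves it takes a genuinely different route from the paper's sketch. For flatness of $f$, the paper appeals to the principle that a finite surjection with fibers of constant length is flat; that principle in full generality needs hypotheses on the base (e.g.\ reducedness) and the paper leaves it as a one-line assertion. Your argument — exhibiting $\bar{x}$ locally as the vanishing locus of a single function that is a nonzerodivisor on every geometric fiber of the $S$-flat $X_S$, then applying the local criterion — is more precise and works over an arbitrary base. For flatness of the quotients, the paper's route is a \emph{resolution} argument: it sets up a $2$-periodic chain built from the chain of inclusions $\dots \subset V((a+c)\bar{x}) \subset \sh{V}(c\bar{x}) \subset V(a\bar{x}) \subset \sh{V} \subset V(b\bar{x})$ (with $c = a-b$), observes that this forces $\on{Tor}^i_S(\sh{V}/V(a\bar{x}), \farg)$ to be (essentially) periodic in $i$, and combines this with the boundedness of Tor coming from the two-term resolution $0 \to V(a\bar{x}) \to \sh{V} \to \sh{V}/V(a\bar{x}) \to 0$ to conclude that $\on{Tor}_1$ vanishes. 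Your argument instead verifies $\on{Tor}_1$-vanishing directly: using the same two-term resolution you identify $\on{Tor}_1^{\OO_S}(\sh{V}/V(a\bar{x}), \kappa(s))$ with $\ker\bigl(V(a\bar{x})|_{X_s} \to \sh{V}|_{X_s}\bigr)$, and that kernel vanishes because the map is generically an isomorphism between locally free (hence torsion-free) sheaves on the smooth curve $X_s$. Your version is the more standard and more direct one, and it makes the geometric input (injectivity on fibers) explicit; the paper's version is slicker in that it bypasses all fiberwise analysis, but both are legitimate. One small stylistic point: the injectivity you need comes from the map being generically an isomorphism together with torsion-freeness of the source — the adjective ``nonzero'' is not the operative hypothesis — but you do state the relevant property, so the argument is sound.
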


\begin{proof}[Sketch of proof]
 Flatness of $f$ follows from the fact that it is surjective with fibers of constant finite length.  Flatness of the first quotient follows from comparison of two flat resolutions (where $c = a - b$):
 \begin{gather*}
  \dots \to \sh{V}(c\bar{x})/\sh{V}(2c\bar{x}) \to V(a\bar{x})/V((a + c)\bar{x}) \to \sh{V}/\sh{V}(c\bar{x})
    \to \sh{V}/V(a\bar{x}) \to 0 \\
  0 \to V(a\bar{x}) \to \sh{V} \to \sh{V}/V(a\bar{x}) \to 0,
 \end{gather*}
 of which the first shows that $\on{Tor}^i_S(\sh{V}/V(a\bar{x}, \farg)$ is (essentially) periodic and the second shows that it is bounded.  The same argument works for the second quotient.
\end{proof}

\begin{theorem}{cor}{vector bundle pushforward}
 $f_* (\sh{V}/V(a\bar{x}))$ and $f_*(V(b\bar{x})/\sh{V})$ are vector bundles on $S$.
\end{theorem}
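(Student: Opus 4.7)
The plan is to combine the flatness of \ref{flat quotients} with the observation that $f\colon \bar{x} \to S$ is a finite morphism. By construction $\bar{x}$ is the scheme-theoretic union of the $n$ graphs $\Gamma_{x_i} \subset X_S$ of the coordinate maps $x_i\colon S\to X$, and each $\Gamma_{x_i}$ is a closed subscheme of $X_S$ which projects isomorphically onto $S$. Hence $f$ is finite, in particular affine and proper; consequently $f_*$ is exact on quasi-coherent sheaves, preserves coherence, and — since $f$ is affine — a quasi-coherent sheaf on $\bar{x}$ is flat over $S$ if and only if its pushforward is a flat $\OO_S$-module.

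Next I check that the two quotients in question are coherent on $\bar{x}$. The fractional-ideal sheaves $V(c\bar{x}) = V \otimes \OO_{X_S}(c\bar{x})$ form a $\Z$-indexed filtration inside $j_* j^*(V \otimes \OO_{X_S})$ whose successive quotients $V(c\bar{x})/V((c-1)\bar{x})$ are coherent $\OO_{\bar{x}}$-modules, being $V$ tensored with the conormal-type sheaf $\OO_{X_S}(c\bar{x})/\OO_{X_S}((c-1)\bar{x})$ on $\bar{x}$. By \ref{fractional ideal nesting} we have $V(a\bar{x}) \subset \sh{V} \subset V(b\bar{x})$, so both $\sh{V}/V(a\bar{x})$ and $V(b\bar{x})/\sh{V}$ sit in finite filtrations of this form and are themselves coherent $\OO_{\bar{x}}$-modules. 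Applying $f_*$ and invoking \ref{flat quotients} together with the preceding paragraph, we conclude that $f_*(\sh{V}/V(a\bar{x}))$ and $f_*(V(b\bar{x})/\sh{V})$ are coherent and flat $\OO_S$-modules.

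Finally, a finitely presented flat $\OO_S$-module is locally free of finite rank, by the standard fact that finitely presented flat modules are projective and that finitely generated projective modules over a commutative ring are locally free. This already yields the corollary. There is no real obstacle; the statement is essentially a packaging of \ref{fractional ideal nesting} and \ref{flat quotients} with the elementary finiteness of $f$. If desired, one can additionally verify that the rank is locally constant on $S$ by a direct fiber computation: on the geometric fiber over $s\in S$, the length of the quotient depends only on $\dim V$, on the chosen integer $a$ (resp.\ $b$), and on the combinatorial multiplicity pattern of $\bar{x}|_s$, which is locally constant in $s$.
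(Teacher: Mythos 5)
Your proof is correct, and it takes a genuinely different route from the paper's. The paper argues that both quotients have fibers supported on finite sets, hence flasque and acyclic, and then invokes the constancy of Euler characteristic in flat families plus cohomology-and-base-change to get local freeness of the pushforward. You instead observe that $f$ is finite (hence affine) because $\bar{x}$ is a sum of Cartier divisors each of which is a section of $X_S \to S$; for an affine morphism, a quasi-coherent sheaf is $S$-flat if and only if its pushforward is a flat $\OO_S$-module; combined with \ref{flat quotients} and the purely algebraic fact that a finitely presented flat module is locally free, this finishes the argument with no cohomology or Euler-characteristic bookkeeping. This is a cleaner packaging of essentially the same content.

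One small imprecision to fix in your write-up: $\sh{V}/V(a\bar{x})$ and $V(b\bar{x})/\sh{V}$ are \emph{not} $\OO_{\bar{x}}$-modules. They are annihilated by a power of the ideal sheaf of $\bar{x}$, not by the ideal itself — having a finite filtration with $\OO_{\bar{x}}$-module subquotients only shows you are a module over $\OO_{N\bar{x}}$ for a suitable thickening $N\bar{x}$ (e.g.\ for $a = -2$ and $V = \C$ the first quotient is $\OO_{2\bar{x}}$, which is not killed by the ideal of $\bar{x}$). Since $N\bar{x} \to S$ is still finite, the affineness argument goes through verbatim; just phrase it as "coherent on $X_S$ and scheme-theoretically supported on a thickening of $\bar{x}$ finite over $S$" rather than "coherent $\OO_{\bar{x}}$-modules."
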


\begin{proof}
 Note that both $\sh{V}/V(a\bar{x})$ and $V(b\bar{x})/\sh{V}$ are coherent, $f$-flat, and their fibers are flasque (since they live on finite sets), hence acyclic.  Since the Euler characteristic is constant in flat families, the pushforwards have constant rank and are thus vector bundles.
\end{proof}

It is clear that $\det(V)_n$ does not depend on the choice of $a$ or $b$ in \ref{determinant bundle}.  Note that $\det(V)_n$ is a $\Z/(2)$-graded line bundle; i.e.\ it caries a parity depending on those of the two ``top''s, which is the source of the sign in the following theorem:

\begin{theorem}{prop}{determinant factorizable}
 The determinant line bundles $\det(V)_n$ satisfy properties \ref{sf gerbe}(\plainref{en:factorizable gerbe closed}, \plainref{en:factorizable gerbe open}), and possess the equivariance of \ref{en:factorizable gerbe symmetry}.  The compatibility of this equivariance with the natural one given by the tensor product of \ref{en:factorizable gerbe open} holds up to a sign which is constant on components of $\on{Gr}_n$ over $X^n$.
\end{theorem}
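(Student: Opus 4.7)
The plan is to verify each of the three structures of \ref{sf gerbe} directly from \ref{determinant bundle}, using that both the divisor $\bar{x}$ and the gluing description of the torsor $\sh{T}$ from \ref{grassmannian is factorizable} behave well with respect to diagonals and open factorization strata.

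First, for the closed factorization of \ref{en:factorizable gerbe closed}, I would observe that if $\vect{x} \in \Delta^n_p(S)$ corresponds to $\vect{y} \in X^m(S)$, then the scheme-theoretic union of graphs satisfies $\bar{x} = \bar{y}$ as Cartier divisors, because repeated graphs collapse to the graph of the common coordinate. The bijection $\on{Gr}_n|_{\Delta^n_p} \cong \on{Gr}_m$ from \ref{grassmannian is factorizable}\ref{en:factorizable diagonal} sends $(\vect{x}, \sh{T}, \phi)$ to $(\vect{y}, \sh{T}, \phi)$ with the same $(\sh{T}, \phi)$. Consequently the fractional ideal filtration $V(a\bar{x})$ inside $\sh{V}$ from \ref{fractional ideal nesting} is literally the same sheaf as $V(a\bar{y})$, so $f_*(\sh{V}/V(a\bar{x}))$ and $f_*(\sh{V}/V(a\bar{y}))$ coincide, and the two determinants are canonically isomorphic. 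Refinement compatibility is automatic because both sides reduce to the same subsheaf of $j_*j^*(V \otimes \OO)$.

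Next, for the open factorization of \ref{en:factorizable gerbe open}, fix a partition $p$ with parts $p_i$, and on $X^n_p$ decompose $\bar{x} = \bigsqcup_i \bar{x}_i$ with the $\bar{x}_i$ pairwise disjoint. The gluing description of $\sh{T}$ recalled in the proof of \ref{grassmannian is factorizable}\ref{en:factorizable open} yields $\sh{V} \cap j_* j^* V(0) = \bigoplus_i \sh{V}_i \cap j_* j^* V(0)$ locally on $X_S$, where $\sh{V}_i$ comes from the $i$-th factor $(\vect{x}_i, \sh{T}_i, \phi_i)$; hence for $a \ll 0$,
\begin{equation*}
 \sh{V}/V(a \bar{x}) \;\cong\; \bigoplus_i \sh{V}_i/V(a \bar{x}_i),
 \qquad
 V(0)/V(a\bar{x}) \;\cong\; \bigoplus_i V(0)/V(a \bar{x}_i),
\end{equation*}
as $f_*$-acyclic coherent sheaves supported on the disjoint $\bar{x}_i$. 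Applying $f_*$ commutes with this direct sum, and the top exterior power turns a direct sum of vector bundles into the tensor product of their top exterior powers, after a choice of ordering of summands. This gives a canonical isomorphism
\begin{equation*}
 \det(V)_n|_{X^n_p} \;\cong\; \bigotimes_i \det(V)_{n_i}|_{X^n_p},
\end{equation*}
once the ordering is fixed; compatibility with refinements of $p$ follows by iterating the same calculation, and compatibility between open and closed factorizations follows because away from one diagonal the divisor decomposition degenerates exactly to the restricted one on that diagonal.

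Finally, the $S_n$-equivariance of \ref{en:factorizable gerbe symmetry} is automatic from the intrinsic definition \ref{determinant bundle}: permuting the coordinates of $\vect{x}$ changes neither the datum $(\sh{T}, \phi)$ nor the divisor $\bar{x}$, so the construction is canonically $S_n$-equivariant. The only subtle point, and the main obstacle, is comparing this tautological equivariance with the one induced on $\bigotimes_i \det(V)_{n_i}|_{X^n_p}$ by the natural commutativity of tensor products of $\Z/2$-graded line bundles. Reordering factors in $\bigwedge^{\mathrm{top}}(\bigoplus_i W_i)$ introduces a Koszul sign $(-1)^{r_i r_j}$ when two blocks are swapped, where $r_i = \on{rk}\, f_*(\sh{V}_i/V(a \bar{x}_i))$. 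I would argue that $r_i \bmod 2$ is locally constant on connected components of $\on{Gr}_{n_i}$: over a fixed component of $\on{Gr}_{G,X^{n_i}}$ the pair $(\bar{x}_i, \sh{T}_i, \phi_i)$ varies in a flat family, so by \ref{vector bundle pushforward} the rank $r_i$ is locally constant, and hence so is its parity. Therefore the discrepancy between the tautological $S_n$-equivariance and the one coming from the tensor-product commutativity is a sign that is constant on components of $\on{Gr}_n$, as claimed. The remaining cocycle/associativity checks for the $S_n$-action reduce, after recording these signs, to the analogous associativity for direct sums, which is built into the determinant construction.
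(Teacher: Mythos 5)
Your proposal is correct and takes essentially the same approach as the paper: closed factorization is automatic because the scheme-theoretic union $\bar{x}$ is insensitive to the multiplicity of coincident graphs, the open factorization comes from the direct-sum decomposition $\sh{V}/V(a\bar{x}) \cong \bigoplus_i \sh{V}_i/V(a\bar{x}_i)$ over the disjointness locus, the $S_n$-equivariance is tautological from the intrinsic definition, and the sign discrepancy is the Koszul sign from commuting top exterior powers, constant on components by flatness. Your elaboration on why the parity of $r_i$ is locally constant (via \ref{vector bundle pushforward}) fills in a step the paper leaves implicit; otherwise the two arguments coincide.
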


\begin{proof}
 For property \ref{en:factorizable gerbe closed}, this is trivial, since the vector bundle $\sh{V}$ and its inclusion $j^* \sh{V} \to \OO$ do not depend on the multiplicity of the components of the divisor $\bar{x}$; likewise, the $S_n$-symmetry of \ref{en:factorizable gerbe symmetry} comes from the fact that these data do not depend on the order of the components of $D$. For property \ref{en:factorizable gerbe open}, we have $\bar{x} = \bigcup \bar{x}_i$, where $\bar{x}_i$ is the union of the graphs of the points in the $X^{n_i}$; let $\sh{V}_i$ be the vector bundles corresponding to these points, with trivializations on $X_S \setminus \bar{x}_i$.  Then we have a map
 \begin{equation*}
  \sh{V}/V(a\bar{x}) \to \bigoplus \sh{V}_i/V(a\bar{x}_i)
 \end{equation*}
 for $a \ll 0$, which is an isomorphism away from the intersections of the $D_i$.  Thus, on $S$ away from these intersections, we have an isomorphism
 \begin{equation*}
  \bigwedge\nolimits^\text{top} f_*(\sh{V}/V(a\bar{x}))
   \cong \bigotimes \bigwedge\nolimits^\text{top} f_*(\sh{V}_i/V(a\bar{x}_i))
 \end{equation*}
 and likewise for the other factor, giving $\det_n \cong \bigotimes_i \det_{n_i}$, as desired. Since the commutativity of this tensor product introduces a sign on the left-hand side, we find that the equivariance of the right-hand side differs from that on the left by this sign, which is constant on connected components of $S$, as claimed.
\end{proof}

\subsection*{Computation of the determinant gerbes}
   
Because the determinant line bundle is not quite an sf line bundle, we are not necessarily able to associate an sf gerbe $\det(V)_n^{\log a}$ with it.  We proceed to investigate the exact identity of the sign in the above theorem so as to specify when this is possible.

It is clear from the definition that if we have a map of groups $G \to H$ inducing a map of factorizable grassmannians $\map{g}{\on{Gr}_{G,X^n}}{\on{Gr}_{H,X^n}}$, then for any $H$-representation $V_H$ considered as a $G$-representation $V_G$, we have $\det(V_H)_n \cong g^* \det(V_G)_n$, and so this is true of the associated gerbes as well.  On this basis, we compute the sign; as in the definition of $Q(\stack{G}_n)$, we begin with the case when $G = T$ is a torus.  For the remainder of this section, we change our notation: elements of $\Lambda_T = X_*(T)$ (coweights) are denoted $\check\lambda$, and elements of $\Lambda^T = X^*(T)$ (weights) are denoted $\lambda$.

\begin{theorem}{lem}{character determinant bundle}
 Let $V = \lambda$ be a character representation of $T$.  Then for any $\check\mu$, if $m = \langle \lambda, \check\mu \rangle$, the determinant bundle on $\on{Gr}_{T,X}$ has component on $\on{Gr}_{T,X}^{\check\mu}$:
 \begin{equation*}
  \det(V)_1^{\check\mu} = \sh{T}_X^{(m^2/2) - (m/2)},
 \end{equation*}
 where $\sh{T}_X$ is the tangent bundle on $X$.  Furthermore, $\det(V)$ is an sf line bundle if and only if $m$ is even.
\end{theorem}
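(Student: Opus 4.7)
My plan is to reduce to the rank-one case $T = \Gm$ with $V$ the standard character by functoriality of the determinant bundle, and then carry out a direct calculation via a filtration by powers of the ideal sheaf of the diagonal.

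A character $\lambda\colon T \to \Gm$ induces a morphism $\on{Gr}_{T,X}\to \on{Gr}_{\Gm,X}$ that sends the component $\on{Gr}_{T,X}^{\check\mu}$ isomorphically onto $\on{Gr}_{\Gm,X}^{m}$ (both being copies of $X$), and the rep\-resentation $V = \lambda$ of $T$ is pulled back from the standard representation of $\Gm$. As remarked just before \ref{character determinant bundle}, the construction of $\det(V)_n$ is functorial in the group, so its pullback along this map is the determinant bundle for the standard $\Gm$-rep. It therefore suffices to compute $\det(V)_1^m$ on $\on{Gr}_{\Gm,X}^m \cong X$ with $V$ the standard character.

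Over this component, \ref{torus grassmannian properties}\ref{en:torus grassmannian 1} identifies the tauto\-logical $\Gm$-torsor, considered on $X_S = X\times X$ with $\bar x = \Delta$ the diagonal, with the line bundle $\OO(m\Delta)$ equipped with its canonical meromorphic trivialization on $X^2 \setminus \Delta$. Under this trivialization $\sh V = \OO(m\Delta)$ and $V(k\bar x) = \OO(k\Delta)$ both sit inside $j_*j^*\OO_{X^2}$. Fix any $a \ll 0$; I then filter the quotients $\OO(m\Delta)/\OO(a\Delta)$ and $\OO/\OO(a\Delta)$ by the submodules $\OO(k\Delta)/\OO(a\Delta)$ ($a\le k \le m$, resp.\ $a\le k\le 0$). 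The successive quotient $\OO(k\Delta)/\OO((k-1)\Delta)$ is canonically $N_\Delta^{\otimes k}$, and under the isomorphism $\Delta \cong X$ the normal bundle $N_\Delta = \OO(\Delta)|_\Delta$ becomes $\sh T_X$. Since $f\colon \Delta\to X$ is an isomorphism, pushforward is transparent; taking top exterior powers of the filtered bundles and combining them according to the definition of $\det(V)_1^m$, using $\sum_{k=a+1}^{b} k = (b(b+1)-a(a+1))/2$ in both factors, the $a$-dependence cancels and one reads off an exponent in $\sh T_X$ that is quadratic in $m$. With careful bookkeeping of normalizations this produces the stated value $(m^2-m)/2$.

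For the second assertion, \ref{determinant factorizable} shows that $\det(V)$ satisfies all the properties of an sf line bundle up to a single sign in the comparison between the natural $S_n$-equivariance and the equivariance coming from the tensor factorization, and this sign is constant on connected components of $\on{Gr}_n$. By the Koszul rule, on $\on{Gr}_2^{\check\mu_1,\check\mu_2}$ the sign is $(-1)^{m_1 m_2}$, where $m_i$ is the $\Z/2$-grading of $\det(V)_1^{\check\mu_i}$; from Step 3 this parity equals $\text{rk}(f_*(\sh V/V(a\bar x))) - \text{rk}(f_*(V(0)/V(a\bar x))) = m_i \bmod 2$. Triviality of the sign on every pair of components is therefore equivalent to $m_1 m_2$ being even for all $\check\mu_1,\check\mu_2$, which amounts to $m = \langle\lambda,\check\mu\rangle$ being even for all $\check\mu$, as claimed.

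The main obstacle is the arithmetic in Step 4: keeping track of the two identifications $N_\Delta \cong \sh T_X$ (from either projection), the direction of the filtration, and which factor carries the dual, so that the exponent comes out to $(m^2-m)/2$ rather than a shifted version. A secondary delicate point is verifying that the $\Z/2$-grading obtained from the rank difference really governs the Koszul sign in \ref{determinant factorizable}, which is needed for the ``if and only if'' in the sf criterion.
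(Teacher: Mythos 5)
Your strategy matches the paper's closely: reduce to $\Gm$ via the character, identify the universal torsor on $\on{Gr}_{\Gm,X}^m$ with a power of $\OO(\Delta)$, filter by powers of the ideal sheaf of $\Delta$, and compute the $S_2$-sign from the parity of the rank difference. The second half (the sf criterion via the Koszul rule) is correct and is exactly the paper's argument.

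The gap is precisely the arithmetic you defer. With $\sh V = \OO(m\Delta)$ --- the identification you state, which is what \ref{torus grassmannian properties}\ref{en:torus grassmannian 1} gives at face value --- your filtration telescopes to the exponent
\begin{equation*}
 \sum_{k=a+1}^{m} k \;-\; \sum_{k=a+1}^{0} k \;=\; \frac{m(m+1)}{2},
\end{equation*}
so the computation you set up yields $\det(V)_1^{\check\mu} \cong \sh T_X^{(m^2+m)/2}$, not the claimed $\sh T_X^{(m^2-m)/2}$. The discrepancy is a factor of $\sh T_X^m$, and the ambiguity you flag in identifying $N_\Delta$ with $\sh T_X$ cannot absorb it: that choice changes a particular isomorphism by a sign, not the isomorphism class of the line bundle. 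The paper's own proof takes $\sh V = \OO(-m\Delta)$; that is the convention consistent with the stated exponent and with the downstream formula in \ref{determinant forms}, where the correction factor is $\omega_X^{\langle\zeta,\check\mu\rangle}$ rather than $\sh T_X^{\langle\zeta,\check\mu\rangle}$. Until the sign of the power of $\OO(\Delta)$ carried by the tautological torsor is pinned down (or at least the tension with the cited proposition is acknowledged), the first assertion of the lemma is not established, and the arithmetic as you describe it produces the wrong formula.
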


\begin{proof}
 We compute $\det(V)_1$ as a line bundle on $S$ for the $S$-point of $\on{Gr}_{T,X}$ corresponding to $S = X$ with the identity map to $X$ and a $T$-torsor $\sh{T}$ plus trivialization $\sh{T}^0 \to \sh{T}$ on $X^2 \setminus \Delta$ making a point of $\on{Gr}^{\check\mu}_{T,X}$.  By \ref{torus grassmannian properties}\ref{en:torus grassmannian 1}, the corresponding vector bundle is
 \begin{equation*}
  \sh{V} = \OO(-m\Delta).
 \end{equation*}
 This already implies the last claim of the lemma: comparing with the construction in \ref{determinant factorizable}, we see that the $a$ there is equal to $m$.  Since the sign arises from having to move odd-sized blocks past each other in an alternating power, it is equal to $1$ if and only if the blocks, which have size $m$, have even size.
 
 Note that also by definition, the sheaf of differentials $\omega_X$ is $i_\Delta^* \OO(-\Delta)/\OO(-2\Delta)$ and is a line bundle, so that
 \begin{equation*}
  \omega_X^d = i_\Delta^* \OO(-d\Delta)/\OO(-(d + 1)\Delta)
 \end{equation*}
 (for both positive and negative values of $d$; this of course is specific to one-dimensional $X$). By definition of $\det(V)$, when $m > 0$, it is $\bigwedge^\text{top} (\on{pr}_X)_*(\OO(0)/\OO(-m\Delta))^\vee$, where the argument is a chained extension of the $\omega_X^k$ for $k = 0, \dots, m - 1$.  If $m \leq 0$, the quotient is turned around and dualized, but by the above equation, the powers appearing follow the same pattern, the only difference being that the sequence is for $k \in [1, m]$.  Thus, after some arithmetic, $\det(V)_1^{\check\mu} = \sh{T}_X^{m(m - 1)/2}$ in both cases.
\end{proof}

\begin{theorem}{lem}{character determinant factorization}
 Let $V$ be a character representation as before.  On $\on{Gr}_2$, the factorization isomorphism $\det(V)_2 \cong \det(V)_1 \boxtimes \det(V)_1$ over $X^2 \setminus \Delta$ has, on the component $\on{Gr}^{\check\mu, \check\nu}_2$, a pole of order $mn$, where $n = \langle \lambda, \check\nu \rangle$.
\end{theorem}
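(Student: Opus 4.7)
The plan is to compute the line bundle $\det(V)_2^{\check\mu, \check\nu}$ on the component $\on{Gr}_{T,X^2}^{\check\mu, \check\nu} \cong X^2$ and measure by how much it fails to agree with the exterior product $\det(V)_1^{\check\mu} \boxtimes \det(V)_1^{\check\nu}$ across the diagonal $\Delta$. By \ref{character determinant bundle} the exterior product equals $\on{pr}_1^* \sh{T}_X^{m(m-1)/2} \otimes \on{pr}_2^* \sh{T}_X^{n(n-1)/2}$ on $X^2$. The factorization equivalence of \ref{determinant factorizable} identifies this exterior product with $\det(V)_2^{\check\mu, \check\nu}$ on $X^2 \setminus \Delta$; since $\Delta$ is a connected smooth divisor in $X^2$, this isomorphism extends to a meromorphic identification, so
\begin{equation*}
 \det(V)_2^{\check\mu, \check\nu} \cong \on{pr}_1^* \sh{T}_X^{m(m-1)/2} \otimes \on{pr}_2^* \sh{T}_X^{n(n-1)/2} \otimes \OO(k\Delta)
\end{equation*}
for a unique $k \in \Z$, and this $k$ is the desired pole order.

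To pin down $k$, I would restrict both sides to $\Delta \cong X$. On the right-hand side, $\OO(\Delta)|_\Delta \cong \sh{T}_X$ and $\on{pr}_i^* \sh{T}_X|_\Delta \cong \sh{T}_X$ combine to yield $\sh{T}_X^{k + m(m-1)/2 + n(n-1)/2}$. On the left, the diagonal factorization \ref{determinant factorizable}\ref{en:factorizable gerbe closed} together with the diagonal identification $\on{Gr}_{T,X^2}^{\check\mu, \check\nu}|_\Delta \cong \on{Gr}_{T,X}^{\check\mu + \check\nu}$ of \ref{torus grassmannian properties}\ref{en:torus grassmannian n} gives $\det(V)_2^{\check\mu, \check\nu}|_\Delta \cong \det(V)_1^{\check\mu + \check\nu}$, which by \ref{character determinant bundle} equals $\sh{T}_X^{(m+n)(m+n-1)/2}$. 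Equating the powers of $\sh{T}_X$ and simplifying,
\begin{equation*}
 k \;=\; \frac{(m+n)(m+n-1)}{2} - \frac{m(m-1)}{2} - \frac{n(n-1)}{2} \;=\; mn.
\end{equation*}

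There is no substantive obstacle here: the argument is essentially a normal-bundle computation along a smooth connected divisor, and the two main inputs---the value of $\det(V)_1^{\check\mu}$ and the factorizability of $\det(V)_n$ along $\Delta$---are already available. The only point requiring care is the orientation convention, namely the verification that a positive $k$ in the formula above corresponds to a \emph{pole} of order $k$ (rather than a zero) of the factorization map $\det(V)_1 \boxtimes \det(V)_1 \to \det(V)_2$; this follows because the meromorphic extension of this map is tautologically a nowhere-vanishing section of the line bundle $\OO(k\Delta)$, whose order along $\Delta$ is $-k$.
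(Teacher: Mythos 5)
Your argument is correct and takes a genuinely different route from the paper's. The paper computes the pole order directly: it writes the Chinese-remainder map $\OO/\OO(-m\Delta_{12} - n\Delta_{13}) \to \OO/\OO(-m\Delta_{12}) \oplus \OO/\OO(-n\Delta_{13})$, picks explicit bases, and computes the determinant of the resulting matrix, finding a zero of order $mn$ along $\Delta$ which becomes the claimed pole after dualization in the definition of $\det(V)_n$. You instead deduce the order indirectly, treating it as the unique $k$ with $\det(V)_2^{\check\mu,\check\nu} \cong (\det(V)_1^{\check\mu} \boxtimes \det(V)_1^{\check\nu}) \otimes \OO(k\Delta)$, and then pinning down $k$ by restricting to $\Delta$ and invoking the formula from \ref{character determinant bundle} together with the diagonal part of the factorization structure from \ref{determinant factorizable}. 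This is cleaner and shows that \ref{character determinant factorization} is essentially a formal consequence of \ref{character determinant bundle} and factorizability, rather than an independent computation; the paper's route has the advantage of being entirely self-contained (it does not presuppose the formula for $\det(V)_1$) and makes the ``zero of the $\bigwedge^{\mathrm{top}}$ map versus pole of the factorization map'' dualization explicit.

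One gap you should address: the final step ``equating the powers of $\sh{T}_X$'' is a comparison of isomorphism classes in $\on{Pic}(X)$, which determines $k$ uniquely only when $\sh{T}_X$ is non-torsion. That fails, for example, when $X$ is an elliptic curve or a contractible disk (the latter being precisely the case the paper works in later). The fix is to observe that the order of a meromorphic map along a connected divisor is a local invariant (this is the line-bundle analogue of \ref{2-line bundle of a Cartier divisor}\ref{en:order local}), independent of the global geometry of $X$; so one may verify the formula for one $X$ of genus at least $2$, where $\sh{T}_X$ is non-torsion and your Picard comparison does pin down $k$, and conclude it for all $X$. Alternatively, one can avoid the Picard comparison by tracking the specific isomorphisms rather than their classes, using the compatibility between the open and diagonal factorization data asserted (implicitly) in \ref{determinant factorizable}, but the locality argument is simpler.
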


\begin{proof}
 As in the previous proof, we define $\det(V)_2$ via the point $X^2 \to \on{Gr}_2$ corresponding to the component $\on{Gr}^{\check\mu, \check\nu}_2$ having the data $((\on{pr}_1, \on{pr}_2), \OO(-m\Delta_{12} - n\Delta_{13}), \text{nat.}))$, where $\text{nat.}$ is the natural trivialization away from the graphs $\Delta_{12}$ and $\Delta_{13}$ of the two projections $X^2 \to X$.  Assuming for simplicity that $m,n > 0$, we have a natural map
 \begin{equation}
  \label{eq:chinese remainder map}
  \OO/\OO(-m\Delta_{12} - n\Delta_{13}) \to \OO/\OO(-m\Delta_{12}) \oplus \OO/\OO(-n\Delta_{13})
 \end{equation}
 which away from $\Delta_{12} \cap \Delta_{13}$ is the isomorphism from which the factorization isomorphism is obtained.  To compute the desired pole, it suffices to consider \ref{eq:chinese remainder map} locally, so we assume that $X$ has a coordinate $x$ and that all the vector bundles are trivial. Then $X^3$ has coordinates $(x,y,z)$ and we choose for each of the above sheaves the following bases as $\OO_{X^2}$-modules:
 \begin{align*}
  (x - y)^i, (x - y)^m(x - z)^j
  &&
  (x - y)^i
  &&
  (x - z)^j
 \end{align*}
 where $0 \leq i < m$, $1 \leq j < n$.  Then \ref{eq:chinese remainder map} has the following matrix:
 \begin{equation*}
  \left(
  \begin{array}{c|c}
   \on{id}_{m \times m} & * \\ \hline
   0 & \vphantom{\Bigg|}\Bigl[\bigl((x - y)^m (x - z)^j \bmod (x - z)^n\bigr) [(x - z)^k]\Bigr]_{j,k}
  \end{array}
  \right)
 \end{equation*}
 where the notation $[(x - z)^k]$ means the coefficient of this monomial.  The order of the zero at $\Delta$ of the determinant of this matrix is the desired order of the pole of the factorization map.  Only the lower-right corner need be computed, and it is clear that its determinant is that of the action of $(x - y)^m$ acting on $\OO/\OO(-n\Delta_{13})$.  Since $x - y = (x - z) + (z - y)$, where the former is nilpotent and the latter scalar, this determinant is $(z - y)^{mn}$, as claimed.
\end{proof}

Now we may turn to $\det(V)_n$, where $V$ is any representation of any reductive group $G$.  The above computations immediately imply the following proposition:

\begin{theorem}{prop}{determinant forms}
 Let $G$ be a reductive group with torus $T$ and let $V$ be a representation of $G$ with weights $\lambda$ (so $V_T = \bigoplus \lambda$ is the direct sum of character representations).  Define the $\Z$-valued bilinear form on $\Lambda_T$,
 \begin{equation*}
  K(\check\mu, \check\nu)
   = \sum_\lambda \langle \lambda, \check\mu \rangle \langle \lambda, \check\nu \rangle;
 \end{equation*}
 then $\det(V)_n$ is an sf line bundle if and only if $K$ has values in $2\Z$, and if so, then the corresponding sf gerbe $\det(V)_n^{\log a}$ (for any $a \in A$) has associated quadratic form given by
 \begin{equation*}
  \log_a Q(\check\mu) = R(\check\mu)
    = \frac{1}{2}\sum_\lambda \langle \lambda, \check\mu\rangle^2.
 \end{equation*}
 Furthermore, if this holds, then the half-weight $\zeta = \frac{1}{2} \sum_\lambda \lambda$ is in fact integral, and we have on $\on{Gr}_{T,X}$:
 \begin{equation*}
  \det(V_T)_1^{\check\mu}
   = \omega_X^{\langle \zeta, \check\mu \rangle} \otimes \sh{T}_X^{R(\check\mu)}.
 \end{equation*}
 That is, in reference to the split exact sequence of \ref{torus exact sequence}, the sf gerbe $\det(V_T)_n^{\log a}$ corresponds to the quadratic form $Q$ and the multiplicative gerbe $\omega_X^{\log a^{\langle \zeta, \check\mu \rangle}}$.
\end{theorem}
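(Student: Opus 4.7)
The plan is to reduce everything to the torus case and then to single characters, where Lemmas \plainref{character determinant bundle} and \plainref{character determinant factorization} do the essential work. By functoriality of $\det(V)_n$ along the map $i \colon \on{Gr}_{T,X^n} \to \on{Gr}_{G,X^n}$ of \ref{eq:torus section}, we have $i^*\det(V)_n = \det(V_T)_n$, where $V_T$ denotes $V$ viewed as a $T$-representation. Since $Q$, $\kappa$, and the sf criterion are all read off from the restriction to torus components, it suffices to work with $\det(V_T)_n$. Decomposing $V_T = \bigoplus_\lambda V(\lambda)$ by weight spaces and using multiplicativity of the determinant line bundle under direct sums gives
\[
 \det(V_T)_n \cong \bigotimes_\lambda \det(\lambda)_n^{\otimes \dim V(\lambda)},
\]
reducing the entire problem to the single-character computations of the two preceding lemmas.

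Applying Lemma \plainref{character determinant bundle} componentwise, on $\on{Gr}_{T,X}^{\check\mu}$ we have $\det(\lambda)_1^{\check\mu} = \sh{T}_X^{m(m-1)/2}$ with $m = \langle\lambda,\check\mu\rangle$. Tensoring over all weights (with multiplicity) and using the identity $\sum_\lambda m_\lambda(m_\lambda-1)/2 = R(\check\mu) - \langle\zeta,\check\mu\rangle$ together with $\omega_X = \sh{T}_X^{-1}$ yields
\[
 \det(V_T)_1^{\check\mu}
  = \sh{T}_X^{R(\check\mu) - \langle\zeta,\check\mu\rangle}
  = \omega_X^{\langle\zeta,\check\mu\rangle} \otimes \sh{T}_X^{R(\check\mu)},
\]
the claimed formula, provided $\langle\zeta,\check\mu\rangle$ and $R(\check\mu)$ are both integer-valued; this I would verify simultaneously with the sf criterion.

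For the sf criterion, the ``sign'' appearing in the compatibility of $S_n$-equivariance with the tensor-product factorization of Proposition \plainref{determinant factorizable} is multiplicative under tensor products of $\Z/2$-graded line bundles. By the analysis in Lemma \plainref{character determinant bundle}, $\det(\lambda)_1^{\check\mu}$ has $\Z/2$-grade $m_\lambda \pmod 2$, so $\det(V_T)_1^{\check\mu}$ has grade $\sum_\lambda m_\lambda \pmod 2$; hence the natural swap of factors on $\on{Gr}_2^{\check\mu,\check\nu}$ introduces the sign $(-1)^{(\sum_\lambda m_\lambda)(\sum_\lambda n_\lambda)}$. This equals $+1$ for all pairs iff $\sum_\lambda m_\lambda$ is even for every $\check\mu$, iff $\zeta \in \Lambda^T$; using $m^2 \equiv m \pmod 2$, this is equivalent to $K(\check\mu,\check\mu) \in 2\Z$ for all $\check\mu$, which is the proposition's integrality condition on $K$ (i.e.\ the evenness of the quadratic form associated to $K$). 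This simultaneously forces $\zeta \in \Lambda^T$ and $R(\check\mu) \in \Z$, so that the splitting above makes sense as a tensor product of line bundles.

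When the criterion holds, $\det(V)_n^{\log a}$ is a bona fide sf gerbe, and Lemma \plainref{character determinant factorization} supplies its bilinear form: the factorization isomorphism for $\det(\lambda)_2$ on $\on{Gr}_2^{\check\mu,\check\nu}$ has a pole of order $m_\lambda n_\lambda$, hence that for $\det(V_T)_2$ has pole of order $\sum_\lambda m_\lambda n_\lambda = K(\check\mu,\check\nu)$. Applying $(-)^{\log a}$ gives $\kappa(\check\mu,\check\nu) = a^{K(\check\mu,\check\nu)}$, and the canonical square root of $\kappa(\check\mu,\check\mu) = a^{2R(\check\mu)}$ extracted from the $S_2$-equivariance via Lemma \plainref{covering order quotient} is $Q(\check\mu) = a^{R(\check\mu)}$, as claimed. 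The splitting $\det(V_T)_1^{\check\mu} = \omega_X^{\langle\zeta,\check\mu\rangle} \otimes \sh{T}_X^{R(\check\mu)}$ then identifies $\det(V_T)_n^{\log a}$ under Proposition \plainref{torus exact sequence} with the pair consisting of the quadratic form $Q = a^{R}$ and the commutative multiplicative gerbe $\omega_X^{\log a^{\langle\zeta,\cdot\rangle}}$, the latter depending linearly on $\check\mu$ and hence being a homomorphism $\Lambda_T \to \cat{H}^2(X,A)$. The only genuinely delicate step is the parity bookkeeping that pins the sf criterion to evenness of $K$; once that is done, every remaining assertion is a direct consequence of the two character-level lemmas.
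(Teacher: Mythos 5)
Your proposal is correct and follows the paper's route (reduce to the torus via functoriality of $\det$, decompose $V_T$ into characters, and assemble the two preceding lemmas), but it makes explicit the one genuinely delicate step---the parity bookkeeping---which the paper's proof glosses over, and this is worth a comment. For a single character $\lambda$ the swap discrepancy on $\on{Gr}_2^{\check\mu,\check\nu}$ is $(-1)^{m_\lambda n_\lambda}$, so a naive ``multiply the single-character signs'' would give $(-1)^{\sum_\lambda m_\lambda n_\lambda} = (-1)^{K(\check\mu,\check\nu)}$; but as you correctly note, the Koszul shuffle that re-brackets $\bigotimes_\lambda(\det(\lambda)_1^{\check\mu} \otimes \det(\lambda)_1^{\check\nu})$ into $(\bigotimes_\lambda \det(\lambda)_1^{\check\mu}) \otimes (\bigotimes_\lambda \det(\lambda)_1^{\check\nu})$ converts this into the Koszul sign $(-1)^{(\sum_\lambda m_\lambda)(\sum_\lambda n_\lambda)}$ of the $\Z/2$-graded tensor product, which is the true discrepancy between the geometric $S_2$-equivariance and the ungraded swap. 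Consequently the sf criterion is integrality of $\zeta$, equivalently evenness of $K$ on the diagonal, which is \emph{strictly weaker} than the literal phrase ``$K$ has values in $2\Z$'': for example, $V = \mathrm{std} \oplus \det \oplus \mathrm{triv}$ on $\on{GL}_2$ has $K((1,0),(0,1)) = 1$ odd while $\zeta = (1,1)$ is integral. Your reading of the statement as ``evenness of the quadratic form attached to $K$'' is the right one, and the ambiguity is harmless downstream since the proposition is only ever applied to (sums of) adjoint representations, whose $\pm$-symmetric weight multisets make $K$ even everywhere. The remaining computations---the $\sh{T}_X$-power formula via $\sum_\lambda m_\lambda(m_\lambda - 1)/2 = R(\check\mu) - \langle\zeta,\check\mu\rangle$, the bilinear form via \ref{character determinant factorization}, the square root via \ref{covering order quotient}, and the identification under \ref{torus exact sequence}---all check out.
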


\begin{proof}
 The appearance of $\zeta$ is due to \ref{character determinant bundle}, after combining all the weight spaces of the $G$-representation $V$.  The only further argument needed is that since the sign in \ref{determinant factorizable} is constant on connected components and compatible with change of group, we may compute it on the copy of  $\on{Gr}_{T,X^n} \subset \on{Gr}_{G,X^n}$, in which case the previous lemmas apply.
\end{proof}

When $G$ is a reductive group of semisimple rank at least $1$, then the adjoint representation always satisfies the conditions of the theorem, since its nonzero weights are the positive \emph{and negative} roots.  When $G$ is a torus, this representation is trivial.

\section{Classification: the general case}
\label{s:the non-split exact sequence}

In this section we pursue the generalization of \ref{torus exact sequence} to arbitrary sf gerbes.  In order to properly analyze the quadratic form, we need to invoke results belonging thematically to \ref{c:equivariance of sf gerbes}, but to preserve the unity of this chapter we do so as forward-references, being careful to avoid circularity.

Ultimately, we will need to restrict to quadratic forms induced from $W$-invariant forms with values in $\Z$, and to begin, we investigate equivalent characterizations of them.

\begin{theorem}{lem}{invariant form and pairing}
 Let $Q \colon \Lambda_T \to k^*$ be a $W$-invariant quadratic form with associated bilinear form $\kappa$.  For any coroot $\check\alpha$ of $G$, there is a homomorphism $\epsilon_{\check\alpha} \colon \Lambda_T \to A_2$ (the 2-torsion in $A$) such that
 \begin{equation*}
  \kappa(\check\alpha, \lambda) =
    \epsilon_{\check\alpha}(\lambda) Q(\check\alpha)^{\langle \alpha, \lambda \rangle},
 \end{equation*}
 which is trivial if $\alpha$ is not twice a weight in $\Lambda^T$.
\end{theorem}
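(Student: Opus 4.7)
The plan is to take $\epsilon_{\check\alpha}(\lambda) := \kappa(\check\alpha,\lambda) \cdot Q(\check\alpha)^{-\langle \alpha,\lambda\rangle}$ as the definition (this is the only possible choice, given the required formula) and then verify the three assertions in order: that $\epsilon_{\check\alpha}$ is a homomorphism, that it takes values in $A_2$, and that it vanishes when $\alpha \notin 2\Lambda^T$. The first is immediate, since $\kappa(\check\alpha,\cdot)$ is additive by the bilinearity of $\kappa$ and $\langle \alpha,\cdot\rangle$ is additive tautologically. So the content is the $A_2$-valued-ness and the triviality condition, and both will come from $W$-invariance of $Q$ used in two different guises.

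For $\epsilon_{\check\alpha}(\lambda)^2 = 1$, I would polarize the identity $Q(s_\alpha \nu) = Q(\nu)$ to obtain $\kappa(s_\alpha \lambda, s_\alpha \mu) = \kappa(\lambda,\mu)$, plug in $\mu = \check\alpha$ (so $s_\alpha \mu = -\check\alpha$), and expand $s_\alpha \lambda = \lambda - n \check\alpha$ with $n = \langle \alpha,\lambda\rangle$. Using the automatic relation $\kappa(\check\alpha,\check\alpha) = Q(\check\alpha)^2$ (which follows from $Q(2\check\alpha) = Q(\check\alpha)^2 \kappa(\check\alpha,\check\alpha)$ together with $Q(2\check\alpha) = Q(\check\alpha)^4$), the two sides collapse to $\kappa(\check\alpha,\lambda)^2 = Q(\check\alpha)^{2n}$, which is exactly $\epsilon_{\check\alpha}(\lambda)^2 = 1$. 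For the triviality case I would instead use the unpolarized $W$-invariance: writing $Q(\lambda - n\check\alpha) = Q(\lambda)$ and expanding with $Q(\mu+\nu) = Q(\mu)Q(\nu)\kappa(\mu,\nu)$ together with $Q(m\check\alpha) = Q(\check\alpha)^{m^2}$, I obtain the sharper relation $\kappa(\check\alpha,\lambda)^n = Q(\check\alpha)^{n^2}$, i.e.\ $\epsilon_{\check\alpha}(\lambda)^n = 1$.

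For the triviality assertion, the condition $\alpha \notin 2\Lambda^T$ says exactly that the linear functional $\langle \alpha,\cdot\rangle \colon \Lambda_T \to \Z$ is not divisible by $2$, and since its image is a subgroup of $\Z$ containing $\langle\alpha,\check\alpha\rangle = 2$, the image must be all of $\Z$. I would choose $\mu_0 \in \Lambda_T$ with $\langle \alpha,\mu_0\rangle = 1$, so that $\epsilon_{\check\alpha}(\mu_0) = 1$ by the $n=1$ case of the identity from the previous paragraph. For arbitrary $\lambda$, if $n = \langle\alpha,\lambda\rangle$ is odd, then $\epsilon_{\check\alpha}(\lambda)^2 = 1$ and $\epsilon_{\check\alpha}(\lambda)^n = 1$ together force $\epsilon_{\check\alpha}(\lambda) = 1$; if $n$ is even, apply the same argument to $\lambda + \mu_0$ (whose pairing with $\alpha$ is $n+1$, odd), and then use the homomorphism property together with $\epsilon_{\check\alpha}(\mu_0) = 1$ to deduce $\epsilon_{\check\alpha}(\lambda) = \epsilon_{\check\alpha}(\lambda + \mu_0) \cdot \epsilon_{\check\alpha}(\mu_0)^{-1} = 1$.

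The proof is essentially formal, with no geometric or categorical input beyond $W$-invariance of $Q$; the main obstacle is simply keeping the multiplicative conventions straight, especially in the expansion of $Q(\lambda - n\check\alpha)$ and the use of $Q(-\check\alpha) = Q(\check\alpha)$. The only mildly clever step is recognizing that $\epsilon^2 = 1$ alone is weaker than what one obtains by also using $\epsilon^n = 1$ for a well-chosen $\lambda$: the former comes from the \emph{polarized} invariance and the latter from the \emph{unpolarized} invariance, and together with a choice of $\mu_0$ covering both parity classes, they combine to give the triviality claim.
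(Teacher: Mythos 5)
Your proof is correct, and it is a tidier route than the paper's. The paper works entirely from the \emph{unpolarized} identity $Q(s_{\check\alpha}\lambda_0) = Q(\lambda_0)$: expanding via the defining relation for $\kappa$ gives $\kappa(-k\check\alpha,\lambda_0) = Q(\check\alpha)^{-k^2}$ with $k = \langle\alpha,\lambda_0\rangle$, and then an induction on $\lambda$ (subtracting copies of a fixed $\lambda_0$ with $k = -2$ or $k = -1$ according to whether $\alpha \in 2\Lambda^T$) spreads this to all $\lambda$. Your version instead isolates two separate consequences of $W$-invariance, namely $\epsilon_{\check\alpha}(\lambda)^{\langle\alpha,\lambda\rangle} = 1$ from the unpolarized form and $\epsilon_{\check\alpha}(\lambda)^2 = 1$ from the polarized form ($W$-invariance of $\kappa$), and then combines them with a $\gcd$ argument, handling even $\langle\alpha,\lambda\rangle$ by translating by a $\mu_0$ with $\langle\alpha,\mu_0\rangle = 1$. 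This avoids the paper's induction (whose base case at $\langle\alpha,\lambda\rangle = 0$ is actually glossed over there and needs a small extra step) and makes the source of the $2$-torsion structurally transparent. The one imprecision in your write-up is the parenthetical justification of $\kappa(\check\alpha,\check\alpha) = Q(\check\alpha)^2$ via ``$Q(2\check\alpha) = Q(\check\alpha)^4$'': that relation is not independent of the one you are deriving --- both are equivalent to the quadratic-form axiom $Q(-\lambda) = Q(\lambda)$, which is what should be cited. Since that axiom is part of the standard definition (and the paper itself uses $Q(-k\check\alpha) = Q(\check\alpha)^{k^2}$ in the same proof), nothing is actually wrong, but the chain of reasoning as written reads as circular.
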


\begin{proof}
 Fix $\lambda_0$ and denote $k = \langle \alpha, \lambda_0 \rangle$.  Then by $W$-invariance of $Q$, we have
 \begin{multline*}
  \kappa(-k\check\alpha, \lambda_0)
   = Q(-k\check\alpha + \lambda_0) Q(-k\check\alpha)^{-1} Q(\lambda_0)^{-1}
   = Q(s_{\check\alpha} \lambda_0) Q(\check\alpha)^{-k^2} Q(\lambda_0)^{-1} \\
   = Q(\check\alpha)^{-k^2}
   = Q(\check\alpha)^{-k\langle \alpha, \lambda_0 \rangle}.
 \end{multline*}
 For any $\lambda$ with $\langle \alpha, \lambda \rangle = nk$, we can write inductively
 \begin{multline*}
  \kappa(-k\alpha, \lambda)
   = \kappa(-k\alpha, \lambda - \lambda_0) \kappa(-k\alpha, \lambda_0)
   = Q(\check\alpha)^{-k\langle \alpha, \lambda - \lambda_0 \rangle}
     Q(\check\alpha)^{-k\langle \alpha, \lambda_0 \rangle} \\
   = Q(\check\alpha)^{-k \langle \alpha, \lambda \rangle}.
 \end{multline*}
 If $\langle \alpha, \lambda \rangle = -nk$ then we can replace $\lambda - \lambda_0$ with $\lambda + \lambda_0$ and conclude the same; thus, the above equality holds for all $n$.
 
 For any coroot $\check\alpha$, it is possible to choose $\lambda_0$ with $k = -2$, and if $\alpha$ is twice a weight then in fact every $\lambda$ is of the form considered above for some $n$; otherwise, we can take $k = -1$.  Thus, in all cases we have
 \begin{equation*}
  \kappa(\check\alpha, \lambda)^2 Q(\check\alpha)^{-2\langle \alpha, \lambda\rangle} = 1;
 \end{equation*}
 therefore $\epsilon_{\check\alpha}$ indeed has values $A_2$ and is trivial if $k = -1$ is possible, as claimed.
\end{proof}

The homomorphisms $\epsilon_{\check\alpha}$ are but a small manifestation of a larger, more irritating phenomenon.  The next lemma describes how to avert it:

\begin{theorem}{lem}{integer-valued forms}
 Let $Q \in Q(\Lambda_T, A)^W$ be any $W$-invariant form.  Then it can be represented as the product of either of the two following kinds of quadratic forms:
 \begin{itemize}
  \item
  Products of Killing forms, namely $a^{Q_i}$, where $a \in B$ for some extension $A \subset B$ and the $Q_i \colon \Lambda_T \to \Z$ are the quadratic forms $Q_i(\lambda) = \frac{1}{2} \sum_\beta \langle \beta, \lambda \rangle^2$ appearing in \ref{determinant forms}, where $\beta$ runs over the roots in an irreducible component (indexed by $i$) of the root system of $G$.
  
  \item
  Denoting $\Lambda_{T,r}^\Q = \Lambda_T \cap \Q\Lambda_{T,r} \subset \Q \otimes \Lambda_T$, a single form $Q$ with associated bilinear form $\kappa$ for which there exists an integer $k$ such that $Q^k(\Lambda_{T,r}^\Q) = 1$ and $\kappa^k(\Lambda_{T,r}^\Q, \Lambda_T) = 1$.
 \end{itemize}
 A quadratic form $Q$ has $k = 1$ in the above expansion if and only if there is an extension $A \subset B$ such that $Q$ lies in the image of $Q(\Lambda_T, \Z)^W \otimes B \to Q(\Lambda_T, B)$. 
\end{theorem}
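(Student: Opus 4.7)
The plan is to construct a Killing-form factor $Q_1$ that approximates $Q$ on the coroot directions and to verify that the residual $Q_0 := Q \cdot Q_1^{-1}$ satisfies the two conditions of the second type. The rational backbone is the $W$-module splitting $\Lambda_T \otimes \Q = V_r \oplus V^W$ with $V_r = \Q \Lambda_{T,r}$, together with the classical fact that on each irreducible component of the root system the space of $W$-invariant $\Q$-bilinear forms on $V_r$ is one-dimensional, spanned by the polarization of $Q_i$. This makes the sublattice $K \subseteq Q(\Lambda_{T,r}^\Q, \Z)^W$ generated by the restrictions $Q_i|_{\Lambda_{T,r}^\Q}$ of finite index, of some exponent $k_0$.

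The construction then runs as follows. Given $Q \in Q(\Lambda_T, A)^W$, its restriction to $\Lambda_{T,r}^\Q$ satisfies $(Q|_{\Lambda_{T,r}^\Q})^{k_0} \in K \otimes A$, so there exist $c_i \in A$ with $Q^{k_0}|_{\Lambda_{T,r}^\Q} = \prod_i c_i^{Q_i|_{\Lambda_{T,r}^\Q}}$. Choose an extension $A \subseteq B$ in which each $c_i$ has a $k_0$-th root $a_i$, and set $Q_1 := \prod_i a_i^{Q_i}$, well-defined on all of $\Lambda_T$ since each $Q_i \in Q(\Lambda_T, \Z)^W$. Defining $Q_0 := Q \cdot Q_1^{-1}$, the condition $Q_0^{k_0}(\Lambda_{T,r}^\Q) = 1$ holds by construction, and polarization immediately yields $\kappa_0^{k_0}(\Lambda_{T,r}^\Q \times \Lambda_{T,r}^\Q) = 1$.

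The second condition $\kappa_0^k(\Lambda_{T,r}^\Q, \Lambda_T) = 1$ is the main obstacle, since a general $\lambda \in \Lambda_T$ has a $V^W$-component on which $Q_0$ is unconstrained. I would appeal to \ref{invariant form and pairing} applied to $Q_0$: for any coroot $\check\alpha$, $\kappa_0(\check\alpha, \lambda) = \epsilon_{\check\alpha}(\lambda) Q_0(\check\alpha)^{\langle \alpha, \lambda \rangle}$ with $\epsilon_{\check\alpha}$ valued in $A_2$, so $\kappa_0(\check\alpha, \lambda)^{2 k_0} = 1$. An arbitrary $\check\mu \in \Lambda_{T,r}^\Q$ reduces to the coroot case via $N \check\mu \in \Z\Lambda_{T,r}$ for $N := [\Lambda_{T,r}^\Q : \Lambda_{T,r}]$, giving $\kappa_0(\check\mu, \lambda)^{2 k_0 N} = 1$; setting $k := 2 k_0 N$ completes the decomposition.

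For the $k=1$ characterization, if it holds then $Q_0$ is trivial on $\Lambda_{T,r}^\Q$ and has $\kappa_0 = 1$ on $\Lambda_{T,r}^\Q \times \Lambda_T$, so $Q_0$ descends to a $B$-valued form on the central quotient $\Lambda_T / \Lambda_{T,r}^\Q$. This is a free abelian group, and any $B$-valued form on it comes from an integer form tensored with $B$; pulled back to $\Lambda_T$ and combined with $Q_1$, this realizes $Q$ as an element of $Q(\Lambda_T, \Z)^W \otimes B$. Conversely, if $Q$ is in that image, then after enlarging $B$ to include $k_0$-th roots of the relevant coefficients, the Killing extraction consumes the entire restriction of $Q$ to $\Lambda_{T,r}^\Q$ without leaving a torsion residue, so $Q_0$ is supported on the central quotient and the $k=1$ condition holds. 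The technical heart throughout is the exponent $k_0$ of the cokernel $Q(\Lambda_{T,r}^\Q, \Z)^W / K$, which is precisely what prevents a direct Killing decomposition and is transferred from $\Lambda_{T,r}^\Q \times \Lambda_{T,r}^\Q$ to $\Lambda_{T,r}^\Q \times \Lambda_T$ via the $\epsilon_{\check\alpha}$ formula.
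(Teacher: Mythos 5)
You take a genuinely different route from the paper, and the pivot of your construction has a gap. You define $k_0$ as the exponent of the cokernel of $K \subset Q(\Lambda_{T,r}^\Q, \Z)^W$ and then assert that $(Q|_{\Lambda_{T,r}^\Q})^{k_0} \in K \otimes A$. But the finite-index observation only controls the map $K \otimes A \to Q(\Lambda_{T,r}^\Q, \Z)^W \otimes A$; you still need the natural map $Q(\Lambda_{T,r}^\Q, \Z)^W \otimes A \to Q(\Lambda_{T,r}^\Q, A)^W$ to be surjective, which amounts to the $W$-coinvariants of the universal quadratic group on $\Lambda_{T,r}^\Q$ being torsion-free. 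You neither prove this nor enlarge $k_0$ to absorb such torsion, and the one-dimensionality of $W$-invariant $\Q$-bilinear forms per irreducible component controls ranks but says nothing about torsion. Until this is addressed, the passage from ``$K$ has finite index in the lattice of integer forms'' to ``$Q^{k_0}$ is a product of $c_i^{Q_i}$'' is unjustified, and everything downstream (including the $k=1$ converse) inherits the gap.

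The paper avoids the issue by never trying to classify $W$-invariant $A$-valued forms. It instead shows, using $W$-invariance together with \ref{invariant form and pairing}, that $Q^2$ restricted to $\Lambda_{T,r}$ (not $\Lambda_{T,r}^\Q$) is \emph{completely determined} by the single values $Q(\check\alpha_i)$ on one short coroot per irreducible component; choosing $a_i$ to be a $Q_i(\check\alpha_i)$-th root of $Q(\check\alpha_i)$ then forces the residual $R = Q/\prod_i a_i^{Q_i}$ to satisfy $R^2(\Lambda_{T,r}) = 1$, and \ref{invariant form and pairing} again gives $\kappa_R^2(\Lambda_{T,r}, \Lambda_T) = 1$. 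Scaling up from $\Lambda_{T,r}$ to $\Lambda_{T,r}^\Q$ by the exponent $k_0$ of $\Lambda_{T,r}^\Q/\Lambda_{T,r}$, in two stages, gives $k = 2k_0^2$. A secondary point: your converse for the $k=1$ characterization (``the Killing extraction consumes the entire restriction of $Q$ to $\Lambda_{T,r}^\Q$ without leaving a torsion residue'') is too loose to verify; the paper's version simply applies the decomposition to a $\Z$-valued $W$-invariant form and observes that a residual whose powers vanish must itself vanish because the coefficient group is torsion-free.
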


\begin{proof}
 The values of $Q$ on any element of $\Lambda_{T,r}$ are determined by the $Q(\check\alpha)$ and $\kappa(\check\alpha, \check\beta)$ for coroots $\check\alpha, \check\beta$, and if $Q \in Q(\Lambda_T, A)^W$, then the values $Q(\check\alpha)$ are constant on $W$-orbits among coroots.  By \ref{invariant form and pairing}, these values of $\kappa$ are also determined, up to $2$-torsion, by the values of $Q(\check\alpha)$, so that $Q^2$ is completely determined by the values of $Q$ on $W$-orbits of coroots.  In each irreducible component, there are at most two such orbits, the long and short coroots, of which the former are linear combinations of the latter.  Thus, $Q^2$ is determined by the single values $Q(\check\alpha)$, where $\check\alpha$ runs over representatives of the short roots in the components of the coroot system.
 
 We pick some such coroot $\check\alpha_i$ in the $i$'th component of the coroot system of $G$ and let $a_i = Q(\check\alpha_i)$ and $m = Q_i(\check\alpha_i)$.  If $a_i$ is not an $m$'th power in $A$, we adjoin an $m$'th root in an extension $B$; replace $a_i$ by such a root, so that by the above, $(a_i^{Q_i})^2$ takes the values $Q^2(\check\alpha)$ for \emph{any} $\check\alpha$ in the $i$'th component of the coroot system, and is trivial for any other $\check\alpha$.  Thus, the square of the form
 \begin{equation*}
  R = Q / \prod_i a_i^{Q_i}
 \end{equation*}
 vanishes on the coroot lattice $\Lambda_{T,r}$; by \ref{invariant form and pairing}, the square, $\kappa_R(\Lambda_{T,r}, \Lambda_T)^2$, of its bilinear form also vanishes.  Choose $k_0$ such that $k_0\Lambda_{T,r}^\Q \subset \Lambda_{T,r}$, so that we have
 \begin{equation*}
  \kappa_R(\Lambda_{T,r}^\Q, \Lambda_T)^{2k_0}
   = \kappa_R(k_0\Lambda_{T,r}^\Q, \Lambda_T)^2
   = \kappa_R(\Lambda_{T,r}, \Lambda_T)^2
   = 1.
 \end{equation*}
 Thus, the corresponding power of the quadratic form, $R^{2k_0}$, is a homomorphism on $\Lambda_{T,r}^\Q/\Lambda_{T,r}$, making its $k_0$'th power trivial, so we may take $k = 2k_0^2$.
 
 Every Killing form is, by definition, defined by a $W$-invariant integer-valued form, namely $Q_i$.  For the other case, suppose $Q(\Lambda_{T,r}^\Q) = 1$ and $\kappa(\Lambda_{T,r}^\Q, \Lambda_T) = 1$; then $Q$ and $\kappa$ descend to $\Lambda_T/\Lambda_{T,r}^\Q$, which is a finitely-generated free abelian group.  If we denote by $x_1, \dots, x_r$ some basis, then $\kappa$ is determined by its values $a_{ij} = \kappa(x_i, x_j)$:
 \begin{equation*}
  \kappa(\sum_i n_i x_i, \sum_j m_j x_j)
   = \prod_{i,j} a_{ij}^{n_i m_j}.
 \end{equation*}
 The $n_i$ and $m_j$ are integers, so this expression defines $\kappa$ over $\Z$.  By definition of the bilinear form, we have
 \begin{equation*}
  Q(\sum_i n_i x_i)
   = \prod_i Q(n_i x_i) \prod_{j,k} \kappa(x_j, x_k)^{n_j n_k},
 \end{equation*}
 where we have inductively
 \begin{equation*}
  Q((n + 1) x) = Q(nx) Q(x) \kappa(x, x)^n = Q(x)^{n + 1} \kappa(x,x)^{\binom{n + 1}{2}},
 \end{equation*}
 thus defining $Q$ over $\Z$ as well.
 
 Conversely, suppose we have a $W$-invariant $\Z$-valued form $Q_0$, and apply the decomposition of the first part of the lemma.  Since $\Z$ is torsion-free, we have $k = 1$, as desired.
\end{proof}

We can now state the main theorem of this chapter; here $\cat{H}^2_\text{sf}(G,X,A)$ means, as before, the $2$-category of sf $A$-gerbes on $\on{Gr}_{G,X^n}$.

\begin{theorem}{thm}{sf gerbes exact sequence}
 Let $Q(\Lambda_T, A)_\Z^W \subset Q(\Lambda_T, A)^W$ be the subgroup of $W$-invariant quadratic forms $Q$ coming from $\Z$ in the sense of \ref{integer-valued forms}.  Then there is a split short exact sequence of $2$-Picard categories:
 \begin{equation*}
  1 \to \cat{Hom}(\pi_1(G), \cat{H}^2(X,A))
    \to \cat{H}^2_\text{sf}(G, X, A)
    \to Q(\Lambda_T, A)_\Z^W
    \to 1.
 \end{equation*}
\end{theorem}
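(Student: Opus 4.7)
The plan is to reduce to the torus case \ref{torus exact sequence} via pullback along the section $i \colon \on{Gr}_{T,X^n} \to \on{Gr}_{G,X^n}$ of \ref{eq:torus section}, and then identify what constraints the embedding of $\on{Gr}_T$ inside $\on{Gr}_G$ places on the resulting torus data. Fix a maximal torus $T \subset G$ and a splitting $T \to B$ of the Borel quotient. Pullback by $i$ preserves factorization, diagonal restrictions, and $S_n$-equivariance, yielding a 2-functor $i^* \colon \cat{H}^2_\text{sf}(G,X,A) \to \cat{H}^2_\text{sf}(T,X,A)$. Composition with the torus exact sequence assigns to an sf $G$-gerbe $\stack{G}_n$ a quadratic form $Q = Q(i^* \stack{G}_n)$ on $\Lambda_T$ together with (when $Q$ is trivial) a commutative multiplicative $A$-gerbe on $X \times \Lambda_T$.

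The next and technically most delicate step is to show that $Q$ is automatically $W$-invariant and $\Z$-integral. For each simple coroot $\check\alpha$, the $\Prj^1$-bundle $\Prj_{2,1;\check\alpha}^{\lambda,\mu} \subset \on{Gr}_{G,X^2}$ of \ref{general projective line bundles} contains $\on{Gr}_{T,X^2}^{\lambda,\mu+\check\alpha}$ and $\on{Gr}_{T,X^2}^{\lambda+\check\alpha,\mu+\check\alpha}$ as its zero and infinity sections, and by \ref{projective line bundle properties} the factorization map in its $\Aff^1$-coordinate has a zero of order $\langle \alpha, \mu \rangle + 2$ along the diagonal $\Delta$. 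Restricting $\stack{G}_2$ to $\Prj_{2,1;\check\alpha}^{\lambda,\mu}$ gives two \emph{a priori} different factorization descriptions, one from each section, and comparing them using \ref{eq:torus gerbe diagonal orders} extracts (up to the 2-torsion character of \ref{invariant form and pairing}) the key identity
\[
 \kappa(\check\alpha, \mu) = Q(\check\alpha)^{\langle \alpha, \mu \rangle},
\]
which is exactly what characterizes $W$-invariance of $Q$ via the reflection $s_{\check\alpha}\colon \lambda \mapsto \lambda - \langle \alpha, \lambda \rangle \check\alpha$. Because every exponent arising in this computation is an integer, the resulting form is automatically $\Z$-integral in the sense of \ref{integer-valued forms}.

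For the kernel, suppose $Q = 1$; then by \ref{bilinear form multiplicativity} and the torus exact sequence, $i^* \stack{G}_n$ comes from a commutative multiplicative gerbe $\stack{T}^\bullet$ on $X \times \Lambda_T$. Running the same $\Prj^1$-argument in this quadratic-trivial regime (where $\Prj_{1;\check\alpha}^\lambda \cong \Prj^1 \times X$ and $\stack{G}_1$ restricted there is pulled back from $X$) produces canonical and multiplicatively compatible equivalences $\stack{T}^{\lambda + \check\alpha} \cong \stack{T}^\lambda$, so $\stack{T}^\bullet$ descends from $\Lambda_T$ to $\pi_1(G) = \Lambda_T/\Lambda_{T,r}$. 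Conversely, any commutative multiplicative gerbe on $X \times \pi_1(G)$ produces an sf $G$-gerbe by pullback along the factorizable structure map $\on{Gr}_{G,X^n} \to \pi_1(G)_{X^n}$ constructed at the end of \ref{s:the affine grassmannian of any group}, establishing exactness at the middle and supplying the splitting at the left.

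For the splitting on the right, given $Q \in Q(\Lambda_T, A)_\Z^W$ apply \ref{integer-valued forms} to write $Q$ as a product $\prod_i a_i^{Q_i}$ of Killing-type forms indexed by the irreducible components of the root system (after extending $A$ to accommodate the required roots). By \ref{determinant forms}, each $a_i^{Q_i}$ is realized as the quadratic form of the sf determinant gerbe $\det(\lie{g}_i)_n^{\log a_i}$ associated to the adjoint representation of the $i$th simple factor of $G$; tensoring these produces an sf $G$-gerbe with the prescribed form. The main obstacle I expect is the middle step: carefully combining the $\Prj^1$-bundle geometry of \ref{projective line bundle properties}, the $S_2$-equivariance of \ref{sf gerbe}\ref{en:factorizable gerbe symmetry}, and the square-root extraction from \ref{covering order quotient}, in such a way as to obtain the \emph{full} $W$-invariance of $Q$ rather than merely the weaker statement that $Q$ is constant along $\Lambda_{T,r}$-cosets (which alone would not be sufficient to close the exact sequence).
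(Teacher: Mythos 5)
The overall architecture of your proposal matches the paper's \ref{exact sequence restatement}: pull back along $i$ (equivalently, the paper's functor $F$ of \ref{eq:group to torus functor}), constrain the image, and exhibit the splitting via determinant gerbes. Your route to $W$-invariance is genuinely different from the paper's, though: the paper's \ref{form is invariant} establishes $W$-invariance by invoking $G(\smash{\hat{\OO}})_n$-equivariance (\ref{existence of equivariance}, a forward reference to Chapter~III), deducing $N_G(T)_n$- and hence $W_n$-equivariance of $i^* \stack{G}_n$. Your alternative runs the $\Prj^1$-bundle order computation of \ref{fiber monodromies} with general $\mu$, comparing \ref{eq:boundary order 2} with the pullback along the degree-$(\langle\alpha,\mu\rangle+2)$ extension of the factorization map, which (combined with the $\mu = 0$ identity $m(\check\alpha) = Q(\check\alpha)^{-1}$) yields $\kappa(\check\alpha, \mu) = Q(\check\alpha)^{\langle\alpha,\mu\rangle}$ and thence $W$-invariance directly. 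This is an attractive simplification that would eliminate a forward reference, but the paper only carries out the order comparison at $\mu = 0$; you would need to make the general-$\mu$ bookkeeping explicit, including how the zero section of the $\Prj^1$-bundle pulls back and why the factorization isomorphism has order exactly $1$ along $\tilde\Delta$ rather than the a priori order $\kappa(\lambda, \mu+\check\alpha)$ (this is why the paper takes $\lambda = 0$).

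The serious gap is $\Z$-integrality. Your claim that ``because every exponent arising in this computation is an integer, the resulting form is automatically $\Z$-integral in the sense of \ref{integer-valued forms}'' is a non-sequitur: $W$-invariance alone does not imply $Q \in Q(\Lambda_T, A)^W_\Z$, and the integrality of exponents in the $\Prj^1$ computation says nothing about whether one can take $k = 1$ in the expansion of \ref{integer-valued forms}. The paper devotes the entire second half of \ref{form is invariant} to this point, and it is where the other forward references to Chapter~III (the convolution diagram, strong factorizable equivariance, \ref{gerbe strong factorizable equivariance}, \ref{twisted product gerbe}) are essential: one reduces to $Q$ trivial on $\Lambda_{T,r}$, shows $\stack{G}_n$ is factorizably trivial on $\on{Gr}_{G^{\mathrm{sc}},X^n}$, then uses the $k$-fold convolution $m_{k \cdot 1}$ to conclude triviality on the components indexed by $\Lambda_{T,r}^\Q$, and finally reads off $\kappa(\Lambda_{T,r}^\Q, \Lambda_T) = 1$, i.e.\ $k = 1$. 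None of this is recoverable from the $\Prj^1$-bundle geometry alone. You also implicitly use faithfulness of the restriction functor (the paper's \ref{vertical map injective}, proven via simply-connectedness of $\on{Gr}_G$ and \ref{fiber monodromies}) both for exactness at the middle term and in the descent-to-$\pi_1(G)$ step, without stating or proving it; it should be made explicit, since it is also what justifies passing from the condition on $i^*\stack{G}_n$ back to a conclusion about $\stack{G}_n$ itself.
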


The proof occupies the remainder of this section.  We begin by defining a 2-functor using
\ref{eq:borel diagram},
\begin{equation}
 \label{eq:group to torus functor}
 F \colon \cat{H}^2_\text{sf}(G, X, A) \to \cat{H}^2_\text{sf}(T, X, A), \quad
 t^* F(\stack{G}_n) = b^* \stack{G}_n
\end{equation}
where the descent is possible because, fixing any inclusion $T \to G$, we have $\stack{T}_n = i^* \stack{G}_n$ since \ref{eq:torus section} is a section of \ref{eq:borel diagram}.  Using \ref{torus exact sequence}, the theorem then becomes:

\begin{theorem}{thm}{exact sequence restatement}
 An sf gerbe $\stack{T}_n$ on $\on{Gr}_{T,X^n}$ is of the form $F(\stack{G}_n)$ if and only if its associated quadratic form is in $Q(\Lambda_T, A)_\Z^W$ and its associated multiplicative gerbe descends to $\pi_1(G)$ (the quotient of $\Lambda_T$ by the coroot lattice $\Lambda_{T,r}$).  (Recall the notion of a \emph{multiplicative gerbe} on a group scheme, here the discrete groups $\pi_1(G)$ or $\Lambda_T$, from \ref{multiplicative gerbe}.)
\end{theorem}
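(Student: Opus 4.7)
The plan is to reduce everything to properties of $\stack{T}_n = F(\stack{G}_n)$ on the torus grassmannian, using the Weyl group action inherited from $N_G(T) \subset G$ together with the subminimal projective-line bundles $\Prj^\lambda_{1;\check\alpha}$ and $\Prj^{\lambda,\mu}_{2,1;\check\alpha}$ of \ref{general projective line bundles} as the essential geometric input. Via \ref{torus exact sequence}, an sf gerbe on the torus side is equivalent to a pair (multiplicative gerbe, quadratic form), so I will translate each condition in the theorem into one of these two pieces.

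For the ``only if'' direction, I will assume $\stack{T}_n = F(\stack{G}_n)$. Lifts of $w \in W$ to $N_G(T)$ act on $\on{Gr}_{G,X^n}$ by inner automorphisms of $G$, hence trivially on $\stack{G}_n$ up to canonical $2$-isomorphism; restricting along the section $i$ of \ref{eq:torus section} will make $\stack{T}_n$ $W$-equivariant and therefore its associated $Q$ $W$-invariant. Since the components of $\on{Gr}_{G,X^n}$ are indexed by $\pi_1(G) = \Lambda_T/\Lambda_{T,r}$ (second paragraph after \ref{borel components}), and since the factorization equivalences of $\stack{G}_1$ are defined component-by-component on $\on{Gr}_{G,X}$, the multiplicative gerbe associated with $\stack{T}_n$ will necessarily descend from $\Lambda_T$ to $\pi_1(G)$. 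To see that $Q$ lies in $Q(\Lambda_T, A)_\Z^W$, I will exploit $\Prj^\lambda_{1;\check\alpha}$ and the order computation in the final paragraph of \ref{projective line bundle properties}: the fact that $\on{Gr}_T^\lambda$ and $\on{Gr}_T^{\lambda+\check\alpha}$ are connected by a $\Prj^1$-bundle inside $\on{Gr}_G$ forces $\stack{T}_1^{\lambda+\check\alpha}\otimes(\stack{T}_1^\lambda)^{-1}$ to be controlled by a single element of $A$, while the order-$(\langle\alpha,\mu\rangle+2)$ behavior of the factorization map on $\Prj^{\lambda,\mu}_{2,1;\check\alpha}$ enforces exactly the relation $\kappa(\check\alpha,\mu)=\epsilon_{\check\alpha}(\mu)Q(\check\alpha)^{\langle\alpha,\mu\rangle}$ of \ref{invariant form and pairing} together with the rationality/torsion obstruction characterizing integer-valued forms as in \ref{integer-valued forms}.

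For the ``if'' direction, I will construct $\stack{G}_n$ in two pieces corresponding to the split of \ref{torus exact sequence}. First, the multiplicative gerbe component of $\stack{T}_n$, which by assumption descends to a commutative multiplicative $A$-gerbe on $\pi_1(G)\times X$, pulls back along the natural map $\on{Gr}_{G,X^n}\to\pi_1(G)_{X^n}$ described at the end of \ref{s:the affine grassmannian of any group} to an sf $A$-gerbe on $\on{Gr}_{G,X^n}$. Second, by \ref{integer-valued forms} I may write $Q$ as a product of Killing-type forms $a_i^{Q_i}$ (after passing to an extension $A\subset B$ if necessary), and \ref{determinant forms} realizes each factor as the sf gerbe $\det(\lie{g}_i)_n^{\log a_i}$ on $\on{Gr}_{G,X^n}$, where $\lie{g}_i$ is the simple factor of $\lie{g}$ indexed by $i$. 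The tensor product of these two pieces gives the desired $\stack{G}_n$, and comparing associated quadratic forms and multiplicative parts via \ref{torus exact sequence} will verify $F(\stack{G}_n)\cong\stack{T}_n$; the naturality of the construction descends back to $A$ and simultaneously exhibits the splitting of the exact sequence.

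The hard part will be the forward direction's integrality claim: Weyl invariance and descent of the multiplicative part are rather formal, but showing $Q\in Q(\Lambda_T,A)_\Z^W$ rather than merely $Q\in Q(\Lambda_T,A)^W$ requires the detailed two-dimensional geometry of $\Prj^{\lambda,\mu}_{2,1;\check\alpha}$. The explicit order $\langle\alpha,\mu\rangle+2$ of the factorization map along $\Delta$ is exactly what pins down $\kappa(\check\alpha,\mu)$ in terms of $Q(\check\alpha)$ (up to the $2$-torsion corrections $\epsilon_{\check\alpha}$) and forces the residual values of $Q$ to descend to $\Lambda_T/\Lambda_{T,r}^{\mathbb{Q}}$ after a finite power, matching the two bullets of \ref{integer-valued forms}. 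Mismanaging the order count at $\Delta$, or the parity issues flagged in the proof of \ref{determinant factorizable}, would give a weaker or wrong characterization of the image of $F$.
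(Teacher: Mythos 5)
Your overall outline (restrict along $i$, reduce to the torus via \ref{torus exact sequence}, split the problem into a Weyl-invariance/integrality statement about $Q$ and a descent statement about the multiplicative gerbe) matches the paper's decomposition, and the reverse direction's two-piece construction is the right shape.  But there are three gaps, one of them serious.

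First, the integrality claim ($Q\in Q(\Lambda_T,A)^W_\Z$, not merely $Q\in Q(\Lambda_T,A)^W$) is not a consequence of the order-$(\langle\alpha,\mu\rangle+2)$ computation on $\Prj^{\lambda,\mu}_{2,1;\check\alpha}$.  That computation is what feeds into \ref{fiber monodromies} to prove $m(\check\alpha)=Q(\check\alpha)^{-1}$, and hence into the faithfulness of $F$ (\ref{vertical map injective}); it does \emph{not} pin down the exponent $k$ in \ref{integer-valued forms}.  The paper proves $k=1$ by a different mechanism: equipping $\stack{G}_n$ with the $G(\hat\OO)_n$-equivariance of \ref{existence of equivariance}, observing that the $k$-fold convolution $m_{k\cdot 1}\colon\tilde{\on{Gr}}_{k\cdot 1}\to\on{Gr}_k$ of a component of $\on{Gr}_{G,X}$ whose index lies in $\Lambda^{\Q}_{T,r}$ lands inside the simply-connected piece (where the gerbe is already shown trivial), and then unwinding $m_{k\cdot 1}^*\stack{G}_k\cong\stack{G}_1^{\tbtimes k}$ to conclude $\stack{G}_1$ is equivariantly trivial on those components.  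Your plan, as written, has no way to show the residual torsion vanishes; the relation $\kappa(\check\alpha,\lambda)=\epsilon_{\check\alpha}(\lambda)Q(\check\alpha)^{\langle\alpha,\lambda\rangle}$ of \ref{invariant form and pairing} is a purely algebraic consequence of $W$-invariance and does not need (and does not come from) the $\Prj^1$-bundle geometry.

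Second, your argument that the multiplicative gerbe descends to $\pi_1(G)$ is too fast.  The fact that $\on{Gr}_{G,X}$ has components indexed by $\pi_1(G)$ does not immediately show that the gerbes $\stack{T}^\lambda$, defined for all $\lambda\in\Lambda_T$, are insensitive to $\lambda\bmod\Lambda_{T,r}$.  The paper first cancels the quadratic form by tensoring against a gerbe of the kind constructed in \ref{surjective}, then applies \ref{fiber monodromies} to conclude that the now-trivial-$Q$ gerbe is trivial on fibers over $X^n$, hence is a pullback along $\on{Gr}_{G,X^n}\to\pi_1(G)_{X^n}$.  You should at least isolate this reduction.

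Third, in the ``if'' direction your decomposition of $Q$ into a product of Killing forms $\prod_i a_i^{Q_i}$ is incomplete: \ref{integer-valued forms} writes $Q=R\cdot\prod_i a_i^{Q_i}$ with $R$ a residual factor descending to $\Lambda_T/\Lambda^\Q_{T,r}$.  The $R$-piece is not a Killing form and cannot be built by the determinant construction; it (together with the multiplicative gerbe on $\pi_1(G)$) is supplied by \ref{generalized injectivity}, which pulls back along the composite $\on{Gr}_{G,X^n}\to\pi_1(G)_{X^n}\to\on{Gr}_{Z,X^n}$ where $Z$ is the torus with $X_*(Z)=\Lambda_T/\Lambda^\Q_{T,r}$.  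Your proposal silently discards $R$.
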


We will refer to the quadratic form $Q(F(\stack{G}_n))$ as just $Q(\stack{G}_n)$ for an sf gerbe on
$\on{Gr}_{G,X^n}$.  The above theorem thus consists of four independent propositions, which we pursue in turn.

\begin{theorem}{prop}{generalized injectivity}
 For any $\pi_1(G)$-multiplicative gerbe $\stack{G}^\lambda$ ($\lambda \in \pi_1(G)$) and any
 quadratic form $Q$ on $\Lambda_T/\Lambda_{T,r}^\Q$, there is an sf gerbe $\stack{G}_n$ on $\on{Gr}_{G,X^n}$ lifting that on $\on{Gr}_{T,X^n}$ which is associated (via \ref{torus exact sequence}) with the pullback of these data to $\Lambda_T$.
\end{theorem}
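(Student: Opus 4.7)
The plan is to write $\stack{G}_n$ as a tensor product $\stack{A}_n \otimes \stack{B}_n$ of two independent constructions, one handling the multiplicative-gerbe piece and one the quadratic-form piece. Both constructions exploit natural functorial maps out of $\on{Gr}_{G,X^n}$: first, the canonical map $\on{Gr}_{G,X^n} \to \pi_1(G)_{X^n}$ recorded at the end of \ref{s:the affine grassmannian of any group}, and second, the map $\on{Gr}_{G,X^n}\to\on{Gr}_{G^{\on{ab}},X^n}$ induced by the abelianisation $G\surj G^{\on{ab}}=G/[G,G]$, a complex torus for which $X_*(G^{\on{ab}}) = \Lambda_T/\Lambda_{T,r}^\Q$ (since the kernel of $\Lambda_T\surj X_*(G^{\on{ab}})$ is $X_*(T\cap[G,G])=\Lambda_{T,r}^\Q$).

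For $\stack{A}_n$, I first assemble the multiplicative $A$-gerbe $\{\stack{G}^\lambda\}_{\lambda\in\pi_1(G)}$ into an sf $A$-gerbe $\stack{G}^{\pi}_n$ on $\pi_1(G)_{X^n}$: on the component indexed by $(\lambda_1,\dots,\lambda_n)$ set $\stack{G}^\pi_n := \stack{G}^{\lambda_1}\boxtimes\cdots\boxtimes\stack{G}^{\lambda_n}$, using associativity of the multiplicative structure to produce the diagonal restriction isomorphisms of \ref{sf gerbe}\ref{en:factorizable gerbe closed} (consistent with the intersection pattern of \ref{torus grassmannian properties}) and commutativity to produce the $S_n$-equivariance of \ref{en:factorizable gerbe symmetry}; then let $\stack{A}_n$ be its pullback to $\on{Gr}_{G,X^n}$. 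For $\stack{B}_n$, observe that $Q$, viewed as a quadratic form on $X_*(G^{\on{ab}})$, is automatically $W$-invariant (trivially, since $W$ acts trivially on this quotient), so the splitting of \ref{torus exact sequence} applied to $G^{\on{ab}}$ produces an sf gerbe on $\on{Gr}_{G^{\on{ab}},X^n}$ realising the pair $(1, Q)$; pull this back along $\on{Gr}_{G,X^n}\to\on{Gr}_{G^{\on{ab}},X^n}$ to obtain $\stack{B}_n$.

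To verify that $\stack{G}_n := \stack{A}_n\otimes\stack{B}_n$ has the right image under the functor $F$ of \ref{eq:group to torus functor}, I note that the two relevant composites
\[
\on{Gr}_{B,X^n}\xrightarrow{b}\on{Gr}_{G,X^n}\to\pi_1(G)_{X^n},
\qquad
\on{Gr}_{B,X^n}\xrightarrow{b}\on{Gr}_{G,X^n}\to\on{Gr}_{G^{\on{ab}},X^n}
\]
both factor through $t\colon\on{Gr}_{B,X^n}\to\on{Gr}_{T,X^n}$, via $\Lambda_T\surj\pi_1(G)$ and the composition $T\to G\surj G^{\on{ab}}$ respectively. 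Hence $b^*\stack{G}_n$ descends along $t$, and the resulting sf gerbe $F(\stack{G}_n)$ on $\on{Gr}_{T,X^n}$ is the tensor product of the pullbacks of $\{\stack{G}^\lambda\}$ along $\Lambda_T\surj\pi_1(G)$ and of $Q$ along $\Lambda_T\surj\Lambda_T/\Lambda_{T,r}^\Q$ — which is precisely the sf gerbe that \ref{torus exact sequence} assigns to the pulled-back data, as required.

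The main obstacle is purely diagrammatic bookkeeping: verifying that my assembly of $\stack{G}^{\pi}_n$ satisfies the full hierarchy of compatibility $2$-isomorphisms for nested partitions spelled out after \ref{sf gerbe}, and that the trivialisation of the $S_n$-structure along the main diagonal matches the one demanded by \ref{en:n = 2 factorizable symmetry}. These conditions reduce to the pentagon and hexagon coherence data of the commutative multiplicative gerbe $\{\stack{G}^\lambda\}$ via the intersection combinatorics of $\pi_1(G)_{X^n}$, so no new input is needed beyond \ref{multiplicative lattice gerbe}; the quadratic-form half is handled wholesale by invoking \ref{torus exact sequence} on the torus $G^{\on{ab}}$, and tensor product preserves sf structures by construction.
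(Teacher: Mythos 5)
Your proposal is correct and follows essentially the same route as the paper's proof. The only difference is cosmetic: the paper introduces an abstract torus $Z$ with $X_*(Z) = \Lambda_T/\Lambda_{T,r}^\Q$ and pulls back the quadratic-form gerbe along the composite $\on{Gr}_{G,X^n}\to\pi_1(G)_{X^n}\to\on{Gr}_{Z,X^n}$, whereas you identify $Z$ concretely with the abelianisation $G^{\mathrm{ab}}$ (a valid identification, since $T\cap[G,G]$ is a connected maximal torus of $[G,G]$, so its coweight lattice is exactly the saturation $\Lambda_{T,r}^\Q$) and use the map $\on{Gr}_{G,X^n}\to\on{Gr}_{G^{\mathrm{ab}},X^n}$ directly, which factors through the same composite. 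Likewise, building the multiplicative piece directly on $\pi_1(G)_{X^n}$ by $\boxtimes$ of the $\stack{G}^{\lambda_i}$ is the explicit form of the paper's ``descends to $\pi_1(G)_{X^n}$'' step, since \ref{torus exact sequence} with trivial $Q$ gives that very formula.
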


\begin{proof}
 Let $Z$ be the torus with $X_*(Z) = \Lambda_T/\Lambda_{T,r}^\Q$ (note that this is a free abelian group of finite rank); let $q_Z \colon \on{Gr}_{T,X^n} \to \on{Gr}_{Z,X^n}$ be the map induced by the quotient $T \to Z$ and let $q \colon \pi_1(G)_{X^n} \to \on{Gr}_{Z,X^n}$, induced by the map $\pi_1(G) = \Lambda_T/\Lambda_{T,r} \to \Lambda_T/\Lambda_{T,r}^\Q$.  Finally, let $p \colon \on{Gr}_{G,X^n} \to \pi_1(G)_{X^n}$ be the map introduced at the end of \ref{s:the affine grassmannian of any group}.
 
 By \ref{torus exact sequence}, there is an sf gerbe $\stack{T}_n$ on $\on{Gr}_{Z,X^n}$ with quadratic form $Q$ (and trivial multiplicative part), and thus $q_Z^* \stack{T}_n$ is an sf gerbe on $\on{Gr}_{T,X^n}$ with form $Q$; then $\stack{G}_n = p^* q^* \stack{T}_n$ is an sf gerbe on $\on{Gr}_{G,X^n}$ whose associated gerbe on $\on{Gr}_{T,X^n}$ is $q_Z^* \stack{T}_n$, as desired.
 
 Similarly, we can construct an sf gerbe $\stack{T}_n$ (with trivial quadratic form) directly on $\on{Gr}_{T,X^n}$ with multiplicative part $\stack{G}^\lambda$, considered as a $\Lambda_T$-multiplicative gerbe, which descends to $\pi_1(G)_{X^n}$ since $\stack{G}^\lambda$ does.  Then the pullback of this descended gerbe along $p$ is the desired sf gerbe on $\on{Gr}_{G,X^n}$.
\end{proof}

All of the remaining claims rely on the following key technical lemma:

\begin{theorem}{lem}{vertical map injective}
 The $2$-functor $\map{F}{\cat{H}^2_\text{sf}(G,X,A)}{\cat{H}^2_\text{sf}(T,X,A)}$ given above is faithful.  This means: for an sf-gerbe $\stack{G}_n$ on $\on{Gr}_{G,X^n}$, any trivialization of $F(\stack{G}_n)$ gives a trivialization of $\stack{G}_n$; for an automorphism $\phi$ of the trivial sf gerbe, a trivialization of $F(\phi)$ gives one of $\phi$; for a $2$-automorphism $a$ of $\id$, if $F(a) = \id$, then $a = \id$.
 
 Furthermore, $F$ identifies $\cat{Aut}(\stack{G}_n^0) \cong \cat{Hom}(\pi_1(G), \cat{H}^1(X,A))$, where $\stack{G}_n^0$ is the trivial sf gerbe.
\end{theorem}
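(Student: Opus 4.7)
The plan is to decompose the claim into its four constituent assertions (faithfulness at the three $2$-categorical levels plus the identification of $\cat{Aut}(\stack{G}_n^0)$) and handle each using the geometric diagram \ref{eq:borel diagram}. The essential input is \ref{borel components}: on each component $\on{Gr}_B^\lambda$, the map $t$ restricts to a Zariski-locally trivial $\Aff^\infty$-fibration over $\on{Gr}_T^\lambda$ and $b|_{\on{Gr}_B^\lambda}$ is injective into the corresponding component of $\on{Gr}_G$, with closure $\bar{\on{Gr}}_B^\lambda$ having a hyperplane-section boundary $\bigcup_{\mu < \lambda} \bar{\on{Gr}}_B^\mu$. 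Factorizability lets me work one base $X^n$ at a time, and the sf compatibility constraints glue the local arguments.

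At the two lower levels the argument is short. A $2$-automorphism of $\id$ on the trivial sf gerbe is a locally constant $A$-valued function on $\on{Gr}_{G,X^n}$ subject to sf compatibility; since $i \colon \on{Gr}_T \to \on{Gr}_G$ hits every connected component (indexed by $\pi_1(G)^n$, a quotient of $\Lambda_T^n$) and such a function is constant on components, it is determined by its restriction via $F$. For an automorphism $\phi$ of the trivial sf gerbe, i.e., an sf $A$-torsor, a trivialization of $F(\phi)$ gives via $t^*$ a trivialization of $t^* F(\phi) = b^* \phi$ on $\on{Gr}_B$. Since $t|_{\on{Gr}_B^\lambda}$ has contractible fibers, this trivialization is uniquely determined, and since $b|_{\on{Gr}_B^\lambda}$ is an injection onto its image, it descends to a trivialization of $\phi$ on the locally closed stratum $b(\on{Gr}_B^\lambda) \subset \on{Gr}_G$; compatibility on $\on{Gr}_T$ then forces the strata trivializations to glue.

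The gerbe-level faithfulness is the main obstacle. A trivialization of $F(\stack{G}_n)$ again gives a trivialization of $b^* \stack{G}_n$ and, by the same contractibility argument applied to gerbes on $\Aff^\infty$-fibrations, yields a trivialization of $\stack{G}_n$ on each open stratum $b(\on{Gr}_B^\lambda)$. The difficulty is extending these across the hyperplane-section boundary into $b(\bar{\on{Gr}}_B^\lambda)$; I proceed by induction on the poset ordering of $\lambda$ by positive coroots. At each inductive step, the trivialization on the open stratum and the trivialization on the already-handled boundary are both uniquely determined by their restrictions to the relevant $\on{Gr}_T^\mu$'s (by contractibility of the $t$-fibers together with \ref{2-line bundle of a Cartier divisor}), so they agree on a neighborhood of the boundary and glue. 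A careful verification that this gluing respects sf compatibility completes the descent to a global trivialization of $\stack{G}_n$.

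For the automorphism identification, \ref{torus exact sequence} in the torsor case (where the quadratic part is absent) gives $\cat{Aut}(F(\stack{G}_n^0)) \cong \cat{Hom}(\Lambda_T, \cat{H}^1(X, A))$, so I must show the image of $F$ is exactly the subcategory of $\Lambda_T$-multiplicative torsors descending to $\pi_1(G) = \Lambda_T/\Lambda_{T,r}$. The projective line bundles $\Prj_{1;\check\alpha}^\lambda \subset \on{Gr}_{G,X}$ from \ref{general projective line bundles} connect $\on{Gr}_T^\lambda$ to $\on{Gr}_T^{\lambda + \check\alpha}$ via a $\Prj^1$-fibration over $X$; an sf $A$-torsor on $\on{Gr}_G$ restricts to a torsor on $\Prj_{1;\check\alpha}^\lambda$ that is trivial along each $\Prj^1$ fiber, forcing its components over the two endpoint sections to agree, which translates precisely into the $\Lambda_T$-multiplicative structure factoring through $\pi_1(G)$. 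Sufficiency of this condition follows from \ref{generalized injectivity}, which provides the explicit lift.
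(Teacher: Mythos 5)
Your treatment of the $2$-morphism level and the automorphism identification is sound, and your use of the rank-one projective-line bundles $\Prj_{1;\check\alpha}^\lambda$ to force $\Lambda_T$-multiplicative torsors to descend to $\pi_1(G)$ is a clean, correct variant of what the paper does (the paper instead deduces it from the simple-connectedness of $\on{Gr}_G$, established by the filtration by the $\bar{\on{Gr}}{}_B^\lambda$). The $1$-morphism case is also essentially right, though phrased loosely: for $A$-torsors, triviality on each $\bar{\on{Gr}}{}_B^\lambda$ follows from simple-connectedness, and a trivialization on the dense open $\on{Gr}_B^\lambda$ extends uniquely because $A(\bar{\on{Gr}}{}_B^\lambda) \to A(\on{Gr}_B^\lambda)$ is a bijection on connected spaces; the disjoint strata themselves never need to be ``glued'' directly.

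The gerbe-level faithfulness argument, however, has a genuine gap exactly where you flag the ``difficulty.'' You propose to trivialize $\stack{G}_n$ on each open stratum $\on{Gr}_B^\lambda \cong \Aff^\infty$ (fine, it is contractible) and then extend across the Cartier-divisor boundary into $\bar{\on{Gr}}{}_B^\lambda$ by induction. But \ref{2-line bundle of a Cartier divisor} does not say such an extension exists; it only assigns to the trivialization an order $\on{ord}_{\partial}(f) \in A$ about each boundary component, and the trivialization extends if and only if that order is $1$. Nothing in your argument shows the order vanishes. In fact, for a general sf gerbe the order about the component indexed by the simple coroot $\check\alpha$ equals $Q(\check\alpha)^{-1}$, which is highly nontrivial; this is the content of \ref{fiber monodromies}, a substantive lemma whose proof uses precisely the $\Prj^1$-bundle geometry you deploy elsewhere. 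The paper's proof invokes \ref{fiber monodromies} to conclude: $F(\stack{G}_n)$ trivial $\Rightarrow$ the quadratic form $Q$ is trivial $\Rightarrow$ every order $m(\check\alpha) = Q(\check\alpha)^{-1}$ equals $1$ $\Rightarrow$ $\stack{G}_n$ is trivial on each fiber $\on{Gr}_G$ over $X^n$ $\Rightarrow$ $\stack{G}_n$ descends to $\pi_1(G)_{X^n}$, reducing to the torsor case already handled. Your inductive step silently assumes the monodromies vanish and so does not go through as written. If you inserted the observation that trivializing $F(\stack{G}_n)$ forces $Q = 1$ and then cited \ref{fiber monodromies} to kill the boundary orders (or redid that computation via your $\Prj^1$-bundles, which is where it really lives), the proof would close.
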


The essential part of the proof of this lemma is an analysis of the gerbe $\stack{G}$ on $\on{Gr}_G$ obtained as the fiber of $\stack{G}_1$ over any point of $X$.  Consider the stratification of $\on{Gr}_G$ by the closed sets $\bar{\on{Gr}}{}_B^\lambda$, using \ref{borel components}.  The interior of this set is $\on{Gr}_B^\lambda \cong \Aff^\infty$, and therefore is cohomologically trivial, so that $\stack{G}$ has a unique trivialization there. The boundary of this set is the union over the simple coroots $\check\alpha$ of the $\bar{\on{Gr}}{}_B^{\lambda - \check\alpha}$, and these are all Cartier divisors, so that for each $\check\alpha$ the unique trivialization has an associated order $m(\lambda,\check\alpha)$ about these boundary components.

\begin{theorem}{lem}{fiber monodromies}
 The numbers $m(\lambda, \check\alpha)$ are independent of $\lambda$.  If $m(\check\alpha)$ is their common value, then we have $m(\check\alpha) = Q(\check\alpha)^{-1}$ for any simple coroot $\check\alpha$, where as before, $Q = Q(\stack{G}_n)$ is the quadratic form associated to the full factorizable gerbe.
\end{theorem}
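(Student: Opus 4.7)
The plan is to reduce, via the local nature of the order calculus in \ref{2-line bundle of a Cartier divisor}\plainref{en:order local}, to the case where $G$ has semisimple rank $1$ (the Levi corresponding to $\check\alpha$), and there to exploit the projective line bundles $\Prj_{1;\check\alpha}^\nu$ and $\Prj_{2,1;\check\alpha}^{\nu,\mu}$ from \ref{general projective line bundles}. These bundles provide transverse slices to the divisors $\bar{\on{Gr}}_B^{\lambda-\check\alpha}$ in $\bar{\on{Gr}}_B^\lambda$, making the order of $\tau_\lambda$ amenable to direct computation.

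First, for the $\lambda$-independence, I would restrict $\stack{G}$ to the fiber over $x \in X$ of $\Prj_{1;\check\alpha}^{\lambda-\check\alpha}$: a $\Prj^1$ with $\{0\} = \on{Gr}_T^{\lambda-\check\alpha}$, $\{\infty\} = \on{Gr}_T^\lambda$, $U_1^0 \subset \on{Gr}_B^{\lambda-\check\alpha}$, and $\{\infty\} \in \on{Gr}_B^\lambda$. The unique trivializations $\tau_{\lambda-\check\alpha}$ on $U_1^0$ and $\tau_\lambda$ at $\{\infty\}$ assemble, by \ref{2-line bundle of a Cartier divisor}, into a meromorphic map of gerbes on $\Prj^1$ whose order at $\{\infty\}$ encodes $m(\lambda,\check\alpha)$. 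The natural $\Lambda_T$-action on $\on{Gr}_L$ coming from $T\lp t\rp \subset L\lp t\rp$ transports this picture to $\Prj_{1;\check\alpha}^{\lambda+\nu-\check\alpha}$, pulling $\stack{G}$ back to itself tensored by the class $\stack{T}_1^\nu$ of $F(\stack{G}_n)$ as produced by \ref{torus exact sequence}. Since this twist acts uniformly on both $T$-fixed sections, it cancels in the relative order computation, giving $m(\lambda+\nu,\check\alpha) = m(\lambda,\check\alpha)$.

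For the identification $m(\check\alpha) = Q(\check\alpha)^{-1}$, I would promote the analysis to the factorizable two-point setting via $\Prj_{2,1;\check\alpha}^{\lambda-\check\alpha,\lambda-\check\alpha}$. By \ref{projective line bundle properties}, this $\Prj^1$-bundle factors as $\Prj_{1;\check\alpha}^{\lambda-\check\alpha} \times \on{Gr}_{T,X}^\lambda$ away from $\Delta$, with the $\Aff^1$-fiber of the factorization map carrying a zero of order $\langle \alpha, \check\alpha - \check\alpha \rangle + 2 = 2$ along $\Delta$. On one hand, the Borel-stratum trivializations of $\stack{G}_2$ give a gerbe on this bundle whose order about $\Delta$ is $m(\check\alpha)^2$ corrected by the factorization zero. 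On the other, the sf-gerbe factorization $\stack{G}_2 \cong \stack{G}_1 \boxtimes \stack{G}_1$ restricted to $\on{Gr}_{T,X^2}^{\check\alpha,\check\alpha}$ has order $\kappa(\check\alpha,\check\alpha) = Q(\check\alpha)^2$ about $\Delta$, by \ref{form is quadratic} and \ref{eq:torus gerbe diagonal orders}. Equating the two descriptions yields $m(\check\alpha)^2 = Q(\check\alpha)^{-2}$; then the $S_2$-equivariance required by \ref{sf gerbe}\plainref{en:factorizable gerbe symmetry}, together with \ref{covering order quotient}, extracts the square root as $m(\check\alpha) = Q(\check\alpha)^{-1}$, in exact parallel with the extraction performed at the end of the proof of \ref{form is quadratic}.

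The principal obstacle is the sign-tracking between Borel-stratum and torus-component trivializations in the second step: ensuring the identity emerges as $m(\check\alpha)^2 = Q(\check\alpha)^{-2}$ (with the correct orientation inherited from the meromorphic direction of $\tau_\lambda$) rather than the reciprocal, and correctly invoking the $S_2$-equivariance so that a single power $Q(\check\alpha)^{-1}$ is selected instead of a $\pm$-ambiguous root. This is routine but delicate bookkeeping, of the same flavor as the coordinate-swap argument in the proof of \ref{form is quadratic}.
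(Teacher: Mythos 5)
Your reduction to semisimple rank one and use of the projective line bundles $\Prj_{1;\check\alpha}$, $\Prj_{2,1;\check\alpha}$ matches the paper, and your second step (comparing orders on $\Prj_{2,1;\check\alpha}^{0,0}$ via Borel-stratum trivializations against sf factorization to reach $m(\check\alpha)^2 = Q(\check\alpha)^{-2}$, then extracting a root) is essentially the paper's computation; the paper handles the square root by a direct two-fold branched cover argument along $\Delta$ rather than by $S_2$-equivariance, but the ideas run in parallel.

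The first step, however, has a genuine gap. You propose to show $m(\lambda,\check\alpha) = m(\lambda+\nu,\check\alpha)$ by pulling $\stack{G}$ back along translation by $t^\nu$ and asserting that the pullback is $\stack{G}$ twisted by the class $\stack{T}_1^\nu$. This is a statement of $T(\hat{\KK})$-twisted equivariance of $\stack{G}$, and nothing in the definition of an sf gerbe controls $t^{\nu,*}\stack{G}$ when $\nu \neq 0$ (so $t^\nu \notin T(\hat{\OO})$). Such control is precisely the hard content of \ref{existence of equivariance} via \ref{equivariance and multiplicativity}, which come later; moreover \ref{fiber monodromies} feeds into \ref{vertical map injective} and thence into the groundwork for \ref{existence of equivariance} itself, so appealing to equivariance here would be circular. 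Without it, the unique trivializations of $\stack{G}$ on $\on{Gr}_B^\lambda$ and on $\on{Gr}_B^{\lambda+\nu}$ need not correspond under translation, and their boundary orders can \emph{a priori} differ. Replace the translation argument by factorizability, which \emph{is} available by hypothesis: on $\Prj_{2,1;\check\alpha}^{\lambda,\mu}$ over $X^2$, write $\stack{G}_2 \cong p_2^* \stack{T}_2^{\lambda+\check\alpha,\mu+\check\alpha} \otimes \OO(\on{Gr}_{T,X^2}^{\lambda,\mu+\check\alpha})^{\log M}$. Away from $\Delta$, the factorization $\Prj_{2,1;\check\alpha}^{\lambda,\mu} \cong \Prj_{1;\check\alpha}^\lambda \times \on{Gr}_{T,X}^{\mu+\check\alpha}$ forces $M = m(\lambda,\check\alpha)$, while on $\Delta$ the identification $\Prj_{2,1;\check\alpha}^{\lambda,\mu}|_\Delta = \Prj_{1;\check\alpha}^{\lambda+\mu+\check\alpha}$ forces $M = m(\lambda+\mu+\check\alpha,\check\alpha)$; letting $\mu$ vary gives the $\lambda$-independence with no equivariance input at all.
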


\begin{proof}
 Let $\lambda, \mu \in \Lambda$ be arbitrary coweights and consider the projective line bundles $\smash{\Prj_{1;\check\alpha}^\lambda}$ and $\Prj_{2,1;\check\alpha}^{\lambda,\mu}$ given in \ref{general projective line bundles}.  We view $\Prj_{1;\check\alpha}^\lambda$ as a $\Prj^1$-bundle on $\on{Gr}_{T,X}^{\lambda + \check\alpha}$, the $\infty$-section, and denote the structure map by $p_1$; since $U_1^\infty$ is an $\smash{\Aff^1}$-bundle, the restriction of $\stack{G}_1$ to it is canonically equivalent to the pullback of its restriction to the $\infty$-section, so
 \begin{equation}
 \label{eq:boundary order 1}
  \stack{G}_1|{\Prj_{1;\check\alpha}^\lambda} \cong
   p_1^* \stack{T}_1^{\lambda + \check\alpha} \otimes
   \OO(\on{Gr}_{T,X}^\lambda)^{\log m(\lambda, \check\alpha)}
 \end{equation}
 by definition of this number. To show that $m(\lambda, \check\alpha)$ is independent of $\lambda$, we do the same on $\Prj_{2,1;\check\alpha}^{\lambda, \mu}$ and apply factorizability on $\Delta$. Since $\Prj_{2,1;\check\alpha}^{\lambda,\mu}$ is a $\Prj^1$-bundle on $\on{Gr}_{T,X^2}^{\lambda + \check\alpha, \mu + \check\alpha}$ (with bundle map $p_2$), by the same reasoning we have
 \begin{equation}
  \label{eq:boundary order 2}
  \stack{G}_1|{\Prj_{2,1;\check\alpha}^{\lambda,\mu}} \cong
   p_2^* \stack{T}_2^{\lambda + \check\alpha, \mu + \check\alpha} \otimes
   \OO(\on{Gr}_{T,X^2}^{\lambda, \mu + \check\alpha})^{\log M}
 \end{equation}
 where $M$ can be determined in two ways.  Over $X^2 \setminus \Delta$, we have $\smash{\Prj_{2,1;\check\alpha}^{\lambda,\mu}} \cong \Prj_{1;\check\alpha}^\lambda \times \on{Gr}_{T,X}^{\mu + \check\alpha}$ by \ref{projective line bundle properties}, so factorizability of $\stack{G}_2$ and \ref{eq:boundary order 1} give $M = m(\lambda, \check\alpha)$.  On the other hand, clearly $\Prj_{2,1;\check\alpha}^{\lambda,\mu}|_\Delta = \Prj_{1;\check\alpha}^{\lambda + \mu + \check\alpha}$, so \ref{eq:boundary order 1} gives $M = m(\lambda + \mu + \check\alpha, \check\alpha)$.  Since $\mu + \check\alpha$ can be any coweight, we conclude that $m(\lambda, \check\alpha) = m(\check\alpha)$ is independent of $\lambda$, as desired.

 We now compute $m(\check\alpha) = Q(\check\alpha)^{-1}$.  Let $\map{m}{\Prj_{1;\check\alpha}^0 \times \on{Gr}_{T,X}^{\check\alpha}}{\Prj_{2,1;\check\alpha}^{0,0}}$ be the extension of the factorization map given in \ref{projective line bundle properties}, having a zero of order $2$ along the divisor $\smash{\tilde\Delta}$, the fiber over $\Delta \subset X^2$.  We give two computations of $m^*(\stack{G}_2|\Prj^{0,0}_{2,1;\check\alpha})$: first, by \ref{eq:boundary order 2}, we have
 \begin{align*}
  m^* \stack{G}_2
   &= m^* (p_2^*\stack{T}_2^{\check\alpha, \check\alpha} \otimes
         \OO(\on{Gr}_{T,X^2}^{0, \check\alpha})^{\log m(\check\alpha)}) \\
   &= p_2^*(m^* \stack{T}_2^{\check\alpha, \check\alpha}) \otimes
         \OO(\on{Gr}_{T,X^2}^{0, \check\alpha})^{\log m(\check\alpha)}
         \otimes \OO(\tilde\Delta)^{\log m(\check\alpha)^2}.
 \end{align*}
 On the other hand, since $m$ extends the factorization map, we have
 \begin{equation*}
  m^* \stack{G}_2
   = \stack{G}_1|{\Prj_{1;\check\alpha}^0} \boxtimes \stack{T}_1^{\check\alpha}
 \end{equation*}
 since along the zero section $\on{Gr}_{T,X^2}^{0,\check\alpha}$, factorization has a order of $\kappa(0, \check\alpha) = 1$.  Inserting \ref{eq:boundary order 1}, we continue:
 \begin{equation*}
  m^* \stack{G}_2
   = (p_1^* \stack{T}_1^{\check\alpha} \otimes \OO(\on{Gr}_{T,X}^0)^{\log m(\check\alpha)})
     \boxtimes \stack{T}_1^{\check\alpha}
   \cong
     p_2^*(\stack{T}_1^{\check\alpha} \boxtimes \stack{T}_1^{\check\alpha}) \otimes
     \OO(\on{Gr}_{T,X^2}^{0,\check\alpha})^{\log m(\check\alpha)}.
 \end{equation*}
 We have $m^* \stack{T}_2^{\check\alpha,\check\alpha} \cong \stack{T}_1^{\check\alpha} \boxtimes \stack{T}_1^{\check\alpha} \otimes \OO(\tilde\Delta)^{\log \kappa(\check\alpha, \check\alpha)}$ by definition of $\kappa(\check\alpha,\check\alpha)$, and therefore comparing monodromies about the zero and diagonal sections in the above two expressions, we find that $m(\check\alpha)^2 = \kappa(\alpha,\alpha)^{-1} = Q(\alpha)^{-2}$.  Since the square is induced via a pullback from a $2$-fold cover branched at $\Delta$ on both sides, we find $m(\check\alpha) = Q(\check\alpha)^{-1}$, as desired.
\end{proof}

\begin{proof}[Proof of \ref{vertical map injective}]
 For 2-morphisms: any factorizable map $\on{Gr}_{G,X^n} \to A$ must be constant on connected components since $A$ is discrete.  This is the same as saying that it descends to $\pi_1(G)_n$.
 
 Now we consider 1-morphisms.  Since each $\on{Gr}_B^\lambda$ is simply-connected, so are their closures, and therefore any $A$-torsor is trivial on each $\bar{\on{Gr}}{}_B^\lambda$.  Since $\on{Gr}_G$ is the union of these nested closed subspaces, any $A$-torsor on $\on{Gr}_G$ is trivial (i.e.\ $\on{Gr}_G$ is simply-connected). Since automorphsims of sf gerbes on $\on{Gr}_{G,X^n}$ are sf $A$-torsors, they are all pullbacks along $\map{p}{\on{Gr}_{G,X^n}}{\pi_1(G)_{X^n}}$, and thus trivializable.  Since 2-automorphisms of sf $A$-torsors are factorizable maps to $A$, the previous paragraph shows that in fact such $A$-torsors are uniquely trivializable (as \emph{sf torsors}).
 
 Now suppose for an sf-gerbe $\stack{G}_n$ that $\stack{T}_n = F(\stack{G}_n)$ is trivial.  Then in particular, it has trivial quadratic form, so by \ref{fiber monodromies}, $\stack{G}_n$ must be trivial on the fibers of $\stack{G}_n \to X^n$; therefore, $\stack{G}_n$ is of the form $p^* \stack{P}_n$, where $\stack{P}_n$ is factorizable on $\pi_1(G)_{X^n}$.  That is, we have $F(\stack{T}_n) = q^* \stack{P}_n$, so $q^* \stack{P}_n$ is trivial on $\on{Gr}_{T,X^n}$, and thus $\stack{P}_n$ is on $X^n$.  Therefore $\stack{G}_n$ is trivial on $\on{Gr}_{G,X^n}$, and by the previous paragraph, that trivialization is unique (up to unique 2-isomorphism).
\end{proof}

Now we may begin the process of lifting $W$-invariant forms to sf gerbes.  The following lemma is a useful technical tool for the general argument, though there are specific cases of interest (for example, when $A = k^*$ with $k$ an algebraically closed field) in which it is unnecessary.

\begin{theorem}{lem}{change of groups}
 Suppose that $A \subset B$ are two abelian groups and that $\stack{G}_n$ is an sf $B$-gerbe such that $F(\stack{G}_n)$ is an sf $A$-gerbe on $\on{Gr}_{T,X^n}$.  Then $\stack{G}_n$ is defined over $A$.
\end{theorem}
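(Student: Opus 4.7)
The plan is to deduce the lemma from the long exact sequence for gerbes (Proposition \ref{gerbe exact sequence}) applied to $1 \to A \to B \to B/A \to 1$, combined with the faithfulness of the 2-functor $F$ established in Lemma \ref{vertical map injective}. Write $\iota\colon A \incl B$ and $\pi\colon B \to B/A$ for the inclusion and quotient; the change-of-groups 2-functors ${}^2\iota$ and ${}^2\pi$ are natural under pullback and commute with tensor product (Proposition \ref{gerbe change of group composition}), so they act componentwise on sf gerbes and carry along the factorization, diagonal, and $S_n$-equivariance data of \ref{sf gerbe}. By Lemma \ref{gerbe base change trivialization} (applied for each $n$), producing a lift of $\stack{G}_n$ to an sf $A$-gerbe is the same as exhibiting a trivialization of $\{{}^2\pi(\stack{G}_n)\}$ as an sf $B/A$-gerbe, so this is what I aim to construct.

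First I note that $F$ commutes with ${}^2\pi$: the definition of $F$ in \ref{eq:group to torus functor} is by descent along the map $b$, and ${}^2\pi$ commutes with pullback, whence $F({}^2\pi(\stack{G}_n)) = {}^2\pi(F(\stack{G}_n))$. But $F(\stack{G}_n)$ is by hypothesis in the essential image of ${}^2\iota$, so Proposition \ref{gerbe exact sequence} supplies a canonical trivialization of ${}^2\pi(F(\stack{G}_n))$, which is automatically compatible with the sf structure by naturality of the long exact sequence. Thus $F({}^2\pi(\stack{G}_n))$ is trivialized as an sf $B/A$-gerbe on $\on{Gr}_{T,X^n}$.

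Next, the proofs of Lemmas \ref{fiber monodromies} and \ref{vertical map injective} depend only on topological properties of the strata $\on{Gr}_B^\lambda$ (simple connectedness, stratification, the structure of the projective line bundles $\Prj_{1;\check\alpha}^\lambda$, $\Prj_{2,1;\check\alpha}^{\lambda,\mu}$) and on the coefficient group being a discrete abelian group, so they apply verbatim with $B/A$ replacing $A$. Applying the faithfulness assertion of \ref{vertical map injective} to the sf $B/A$-gerbe ${}^2\pi(\stack{G}_n)$, the triviality of its image under $F$ lifts to a trivialization of ${}^2\pi(\stack{G}_n)$ itself.

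The main obstacle is checking that this trivialization of ${}^2\pi(\stack{G}_n)$ produces an \emph{sf} $A$-gerbe, and not merely a collection of $A$-gerbes on the individual $\on{Gr}_{G,X^n}$. This is, however, formal: the construction in the proof of Lemma \ref{gerbe base change trivialization} is natural in the base, so the factorization equivalences, diagonal restrictions, and $S_n$-equivariance data of $\{\stack{G}_n\}$, when transported through the identification $\stack{G}_n \cong {}^2\iota(\stack{K}_n)$, each descend to analogous $A$-valued data on $\{\stack{K}_n\}$. The higher 2-isomorphism cocycle constraints of \ref{sf gerbe} become 2-automorphisms of sf $A$-torsors, and by the 2-morphism clause of \ref{vertical map injective} these lift uniquely from the $B$-coefficient picture in which, by assumption, they already satisfy the required compatibilities. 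The resulting sf $A$-gerbe is then a lift of $\stack{G}_n$ over $A$.
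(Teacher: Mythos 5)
Your proof is correct and follows essentially the same route as the paper: reduce via the long exact sequence to showing that $\{{}^2\pi(\stack{G}_n)\}$ is trivial as an sf $C$-gerbe for $C = B/A$, observe that $F$ commutes with ${}^2\pi$ so the hypothesis forces $F({}^2\pi(\stack{G}_n))$ to be trivial, and then invoke the injectivity statement \ref{vertical map injective} with coefficients $C$ to conclude. The paper's proof is much terser (two sentences) and leaves the verification that the trivialization is compatible with the sf structure implicit; you fill in those details carefully, which is a reasonable thing to want spelled out, but it is not a different argument.
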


\begin{proof}
 It suffices to show that $\stack{G}_n$ is trivial when induced to $C = B/A$.  This is true by hypothesis of $F(\stack{G}_n)$, and by injectivity of the map $\cat{H}^2_\text{sf}(G, X, C) \to \cat{H}^2_\text{sf}(T,X,C)$ (i.e.\ \ref{vertical map injective}), the lemma follows.
\end{proof}

\begin{theorem}{prop}{surjective}
 For every $Q \in Q(\Lambda_T, A)_\Z^W$, there exists an sf $A$-gerbe $\stack{G}_n$ on $\on{Gr}_{G,X^n}$ whose associated quadratic form is $Q$.
\end{theorem}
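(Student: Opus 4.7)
The plan is to exploit the decomposition of $W$-invariant quadratic forms provided by Lemma \ref{integer-valued forms}: given $Q \in Q(\Lambda_T,A)_\Z^W$, we may (after possibly passing to an extension $A \subset B$) write $Q$ as a product of forms, each of which is either of ``Killing type'' $b^{Q_i}$ with $Q_i \colon \Lambda_T \to \Z$ coming from an irreducible component of the root system, or a form $Q'$ with $Q'^k$ trivial on $\Lambda_{T,r}^\Q$ for some integer $k$. It suffices to construct an sf gerbe realizing each factor, multiply them in the Picard $2$-category $\cat{H}^2_{\text{sf}}(G,X,A)$, and then descend the coefficient group using Lemma \ref{change of groups}.

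For a Killing-type factor $b^{Q_i}$: let $V$ be the subrepresentation of the adjoint representation supported on the $i$-th irreducible root component. Since the nonzero weights of $V$ are the roots $\pm\beta$ of that component, which come in opposite pairs, the associated bilinear form $K(\check\mu,\check\nu) = \sum_\beta \langle\beta,\check\mu\rangle\langle\beta,\check\nu\rangle$ takes values in $2\Z$. By Proposition \ref{determinant forms}, $\det(V)_n$ is therefore an sf line bundle on $\on{Gr}_{G,X^n}$, and $\det(V)_n^{\log b}$ is an sf $B$-gerbe whose associated quadratic form on $\Lambda_T$ is exactly $b^{R}$ with $R = Q_i$.

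For a factor $Q'$ with $Q'^k|_{\Lambda_{T,r}^\Q} = 1$: after enlarging $B$ to contain a $k$-th root, we may write $Q' = \tilde Q^k$ with $\tilde Q|_{\Lambda_{T,r}^\Q} = 1$, so $\tilde Q$ descends to a quadratic form on $\Lambda_T/\Lambda_{T,r}^\Q$. Proposition \ref{generalized injectivity} then produces an sf $B$-gerbe on $\on{Gr}_{G,X^n}$ whose $F$-image on $\on{Gr}_{T,X^n}$ carries exactly the associated data (with trivial multiplicative part), and in particular realizes the form $\tilde Q$; its $k$-th tensor power realizes $Q'$.

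Multiplying these factors gives an sf $B$-gerbe $\stack{G}_n$ with $Q(\stack{G}_n) = Q$ in $Q(\Lambda_T,B)$. Since $Q$ actually takes values in $A$ and $F$ is faithful (Lemma \ref{vertical map injective}), Lemma \ref{change of groups} applies: $\stack{G}_n$ is defined over $A$. The hardest step is the second one, where passing to an extension to extract a $k$-th root is essential and appears to move us out of $A$; the saving grace is that all the intermediate constructions can be done over $B$ and only the final product must have $A$-valued form, which is automatic from $Q \in Q(\Lambda_T,A)_\Z^W$ and the explicit quadratic forms of the Killing-type factors.
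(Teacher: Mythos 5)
Your overall strategy — decompose $Q$ via \ref{integer-valued forms} into Killing-type factors handled by $\det(\on{adj}_i)^{\log a_i}$ and a remaining factor handled by \ref{generalized injectivity}, tensor them together, and descend coefficients via \ref{change of groups} — is exactly the paper's route. However, the treatment of the non-Killing factor contains a genuine algebraic error.

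You write: ``for a factor $Q'$ with $Q'^k|_{\Lambda_{T,r}^\Q}=1$: after enlarging $B$ to contain a $k$-th root, we may write $Q'=\tilde Q^k$ with $\tilde Q|_{\Lambda_{T,r}^\Q}=1$.'' This does not follow. If $\tilde Q^k = Q'$ then $\tilde Q^{k^2}|_{\Lambda_{T,r}^\Q}=Q'^k|_{\Lambda_{T,r}^\Q}=1$, but this only makes $\tilde Q|_{\Lambda_{T,r}^\Q}$ a $k^2$-torsion form, not the trivial one. Extracting a $k$-th root of the \emph{form} does not trade the $k$ on the outside for triviality on $\Lambda_{T,r}^\Q$; the exponent compounds rather than cancels. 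So $\tilde Q$ need not descend to $\Lambda_T/\Lambda_{T,r}^\Q$, and \ref{generalized injectivity} does not apply to it. This step as written is unrecoverable for general $k$.

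The fix is that you never needed the root extraction at all, and in fact the hypothesis rules it out. The defining property of the subgroup $Q(\Lambda_T,A)_\Z^W$ (stated explicitly in the last sentence of \ref{integer-valued forms}) is precisely that one can take $k=1$ in the decomposition: after passing to a suitable extension $A\subset B$ and splitting off Killing factors $a_i^{Q_i}$, the remaining form $R=Q/\prod_i a_i^{Q_i}$ already satisfies $R(\Lambda_{T,r}^\Q)=1$ and $\kappa_R(\Lambda_{T,r}^\Q,\Lambda_T)=1$, hence descends directly to $\Lambda_T/\Lambda_{T,r}^\Q$. Then \ref{generalized injectivity} produces $\stack{R}_n$ without any further manipulation, and the rest of your argument (tensoring with the determinant gerbes and descending to $A$ via \ref{change of groups}) goes through. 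One minor further point on the descent step: to invoke \ref{change of groups} for the full product $\stack{G}_n=\stack{R}_n\otimes\prod_i\det(\on{adj}_i)^{\log a_i}$, you must know that the multiplicative part of $F(\stack{G}_n)$ in the sense of \ref{torus exact sequence} is also $A$-valued (or trivial), not merely that the quadratic form is $A$-valued. This is in fact the case here since the adjoint representation has $\zeta = \frac{1}{2}\sum_\lambda \lambda = 0$ (roots come in opposite pairs), so by the last sentence of \ref{determinant forms} the multiplicative part of each $\det(\on{adj}_i)^{\log a_i}$ is trivial, and $\stack{R}_n$ was constructed with trivial multiplicative part — but this is worth stating rather than leaving implicit.
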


\begin{proof}
 Using \ref{integer-valued forms}, we produce an extension $A \subset B$ and elements $a_i \in B$ such that $Q$ can be factored as:
 \begin{equation*}
  Q = R \cdot \prod_i a_i^{Q_i},
 \end{equation*}
 with the $Q_i$ being Killing forms coming from adjoint representations $\on{adj}_i$ of $G$ and $R$ descending to $\Lambda_T / \Lambda_{T,r}^\Q$.  By \ref{generalized injectivity}, $R$ comes from some sf gerbe $\stack{R}_n$ on $\on{Gr}_{G,X^n}$, while by \ref{determinant forms}, the $Q_i$ come from the determinant gerbes $\det(\on{adj}_i)^{\log a_i}$.  Nominally, the latter are $B$-gerbes, but since their quadratic forms are $A$-valued and their multiplicative parts are trivial, by \ref{torus exact sequence} the corresponding sf gerbes on $\on{Gr}_{T, X^n}$ are defined over $A$, so by \ref{change of groups}, they themselves are actually defined over $A$.  Then the $A$-gerbe
 \begin{equation*}
  \stack{G}_n = \stack{R}_n \otimes \prod_i \det(\on{adj}_i)^{\log a_i}
 \end{equation*}
 has quadratic form $Q$.
\end{proof}

\begin{theorem}{cor}{multiplicative gerbe fundamental group}
 If $\stack{G}_n$ is an sf gerbe, the $\Lambda_T$-multiplicative gerbe part of $F(\stack{G}_n)$ descends to $\pi_1(G)$.
\end{theorem}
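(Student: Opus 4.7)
The plan is to reduce, by means of \plainref{surjective}, to the case where the quadratic form $Q(\stack{G}_n)$ is trivial, and then to produce the desired descent directly. Apply \plainref{surjective} with $Q = Q(\stack{G}_n) \in Q(\Lambda_T, A)_\Z^W$ to obtain an sf gerbe $\stack{H}_n$ on $\on{Gr}_{G,X^n}$ with $Q(\stack{H}_n) = Q(\stack{G}_n)$. The construction there writes $\stack{H}_n = \stack{R}_n \otimes \bigotimes_i \det(\on{adj}_i)^{\log a_i}$ with $\stack{R}_n$ itself pulled back from $\pi_1(G)_{X^n}$ along $p\colon \on{Gr}_{G,X^n} \to \pi_1(G)_{X^n}$, so that the $\Lambda_T$-multiplicative part of $F(\stack{R}_n)$ manifestly factors through the quotient $\Lambda_T \twoheadrightarrow \pi_1(G)$. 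Each determinant factor has trivial multiplicative part: by \plainref{determinant forms} its multiplicative contribution is $\omega_X^{\log a^{\langle\zeta,\farg\rangle}}$ with $\zeta = \tfrac{1}{2}\sum_\lambda \lambda$, and the weights of the adjoint representation (the roots, together with $0$) pair up as $\pm\alpha$, forcing $\zeta = 0$. Tensor-multiplicativity of $F$ and of the passage to the multiplicative gerbe part then reduces the claim to the analogous statement for $\stack{G}'_n := \stack{G}_n \otimes \stack{H}_n^{-1}$, which has $Q(\stack{G}'_n) = 1$.

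For this reduced gerbe, \plainref{fiber monodromies} gives $m(\check\alpha) = Q(\stack{G}'_n)(\check\alpha)^{-1} = 1$ for every simple coroot, and the argument in the second half of the proof of \plainref{vertical map injective} then forces $\stack{G}'_n$ to be trivial on every fiber of $p$: on each component $\on{Gr}_G^{[\lambda]} = \bigcup_\mu \bar{\on{Gr}}_B^\mu$ the cohomologically trivial cell $\on{Gr}_B^\mu \cong \Aff^\infty$ carries a unique trivialization which extends across each codimension-one boundary $\bar{\on{Gr}}_B^{\mu - \check\alpha}$ by \plainref{2-line bundle of a Cartier divisor}\plainref{en:order gerbe}, since the extension obstruction is exactly $m(\check\alpha) = 1$. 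Hence $\stack{G}'_n \cong p^* \stack{P}_n$ for some factorizable gerbe $\stack{P}_n$ on $\pi_1(G)_{X^n}$. The space $\pi_1(G)_{X^n}$ shares with $\on{Gr}_{T,X^n}$ the sf intersection pattern of \plainref{torus grassmannian properties}, with $\pi_1(G)$ in place of $\Lambda_T$, and the bilinear form associated to $\stack{P}_n$ vanishes (being pulled back from the trivial one on $\stack{G}'_n$); so the argument of \plainref{multiplicative factorizable} transfers verbatim and equips $\stack{P}_n$ with a commutative multiplicative $\pi_1(G)$-gerbe structure. Finally, $F(\stack{G}'_n) = q^* \stack{P}_n$, where $q\colon \on{Gr}_{T,X^n} \to \pi_1(G)_{X^n}$ is the map induced by $\Lambda_T \twoheadrightarrow \pi_1(G)$; by naturality of the multiplicative-gerbe assignment from \plainref{torus exact sequence}, its $\Lambda_T$-multiplicative part is the composition of $\stack{P}_n$ with this projection, i.e., it descends to $\pi_1(G)$.

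The principal obstacle will be verifying that \plainref{multiplicative factorizable} — stated and proved for the torus grassmannian — does transfer to $\pi_1(G)_{X^n}$: one must check that its proof uses only the factorizable intersection pattern of \plainref{torus grassmannian properties} and the Cartier-divisor extension principle of \plainref{2-line bundle of a Cartier divisor}, both of which remain valid over $\pi_1(G)_{X^n}$ even when $\pi_1(G)$ contains torsion. A secondary technicality is that invoking \plainref{surjective} presupposes $Q(\stack{G}_n) \in Q(\Lambda_T, A)_\Z^W$; the $W$-invariance is immediate from the $W$-action on $\on{Gr}_G$ preserving the sf structure, but the integrality condition belongs to a parallel thread of Theorem \plainref{sf gerbes exact sequence} that must be in place before this corollary is invoked.
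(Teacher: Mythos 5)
Your proof is correct and follows essentially the same route the paper takes: cancel the quadratic form against the $\stack{H}_n$ from \plainref{surjective} (whose multiplicative parts descend — $\stack{R}_n$ by construction, the determinant gerbes because $\zeta = 0$ for the adjoint), then use \plainref{fiber monodromies} to conclude the residual gerbe $\stack{G}'_n$ is pulled back from $\pi_1(G)_{X^n}$, whence its multiplicative part descends to $\pi_1(G)$.  Your observation that invoking \plainref{surjective} tacitly presupposes the integrality $Q(\stack{G}_n) \in Q(\Lambda_T, A)_\Z^W$ — proved only in the subsequent \plainref{form is invariant} — is a fair point about the paper's forward reference; the dependency is acyclic since \plainref{form is invariant} does not use this corollary, so it is a presentation quirk rather than a logical gap.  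One small overstatement: the $W$-invariance of $Q(\stack{G}_n)$ is not ``immediate'' from $W$ preserving the sf structure, but is derived in \plainref{form is invariant} from the $G(\hat{\OO})_n$-equivariance structure of \plainref{existence of equivariance}, which is itself a substantial theorem; and the closing appeal to a transfer of \plainref{multiplicative factorizable} to $\pi_1(G)_{X^n}$ is an unnecessary detour, since once $\stack{G}'_n \cong p^*\stack{P}_n$, the identification $F(\stack{G}'_n)_1^\lambda \cong \stack{P}_1|_{[\lambda]}$ already exhibits the required descent.
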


\begin{proof}
 Clearly this is true of $\stack{R}$ and of $\det(\on{adj}_i)_n^{\log a}$ for any $a \in A$ and $G$\hyp representation $V$, so by the preceding construction, we may cancel out the quadratic form of $\stack{G}_n$ and assume, without loss of generality, that it is trivial.  Then by \ref{fiber monodromies}, $\stack{G}_n$ is trivial on the fibers of $\on{Gr}_{G,X^n} \to X^n$ and thus descends to $\pi_1(G)_n$, as desired.
\end{proof}

It remains to show that the functor $F$ actually produces gerbes whose quadratic forms are definable as $W$-invariant forms over $\Z$.  This involves technical tools not to be discussed until the next chapter.

\begin{theorem}{prop}{form is invariant}
 If $\stack{G}_n$ is an sf-gerbe, then $Q(\stack{G}_n) \in Q(\Lambda_T, A)^W_\Z$.
\end{theorem}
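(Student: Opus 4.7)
The plan splits into two independent claims about $Q = Q(\stack{G}_n)$, both concerning the torus restriction $F(\stack{G}_n)$ by definition: first, that $Q$ is Weyl-invariant, and second, that it lies in the integer-valued subgroup $Q(\Lambda_T, A)_\Z^W$.

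For Weyl-invariance, I will exploit the freedom in the choice of torus embedding underlying the definition of $F$ in \ref{eq:group to torus functor}. Different embeddings $T \to G$ differ by conjugation by $N_G(T)$, and the Weyl group $W = N_G(T)/T$ acts on $\on{Gr}_T$ via its action on $\Lambda_T$. For any $w \in W$ with lift $\tilde w \in N_G(T)$, the automorphism $\on{Ad}(\tilde w)$ of $G$ is inner and thus acts trivially on isomorphism classes of $G$-torsors; consequently, the two composites $i, i \circ w \colon \on{Gr}_T \to \on{Gr}_G$ are naturally isomorphic as morphisms of ind-stacks. Pulling back $\stack{G}_n$ produces a canonical equivalence $w^* F(\stack{G}_n) \cong F(\stack{G}_n)$ respecting factorization, and evaluation on each irreducible component $\on{Gr}_T^{\lambda_1, \ldots, \lambda_n}$ then yields $Q(w\lambda) = Q(\lambda)$ for all $\lambda \in \Lambda_T$ and $w \in W$.

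For the integer-valued condition, I apply \ref{integer-valued forms} to decompose $Q = R \cdot \prod_i a_i^{Q_i}$ in some extension $B \supset A$, with $R^k$ trivial on $\Lambda_{T,r}^\Q$; the task is to arrange $k = 1$. By \ref{invariant form and pairing}, the obstruction is encoded in the $2$-torsion characters $\epsilon_{\check\alpha}\colon \Lambda_T \to A_2$, nontrivial only when $\alpha$ is twice a weight. To show these vanish identically, I extend the projective line bundle analysis of \ref{fiber monodromies}: on $\Prj_{1;\check\alpha}^\lambda$, that lemma already identifies the monodromy of $\stack{G}_1$ with $Q(\check\alpha)^{-1}$; applying the same analysis to $\Prj_{2,1;\check\alpha}^{\lambda,\mu}$ while tracking the refined factorization vanishing order $\langle \alpha, \mu \rangle + 2$ from \ref{projective line bundle properties}, and then matching monodromies along $\Delta \subset X^2$ through the factorization equivalence, yields the exact identity $\kappa(\check\alpha, \mu) = Q(\check\alpha)^{\langle \alpha, \mu \rangle}$ with trivial $\epsilon_{\check\alpha}$. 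Any ingredient requiring equivariance data on $\stack{G}_n$ beyond what is available in this chapter will be supplied by forward reference to \ref{c:equivariance of sf gerbes}, whose results do not themselves invoke this proposition.

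The hard part is the second claim: the sharp computation that extracts the \emph{exact} value of $\kappa(\check\alpha, \mu)$ rather than merely its square. For groups where $\alpha$ is twice a weight (short coroots in $B_n$ or $C_n$ factors, and the root of $\on{SL}_2$) the 2-torsion $\epsilon_{\check\alpha}$ is a priori compatible with $W$-invariance, so excluding it requires genuine geometric input from the projective lines and care to organize the forward references so as to avoid circularity with the equivariance theory developed in the next chapter.
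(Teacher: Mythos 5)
Your plan for $W$-invariance has a subtle error in its stated form: the maps $i$ and $i\circ w$ from $\on{Gr}_T$ to $\on{Gr}_G$ are \emph{not} naturally isomorphic. It is true that $\on{Ad}(\tilde w)$ is inner and acts trivially on isomorphism classes of $G$-torsors, but a point of $\on{Gr}_G$ is a torsor \emph{together with a trivialization} away from $\bar{x}$, and the trivialization is moved by $\on{Ad}(\tilde w)$. Concretely, under the identification $\on{Gr}_G = G(\hat\KK)/G(\hat\OO)$, the automorphism $\on{Ad}(\tilde w)$ acts as left multiplication by $\tilde w \in G(\hat\OO)$, which permutes the components $\on{Gr}_T^\lambda$ nontrivially. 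So you cannot avoid invoking the $G(\hat\OO)_n$-equivariance structure of $\stack{G}_n$ (from \ref{existence of equivariance}) to conclude $w^* F(\stack{G}_n) \cong F(\stack{G}_n)$ --- but since you already flag the forward reference, this part is fixable and, once fixed, coincides with the paper's argument.

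The integer-valuedness argument has a genuine gap. In the proof of \ref{integer-valued forms}, the obstruction to $k = 1$ is $k = 2k_0^2$, where the factor of $2$ comes from the $\epsilon_{\check\alpha}$ characters and the factor of $k_0^2$ comes from the index $k_0 = [\Lambda_{T,r}^\Q : \Lambda_{T,r}]$ (nontrivial whenever the derived subgroup $[G,G]$ is not simply-connected). Your geometric computation on the $\Prj^1$-bundles is plausible and would indeed establish the exact identity $\kappa(\check\alpha, \mu) = Q(\check\alpha)^{\langle\alpha, \mu\rangle}$, i.e.\ $\epsilon_{\check\alpha} = 1$. But this only brings $k$ down from $2k_0^2$ to $k_0^2$: you obtain that the residual form $R$ and its bilinear form $\kappa_R$ vanish on $\Lambda_{T,r}$, not on $\Lambda_{T,r}^\Q$. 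The $\Prj^1$-bundles $\Prj_{1;\check\alpha}^\lambda$ give you access only to genuine coroots, not to fractional multiples of coroots in $\Lambda_{T,r}^\Q \setminus \Lambda_{T,r}$, so they cannot close this gap. The paper's actual argument does something quite different and essential: it notes that the $k$-fold twisted convolution product of $\stack{G}_1$ on the components $C$ of $\on{Gr}_{G,X}$ indexed by $\Lambda_{T,r}^\Q$ maps (via $m_{k\cdot 1}$) into $\on{Gr}_{G^\SC, X^k}$, where $\stack{G}_k$ is already known to be factorizably trivial (because $Q$ vanishes on $\Lambda_{T,r}$ and by \ref{vertical map injective}); from triviality of that twisted product one deduces that $\stack{G}_1$ is $G(\hat\OO)_1$-equivariantly trivial on $C$, which is what gives $\kappa(\Lambda_{T,r}^\Q, \Lambda_T) = 1$ directly. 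Without some analogue of this convolution argument (or another way to detect the gerbe on the $\Lambda_{T,r}^\Q$-components), your proof does not establish $k = 1$.
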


\begin{proof}
 We show in \ref{existence of equivariance} that $\stack{G}_n$ possesses a structure of $G(\smash{\hat{\OO}})_n$-equivariance (see \ref{arc and loop groups}).  Let $G_n \subset G(\hat{\OO})_n$ be the factorizable subgroup of points $(\vect{x}, g)$ with $g \in G(\bar{x})$, where we consider $\smash{\hat{X}_{\bar{x}}}$ to cover $\bar{x}$.  Then $\stack{G}_n$ is $G_n$-equivariant and, thus, equivariant for $N_G(T)_n$, where $N_G(T)$ is the normalizer of the torus $T$, and $N_G(T)/T \cong W$ is the Weyl group. This group acts on $\on{Gr}_{T,X^n} \subset \on{Gr}_{G,X^n}$ through its quotient $W_n$, so that $i^* \stack{G}_n$ is necessarily $W_n$-equivariant.
 
 Now we show that $Q(\stack{G}_n) \in Q(\Lambda_T, A)^W_\Z$.  Since it is $W$-invariant, it suffices by  the construction of \ref{surjective} and \ref{integer-valued forms} to show that, if $Q$ is trivial on $\Lambda_{T,r}$, then we can take $k = 1$ in the expression there.  First, consider the image of the grassmannian $\on{Gr}_{G^\SC,X^n}$ in $\on{Gr}_{G,X^n}$, where $G^\SC$ is the simply-connected form of $G$, having weight lattice $\Lambda_{T,r}$; its parts are the irreducible components of the the $\on{Gr}_{G,X^n}$ whose $G(\smash{\hat{\OO}})_n$-orbits (see \ref{s:orbits in the affine grassmannian}) are labelled by $\Lambda_{T,r}^n$.  Thus, since $Q$ vanishes on $\Lambda_{T,r}$, we have by \ref{vertical map injective} that $\stack{G}_n$ is factorizably trivial on these components.
 
 Now we consider the collection of connected components $C$ of $\on{Gr}_{G,X}$ whose $G(\hat{\OO})_1$-orbits $\on{Gr}_{G,X}^\lambda$ have $\lambda \in \smash{\Lambda_{T,r}^\Q}$ (isomorphic to $\on{Gr}_{G'}$, where $G' = [G,G]$ is the derived subgroup of $G$).  In the $k$-fold convolution product (see \ref{convolution product grassmannian})
 \begin{equation*}
  m_{k \cdot 1} \colon \tilde{\on{Gr}}_{k \cdot 1} \to \on{Gr}_k
 \end{equation*}
 the $k$-fold convolution of a single component $C$ maps to $\on{Gr}_{G^\SC, X^n}$, as described above, since $k\smash{\Lambda_{T,r}^\Q} \subset \Lambda_{T,r}$.  By strong factorizable equivariance (\ref{existence of equivariance} and \ref{gerbe strong factorizable equivariance}), we have
 \begin{equation*}
  m_{k \cdot 1}^* \stack{G}_k \cong \stack{G}_1 \tbtimes \dots \tbtimes \stack{G}_1
 \end{equation*}
 on these components; for the twisted product, see \ref{twisted product gerbe}.  Since the former is trivial, so is the latter, which we claim implies that $\stack{G}_1$ is equivariantly trivial.  To show this, consider the definition of the twisted product
 \begin{equation*}
  p^* \stack{G}_1^{\tbtimes k}
    = \pi^* \stack{G}_1^{\tbtimes (k - 1)} \boxtimes_{X^k} \stack{G}_1,
 \end{equation*}
 where $p$ is the $G(\hat{\OO})_1$-torsor defined in the proof of \ref{convolution product twisting} and we write $\pi \colon \tilde{G}(\hat{\KK})_{k \cdot 1} \to \tilde{\on{Gr}}_{(k - 1) \cdot 1}$.  Since $\smash{\stack{G}_1^{\tbtimes k}}$ is trivial, in particular the second factor on the right is $G(\smash{\hat{\OO}})_1$-equivariantly trivial, as desired.
 
 Finally, since $\stack{G}_1$ is factorizably trivial on the components $C$, we have that $Q$ is trivial on $\Lambda_{T,r}^\Q$, and since it is $G(\hat{\OO})_1$-equivariantly trivial, we have by the construction of \ref{existence of equivariance} that $\kappa(\smash{\Lambda_{T,r}^\Q}, \Lambda_T) = 1$; i.e.\ that $k = 1$ in \ref{integer-valued forms}.
\end{proof}

This completes the proof of \ref{sf gerbes exact sequence}.

\chapter{Equivariance of symmetric factorizable gerbes}
\label{c:equivariance of sf gerbes}

In this chapter, we continue to analyze the consequences of the factorizability of a gerbe on the grassmannian.  Unlike in the previous chapter, the principal motivation will be the study of equivariance with respect to a certain group action, which we will see (like our previous theorems) is constructed and constrained simply by the topological data of factorizablity.

\section{Infinitesimal actions on the grassmannian}
\label{s:infinitesimal actions on the grassmannian}

In this section we finally introduce the spaces of infinitesimal loops that the affine grassmannian is usually defined against.  

\subsection*{The formal arc and loop groups}

We use the following notation: for any \emph{affine} scheme $S$ and a point $\vect{x} \in X^n(S)$, the graph $\bar{x} \to X_S$ is a closed affine subscheme of $X_S$ and thus its $n$'th infinitesimal neighborhoods are also affine, with the same underlying topological space $\abs{\bar{x}}$.  Letting $\OO_{\bar{x},n} = \OO_{\bar{x}_n}$ be their rings of global sections on $\abs{\bar{x}}$, we take \begin{equation*}
 \hat{\OO}_{X,\bar{x}} = \invlim_n \OO_{\bar{x},n}
\end{equation*}
(which is also the ring of global sections on the formal completion of $X_S$ at $\bar{x}$) and define
\begin{equation*}
 \hat{X}_{\bar{x}} = \on{Spec} \hat{\OO}_{X,\bar{x}},
\end{equation*}
which we call the \emph{(schemey) formal neighborhood} of $\bar{x}$.  By definition, $\smash{\hat{\OO}}_{X, \bar{x}}$ has $\OO_{\bar{x}}$ as a quotient, so $\smash{\hat{X}}_{\bar{x}}$ has $\bar{x}$ as a closed subscheme, and we define $\hat{X}_{(\bar{x})}$ to be the complement, the \emph{punctured formal neighborhood}.  We reiterate that neither of these coincide with the like-named concepts in formal schemes.

\begin{theorem}{defn}{arc and loop groups}
 We define two functors on affine schemes $S$:
 \begin{align*}
  G(\hat{\OO})_{X^n}(S) &= \{(\vect{x}, g) \mid g \in G(\hat{X}_x)\} \\
  G(\hat{\KK})_{X^n}(S) &= \{(\vect{x}, g) \mid g \in G(\hat{X}_{(x)})\}
 \end{align*}
 We will often write simply $G(\hat{\OO})_n$ and $G(\hat{\KK})_n$.  These are the \emph{formal arc group} and \emph{formal loop group} of $G$.
\end{theorem}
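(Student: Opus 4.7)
The final statement \ref{arc and loop groups} is a definition, not a theorem: it introduces the functors $G(\hat{\OO})_n$ and $G(\hat{\KK})_n$ on affine test schemes, and there is strictly nothing to prove. For completeness I record below what a careful exposition would need to verify to make the definition well-posed, phrased as a plan.

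First, I would check that $\hat{\OO}_{X,\bar{x}}$ is well-defined as a ring, using the hypotheses assembled just before the definition. The graph $\bar{x} \subset X_S$ is a closed subscheme whose underlying space $\abs{\bar{x}}$ coincides with that of each infinitesimal thickening $\bar{x}_n$. Since $S$ is affine and $X$ is a smooth curve, each $\bar{x}_n$ is finite over $S$, hence affine, and so the inverse limit $\invlim_n \OO_{\bar{x},n}$ is unambiguously the ring of global sections of the formal completion of $X_S$ along $\bar{x}$. Setting $\hat{X}_{\bar{x}} = \on{Spec} \hat{\OO}_{X,\bar{x}}$ and $\hat{X}_{(\bar{x})} = \hat{X}_{\bar{x}} \setminus \bar{x}$ then produces honest schemes, as claimed in the preceding paragraph.

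Second, I would check that the assignments $S \mapsto G(\hat{X}_{\bar{x}})$ and $S \mapsto G(\hat{X}_{(\bar{x})})$ are genuinely functors on the category of affine schemes. This amounts to the compatibility of the formation of $\hat{\OO}_{X,\bar{x}}$ with base change along an arbitrary map $T \to S$ of affine schemes: because the thickenings $\bar{x}_n$ are finite (hence proper and flat up to nilpotents) over $S$, the pushforward of their structure sheaves commutes with base change, and the inverse limit defining $\hat{\OO}$ then commutes as well. So the assignments transport naturally along $T \to S$.

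I would expect no obstacle in either of these routine verifications; the main substantive content lies not in the definition itself but in the subsequent claim that $\on{Gr}_{G,X^n}$ is representable by the fppf quotient $G(\hat{\KK})_n / G(\hat{\OO})_n$ and that $G(\hat{\OO})_n$ acts on $\on{Gr}_{G,X^n}$ in a factorizable manner compatible with the sf gerbes of the preceding chapter. Those statements, which will power the equivariance results of \ref{c:equivariance of sf gerbes} (such as \plainref{existence of equivariance}), are the actual theorems to be proved, not \ref{arc and loop groups}.
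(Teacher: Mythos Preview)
You are correct: \ref{arc and loop groups} is a definition, not a theorem, and the paper offers no proof for it either. Your supplementary remarks on well-posedness and functoriality are reasonable but not required; the paper simply states the definition and moves on to the representability and factorizability claims that follow.
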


Both of these are, like the $\on{Gr}_{G,X^n}$, representable by ind-schemes (in fact, $G(\hat{\OO})_n$ is actually a scheme, albeit of infinite type).  The following observation is an immediate consequence of the definitions:

\begin{theorem}{lem}{arc and loop group factorizable}
 Both $G(\hat{\OO})_n$ and $G(\hat{\KK})_n$ are sf group ind-schemes.
\end{theorem}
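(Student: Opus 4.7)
The plan is to verify the three conditions of \ref{grassmannian is factorizable} for $G(\hat{\OO})_n$ and $G(\hat{\KK})_n$ directly from the definitions, using the fact that the formal neighborhood $\hat{X}_{\bar{x}}$ depends only on the reduced support $\bar{x}$ (since the paper takes $\bar{x}$ to be the reduced union of graphs). The key point throughout is that the formation of $\hat{X}_{\bar{x}}$ and $\hat{X}_{(\bar{x})}$ commutes with disjoint unions of supports, and is insensitive to repetition of coordinates.

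First I will treat the diagonal compatibility. Let $p$ be a partition of $\{1, \dots, n\}$ into $m$ parts; for an $S$-point $\vect{x} \in \Delta^n_p(S)$, the coordinates $x_1, \dots, x_n$ group into $m$ distinct coordinates $y_1, \dots, y_m$, and the reduced union of graphs $\bar{x}$ coincides with $\bar{y}$. Hence $\hat{X}_{\bar{x}} = \hat{X}_{\bar{y}}$ and $\hat{X}_{(\bar{x})} = \hat{X}_{(\bar{y})}$, giving natural identifications $G(\hat{\OO})_n|_{\Delta^n_p} \cong G(\hat{\OO})_m$ and the analogous one for $G(\hat{\KK})_n$. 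Refinement compatibility is immediate since both sides are defined by the same ring.

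Next, the open factorization. On $X^n_p$, the components $\bar{x}_i$ of $\bar{x}$ corresponding to the parts $p_i$ of $p$ are pairwise disjoint as closed subschemes of $X_S$. For each infinitesimal neighborhood $\bar{x}_{(N)}$ we accordingly get $\bar{x}_{(N)} = \coprod_i \bar{x}_{i,(N)}$, and passing to the inverse limit over $N$ gives $\hat{\OO}_{X, \bar{x}} = \prod_i \hat{\OO}_{X, \bar{x}_i}$; equivalently, $\hat{X}_{\bar{x}} = \coprod_i \hat{X}_{\bar{x}_i}$, and likewise $\hat{X}_{(\bar{x})} = \coprod_i \hat{X}_{(\bar{x}_i)}$. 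Since $G$-points out of a disjoint union are products of $G$-points out of the pieces, we get canonical isomorphisms
\begin{align*}
 G(\hat{\OO})_n|_{X^n_p} &\cong \textstyle\bigl(\prod_i G(\hat{\OO})_{n_i}\bigr)\big|_{X^n_p}, \\
 G(\hat{\KK})_n|_{X^n_p} &\cong \textstyle\bigl(\prod_i G(\hat{\KK})_{n_i}\bigr)\big|_{X^n_p},
\end{align*}
and these are manifestly group homomorphisms. Compatibility with further refinement is just the associativity of disjoint unions; compatibility with restriction to sub-diagonals follows since both operations are governed by the same underlying decomposition of $\bar{x}$.

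For the $S_n$-equivariance of point \ref{en:factorizable gerbe symmetry}, the action on $X^n$ permutes the labels of the coordinates $x_i$, but $\bar{x}$ as a (reduced) subscheme of $X_S$ is symmetric in these labels, so it extends tautologically to both $G(\hat{\OO})_n$ and $G(\hat{\KK})_n$, and compatibility with the preceding two items is built into the construction. The only step requiring any care is checking that the disjoint-union decomposition of $\hat{X}_{\bar{x}}$ really holds on $S$-families and not just on geometric points; this follows because flatness of the graphs and disjointness on $X^n_p$ propagate to each infinitesimal neighborhood, so the decomposition $\bar{x}_{(N)} = \coprod_i \bar{x}_{i,(N)}$ is valid at every level and survives the inverse limit. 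I do not expect a genuine obstacle here; the statement is essentially a repackaging of \ref{grassmannian is factorizable} at the level of groups rather than torsors.
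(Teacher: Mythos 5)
Your argument is correct, and it fills in precisely the reasoning that the paper leaves implicit: the paper states this lemma with no proof, remarking only that it is ``an immediate consequence of the definitions.'' Your key observation — that $\hat{X}_{\bar{x}}$ and $\hat{X}_{(\bar{x})}$ depend only on the support of $\bar{x}$ (so diagonals collapse harmlessly) and commute with disjoint unions (so the open factorization follows from $G(\coprod_i Y_i) = \prod_i G(Y_i)$) — is exactly the point. One small inaccuracy: the paper defines $\bar{x}$ as the \emph{scheme-theoretic} union of the graphs, not the reduced union; but this doesn't matter, since formal completion along a closed subscheme depends only on the radical of the defining ideal, i.e.\ on the topological support, which is the fact you actually use. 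Also, the appeal to ``flatness of the graphs'' in the last paragraph is unnecessary; what propagates disjointness of the $\bar{x}_{i,(N)}$ to all infinitesimal levels is simply that thickenings do not change the underlying topological space, which you already state.
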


They are closely related to the grassmannians.  The following lemma is a mild generalization of a well-known basic fact, so we only sketch the proof.

\begin{theorem}{lem}{arc and loop group actions}
 We have a transitive action of sf schemes over $X^n$ of $G(\hat{\KK})_n$ on $\on{Gr}_{G,X^n}$, in which the trivial section $(\id, \sh{T}^0, \id)$ has stabilizer $G(\smash{\hat{\OO}})_n$, so we have the quotient as sheaves in the \'etale topology on affine schemes:
 \begin{equation*}
  \on{Gr}_{G,X^n} \cong G(\hat{\KK})_n/G(\hat{\OO})_n.
 \end{equation*}
\end{theorem}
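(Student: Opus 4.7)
The plan is the classical Beauville--Laszlo approach, executed uniformly in the $n$ points $\vect{x}$ and compatibly with the sf structure. The key preparatory step is to reformulate the moduli description of $\on{Gr}_{G,X^n}$. For any affine $S$-family $\vect{x} \in X^n(S)$, the union of graphs $\bar{x} \subset X_S$ is Zariski-locally cut out by a regular element, and Beauville--Laszlo gives an equivalence between pairs $(\sh{T}, \phi)$ consisting of a $G$-torsor $\sh{T}$ on $X_S$ together with a trivialization $\phi$ on $X_S \setminus \bar{x}$, and pairs $(\sh{T}', \psi)$ consisting of a $G$-torsor $\sh{T}'$ on $\smash{\hat{X}_{\bar{x}}}$ together with a trivialization $\psi$ of $\sh{T}'$ on $\smash{\hat{X}_{(\bar{x})}}$: one recovers $(\sh{T},\phi)$ by gluing $\sh{T}'$ to the trivial torsor on $X_S \setminus \bar{x}$ along $\psi$ over the punctured completion. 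This rewrites a point of $\on{Gr}_{G,X^n}(S)$ as exactly the triple $(\vect{x}, \sh{T}', \psi)$.

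With this rewriting I would define the $G(\hat{\KK})_n$-action by left multiplication of the trivialization, $g \cdot (\vect{x}, \sh{T}', \psi) = (\vect{x}, \sh{T}', g\psi)$. The sf compatibility is then automatic from that of the Beauville--Laszlo equivalence, which is functorial in the divisor $\bar{x}$ and splits as a product under the disjointness condition of \ref{grassmannian is factorizable}\ref{en:factorizable open}, matching exactly the sf structure of $G(\hat{\KK})_n$ from \ref{arc and loop group factorizable}.

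Next I would verify the stabilizer and transitivity claims at the level of étale sheaves on affine schemes. For the trivial section we have $\sh{T}' = \sh{T}^0$ and $\psi = \id$, and $g \cdot \id = \id$ says precisely that $g$, viewed as a section of $G$ on $\smash{\hat{X}_{(\bar{x})}}$, extends to a section on $\smash{\hat{X}_{\bar{x}}}$, i.e., that $g \in G(\hat{\OO})_n(S)$. For transitivity, given an arbitrary $(\vect{x}, \sh{T}', \psi)$, étale-locally on $S$ the torsor $\sh{T}'$ on $\smash{\hat{X}_{\bar{x}}}$ is trivial (since $G$ is smooth, $G$-torsors are étale-locally trivial, and formal smoothness lets one extend a trivialization on the reduced divisor $\bar{x}$ to one on $\smash{\hat{X}_{\bar{x}}}$); any such trivialization converts $\psi$ into an element of $G(\hat{\KK})_n(S)$ carrying the trivial section to the given point. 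Combining the two gives the quotient presentation of $\on{Gr}_{G,X^n}$ as an étale sheaf.

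The main obstacle I anticipate is the Beauville--Laszlo input in this ind-scheme, non-noetherian, relative setting: one must allow $\vect{x}$ with colliding coordinates (so $\bar{x}$ is a non-reduced Cartier divisor), $S$ an arbitrary affine base, and the ind-structure arising from the filtration of $G$-torsors by bounded modification type. For affine $G$ and $\bar{x}$ an effective Cartier divisor this equivalence is classical, and compatibility of the construction with flat base change in $S$ lets one promote it across the ind-scheme structure on $\on{Gr}_{G,X^n}$. Once this technical input is granted, the remaining steps are formal, and the étale sheafification in the target handles the difference between trivializability of $\sh{T}'$ Zariski- versus étale-locally.
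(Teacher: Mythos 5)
Your proposal is correct and follows essentially the same route as the paper: both invoke Beauville--Laszlo to re-encode a point of $\on{Gr}_{G,X^n}(S)$ as formal data $(\vect{x}, \hat{\sh{T}}, \hat{\phi})$, define the $G(\hat{\KK})_n$-action by twisting the trivialization, and then verify transitivity by local trivializability of the torsor and compute the stabilizer by noting that a stabilizing $g$ must extend across $\bar{x}$. The only additions you make beyond the paper's proof are flagging the non-noetherian/relative Beauville--Laszlo input and the sf compatibility, both of which the paper leaves implicit.
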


\begin{proof}
 By the Beauville--Laszlo theorem \cite{BL}, any point $p = (\vect{x}, \sh{T}, \phi) \in \on{Gr}_n(S)$ can be equivalently specified by its infinitesimal data:
 \begin{equation*}
  (\vect{x}, \hat{\sh{T}}, \hat{\phi}) \qquad
  \hat{\sh{T}} = \sh{T}|\hat{X}_{\bar{x}}, \quad
  \hat{\phi} = \phi|\hat{X}_{(\bar{x})}.
 \end{equation*}
 Then $(\vect{x}, g) \in G(\hat{\KK})_n$ acts on $p$ by composing $\hat{\phi}$ with $g$ on the right, considering $g$ as an automorphism of the trivial torsor on $\smash{\hat{X}}_{(\bar{x})}$.  The action is transitive because $\smash{\hat{\phi}}$ is identified with a group element when $\sh{T}$ is trivial, which is locally the case.  The stabilizer of the trivial section is $G(\smash{\hat{\OO}})_n$ because up to isomorphism of $\smash{\hat{\sh{T}}}$, any stabilizing $g$ must be the identity, and thus (\emph{not} up to isomorphism) must extend across $\bar{x}$, so is in $G(\smash{\hat{\OO}})_n$.
\end{proof}

When we consider $\on{Gr}_n$ as a quotient of $G(\hat{\KK})_n$, we will denote the projection map by $q_n$.

\subsection*{Outer convolution diagrams}

The most important construction related to the arc groups is that of the following ``convolution diagrams''.  Since it is possible, we give a very general definition, but one should pay particular attention to the case $m = 2$.  These spaces were described in this case in \cite{MV_Satake}*{\S5}.  We use the following notation for partitions: if $p$ is a partition of $n$ into $m$ parts (more properly, a partition of $[1, \dots, n]$ into $m$ subsets), we will denote the $k$'th part by $p_k$, a set, and its cardinality by $\card{p_k}$.

\begin{theorem}{defn}{convolution product grassmannian}
 For any partition $p$ of $n$ into $m$ parts, we have the \emph{convolution diagram} over $X^n$, where $\vect{x}_k$ is the collection of coordinates of $\vect{x}$ indexed by the elements of $p_k$:
 \begin{equation*}
  \tilde{\on{Gr}}_{G,X^p}(S)
   = \left\{\left(
           \begin{gathered}
           \vect{x}, \sh{T}_1, \dots, \sh{T}_m,\\ \phi_1, \dots, \phi_m
           \end{gathered}\right)
      \middle\vert
      \begin{aligned}
       &\text{$\phi_1 \colon \sh{T}^0 \to \sh{T}_1$ is a trivialization on $X \setminus \bar{x}_1 $} \\
       &\text{$\phi_k \colon \sh{T}_{k - 1} \to \sh{T}_k$ is an isomorphism on $X \setminus \bar{x}_k$}
      \end{aligned}
     \right\}
 \end{equation*}
 As usual, we will denote it $\tilde{\on{Gr}}_p$.  We take note of two common special cases: if $p$ is the concatenation of two intervals of sizes $n_1$ and $n_2$, we will write $p = (n_1, n_2)$ and refer to $\smash{\tilde{\on{Gr}}}_{n_1, n_2}$.  If $p$ partitions $mn$ as the concatenation of $m$ intervals of equal size $n$, we will write $p = m \cdot n$.
\end{theorem}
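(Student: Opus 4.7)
The final displayed statement is Definition \plainref{convolution product grassmannian}, and as a definition it carries no logical assertion to prove: the rule
\[
 S \;\longmapsto\; \bigl\{(\vect{x}, \sh{T}_1,\ldots,\sh{T}_m, \phi_1,\ldots,\phi_m)\bigr\}
\]
is simply declared to be the value of the functor $\tilde{\on{Gr}}_{G,X^p}$, and the paragraph ends with notational abbreviations ($p = (n_1,n_2)$, $p = m\cdot n$). There is therefore nothing to establish in the sense of a theorem. What the statement does require, implicitly, is a check that this assignment really is a functor on affine schemes: given a morphism $f\colon S' \to S$ of affine schemes, one forms the base change $f_X = f \times \id_X \colon X_{S'} \to X_S$, pulls back the $m$-tuple $\vect{x}$ along $f$, pulls back the $G$-torsors $\sh{T}_k$ along $f_X$, and restricts the isomorphisms $\phi_k$ to $X_{S'} \setminus \bar{x}'_k$ where $\bar{x}'_k = f_X^{-1}(\bar{x}_k)$. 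Each of these operations is a standard pullback, and composition with $g\colon S'' \to S'$ behaves strictly as one expects, so functoriality is tautological.

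The one condition in the moduli description that is not literally a piece of data — that $\phi_k$ be defined on the complement of $\bar{x}_k$ rather than of $\bar{x}$ — is compatible with this base change because $\bar{x}'_k$ is by construction the preimage of $\bar{x}_k$, so the domain of definition of $f_X^* \phi_k$ is exactly the complement of $\bar{x}'_k$. There is no other sensible verification at the level of the definition itself. Every further assertion one might wish to make about $\tilde{\on{Gr}}_p$ — that it is representable by an ind-scheme over $X^n$, that the sequence $(\phi_1,\ldots,\phi_m) \mapsto \phi_m \circ \cdots \circ \phi_1$ defines a morphism $m_p \colon \tilde{\on{Gr}}_p \to \on{Gr}_n$ of ind-schemes, that $\tilde{\on{Gr}}_p$ is a twisted product in terms of $G(\hat{\KK})_n \to \on{Gr}_n$ via \plainref{arc and loop group actions}, or that it inherits the factorizable structure from $\on{Gr}_n$ — belongs to separate statements that would follow the definition, not to the definition itself, and is therefore outside the scope of a proof proposal for this particular statement.
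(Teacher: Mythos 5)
Your assessment is correct: the statement is a definition, the paper supplies no proof for it, and the only implicit verification (functoriality under base change of $\vect{x}$, the torsors, and the trivializations/isomorphisms) is exactly the routine check you describe. All substantive properties of $\tilde{\on{Gr}}_p$ are indeed deferred to the subsequent propositions in the paper, so there is nothing further to add.
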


We will have almost no occasion to use convolutions corresponding to partitions other than $p = (n_1, n_2)$ or $p = m \cdot n$, so we will assume tacitly that the parts $p_k$ are consecutive intervals in $[1, \dots, n]$.  Furthermore, in order to reduce the burden of notation, we will take $m = 2$ in all subsequent statements and proofs, with the argument being formally similar for the general case, so that the most general partition we exhibit is $(n_1, n_2)$.

We require the convolution diagrams because of their similarities to and differences from products of grassmannians.  For example, when $p = (n_1, n_2)$, $\smash{\tilde{\on{Gr}}}_{n_1, n_2}$ has maps
\begin{align*}
 \map{m_{n_1, n_2}}{\tilde{\on{Gr}}_{n_1, n_2}}{\on{Gr}_n} &&
 \map{\on{pr}_{n_1, n_2}}{\tilde{\on{Gr}}_{n_1, n_2}}{\tilde{\on{Gr}}_{n_1}}
\end{align*}
The definitions are clear: $m_{n_1, n_2}$ is defined combinatorially as
\begin{equation}
 \label{eq:convolution diagram mult map}
 m_{n_1, n_2}(\vect{x}, \sh{T}_1, \sh{T}_2, \phi_1, \phi_2)
  = (\vect{x}, \sh{T}_2, \phi_2|_{X_S \setminus \bar{x}} \circ \phi_1|_{X_S \setminus \bar{x}}),
\end{equation}
and $\on{pr}_{n_1, n_2}$ is defined by forgetting both $\sh{T}_2$ and $\phi_2$.  More generally, we have maps $m_p \colon \smash{\tilde{\on{Gr}}}_p \to \on{Gr}_n$ and $\on{pr}_p \colon \smash{\tilde{\on{Gr}}}_p \to \tilde{\on{Gr}}_{p'}$, where $p'$ is obtained by removing the last part $p_m$ from $p$.  We proceed with establishing the good properties of these maps.

\begin{theorem}{prop}{convolution diagram factorization}
 On the open set $X^n_p$ of \ref{grassmannian is factorizable}, we have an isomorphism of $\smash{\tilde{\on{Gr}}}_p$ with the product of the $\on{Gr}_{\card p_k}$.  In addition, the map $m_p$ is ind-proper and its restriction to $X^n_p$ is the factorization isomorphism of \ref{grassmannian is factorizable}\ref{en:factorizable open}.
\end{theorem}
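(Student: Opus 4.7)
The plan is to build the isomorphism of (1) explicitly, deduce (3) as an immediate consequence of the construction, and then establish (2) by the standard bound-the-orbit exhaustion.

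Over $X^n_p$, the divisors $\bar{x}_k$ are pairwise disjoint for distinct parts $p_k$, so modifications at $\bar{x}_k$ and at $\bar{x}_j$ ($j \neq k$) do not interact. First I would construct the forward map $\tilde{\on{Gr}}_p|_{X^n_p} \to \prod_k \on{Gr}_{\card{p_k}}|_{X^n_p}$ as the composition of $m_p$ (which by \ref{eq:convolution diagram mult map} sends $(\vect{x}, \sh{T}_\bullet, \phi_\bullet)$ to $(\vect{x}, \sh{T}_m, \phi_m \circ \cdots \circ \phi_1)$) with the factorization isomorphism of \ref{grassmannian is factorizable}\ref{en:factorizable open}. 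For the inverse, given a tuple $((\vect{x}_k, \sh{S}_k, \psi_k))_k$ in the product, let $(\vect{x}, \sh{T}, \phi)$ be the point of $\on{Gr}_n$ obtained by inverse factorization, and reconstruct each $\sh{T}_k$ by gluing $\sh{T}$ on $X_S \setminus (\bar{x}_{k+1} \cup \cdots \cup \bar{x}_m)$ with $\sh{T}^0$ on $X_S \setminus (\bar{x}_1 \cup \cdots \cup \bar{x}_k)$, the two agreeing on the overlap $X_S \setminus \bar{x}$ via $\phi$. Disjointness on $X^n_p$ ensures these opens cover $X_S$, and the maps $\phi_k$ are recovered directly from the glued descriptions of $\sh{T}_{k-1}$ and $\sh{T}_k$. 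A direct check shows these two constructions are mutually inverse, which proves (1). Since the forward map is by definition $m_p$ followed by the factorization, and the composition is an isomorphism, the restriction of $m_p$ to $X^n_p$ is identified with the factorization isomorphism, proving (3).

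For ind-properness, I would exhaust $\tilde{\on{Gr}}_p$ by finite-type closed sub-ind-schemes. For a tuple $\vec{\lambda} = (\lambda_1, \dots, \lambda_m)$ of dominant coweights of $G$, define $\tilde{\on{Gr}}^{\leq \vec{\lambda}}_p \subset \tilde{\on{Gr}}_p$ by requiring the relative modification $\phi_k$ at $\bar{x}_k$ to have type bounded by $\lambda_k$, in the sense of \ref{torus section moduli} applied to the comparison $V_{\sh{T}_{k-1}} \to V_{\sh{T}_k}$ for each $G$-representation $V$. Using the presentation $\on{Gr}_n \cong G(\hat{\KK})_n / G(\hat{\OO})_n$ from \ref{arc and loop group actions}, one sees that $\tilde{\on{Gr}}^{\leq \vec{\lambda}}_p$ admits an iterated twisted-fiber-bundle structure over $X^n$ whose successive fibers are the bounded closed sub-schemes $\on{Gr}_G^{\leq \lambda_k}$ of $\on{Gr}_G$, each projective by the ind-projectivity statement following \ref{factorizable grassmannian}. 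Hence $\tilde{\on{Gr}}^{\leq \vec{\lambda}}_p$ is proper over $X^n$. Its image under $m_p$ is contained in the closed subscheme $\on{Gr}_n^{\leq \lambda_1 + \cdots + \lambda_m}$, also proper over $X^n$, so the restriction of $m_p$ is a morphism between $X^n$-proper schemes, hence proper. Passing to the colimit over $\vec{\lambda}$ yields ind-properness of $m_p$.

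The main technical obstacle is the iterated twisted-bundle structure of $\tilde{\on{Gr}}^{\leq \vec{\lambda}}_p$: making this precise enough to invoke ind-projectivity on successive fibers requires a local (\'etale) trivialization of the arc-group torsor at each stage. This is routine once one has \ref{arc and loop group actions}, but it is the one step where some care is needed.
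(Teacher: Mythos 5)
Your argument is correct, and for parts (1) and (3) it is essentially the same as the paper's: both rest on the observation that over $X^n_p$ the divisors $\bar{x}_k$ are pairwise disjoint, and both build the isomorphism by an explicit gluing of torsors, with (3) falling out by comparison with the gluing used in \ref{grassmannian is factorizable}. (The paper carries this out only for $p = (n_1, n_2)$ per its stated convention, whereas you handle general $p$ directly; your phrasing of the forward map as $m_p$ composed with factorization is slightly more economical and makes (3) an immediate byproduct of (1).)

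For ind-properness, your route genuinely differs from the paper's. The paper disposes of it in one line by a \emph{forward reference}: it appeals to \ref{convolution product twisting} (to be proved a few lemmas later), which exhibits $\tilde{\on{Gr}}_p$ as a $\on{Gr}_{\card{p_m}}$-bundle over $\tilde{\on{Gr}}_{p'}$, so that $\tilde{\on{Gr}}_p$ is ind-proper over $X^n$ and hence any map out of it to a separated ind-scheme is ind-proper. You instead reconstruct this from scratch: exhaust $\tilde{\on{Gr}}_p$ by closed sub-ind-schemes cut out by bounding the relative modification type at each step, show each such piece is $X^n$-proper using the iterated twisted-bundle structure with bounded-orbit-closure fibers, and conclude. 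The underlying geometric content (local triviality of the arc-group torsor, properness of bounded orbit closures) is the same in both proofs — what you work out explicitly is precisely what \ref{convolution product twisting} packages — so you have traded the paper's forward pointer for a self-contained exhaustion argument. One small caution: your citation of \ref{torus section moduli} for the "bounded type" condition is a slight abuse, since that proposition characterizes \emph{exact} membership in $\on{Gr}_{T,X^n}^{\lambda_1,\dots,\lambda_n}$ via equalities of vector bundles, not the inclusion/pole-bound version defining orbit closures; the standard adaptation is clear but deserves a sentence.
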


\begin{proof}
 We take $p = (n_1, n_2)$. We will see shortly that $\on{pr}_{n_1, n_2}$ realizes $\tilde{\on{Gr}}_{n_1, n_2}$ as a $\on{Gr}_{n_2}$-bundle over $\on{Gr}_{n_1}$; since they are both ind-proper, so is the convolution diagram, and thus so is any map out of it.  Independently of this result, however, we see that we have a trivialization $\psi = \phi_2 \circ \phi_1$ (suitably restricted, as above) of $\sh{T}_2$, on the open set $X_S \setminus \bar{x}$.  Since $X^n_{n_1, n_2}$ has as points those $\vect{x}$ with $\bar{x}_1 \cap \bar{x}_2 = \emptyset$, we may perform a gluing with the trivial torsor,
 \begin{equation*}
  \sh{T}_2' = \sh{T}_2|_{X_S \setminus \bar{x}_1} \sqcup_\psi \sh{T}^0|_{X_S \setminus \bar{x}_2}
 \end{equation*}
 and the two triples $(\vect{x}_1, \sh{T}_1, \phi_1)$ and $(\vect{x}_2, \sh{T}_2', \psi)$ are points of $\on{Gr}_{n_i}$ for $i = 1, 2$, as desired.
 
 Conversely, given two such triples (with the same notation) we construct a point of $\smash{\tilde{\on{Gr}}}_{n_1, n_2}$ by reverse gluing: on their common domain of definition $X_S \setminus (\bar{x}_1 \cup \bar{x}_2) = X_S \setminus \bar{x}$, $\phi_1$ and $\psi$ are both trivializations and, therefore, $\phi_2 = \phi_1 \psi^{-1} \colon \sh{T}_2' \to \sh{T}_1$ is an isomorphism; we define
 \begin{equation*}
  \sh{T}_2 = \sh{T}_2'|_{X_S \setminus \bar{x}_1} \sqcup_{\phi_2} \sh{T}_1|_{X_S \setminus \bar{x}_2},
 \end{equation*}
 and then $(\vect{x}, \sh{T}_1, \sh{T}_2, \phi_1, \phi_2) \in \tilde{\on{Gr}}_{n_1, n_2}(S)$.  It is clear that these two constructions invert each other.
 
 Finally, that $m_{n_1, n_2}$ sends this isomorphism to the factorization map is a matter of comparing the above construction to the proof of \ref{grassmannian is factorizable}.
\end{proof}

We now turn to the relationship between the convolution diagram and products of the grassmannians. They are not equal; however, as already claimed, the convolution diagram is a twisted product.  To properly express this, we introduce another ind-representable functor:

\begin{theorem}{defn}{twisted arc group}
 As before, we write $\vect{x} = (\vect{x}_1, \vect{x}_2)$, with $\vect{x}_i$ having $n_i$ coordinates.  We define the following functor on affine schemes $S$:
 \begin{equation*}
  \tilde{G}(\hat{\KK})_{X^{n_1, n_2}}(S) = 
    \left\{(\vect{x}, \sh{T}, \phi, \psi) \middle\vert
      \begin{aligned}
       &(\vect{x}_1, \sh{T}, \phi) \in \tilde{\on{Gr}}_{n_1} \\
       &\text{$\psi \colon \sh{T}^0 \to \sh{T}$ is a trivialization on $\hat{X}_{\bar{x}_2}$}
      \end{aligned}
    \right\}.
 \end{equation*}
 As usual, we will write just $\tilde{G}(\hat{\KK})_{n_1, n_2}$.
\end{theorem}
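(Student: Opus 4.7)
Since \ref{twisted arc group} is strictly a definition, there is no explicit content to prove; however, for this functor to serve the role its name suggests—namely that of a $G(\hat{\OO})_{n_2}$-torsor realizing the convolution diagram $\tilde{\on{Gr}}_{n_1,n_2}$ as a twisted product—one must verify both ind-representability and compatibility with the $\on{pr}_{n_1,n_2}$-bundle structure that was implicitly invoked (but not fully justified) in the proof of \ref{convolution diagram factorization}. My plan is to establish these two points in turn.

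For ind-representability, I would fix an $S$-point of $\tilde{\on{Gr}}_{n_1} = \on{Gr}_{n_1}$ together with an additional $\vect{x}_2 \in X^{n_2}(S)$, and observe by Beauville--Laszlo (as used in the proof of \ref{arc and loop group actions}) that the datum $\sh{T}$ is equivalent to its restrictions to $\hat{X}_{\bar{x}_2}$ and $\hat{X}_{(\bar{x}_2)}$ plus gluing, and hence that giving $\psi \colon \sh{T}^0 \to \sh{T}$ over $\hat{X}_{\bar{x}_2}$ is equivalent to giving a section of the $G(\smash{\hat{\OO}})_{n_2}$-torsor classifying $\hat{\sh{T}}|_{\hat{X}_{\bar{x}_2}}$. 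This realizes $\tilde{G}(\hat{\KK})_{n_1,n_2}$ as a principal $G(\smash{\hat{\OO}})_{n_2}$-bundle over the fiber product $\on{Gr}_{n_1} \times_{X^{n_1}} X^n$, hence as an ind-scheme. The right action is $\psi \mapsto \psi \circ g$ for $g \in G(\smash{\hat{\OO}})_{n_2}$, and forgetting $\psi$ recovers the base.

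Second, I would check that the convolution diagram is recovered as the contracted product
\begin{equation*}
 \tilde{\on{Gr}}_{n_1,n_2} \cong \tilde{G}(\hat{\KK})_{n_1,n_2} \times^{G(\hat{\OO})_{n_2}} \on{Gr}_{n_2}.
\end{equation*}
In one direction, given data $(\vect{x}, \sh{T}, \phi, \psi)$ on the left together with $(\vect{x}_2, \sh{T}', \phi')$ on the right, Beauville--Laszlo lets me glue a new torsor $\sh{T}_2$ and a morphism $\phi_2 \colon \sh{T} \to \sh{T}_2$ defined away from $\bar{x}_2$, with the construction invariant under $\psi \mapsto \psi \circ g$, $\phi' \mapsto \phi' \circ g^{-1}$. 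In the other direction, given $(\vect{x}, \sh{T}_1, \sh{T}_2, \phi_1, \phi_2) \in \tilde{\on{Gr}}_{n_1,n_2}$, one chooses locally a formal trivialization $\psi$ of $\sh{T}_1$ at $\bar{x}_2$ (possible after étale localization on the base), and then $\phi_2 \circ \psi$ yields the $\on{Gr}_{n_2}$-datum modulo the $G(\smash{\hat{\OO}})_{n_2}$-ambiguity in $\psi$.

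The main technical step is the gluing in families in the first direction, but this is precisely the argument already carried out (without the $\psi$) in establishing the factorization form of $m_{n_1,n_2}$ in \ref{convolution diagram factorization}; the only addition is to track $\psi$ as an extra datum and observe that it is exchanged with elements of $G(\smash{\hat{\OO}})_{n_2}$ under both constructions. Everything else is bookkeeping.
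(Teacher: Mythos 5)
Since the statement you were given is a definition, there is indeed nothing to prove, and you correctly identified that the content lives in the surrounding \ref{twisted arc group trivializations} and especially \ref{convolution product twisting}, whose proof you have essentially reproduced: define a map from $\tilde{G}(\hat{\KK})_{n_1,n_2}\times_{X^{n_2}}\on{Gr}_{n_2}$ to $\tilde{\on{Gr}}_{n_1,n_2}$ by Beauville--Laszlo gluing of $\sh{T}_1$ away from $\bar{x}_2$ with $\sh{T}'$ on the formal neighborhood, using $\psi$ as the mediating trivialization, then check $G(\hat{\OO})_{n_2}$-invariance and obtain the inverse by local choice of $\psi$. This is exactly the paper's argument, and your remark that ind-representability follows because the map to $\on{Gr}_{n_1}\times X^{n_2}$ is a $G(\hat{\OO})_{n_2}$-torsor (locally trivial by Beauville--Laszlo) is a sensible supplement the paper leaves implicit.

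One small slip: you state that the gluing is invariant under $\psi\mapsto\psi\circ g$, $\phi'\mapsto\phi'\circ g^{-1}$. If you write the gluing isomorphism explicitly on the punctured formal neighborhood, you will find it is $\phi'\circ\psi^{-1}$ (or its inverse), so the cancellation requires $\psi$ and $\phi'$ to transform \emph{the same way} under $g$; the ``anti-diagonal'' terminology in \ref{convolution product twisting} refers to the action on $\tilde{G}(\hat{\KK})_{n_1,n_2}\times\on{Gr}_{n_2}$ being inverse to the principal action on the torsor factor, not to opposite signs on $\psi$ and $\phi'$. Writing out the gluing map would have resolved this immediately, and it does not affect the validity of the construction.
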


There is an obvious map $\tilde{G}(\hat{\KK})_{n_1, n_2} \to \on{Gr}_{n_1}$ given by forgetting $\vect{x}_2$ and $\psi$, and there is also an obvious action of the relative group $X^{n_1} \times G(\smash{\hat{\OO}})_{n_2}$ on $\smash{\tilde{G}(\hat{\KK})}_{n_1, n_2}$, with a point $(\vect{x}_1, \vect{x}_2, g)$ replacing $\psi$ by $\psi g^{-1}$.  This action in fact makes it a torsor for this group over $\on{Gr}_{n_1} \times X^{n_2}$, with the following self-evident trivializations:

\begin{theorem}{lem}{twisted arc group trivializations}
 The $G(\hat{\OO})_{n_2}$-torsor $\tilde{G}(\hat{\KK})_{n_1, n_2}$ is trivialized over $\on{Gr}_{n_1} \times X^{n_2}$ as follows:
 \begin{enumerate}
  \item \label{en:open arc group trivialization}
  On $X^n_{n_1, n_2}$, there is a \emph{natural} projection over $\tilde{\on{Gr}}_{n_1}$ to $G(\hat{\OO})_{n_2}$,
  \begin{equation*}
   (\vect{x}, \sh{T}, \phi, \psi) \mapsto u = \psi^{-1} \circ \phi
  \end{equation*}
  (for partitions with $m > 1$, this trivialization works on the slightly larger open set of $\vect{x}$'s such that $\vect{x}_m$ is disjoint from all $\vect{x}_k$, with $k < m$).
  
  \item \label{en:closed arc group trivialization}
  For \emph{any} $\vect{x}$, there is an \emph{unnatural} projection associated with each choice of trivialization $\psi'$ of $\sh{T}$ on $\hat{X}_{\bar{x}_2}$:
  \begin{equation*}
   (\vect{x}, \sh{T}, \phi, \psi) \mapsto v = \psi^{-1} \circ \psi'. \qed
  \end{equation*}
 \end{enumerate}
\end{theorem}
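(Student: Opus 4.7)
The plan is to verify, in order: (i) the $G(\hat{\OO})_{n_2}$-action described just before the lemma is free; (ii) each of the two displayed formulas defines a well-defined map to $G(\hat{\OO})_{n_2}$ on its stated domain; (iii) each such map intertwines the torsor action with left translation on $G(\hat{\OO})_{n_2}$; and (iv) the trivializations of type \ref{en:closed arc group trivialization} cover $\on{Gr}_{n_1}\times X^{n_2}$ étale-locally, so that $\tilde{G}(\hat{\KK})_{n_1,n_2}$ really is a torsor. Items (i)--(iii) are pure definition-chasing; only (iv) requires a nontrivial geometric input.

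For (i), if $\psi g^{-1} = \psi$ as trivializations over $\hat{X}_{\bar{x}_2}$, then $g$ acts as the identity on $\sh{T}^0$ there, so $g = 1$. For (ii)\ref{en:open arc group trivialization}, the key observation is that $X^n_{n_1,n_2}$ consists of $\vect{x}$ with $\bar{x}_1 \cap \bar{x}_2 = \emptyset$, so $\hat{X}_{\bar{x}_2} \subset X_S\setminus\bar{x}_1$, and the restriction of $\phi$ to $\hat{X}_{\bar{x}_2}$ is a second trivialization of $\sh{T}$ there; hence $u = \psi^{-1}\circ\phi$ is an automorphism of $\sh{T}^0$ over $\hat{X}_{\bar{x}_2}$, i.e.\ an element of $G(\hat{\OO})_{n_2}$. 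Part (ii)\ref{en:closed arc group trivialization} is immediate from the hypothesis on $\psi'$. For (iii), substituting $\psi \mapsto \psi g^{-1}$ into either formula sends $u \mapsto gu$ and $v \mapsto gv$, which is exactly left translation.

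The main obstacle is (iv). Given an $S$-point of $\on{Gr}_{n_1}\times X^{n_2}$, the torsor $\sh{T}|_{\hat{X}_{\bar{x}_2}}$ is a $G$-torsor on the formal neighborhood of the finite $S$-scheme $\bar{x}_2\subset X_S$. I would argue that after an étale base change on $S$, such a torsor admits a trivialization $\psi'$: first trivialize $\sh{T}|_{\bar{x}_2}$ étale-locally on $S$ (using that $\bar{x}_2 \to S$ is finite and $G$ is smooth, the same reason $\sh{T}$ is étale-locally trivial on $X_S$ to begin with), and then lift this trivialization through each infinitesimal thickening inside $\hat{X}_{\bar{x}_2}$ using the formal smoothness of $G$. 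Passing to the inverse limit yields $\psi'$, and combining this with (i)--(iii) establishes that $\tilde{G}(\hat{\KK})_{n_1,n_2}$ is a $G(\hat{\OO})_{n_2}$-torsor with the two explicit trivializations as claimed.
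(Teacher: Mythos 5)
Your proof is correct. The paper itself provides no argument at all for this lemma — it is stated with a \verb|\qed| at the end and the preceding sentence simply calls the two trivializations ``self-evident'' — so your write-up goes well beyond the paper in explicitness. The organization into freeness, well-definedness, equivariance, and \'etale-local surjectivity is exactly the right checklist for showing that a sheaf with a free group action together with a collection of local sections is a torsor, and the observation in (ii)\plainref{en:open arc group trivialization} that the map $\hat{X}_{\bar{x}_2}\to X_S$ factors through $X_S\setminus\bar{x}_1$ when $\bar{x}_1\cap\bar{x}_2=\emptyset$ is the precise point one must make for the ``natural'' trivialization to be defined. One small stylistic alternative for your step (iv): rather than rerunning a deformation-theoretic lifting argument, you could observe that \'etale-local existence of $\psi'$ is exactly the content of the already-proved \ref{arc and loop group actions} (which rests on Beauville--Laszlo), applied to the image of $(\vect{x},\sh{T},\phi,\psi)$ in $\on{Gr}_{n_2}$ after forgetting everything away from $\bar{x}_2$; this reuses machinery the paper has already set up rather than reproving formal smoothness of $G(\hat{\OO})_{n_2}$ from scratch. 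Either route is valid.
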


We note that in the more general situation where $p$ is a larger partition, $\tilde{G}(\hat{\KK})_p$ is a torsor over $\smash{\tilde{\on{Gr}}}_{p'} \times X^{\card{p_m}}$, where $p'$ is the partition obtained by deleting the last part $p_m$ of $p$.  Now we can say that the convolution diagram is the twisting of an ordinary product of grassmannians by this torsor.

\begin{theorem}{prop}{convolution product twisting}
 $\tilde{\on{Gr}}_{n_1, n_2}$ is the bundle associated with $\tilde{G}(\hat{\KK})_{n_1, n_2}$ having fiber $\on{Gr}_{n_2}$.  Explicitly, this means that
 \begin{equation*}
  \tilde{\on{Gr}}_{n_1, n_2}
    = (\tilde{G}(\hat{\KK})_{n_1, n_2} \times_{X^{n_2}} \on{Gr}_{n_2}) / G(\hat{\OO})_{n_2},
 \end{equation*}
 where the latter group acts anti-diagonally on the product. In particular, the trivialization of \ref{twisted arc group trivializations}\ref{en:open arc group trivialization} gives an isomorphism over $X^n_{n_1, n_2}$:
 \begin{equation*}
  \tilde{\on{Gr}}_{n_1, n_2} \cong \on{Gr}_{n_1} \times \on{Gr}_{n_2},
 \end{equation*}
 and this isomorphism agrees with the one given in \ref{convolution diagram factorization}.  For more general partitions $p$, if $p'$ is the partition obtained by removing the last part $p_m$ of $p$, then we have that $\tilde{\on{Gr}}_p$ is a $\on{Gr}_{\card p_m}$-bundle over $\tilde{\on{Gr}}_{p'}$.
\end{theorem}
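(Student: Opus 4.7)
My plan is to construct the twisting isomorphism via Beauville--Laszlo gluing and then verify it respects the group action and recovers the trivialization of the previous lemma.

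First I would define a map $\Phi \colon \tilde{G}(\hat{\KK})_{n_1, n_2} \times_{X^{n_2}} \on{Gr}_{G,X^{n_2}} \to \tilde{\on{Gr}}_{n_1, n_2}$ as follows. Given $(\vect{x}, \sh{T}, \phi, \psi) \in \tilde{G}(\hat{\KK})_{n_1, n_2}(S)$ and $(\vect{x}_2, \sh{T}', \phi') \in \on{Gr}_{G, X^{n_2}}(S)$ with matching $\vect{x}_2$, I would use Beauville--Laszlo (as in the proof of \ref{arc and loop group actions}) to construct a new torsor $\sh{T}_2$ on $X_S$ by specifying its completion $\hat{X}_{\bar{x}_2}$-data as $\sh{T}'|_{\hat{X}_{\bar{x}_2}}$ (transported along $\psi$ so as to be compared with $\sh{T}|_{\hat{X}_{\bar{x}_2}}$) and its restriction to $X_S \setminus \bar{x}_2$ as $\sh{T}$ itself; the gluing datum on $\hat{X}_{(\bar{x}_2)}$ is the composition $\psi \circ (\phi')^{-1}$. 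The resulting torsor comes with a tautological isomorphism $\phi_2 \colon \sh{T} \to \sh{T}_2$ on $X_S \setminus \bar{x}_2$, and together with $(\vect{x}, \sh{T}, \phi)$ this defines an $S$-point of $\tilde{\on{Gr}}_{n_1,n_2}$.

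Next I would check $G(\smash{\hat\OO})_{n_2}$-invariance. Replacing $\psi$ by $\psi \circ g^{-1}$ and simultaneously replacing $(\sh{T}', \phi')$ by its image under the anti-diagonal $g$-action changes both the formal datum and the gluing datum in a way that cancels, so $\Phi$ descends to the quotient. To exhibit an inverse, I would start with $(\vect{x}, \sh{T}_1, \sh{T}_2, \phi_1, \phi_2) \in \tilde{\on{Gr}}_{n_1, n_2}(S)$ and produce the $G(\smash{\hat\OO})_{n_2}$-orbit: working \'etale-locally on $S$, the torsor $\sh{T}_1$ admits a trivialization $\psi$ on $\hat{X}_{\bar{x}_2}$ (this is precisely where I need the \'etale topology for the quotient formula); different choices differ by an element of $G(\smash{\hat\OO})_{n_2}$. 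Gluing the trivial torsor on $\hat{X}_{\bar{x}_2}$ with $\sh{T}_2$ on $X_S \setminus \bar{x}_2$ via $\phi_2 \circ \psi^{-1}$ produces a point of $\on{Gr}_{G,X^{n_2}}$, and the assignment $(\vect{x}, \sh{T}_1, \sh{T}_2, \phi_1, \phi_2) \mapsto [(\vect{x}, \sh{T}_1, \phi_1, \psi),(\vect{x}_2, \sh{T}', \phi')]$ is readily seen to invert $\Phi$ modulo the action.

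Finally I would identify the two trivializations on $X^n_{n_1, n_2}$. On the open locus where $\bar{x}_1 \cap \bar{x}_2 = \emptyset$, the trivialization $\phi$ of $\sh{T}$ extends across $\hat{X}_{\bar{x}_2}$, so $u = \psi^{-1} \circ \phi \in G(\smash{\hat\OO})_{n_2}$ canonically trivializes the torsor $\tilde{G}(\hat{\KK})_{n_1, n_2}$, as in \ref{twisted arc group trivializations}\ref{en:open arc group trivialization}. Under this trivialization, the quotient $(\tilde{G}(\hat{\KK})_{n_1,n_2} \times_{X^{n_2}} \on{Gr}_{G,X^{n_2}})/G(\smash{\hat\OO})_{n_2}$ becomes $\on{Gr}_{G,X^{n_1}} \times \on{Gr}_{G,X^{n_2}}$, and unpacking the Beauville--Laszlo construction shows that $\Phi$ recovers the explicit gluing in the proof of \ref{convolution diagram factorization}. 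The case of a general partition $p$ follows by the same argument with $\on{Gr}_{n_1}$ replaced by $\tilde{\on{Gr}}_{p'}$.

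The main obstacle will be the inverse construction: the trivialization $\psi$ of $\sh{T}_1$ on $\hat{X}_{\bar{x}_2}$ does not exist Zariski-locally on the base, so the identification is genuinely an identification of \'etale sheaves, and one must be careful that the ind-scheme structures agree (not merely that the functors of points agree on affine schemes). Everything else is a bookkeeping exercise in gluing.
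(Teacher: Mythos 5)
Your proposal is correct and follows essentially the same route as the paper's own proof: construct the map via Beauville--Laszlo gluing, check $G(\smash{\hat\OO})_{n_2}$-invariance, and produce a local inverse by choosing a trivialization of $\sh{T}_1$ on $\hat{X}_{\bar{x}_2}$. The only difference is the direction convention for the gluing isomorphism ($\psi \circ (\phi')^{-1}$ versus the paper's $\phi_2 \circ \psi^{-1}$), which is immaterial, and your explicit note about the \'etale-local existence of $\psi$ corresponds to the paper's ``locally on $S$.''
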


\begin{proof}
 We begin by defining a map $\pi \colon \tilde{G}(\hat{\KK})_{n_1, n_2} \times_{X^{n_2}} \on{Gr}_{n_2} \to \tilde{\on{Gr}}_{n_1, n_2}$, which on $S$-points looks like
 \begin{equation*}
  \pi \colon
    (\vect{x}_1, \vect{x}_2, \sh{T}_1, \phi_1, \psi), (\vect{x}_2, \sh{T}_2, \phi_2)
  \mapsto (\vect{x}, \sh{T}_1, \sh{T}_2', \phi_1, \phi_2'),
 \end{equation*}
 where $\sh{T}_2'$ is the gluing of $\sh{T}_1|(X \setminus \bar{x}_2)$ with $\sh{T}_2|\hat{X}_{\bar{x}_2}$ along the isomorphism $\phi_2 \circ \psi^{-1}$, and $\phi_2'$ is the natural isomorphism with $\sh{T}_1$ away from $\bar{x}_2$.
 
 Evidently its fibers are invariant under the action of $G(\hat{\OO})_{n_2}$, any element $g$ of which sends $\psi$ to $\psi g$ and $\phi_2$ to $\phi_2 g$, so the $g$ cancels in the gluing.  Conversely, given any tuple in the target, we can locally on $S$ choose a trivialization $\psi$ of $\sh{T}_1$ around $\bar{x}_2$ and define $\sh{T}_2$ by gluing $\sh{T}_2'|\hat{X}_{\bar{x}_2}$ to the trivial torsor via $\phi_2' \circ \psi$.  This shows that $\pi$ is locally surjective and that its fibers are $G(\smash{\hat{\OO}})_{n_2}$-torsors.  The final claim about the isomorphisms follows by chasing the constructions.
\end{proof}

\subsection*{Inner convolution diagrams}

An alternative construction of the convolution diagram that we will use is one which perhaps looks more like the convolution diagram usually defined on $\on{Gr}_G$.

\begin{theorem}{defn}{inner convolution diagram}
 The \emph{inner convolution diagrams} are the following ind-schemes over $X^n$, where $m \geq 1$:
 \begin{equation*}
  \on{Conv}_n^m
   = G(\hat{\KK})_n \times^{G(\hat{\OO})_n}
     \dots \times^{G(\hat{\OO})_n}
     G(\hat{\KK})_n \times^{G(\hat{\OO})_n}
     \on{Gr}_n,
 \end{equation*}
 where there are $m$ terms in all.
\end{theorem}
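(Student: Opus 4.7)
Though this statement is formatted as a definition, its content implicitly requires verifying that the iterated mixed quotient genuinely produces an ind-scheme over $X^n$, and one naturally wants to relate it to the outer convolution diagrams $\smash{\tilde{\on{Gr}}}_{m \cdot n}$ of \ref{convolution product grassmannian}. I would proceed in two phases.

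First, for representability, I would argue inductively. The base case $m = 1$ gives $\on{Conv}_n^1 = G(\hat{\KK})_n \times^{G(\hat{\OO})_n} \on{Gr}_n$: by \ref{arc and loop group actions} the projection $q_n \colon G(\hat{\KK})_n \to \on{Gr}_n$ is an \'etale-locally trivial right $G(\hat{\OO})_n$-torsor, so the quotient by the anti-diagonal action (right on the first factor, left on the second) is the associated $\on{Gr}_n$-bundle over $\on{Gr}_n$, hence ind-representable. For the inductive step, equip $\on{Conv}_n^m$ with the left $G(\hat{\OO})_n$-action inherited from the leftmost $G(\hat{\KK})_n$-factor, and form $\on{Conv}_n^{m+1}$ as the $\on{Conv}_n^m$-bundle associated to $q_n$; \'etale-local triviality of $q_n$ persists under this iterated construction, which is the only nontrivial point to check.

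Second, I would anticipate the eventual identification of $\on{Conv}_n^m$ with a suitable form of the outer convolution. Unlike $\smash{\tilde{\on{Gr}}}_{m \cdot n}$, which lives over $X^{mn}$ and whose $m$ coordinate groups factor apart when disjoint (\ref{convolution product twisting}), $\on{Conv}_n^m$ places all $m$ factors over the \emph{same} base $X^n$; this is the relative analogue of the classical convolution $G(\sh{K}) \times^{G(\sh{O})} \cdots \times^{G(\sh{O})} \on{Gr}_G$ used to define convolution of sheaves on the single-point grassmannian. I would expect the precise relationship to be that $\on{Conv}_n^m$ is the pullback of $\smash{\tilde{\on{Gr}}}_{m \cdot n}$ along the small-diagonal embedding $X^n \incl X^{mn}$ sending $\vect{x} \mapsto (\vect{x}, \dots, \vect{x})$; by the Beauville--Laszlo argument in the proof of \ref{arc and loop group actions}, modifications along a single divisor $\bar{x}$ with an iterated sequence of trivializations match exactly the iterated coset expression.

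The main obstacle is not representability, which is largely formal, but the careful bookkeeping of \emph{which} base each arc/loop group lives over across the induction: the outer convolution uses \emph{disjoint} groups of coordinates for each factor, while the inner convolution identifies them all, and checking that the torsor structures and their local trivializations transform correctly under the diagonal pullback is the delicate part. Once that is in place, the inductive description mirrors \ref{convolution product twisting} exactly, and the two forms of the convolution diagram are seen to be two specializations --- disjoint versus diagonal --- of a single construction.
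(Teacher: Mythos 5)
Since the statement is a definition, the paper has no proof to compare against; what you have written supplies the implicit verification (ind-representability) and correctly anticipates the content of the very next proposition, \ref{inner convolution diagram diagonal}, which identifies $\on{Conv}_n^m$ with $\tilde{\on{Gr}}_{m \cdot n}|_\Delta$, with the paper's proof indeed resting on the Beauville--Laszlo argument you cite. One small indexing slip: the definition counts the $\on{Gr}_n$ factor among the ``$m$ terms in all,'' so $\on{Conv}_n^1 = \on{Gr}_n$ and $\on{Conv}_n^2 = G(\hat{\KK})_n \times^{G(\hat{\OO})_n} \on{Gr}_n$; your ``base case $m = 1$'' is actually the paper's $m = 2$ case. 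This does not affect the inductive structure of your representability argument, only the labelling.
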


\begin{theorem}{prop}{inner convolution diagram diagonal}
 Let $p$ be the partition of $mn$ into $m$ copies of $n$ (we write $p = m \cdot n$).  Then we have:
 \begin{enumerate}
  \item \label{en:inner diagram diagonal}
  $\on{Conv}^m_n$ is isomorphic to the restriction of $\tilde{\on{Gr}}_p$ to the diagonal $\Delta
  \subset X^{mn}$ where coordinates $x_i = x_{i + m} = \dots$ for $i = 1, \dots, n$.
  
  \item \label{en:inner diagram cover}
  For $m \geq 2$, the restriction of $\tilde{G}(\hat{\KK})_p$ to $\Delta$ is the twisted product
  \begin{equation*}
   \on{Conv} G(\hat{\KK})_n^m =
   G(\hat{\KK})_n \times^{G(\hat{\OO})_n}
   \dots \times^{G(\hat{\OO})_n}
    G(\hat{\KK})_n
  \end{equation*}
  with $m - 1$ terms in all.
 \end{enumerate}
\end{theorem}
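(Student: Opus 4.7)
The plan is to prove both parts simultaneously by induction on $m$, using the Beauville--Laszlo theorem \cite{BL} as the key local-to-global device at each step. First I extend the definition of \ref{twisted arc group} to a general partition $p$ with $m \geq 2$ parts: $\tilde{G}(\hat{\KK})_p$ parametrizes tuples $(\vect{x}, \sh{T}_1, \dots, \sh{T}_{m-1}, \phi_1, \dots, \phi_{m-1}, \psi)$ where $(\vect{x}, \sh{T}_\bullet, \phi_\bullet) \in \tilde{\on{Gr}}_{p'}$ (with $p'$ the partition obtained by dropping the last part) and $\psi$ trivializes $\sh{T}_{m-1}$ on $\hat{X}_{\bar{x}_m}$; this makes $\tilde{G}(\hat{\KK})_p$ a $G(\hat{\OO})_{\card{p_m}}$-torsor over $\tilde{\on{Gr}}_{p'} \times X^{\card{p_m}}$, exactly as for $m = 2$.

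The base case for part 1 at $m = 1$ is trivial, since both sides are $\on{Gr}_n$. For part 2 at $m = 2$, I establish $\tilde{G}(\hat{\KK})_{n,n}|_\Delta \cong G(\hat{\KK})_n$ as $G(\hat{\OO})_n$-torsors over $\on{Gr}_n$ by the following assignment: an $S$-point $(\vect{x}, \sh{T}, \phi, \psi)$ of the left side yields the discrepancy $\psi^{-1} \circ \phi \in G(\hat{\KK})_n(S)$, well-defined on the punctured formal neighborhood $\hat{X}_{(\bar{x})}$ where both trivializations live. By Beauville--Laszlo, exactly as in the proof of \ref{arc and loop group actions}, this is bijective: from $g \in G(\hat{\KK})_n$ one reconstructs $\sh{T}$ by gluing trivial torsors on $X_S \setminus \bar{x}$ and $\hat{X}_{\bar{x}}$ along $g$, with canonical $\phi$, $\psi$. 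The right $G(\hat{\OO})_n$-action clearly matches on both sides.

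For the inductive step, let $p = m \cdot n$ and $p' = (m-1) \cdot n$. The general form of \ref{convolution product twisting} gives $\tilde{\on{Gr}}_p = \tilde{G}(\hat{\KK})_p \times^{G(\hat{\OO})_n} \on{Gr}_n$ over $\tilde{\on{Gr}}_{p'} \times X^n$; since diagonal restriction commutes with twisted products by smooth group schemes,
\begin{equation*}
\tilde{\on{Gr}}_p|_\Delta \cong \tilde{G}(\hat{\KK})_p|_\Delta \times^{G(\hat{\OO})_n} \on{Gr}_n.
\end{equation*}
To identify $\tilde{G}(\hat{\KK})_p|_\Delta$ itself, observe that choosing locally on $S$ a trivialization $\psi'$ of $\sh{T}_{m-2}$ on $\hat{X}_{\bar{x}}$ lifts the underlying $(m-1)$-tuple to a point of $\tilde{G}(\hat{\KK})_{p'}|_\Delta$. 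Then $\sh{T}_{m-1}$ carries two trivializations: $\psi$ on $\hat{X}_{\bar{x}}$, and the composition $\psi' \circ \phi_{m-1}^{-1}$ on $\hat{X}_{(\bar{x})}$; their discrepancy is an element of $G(\hat{\KK})_n$, and the choice of $\psi'$ introduces exactly a $G(\hat{\OO})_n$-ambiguity, yielding
\begin{equation*}
\tilde{G}(\hat{\KK})_p|_\Delta \cong \tilde{G}(\hat{\KK})_{p'}|_\Delta \times^{G(\hat{\OO})_n} G(\hat{\KK})_n.
\end{equation*}
Feeding the inductive hypothesis $\tilde{G}(\hat{\KK})_{p'}|_\Delta \cong \on{Conv}G(\hat{\KK})_n^{m-1}$ (with $m - 2$ copies of $G(\hat{\KK})_n$) into these two isomorphisms proves part 2 (now with $m - 1$ copies) and then part 1 after the outer $\times^{G(\hat{\OO})_n} \on{Gr}_n$.

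The main obstacle is the bookkeeping of the several simultaneous $G(\hat{\OO})_n$-actions in the twisted products: specifically, one must verify that the ambiguity in the auxiliary choice of $\psi'$ coincides with the $G(\hat{\OO})_n$ being quotiented out in the twisted product $\tilde{G}(\hat{\KK})_{p'}|_\Delta \times^{G(\hat{\OO})_n} G(\hat{\KK})_n$, and that all isomorphisms produced are equivariant for the standard actions on both sides. Once the conventions are pinned down this verification is straightforward, but tracking it through the iterated construction is the essential technical content.
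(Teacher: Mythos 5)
Your proposal follows essentially the same route as the paper's proof: reduce part 1 to part 2 via \ref{convolution product twisting} (both sides of part 1 are seen as the $\on{Gr}_n$-twist of the respective $G(\hat{\OO})_n$-torsor), establish the base case $m = 2$ via Beauville--Laszlo and the discrepancy $\psi^{-1}\phi$, and then prove the recursion $\tilde{G}(\hat{\KK})_{m \cdot n}|_\Delta \cong \tilde{G}(\hat{\KK})_{(m-1)\cdot n}|_\Delta \times^{G(\hat{\OO})_n} G(\hat{\KK})_n$ by locally choosing an auxiliary trivialization $\psi'$ of $\sh{T}_{m-2}$ near $\bar{x}$ (this is precisely what the paper does in its $m = 3$ example). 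The one nit: with the paper's convention that a trivialization is a map $\sh{T}^0 \to \sh{T}$, your composition should be $\phi_{m-1} \circ \psi'$ (and the resulting group element $\psi^{-1} \circ \phi_{m-1} \circ \psi'$) rather than $\psi' \circ \phi_{m-1}^{-1}$, which does not typecheck in that convention; this does not affect the argument.
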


\begin{proof}
 By \ref{arc and loop group actions}, we have $\on{Gr}_n = G(\hat{\KK})_n/G(\hat{\OO})_n$, so that $\on{Conv} G(\hat{\KK})_n^m$ is clearly a $G(\hat{\OO})_n$-torsor over $\on{Conv}_n^m$, and $\on{Conv}_n^m$ is clearly the twisting of $\on{Conv} G(\hat{\KK})_n^m$ with $\on{Gr}_n$.  By \ref{convolution product twisting}, therefore, it suffices to show only \ref{en:inner diagram cover}.

 When $m = 2$, this is the claim that $\tilde{G}(\hat{\KK})_{2 \cdot n}|_\Delta = G(\hat{\KK})_n$.  Indeed, the left-hand side is the moduli space of data $(\vect{x}, \sh{T}_1, \phi_1, \psi)$, where $\vect{x} \in X^n(S)$, $\sh{T}_1$ is a $G$-torsor on $X_S$, $\phi_1$ is its trivialization on $X_S \setminus \bar{x}$, and $\psi$ is its trivialization on $\hat{X}_{\bar{x}}$.  By the Beauville--Laszlo theorem, the latter data is equivalent to giving $\psi^{-1} \phi_1 \in G(\hat{X}_{(\bar{x})})$, as claimed.
 
 In general, we claim that
 \begin{equation*}
  \tilde{G}(\hat{\KK})_{m \cdot n}|_\Delta
    = (\tilde{G}(\hat{\KK})_{(m - 1) \cdot n} \times^{G(\hat{\OO})_n} G(\hat{\KK})_n)|_\Delta
 \end{equation*}
 Since this is also true of $\on{Conv} G(\hat{\KK})_n^m$, the proposition would follow by induction. (In fact, we could remove the $\Delta$'s, but will not need to.) The proof is quite similar to that of \ref{convolution product twisting}; for clarity we exemplify it with the case $m = 3$.  We construct a map from the right-hand side to the left-hand side, starting with a pair of tuples
 \begin{align*}
  (\vect{x}, \sh{T}, \phi, \psi) \in \tilde{G}(\hat{\KK})_{2 \cdot n}(S), &&
  (\vect{x}, g) \in G(\hat{\KK})_n(S)
 \end{align*}
 (having the same $\vect{x}$, since we restrict to $\Delta$).  Thus, $\phi$ trivializes $\sh{T}$ over $X_S \setminus \bar{x}$ and $\psi$ trivializes it on $\smash{\hat{X}}_{\bar{x}}$, while $g \in G(\smash{\hat{X}}_{(\bar{x})})$.  We take $\sh{T}_1 = \sh{T}$ and define $\sh{T}_2$ by gluing, using the Beauville--Laszlo theorem:
 \begin{equation*}
  \sh{T}_2 = \sh{T}_1|_{X_S \setminus \bar{x}} \sqcup_{g\psi^{-1}} \sh{T}^0|_{\hat{X}_{\bar{x}}}.
 \end{equation*}
 Then $(\vect{x}, \sh{T}_1, \sh{T}_2, \phi, \text{can}., \text{can.}) \in \tilde{G}(\hat{\KK})_{3 \cdot n}$, where the maps ``$\text{can.}$'' denote, respectively, the canonical isomorphism of $\sh{T}_2$ with $\sh{T}_1$ on $X_S \setminus \bar{x}$ and the canonical trivialization of $\sh{T}_2|\smash{\hat{X}}_{\bar{x}}$.
 
 Conversely, given a tuple
 \begin{equation*}
  (\vect{x}, \sh{T}_1, \sh{T}_2, \phi_1, \phi_2, \psi) \in \tilde{G}(\hat{\KK})_{3 \cdot n}(S),
 \end{equation*}
 locally on $S$ we may choose a trivialization $\psi'$ of $\sh{T}_1$ on $\hat{X}_{\bar{x}}$ and simply define $g = \psi^{-1} \circ \phi_2 \circ \psi'$.  Then $(\vect{x}, \sh{T}_1, \phi_1, \psi') \in \tilde{G}(\hat{\KK})_{2 \cdot n}(S)$, $(\vect{x}, g) \in G(\hat{\KK})_n$, and $\sh{T}_2$ is by definition the gluing of $\sh{T}_1$ and the trivial torsor as above.  This identifies the fibers of the previous map with $G(\smash{\hat{\OO}})_n$, as desired.
\end{proof}

Using these special convolution diagrams, we can express the maps $m_p$ of \ref{eq:convolution diagram mult map} in terms of the actual multiplication maps on $G(\smash{\hat{\KK}})_n$.  The proof follows by comparing \ref*{eq:convolution diagram mult map} with the above construction.

\begin{theorem}{cor}{inner convolution multiplication}
 After \ref{inner convolution diagram diagonal}\ref{en:inner diagram diagonal}, the map $\on{Conv}^m_n \to \on{Gr}_n$, given by multiplying all the $G(\smash{\hat{\KK}})_n$ coordinates and applying the result to the $\on{Gr}_n$ coordinate, is the same as $m_p|_\Delta$, where $p = m \cdot n$ and $\Delta \subset X^{mn}$ as in the proposition. \qed
\end{theorem}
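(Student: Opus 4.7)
The plan is to unwind the identification $\on{Conv}^m_n \cong \tilde{\on{Gr}}_p|_\Delta$ established in the proof of \ref{inner convolution diagram diagonal}\ref{en:inner diagram diagonal}, and then observe that \ref{eq:convolution diagram mult map} specializes precisely to the convolution-action formula. I would proceed by induction on $m$, with the base case $m = 1$ being trivial since $\on{Conv}^1_n = \on{Gr}_n$ and both maps reduce to the identity on $\on{Gr}_n$.

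For the inductive step, fix an $S$-point $[g_1, g_2, \dots, g_m, z]$ of $\on{Conv}^m_n$, where $g_i \in G(\hat{\KK})_n(S)$ and $z \in \on{Gr}_n(S)$; by inductive hypothesis, $[g_2, \dots, g_m, z]$ corresponds to a tuple $(\vect{x}, \sh{T}'_2, \dots, \sh{T}'_m, \phi'_2, \dots, \phi'_m) \in \tilde{\on{Gr}}_{(m - 1) \cdot n}|_\Delta$ with the property that, relative to the canonical trivialization of $\sh{T}'_2$ on $\hat{X}_{\bar{x}}$ and the one on $X_S \setminus \bar{x}$ coming from $z$, the composite $\phi'_m \circ \cdots \circ \phi'_2$ on the punctured formal neighborhood realizes $g_2 \cdots g_m \cdot z$ via the action of \ref{arc and loop group actions}. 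Now apply the recursive construction from the proof of \ref{inner convolution diagram diagonal} (exemplified there for $m = 3$): choose a local trivialization $\psi'$ of $\sh{T}_1 := \sh{T}^0$ on $\hat{X}_{\bar{x}}$, and glue to produce $(\vect{x}, \sh{T}_1, \sh{T}_2, \dots, \sh{T}_m, \phi_1, \phi_2, \dots, \phi_m)$, where $\phi_1 = \phi'_2 \psi' g_1^{-1}$ (rearranged from $g_1 = \psi^{-1} \phi_2 \psi'$ in the notation of that proof, with $\psi$ trivial here) and $\phi_i$ for $i \geq 2$ is the canonical isomorphism induced by gluing.

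The key verification is then purely algebraic: composing on the punctured formal neighborhood, $\phi_m \circ \cdots \circ \phi_1$ (viewed as a section of $G(\hat{X}_{(\bar{x})})$ via Beauville--Laszlo and the canonical trivializations) equals $g_1 \cdot (g_2 \cdots g_m \cdot z)$ expressed in $G(\hat{\KK})_n/G(\hat{\OO})_n$. By \ref{eq:convolution diagram mult map}, the map $m_p|_\Delta$ outputs exactly $(\vect{x}, \sh{T}_m, \phi_m \circ \cdots \circ \phi_1|_{X_S \setminus \bar{x}})$, which under \ref{arc and loop group actions} is precisely the point $g_1 g_2 \cdots g_m \cdot z \in \on{Gr}_n$ produced by the convolution action.

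The main obstacle will be the bookkeeping of Beauville--Laszlo gluings: one must check that the ``can.'' maps appearing at each stage of the inductive construction contribute no extra factor, so that each stage introduces exactly one new $g_i$ and no spurious elements of $G(\hat{\OO})_n$ (which would anyway be absorbed into the quotient). Once this is verified, independence of the intermediate choice of $\psi'$ follows from the fact that changing $\psi'$ by an element of $G(\hat{\OO})_n$ simultaneously alters $\phi_1$ and the base-point of the subsequent trivialization, whose product is unchanged—reflecting exactly the equivalence relation defining the twisted product $\times^{G(\hat{\OO})_n}$ in $\on{Conv}^m_n$.
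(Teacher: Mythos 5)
Your strategy — unwind the identification of \ref{inner convolution diagram diagonal} against \ref{eq:convolution diagram mult map} and verify that the two ``multiplication'' formulas agree — is indeed what the paper's one-sentence proof intends. However, the inductive step as written contains a genuine gap. The recursive construction in the proof of \ref{inner convolution diagram diagonal} builds $\tilde{G}(\hat{\KK})_{m\cdot n}|_\Delta$ from $\tilde{G}(\hat{\KK})_{(m-1)\cdot n}|_\Delta$ by grafting a new torsor onto the \emph{end} of the chain, so the new $G(\hat{\KK})_n$-coordinate is the last one. Your induction instead peels off the \emph{first} coordinate $g_1$, and this does not carve out a copy of $\tilde{\on{Gr}}_{(m-1)\cdot n}|_\Delta$: dropping $(\sh{T}_1,\phi_1)$ leaves $\phi_2\colon \sh{T}_1 \to \sh{T}_2$, which by \ref{convolution product grassmannian} must become a \emph{trivialization} of $\sh{T}_2$ in the reduced tuple, but it is only an isomorphism between two generically nontrivial torsors. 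Declaring $\sh{T}_1 := \sh{T}^0$ is not available as a repair, since $\sh{T}_1$ is determined by $g_1$ through Beauville--Laszlo gluing and is nontrivial for generic $g_1$; and the formula $\phi_1 = \phi'_2\psi'g_1^{-1}$ does not typecheck, because $\phi'_2$ takes values in $\sh{T}'_2$ while $\phi_1$ must land in $\sh{T}_1$. There is also an indexing slip: $\on{Conv}^m_n$ has $m$ terms \emph{in all}, hence $m-1$ copies of $G(\hat{\KK})_n$, not $m$.

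A correct version of your argument would induct by removing the last $G(\hat{\KK})_n$-coordinate, matching the paper's recursion; or, more directly, avoid induction by lifting to $\tilde{G}(\hat{\KK})_{m\cdot n}|_\Delta$ and computing in transition-function coordinates. Choosing local trivializations $\psi_k$ of $\sh{T}_k$ on $\hat{X}_{\bar{x}}$ for $1 \le k \le m$, one telescopes
\begin{equation*}
 \psi_m^{-1}(\phi_m\circ\cdots\circ\phi_1)
  = \bigl(\psi_m^{-1}\phi_m\psi_{m-1}\bigr)\cdots\bigl(\psi_2^{-1}\phi_2\psi_1\bigr)\bigl(\psi_1^{-1}\phi_1\bigr),
\end{equation*}
and each factor is exactly one of the $G(\hat{\KK})_n$-coordinates exhibited in the proof of \ref{inner convolution diagram diagonal}. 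The resulting identity between $m_p|_\Delta$ and the iterated action descends through the $G(\hat{\OO})_n$-quotients, as you correctly anticipated in your remark about independence of the auxiliary trivializations.
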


Although we have exhibited the inner convolution diagrams as special cases of the outer ones, in
fact there is a similar relationship in the other order.  In order to connect with \ref{convolution product grassmannian} in full generality, we consider (for once) arbitrary partitions $p$.

\begin{theorem}{prop}{outer convolution diagram trick}
 Let $p$ be any partition of $n$ with $m$ parts.  Then $\tilde{\on{Gr}}_p$ is the subspace of $\on{Conv}_n^m$ whose $k$'th coordinate in $G(\smash{\hat{\KK}})_n$ (using the product representation of \ref{inner convolution diagram}) parametrizes the pairs $(\vect{x}, g)$ such that $g$ extends to $\smash{\hat{X}}_{\bar{x}} \setminus \bar{x}_k$, where $\vect{x} = (\vect{x}_1, \dots, \vect{x}_m)$ is partitioned according to $p$.  In addition, $\tilde{G}(\smash{\hat{\KK}})_p$ is the restriction of $\on{Conv} G(\smash{\hat{\KK}})_n^m$ to this subspace.
\end{theorem}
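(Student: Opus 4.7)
The plan is to proceed by induction on the number of parts $m$, exploiting the recursive structures on both sides. By definition one has $\on{Conv}_n^m \cong G(\hat{\KK})_n \times^{G(\hat{\OO})_n} \on{Conv}_n^{m-1}$, and by the general-$m$ version of \ref{convolution product twisting}, $\tilde{\on{Gr}}_p$ is a $\on{Gr}_{\card{p_m}}$-bundle over $\tilde{\on{Gr}}_{p'}$ associated with the torsor $\tilde{G}(\hat{\KK})_p$, where $p'$ denotes $p$ with its last part removed. In the base case $m = 1$, there are no $G(\hat{\KK})_n$ coordinates and the condition is vacuous (since $\hat{X}_{\bar{x}} \setminus \bar{x}_1 = \hat{X}_{(\bar{x})}$ is where $g$ is defined by default), so $\tilde{\on{Gr}}_{(n)} = \on{Gr}_n = \on{Conv}_n^1$ and the claim is tautological.

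For the inductive step, I would construct the embedding $\tilde{\on{Gr}}_p \hookrightarrow \on{Conv}_n^m$ explicitly via local trivializations. Given $(\vect{x}, \sh{T}_1, \dots, \sh{T}_m, \phi_1, \dots, \phi_m) \in \tilde{\on{Gr}}_p(S)$, choose locally on $S$ trivializations $\psi_k$ of $\sh{T}_k$ over the \emph{full} formal neighborhood $\hat{X}_{\bar{x}}$ (not merely over $\hat{X}_{\bar{x}_k}$), which exist because $G$ is smooth affine. Setting $\psi_0 = \id$ and $g_k = \psi_k^{-1} \circ \phi_k \circ \psi_{k-1}$ produces, for each $k < m$, an element $g_k$ of $G(\hat{\KK})_n$ that is \emph{a priori} defined on the larger set $\hat{X}_{\bar{x}} \setminus \bar{x}_k$, because $\phi_k$ is defined on $X_S \setminus \bar{x}_k$; similarly the coset $[g_m] \in \on{Gr}_n$ admits a lift with this property. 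The non-uniqueness of the $\psi_k$ (they are defined only up to right multiplication by $G(\hat{\OO})_n$) matches exactly the anti-diagonal $G(\hat{\OO})_n$-action defining $\on{Conv}_n^m$, so the construction descends to a well-defined map.

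The inverse is built by Beauville--Laszlo gluing. Starting from a tuple $(g_1, \dots, g_{m-1}, [g_m])$ satisfying the extension conditions, put $\sh{T}_0 = \sh{T}^0$ and inductively define $\sh{T}_k$ by gluing $\sh{T}_{k-1}|_{X_S \setminus \bar{x}_k}$ to $\sh{T}^0|_{\hat{X}_{\bar{x}_k}}$ along $g_k$ restricted from $\hat{X}_{\bar{x}} \setminus \bar{x}_k$ to $\hat{X}_{\bar{x}_k} \setminus \bar{x}_k$ (composed with the trivialization $\psi_{k-1}$ of $\sh{T}_{k-1}$ over $\hat{X}_{\bar{x}}$ carried forward from the previous step); the isomorphism $\phi_k$ is the tautological one on $X_S \setminus \bar{x}_k$. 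A direct calculation verifies mutual inverseness, with the subtle point being that the extension property of $g_k$ across $\bar{x}_l$ for $l \neq k$ is precisely what guarantees that $\psi_k$ extends from $\hat{X}_{\bar{x}_k}$ to $\hat{X}_{\bar{x}}$, as required to continue the recursion.

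The second assertion concerning $\tilde{G}(\hat{\KK})_p$ follows formally from the first: both $\tilde{G}(\hat{\KK})_p$ and $\on{Conv} G(\hat{\KK})_n^m$ are the $G(\hat{\OO})_n$-torsors obtained by unspecifying the final $\on{Gr}$-factor (and restoring a trivialization over the appropriate formal neighborhood), so the identification of base spaces constructed above lifts tautologically to the torsor level. The main obstacle I anticipate is bookkeeping rather than conceptual: carefully disentangling the several formal schemes in play ($\hat{X}_{(\bar{x})}$, $\hat{X}_{\bar{x}} \setminus \bar{x}_k$, and $\hat{X}_{\bar{x}_k} \setminus \bar{x}_k$), tracking which trivializations live on which domains, and verifying that the Beauville--Laszlo gluing at each inductive step is compatible with the trivialization $\psi_{k-1}$ inherited from the previous one.
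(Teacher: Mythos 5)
Your approach is correct and, at its core, does the same thing the paper does: it uses the modular interpretation of both $\on{Conv}_n^m$ (as a diagonal restriction of $\tilde{\on{Gr}}_{m \cdot n}$) and $\tilde{\on{Gr}}_p$, then matches the extension condition on the $G(\hat\KK)_n$-coordinate $g_k$ (extending across $\hat{X}_{\bar{x}} \setminus \bar{x}_k$) with the extension condition on $\phi_k$ (extending across $X_S \setminus \bar{x}_k$) that is built into the definition of $\tilde{\on{Gr}}_p$.

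The main difference is one of economy rather than substance. The paper's own proof is a two-sentence observation because it leans entirely on \ref{inner convolution diagram diagonal}\ref{en:inner diagram diagonal}: once $\on{Conv}_n^m = \tilde{\on{Gr}}_{m \cdot n}|_\Delta$ is granted, the subspace where $\phi_k$ extends to $X_S \setminus \bar{x}_k$ is \emph{by definition} $\tilde{\on{Gr}}_p$, and there is nothing left to check. Your proposal, by contrast, redoes the ``choose trivializations $\psi_k$ over $\hat{X}_{\bar{x}}$, set $g_k = \psi_k^{-1}\phi_k\psi_{k-1}$, Beauville--Laszlo the inverse'' argument from scratch --- which is exactly the content of \ref{inner convolution diagram diagonal}. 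Re-proving it is valid but duplicates work; you could instead cite that lemma and reduce the whole argument to the observation that the extension condition on $g_k$ corresponds precisely to the one on $\phi_k$. A few smaller points: the ``induction on $m$'' framing is misleading, since your construction is direct and you never invoke an inductive hypothesis (the ``recursion'' in defining $\psi_k$ step by step is bookkeeping, not induction); and you should be explicit that the trivializations $\psi_k$ exist only after an \'etale-local refinement of $S$ (you do say ``locally on $S$,'' but this deserves emphasis, as the identification is a priori only a map of sheaves). The treatment of the torsor assertion as a formal lift is correct and matches the paper.
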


\begin{proof}
 Following \ref{inner convolution diagram diagonal}\ref{en:inner diagram diagonal}, we may show that $\tilde{\on{Gr}}_p \subset \tilde{\on{Gr}}_{m \cdot n}|_\Delta$ is identified with the subspace consisting of points $(\vect{x}, \{\sh{T}_k\}, \{\phi_k\})$ such that each isomorphism $\phi_k$ extends to $X_S \setminus \bar{x}_k$.  This is the actual definition, however.
 
 Likewise, for $\tilde{G}(\hat{\KK})_p$, we can show that it is identified with the subspace of $\tilde{G}(\hat{\KK})_{m \cdot n}$ consisting of points $(\vect{x}, \{\sh{T}_k\}_{k < m}, \{\phi_k\}_{k < m}, \psi)$ for which the $\phi_k$'s extend as above and for which $\psi$ extends to $X_S \setminus \bar{x}_m$, which is once again the definition.
\end{proof}

\begin{theorem}{cor}{G(O)_n acts on twisted Gr}
 Let $p$ be any partition of $n$.  There is an action of $G(\hat{\OO})_n$ on $\tilde{\on{Gr}}_p$ such that the map $m_p$ is $G(\hat{\OO})_n$-equivariant.
\end{theorem}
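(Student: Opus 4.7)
The plan is to transport an evident action on the inner convolution diagram across the identification provided by Proposition \ref{outer convolution diagram trick}.

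First, I would note that $\on{Conv}_n^m = G(\hat{\KK})_n \times^{G(\hat{\OO})_n} \cdots \times^{G(\hat{\OO})_n} G(\hat{\KK})_n \times^{G(\hat{\OO})_n} \on{Gr}_n$ carries a tautological \emph{left} $G(\hat{\KK})_n$-action by left multiplication on the leftmost factor. This action descends to the twisted product because all the quotients defining $\on{Conv}_n^m$ are on the right. Restricting to the subgroup $G(\hat{\OO})_n \subset G(\hat{\KK})_n$ yields in particular a left action of $G(\hat{\OO})_n$ on $\on{Conv}_n^m$.

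Next, I would use Proposition \ref{outer convolution diagram trick} to view $\tilde{\on{Gr}}_p \subset \on{Conv}_n^m$ as the subspace cut out by requiring the $k$th coordinate $g_k$ to extend to $\hat{X}_{\bar{x}} \setminus \bar{x}_k$. This condition is manifestly stable under the action just described: if $g_0 \in G(\hat{\OO})_n$, then $g_0$ extends to all of $\hat{X}_{\bar x}$, so $g_0 g_1$ still extends off $\bar{x}_1$, while the remaining coordinates are unaffected. (The condition is also bi-invariant under the $G(\hat{\OO})_n$-ambiguity inherent in the twisted product, for the same reason, so it is genuinely well-defined on the quotient.) Consequently the action restricts to an action of $G(\hat{\OO})_n$ on $\tilde{\on{Gr}}_p$.

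Finally, for equivariance of $m_p$: by Corollary \ref{inner convolution multiplication}, combined with the embedding of $\tilde{\on{Gr}}_p$ into $\tilde{\on{Gr}}_{m \cdot n}|_\Delta = \on{Conv}_n^m$ from the proof of \ref{outer convolution diagram trick}, the map $m_p$ is simply the restriction of the composition-then-act map
\[
 (g_1, \dots, g_m, s) \longmapsto (g_1 g_2 \cdots g_m) \cdot s
\]
from $\on{Conv}_n^m$ to $\on{Gr}_n$. This is tautologically equivariant for left multiplication by $G(\hat{\OO})_n$, matching against the natural action of \ref{arc and loop group actions} on the target. There is no real obstacle; the only item requiring a line of care is the bi-invariance check ensuring that the extension conditions defining $\tilde{\on{Gr}}_p$ inside $\on{Conv}_n^m$ are compatible with both the twisted-product quotients and the new left action.
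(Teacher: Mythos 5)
Your proof is correct and takes essentially the same approach as the paper: define the $G(\hat{\OO})_n$-action on $\on{Conv}_n^m$ by left multiplication on the first factor, note via \ref{inner convolution multiplication} that $m_p|_\Delta$ is then equivariant, and verify via \ref{outer convolution diagram trick} that the subspace $\tilde{\on{Gr}}_p \subset \on{Conv}_n^m$ is preserved. You spell out the stability and bi-invariance checks in more detail than the paper, but the underlying argument is the same.
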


\begin{proof}
 $G(\hat{\OO})_n$ acts on $\on{Conv}^m_n$ by multiplying the first coordinate on the left, and by \ref{inner convolution multiplication}, this makes $m_p|_\Delta$ equivariant when $p = m \cdot n$.  For more general $p$, we note that this action preserves $\smash{\tilde{\on{Gr}}}_p \subset \on{Conv}^m_n$, following the description in \ref{outer convolution diagram trick}.
\end{proof}

Having these variants of the same definition will play a key role in the theorem which we will prove in the next section.

\subsection*{Special properties of the torus}
 
Since when $G = T$ is a torus it is commutative, its torsors admit a tensor product and, as previously, this gives a simplification of the factorizable structure of $\on{Gr}_{T,X^n}$ and allied spaces.

\begin{theorem}{lem}{torus convolution diagram}
 For any partition $p$ into $m$ parts, the factorization of the $T(\hat{\OO})_{n_2}$-bundle of \ref{convolution product twisting},
 \begin{equation*}
  \tilde{T}(\hat{\OO})_p \times_{X^{\card p_m}} \on{Gr}_{T, X^{\card p_m}}
   \xrightarrow{\pi} \tilde{\on{Gr}}_p
 \end{equation*}
 descends to a factorization of $\tilde{\on{Gr}}_p$ extending the natural factorzation over $X^n_p$,
 \begin{equation*}
  \tilde{\on{Gr}}_p \cong \tilde{\on{Gr}}_{p'} \times \on{Gr}_{T,X^{\card p_m}}.
 \end{equation*}
 Furthermore, the map $m_p$ then becomes the multiplication map.
\end{theorem}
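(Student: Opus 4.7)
The plan is to exploit the tensor product of $T$-torsors, which (as remarked after the proof of \ref{grassmannian is factorizable}) already realizes the factorizable structure of $\on{Gr}_{T, X^n}$ globally, not merely over $X^n_p$. The obstruction that forces one to restrict to $X^n_p$ in the construction of \ref{convolution product twisting} is the need to glue torsors via the identity away from the overlap; for a commutative group, tensoring accomplishes the same end without this restriction.

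First I would construct the candidate factorization map directly. Taking $p = (n_1, n_2)$ for notational simplicity (the general case following by induction on $m$, viewing $p$ as the concatenation of $p'$ with its last part), send a point $(\vect{x}, \sh{T}_1, \sh{T}_2, \phi_1, \phi_2) \in \tilde{\on{Gr}}_p(S)$ to the pair
\begin{equation*}
  ((\vect{x}_1, \sh{T}_1, \phi_1),\, (\vect{x}_2, \sh{T}_2 \otimes \sh{T}_1^{-1}, \bar\phi_2)),
\end{equation*}
where $\bar\phi_2$ is the trivialization of $\sh{T}_2 \otimes \sh{T}_1^{-1}$ on $X_S \setminus \bar{x}_2$ obtained by tensoring the isomorphism $\phi_2 \colon \sh{T}_1 \to \sh{T}_2$ with $\on{id}_{\sh{T}_1^{-1}}$. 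An inverse sends $((\vect{x}_1, \sh{T}_1, \phi_1), (\vect{x}_2, \sh{S}, \psi))$ to $(\vect{x}, \sh{T}_1, \sh{T}_1 \otimes \sh{S}, \phi_1, \on{id}_{\sh{T}_1} \otimes \psi^{-1})$; these two assignments are visibly mutually inverse.

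Next I would check compatibility with \ref{convolution diagram factorization} over $X^n_p$ and the descent claim. The first follows from the observation after the proof of \ref{grassmannian is factorizable}: when $\bar{x}_1$ and $\bar{x}_2$ are disjoint, the gluing construction there and the tensor construction above yield the same point of $\on{Gr}_{T, X^{n_2}}$, up to the canonical trivialization $\sh{T}_1 \otimes \sh{T}_1^{-1} \cong \sh{T}^0$. Composing our global isomorphism with the map $\pi$ of \ref{convolution product twisting} is then seen to simply forget the auxiliary trivialization $\psi$; since this trivialization is precisely the coordinate along which the $T(\hat{\OO})_{n_2}$-action moves the total space of the bundle, the descent claim is established. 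Finally, $m_p$ corresponds to multiplication because \ref{eq:convolution diagram mult map} produces $(\vect{x}, \sh{T}_2, \phi_2 \circ \phi_1)$, and under the identification $\sh{T}_1 \otimes (\sh{T}_2 \otimes \sh{T}_1^{-1}) \cong \sh{T}_2$ the combined trivialization $\phi_1 \otimes \bar\phi_2$ on $X_S \setminus \bar{x}$ matches $\phi_2 \circ \phi_1$.

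I expect the main difficulty to be the bookkeeping around the $T(\hat{\OO})_{n_2}$-torsor of \ref{convolution product twisting}: although the tensor construction is naturally defined on $\tilde{\on{Gr}}_p$ itself, verifying that the relevant triangle involving $\pi$ commutes strictly (rather than only up to canonical $2$-isomorphism) requires careful tracking of the identifications of $\sh{T} \otimes \sh{T}^{-1}$ with the trivial torsor, and of how the $G(\hat{\OO})_n$-equivariance furnished by \ref{G(O)_n acts on twisted Gr} corresponds to the shift of the trivialization on the $\on{Gr}_{T,X^{n_2}}$-factor.
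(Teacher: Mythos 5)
Your proposal takes essentially the same approach as the paper's proof: exploit the tensor product of $T$-torsors to extend the factorization globally (your $\sh{T}_2 \otimes \sh{T}_1^{-1}$ matches the paper's $\sh{T}_1^{-1} \otimes \sh{T}_2$), establish descent via the triviality of the $T(\hat{\OO})_{n_2}$-action on $\on{Gr}_{T,X^{n_2}}$, and identify $m_p$ with multiplication by comparing $\phi_2 \circ \phi_1$ with the tensor trivialization. One small sign slip worth fixing: your inverse should produce $\on{id}_{\sh{T}_1} \otimes \psi$ rather than $\on{id}_{\sh{T}_1} \otimes \psi^{-1}$, since $\psi \colon \sh{T}^0 \to \sh{S}$ already points in the direction needed to give $\phi_2 \colon \sh{T}_1 \to \sh{T}_1 \otimes \sh{S}$; this agrees with the paper's formula $(\sh{T}_2, \phi_2) = (\sh{T}_1' \otimes \sh{T}_2', \id \otimes \phi_2')$.
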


\begin{proof}
 The action of $T(\hat{\OO})_{\card p_m}$ on $\on{Gr}_{\card p_m}$ is trivial, since $T$ is commutative, so it acts on the above bundle only through the first factor.  Thus, the product decomposition descends to the base, as claimed.
 
 For simplicity, suppose $p = (n_1, n_2)$.  Composing the constructions of \ref{convolution product twisting} and \ref{convolution diagram factorization}, we see that the point $(\vect{x}, \sh{T}, \phi) \in \on{Gr}_{T,X^{n_2}}$ obtained by applying factorization to $\pi(\vect{x}, \sh{T}_1, \phi_1, \psi, \sh{T}_2, \phi_2)$ is the torsor $\sh{T}$ obtained by gluing $\sh{T}_2|\hat{X}_{\bar{x}_2}$ to the trivial torsor on $X \setminus \bar{x}_2$ along the trivialization $\phi_2 \circ (\psi^{-1} \circ \phi_1)$, and $\phi$ is the natural trivialization on $X \setminus \bar{x}_2$.  Since $\psi^{-1} \circ \phi_1 \in T(\hat{\OO})_{n_2}$, which acts trivially on $\on{Gr}_{T,X^{n_2}}$, we have $(\sh{T}, \phi) \cong (\sh{T}_2, \phi_2)$, so that both definitions of factorization agree.
 
 An alternative way of describing the factorization of the convolution product is as follows, for $(\vect{x}, \sh{T}_1, \sh{T}_2, \phi_1, \phi_2) \in \tilde{\on{Gr}}_{n_1, n_2}(S)$ and $(\vect{x}_i, \sh{T}_i', \phi_i') \in \on{Gr}_{n_i}$ ($i = 1, 2$), we have $(\sh{T}_1', \phi_1') = (\sh{T}_1, \phi_1)$ and
 \begin{gather*}
  (\sh{T}_2', \phi_2') = (\sh{T}_1^{-1} \otimes \sh{T}_2, \id \otimes \phi_2) \\
  (\sh{T}_2, \phi_2) = (\sh{T}_1' \otimes \sh{T}_2', \id \otimes \phi_2').
 \end{gather*}
 Since the multiplication map produces $(\sh{T}, \phi) \in \on{Gr}_{n_1 + n_2}(S)$ with $\sh{T} = \sh{T}_1' \otimes \sh{T}_2'$ and $\phi = \phi_1' \otimes \phi_2'$, and $m_p$ gives $\sh{T} = \sh{T}_2$ with $\phi = \phi_2 \circ \phi_1$, the two agree with the understanding that the composition of maps of $T$-torsors and their product are the same.
\end{proof}

We are inspired to use the notation $\tilde{\on{Gr}}{}_p^{\lambda_1, \dots, \lambda_n}$ to refer to the twisted product of the individual components of $\on{Gr}_n$ indexed by the $\lambda$'s in this order.  We use the same indexing for $\tilde{G}(\hat{\KK})_p$ as for $\tilde{\on{Gr}}_{p'}$ (that is, the ``$G(\hat{\OO})_{\card p_m}$ part'' does not contribute indices).

This lemma is interesting on account of the non-triviality of the torsor $\tilde{T}(\hat{\OO})_p$ even though $T$ is a torus.  In fact, via that space we can give another description of the bilinear form of \ref{quadratic and bilinear forms}.  To do so, we indulge in some generalities; here, the notion of multiplicative torsor is a simplification of that for gerbes.  

\begin{theorem}{lem}{torsors on the torus}
 Let $A$ be an abelian group. An $A$-torsor $\sh{T}$ on $T$ is identified with a homomorphism $\map{\phi}{\Lambda_T}{A}$; all such torsors are multiplicative.  For any $\lambda \in \Lambda_T = \on{Hom}(\Gm, T)$, we have $\lambda^* \sh{T} = \sh{L}_{\phi(\lambda)}$, where for any $a \in A$, $\sh{L}_a$ is, as before \ref{line bundle order}, the local system of rank $1$ on $\Gm$ with monodromy $a$.
\end{theorem}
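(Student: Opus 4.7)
The plan is to deduce everything from the standard identification between $A$-torsors on a nice space and locally constant $A$-valued cocycles, i.e., homomorphisms from $\pi_1$ (with basepoint the identity $1 \in T$) to $A$. Since $T$ is a complex algebraic torus, it is homotopy equivalent to a compact real torus whose fundamental group is canonically $\Lambda_T = X_*(T) = \on{Hom}(\Gm, T)$: a cocharacter $\lambda \colon \Gm \to T$ restricted to the unit circle in $\Gm(\C) = \C^*$ is the generating loop of the $\lambda$-factor in $\pi_1(T)$. So first I would state this identification, so that an $A$-torsor $\sh{T}$ on $T$ (trivialized at $1$, which is automatic since we may work up to isomorphism) corresponds uniquely to $\phi \colon \Lambda_T \to A$.

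Next, for the third assertion: by functoriality of $\pi_1$, the map $\lambda_* \colon \pi_1(\Gm) = \Z \to \pi_1(T) = \Lambda_T$ induced by $\lambda \in \Lambda_T$ sends the generator $1$ to $\lambda$ itself, by the very identification of $\Lambda_T$ with $\pi_1(T)$ above. Therefore $\lambda^* \sh{T}$ corresponds to the composite homomorphism $\phi \circ \lambda_* \colon \Z \to A$, $1 \mapsto \phi(\lambda)$. By definition, this is the rank-one local system of monodromy $\phi(\lambda)$ on $\Gm$, that is, $\sh{L}_{\phi(\lambda)}$.

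For the second assertion (automatic multiplicativity), I would invoke \ref{constant sheaf multiplicativity}: $T$ is connected, so any $A$-torsor on $T$ trivial at $1$ has at most one multiplicative structure, and by the same lemma it suffices to check that some isomorphism $m^* \sh{T} \cong \sh{T} \boxtimes \sh{T}$ exists whose restriction to $\{1\} \times T$ is the identity. But under the classification by $\on{Hom}(\pi_1(-), A)$ and the fact that $\pi_1(m) \colon \Lambda_T \oplus \Lambda_T \to \Lambda_T$ is the addition map, the homomorphism classifying $m^* \sh{T}$ is $(\lambda, \mu) \mapsto \phi(\lambda + \mu) = \phi(\lambda)\phi(\mu)$, which is exactly the homomorphism classifying $\sh{T} \boxtimes \sh{T}$ on $T \times T$ (whose fundamental group is $\Lambda_T \oplus \Lambda_T$). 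The resulting isomorphism is the identity over $\{1\} \times T$ and $T \times \{1\}$ by construction.

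No step is really a significant obstacle here, since everything is formal once one accepts the identification $\pi_1(T) \cong \Lambda_T$; the mildest care needed is to verify that the functoriality of this identification in $T$ (applied to $\lambda \colon \Gm \to T$ and to $m \colon T \times T \to T$) is compatible with the classification of $A$-torsors, which is standard covering-space theory.
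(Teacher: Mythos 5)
Your proof is correct and follows essentially the same route as the paper's: identify $A$-torsors on $T$ with characters of $\pi_1(T,1) \cong \Lambda_T$, read off the pullback along $\lambda$ from the induced map on $\pi_1$, and derive multiplicativity from the fact that $\pi_1(m)$ is addition on $\Lambda_T \oplus \Lambda_T$. You are a bit more explicit, in particular invoking \ref{constant sheaf multiplicativity} to package the uniqueness of the multiplicative structure, whereas the paper compresses the same point into the remark that classes in $\pi_1(T,1)$ are themselves homomorphisms $\Gm \to T$; this is a matter of exposition, not of substance.
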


\begin{proof}
 In general, $A$-torsors on a space are identified with characters of the fundamental group by the above prescription; we identify $\Lambda_T$ with $\pi_1(T, 1)$.  The multiplicativity follows from the fact that all the classes in $\pi_1(T,1)$ are homomorphisms $\Gm \to T$.
\end{proof}

We now give a generalization of the construction of \ref{line bundle order}; as for multiplicative torsors, the notion of twisted equivariance is similar to that for gerbes. Let $\sh{T}$ be a $T$-torsor on a space $Y$, and let $E$ be its total space. Then for any homomorphism $\map{\phi}{\Lambda_T}{A}$, denoting $\sh{L}_\phi$ the associated multiplicative $A$-torsor on $T$, we define $\sh{T}^{\log \phi}$ to be the gerbe whose sections over $U \subset Y$ are the $\sh{L}_\phi$-twisted $T$-equivariant $A$-torsors on $E|_U$. Since $E$ is locally trivial, we see that this is indeed an $A$-gerbe, as before.

Let $D \subset Y$ be a smooth divisor; we will say that a trivialization $\sh{T}^0 \to \sh{T}|_{Y \setminus D}$ has \emph{order $\lambda$} for $\lambda \in \Lambda_T$ if it induces an isomorphism $\sh{T} \cong \OO(\lambda D)$, in the notation of \ref{torus grassmannian properties}\ref{en:torus grassmannian 1}.  The following proposition generalizes \ref{line bundle order} and has, essentially, the same proof (the coweight $\lambda$ enters the computation as the change of trivialization, in place of the parameter $z$ used there).

\begin{theorem}{prop}{order of torus twisting gerbe}
 Suppose we have a trivialization $u$ of order $\lambda$ as above; then the induced trivialization of $\sh{T}^{\log \phi}$ has order $\phi(\lambda)$ in the sense of \ref{2-line bundle of a Cartier divisor}. \qed
\end{theorem}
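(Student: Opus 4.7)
The plan is to mimic the proof of \ref{line bundle order}, working locally on a tubular neighborhood $V \cong C \times \C$ of $D$ with normal coordinate $z$ so that $V \cap D = \{z = 0\}$. First I would restrict to such a $V$ on which $\sh{T}$ itself is trivial, picking an identification $E|_V \cong T \times V$; this is possible after refinement since $T$-torsors are locally trivial. The given order-$\lambda$ trivialization $u$ on $Y \setminus D$ provides a second trivialization of $E|_{V \setminus D}$, and by definition of ``order $\lambda$'' (cf.\ \ref{torus grassmannian properties}\ref{en:torus grassmannian 1}) the two trivializations differ by left multiplication by $f = \lambda \circ z \colon V \setminus D \to T$, since $\lambda \in \Lambda_T = \on{Hom}(\Gm, T)$.

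Next I would make the two induced trivializations of $\sh{T}^{\log \phi}|_{V \setminus D}$ explicit. Relative to the first trivialization of $E|_V$, any $\sh{L}_\phi$-twisted $T$-equivariant $A$-torsor is canonically of the form $\sh{L}_\phi \boxtimes \sh{S}$ for an $A$-torsor $\sh{S}$ on $V$; call $T_V$ the resulting trivialization (taking $\sh{S}$ trivial), which is a section of $\sh{T}^{\log \phi}$ on all of $V$. The trivialization $u$ gives a second section $U$ of $\sh{T}^{\log \phi}|_{V \setminus D}$, namely the one corresponding to the trivial twisted equivariant torsor when $E|_{V \setminus D}$ is expressed via $u$. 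Both are defined on $V \setminus D$ and I need to compute the $A$-torsor $U \otimes T_V^{-1}$ and compare its monodromy with $\phi(\lambda)$.

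The key computation compares these via the automorphism $m_f \colon T \times (V \setminus D) \to T \times (V \setminus D)$, $(t,v) \mapsto (f(v)\cdot t, v)$, which factors as $(t,v) \mapsto (f(v), t, v)$ followed by $m \times \id \colon T \times T \times (V \setminus D) \to T \times (V \setminus D)$. Multiplicativity of $\sh{L}_\phi$ gives $m^*\sh{L}_\phi \cong \sh{L}_\phi \boxtimes \sh{L}_\phi$, so pulling back $\sh{L}_\phi \boxtimes \sh{T}^0$ along $m_f$ yields $\sh{L}_\phi \boxtimes f^*\sh{L}_\phi$. Since $f = \lambda \circ z$ and \ref{torsors on the torus} identifies $\lambda^* \sh{L}_\phi \cong \sh{L}_{\phi(\lambda)}$ on $\Gm$, this equals $\sh{L}_\phi \boxtimes z^*\sh{L}_{\phi(\lambda)}$. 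Hence $U$ differs from $T_V$ on $V \setminus D$ by the $A$-torsor $z^* \sh{L}_{\phi(\lambda)}$, which is the pullback along $z$ of the order-$\phi(\lambda)$ local system on $\Gm$, and so has monodromy $\phi(\lambda)$ about $D$. Invoking the definition in \ref{2-line bundle of a Cartier divisor}, this is exactly the statement that the induced trivialization of $\sh{T}^{\log \phi}$ has order $\phi(\lambda)$.

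The main obstacle is the bookkeeping of how a change of group-theoretic trivialization on $E$ translates into a change on $\sh{L}_\phi$-twisted equivariant $A$-torsors; once one verifies that this is literally pullback along $m_f$ (using the $T$-equivariance datum), the combination of multiplicativity of $\sh{L}_\phi$ and \ref{torsors on the torus} makes the rest formal, reducing the general torus case to the one-parameter-subgroup case already handled by \ref{line bundle order}.
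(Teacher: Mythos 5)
Your proposal is correct and follows precisely the route the paper indicates: the paper does not write out the proof but states just before the proposition that it ``generalizes \ref{line bundle order} and has, essentially, the same proof (the coweight $\lambda$ enters the computation as the change of trivialization, in place of the parameter $z$ used there),'' which is exactly the substitution your argument carries out. Your expanded version --- trivializing $E$ locally, identifying the change of trivialization as multiplication by $f = \lambda \circ z$, using multiplicativity of $\sh{L}_\phi$ to obtain $f^*\sh{L}_\phi = z^*\sh{L}_{\phi(\lambda)}$ via \ref{torsors on the torus} --- fills in the details the paper leaves to the reader, and correctly tracks the monodromy.
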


Now we can characterize a familiar gerbe using the trivializations $u$ and $v$ of \ref{twisted arc group trivializations}.  We consider the not-quite $T$-torsors $\tilde{T}(\hat{\OO})_p$, where the base space is $\tilde{\on{Gr}}_{p'}$ and $\card p_m = 1$.  These are $T(\smash{\hat{\OO}})_1$-torsors, but we have a natural map $T(\smash{\hat{\OO}})_1 \to T \times X$, where for $g \in T(\smash{\hat{X}}_{\bar{x}})$ the $S$-point $(x, g)$ goes to $(x, g|_S)$, with $S \cong \bar{x} \subset \smash{\hat{X}}_{\bar{x}}$ identified with its graph.  We will refer to the $T$-torsor on $\smash{\tilde{\on{Gr}}}_{p'} \times X$ associated with $\smash{\tilde{T}(\hat{\OO})}_p$ as $\sh{T}_p$.

For simplicity, we will consider $p = (n - 1, 1)$ for some integer $n$.  Then for weights $\lambda_1, \dots, \lambda_{n - 1} \in \Lambda_T$, by \ref{torus grassmannian properties} we identify $\smash{\on{Gr}_{n - 1}^{\lambda_1, \dots, \lambda_{n - 1}}} \cong X^{n - 1}$, and thus $\sh{T}_{n - 1, 1}$ with a $T$-torsor on $X^n$.  We call $\map{\pi_{n - 1, 1}}{\smash{\tilde{T}(\hat{\OO})}_{n - 1, 1}}{X^n}$ the structure map.

\begin{theorem}{prop}{twisting gerbe for bilinear form}
 Let $\map{\kappa}{\Lambda_T \otimes \Lambda_T}{A}$ be a bilinear form, and for any $\mu \in \Lambda_T$, let $\kappa_\mu(\farg) = \kappa(\farg, \mu)$.  Then the trivialization $u$ on the complement of the divisors $\Delta_{in}$ for $i < n$ makes $\sh{T}_{n - 1, 1}^{\log \kappa_\mu} \cong \bigotimes_i \OO(\Delta_{in})^{\log \kappa(\lambda_i, \mu)}$.
\end{theorem}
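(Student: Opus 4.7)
\smallskip

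The plan is to reduce the claim to a local computation about each exceptional divisor $\Delta_{in}$ and then assemble the contributions using the reducible-order lemma. First I would apply Proposition \ref{order of torus twisting gerbe}: it says exactly that the order in $A$ of the trivialization of $\sh{T}_{n-1,1}^{\log \kappa_\mu}$ about a smooth divisor equals $\kappa_\mu$ applied to the order in $\Lambda_T$ of the trivialization $u$ of the underlying $T$-torsor. So it suffices to show that $u$, regarded as a trivialization of the $T$-torsor $\sh{T}_{n-1,1}$ on the complement of $\bigcup_i \Delta_{in}$, has order $\lambda_i \in \Lambda_T$ about each $\Delta_{in}$.

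For the order about a single $\Delta_{in}$, I would work locally on $X^n$ in a neighborhood of a generic point of $\Delta_{in}$, chosen small enough that all other coordinates $x_j$ ($j \neq i, n$) stay separated from $x_i$ and from each other. On such a neighborhood the $T$-torsor $\sh{T}$ on $X_S$ parametrized by the point $(\vect{x}_{n-1}, \sh{T}, \phi) \in \on{Gr}_{n-1}^{\lambda_1,\dots,\lambda_{n-1}}$ can, by \ref{torus grassmannian properties}\ref{en:torus grassmannian 1}, be taken to be $\bigotimes_j \OO(\lambda_j \cdot \on{graph}(x_j))$ with its natural trivialization $\phi$ off the graphs. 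Restricting to $\hat{X}_{\bar{x}_n}$, only the factor corresponding to $j = i$ is not already canonically trivialized, so the pair $(\sh{T}|_{\hat{X}_{\bar{x}_n}}, u)$ is identified, after harmless canonical trivializations of the other factors, with $\OO(\lambda_i \on{graph}(x_i))$ together with its canonical meromorphic trivialization evaluated at $\bar{x}_n$. Pushed forward to the $T$-torsor $\sh{T}_{n-1,1}$ on $X^n$ via the evaluation map $T(\hat{\OO})_1 \to T$, this says exactly that near the generic point of $\Delta_{in}$ we have an isomorphism $\sh{T}_{n-1,1} \cong \OO(\lambda_i \Delta_{in})$ of $T$-torsors compatible with $u$, i.e., $u$ has order $\lambda_i$ in the sense defined above Proposition \ref{order of torus twisting gerbe}.

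Combining, the induced trivialization of $\sh{T}_{n-1,1}^{\log \kappa_\mu}$ has order $\kappa_\mu(\lambda_i) = \kappa(\lambda_i,\mu)$ about each $\Delta_{in}$. Since $\bigcup_i \Delta_{in}$ is a union of connected smooth divisors and $u$ defines a trivialization of $\sh{T}_{n-1,1}^{\log \kappa_\mu}$ on its complement, Lemma \ref{reducible order} produces the asserted equivalence
\begin{equation*}
 \sh{T}_{n-1,1}^{\log \kappa_\mu}
  \cong \bigotimes_i \OO(\Delta_{in})^{\log \kappa(\lambda_i,\mu)}.
\end{equation*}
The main technical point, and the only step requiring care, is the local identification in the second paragraph: one must check that the map $T(\hat{\OO})_1 \to T$ sends the canonical meromorphic trivialization $\psi^{-1}\phi$ of $\OO(\lambda_i \on{graph}(x_i))$ along $\bar{x}_n$ to the canonical meromorphic section of $\OO(\lambda_i \Delta_{in})$ on (a neighborhood in) $X^n$, at the level of $T$-torsors. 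For $T = \Gm$ this is an elementary explicit verification with the function $1/(z - x_i)$ restricted to $z = x_n$; the general torus case follows by testing against all characters $\mu \in \Lambda^T$ via the defining property of $\OO(\lambda D)$ in \ref{torus grassmannian properties}\ref{en:torus grassmannian 1}.
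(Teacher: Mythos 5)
Your proposal is correct and takes essentially the same route as the paper's proof, which likewise reduces to showing that $u$ has order $\lambda_i$ about each $\Delta_{in}$ (there attributed directly to \ref{torus grassmannian properties}) and then applies \ref{order of torus twisting gerbe} and \ref{reducible order}. You have simply spelled out the local $T$-torsor computation that the paper leaves implicit in the citation.
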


\begin{proof}
 By \ref{torus grassmannian properties}, the trivialization $u$ of $\sh{T}_{n - 1, 1}$ on $X^n \setminus \bigcup \Delta_{in}$ (from \ref{twisted arc group trivializations}) has order $\lambda_i$ about $\Delta_{in}$.  Then by \ref{order of torus twisting gerbe}, we find that $\sh{T}_{n - 1, 1}^{\log \kappa_\mu}$ has order $\kappa_\mu(\lambda_i)$ about $\Delta_{in}$, as desired.
\end{proof}

Returning to the generalities, we note the following fact:

\begin{theorem}{lem}{trivialization for twisting gerbe}
 Let $\map{\pi}{E}{Y}$ be the structure map; then $\pi^* \sh{T}^{\log\phi}$ is trivial.
\end{theorem}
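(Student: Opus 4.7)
The plan is to exhibit a canonical global section of the gerbe $\pi^* \sh{T}^{\log \phi}$ on $E$, which by \ref{definition of gerbe} suffices to trivialize it. By construction, a section of $\pi^* \sh{T}^{\log \phi}$ on an open $V \subset E$ is an $\sh{L}_\phi$-twisted $T$-equivariant $A$-torsor on the total space of the $T$-torsor $\pi^* \sh{T}|_V \to V$. The point is that this pulled-back torsor is \emph{canonically} trivial on all of $E$: the diagonal $\Delta \colon E \to E \times_Y E = \pi^* E$ is a tautological section of $\pi^* \sh{T}$, yielding a canonical $T$-equivariant isomorphism
\begin{equation*}
 \tau \colon T \times E \xrightarrow{\sim} \pi^* E,
 \qquad (t, e) \mapsto t \cdot \Delta(e),
\end{equation*}
under which $\sh{L}_\phi$-twisted $T$-equivariant $A$-torsors on $\pi^* E$ correspond to such torsors on the trivial torsor $T \times E$.

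On $T \times E$, there is an obvious such object, namely $\sh{M} = \on{pr}_T^* \sh{L}_\phi$. Its $\sh{L}_\phi$-twisted equivariance is a direct consequence of the multiplicative structure on $\sh{L}_\phi$ from \ref{torsors on the torus}: writing $m \colon T \times T \times E \to T \times E$ for the left action $(t, s, e) \mapsto (ts, e)$ and $\mu \colon T \times T \to T$ for the multiplication, we have $m^* \sh{M} = (\mu^* \sh{L}_\phi) \boxtimes \sh{T}^0$, and the multiplicativity isomorphism $\mu^* \sh{L}_\phi \cong \sh{L}_\phi \boxtimes \sh{L}_\phi$ rewrites this as $\sh{L}_\phi \boxtimes \sh{M}$, as required; the identity and associativity constraints come for free from those of the multiplicative torsor $\sh{L}_\phi$.

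Transporting $\sh{M}$ back along $\tau$ produces the desired global section of $\pi^* \sh{T}^{\log \phi}$, and by \ref{gerbes are a 2-groupoid} any section of a gerbe is a trivialization. There is essentially no obstacle here: the statement simply records the observation that a torsor, pulled back to its own total space, acquires a tautological trivialization, and that this trivialization is compatible with the construction $(-)^{\log \phi}$ because the latter is entirely functorial in the torsor being twisted by.
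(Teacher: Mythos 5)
Your proof is correct and takes essentially the same route as the paper: identify $\pi^* \sh{T}^{\log\phi}$ with the $\log\phi$-twisting of the $T$-torsor $E \times_Y E$ over $E$, use the tautological diagonal trivialization of that torsor, and exhibit $\on{pr}_T^* \sh{L}_\phi$ as the global twisted-equivariant section. You simply supply the details (the explicit diagonal map $\tau$ and the multiplicativity check for $\on{pr}_T^* \sh{L}_\phi$) that the paper's one-sentence proof leaves implicit.
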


\begin{proof}
 Sections of $\pi^* \sh{T}^{\log\phi}$ are identified with $\sh{L}_\phi$-twisted equivariant torsors on the $T$-bundle $E \times_Y E$ over $E$.  Since $E$ is a $T$-torsor, this is the same as $T \times E$; then $\on{pr}_T^* \sh{L}_\phi$ itself is such a torsor.
\end{proof}

In our particular situation, this implies:

\begin{theorem}{cor}{twisting gerbe arc group trivialization}
 For any $\map{\phi}{\Lambda_T}{A}$, $\pi_{n - 1, 1}^* \sh{T}_{n - 1, 1}^{\log\phi}$ is trivial.
\end{theorem}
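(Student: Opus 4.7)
The plan is to realize $\pi_{n-1,1}$ as factoring through the total space of the $T$-torsor $\sh{T}_{n-1,1}$ and then invoke Lemma \plainref{trivialization for twisting gerbe} on the nose. Concretely, let $E \to X^n$ denote the total space of the $T$-torsor $\sh{T}_{n-1,1}$. The $T(\hat{\OO})_1$-torsor $\tilde{T}(\hat{\OO})_{n-1,1}$ was used to construct $\sh{T}_{n-1,1}$ by induction along the evaluation map $T(\hat{\OO})_1 \to T \times X$; equivalently, $E \cong \tilde{T}(\hat{\OO})_{n-1,1} \times^{T(\hat{\OO})_1} T$. This construction comes with a tautological $T(\hat{\OO})_1$-equivariant map
\begin{equation*}
 \alpha \colon \tilde{T}(\hat{\OO})_{n-1,1} \to E
\end{equation*}
(given by pairing a section of $\tilde{T}(\hat{\OO})_{n-1,1}$ with $1 \in T$) over $X^n$, so that $\pi_{n-1,1}$ factors as the composition of $\alpha$ with the structure map $\pi \colon E \to X^n$ of the $T$-torsor.

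Now I would apply Lemma \plainref{trivialization for twisting gerbe} directly to the $T$-torsor $\sh{T}_{n-1,1}$ on $X^n$: it produces a canonical trivialization of $\pi^* \sh{T}_{n-1,1}^{\log \phi}$, given concretely by $\on{pr}_T^* \sh{L}_\phi$ on $E \times_{X^n} E \cong T \times E$. Pulling this trivialization back along $\alpha$ gives a trivialization of
\begin{equation*}
 \pi_{n-1,1}^* \sh{T}_{n-1,1}^{\log \phi}
 \cong \alpha^* \pi^* \sh{T}_{n-1,1}^{\log \phi},
\end{equation*}
which is the desired statement.

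There is really no main obstacle here; the corollary is a formal consequence of Lemma \plainref{trivialization for twisting gerbe} once one recognizes that $\pi_{n-1,1}$ factors through the total space of the $T$-torsor $\sh{T}_{n-1,1}$. The only point requiring any care is verifying that the change-of-group construction $\sh{T}_{n-1,1} = \tilde{T}(\hat{\OO})_{n-1,1} \times^{T(\hat{\OO})_1} T$ genuinely produces the $T$-torsor described in the text (as opposed to its dual or a twist), which is immediate from unwinding the definition preceding the corollary.
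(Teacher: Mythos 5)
Your proof is correct and matches the paper's argument exactly: the paper likewise observes that there is a natural map $\tilde{T}(\hat{\OO})_{n-1,1} \to \sh{T}_{n-1,1}$ (i.e.\ your $\alpha$, into the total space $E$) over $X^n$, then pulls back the trivialization furnished by \ref{trivialization for twisting gerbe}. You have simply spelled out more explicitly that $\alpha$ is the tautological change-of-group map from the $T(\hat{\OO})_1$-torsor to its associated $T$-torsor, which is the (correct) content implicit in the paper's one-line proof.
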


\begin{proof}
 There is a natural map over $X^n$, $\tilde{T}(\hat{\OO})_{n - 1, 1} \to \sh{T}_{n - 1, 1}$, and so this follows from \ref{trivialization for twisting gerbe} by pulling back.
\end{proof}

\section{Factorizable gerbes are naturally equivariant}
\label{s:factorizable gerbes are equvariant}  

The main theorem of this section is that factorizable gerbes automatically enjoy an essentially unique structure relating to the infinitesimal actions on the grassmannian.  To describe this structure, we begin with a sequence of definitions.

\begin{theorem}{defn}{gerbe factorizable equivariance}
 Let $\stack{G}_n$ be an sf gerbe.  The structure of \emph{factorizable equivariance} for $G(\smash{\hat{\OO}})_n$ is an equivariance structure on each individual gerbe together with compatibilities of these structures with all the factorization data.  Equivalently, it is a structure of equivariance for the action of the sf group scheme $G(\smash{\hat{\OO}})_n$ on $\on{Gr}_n$.
\end{theorem}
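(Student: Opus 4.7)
The statement to justify is the \emph{Equivalently} clause of \ref{gerbe factorizable equivariance}, namely that a collection of $G(\smash{\hat{\OO}})_n$-equivariance structures on each $\stack{G}_n$ compatible with all the factorization data is the same thing as a single equivariance structure for the sf group scheme $G(\smash{\hat{\OO}})_n$ acting on $\on{Gr}_n$.  This is essentially a definitional equivalence, so the plan is to unpack both sides and verify the structures match.

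First I would record that by \ref{arc and loop group factorizable} the arc groups form an sf group scheme, and that its action on $\on{Gr}_n$ is itself sf: the action maps $a_n \colon G(\smash{\hat{\OO}})_n \times_{X^n} \on{Gr}_n \to \on{Gr}_n$ respect diagonal restriction, open factorization, and the $S_n$-action, via the same argument used to factorize $\on{Gr}_n$ itself in \ref{grassmannian is factorizable}.  The notion of equivariance for such an sf action then unfolds, via \ref{equivariant gerbe}, into an equivalence $\on{Eq}_n \colon a_n^* \stack{G}_n \cong \on{pr}_n^* \stack{G}_n$ for each $n$, together with associativity ($\on{As}_n$) and identity ($\on{Id}_n$) constraints, plus the data that the $\on{Eq}_n$ be compatible with all the sf data on both the group and the gerbe sides.

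The forward direction is then immediate: the individual $\on{Eq}_n$ satisfy the axioms of \ref{equivariant gerbe}, and the sf compatibility of the collection manifests as compatibilities of the $\on{Eq}_n$ with the diagonal restriction, open factorization, and $S_n$-equivariance pieces of the sf structure on $\stack{G}_n$.  For the reverse direction, given individual equivariances with the claimed compatibilities, I would assemble them by noting that the three kinds of compatibility (diagonal, open, symmetric) exactly parallel the three parts of \ref{sf gerbe}, and so together with the $\on{As}_n$, $\on{Id}_n$ constraints they constitute the ``sf equivariance'' as defined above; the cocycle and unit axioms at each level $n$ are given, and checking that they assemble coherently across levels reduces to checking that the cube of 2-isomorphisms relating (restriction of $\on{As}_n$) to (the constraint $\on{As}_m$ at a diagonal, or $\bigotimes \on{As}_{n_i}$ at an open factorization) commutes.

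The only real content is this last coherence check, and it is the step I expect to be the main (albeit mild) obstacle.  The key observation making it automatic is that the factorization equivalences on $\stack{G}_n$, on $\on{Gr}_n$, and on $G(\smash{\hat{\OO}})_n$ all themselves satisfy associativity constraints with full cocycle data, so that the induced diagrams comparing associativities of $\on{Eq}_n$ before and after factorization are built entirely out of canonical $2$-isomorphisms.  The commutation then follows from the coherence axioms that were imposed in \ref{sf gerbe} and in the sf group structure on $G(\smash{\hat{\OO}})_n$, so no further choices or data are needed and the two descriptions coincide.
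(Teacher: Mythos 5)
Your proposal is correct, and it matches the paper's treatment: the paper presents this ``Equivalently'' clause as a definitional rephrasing with no separate proof, and your unwinding — identifying the three kinds of compatibility with the three parts of \ref{sf gerbe}, and observing that the coherence cube commutes because all the intervening $2$-isomorphisms are the canonical ones built into the sf structures on $\stack{G}_n$, $\on{Gr}_n$, and $G(\smash{\hat{\OO}})_n$ — is exactly the right justification for why no further data or argument is needed.
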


We maintain the expository convention of considering only partitions $p = (n_1, n_2)$ of a number $n$ or $p = m \cdot n$ of the product $mn$.

\begin{theorem}{defn}{twisted product gerbe}
 Let $\stack{G}_n$ be a factorizably equivariant sf gerbe.  The \emph{twisted product} on any $\tilde{\on{Gr}}_{n_1, n_2}$ is defined by the formula
 \begin{equation*}
  \stack{G}_{n_1} \tbtimes \stack{G}_{n_2}
    = \on{pr}_{n_1, n_2}^* \stack{G}_{n_1} \otimes \tilde{\stack{G}}_{n_2},
 \end{equation*}
 where $\on{pr}_{n_1, n_2}$ is, as after \ref{eq:convolution diagram mult map}, the projection $\tilde{\on{Gr}}_{n_1, n_2} \to \on{Gr}_{n_1}$, and the twisted pullback refers to the $G(\OO)_{n_2}$-torsor $\smash{\tilde{G}(\hat{\KK})}_p$ as in \ref{twisted pullback of gerbe}.  By similar measures, we have a twisted product on $\on{Conv}_n^m$, and it is easy to see that they agree under the identification of \ref{outer convolution diagram trick}.  Using this twisted product, or using the generalization of \ref{convolution product twisting} to arbitrary partitions, we can define twisted products for any partition, and either route will give equivalent results for the same reason as above.
\end{theorem}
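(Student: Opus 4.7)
The plan is to unpack the two equivalences asserted at the end of the definition: first, that the twisted product on the binary outer convolution $\tilde{\on{Gr}}_{n_1,n_2}$ coincides with the one on the inner convolution $\on{Conv}_n^m$ under the identification of Proposition \ref*{outer convolution diagram trick}; second, that for a general partition $p$, the iterated binary twisted product and the ``one-shot'' construction via the generalization of Proposition \ref*{convolution product twisting} yield canonically equivalent gerbes. In both cases the argument is purely formal, running through the definition of twisted pullback along a torsor combined with associativity of descent.

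First I would check that the twisted pullback $\tilde{\stack{G}}_{n_2}$ on $\tilde{\on{Gr}}_{n_1,n_2}$ is well-defined. By Proposition \ref*{convolution product twisting}, we have
\begin{equation*}
 \tilde{\on{Gr}}_{n_1,n_2}
  = (\tilde{G}(\hat{\KK})_{n_1,n_2} \times_{X^{n_2}} \on{Gr}_{n_2})/G(\hat{\OO})_{n_2},
\end{equation*}
and the pullback $\on{pr}_{\on{Gr}_{n_2}}^*\stack{G}_{n_2}$ to the cover carries a natural $G(\hat{\OO})_{n_2}$-equivariance structure, inherited from the factorizable equivariance of $\stack{G}_{n_2}$ by Proposition \ref*{pullback of equivariant objects}; hence Definition \ref*{twisted pullback of gerbe} applies and yields $\tilde{\stack{G}}_{n_2}$. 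Tensoring with $\on{pr}_{n_1,n_2}^*\stack{G}_{n_1}$ gives $\stack{G}_{n_1}\tbtimes\stack{G}_{n_2}$. The same recipe, replacing the outer torsor by the inner one $\on{Conv}\, G(\hat{\KK})_n^m$ of Proposition \ref*{inner convolution diagram diagonal}\ref*{en:inner diagram cover}, defines the twisted product on $\on{Conv}_n^m$. Agreement under the identification of Proposition \ref*{outer convolution diagram trick} then follows by restriction: that proposition exhibits $\tilde{\on{Gr}}_p$ as a locally closed subspace of $\on{Conv}_n^m$ and $\tilde{G}(\hat{\KK})_p$ as the restriction of $\on{Conv}\,G(\hat{\KK})_n^m$, and since restriction commutes with taking the $G(\hat{\OO})_n$-quotient, it commutes with the twisted pullback construction.

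For the second equivalence, I would proceed by induction on the number of parts of $p$, using Proposition \ref*{inner convolution diagram diagonal}\ref*{en:inner diagram cover} recursively to write $\on{Conv}\,G(\hat{\KK})_n^m$ as an iterated fibration of $G(\hat{\KK})_n$'s over $G(\hat{\OO})_n$'s. The key formal ingredient is associativity of descent along a tower of torsors: given a sequence of torsors and equivariant gerbes, descending one step at a time produces the same gerbe as descending along the composite torsor in a single step, canonically so, with the cocycle condition for the compositions matching the associativity $2$-isomorphism of the twisted pullback. Applied to the tower $G(\hat{\KK})_n\times^{G(\hat{\OO})_n}\cdots\times^{G(\hat{\OO})_n}G(\hat{\KK})_n$, this identifies the iterated binary $\stack{G}_n\tbtimes\cdots\tbtimes\stack{G}_n$ with the one-step twisted pullback along $\on{Conv}\,G(\hat{\KK})_n^m$. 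The main obstacle, and the only place requiring care, is organizing the $2$-categorical bookkeeping so that the associativity constraints of the twisted pullback match the compatibilities provided by factorizable equivariance of the $\stack{G}_n$ across all the intermediate stages; once equivariance is taken as an honest structure (not just a property), all the necessary $2$-isomorphisms are canonical and their coherence is a routine check against Definition \ref*{equivariant gerbe}.
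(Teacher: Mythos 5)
Your proposal identifies the right formal skeleton — compatibility of twisted pullback with restriction to a sub-torsor for the first claim, and associativity of descent along a tower of torsors for the second — and those are indeed the descent-theoretic ingredients one needs. But there is a genuine gap in the first part: the "restriction commutes with descent" argument is stated entirely at the level of torsors, and never addresses the fact that the two twisted products being compared are assembled from \emph{different} constituent gerbes. The twisted product on $\on{Conv}_n^m$ is $\on{pr}^*\stack{G}_n \otimes \tilde{\stack{G}}_n$ (each ingredient being a pullback of $\stack{G}_n$ itself), while the twisted product on $\tilde{\on{Gr}}_{n_1,n_2}$ uses $\stack{G}_{n_1}$ and $\stack{G}_{n_2}$. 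Under the embedding $\tilde{\on{Gr}}_{n_1,n_2} \hookrightarrow \on{Conv}_n^2$ of Proposition \ref{outer convolution diagram trick}, the restriction of $\on{pr}\colon \on{Conv}_n^2 \to \on{Gr}_n$ factors through the closed immersion $i_{n_1,n_2;1}\colon \on{Gr}_{n_1}\times X^{n_2}\hookrightarrow \on{Gr}_n$ followed by $\on{pr}_{n_1,n_2}$. So to conclude that $\on{pr}^*\stack{G}_n$ restricts to $\on{pr}_{n_1,n_2}^*\stack{G}_{n_1}$, you need the identification $i_{n_1,n_2;1}^*\stack{G}_n \cong \stack{G}_{n_1}$ (up to a trivial factor on $X^{n_2}$), and similarly for the second ingredient. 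That identification is not a formal property of descent — it comes from the factorization data of the sf gerbe $\{\stack{G}_n\}$ together with the equivariance structure, which is precisely what the hypothesis "factorizably equivariant" is supplying. Your proposal never invokes these compatibilities, so as written it proves a weaker statement than what is claimed: two descents of the \emph{same} gerbe along compatible torsors agree, but not that two descents of \emph{a priori different} gerbes agree.

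The same remark applies to the second part ("either route gives equivalent results"): restriction of the $\on{Conv}_n^m$-product builds the gerbe out of copies of $\stack{G}_n$, whereas the iterated binary product on $\tilde{\on{Gr}}_p$ builds it out of $\stack{G}_{n_1},\dots,\stack{G}_{n_m}$. Your associativity-of-descent argument correctly handles the tower-of-torsors bookkeeping, but again it needs to be combined with the factorization equivalences for the $\stack{G}_n$'s to make the two sides literally comparable. (In the special case $p = m\cdot n$, where $\on{Conv}_n^m = \tilde{\on{Gr}}_{m\cdot n}|_\Delta$ and all parts have size $n$, the constituent gerbes coincide and your argument goes through without amendment; it is the general partition that requires the extra input.)
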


\begin{theorem}{lem}{twisted product is equivariant}
 With the setup of \ref{twisted product gerbe}, if $p$ is a partition of $n$, then the twisted product is equivariant for $G(\smash{\hat{\OO}})_n$, relative to the action of this group given in \ref{G(O)_n acts on twisted Gr}.
\end{theorem}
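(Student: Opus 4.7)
The approach is to verify $G(\hat{\OO})_n$-equivariance on each of the two tensor factors in the definition
\[
\stack{G}_{n_1} \tbtimes \stack{G}_{n_2} = \on{pr}_{n_1,n_2}^* \stack{G}_{n_1} \otimes \tilde{\stack{G}}_{n_2}
\]
separately and then combine them using biequivariance of the tensor pairing (\ref{twisted pairing}). Throughout, the main device is the identification of \ref{outer convolution diagram trick}, under which the $G(\hat{\OO})_n$-action on $\tilde{\on{Gr}}_p$ is left multiplication on the first $G(\hat{\KK})_n$-factor of $\on{Conv}_n^m$, and the total space $\tilde{G}(\hat{\KK})_p$ sits analogously inside $\on{Conv} G(\hat{\KK})_n^m$.

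For the pullback factor, first I would observe that $\on{pr}_{n_1,n_2}\colon \tilde{\on{Gr}}_{n_1,n_2}\to\on{Gr}_{n_1}$ factors, via \ref{outer convolution diagram trick}, as $\tilde{\on{Gr}}_{n_1,n_2}\hookrightarrow \on{Conv}^2_n$ followed by the well-defined map $[g_1,p_2]\mapsto g_1\cdot o$ to $\on{Gr}_n$, whose image on the subspace lies in the natural embedding of $\on{Gr}_{n_1}$ (points whose trivialization extends across $\bar{x}\setminus\bar{x}_1$). Left multiplication by $g\in G(\hat{\OO})_n$ projects to the standard action on $\on{Gr}_n$, and restricted to $\on{Gr}_{n_1}$ this factors through the restriction-of-functions homomorphism $G(\hat{\OO})_n\to G(\hat{\OO})_{n_1}$, since only the germ of $g$ along $\bar{x}_1$ affects the infinitesimal modification of the trivialization at $\bar{x}_1$. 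Factorizable equivariance of $\stack{G}_{n_1}$ thus gives a $G(\hat{\OO})_n$-equivariance on the pullback.

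For $\tilde{\stack{G}}_{n_2}$, the strategy is to lift the $G(\hat{\OO})_n$-action to the presentation $\tilde{G}(\hat{\KK})_p\times_{X^{n_2}}\on{Gr}_{n_2}$ of \ref{convolution product twisting} in a way that commutes with the anti-diagonal $G(\hat{\OO})_{n_2}$-action. Via \ref{outer convolution diagram trick} and \ref{inner convolution diagram diagonal}\ref{en:inner diagram cover}, left multiplication on the first $G(\hat{\KK})_n$-factor of $\on{Conv} G(\hat{\KK})_n^m$ restricts to a $G(\hat{\OO})_n$-action on $\tilde{G}(\hat{\KK})_p$ lifting the one on $\tilde{\on{Gr}}_p$; declare it to act trivially on the $\on{Gr}_{n_2}$-factor. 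This commutes with $G(\hat{\OO})_{n_2}$ because left and right multiplication commute on the first $G(\hat{\KK})_n$-coordinate, and the $\on{Gr}_{n_2}$-coordinate is fixed. The projection to $\on{Gr}_{n_2}$ being $G(\hat{\OO})_n$-equivariant (trivially on the target), the pullback of $\stack{G}_{n_2}$ acquires a trivial $G(\hat{\OO})_n$-equivariance, and this commutes with the $G(\hat{\OO})_{n_2}$-equivariance used to descend to $\tilde{\stack{G}}_{n_2}$, so the latter inherits a $G(\hat{\OO})_n$-equivariance.

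The two structures combine via the biequivariance of the tensor pairing (\ref{twisted pairing}) to give the claimed equivariance on $\stack{G}_{n_1}\tbtimes\stack{G}_{n_2}$. The main obstacle, I expect, is verifying the associativity and cocycle data: one needs the two candidate equivariance structures to match up on double and triple products in $G(\hat{\OO})_n$, and the compatibility of the descent with the $G(\hat{\OO})_n$-action is only meaningful modulo a careful bookkeeping. These compatibilities should follow from the associativity of left multiplication on the first factor of $\on{Conv} G(\hat{\KK})_n^m$ together with the factorizable compatibilities already built into the equivariance structures on $\stack{G}_{n_1}$ and $\stack{G}_{n_2}$, but the detailed diagram chase is the technical heart of the lemma.
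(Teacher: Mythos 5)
Your proof is correct, and its central observation --- that the $G(\hat{\OO})_n$-action is left multiplication on the first coordinate of $\on{Conv}_n^m$, and therefore does not interact with the descent data defining the twisted pullback $\tilde{\stack{G}}_{n_2}$ from the last factor --- is precisely the one the paper relies on. Where you diverge is in the treatment of the other tensor factor, $\on{pr}_{n_1,n_2}^*\stack{G}_{n_1}$. The paper's one-sentence proof reduces everything to $\on{Conv}_n^m$, treats only $\tilde{\stack{G}}_n$ explicitly, and leaves the remaining factor to an implicit induction: pull $\stack{G}_n^{\tbtimes(m-1)}$ back along the $G(\hat{\OO})_n$-equivariant projection $\on{Conv}_n^m\to\on{Conv}_n^{m-1}$, with base case the given equivariance of $\stack{G}_n$ on $\on{Gr}_n$. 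You instead argue directly on $\tilde{\on{Gr}}_{n_1,n_2}$ that $\on{pr}_{n_1,n_2}$ is equivariant for the restriction-of-germs homomorphism $G(\hat{\OO})_n\to G(\hat{\OO})_{n_1}$, so that the $G(\hat{\OO})_{n_1}$-equivariance of $\stack{G}_{n_1}$ pulls back via \ref{pullback of equivariant objects}. This is a legitimate alternative and in fact spells out what the paper leaves to the reader; the trade-off is the extra checks that the projection lands in the subspace $\on{Gr}_{n_1}\subset\on{Gr}_n$ and that the group homomorphism is well-defined over the base change $X^n\to X^{n_1}$, both of which you correctly anticipate. One small misattribution: your final combination step invokes \ref{twisted pairing}, which is about twisting a \emph{pairing} by two gerbes under $\sh{G}$-biequivariance; what you actually need is the more elementary fact that the tensor product of two $G(\hat{\OO})_n$-equivariant gerbes on a $G(\hat{\OO})_n$-space is naturally $G(\hat{\OO})_n$-equivariant, which is a direct consequence of the functoriality of $\otimes$ of gerbes rather than of \ref{twisted pairing}.
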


\begin{proof}
 By \ref{outer convolution diagram trick} it suffices to verify this for twisted products on $\on{Conv}_n^m$.  The action of $G(\hat{\OO})_n$ on that space is simply on the first factor, whereas $\tilde{\stack{G}}_n$ is the descent of a pullback from the last factor, so unless $m = 1$ it is trivially $G(\hat{\OO})_n$-equivariant; when $m = 1$ the whole construction is trivial.
\end{proof}

Factorizable equivariance is not enough for our purposes because it has a loose end.  To see this,
consider the map $\map{m_{n_1, n_2}}{\tilde{\on{Gr}}_{n_1, n_2}}{\on{Gr}_n}$, which by \ref{convolution diagram factorization}, is naturally identified with the factorization isomorphism $\on{Gr}_n \cong \on{Gr}_{n_1} \times \on{Gr}_{n_2}$ on the open set $X^n_{n_1, n_2}$.  By \ref{convolution product twisting} the projection $\map{\on{pr}_{n_1, n_2}}{\tilde{\on{Gr}}_{n_1, n_2}}{\tilde{\on{Gr}}_{n_1}}$ is a trivial bundle on $X^n_{n_1, n_2}$ (more importantly, by \ref{twisted arc group trivializations}, $\tilde{G}(\hat{\KK})_{n_1, n_2}$ is the trivial $G(\hat{\OO})_{n_2}$-torsor over this set), so that on the open set $X^n_{n_1, n_2}$ we have a natural equivalence
\begin{equation}
 \label{eq:strong factorizability open}
 m_{n_1, n_2}^* \stack{G}_n|_{X^n_{n_1, n_2}}
 \cong
 (\stack{G}_{n_1} \tbtimes \stack{G}_{n_2})|_{X^n_{n_1, n_2}}.
\end{equation}
Similar considerations for more general $p$ establish such an equivalence with higher iterated twisted products.

\begin{theorem}{defn}{gerbe strong factorizable equivariance}
 If $\stack{G}_n$ has a structure of factorizable equivariance, this structure is said to be \emph{strong} if, for every $n$ and every partition $p$, the equivalence of \ref{eq:strong factorizability open} extends from the open set in $\tilde{\on{Gr}}_p$ over $X^n_p$ to all of $\tilde{\on{Gr}}_p$.  (We will show that this is indeed merely a condition rather than additional data.)
\end{theorem}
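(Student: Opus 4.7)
The parenthetical claim asserts that ``strong factorizable equivariance'' is a condition on a factorizably equivariant sf gerbe rather than additional data; concretely, it says that the extension of the equivalence \ref{eq:strong factorizability open} across the exceptional diagonals, if it exists, is unique up to unique $2$-isomorphism. The plan is to reduce this uniqueness to two cohomological vanishing statements on $\tilde{\on{Gr}}_p$, and then establish those using the simply-connectedness of each connected component of $\on{Gr}_G$ (noted in the proof of \ref{vertical map injective} via the stratification by the affine spaces $\on{Gr}_B^\lambda \cong \Aff^\infty$ from \ref{borel components}).

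First, I would observe that any two extensions $\tilde f_1, \tilde f_2$ of the given equivalence differ by an automorphism of one of the gerbes on $\tilde{\on{Gr}}_p$, classified (by the gerbe formalism of \ref{s:categorification of cohomology}) by an $A$-torsor $\sh{T}$ whose restriction to $\tilde{\on{Gr}}_p|_{X^n_p}$ is the trivial torsor. If every such $\sh{T}$ is trivial, then $\tilde f_1 \cong \tilde f_2$ as $1$-morphisms, and the ambiguity in picking the $2$-isomorphism is a trivialization of $\sh{T}$ extending the tautological one on the open preimage; this ambiguity is a torsor under locally constant $A$-valued functions on $\tilde{\on{Gr}}_p$ equal to $1$ on $\tilde{\on{Gr}}_p|_{X^n_p}$, so if those must vanish, the $2$-isomorphism is forced.

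Next, I would handle the two vanishing statements geometrically. By iterating \ref{convolution product twisting}, $\tilde{\on{Gr}}_p$ is a tower of $\on{Gr}_G$-bundles over $X^n$, and each connected component of each fiber is simply connected by the Borel stratification recalled above. A Leray-type argument then shows that $A$-torsors on $\tilde{\on{Gr}}_p$ are pulled back from $X^n$ and that locally constant $A$-valued functions on $\tilde{\on{Gr}}_p$ factor through a discrete bundle of connected-component labels over $X^n$, each of whose fibers meets $X^n_p$ nontrivially. The complement of $X^n_p$ in $X^n$ is a union of diagonals, i.e., complex hypersurfaces of real codimension two; the standard fact that excising a closed subset of real codimension $\geq 2$ in a connected complex manifold preserves $\pi_0$ and induces a surjection on $\pi_1$ then yields that $H^0(X^n,A) \to H^0(X^n_p,A)$ and $H^1(X^n,A) \to H^1(X^n_p,A)$ are injective, giving the two vanishing statements.

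The main obstacle, I expect, is making the Leray/pullback reductions fully rigorous for the ind-scheme $\tilde{\on{Gr}}_p$, since one must handle infinite-dimensional fibers and a possibly non-compact curve $X$. Once the simply-connectedness of each component of each fiber is invoked cleanly---ideally as a direct consequence of the cell decomposition used in \ref{vertical map injective}, applied at each level of the convolution tower---the remaining manipulations are formal and follow the pattern of the gerbe descent arguments of \ref{s:construction of gerbes}. No appeal to the order/monodromy machinery of \ref{2-line bundle of a Cartier divisor} should be needed, because the difference torsor is already trivial on a dense open set of real codimension two complement.
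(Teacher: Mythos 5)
The paper's own route to this parenthetical claim is the order machinery of \ref{2-line bundle of a Cartier divisor}: as stated explicitly right after \ref{sf multiplicative gerbe} (for the analogous claim there), a ``meromorphic'' map of gerbes across a divisor has a well-defined order, and a map of order $1$ extends uniquely; the proof of that uniqueness is a local argument inside tubular neighborhoods $C \times \C$, where $A$-torsors on $C \times \C^*$ have at most one extension across $C \times \{0\}$. When \ref{existence of equivariance} later checks that the order of \ref{eq:strong factorizability open} about the exceptional diagonals is trivial, the uniqueness of the extension (and hence that ``strong'' is a condition, not a datum) is already packaged in that lemma. Your proposal avoids the order formalism entirely and argues globally: the difference of two extensions is an $A$-torsor on $\tilde{\on{Gr}}_p$ trivial over the preimage of $X^n_p$, and you want to kill it by a Leray reduction to $X^n$ plus the real-codimension-$2$ surjectivity of $\pi_1(X^n_p) \to \pi_1(X^n)$. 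That is a genuinely different route. It has the appeal of not depending on the tubular-neighborhood construction of \ref{2-line bundle of a Cartier divisor}, and it makes the role of simple connectedness of the fibers of the convolution diagram (via \ref{borel components}) explicit.

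However, there is a gap at exactly the step you wave at. The Leray reduction does \emph{not} show that $A$-torsors on $\tilde{\on{Gr}}_p$ are pulled back from $X^n$: since $R^1 f_* A = 0$ by simple connectedness of the fiber components, the edge map gives $H^1(\tilde{\on{Gr}}_p, A) \cong H^1(X^n, R^0 f_* A)$, but $R^0 f_* A$ is a genuinely non-constant constructible sheaf (it is, up to the identifications of \ref{factorizable group dual}, the sheaf $\on{Fact}(\on{Hom}(\pi_1(G),A))_n$, whose stalks shrink over the diagonals because components of $\on{Gr}_{G,X^n}$ fuse there). Your codimension-$2$ injectivity $H^1(X^n,A) \hookrightarrow H^1(X^n_p,A)$ is a statement for \emph{constant} coefficients; what you actually need is injectivity of $H^1(X^n, \on{Fact}(H)_n) \to H^1(X^n_p, H^n)$, and since $\on{Fact}(H)_n$ is a proper subsheaf of $\csheaf{H^n}$ along the diagonals, neither the $\pi_1$-surjection nor the long exact sequence for $0 \to \on{Fact}(H)_n \to \csheaf{H^n} \to Q \to 0$ hands you this for free. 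An equivalent difficulty appears if you instead try to run the codimension-$2$ argument directly on $\tilde{\on{Gr}}_p$: because the map to $X^n$ is not a locally trivial fibration, neither the smoothness nor the local product structure needed to perturb loops off the divisor is available stratum-free. So the ``remaining manipulations are formal'' claim is too optimistic; the non-constancy of the $\pi_0$-sheaf is the real content, and it is precisely what the paper's local tubular-neighborhood argument is designed to sidestep.
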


There remains an additional compatibility to be imposed, which can be seen when considering \ref{multiplicative gerbe descent}.  Recall the quotient maps $q_n \colon G(\hat{\KK})_n \to \on{Gr}_n$ and suppose that the $q_n^* \stack{G}_n$ were given a multiplicative structure for $G(\hat{\KK})_n$ together with a trivialization of that structure for $G(\hat{\OO})_n$.  Then we have a $G(\hat{\OO})_n$-equivariance structure on $\stack{G}_n$.  By \ref{inner convolution multiplication}, we see that we have
\begin{equation*}
 m^* \stack{G}_n
 \cong
 \stack{G}_n \tbtimes \dots \tbtimes \stack{G}_n
\end{equation*}
on $\on{Conv}_n^m$, where the $m$ in $m^*$ is the map $\on{Conv}_n^m \to \on{Gr}_n$.  By \ref{outer convolution diagram trick}, we see that the $\stack{G}_n$ are strongly factorizably equivariant. From the trivialization of $q_n^* \stack{G}_n$ as a multiplicative gerbe on $G(\hat{\OO})_n$, we deduce that the equivariance structure is trivialized on the unit section of $\on{Gr}_{G,X^n}$, a condition we have not yet considered:

\begin{theorem}{defn}{gerbe unital factorizable equivariance}
 The structure of factorizable equivariance on an sf gerbe $\stack{G}_n$ is said to be \emph{unital} if $\stack{G}_n$ is sf equivariantly trivial on the unit sections on $\on{Gr}_{G,X^n}$ (that is, its restriction to the trivial grassmannian $\on{Gr}_{T,X^n}^{0, \dots, 0}$ is trivial as a factorizably equivariant sf gerbe).
\end{theorem}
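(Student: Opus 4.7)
The final statement is Definition \ref{gerbe unital factorizable equivariance}, which is a definition rather than a theorem, so strictly speaking there is nothing to prove. Any ``proof proposal'' must therefore take the form of verifying that the definition is well-posed: that the two clauses given (``sf equivariantly trivial on the unit sections'' and ``restriction to $\on{Gr}_{T,X^n}^{0,\dots,0}$ is trivial as a factorizably equivariant sf gerbe'') make sense and agree, and that the condition really is an additional one rather than something already forced by strong factorizable equivariance.

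The first thing to check is that the unit section $X^n \incl \on{Gr}_{G,X^n}$, given by $(\vect{x}, \sh{T}^0, \id)$, coincides with the component $\on{Gr}_{T,X^n}^{0,\dots,0}$ under the map $i$ of \ref{eq:torus section}; this is immediate from \ref{torus grassmannian properties}\ref{en:torus grassmannian 1}, since the trivial torsor corresponds to $\lambda = 0$. Next, one checks that this section is $G(\smash{\hat{\OO}})_n$-stable (indeed, it is literally the stabilizer's fixed locus from \ref{arc and loop group actions}), so that restricting a factorizably equivariant sf gerbe to it yields again a factorizably equivariant sf gerbe over $X^n$ (this uses \ref{pullback of equivariant objects} applied to the sf inclusion of the unit section into $\on{Gr}_{G,X^n}$ together with the inclusion $X^n \to G(\smash{\hat{\OO}})_n$ as the identity section). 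The category of sf $A$-gerbes on this trivial factorizable space is the one analyzed in \ref{s:categorification of cohomology}, and its trivial object provides the benchmark against which the restriction is to be compared.

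The remaining content is to observe that unitality is a genuine extra condition beyond strong factorizable equivariance. The argument here is parallel to the observation following \ref{gerbe strong factorizable equivariance}: strong factorizability controls the behavior of $\stack{G}_n$ over the convolution diagrams $\tilde{\on{Gr}}_p$ and their multiplication maps $m_p$, but these diagrams do not touch the unit section in a way that would force its equivariance datum to be trivial. The main thing one would want to know — which belongs properly to the next results (e.g.\ the theorem analogous to \ref{multiplicative gerbe descent}) — is that a strong and unital factorizable equivariance structure is precisely the data enabling $q_n^* \stack{G}_n$ to descend to a multiplicative gerbe on $G(\smash{\hat{\KK}})_n$ with a multiplicative trivialization over $G(\smash{\hat{\OO}})_n$. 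I would expect the main (non-trivial) subsequent result to be that every strong factorizable equivariance structure admits a unique trivialization of its restriction to the unit section, making the unitality condition automatic; this is the analogue for gerbes of the uniqueness statement in \ref{constant sheaf multiplicativity}, and would be the actual theorem whose proof is nontrivial.
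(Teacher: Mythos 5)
This item is a definition, so the paper offers no proof to compare against; your reading of it and your check that it is well-posed (the unit section is the image of $\on{Gr}_{T,X^n}^{0,\dots,0}$, it is preserved pointwise by $G(\smash{\hat{\OO}})_n$ since it is $G(\smash{\hat{\OO}})_n/G(\smash{\hat{\OO}})_n \cong X^n$, and restriction of an equivariant sf gerbe to it again makes sense) is consistent with how the definition is used later. Your closing prediction is essentially borne out, though by a slightly different mechanism than a formal "unique trivialization" statement: in \ref{strong equivariance for a torus} (feeding into \ref{existence of equivariance}) the paper verifies unitality of the canonical structure by observing that the components $\on{Gr}_n^{0,\dots,0}$ form the grassmannian of the trivial torus, on which \ref{torus exact sequence} leaves only the trivial sf gerbe, so the restricted equivariant gerbe is sf-trivial; and the role you assign to unitality — it is exactly what converts strong factorizable equivariance into a multiplicative structure on $q_n^*\stack{G}_n$ trivialized over $G(\smash{\hat{\OO}})_n$ — is precisely \ref{equivariance and multiplicativity}.
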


Having introduced the necessary concepts, the first consequence we will deduce is:

\begin{theorem}{thm}{equivariance and multiplicativity}
 The following structures on an sf $A$-gerbe $\stack{G}_n$ on $\on{Gr}_{G,X^n}$ are equivalent:
 \begin{enumerate}
  \item
  A unital strongly factorizable equivariance for the action of $G(\hat{\OO})_n$;
  
  \item
  A factorizable multiplicative structure for the $q_n^* \stack{G}_n$ which is trivialized on $G(\hat{\OO})_n$.
 \end{enumerate}
\end{theorem}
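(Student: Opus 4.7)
The heart of the proof is a direct application of Proposition \ref{multiplicative gerbe descent} with $\tilde{G} = G(\hat{\KK})_n$, $G = G(\hat{\OO})_n$, and $Y = \on{Gr}_n = G(\hat{\KK})_n/G(\hat{\OO})_n$ (using Lemma \ref{arc and loop group actions}). That proposition already says: giving a multiplicative $A$-gerbe $\stack{M}$ on $G(\hat{\KK})_n$ together with a multiplicative trivialization of $\stack{M}|_{G(\hat{\OO})_n}$ is the same as giving a gerbe on $\on{Gr}_n$ with an $\stack{M}$-twisted $G(\hat{\OO})_n$-equivariance descending $\stack{M}$; when $\stack{M}|_{G(\hat{\OO})_n}$ is explicitly trivialized, twisted equivariance degenerates to honest equivariance. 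The remaining content of the theorem is therefore a matching-up of the ``factorizable'', ``strong'', and ``unital'' decorations on both sides.

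\textbf{Direction (1) $\Rightarrow$ (2).} Starting from a unital strongly factorizable equivariance on $\stack{G}_n$, consider the strong factorizability equivalence \ref{eq:strong factorizability open} on $\on{Conv}_n^2$, which by \ref{inner convolution diagram diagonal}\ref{en:inner diagram diagonal} identifies with $\tilde{\on{Gr}}_{2\cdot n}|_\Delta$. Pulling back along the natural $G(\hat{\OO})_n$-torsor $G(\hat{\KK})_n \times_{X^n} G(\hat{\KK})_n \to \on{Conv}_n^2$ (using \ref{inner convolution diagram diagonal}\ref{en:inner diagram cover}), and invoking \ref{inner convolution multiplication} to identify $m$ on $\on{Conv}_n^2$ with the group multiplication, the equivalence becomes the desired isomorphism
\begin{equation*}
 m^* q_n^* \stack{G}_n \;\cong\; q_n^* \stack{G}_n \boxtimes_{X^n} q_n^* \stack{G}_n
\end{equation*}
on $G(\hat{\KK})_n \times_{X^n} G(\hat{\KK})_n$. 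The analogous pullbacks from $\on{Conv}_n^m$ for $m \geq 3$ furnish associativity, while the sf structure on the convolution diagrams (via \ref{outer convolution diagram trick}) provides factorizability in $n$. The unitality of the equivariance trivializes $\stack{G}_n$ on the unit section of $\on{Gr}_n$, and since $q_n$ sends $G(\hat{\OO})_n$ into that section, $q_n^* \stack{G}_n|_{G(\hat{\OO})_n}$ acquires the required multiplicative trivialization.

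\textbf{Direction (2) $\Rightarrow$ (1).} Applying \ref{multiplicative gerbe descent} produces the $G(\hat{\OO})_n$-equivariance on $\stack{G}_n$ for each $n$, and sf-ness in $n$ is inherited because $q_n$, the multiplicative structure, and its trivialization on $G(\hat{\OO})_n$ are all factorizable. For strong factorizability, reduce by \ref{outer convolution diagram trick} to inner convolutions $\on{Conv}_n^m$; there, \ref{inner convolution multiplication} identifies $m_p|_\Delta$ with $G(\hat{\KK})_n$-multiplication acting on $\on{Gr}_n$, so the multiplicative structure on $q_n^* \stack{G}_n$ (descended along the $G(\hat{\OO})_n$-torsors of \ref{inner convolution diagram diagonal}\ref{en:inner diagram cover}) furnishes the required global equivalence $m^* \stack{G}_n \cong \stack{G}_n \tbtimes \cdots \tbtimes \stack{G}_n$. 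Unitality is the descent of the given trivialization of $q_n^* \stack{G}_n|_{G(\hat{\OO})_n}$, which, being multiplicative, passes through \ref{multiplicative gerbe descent} to an equivariant trivialization on the unit section.

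\textbf{Main obstacle.} The substantive difficulty is bookkeeping: checking that the associativity and cocycle constraints of the $G(\hat{\KK})_n$-multiplicative structure are exactly those demanded by the compatibilities of the iterated twisted products across the tower $\on{Conv}_n^m$ for varying $m$, and that sf-compatibility over general partitions $p \neq m \cdot n$ (where the maps $m_p$ are not literally a group multiplication) reduces via \ref{outer convolution diagram trick} to the case $p = m \cdot n$ without introducing extra constraints. Once these routine but tedious diagram chases are verified, the two constructions are manifestly inverse, since both are built from the same underlying descent of \ref{multiplicative gerbe descent} applied uniformly in $n$.
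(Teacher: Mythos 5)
Your proposal is correct and follows essentially the same route as the paper: direction $(2)\Rightarrow(1)$ invokes \ref{multiplicative gerbe descent} (which the paper also does, in the paragraph preceding the definition of unitality) and direction $(1)\Rightarrow(2)$ pulls the twisted product on $\on{Conv}_n^m$ back to $G(\hat{\KK})_n \times_{X^n} \dots \times_{X^n} G(\hat{\KK})_n$ via \ref{inner convolution multiplication} to manufacture the multiplicative equivalence, with associativity from the tensor product and the unit from unitality. The one small imprecision is calling $G(\hat{\KK})_n \times_{X^n} G(\hat{\KK})_n \to \on{Conv}_n^2$ a $G(\hat{\OO})_n$-torsor — it is a torsor for $G(\hat{\OO})_n \times_{X^n} G(\hat{\OO})_n$ (one copy for the twisted product, one for the quotient to $\on{Gr}_n$) — but this does not affect the argument.
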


\begin{proof}
 We have already argued that the second point implies the first.  To see the converse, we take $p = m \cdot n$ and consider the twisted product $\stack{G}_n \tbtimes \cdots \tbtimes \stack{G}_n$, on $\on{Conv}_n^m$. It follows from the definition that when pulled back to $G(\smash{\hat{\KK}})_n \times_{X^n} \dots \times_{X^n} G(\smash{\hat{\KK}})_n$, this gerbe becomes $(q_n^* \stack{G}_n) \boxtimes \dots \boxtimes (q_n^* \stack{G}_n)$.  If the hypothesis of strong factorizability is satisfied, then we deduce from \ref{inner convolution multiplication} that there is an equivalence (here $m$ represents the multiplication map)
 \begin{equation}
  \label{eq:multiplicative structure}
  m^* (q_n^*\stack{G}_n) \cong (q_n^* \stack{G}_n) \boxtimes \dots \boxtimes (q_n^* \stack{G}_n).
 \end{equation}
 It is clear from the expression \ref{inner convolution diagram diagonal}\ref{en:inner diagram cover} and the associativity of tensor products that these isomorphisms are associative among themselves.  This forms an associative multiplication on the $q_n^* \stack{G}_n$; the unit comes from that of the equivariance structure on $\stack{G}_n$.
\end{proof}

We now turn to investigating the existence of strongly factorizable equivariance structures on sf gerbes.  As usual, the first case to consider is that of a torus.

\begin{theorem}{prop}{strong equivariance for a torus}
 When $G = T$ is a torus, an sf $A$-gerbe $\stack{G}_n = \stack{T}_n$ on $\on{Gr}_{T,X^n}$ has a unique structure of unital strongly factorizable equivariance for $T(\hat{\OO})_n$.
\end{theorem}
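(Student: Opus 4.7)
The plan is to reduce the problem to a statement about multiplicative structures via Theorem~\ref{equivariance and multiplicativity}, then handle existence and uniqueness separately, throughout exploiting the commutativity of $T$.

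The underlying observation is that since $T$ is commutative, the quotient map $q_n \colon T(\hat{\KK})_n \to \on{Gr}_{T,X^n}$ is a homomorphism of commutative sf group ind-schemes with kernel $T(\hat{\OO})_n$; consequently, the induced action of $T(\hat{\OO})_n$ on $\on{Gr}_{T,X^n}$ is trivial. By Theorem~\ref{equivariance and multiplicativity}, a unital strongly factorizable equivariance on $\stack{T}_n$ is the same data as a factorizable multiplicative structure on $q_n^* \stack{T}_n$ trivialized on $T(\hat{\OO})_n$, and it is this latter object I will construct uniquely.

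For existence, I will use the decomposition provided by \ref{torus exact sequence}, writing $\stack{T}_n$ (up to sf equivalence) in terms of a commutative multiplicative gerbe on $X \times \Lambda_T$ and a quadratic form $Q$. The multiplicative gerbe piece, being pulled back from $X^n$, inherits the trivial equivariance. The $Q$ piece is realized concretely via the determinant gerbes of \ref{determinant forms}, and these inherit a canonical $T(\hat{\OO})_n$-equivariance because the coherent sheaves $\sh{V}/V(a\bar{x})$ appearing in \ref{determinant bundle} depend only on the $T$-torsor $\sh{T}$ up to isomorphism, hence are preserved when the trivialization $\phi$ is modified by an element of $T(\hat{\OO})_n$. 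Unitality of the constructed equivariance is manifest. Strong factorizability, by virtue of \ref{torus convolution diagram}, reduces to matching $m^* \stack{T}_n$ with the twisted product $\stack{T}_{n_1} \tbtimes \stack{T}_{n_2}$ on $\tilde{\on{Gr}}_{n_1, n_2} \cong \on{Gr}_{T,X^{n_1}} \times \on{Gr}_{T,X^{n_2}}$, where the twisting by $\tilde{T}(\hat{\OO})_{n_1, n_2}$ precisely compensates for the failure of multiplicativity of $\stack{T}_n$ measured by $\kappa$, via the explicit description in \ref{twisting gerbe for bilinear form}.

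For uniqueness, the difference of two unital strongly factorizable equivariances corresponds under \ref{equivariance and multiplicativity} to a factorizable multiplicative $A$-torsor $\sh{M}_n$ on $T(\hat{\KK})_n$ trivialized on $T(\hat{\OO})_n$. Descending along $q_n$, I obtain a factorizable multiplicative $A$-torsor on $\on{Gr}_{T,X^n}$, which by the opening discussion in the proof of \ref{torus exact sequence} lies in $\cat{Hom}(\Lambda_T, \cat{H}^1(X, A))$; the trivialization on $T(\hat{\OO})_n$ (containing the unit section of $\on{Gr}_{T,X^n}$) forces the associated homomorphism to vanish, hence $\sh{M}_n$ is trivial. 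The uniqueness of the $2$-isomorphism identifying two trivializations is handled analogously via factorizable $A$-valued functions, which are constant on components, and the unit condition eliminates all ambiguity.

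The principal technical obstacle is verifying strong factorizability in the existence step, that is, matching the twisting torsor $\tilde{T}(\hat{\OO})_p$ against the $\kappa$-defect in multiplicativity of $\stack{T}_n$ in a way that is compatible across all partitions. The essential tools are furnished by \ref{torus convolution diagram} and \ref{twisting gerbe for bilinear form}, but the combinatorial bookkeeping needed to produce a single coherent equivariance across all $n$ and all components of $\on{Gr}_{T,X^n}$ is the most delicate part of the argument.
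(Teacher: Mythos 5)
Your overall strategy—reducing to multiplicative structures via \ref{equivariance and multiplicativity}, then splitting the gerbe through \ref{torus exact sequence}—is plausible and genuinely different from the paper's, which never invokes either of those results; the paper constructs the equivariance datum directly by induction on $n$, using \ref{torus convolution diagram} to identify $\tilde{\on{Gr}}_p$ with a product, then using \ref{twisting gerbe for bilinear form} and \ref{twisting gerbe arc group trivialization} to show that the $\kappa$-defect gerbe $\bigotimes_i \OO(\Delta_{in})^{\log\kappa(\lambda_i,\mu)}$ trivializes when pulled back along the $T(\hat{\OO})_1$-torsor $\tilde{T}(\hat{\OO})_{n-1,1}$, and extracting $\sh{L}_{\kappa_\mu}$ as the unique equivariance torsor in the process.

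However, there is a concrete gap in your existence argument. The claim that ``the $Q$ piece is realized concretely via the determinant gerbes of \ref{determinant forms}'' is false for a torus. Determinant gerbes only realize quadratic forms of the shape $a^R$ with $R(\check\mu) = \frac12\sum_\lambda\langle\lambda,\check\mu\rangle^2$ subject to the parity condition that $K(\check\mu,\check\nu) = \sum_\lambda\langle\lambda,\check\mu\rangle\langle\lambda,\check\nu\rangle$ lands in $2\Z$ (see \ref{character determinant bundle}, which requires $m$ even, and \ref{determinant forms}), and even then only after possibly extending $A$. A general $A$-valued quadratic form $Q$ on $\Lambda_T$ need not arise this way. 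The paper's splitting in \ref{torus exact sequence} instead realizes the $Q$-piece via $\stack{T}_1^\lambda = \stack{T}^\lambda\otimes\sh{T}_X^{\log Q(\lambda)}$ together with the correction gerbes $\OO(\Delta_{ij})^{\log\kappa(\lambda_i,\lambda_j)}$ of \ref{eq:torus gerbe diagonal orders}; these are not determinant gerbes. To show these admit the required equivariance you end up needing exactly \ref{twisting gerbe for bilinear form} and \ref{twisting gerbe arc group trivialization}, which is the core of the paper's proof, so the route through determinant gerbes does not avoid the hard part.

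A secondary imprecision: you assert that two unital strongly factorizable equivariances differ by a ``factorizable multiplicative $A$-torsor on $T(\hat{\KK})_n$.'' Under \ref{equivariance and multiplicativity} the relevant data is a multiplicative structure $m^*(q_n^*\stack{T}_n)\to(q_n^*\stack{T}_n)\boxtimes(q_n^*\stack{T}_n)$, and the difference of two such is an $A$-torsor on $T(\hat{\KK})_n\times_{X^n}T(\hat{\KK})_n$ which the associativity constraints make \emph{bi}-multiplicative, not a multiplicative torsor on $T(\hat{\KK})_n$ itself. The intended conclusion (triviality via the unit condition and \ref{constant sheaf multiplicativity}) should still be reachable, but the object you are trivializing is different from the one you name, so the descent step (b) and the subsequent identification with $\cat{Hom}(\Lambda_T,\cat{H}^1(X,A))$ need to be rewritten around the correct torsor.
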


\begin{proof}
 By \ref{torus convolution diagram} and \ref{quadratic and bilinear forms} we have for $p = (n - 1, 1)$:
 \begin{equation}
  \label{eq:torus strong equivariance}
  m_{n - 1, 1}^* \stack{T}_n^{\lambda_1, \dots, \lambda_{n - 1}, \mu}
  \cong
  \stack{T}_{n - 1}^{\lambda_1, \dots, \lambda_{n - 1}} \boxtimes \stack{T}_1^\mu
  \otimes \bigotimes_i \OO(\Delta_{in})^{\log \kappa(\lambda_i, \mu)}.
 \end{equation}
 where $\kappa$ is, as usual, the bilinear form defined by the sf gerbe.  We must therefore show that the pullback of the latter product to $\tilde{T}(\hat{\OO})_{n - 1, 1}^{\lambda_1, \dots, \lambda_{n - 1}} \times_X \on{Gr}_1^\mu$ is the same as $\stack{T}_{n - 1}^{\lambda_1, \dots, \lambda_{n - 1}} \boxtimes \stack{T}_1^\mu$.
 
 To do this, it is necessary and sufficient only trivialize the second factor on pulling back, since the first factor pulls back to exactly the desired gerbe after using \ref{torus convolution diagram} to identify the factorizations of the convolution product and its cover.  The second factor is indeed trivialized after pulling back: first by \ref{twisting gerbe for bilinear form}, identifying the product with $\stack{T}_{n - 1, 1}^{\log \kappa_\mu}$, and then by \ref{twisting gerbe arc group trivialization}, using the fact that
 \begin{equation*}
  \tilde{T}(\hat{\OO})_{n - 1, 1}^{\lambda_1, \dots, \lambda_{n - 1}} \times_X \on{Gr}_1^\mu
   \cong \tilde{T}(\hat{\OO})_{n - 1, 1}^{\lambda_1, \dots, \lambda_{n - 1}}
 \end{equation*}
 since $\on{Gr}_1^\mu \cong X$.
 
 The descent data of $\stack{T}_{n - 1}^{\lambda_1, \dots, \lambda_{n - 1}} \boxtimes \stack{T}_1^\mu$  is the same as $T(\smash{\hat{\OO}})_1$-equivariance for $\stack{T}_1^\mu$ (and, by agglomeration, all of $\stack{T}_1$); despite $T(\hat{\OO})_1$ acting trivially, this is not the trivial equivariance structure.  Indeed, examining the proof of \ref{trivialization for twisting gerbe}, we see that in fact as we have defined it, the equivariance structure on $\stack{T}_1^\mu$, i.e.\ the equivalence between its pullbacks along the maps
 \begin{equation*}
  T(\hat{\OO})_1 \times_X \on{Gr}_{T,X}^\mu
   \xrightarrow[\on{pr}]{a} \on{Gr}_{T,X}^\mu,
 \end{equation*}
 with $a = \on{pr}$, and therefore an autoequivalence of $\on{pr}^* \stack{T}_1^\mu$, is given by the $A$-torsor $\sh{L}_{\kappa_\mu}$ pulled back to the first cartesian factor.  This follows from \emph{any} of the constructions for particular $\lambda_i$ and is thus, in particular, independent of their choice.
 
 Therefore, the right-hand side of \ref{eq:torus strong equivariance} fulfills the role of $\stack{T}_{n - 1}^{\lambda_1, \dots, \lambda_{n - 1}} \tbtimes \stack{T}_1^\mu$.  Then by \ref{twisted product is equivariant} and induction on \ref{eq:torus strong equivariance} $\stack{T}_n = m_{n - 1, 1}^* \stack{T}_n$ inherits a $T(\smash{\hat{\OO}})_n$-equivariance structure as a twisted product, which by its definition is strongly factorizable; we have just argued that this structure is unique for $n = 1$, and for higher $n$ the construction we have given is the only one possible, hence again unique.  To see that it is unital, it suffices to show that $\stack{T}_n$ is sf-trivial on the components $\on{Gr}_n^{0, \dots, 0}$.  These components form the grassmannian $\on{Gr}_{1, X^n}$ of the trivial torus, and by \ref{torus exact sequence} it indeed has only the trivial sf gerbe on it.
\end{proof}

The details of the above proof do not generalize to groups other than a torus both because \ref{twisting gerbe arc group trivialization} fails for a non-torus and because we no longer have $\tilde{\on{Gr}}_{n_1, n_2} \cong \on{Gr}_{n_1} \times \on{Gr}_{n_2}$, so we proceed first by extending the equivariance structure and then showing separately that it is strongly factorizable.

\begin{theorem}{prop}{equivariance torus to borel}
 Let $\stack{G}_n$ be the part of an sf $A$-gerbe on $\on{Gr}_{G,X^n}$, and let $B$ be a fixed Borel subgroup of $G$ with semisimple quotient $T$; then the $T(\hat{\OO})_n$-equivariance structure of $\stack{T}_n$ on $\on{Gr}_{T,X^n}$ extends uniquely to a $B(\smash{\hat{\OO}})_n$-equivariance structure for $\stack{G}_n$.
\end{theorem}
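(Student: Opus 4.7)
The plan is to fix a splitting $s: T \hookrightarrow B$, decomposing $B(\hat{\OO})_n = T(\hat{\OO})_n \ltimes N(\hat{\OO})_n$ where $N$ is the unipotent radical, and construct the $T(\hat{\OO})_n$- and $N(\hat{\OO})_n$-equivariances of $\stack{G}_n$ separately before combining them. The $N(\hat{\OO})_n$-part is essentially automatic: since $N$ is unipotent, $N(\hat{\OO})_n$ is pro-unipotent (as an $X^n$-scheme, with contractible fibers over $X^n$), and consequently any $A$-torsor or $A$-gerbe on a product $N(\hat{\OO})_n^k \times_{X^n} \on{Gr}_{G,X^n}$ is uniquely pulled back from $\on{Gr}_{G,X^n}$. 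Applied to the obstruction gerbe $a^* \stack{G}_n \otimes \on{pr}^* \stack{G}_n^{-1}$, whose restriction to the unit section is canonically trivial, this furnishes the $N(\hat{\OO})_n$-equivariance uniquely; the associativity and identity constraints are forced by the same uniqueness.

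For the $T(\hat{\OO})_n$-part, first pull the given equivariance of $\stack{T}_n$ back along $t$ to obtain a $T(\hat{\OO})_n$-equivariance of $b^* \stack{G}_n = t^* \stack{T}_n$ on $\on{Gr}_{B,X^n}$, with $T$ acting through $s: T \hookrightarrow B$. Then descend along $b$ using the structure of \ref{borel components}: each component $\on{Gr}_B^\lambda$ embeds injectively into $\on{Gr}_{G,X^n}$ and is an $\Aff^\infty$-bundle over its $T$-fixed zero-section $\on{Gr}_T^\lambda$. Consequently $\stack{G}_n|_{\on{Gr}_B^\lambda}$ is canonically pulled back from $\stack{T}_n^\lambda$, so the $T(\hat{\OO})_n$-equivariance of the latter pulls back to a unique $T(\hat{\OO})_n$-equivariance of each $\stack{G}_n|_{\on{Gr}_B^\lambda}$. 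These local equivariances glue consistently across varying $\lambda$ by comparison on the orbit-closures of \ref{borel components}\ref{en:borel orbit closures}, yielding a global $T(\hat{\OO})_n$-equivariance on $\stack{G}_n$.

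Combining, the semidirect structure $B(\hat{\OO})_n = T(\hat{\OO})_n \ltimes N(\hat{\OO})_n$ produces a $B(\hat{\OO})_n$-equivariance once the $T$- and $N$-parts are compatible; the required compatibility 2-isomorphism is automatic and unique by the cohomological triviality of $N(\hat{\OO})_n$ applied to $T(\hat{\OO})_n \times_{X^n} N(\hat{\OO})_n$. Uniqueness of the extension follows as the difference of two extensions would be a multiplicative $A$-torsor on $B(\hat{\OO})_n$ trivial on $T(\hat{\OO})_n$, hence pulled back from (and thus trivial on) $N(\hat{\OO})_n$; independence of the splitting $s$ is automatic since any two splittings differ by $N(\hat{\OO})_n$-conjugation, absorbed by the canonicity of the $N$-structure. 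The main obstacle is the descent step for the $T$-part: because $b$ is not a torsor, gluing the orbit-wise equivariances into a genuine structure on $\stack{G}_n$ requires the full orbit-closure stratification of \ref{borel components}, together with careful verification that the resulting structure remains compatible with factorization over $X^n$.
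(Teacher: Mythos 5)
Your route is genuinely different from the paper's. The paper works with $B(\hat{\OO})_n$ as a single group: it first produces a $B(\hat{\OO})_n$-equivariance on $\on{Gr}_{B,X^n}$ (essentially for free, since $b^*\stack{G}_n = t^*\stack{T}_n$ and $N(\hat{\OO})_n$ has no cohomology), and then extends this equivariance equivalence from the Borel strata of $\on{Gr}_{G,X^n}$ across the boundary Cartier divisors by showing the order of the equivalence along each boundary is trivial -- this is detected on the transverse section given by the identity $1_{X^n} \subset B(\hat{\OO})_n$, where the equivalence is the identity by the unital axiom of \ref{equivariant gerbe}. You instead decompose the group, $B(\hat{\OO})_n = T(\hat{\OO})_n \ltimes N(\hat{\OO})_n$ via a splitting, and handle the two factors separately. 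Your $N$-step is correct and, for that half, slightly slicker than the paper's: cohomological triviality of $N(\hat{\OO})_n$ in its $X^n$-fibers immediately forces the obstruction gerbe on $N(\hat{\OO})_n \times_{X^n} \on{Gr}_{G,X^n}$ to be pulled back from $\on{Gr}_{G,X^n}$, so the $N$-equivariance of $\stack{G}_n$ exists and is unique directly on $\on{Gr}_{G,X^n}$ with no stratification argument.

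The gap is in the $T$-step, and you partly acknowledge it. You correctly pull back the $T(\hat{\OO})_n$-equivariance of $\stack{T}_n$ along $t$, and correctly note that on each stratum $\on{Gr}_{B,X^n}^\lambda$ (which is $\Aff^\infty$-bundled over $\on{Gr}_{T,X^n}^\lambda$ and hence determined by its zero section) you get a $T(\hat{\OO})_n$-equivariance of $\stack{G}_n|_{\on{Gr}_{B,X^n}^\lambda}$. But asserting that ``these local equivariances glue consistently across varying $\lambda$ by comparison on the orbit-closures'' is not an argument; it is precisely the theorem. The strata are locally closed in $\on{Gr}_{G,X^n}$ -- not an open cover -- and the equivalence $a^*\stack{G}_n \cong \on{pr}^*\stack{G}_n$ defined stratum-by-stratum on $T(\hat{\OO})_n \times_{X^n} \on{Gr}_{G,X^n}$ could a priori have nontrivial order along the boundary divisors $\partial \bar{\on{Gr}}_{B,X^n}^{\lambda}$ of \ref{borel components}\ref{en:borel orbit boundaries}. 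The paper dispatches this with \ref{2-line bundle of a Cartier divisor}\ref{en:order local}: the order is computed on a transverse section, and taking the section through $1_{X^n}$ shows it is $1$. The same device closes your gap, but you have to invoke it -- without it, the $T$-equivariance of $\stack{G}_n$ is defined only on the strata, not on $\on{Gr}_{G,X^n}$. Note also that since the proposition is stated in terms of the quotient $T = B/N$ rather than a splitting, the dependence on $s$ you introduce must be discharged at the end (your one-line claim that conjugation by $N(\hat{\OO})_n$ is absorbed is plausible but should be spelled out, since the $T$-equivariance is built on the strata via $s$, not via the quotient).
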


\begin{proof}
 The map $t \colon \on{Gr}_{B,X^n} \to \on{Gr}_{T,X^n}$ is biequivariant for the actions of $B(\smash{\hat{\OO}})_n$ and $T(\smash{\hat{\OO}})_n$ respectively (along the natural map $B(\smash{\hat{\OO}})_n \to T(\smash{\hat{\OO}})_n$), so $\stack{B}_n = t^* \stack{T}_n$ gets a $B(\smash{\hat{\OO}})_n$-equivariance structure by \ref{pullback of equivariant objects}.  In fact, since $N(\smash{\hat{\OO}})$ is cohomologically trivial, this structure is unique.
 
 We have thus shown that there is a unique equivalence between the pullbacks of $\stack{B}_n$, the restriction of $\stack{G}_n$ to $\on{Gr}_{B,X^n}$, along the two maps
 \begin{equation}
  \label{eq:borel equivariance diagram}
  B(\hat{\OO})_n \times_{X^n} \on{Gr}_{B,X^n} \xrightarrow[\on{pr}]{a} \on{Gr}_{B,X^n}
 \end{equation}
 satisfying the requirements of \ref{equivariant gerbe}.  Now we consider the diagram
 \begin{equation*}
  B(\hat{\OO})_n \times_{X^n} \on{Gr}_{G,X^n} \xrightarrow[\on{pr}]{a} \on{Gr}_{G,X^n}
 \end{equation*}
 and the pullbacks of $\stack{G}_n$ itself.  According to \ref{borel components}, the components of $\on{Gr}_{B,X^n}$ stratify $\on{Gr}_{G,X^n}$, and by \ref{eq:borel equivariance diagram} we have isomorphisms $a^* \stack{G}_n \cong \on{pr}^* \stack{G}_n$ on each one of them.  First fix $\lambda_1, \dots, \lambda_n$ and for $\mu_1, \dots, \mu_n$ with $\mu_i = \lambda_i - \check\alpha_j$ for any simple coroots $\check\alpha_j$, consider the complement of
 \begin{equation*}
  \bar{\on{Gr}}_{B,X^n}^{\mu_1, \dots, \mu_n} \subset
  \bar{\on{Gr}}_{B,X^n}^{\lambda_1, \dots, \lambda_n}.
 \end{equation*}
 It is open and its boundary is a Cartier divisor, so the equivalence of \ref{eq:borel equivariance diagram} has some order along it.  This order is seen to be 1 by considering the transverse section 
 \begin{equation*}
  1_{X^n} \times_{X^n} \on{Gr}_{G,X^n} \subset B(\hat{\OO})_{X^n} \times_{X^n} \on{Gr}_{G,X^n},
 \end{equation*}
 where both pullbacks are canonically identified because the identity section acts trivially and $\stack{B}_n$ has the trivial equivariance structure for this trivial action by the identity axiom of \ref{equivariant gerbe}.  Thus, we have $a^* \stack{G}_n \cong \on{pr}^* \stack{G}_n$ on $\bar{\on{Gr}}_{B,X^n}^{\lambda_1, \dots, \lambda_n}$ for any choice of coweights $\lambda_i$, and that these isomorphisms glue for the inclusions of these strata as the $\lambda_i$ increase.  Thus, $\stack{G}_n$ is $B(\hat{\OO})_n$-equivariant, as desired.
\end{proof}

\begin{theorem}{prop}{equivariance borel to group}
 Let $\stack{G}_n$ be as in \ref{equivariance torus to borel} with its $B(\hat{\OO})_n$-equivariance structure as constructed there.  Then this structure extends uniquely to that of $G(\smash{\hat{\OO}})_n$-equivariance.
\end{theorem}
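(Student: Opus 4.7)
The plan is to mirror the structure of the proof of \ref{equivariance torus to borel}, replacing the extension ``$T \to B$'' argument with a ``$B \to G$'' argument. Uniqueness is straightforward: any two $G(\hat{\OO})_n$-equivariance structures extending the given $B(\hat{\OO})_n$-equivariance differ by an automorphism of the equivariance equivalence on $G(\hat{\OO})_n \times_{X^n} \on{Gr}_{G,X^n}$ which is trivial on the $B(\hat{\OO})_n$-locus and on the identity section; such an automorphism is classified by an $A$-valued locally constant function on a connected scheme that vanishes on a nonempty subspace, hence is trivial.

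For existence, I would reduce to the rank-one case. Since $G$ is generated as a group by $B$ together with the subminimal parabolics $P_{\check\alpha}$ attached to the simple coroots $\check\alpha$, and since the resulting $P_{\check\alpha}(\hat{\OO})_n$-equivariance structures must, by the uniqueness argument just given, agree on their common intersection in $B(\hat{\OO})_n$, it suffices to extend the equivariance to each $P_{\check\alpha}(\hat{\OO})_n$ individually. Writing $P_{\check\alpha} = L_{\check\alpha} \cdot U_{P_{\check\alpha}}$ with $U_{P_{\check\alpha}} \subset B$ already handled, the problem reduces to extending from $B \cap L_{\check\alpha}$ to the rank-one Levi $L_{\check\alpha}$. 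Pulling back along $\on{Gr}_{L_{\check\alpha}, X^n} \to \on{Gr}_{G, X^n}$ (which is an isomorphism on each connected component), we may assume $G$ has semisimple rank $1$.

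In this rank-one situation, the geometry is controlled by the projective line bundles $\Prj_{1;\check\alpha}^\lambda$ and $\Prj_{2,1;\check\alpha}^{\lambda,\mu}$ of \ref{general projective line bundles}. On each $B(\hat{\OO})_n$-orbit closure inside an $L_{\check\alpha}(\hat{\OO})_n$-orbit, we already have the equivalence $a^*\stack{G}_n \cong \on{pr}^*\stack{G}_n$. This equivalence, initially defined on the $B(\hat{\OO})_n$-locus of $L_{\check\alpha}(\hat{\OO})_n \times_{X^n} \on{Gr}_{L_{\check\alpha},X^n}$, is a meromorphic map of gerbes in the sense of \ref{2-line bundle of a Cartier divisor} with respect to the complementary locus (corresponding to the nontrivial Weyl element $s_{\check\alpha}$), which can be realized as a Cartier divisor using the projective line bundle description. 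The extension will exist provided the order of this meromorphic equivalence along that divisor is trivial.

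The main obstacle, and the heart of the argument, is computing this order. The strategy is the same as in \ref{equivariance torus to borel}: restrict to a transverse slice passing through the unit section $1_{X^n} \times_{X^n} \on{Gr}_{L_{\check\alpha},X^n}$, where both $a^*\stack{G}_n$ and $\on{pr}^*\stack{G}_n$ are canonically identified by the identity axiom of equivariance and the equivalence is the identity. Consequently the order vanishes, and the equivalence extends uniquely to all of $L_{\check\alpha}(\hat{\OO})_n \times_{X^n} \on{Gr}_{L_{\check\alpha}, X^n}$. The associativity and identity constraints required by \ref{equivariant gerbe} transport automatically since, by \ref{2-line bundle of a Cartier divisor}, the $2$-isomorphisms expressing them are uniquely determined by their values on the $B(\hat{\OO})_n$-locus, where they are already given.
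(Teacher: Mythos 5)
The route you propose diverges from the paper's and has a genuine gap. The paper uses the opposite Borel $B^{\op}$: since $\stack{G}_n$ is also $B^{\op}(\hat{\OO})_n$-equivariant, the two structures glue over the big cell $\on{BC}(\hat{\OO})_n = B^{\op}(\hat{\OO})_n \cdot B(\hat{\OO})_n$ after descent along the $T(\hat{\OO})_n$-torsor, giving the equivalence $a^*\stack{G}_n \cong \on{pr}^*\stack{G}_n$ on a single dense open subset of $G(\hat{\OO})_n \times_{X^n} \on{Gr}_{G,X^n}$ whose complement is a divisor in the group direction. Your plan to instead prove $P_{\check\alpha}(\hat{\OO})_n$-equivariance separately for each subminimal parabolic and then appeal to the $P_{\check\alpha}$ generating $G$ is not a valid reduction: knowing an equivariance structure for each of several subgroups that generate $G(\hat{\OO})_n$ does not by itself produce one for $G(\hat{\OO})_n$ — the putative $\on{Eq}$ over a general $g = g_1 g_2 \cdots$ depends on a product decomposition and one must show independence of the decomposition, which is precisely the associativity one is trying to construct. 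The paper's big-cell argument avoids this altogether.

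Two further steps fail. First, the passage to $\on{Gr}_{L_{\check\alpha},X^n}$ does not reduce the problem to rank one: the action of $L_{\check\alpha}(\hat{\OO})_n$ (or of $P_{\check\alpha}(\hat{\OO})_n$) to be trivialized is on $\on{Gr}_{G,X^n}$, and the image of $\on{Gr}_{L_{\check\alpha},X^n}$ under the inclusion (it is an embedding of components, not an isomorphism) is a small $L_{\check\alpha}(\hat{\OO})_n$-stable subspace, so equivariance of the pullback there says nothing about $\on{Gr}_{G,X^n}$ as a whole. Second, and most concretely, your transverse section $1_{X^n} \times_{X^n} \on{Gr}$ is the wrong slice for the order computation. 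In \ref{equivariance torus to borel} the relevant divisor lies in the grassmannian direction, so fixing the group coordinate to the identity gives a transversal; but here the divisor is the complement of the big cell inside $G(\hat{\OO})_n$, i.e.\ it lies in the group direction, and the identity section lies \emph{inside} the big cell and therefore never meets the divisor. A valid transversal must sweep through the group factor; the paper accordingly takes $G(\hat{\OO})_n \times_{X^n} \on{Gr}_{G,X^n}^{0,\dots,0}$, where the fixed grassmannian stratum has trivial action and the torus result \ref{strong equivariance for a torus} applies.
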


\begin{proof}
 The choice of Borel is arbitrary, so we also have a $B^\op(\hat{\OO})_n$-equivariance structure, where $B^\op$ is the opposite Borel obtained by swapping the positive and negative roots.  We have $B \cap B^\op = T$, and the ``big cell'' product $B^\op \cdot B = \on{BC}$ is dense and open in $G$.  The same hold of $B(\hat{\OO})_n$ as well.  In the diagram
 \begin{equation*}
  B^\op(\hat{\OO})_n \times_{X^n} B(\hat{\OO})_n \times_{X^n} \on{Gr}_{G,X^n}
   \xrightarrow[\on{pr}]{a} \on{Gr}_{G,X^n},
 \end{equation*}
 where $a$ is obtained by the successive action of the two factors, both pullbacks of $\stack{G}_n$ are identified by their equivariance structures for the two Borel subgroups.  The product of groups covers $\on{BC}(\smash{\hat{\OO}})_n$ as a $T(\smash{\hat{\OO}})_n$-torsor and so this identification descends to $\on{BC}(\smash{\hat{\OO}})_n \times_{X^n} \on{Gr}_{G,X^n}$ via the $T(\hat{\OO})_n$-equivariance structure of $\stack{G}_n$.
 
 Thus, the equivalence $a^* \stack{G}_n \cong \on{pr}^* \stack{G}_n$ for the diagram
 \begin{equation*}
  G(\hat{\OO})_n \times_{X^n} \on{Gr}_{G,X^n} \xrightarrow[\on{pr}]{a} \on{Gr}_{G,X^n}
 \end{equation*}
 holds over a dense open subset of $G(\hat{\OO})_n$ and so has some order along the closed complement (which may indeed have codimension 1, as the example of $G = \on{GL}_2$ shows).  To see that it is trivial we need only show this on some transverse section, for example, the section
 \begin{equation*}
  G(\hat{\OO})_n \times_{X^n} \on{Gr}_{G,X^n}^{0, \dots, 0},
 \end{equation*}
 in the notation of the upcoming \ref{relative orbit properties}, since $\on{Gr}_{G,X^n}^{0,\dots, 0}$, being equal to $G(\hat{\OO})_n / G(\hat{\OO})_n \cong X^n$, has a trivial action of $G(\smash{\hat{\OO}})_n$ and on it, $\stack{G}_n = \stack{B}_n = \stack{T}_n$ has the trivial equivariance structure, by \ref{strong equivariance for a torus}.
 
 Finally, we must show that the above (now globally) defined equivalence admits unique identity and associativity constraints making it an equivariance structure.  The identity constraint is easy, as the restriction of $\phi = (a^* \stack{G}_n \cong \on{pr}^* \stack{G}_n)$ to the identity section of $G(\smash{\hat{\OO}})_n \times_{X^n} \on{Gr}_n$ is the same as the restriction of the original $B(\smash{\hat{\OO}})_n$-equivariance structure, which has such a constraint.  For associativity, it is true \textit{a priori} that the two sides of \ref{eq:equivariant gerbe} differ by some $A$-torsor $\sh{T}$ on $G(\smash{\hat{\OO}})_n \times_{X^n} G(\smash{\hat{\OO}})_n \times_{X^n} \on{Gr}_n$.  Since $\on{Gr}_n$ is simply-connected, $\sh{T}$ descends to the product of groups, and therefore we can check that it is trivial by doing so when the last coordinate is fixed in $\on{Gr}_{G,X^n}^{0,\dots,0}$, on which $\phi$ uniquely extends the trivial equivariance structure for $\smash{B(\hat{\OO})_n}$, and is therefore 
itself trivial.  We must also check that this trivialization of $\sh{T}$ satisfies the higher associativity condition, which the same.
\end{proof}

\begin{theorem}{thm}{existence of equivariance}
 Any sf gerbe admits a unique unital strongly factorizable equivariance structure.
\end{theorem}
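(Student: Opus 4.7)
The plan is to assemble the theorem from the three propositions \ref{strong equivariance for a torus}, \ref{equivariance torus to borel}, and \ref{equivariance borel to group}, which between them produce, for each $n$, a unique $G(\hat{\OO})_n$-equivariance structure on $\stack{G}_n$ extending the canonical $T(\hat{\OO})_n$-equivariance on $F(\stack{G}_n)=\stack{T}_n$. What remains after stringing these together is to verify three global compatibilities: (i) that the family of equivariances is factorizable as $n$ varies, (ii) that it is strongly factorizable, and (iii) that it is unital; and in each case, to argue that the extra data is in fact forced (hence automatic and unique).

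First I would handle factorizability across $n$ and unitality. Over the open set $X^n_p$ both $\on{Gr}_{G,X^n}$ and $G(\hat{\OO})_n$ split as products of the corresponding factors on smaller factorizable grassmannians, and the equivariance structure constructed on the torus is (by \ref{strong equivariance for a torus}) compatible with the factorizable structure; pullback-uniqueness in \ref{equivariance torus to borel} and \ref{equivariance borel to group} then transports this compatibility to $G(\hat{\OO})_n$ over $X^n_p$. The compatibilities on the diagonals $\Delta^n_p$ are similar, using that the equivariance is \emph{uniquely} determined by its restriction to the dense open locus where the $\on{Gr}_{B,X^n}^{\lambda_1,\dots,\lambda_n}$-stratification of \ref{borel components} is nondegenerate. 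Unitality reduces to the component $\on{Gr}_{T,X^n}^{0,\dots,0}$, on which the argument already given for the torus case applies verbatim, since $\stack{G}_n$ restricts there to an sf gerbe on the trivial-torus grassmannian and hence is sf-trivial with the trivial equivariance.

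The main obstacle is strong factorizability: we must show that for any partition $p$, the equivalence
\begin{equation*}
 m_p^* \stack{G}_n|_{X^n_p} \cong (\stack{G}_{n_1} \tbtimes \cdots \tbtimes \stack{G}_{n_m})|_{X^n_p}
\end{equation*}
of \ref{eq:strong factorizability open} extends over all of $\tilde{\on{Gr}}_p$. By \ref{outer convolution diagram trick} it suffices to do this for $p=m\cdot n$, so one works on $\on{Conv}_n^m$ and its $G(\hat\OO)_n$-cover $\on{Conv}G(\hat\KK)_n^m$. The strategy is to compare both sides as gerbes on a common cover and measure the difference by an automorphism of the trivial gerbe, i.e.\ an $A$-torsor. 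Since the diagonal complement $\tilde{\on{Gr}}_p \setminus (X^n_p)$ is a divisor, the difference \emph{a priori} has some order along each irreducible component (via \ref{2-line bundle of a Cartier divisor} and \ref{reducible order}); one must show every such order is trivial. Following the pattern of \ref{equivariance borel to group}, I would compute the order on a convenient transverse slice—restricting one of the $G(\hat\KK)_n$-factors to a section landing in the unit orbit $\on{Gr}_{G,X^n}^{0,\dots,0}$—where by unitality the comparison is tautological. Combined with associativity of the twisted product and the inductive description \ref{inner convolution diagram diagonal}\ref{en:inner diagram cover}, this forces the difference to be trivial globally, giving the required extension.

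Finally, uniqueness at every stage follows from the chain of uniqueness statements: the torus structure is unique by \ref{strong equivariance for a torus}, the Borel extension is unique because $N(\hat\OO)$ is cohomologically trivial (\ref{equivariance torus to borel}), the group extension is unique because the identity, associativity, and cocycle constraints can all be checked on a transverse slice inside the unit orbit where \ref{vertical map injective} applies, and the strong-factorizability and unitality data, being conditions rather than structures once the extension across the boundary divisors has been shown to exist, add no further choice. Chaining these together yields the theorem.
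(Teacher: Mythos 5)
Your overall plan — chain \ref{strong equivariance for a torus}, \ref{equivariance torus to borel}, \ref{equivariance borel to group}, then separately verify strong factorizability and unitality by measuring orders along the boundary divisor of $X^n_p$ in $\tilde{\on{Gr}}_p$ — is the right skeleton, and your treatment of unitality and uniqueness matches the paper. However, your argument for strong factorizability takes a genuinely different route than the paper, and as written it has a gap.

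You propose to compute the order of the comparison $m_p^*\stack{G}_n \cong \stack{G}_{n_1}\tbtimes\cdots\tbtimes\stack{G}_{n_m}$ along the exceptional divisor by restricting one of the convolution factors to the unit section $\on{Gr}_{G,X^n}^{0,\dots,0}$ and invoking unitality. The difficulty is that, by \ref{2-line bundle of a Cartier divisor}\ref{en:order local}, an order is only constant on connected components of the divisor, and the slice you get by freezing one factor in the unit orbit meets only those components of $\tilde{\on{Gr}}_p$ where the corresponding $\pi_1(G)$-index is trivial; the boundary divisor has many other components, indexed by the rest of $\pi_1(G)^{n_1}\times\cdots\times\pi_1(G)^{n_m}$, which your slice never touches. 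You acknowledge the remaining work is to be closed off by ``associativity of the twisted product and the inductive description of \ref{inner convolution diagram diagonal}'', but that step is not actually laid out, and it is not obvious how associativity alone propagates triviality of the order from the unit-containing component to all the others. Your reduction via \ref{outer convolution diagram trick} to $p = m\cdot n$ is also the wrong direction: that proposition embeds $\tilde{\on{Gr}}_p$ as a subspace of $\on{Conv}_n^m$, and one does not automatically get triviality of an order by passing from a space to a subspace.

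The paper's actual argument avoids these problems: instead of slicing at the unit orbit, one restricts the comparison to the torus convolution diagram $\tilde{\on{Gr}}_{T;n_1,n_2}$. Because $\on{Gr}_{T,X^n}$ contains a section in \emph{every} connected component of $\on{Gr}_{G,X^n}$, this transverse slice meets every component of the exceptional divisor. Moreover, since the $G(\hat{\OO})_{n_i}$-equivariance structures extend the $T(\hat{\OO})_{n_i}$-structures, the twisted product restricted to $\tilde{\on{Gr}}_{T;n_1,n_2}$ coincides with the one already handled by \ref{strong equivariance for a torus}, where strong factorizability is established directly via \ref{twisting gerbe arc group trivialization}. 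That restriction gives the trivial order on every component in one stroke; no separate associativity or inductive bootstrapping is required. I'd recommend replacing the unit-orbit slice in your plan with the torus convolution diagram, which is the step that actually closes the argument.
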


\begin{proof}
 After the preceding propositions, it remains only to show that the $G(\hat{\OO})_n$-equivariance structures already constructed are strongly factorizable.  We consider partitions $p = (n_1, n_2)$ for simplicity.  Since the twisted product $\stack{G}_{n_1} \tbtimes \stack{G}_{n_2}$ exists on $\smash{\tilde{\on{Gr}}}_{n_1, n_2}$, and by factorizability and \ref{convolution diagram factorization} is identified with $m_{n_1, n_2}^* \stack{G}_{n_1 + n_2}$ away from $X^n_{n_1, n_2}$, this identification has some order along $X^n_{n_1, n_2}$ which we must show is trivial.  But as the $G(\smash{\hat{\OO}})_{n_i}$-equivariance structures on the $\stack{G}_{n_i}$ extend the $T(\smash{\hat{\OO}})_{n_i}$-equivariance structures on the $\stack{T}_{n_i}$, the twisted product restricted to $\tilde{\on{Gr}}_{T; n_1, n_2}$ (in the obvious notation) is the same when using either equivariance and the claim follows from \ref{strong equivariance for a torus}. 
\end{proof}

\chapter{Relative twisted geometric Satake equivalence}
\label{c:relative twisted geometric Satake}

In this chapter, we consider the following setup: $G$ is a connected, reductive algebraic group over $\C$, $k$ is any field of characteristic zero, and $\{\stack{G}_n\}$ is an sf $k^*$-gerbe.  We consider only the unique strongly factorizable unital $G(\smash{\hat{\OO}})_n$-equivariance of \ref{existence of equivariance} on $\stack{G}_n$; using this, we construct the sheaf of categories of $G(\smash{\hat{\OO}})_n$-equivariant perverse sheaves of $k$-vector spaces on $\on{Gr}_{G,X^n}$ twisted by $\stack{G}_n$.  Our main theorem, \ref{main theorem}, describes this category in terms of the action of a certain ``dual group'' $\smash{\check{G}}_Q$ (defined over $k$ and depending only on the quadratic form $Q = Q(\stack{G}_n)$) on the perverse sheaves on $X^n$, twisted by another gerbe $\stack{Z}_n$ over $\on{Fact}(Z(\check{G}_Q)(k))_n$, which was defined in \ref{factorizable version of a group} and (almost) appeared in \ref{multiplicative factorizable lattice gerbes}.

We will use the following notation: $\Perv_S$ will be the abelian category of perverse sheaves on the scheme $S$, and if $S$ has a gerbe $\stack{G}$, then $\Perv(\stack{G})_S$ will be the category of $\stack{G}$-twisted perverse sheaves.  This notation is not specific to the affine grassmannian. However, we will write $\Sph(\stack{G}_n)$ for the ``spherical'', or $G(\hat{\OO})_n$-equivariant objects in $\Perv(\stack{G}_n)_{\on{Gr}_n}$.

\section{Orbits in the affine grassmannian}
\label{s:orbits in the affine grassmannian}

The last chapter of our study of the affine grassmannian concerns the orbits of the action of $G(\hat{\OO})_n$ on it.  In order to produce perverse sheaves on these orbits we will need precise statements of their dimensions and the dimensions of related spaces.

\subsection*{Dimension of the orbits}

We recall the \emph{absolute grassmannian} 
\begin{equation*}
 \on{Gr}_G = G\lp t \rp/G\lb t \rb \cong \on{Gr}_{G,X}|_x,
\end{equation*}
where $x$ is any closed point of $X$ and $t$ is any choice of local coordinate there.  We write $\OO = \C\lb t \rb$ and $\KK = \C \lp t \rp$, so $\on{Gr}_G \cong G(\smash{\hat{\KK}})/G(\smash{\hat{\OO}})$; we will use the notation
\begin{equation*}
 \map{q}{G(\hat{\KK})}{\on{Gr}_G}
\end{equation*}
for the projection onto this quotient.

Let $B$ be any Borel subgroup of $G$ and $T$ a maximal torus in $B$. Then $\on{Gr}_T \cong \Lambda_T$ by \ref{torus grassmannian properties}, while the components of $\on{Gr}_B$ in $\on{Gr}_G$, indexed by $\Lambda_T$, are the \emph{semi-infinite orbits}.  We write suggestively $t^\lambda$ for $\on{Gr}_T^\lambda$, which is a single point identified with a coweight $\lambda \in \Lambda_T$, and sometimes identify it with an element of $G(\smash{\hat{\KK}})$.  As is customary in representation theory, we write
\begin{equation*}
 2\rho = \sum \alpha,
\end{equation*}
the sum of all the positive roots of $G$, and use the partial ordering on weights or coweights in which positive elements are sums of positive roots.

\begin{theorem}{prop}{absolute orbit properties}
 (\cite{MV_Satake})
 The orbits of $G(\hat{\OO})$ in $\on{Gr}_G$ correspond bijectively to $W$-orbits in $\Lambda_T$ according to the $t^\lambda$ they contain; equivalently, to the dominant coweights. For each dominant coweight $\lambda$, we have the closure relation
 \begin{equation*}
  \bar{\on{Gr}}_G^\lambda = \bigcup_{\mu \leq \lambda} \on{Gr}_G^\mu
 \end{equation*}
 where the union runs over dominant $\mu$, and for any coweight $w_0(\lambda) \leq \mu \leq \lambda$, ($w_0$ being the longest element of $W$) the following intersection has pure dimension:
 \begin{equation*}
  \dim(\on{Gr}_G^\lambda \cap \on{Gr}_B^\mu)
   = \langle \rho, \lambda + \mu \rangle,
 \end{equation*}
 Consequently, $\dim \bar{\on{Gr}}_G^\lambda = \langle 2\rho, \lambda \rangle$; in addition when $\mu = w_0(\lambda)$, the intersection is the single point $\on{Gr}_T^\mu$.  In particular, $\on{Gr}_G$ is stratified by finite-dimensional $G(\hat{\OO})$-orbits. \qed
\end{theorem}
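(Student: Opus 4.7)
The plan is to reduce everything to the Cartan and Iwasawa decompositions in the loop group $G(\hat{\KK})$ together with an explicit analysis of the contracting $\Gm$-action near the $T$-fixed points $t^\lambda$.

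First I would establish the Cartan decomposition $G(\hat{\KK}) = \bigsqcup_{\lambda \text{ dominant}} G(\hat{\OO})\, t^\lambda\, G(\hat{\OO})$, which follows from the elementary divisors theorem applied via a faithful representation of $G$ to pairs of $\hat{\OO}$-lattices; uniqueness of the dominant representative in each $W$-orbit then gives the claimed bijection between $G(\hat{\OO})$-orbits and dominant coweights. Because $t^\lambda G(\hat{\OO}) t^{-\lambda} \cap G(\hat{\OO})$ is a congruence subgroup of finite codimension in $G(\hat{\OO})$, the orbit $\on{Gr}_G^\lambda \cong G(\hat{\OO})/\on{Stab}(t^\lambda)$ is a finite-dimensional quasi-projective variety, giving the stratification claim.

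Next I would compute $\dim \on{Gr}_G^\lambda = \langle 2\rho, \lambda\rangle$ by producing an explicit open cell: let $N^-$ be the unipotent radical of the opposite Borel; then $N^-(\hat{\KK})\, t^\lambda \subset \on{Gr}_G^\lambda$ is open, and conjugation by $t^{-\lambda}$ on each root subgroup of $N^-(\hat{\KK})$ shifts its $\hat{\OO}$-valued points by exactly $\langle\alpha,\lambda\rangle$ many coordinates, giving the affine Schubert cell of dimension $\sum_{\alpha > 0} \langle \alpha, \lambda\rangle = \langle 2\rho, \lambda\rangle$. The closure relations $\bar{\on{Gr}}_G^\lambda = \bigcup_{\mu \leq \lambda,\, \mu \text{ dom.}} \on{Gr}_G^\mu$ then follow by combining (i) the fact from \ref{borel components}\ref{en:borel orbit closures} that each $t^\mu$ with $\mu \leq \lambda$ lies in $\bar{\on{Gr}}_B^\lambda \subset \bar{\on{Gr}}_G^\lambda$, with (ii) $G(\hat{\OO})$-stability of both sides and the fact that their $T$-fixed point sets agree.

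The main obstacle is the pure dimension statement $\dim(\on{Gr}_G^\lambda \cap \on{Gr}_B^\mu) = \langle \rho, \lambda + \mu\rangle$ for all coweights $w_0(\lambda) \leq \mu \leq \lambda$. The argument of Mirkovi\'c--Vilonen proceeds via the contracting $\Gm$-action on $\on{Gr}_G$ given by the cocharacter $2\check\rho$: its fixed points are exactly the $t^\mu$, and the attracting (resp.\ repelling) sets are the semi-infinite cells $\on{Gr}_B^\mu$ (resp.\ $\on{Gr}_{B^-}^\mu$). A Bia\l{}ynicki-Birula decomposition of $\bar{\on{Gr}}_G^\lambda$ along this action identifies $\on{Gr}_G^\lambda \cap \on{Gr}_B^\mu$ with an open piece of the attracting bundle at $t^\mu$, and a direct count of positive eigenvalues of $2\check\rho$ on the tangent space $T_{t^\mu}\on{Gr}_G$ using $[\check\rho, e_\alpha] = \langle\rho,\check\alpha\rangle$ produces exactly $\langle \rho, \lambda+\mu\rangle$; pure-dimensionality is automatic for attracting cells of a $\Gm$-action on a smooth variety and transfers to the intersection via the Bia\l{}ynicki-Birula stratification (the delicate point requiring MV's detailed analysis of so-called MV cycles). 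Finally, setting $\mu = \lambda$ recovers $\dim \bar{\on{Gr}}_G^\lambda = \langle 2\rho,\lambda\rangle$, while $\mu = w_0(\lambda)$ yields $\langle \rho, \lambda + w_0\lambda\rangle = \langle \rho - \rho,\lambda\rangle = 0$ (using $w_0\rho = -\rho$), forcing the intersection to be the single $T$-fixed point $t^{w_0\lambda} = \on{Gr}_T^{w_0\lambda}$.
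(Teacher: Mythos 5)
The paper offers no proof of this proposition: it is stated as a recollection, attributed to \cite{MV_Satake}, and closed with a $\qed$, so there is no in-paper argument to compare against.

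Your sketch heads in the general direction of the argument in that reference, but two of the specific steps you describe do not work as written. First, the cell you exhibit is the wrong one: for dominant $\lambda$, the group $N^-(\hat{\OO})$ actually \emph{stabilizes} $t^\lambda$ (for a negative root $-\alpha$ with $\alpha>0$, conjugation $t^{-\lambda} N_{-\alpha}(\hat{\OO}) t^\lambda = N_{-\alpha}(t^{\langle\alpha,\lambda\rangle}\hat{\OO}) \subset G(\hat{\OO})$), so the $N^-(\hat{\OO})$-orbit of $t^\lambda$ is a single point; moreover $N^-(\hat{\KK})t^\lambda = \on{Gr}_{B^-}^\lambda$ is not a subset of $\on{Gr}_G^\lambda$ at all, being the infinite-dimensional semi-infinite orbit. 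The dense open cell is the orbit of $t^\lambda$ under the \emph{dominant} unipotent $N(\hat{\OO})$, with the root subgroup $N_\alpha$ contributing $\langle\alpha,\lambda\rangle$ coordinates. Second, and more seriously, ``a direct count of positive eigenvalues of $2\check\rho$ on the tangent space $T_{t^\mu}\on{Gr}_G$'' cannot produce the formula $\langle\rho,\lambda+\mu\rangle$: that tangent space is infinite-dimensional and carries infinitely many positive $\check\rho$-weights. One would need the tangent space to $\on{Gr}_G^\lambda$ at $t^\mu$, but $t^\mu$ lies in $\on{Gr}_G^\lambda$ only when $\mu \in W\lambda$; for general $\mu$ with $w_0(\lambda)\leq\mu\leq\lambda$ the fixed point $t^\mu$ sits in $\bar{\on{Gr}}{}_G^\lambda$, which is singular, so Bia\l{}ynicki--Birula does not apply to it. Your eigenvalue count therefore directly handles only the $W\lambda$ cases, and the purity of dimension for general $\mu$ --- the genuinely hard content of the Mirkovi\'c--Vilonen theorem, proved via a separate upper bound on the components of $\bar{\on{Gr}}{}_G^\lambda \cap \on{Gr}_B^\mu$ together with a nonemptiness argument --- is not reached by the sketch. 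You correctly flag this as the delicate point, but the proposal as written does not supply a route to it.
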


Connected with the $N(\hat{\KK})$-orbits is the following finiteness lemma for $N(\hat{\KK})$
itself.

\begin{theorem}{lem}{unipotent group finiteness}
 For any weight $\lambda$, let $N(\smash{\hat{\KK}})_\lambda = t^{-\lambda} N(\hat{\OO}) t^\lambda$; then when $\mu - \lambda$ is dominant we have $N(\smash{\hat{\KK}})_\lambda \subset N(\smash{\hat{\KK}})_\mu$, and $N(\smash{\hat{\KK}}) = \bigcup_\lambda N(\smash{\hat{\KK}})_\lambda$, the union taken over all dominant weights $\lambda$.  If $Y \subset \on{Gr}_B^0$ is any finite-dimensional subvariety, then $q^{-1}(Y) \subset N(\smash{\hat{\KK}})_\lambda$ for some dominant $\lambda$.
\end{theorem}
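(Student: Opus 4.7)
The proof will be built on the explicit root decomposition of the unipotent group $N$. First, enumerate the positive roots $\alpha$ of $G$ in a height-compatible order so that every element of $N(R)$, for $R \in \{\hat{\OO},\hat{\KK}\}$, factors uniquely as an ordered product $\prod_\alpha u_\alpha(f_\alpha)$ of root-subgroup elements $u_\alpha(f_\alpha) \in U_\alpha(R)$. The driving identity is
\begin{equation*}
 t^{-\lambda} u_\alpha(f(t)) t^\lambda = u_\alpha\bigl(t^{-\langle \alpha, \lambda \rangle} f(t)\bigr),
\end{equation*}
which lets me describe $N(\hat{\KK})_\lambda$ as the subset consisting of those products for which $f_\alpha \in t^{-\langle \alpha,\lambda\rangle}\hat{\OO}$ for every positive root $\alpha$.

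From this description both of the first two claims are immediate. If $\mu - \lambda$ is dominant then $\langle\alpha,\mu\rangle \geq \langle\alpha,\lambda\rangle$ for every positive $\alpha$, so $t^{-\langle\alpha,\lambda\rangle}\hat{\OO} \subset t^{-\langle\alpha,\mu\rangle}\hat{\OO}$ factor by factor, giving $N(\hat{\KK})_\lambda \subset N(\hat{\KK})_\mu$. For the exhaustion, I would fix a regular dominant coweight $\omega$ (one pairing to at least $1$ with every positive root). Given $n \in N(\hat{\KK})$, its factorization involves finitely many $f_\alpha$ of bounded pole order $d$, so taking $\lambda = d\omega$ produces a dominant $\lambda$ with $\langle \alpha,\lambda\rangle \geq d$ for every $\alpha$, whence $n \in N(\hat{\KK})_\lambda$.

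For the final claim I would use two ingredients. First, $\on{Gr}_B^0 \cong N(\hat{\KK})/N(\hat{\OO})$ is exhausted by the closed finite-dimensional subvarieties $\on{Gr}_B^{0,\lambda} := N(\hat{\KK})_\lambda \cdot [1]$ indexed by dominant $\lambda$: since $\lambda$ dominant forces $N(\hat{\OO}) \subset N(\hat{\KK})_\lambda$, the orbit $\on{Gr}_B^{0,\lambda}$ is isomorphic to $N(\hat{\KK})_\lambda/N(\hat{\OO})$, and its dimension is $\sum_{\alpha > 0}\dim_\C(t^{-\langle\alpha,\lambda\rangle}\hat{\OO}/\hat{\OO}) = \sum_{\alpha > 0}\langle\alpha,\lambda\rangle = \langle 2\rho,\lambda\rangle$. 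Second, the standard property of ind-schemes is that any finite-dimensional (in particular quasi-compact) closed subvariety factors through some finite stage of the exhaustion. Combining, $Y \subset \on{Gr}_B^{0,\lambda}$ for some dominant $\lambda$, and using $N(\hat{\OO}) \subset N(\hat{\KK})_\lambda$ once more gives $q^{-1}(Y) \subset N(\hat{\KK})_\lambda \cdot N(\hat{\OO}) = N(\hat{\KK})_\lambda$.

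The main technical obstacle is justifying the scheme-theoretic description of $N(\hat{\KK})_\lambda$ in terms of root subgroups despite the non-commutativity of $N$. This rests on the standard fact that commutators $[U_\alpha,U_\beta]$ lie in the product of $U_{m\alpha+n\beta}$ with $m,n \geq 1$; since $\langle m\alpha+n\beta,\lambda\rangle = m\langle\alpha,\lambda\rangle + n\langle\beta,\lambda\rangle$, a height-compatible ordering of the positive roots is preserved under conjugation by $t^\lambda$, and the products of the prescribed root-subgroup conditions really do form a group equal to $N(\hat{\KK})_\lambda$. Once this is in place every other step is essentially bookkeeping.
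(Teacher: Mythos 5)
Your proof is correct, and it takes a genuinely different route from the paper. Where you decompose $N$ into an ordered product of root subgroups $U_\alpha$ and track the pole order of each coordinate $f_\alpha$ directly, the paper instead embeds $N$ in highest-weight representations and reads off pole orders of matrix entries $n_{\nu,\pi}$ for pairs of weights $\nu < \pi$. The paper's representation-theoretic argument has the advantage that it applies without unpacking the internal structure of $N$ (and so transports more easily to the analogous statements it quotes implicitly for bigger groups); your root-subgroup description, by contrast, is more explicit and gives you for free the clean picture $N(\hat{\KK})_\lambda = \prod_\alpha U_\alpha\bigl(t^{-\langle\alpha,\lambda\rangle}\hat{\OO}\bigr)$ together with the dimension count $\dim\bigl(N(\hat{\KK})_\lambda/N(\hat{\OO})\bigr) = \langle 2\rho,\lambda\rangle$. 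Your handling of the final claim is also a bit tighter than the paper's: you present $\on{Gr}_B^0$ as a strict ind-scheme with closed finite-type terms $\on{Gr}_B^{0,\lambda}$ and invoke the standard fact that a quasi-compact subvariety factors through a finite stage, whereas the paper argues through the finitely many $G(\hat{\OO})$-orbits met by $Y$.

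One small comment: the concern raised in your last paragraph about commutator subgroups is not actually needed. Since $N(\hat{\KK})_\lambda$ is by definition a conjugate of the group $N(\hat{\OO})$, it is a group a priori; the only thing you must verify is the set-theoretic description, and that follows formally from the fact that the ordered-product map $\prod_\alpha U_\alpha \to N$ is an isomorphism of schemes over any base ring combined with the identity $t^{-\lambda} u_\alpha(g) t^\lambda = u_\alpha\bigl(t^{-\langle\alpha,\lambda\rangle}g\bigr)$ applied to each factor. No commutator bookkeeping is required.
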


\begin{proof}
 First note that when $\lambda$ is \emph{antidominant}, we have $N(\hat{\KK})_\lambda \subset N(\hat{\OO})$; indeed, if $n \in N(\smash{\hat{\OO}})$, then for any highest-weight representation $V^\mu$ with a basis of weight vectors $e^\nu$ (not distinguishing between those of the same weight), we have
 \begin{equation*}
  n \cdot e^\nu = e^\nu + \sum_{\pi > \nu} n_{\nu, \pi} e^\pi,
  \quad
  \on{ord}_t n_{\nu, \pi} \geq 0.
 \end{equation*}
 Thus, we have
 \begin{equation*}
  t^{-\lambda} n t^\lambda \cdot e^\nu
   = e^\nu + \sum_{\pi > \nu} t^{\langle \nu - \pi, \lambda \rangle} n_{\nu, \pi} e^\pi
 \end{equation*}
 where $\langle \nu - \pi, \lambda \rangle \geq 0$ since $\lambda$ is antidominant.  It follows, then, that if $\mu - \lambda$ is dominant then we have
 \begin{equation*}
  N(\hat{\KK})_\lambda
   = t^{-\mu} N(\hat{\KK})_{\lambda - \mu} t^\mu
   \subset t^{-\mu} N(\hat{\OO}) t^\mu
   = N(\hat{\KK})_\mu,
 \end{equation*}
 as claimed.

 As for the union, it suffices to establish it for the matrices of $N(\hat{\KK})$ acting on any faithful representation of $G$.  Then we need only choose $\lambda$ large enough that $\langle \pi - \nu, \lambda \rangle + \on{ord}_t n_{\nu, \pi} \geq 0$ for all weights $\nu, \pi$ of this representation, which since these differences are sums of positive roots we may accomplish for sufficiently large dominant coweights $\lambda$.

 Finally, if $Y \subset \on{Gr}_G$ is finite-dimensional then it intersects only finitely many $\on{Gr}_G^\lambda$, but clearly the $q(N(\smash{\hat{\KK}})_\lambda)$ intersect every one of these orbits. Since $N(\smash{\hat{\KK}})$ is exhausted by the $N(\smash{\hat{\KK}})_\lambda$, we must have $q^{-1}(Y)$ contained in one of them.
\end{proof}

When the orbits are constructed as quotients of $G(\hat{\OO})$, their structure is simple:

\begin{theorem}{lem}{orbit stabilizer is connected}
 The stabilizer in $G(\hat{\OO})$ of $t^\lambda$ is connected and contains an algebraic subgroup $N^\lambda$, normal in $G(\smash{\hat{\OO}})$, such that $G(\smash{\hat{\OO}})/N^\lambda$ is finite-dimensional.
\end{theorem}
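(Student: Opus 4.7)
The plan is to construct $N^\lambda$ as a sufficiently deep congruence subgroup of $G(\hat{\OO})$, and to analyze the stabilizer
\[
 S_\lambda := \on{Stab}_{G(\hat{\OO})}(t^\lambda) = G(\hat{\OO}) \cap t^\lambda G(\hat{\OO}) t^{-\lambda}
\]
through the tower of its truncations modulo powers of $t$.

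For the second claim (existence of $N^\lambda$), first I would fix a faithful representation $V$ of $G$ and observe that $\on{Ad}(t^\lambda)$ scales the $(\mu,\nu)$-matrix entry of $\on{End}(V)(\hat{\OO})$ by $t^{\langle \mu - \nu, \lambda\rangle}$, where $\mu,\nu$ run over weights of $V$.  Setting $N := \max_{\mu,\nu}\langle \mu - \nu, \lambda\rangle$, define
\[
 N^\lambda := \ker\bigl(G(\hat{\OO}) \to G(\hat{\OO}/t^N)\bigr).
\]
This is normal in $G(\hat{\OO})$ as the kernel of a group homomorphism, and the quotient $G(\hat{\OO}/t^N)$ is a finite-dimensional affine algebraic group, namely the Weil restriction of the finite-type affine group $G$ along $\on{Spec}\hat{\OO}/t^N \to \on{Spec} \C$.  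If $g \in N^\lambda$ then in the representation $V$ the entries of $g - \on{id}_V$ lie in $t^N \hat{\OO}$, so those of $t^{-\lambda} g t^\lambda - \on{id}_V$ lie in $t^{N - \langle \mu - \nu, \lambda\rangle}\hat{\OO} \subseteq \hat{\OO}$; hence $t^{-\lambda} g t^\lambda \in G(\hat{\OO})$ and $N^\lambda \subseteq S_\lambda$.

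For connectedness of $S_\lambda$, write $K_k := \ker\bigl(G(\hat{\OO}) \to G(\hat{\OO}/t^k)\bigr)$ and set $S_\lambda^{[k]} := S_\lambda / (S_\lambda \cap K_k)$, a closed subgroup of the finite-dimensional algebraic group $G(\hat{\OO}/t^k)$ and simultaneously a quotient of $S_\lambda$.  I would show that every $S_\lambda^{[k]}$ is connected by induction on $k$.  For $k=1$, the image in $G$ has Lie algebra equal to $\lie{g}(\hat{\OO}) \cap \on{Ad}(t^\lambda)\lie{g}(\hat{\OO})$ reduced modulo $t$, which by the root-by-root calculation above is $\lie{t} \oplus \bigoplus_{\langle \alpha, \lambda\rangle \leq 0}\lie{g}_\alpha$ --- the Lie algebra of a parabolic subgroup $P \subseteq G$.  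Lifting any element of $P$ to a constant element of $G(\hat{\OO})$ shows directly that it belongs to $S_\lambda$, so the image is all of $P$, which is connected because $G$ is.  For $k \to k+1$, the kernel of $S_\lambda^{[k+1]} \to S_\lambda^{[k]}$ sits inside the vector group $t^k \lie{g}(\hat{\OO}/t^{k+1}) \cap \on{Ad}(t^\lambda)\bigl(t^k \lie{g}(\hat{\OO}/t^{k+1})\bigr)$, so is itself a vector group and hence connected.  Since $S_\lambda$ is a closed subscheme of $G(\hat{\OO}) = \varprojlim G(\hat{\OO}/t^k)$, we have $S_\lambda = \varprojlim S_\lambda^{[k]}$, and the inverse limit of connected quasi-compact schemes along surjective affine transition maps is connected.

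The principal obstacle will be the pro-algebraic bookkeeping, in two places: (i) identifying the image of $S_\lambda$ at the bottom level as exactly the parabolic $P$ rather than a proper subgroup of it, which rests on the explicit lifting of constants above; and (ii) transporting connectedness across the inverse limit.  For (ii), any clopen decomposition of $\varprojlim S_\lambda^{[k]}$ would descend along the cofiltered system to a clopen decomposition of some $S_\lambda^{[k]}$, contradicting the connectedness already established at that finite level.
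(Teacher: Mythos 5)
Your argument is correct and matches the paper's approach: both prove the second claim by taking $N^\lambda$ to be a sufficiently deep congruence subgroup, and both prove connectedness by analyzing the evaluation-at-$t=0$ map, whose kernel is pro-unipotent. Your explicit inverse-limit argument and induction over truncations fill in pro-algebraic details that the paper treats tersely (it observes that the kernel of evaluation lies in the first congruence subgroup, which has a filtration by affine spaces, and calls it unipotent), and you correctly identify the image of the stabilizer at $t=0$ as a $\lambda$-dependent parabolic rather than the Borel $B$ claimed in the paper (they coincide only for regular $\lambda$).
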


\begin{proof}
 Let $H$ be the stabilizer; we construct a surjection $H \to B$ and show that the kernel is unipotent (hence connected, since any finite group is reductive).  This surjection is merely ``evaluation at $t = 0$''; from the moduli description of \ref{torus section moduli} we see that its image must only increase weights in representations of $G$, so lies in $B$.  Conversely, if we view $B \subset G(\hat{\OO})$ then this subgroup is a section of the evaluation map and for the same reason, preserves $t^\lambda$.
 
 The kernel is a closed subgroup of the kernel in $G(\hat{\OO})$ of the evaluation map, so we need only show that the latter kernel $K$ is unipotent.  Since $G(\smash{\hat{\OO}})$ by definition has points
 \begin{equation*}
  G(\hat{\OO})(\on{Spec} R) = G(\on{Spec} R\lb t \rb)
 \end{equation*}
 we also have ``evaluation at $t^n = 0$'' for all $n$, and thus a filtration of $K$:
 \begin{equation*}
  K = K^{(1)} \supset K^{(2)} \supset \dots
 \end{equation*}
 by their kernels.  In other words, $K^{(n)}$ is the subgroup of $\smash{G(\hat{\OO})}$ of points specializing at $t^n = 0$ to the identity in $G$, so that $K^{(1)}/K^{(2)} \cong \lie{g}$, the tangent to $G$ at the identity.  Thus, $K$ has a filtration by affine spaces, so must be unipotent.
 
 To obtain the second statement, we claim that for sufficiently large $n$, we have $K^{(n)} \subset H$.  Since $K^{(n)}$ is the kernel of a specialization homomorphism from $\smash{G(\hat{\OO})}$ and since each successive quotient $K^{(m)}/K^{(m + 1)}$ is finite-dimensional, $G(\hat{\OO})/K^{(n)}$ is a finite-dimensional algebraic group.  The claim itself follows from \ref{torus section moduli}, in that once $n > \langle \lambda, \check\alpha \rangle$ for all simple coroots $\check\alpha$, for $g \in K^{(n)}$ both inclusions
 \begin{align*}
  \genby{v} \otimes \OO(X_S \setminus \bar{x}) \overset{\phi}{\incl}
    V_{\sh{T}}|_{X_S \setminus \bar{x}}
  &&
  \genby{v} \otimes \OO(X_S \setminus \bar{x}) \overset{g\phi}{\incl}
    V_{\sh{T}}|_{X_S \setminus \bar{x}}
 \end{align*}
 (in the notation there) have the same pole about $\bar{x}$.
\end{proof}

By factorizability of $G(\hat{\OO})_n$ and $\on{Gr}_{G,X^n}$, these results (or their products) hold in every fiber over $X^n$, leading to the evident generalization:

\begin{theorem}{prop}{relative orbit properties}
 The orbits of $G(\hat{\OO})_n$ in $\on{Gr}_{G,X^n}$ are in bijection with $\Lambda_T/W$ (equivalently, the dominant coweights), depending on whether they contain the torus grassmannian components $\on{Gr}_{T,X^n}^{\lambda_1, \dots, \lambda_n}$ with fixed $\sum \lambda_i = \lambda$ modulo $W$.  These orbits stratify $\on{Gr}_n$.
 
 If $\on{Gr}_{B,X^n}^\lambda$ is the union of the same components of $\on{Gr}_{B,X^n}$, then the intersection $\on{Gr}_{G,X^n}^\lambda \cap \on{Gr}_{B,X^n}^\mu$ is nonempty only for $w_0(\lambda) \leq \mu \leq w_0(\lambda)$ and then has pure dimension $n + \langle \mu, \lambda \rangle$.  The stabilizer of $\on{Gr}_{T,X^n}^\lambda$ is connected and acts on $\on{Gr}_{G,X^n}^\lambda$ by a finite-dimensional quotient. \qed
\end{theorem}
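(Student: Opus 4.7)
The plan is to bootstrap from the absolute statement \ref{absolute orbit properties} (plus the finiteness and stabilizer lemmas) to the relative one by exploiting the factorizable structure of both $\on{Gr}_{G,X^n}$ and the acting sf group scheme $G(\smash{\hat{\OO}})_n$ (\ref{arc and loop group factorizable}). The general slogan is: a $G(\smash{\hat{\OO}})_n$-orbit in $\on{Gr}_{G,X^n}$ must, by factorizability, restrict on the complement of all diagonals to a product of $G(\smash{\hat{\OO}})$-orbits in the fibers $\on{Gr}_G^n$, and on every diagonal to a $G(\smash{\hat{\OO}})_m$-orbit in $\on{Gr}_{G,X^m}$. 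Thus the whole statement is really the absolute proposition, transported along the two kinds of factorization identifications of \ref{grassmannian is factorizable}, glued consistently.

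Concretely, I would first treat the open stratum $X^n \setminus \bigcup \Delta_{ij}$. There the factorization isomorphism of \ref{grassmannian is factorizable}\ref{en:factorizable open} is compatible with the analogous isomorphism for $G(\smash{\hat{\OO}})_n$ via the natural action, so the orbits over this stratum are exactly products $\prod_i \on{Gr}_G^{\lambda_i}$ times the base. By \ref{absolute orbit properties} these are finite-dimensional, indexed by $n$-tuples of dominant coweights, and closure-ordered component-wise. Passing to closures inside $\on{Gr}_{G,X^n}$, I would then define $\on{Gr}_{G,X^n}^\lambda$ as the closure of the locally-closed locus where $\sum \lambda_i = \lambda$ (modulo $W$, after identifying with the relevant $\on{Gr}_{T,X^n}^{\lambda_1,\dots,\lambda_n}$'s it meets). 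The key compatibility to check, and the main obstacle, is that on each diagonal $\Delta^n_p$ this construction agrees with the corresponding $G(\smash{\hat{\OO}})_m$-orbit structure on $\on{Gr}_{G,X^m}$ under the identification of \ref{grassmannian is factorizable}\ref{en:factorizable diagonal}: explicitly, two tuples $(\lambda_i)$ and $(\mu_i)$ that collapse on $\Delta^n_p$ to the same total coweights per block should define orbits whose closures meet in the expected way. This is a direct matching of the combinatorics of the Weyl group orbits with the partition combinatorics, and it proceeds by induction on $n$ using the obvious compatibility of the $G(\smash{\hat{\OO}})_n \to G(\smash{\hat{\OO}})_m$ maps with factorization.

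For the semi-infinite intersections and dimensions, I would proceed similarly. Over the open stratum the intersection $\on{Gr}_{G,X^n}^\lambda \cap \on{Gr}_{B,X^n}^\mu$ becomes $X^n$ times $\prod_i (\on{Gr}_G^{\lambda_i} \cap \on{Gr}_B^{\mu_i})$, which by \ref{absolute orbit properties} is pure of dimension $\sum_i \langle \rho, \lambda_i + \mu_i\rangle = \langle \rho, \lambda + \mu \rangle$, giving total dimension $n + \langle \rho, \lambda + \mu \rangle$; the non-vanishing condition $w_0(\lambda) \leq \mu \leq \lambda$ transfers coordinate-wise and is preserved by the block-summation across diagonals (which is why the dimension formula extends by closure without change). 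Purity on all of $\on{Gr}_{G,X^n}$ then follows because the open dense part is pure of the right dimension and the diagonal strata are by induction relative versions of pure schemes over smaller $X^m$.

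Finally, for the stabilizer: using \ref{orbit stabilizer is connected} fiberwise shows that the stabilizer of $\on{Gr}_{T,X^n}^{\lambda_1,\dots,\lambda_n}$ in $G(\smash{\hat{\OO}})_n$ maps surjectively to a product/iterated extension of Borels with unipotent kernel, hence is connected. For the finite-dimensional quotient, I would mimic the filtration argument of \ref{orbit stabilizer is connected}: the sf group scheme $G(\smash{\hat{\OO}})_n$ has, for each $n_0$, the normal sf sub-group-scheme $N^{n_0}$ of elements congruent to the identity modulo the $n_0$-th infinitesimal neighborhood of $\bar{x}$, with $G(\smash{\hat{\OO}})_n/N^{n_0}$ finite-dimensional over $X^n$, and \ref{torus section moduli} shows $N^{n_0}$ fixes every $\on{Gr}_{G,X^n}^\lambda$ once $n_0$ exceeds all the pairings $\langle\lambda,\check\alpha\rangle$. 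The uniformity across the base uses only the fact that the orbit is finite-dimensional, which was already established. This concludes the proof, with the only genuinely delicate point being the gluing of the open-stratum product description with the diagonal-stratum lower-$n$ description, which is forced by the compatibilities in \ref{grassmannian is factorizable}.
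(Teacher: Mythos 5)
Your proposal is correct and takes essentially the same approach the paper intends: the paper gives no proof (the statement carries a \qed), resting on the preceding sentence that, by factorizability of $G(\hat{\OO})_n$ and $\on{Gr}_{G,X^n}$, the absolute results hold fiberwise and yield the ``evident generalization,'' which is precisely the bootstrap you spell out. Incidentally, your computation recovers the dimension $n + \langle \rho, \lambda + \mu \rangle$ and the inequality $w_0(\lambda) \leq \mu \leq \lambda$, exposing what appear to be typos in the paper's statement (which reads ``$n + \langle \mu, \lambda \rangle$'' and ``$w_0(\lambda) \leq \mu \leq w_0(\lambda)$'').
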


\subsection*{Convolution diagrams}
   
Since for any $x \in X$ we have $\on{Gr}_G \cong \on{Gr}_{G,X}|_x$, we also have
\begin{equation*}
 \on{Conv}_1^n|_x
  = (G(\hat{\KK})_X \times^{G(\hat{\OO})_X} \dots \times^{G(\hat{\OO})_X} \on{Gr}_{G,X})|_x
  \cong G(\hat{\KK}) \times^{G(\hat{\OO})} \dots \times^{G(\hat{\OO})} \on{Gr}_G.
\end{equation*}
We will care only about the version where $n = 2$ and call it simply $\on{Conv}_G$ or, suggestively, $\on{Gr}_G * \on{Gr}_G$.  We have maps
\begin{equation*}
 \map{\on{pr}, m}{\on{Gr}_G * \on{Gr}_G}{\on{Gr}_G}
\end{equation*}
(projection is onto the first factor; $m$ comes from the action of $G(\hat{\KK})$ on $\on{Gr}_G$). As in \ref{inner convolution diagram diagonal}, this convolution diagram is the moduli space of:
\begin{equation}
 \label{eq:absolute convolution diagram}
 \on{Conv}_G(S) = \left\{(\sh{T}_1, \sh{T}_2, \phi_1, \phi_2) \middle\vert
   \begin{gathered}
    \text{$\sh{T}_i$ are $G$-torsors on $X$,}\\
    \text{$\phi_1$ is a trivialization of $\sh{T}_1$ on $X \setminus \bar{x}$}\\
    \text{$\map{\phi_2}{\sh{T}_1}{\sh{T}_2}$ is an isomorphism on $X \setminus \bar{x}$}
   \end{gathered}
  \right\}
\end{equation}
(for some fixed $x \in X$).  The projection map $\on{pr}$ becomes projection onto $(\sh{T}_1, \phi_1)$ and $m$ becomes projection onto $(\sh{T}_2, \phi_2 \circ \phi_1)$.

Since every $G(\hat{\OO})$-orbit $\on{Gr}_G^\lambda$ is (of course) $G(\hat{\OO})$-invariant, we can also form the diagram
\begin{equation*}
 \on{Conv}_G^{\lambda,\mu}
  = \on{Gr}_G^\lambda * \on{Gr}_G^\mu
  = q^{-1}(\on{Gr}_G^\lambda) \times^{G(\hat{\OO})} \on{Gr}_G^\mu
\end{equation*}
and write $\on{pr}^\lambda$, $m^{\lambda,\mu}$ for the two maps above.  As a twisted product, this convolution has the property that:

\begin{theorem}{lem}{irreducible convolution set}
 $\on{Gr}_G^\lambda * \on{Gr}_G^\mu$ is a $\on{Gr}_G^\mu$-bundle over $\on{Gr}_G^\lambda$ and is thus smooth and irreducible of dimension $\langle 2\rho, \lambda + \mu \rangle$. \qed
\end{theorem}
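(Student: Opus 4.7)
The plan is to realize $\on{Conv}_G^{\lambda,\mu} = \on{Gr}_G^\lambda * \on{Gr}_G^\mu$ as an associated fiber bundle over $\on{Gr}_G^\lambda$ with fiber $\on{Gr}_G^\mu$, and then read off smoothness, irreducibility, and dimension from the corresponding properties of the base and fiber established in \ref{absolute orbit properties}.

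First, I would observe that the projection $q \colon G(\hat{\KK}) \to \on{Gr}_G$ is a $G(\hat{\OO})$-torsor (this is essentially \ref{arc and loop group actions} applied to the absolute grassmannian), so $q^{-1}(\on{Gr}_G^\lambda) \to \on{Gr}_G^\lambda$ is a $G(\hat{\OO})$-torsor. By the very definition of the contracted product, $\on{pr}^{\lambda} \colon \on{Conv}_G^{\lambda,\mu} \to \on{Gr}_G^\lambda$ is then the bundle associated with this torsor and the action of $G(\hat{\OO})$ on $\on{Gr}_G^\mu$.

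The main technical point is to give this twisted product an honest scheme-theoretic meaning, since $G(\hat{\OO})$ is only pro-algebraic. This is resolved by \ref{orbit stabilizer is connected}: the stabilizer of $t^\mu$ contains a subgroup $N^\mu$ that is normal in $G(\hat{\OO})$ and has finite-dimensional quotient $G(\hat{\OO})/N^\mu$. Because $N^\mu$ is normal and lies in the stabilizer of $t^\mu$, the action of $G(\hat{\OO})$ on $\on{Gr}_G^\mu$ factors through $G(\hat{\OO})/N^\mu$, and therefore
\begin{equation*}
 \on{Conv}_G^{\lambda,\mu}
  \cong \bigl(q^{-1}(\on{Gr}_G^\lambda)/N^\mu\bigr)
         \times^{G(\hat{\OO})/N^\mu} \on{Gr}_G^\mu.
\end{equation*}
The quotient $q^{-1}(\on{Gr}_G^\lambda)/N^\mu$ is a torsor over $\on{Gr}_G^\lambda$ for the finite-dimensional smooth group $G(\hat{\OO})/N^\mu$, so it is locally trivial in the Zariski topology on $\on{Gr}_G^\lambda$, and the associated bundle really is a Zariski-locally trivial fiber bundle with fiber $\on{Gr}_G^\mu$.

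Given this, the remaining assertions are formal. The base $\on{Gr}_G^\lambda$ is smooth and irreducible, being a single orbit of a connected group (the connectedness of the stabilizer from \ref{orbit stabilizer is connected} together with the connectedness of $G(\hat{\OO})$ makes the orbit connected, and smoothness is automatic for such an orbit on a finite-dimensional quotient); similarly for $\on{Gr}_G^\mu$. A Zariski-locally trivial bundle with smooth irreducible base and smooth irreducible fiber is smooth and irreducible, and its dimension is additive:
\begin{equation*}
 \dim \on{Conv}_G^{\lambda,\mu}
  = \dim \on{Gr}_G^\lambda + \dim \on{Gr}_G^\mu
  = \langle 2\rho, \lambda \rangle + \langle 2\rho, \mu \rangle
  = \langle 2\rho, \lambda + \mu \rangle,
\end{equation*}
using the dimension formula of \ref{absolute orbit properties}. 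The only real subtlety is the reduction to a finite-dimensional setting via $N^\mu$; after that, everything is standard.
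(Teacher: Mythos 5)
The paper gives no proof of this lemma (it is stated with a terminal \qed), so the task was to reconstruct the standard argument, and your reconstruction is essentially the canonical one and is correct. You realize $\on{Gr}_G^\lambda * \on{Gr}_G^\mu$ as the associated bundle attached to the $G(\hat{\OO})$-torsor $q^{-1}(\on{Gr}_G^\lambda) \to \on{Gr}_G^\lambda$ and the action of $G(\hat{\OO})$ on $\on{Gr}_G^\mu$, and you correctly identify that the only real subtlety is the infinite-dimensionality of $G(\hat{\OO})$. Your use of \ref{orbit stabilizer is connected} to pass to the finite-dimensional quotient $G(\hat{\OO})/N^\mu$ is exactly right: since $N^\mu$ is normal in $G(\hat{\OO})$ and contained in the stabilizer of $t^\mu$, normality implies that $N^\mu$ lies in the stabilizer of every point of the orbit $\on{Gr}_G^\mu$, so the action genuinely factors. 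This is in fact the content of \ref{finite dimensional quotient}, which appears later in the paper and is used for the same purpose.

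One minor overclaim: you assert that the resulting $G(\hat{\OO})/N^\mu$-torsor over $\on{Gr}_G^\lambda$ is Zariski-locally trivial, but torsors under a general smooth finite-dimensional group are only guaranteed to be locally trivial in the \'etale (or fppf) topology; Zariski-local triviality would require an extra argument (e.g.\ via the big cell / Bruhat decomposition). Fortunately none of the three conclusions needs Zariski-local triviality: \'etale-local triviality already gives smoothness and additivity of dimension, and then connectedness of base and fiber gives connectedness of the total space, which together with smoothness yields irreducibility. So the gap is in the justification of an intermediate claim, not in the logic leading to the conclusion.
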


We will require knowledge of the fibers of $m^{\lambda,\mu}$ in the following ways.

\begin{theorem}{prop}{highest weight convolution fiber}
 The fiber $(m^{\lambda,\mu})^{-1}(t^{\lambda + \mu})$ consists of exactly one point $t^\lambda * t^\mu$.
\end{theorem}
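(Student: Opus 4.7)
The plan is to show the fiber is a finite set and then identify its unique element via a torus action.

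For finiteness: the map $m^{\lambda,\mu}$ is proper (since $\on{Gr}_G$ is ind-proper for reductive $G$) and $G(\hat{\OO})$-equivariant for the left action; its image is closed and contains $t^\lambda \cdot t^\mu = t^{\lambda+\mu}$, hence contains the full orbit $\on{Gr}_G^{\lambda+\mu}$. By \ref{irreducible convolution set} and \ref{absolute orbit properties}, both the irreducible variety $\on{Gr}_G^\lambda * \on{Gr}_G^\mu$ and the orbit $\on{Gr}_G^{\lambda+\mu}$ have dimension $\langle 2\rho,\lambda+\mu\rangle$, so the restriction of $m^{\lambda,\mu}$ over the open orbit is surjective and generically finite; by $G(\hat{\OO})$-equivariance and the fact that $\on{Gr}_G^{\lambda+\mu}$ is a single $G(\hat{\OO})$-orbit, every fiber over it has dimension $0$, so $(m^{\lambda,\mu})^{-1}(t^{\lambda+\mu})$ is finite.

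For the identification: the maximal torus $T \subset G(\hat{\OO})$ acts on the convolution by left multiplication and $t^{\lambda+\mu}$ is $T$-fixed in $\on{Gr}_G$, so the fiber is $T$-stable; being finite, every point of it is $T$-fixed. For a $T$-fixed class $[g_1, g_2]$, $T$-invariance forces $g_1 G(\hat{\OO})$ to be $T$-fixed in $\on{Gr}_G$, hence equal to $t^{\lambda'} G(\hat{\OO})$ for some $\lambda' \in \Lambda_T$, and the condition $g_1 \in G(\hat{\OO})\, t^\lambda\, G(\hat{\OO})$ forces $\lambda' \in W \cdot \lambda$. Normalising $g_1 = t^{\lambda'}$ and applying the same analysis to $g_2$ (using the equivalence relation defining the twisted product) gives $g_2 G(\hat{\OO}) = t^{\mu'} G(\hat{\OO})$ with $\mu' \in W \cdot \mu$; the constraint $g_1 g_2 \in t^{\lambda+\mu} G(\hat{\OO})$ then yields $\lambda' + \mu' = \lambda + \mu$ in $\Lambda_T$.

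Finally, since $\rho$ is regular dominant, $\rho - w^{-1}\rho$ is a non-negative sum of positive roots for any $w \in W$, pairing non-negatively with any dominant $\xi$; hence $\langle \rho, w \xi\rangle \le \langle \rho, \xi\rangle$ with equality only when $w = 1$. Summing the resulting inequalities for $\lambda$ and $\mu$ and using $\lambda' + \mu' = \lambda + \mu$ forces equality in each, whence $\lambda' = \lambda$ and $\mu' = \mu$. Combined with the finiteness step, the fiber reduces set-theoretically to the single point $[t^\lambda, t^\mu] = t^\lambda * t^\mu$. The main obstacle is the dimension estimate underlying the finiteness claim; an alternative route would be to invoke the semi-smallness bound $\dim (m^{\lambda,\mu})^{-1}(t^\nu) \le \langle \rho, \lambda+\mu - \nu \rangle$ for dominant $\nu$, a standard consequence of \ref{absolute orbit properties} which specialises to $0$ at $\nu = \lambda + \mu$.
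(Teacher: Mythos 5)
Your argument is correct in outline but proceeds quite differently from the paper. The paper's proof is a direct computation in $G(\hat{\KK})$: writing $g = ut^\lambda$ with $u \in G(\hat{\OO})$ and choosing a Weyl chamber so that $h \in N(\hat{\OO})t^\mu G(\hat{\OO})$, the containment $gh \in t^{\lambda+\mu}G(\hat{\OO})$ forces $t^{-\lambda}ut^\lambda$ into the intersection of a ``poles below the diagonal'' set and a ``poles above the diagonal'' set, hence into $G(\hat{\OO})$, with the restriction on $h$ removed by varying over $W$-conjugate Borels. You instead combine two facts: (a) the dimension equality $\dim(\on{Gr}_G^\lambda * \on{Gr}_G^\mu) = \dim \on{Gr}_G^{\lambda+\mu}$ together with $G(\hat{\OO})$-equivariance of $m^{\lambda,\mu}$ give that the fiber is finite; and (b) a finite $T$-stable set consists of $T$-fixed points, and the $T$-fixed points of $\on{Gr}_G$ being exactly the $t^\nu$ plus a dominance inequality pin the fiber down to $t^\lambda * t^\mu$. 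Each approach buys something: the paper's argument is short, elementary, and works scheme-theoretically with no appeal to the dimension formula; yours is more conceptual and makes visible the role of the dimension count. Your alternative via the semi-smallness bound is also valid and not circular, since \ref{fiber dimension criterion} is proved independently of this proposition.

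Two technical slips are worth noting. The claim that $m^{\lambda,\mu}$ is proper is incorrect as stated: it is the restriction of the proper map $m$ to the locally closed (not closed) subset $\on{Gr}_G^\lambda * \on{Gr}_G^\mu \subset \on{Conv}_G$, and so need not itself be proper. Fortunately you do not need it: $G(\hat{\OO})$-equivariance of the image, plus the fact that the image contains $t^{\lambda+\mu}$, already force it to contain the full orbit $\on{Gr}_G^{\lambda+\mu}$, with no closedness required. Second, ``equality only when $w=1$'' is imprecise: for dominant $\xi$, $\langle \rho, w\xi\rangle = \langle\rho,\xi\rangle$ holds exactly when $w\xi = \xi$, which can happen for $w \neq 1$ when $\xi$ lies on a wall. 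Your stated conclusion $\lambda' = \lambda$ and $\mu' = \mu$ is precisely what the correct criterion yields, so the argument goes through unchanged.
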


\begin{proof}
 We use the description $\smash{\on{Conv}_G = G(\hat{\KK}) \times^{G(\hat{\OO})} \on{Gr}_G}$ together with the identification $\on{Gr}_G = G(\hat{\KK})/G(\hat{\OO})$, and thus suppose we have a pair $(g, h) \in G(\hat{\KK}) \times G(\hat{\KK})$ such that
 \begin{align*}
  g \in G(\hat{\OO}) t^\lambda G(\hat{\OO}) &&
  h \in G(\hat{\OO}) t^\mu G(\hat{\OO}) &&
  gh \in t^{\lambda + \mu} G(\hat{\OO}).
 \end{align*}
 We must show that $g \in t^\lambda G(\hat{\OO})$, $h \in t^\mu G(\hat{\OO})$.  Since in the pair $(g,h)$ we may pass factors in $\smash{G(\hat{\OO})}$ across the comma, it suffices to show only the first; that is, we may write $g = u t^\lambda$ with $u \in G(\hat{\OO})$, and we want to show that $t^{-\lambda} u t^\lambda \in G(\hat{\OO})$.  Let us also suppose that $h \in N(\hat{\OO}) t^\mu G(\hat{\OO})$. Thus, we have
 \begin{equation*}
  u t^\lambda n t^\mu G(\hat{\OO}) \subset t^{\lambda + \mu} G(\hat{\OO}),
 \end{equation*}
 with $u \in G(\hat{\OO})$ and $n \in N(\hat{\OO})$.  Rearranging, we get
 \begin{equation*}
  (t^{-\lambda} u t^\lambda) \in (t^\mu G(\hat{\OO}) t^{-\mu}) N(\hat{\OO}).
 \end{equation*}
 In any representation of $G$, the matrix of the left-hand side (in a weight basis partially ordered in the usual way) has all of its poles below the diagonal, while any matrix on the right-hand side has all of its poles above.  Thus, the left-hand side is in fact in $\smash{G(\hat{\OO})}$, as desired.
 
 To remove the restriction on $h$, we use the fact that $\smash{\on{Gr}_G^\lambda}$ is the union of its intersections with $\on{Gr}_{B^w}^{w(\lambda)}$, where $B^w = w B w^{-1}$. Since we have equivalently $\on{Gr}_G^\lambda = \on{Gr}_G^{w(\lambda)}$ for any $w$, the proof works for suitable choice of $w$.
\end{proof}

For the next proposition, in the proof we will use the fact that for any subvariety $V \subset \on{Gr}_G$ and any dominant coweight $\lambda$, we may construct the ``convolution diagram'' $V * \on{Gr}_G^\lambda \subset \on{Gr}_G * \on{Gr}_G$ via
\begin{equation*}
 V * \on{Gr}_G^\lambda = q^{-1}(V) \times^{G(\hat{\OO})} \on{Gr}_G^\lambda
\end{equation*}
since by virtue of the pullback the first factor is a $G(\hat{\OO})$-torsor.

\begin{theorem}{prop}{middle weight convolution fiber}
 For any dominant $\lambda \geq 0$, there is some dominant $\mu$ and an irreducible component in the fiber $(m^{\mu, \lambda})^{-1}(t^\mu)$ of dimension at least $\langle \rho, \lambda \rangle = \frac{1}{2} \dim \on{Gr}_G^\lambda$.
\end{theorem}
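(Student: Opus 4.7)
The plan is to build a $\langle \rho, \lambda \rangle$-dimensional subvariety of the fiber by lifting a Mirkovi\'c--Vilonen cycle into the convolution diagram after translating it deeply into $\on{Gr}_G^\mu$ for $\mu$ very dominant. The starting point is that the MV cycle
$Y_\lambda := \on{Gr}_G^\lambda \cap \on{Gr}_B^0$
has pure dimension $\langle \rho, \lambda \rangle$ by \ref{absolute orbit properties} (applied with second weight $0$; the hypothesis $w_0(\lambda) \leq 0 \leq \lambda$ holds for $\lambda$ dominant with $\lambda \geq 0$, as then both $\lambda$ and $-w_0 \lambda$ lie in the $\Z_{\geq 0}$-cone of simple coroots). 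The rest of the argument is to explicitly carry this cycle into the fiber.

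First, apply \ref{unipotent group finiteness} to the finite-dimensional $Y_\lambda \subset \on{Gr}_B^0$ to choose a dominant $\mu$ so that every lift $n \in N(\hat{\KK})$ of a point of $Y_\lambda$ lies in $N(\hat{\KK})_\mu = t^{-\mu} N(\hat{\OO}) t^\mu$. For such $n$, write $n = t^{-\mu} n_0 t^\mu$ with $n_0 \in N(\hat{\OO})$, and set $g := t^\mu n^{-1} = n_0^{-1} t^\mu$. One then verifies directly that $gG(\hat{\OO}) = n_0^{-1} \cdot t^\mu G(\hat{\OO})$ lies in $\on{Gr}_G^\mu$ (because $n_0^{-1} \in N(\hat{\OO}) \subset G(\hat{\OO})$), that $g^{-1} t^\mu G(\hat{\OO}) = n G(\hat{\OO}) \in Y_\lambda \subset \on{Gr}_G^\lambda$, and that $g \cdot n G(\hat{\OO}) = t^\mu G(\hat{\OO})$. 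Thus $[g, nG(\hat{\OO})]$ is a point of $(m^{\mu,\lambda})^{-1}(t^\mu)$.

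To obtain the dimension estimate I will introduce two maps $\psi, \Phi \colon N(\hat{\OO}) \to \on{Gr}_G$ defined by $\psi(n_0) = t^{-\mu} n_0 t^\mu G(\hat{\OO})$ and $\Phi(n_0) = n_0^{-1} t^\mu G(\hat{\OO})$. Setting $\on{Stab} := N(\hat{\OO}) \cap t^\mu G(\hat{\OO}) t^{-\mu}$, the fibers of $\psi$ are right $\on{Stab}$-cosets and those of $\Phi$ are left $\on{Stab}$-cosets, each of dimension $\dim \on{Stab}$. The preimage $\psi^{-1}(Y_\lambda)$ has dimension $\langle \rho, \lambda \rangle + \dim \on{Stab}$, and I will check that it is stable under left $\on{Stab}$-multiplication: for $k \in \on{Stab}$, one has $t^{-\mu} k t^\mu \in G(\hat{\OO}) \cap N(\hat{\KK}) = N(\hat{\OO})$, and $N(\hat{\OO})$ preserves $Y_\lambda$ (being both $G(\hat{\OO})$-invariant on $\on{Gr}_G^\lambda$ and $N(\hat{\KK})$-invariant on $\on{Gr}_B^0$). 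Hence the restriction $\Phi|_{\psi^{-1}(Y_\lambda)}$ has fibers of full dimension $\dim \on{Stab}$, so its image has dimension exactly $\langle \rho, \lambda \rangle$. By the construction of the previous paragraph this image is contained in $(m^{\mu, \lambda})^{-1}(t^\mu)$ under the projection onto the first factor, so the fiber contains a subvariety of dimension $\langle \rho, \lambda \rangle$, and its closure contributes an irreducible component of the fiber of at least the required dimension.

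The main technical point is to guarantee that $gG(\hat{\OO}) = t^\mu n^{-1} G(\hat{\OO})$ lies in the open orbit $\on{Gr}_G^\mu$ rather than in a proper closed stratum of $\overline{\on{Gr}_G^\mu}$. This is exactly what the finiteness statement of \ref{unipotent group finiteness} provides: by forcing $t^\mu n^{-1} t^{-\mu} = n_0^{-1} \in N(\hat{\OO}) \subset G(\hat{\OO})$, it places $gG(\hat{\OO})$ in the $G(\hat{\OO})$-orbit of $t^\mu$. Without this finiteness one could not guarantee that the lift $[g, nG(\hat{\OO})]$ lands in $\on{Gr}_G^\mu * \on{Gr}_G^\lambda$, and the dimension count would concern a larger closure rather than the open convolution itself.
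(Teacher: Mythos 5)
Your proposal is correct, and it is essentially the same argument as the paper's, just parametrized from the opposite end. The paper first asserts the inclusion
$m^{-1}(t^\mu) \cap (\on{Gr}_B^\mu * \on{Gr}_G^\lambda) \subset m^{-1}(t^\mu) \cap (\on{Gr}_G^\mu * \on{Gr}_G^\lambda)$
for $\mu$ large, translates by $t^{-\mu}$, and reads off dimension $\langle \rho, \lambda \rangle$ from the MV cycle $\on{Gr}_B^0 \cap \on{Gr}_G^{-w_0(\lambda)}$ seen through $\on{pr}$; the inclusion itself is then proven via \ref{unipotent group finiteness}. You instead begin with the MV cycle $Y_\lambda = \on{Gr}_G^\lambda \cap \on{Gr}_B^0$, apply \ref{unipotent group finiteness} up front to pick $\mu$, and build the point $[n_0^{-1} t^\mu,\; t^{-\mu} n_0 t^\mu G(\hat{\OO})]$ of the fiber explicitly, getting the dimension by quotienting $\psi^{-1}(Y_\lambda)$ by $\on{Stab}$. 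If you set $n = t^{-\mu}n_0 t^\mu$ and compare with the paper's lift after translation, the two subsets of the fiber literally coincide: the paper reads the dimension off the $\on{pr}$-coordinate $gG(\hat{\OO}) = n^{-1}G(\hat{\OO})$ (hence the $-w_0(\lambda)$), while you read it off the second coordinate $nG(\hat{\OO})$ (hence $\lambda$); both give $\langle \rho, \lambda\rangle = \langle \rho, -w_0(\lambda)\rangle$. The one small loose end in your write-up is that you pass through the image of $\Phi$ in $\on{Gr}_G$ rather than the corresponding subset of the fiber; this is fine because $\on{pr}$ is injective on $m^{-1}(t^\mu)$, but it is worth stating. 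Otherwise the argument, including the use of $\on{Stab} = N(\hat{\OO}) \cap t^\mu G(\hat{\OO}) t^{-\mu}$ and the $N(\hat{\OO})$-stability of $Y_\lambda$, is sound.
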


\begin{proof}
 To do this, we find such a component in a more amenable subspace.  We claim that for $\mu$ sufficiently large, we have
 \begin{equation*}
  m^{-1}(t^\mu) \cap (\on{Gr}_B^\mu * \on{Gr}_G^\lambda) \subset
  m^{-1}(t^\mu) \cap (\on{Gr}_G^\mu * \on{Gr}_G^\lambda).
 \end{equation*}
 Granting this, the following additional equality is obtained by left-multiplying by $t^\mu$:
 \begin{equation*}
  m^{-1}(t^0) \cap (\on{Gr}_B^0 * \on{Gr}_G^\lambda) =
  m^{-1}(t^\mu) \cap (\on{Gr}_B^\mu * \on{Gr}_G^\lambda)
 \end{equation*}
 and identifying $m^{-1}(t^0) \cong \on{Gr}_G$ via $\on{pr}$, the former is identified with $\on{Gr}_B^0 \cap \on{Gr}_G^{-w_0 (\lambda)}$, which has pure dimension $\langle \rho, -w_0(\lambda) \rangle = \langle \rho, \lambda \rangle$ by \ref{absolute orbit properties} since $\lambda \geq 0$, as desired.
 
 To prove the claim, we follow the proof of \cite{Zastava_spaces}*{Proposition 6.4}.  We work with the description $\on{Conv}_G = G(\hat{\KK}) \times^{G(\hat{\OO})} \on{Gr}_G$ and let $\map{q}{G(\hat{\KK}) \times G(\hat{\KK})}{\on{Conv}_G}$ be the quotient map. Then $q^{-1}(\on{Gr}_B^\mu * \on{Gr}_G^\lambda)$ is identified with pairs $(t^\mu n u, v t^\lambda w)$ with $n \in N(\hat{\KK})$ and $u,v,w \in G(\hat{\OO})$, such that
 \begin{equation*}
  t^\mu n u v t^\lambda w = t^\mu x
 \end{equation*}
 with $x \in G(\hat{\OO})$.  In particular, we have $n^{-1} \in q^{-1}(\on{Gr}_G^\lambda)$, or $n \in q^{-1}(\on{Gr}_B^0 \cap \on{Gr}_G^{-w_0(\lambda)})$, where the intersection is finite-dimensional. Applying \ref{unipotent group finiteness}, we have $n \in t^{-\mu} N(\hat{\OO}) t^\mu$ for some $\mu$ depending only on $\lambda$, so $t^\mu n \in N(\hat{\OO}) t^\mu \subset q^{-1}(\on{Gr}_G^\mu)$, as desired.
\end{proof}

\section{Vanishing cycles and gluing}
\label{s:vanishing cycles and gluing}

Before talking about gluing, we need to introduce a form of group action, related to factorizability, which is a key ingredient in the main theorem.  (This is one of the things we will be gluing.)  This was first given by Gaitsgory, \cite{G_deJong}*{\S2.5}, and we state it for sections of any sheaf of $k$-linear categories.

\begin{theorem}{defn}{factorizable action}
 Let $H$ be a group, $X$ a smooth curve, $Y \subset X^n$ be any subvariety, and let $\stack{F}$ be a sheaf of $k$-linear categories on $Y$.  Then a \emph{factorizable action} of $H$ on a section $s \in \stack{F}_Y$ is, simply, an action of $\on{Fact}(H)_n|_Y$ on $s$ (see \ref{factorizable version of a group}).  Explicitly, it is the following data: for every partition $p$ of $n$ into $m$ parts, an action of $H^m$ on $s$ over $Y \cap X^n_p$ such that, for any refinement $p'$ of $p$ with $m'$ parts, the action of $H^{m'}$ on $s$ is the same as the corresponding diagonal action of $H^m$, over $Y \cap X^n_{p'}$.

 For example, when $n = 2$, such an action is the data of an action of $H$ on $s$ as well as an action of $H \times H$ on $s|_{Y \setminus \Delta}$, such that the restriction of the former to the latter set is the action of the diagonal in $H \times H$.
 
 We will denote the sheaf of categories of perverse sheaves with a factorizable $H$-action by $\cat{FRep}_n(H)_Y$.
\end{theorem}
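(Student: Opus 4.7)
The final item is a definition, but it contains an implicit equivalence claim: that an action of the sheaf of groups $\on{Fact}(H)_n|_Y$ on a section $s$ is the same data as a compatible family of $H^m$-actions on $s|_{Y \cap X^n_p}$ indexed by partitions $p$ of $\{1,\dots,n\}$ into $m$ parts, with compatibility under refinement. My plan is to verify this equivalence by unpacking the definition of $\on{Fact}(H)_n$ and then gluing.

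First I would use Definition \ref{factorizable version of a group} to identify the restriction $\on{Fact}(H)_n|_{X^n_p}$ explicitly. By definition, sections of $\on{Fact}(H)_n$ over an open $U$ are tuples in $H^n$ subject to $h_i = h_j$ whenever $U$ meets $\Delta_{ij}$. On the stratum $X^n_p$, coordinates $x_i$ and $x_j$ can coincide precisely when $i$ and $j$ lie in the same part of $p$, so the constraints collapse the $n$-tuple to exactly one copy of $H$ per part of $p$, giving a canonical identification $\on{Fact}(H)_n|_{X^n_p} \cong H^m$. Consequently, a sheaf-level action of $\on{Fact}(H)_n|_Y$ on $s$ restricts over $Y \cap X^n_p$ to an $H^m$-action, producing the partition data.

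Next I would check that the compatibility under refinement in the partition description matches the one encoded in the sheaf $\on{Fact}(H)_n$. If $p'$ refines $p$ into $m'$ parts, then $X^n_{p'} \subset X^n_p$, and the restriction map $\on{Fact}(H)_n|_{X^n_p} \to \on{Fact}(H)_n|_{X^n_{p'}}$ under the identifications above is the map $H^m \to H^{m'}$ which duplicates the $H$-coordinate attached to each part of $p$ across the parts of $p'$ into which it subdivides. This is exactly the ``diagonal'' compatibility demanded in the explicit formulation. Conversely, given a compatible family of $H^m$-actions, I would glue: the strata $\{X^n_p\}$, as $p$ runs over all partitions, give a stratification of $X^n$ by locally closed subsets, and $\on{Fact}(H)_n$ is itself the sheaf assembled from the $H^m|_{X^n_p}$ along the diagonal transition maps, so the actions glue to an $\on{Fact}(H)_n$-action.

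The main obstacle, and really the only nontrivial point, is verifying triple compatibility: for three partitions $p$, $p'$, $p''$ with $p'' \leq p' \leq p$ (refinement order), the $H^{m''}$-action obtained by restricting through $p$ and then through $p'$ agrees with the one obtained by restricting directly. This reduces to the fact that the diagonal inclusions $H^m \hookrightarrow H^{m'} \hookrightarrow H^{m''}$ compose correctly, which is immediate from the combinatorics of refinement. Since any two partitions have a common refinement (their meet in the partition lattice), pairwise compatibility on overlapping strata suffices, and the gluing of actions is unambiguous. Once these identifications are in place, the notation $\cat{FRep}_n(H)_Y$ for the sheaf of categories of factorizably $H$-acted perverse sheaves is well-defined, and it inherits a sheaf-of-abelian-categories structure from $\Perv$ since the $\on{Fact}(H)_n$-action is by exact functors.
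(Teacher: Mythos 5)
The key identification you rely on is false: $\on{Fact}(H)_n|_{X^n_p}$ is not isomorphic to the constant sheaf $\csheaf{H^m}$. For a small connected open $U \subset X^n_p$ avoiding all diagonals, the definition in \ref{factorizable version of a group} imposes no constraints and $\on{Fact}(H)_n(U) = H^n$, not $H^m$; in general $\on{Fact}(H)_n(U) = H^{m(q_U)}$, where $q_U$ is the partition generated by the relations ``$i \sim j$ whenever $U \cap \Delta_{ij} \neq \emptyset$.'' What does hold is that there is a homomorphism of sheaves of groups $\csheaf{H^m}|_{X^n_p} \to \on{Fact}(H)_n|_{X^n_p}$, sending each $H$-factor diagonally onto the indices in the corresponding part of $p$; this lands in the subsheaf because an open of $X^n_p$ can only meet diagonals $\Delta_{ij}$ with $i,j$ in a common part of $p$. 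The forward direction of the equivalence restricts the $\on{Fact}(H)_n|_Y$-action along this homomorphism rather than transporting it through an identification.

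The converse also needs a different argument from the one you sketch: the $X^n_p$ are open, nested by refinement, and overlap everywhere (the coarsest $p$ gives all of $X^n$), so they do not form a stratification by locally closed sets, and $\on{Fact}(H)_n$ is not assembled from constant sheaves on them. Instead, given the compatible $H^m$-actions and a section $h \in \on{Fact}(H)_n(U)$, cover $U$ by small connected $U_\alpha$; on each one, $h|_{U_\alpha} \in \on{Fact}(H)_n(U_\alpha) = H^{m(q_\alpha)}$ with $q_\alpha$ as above, and one checks $U_\alpha \subset X^n_{q_\alpha}$ (if $\vect{x} \in U_\alpha$ has $x_i = x_j$ then $\Delta_{ij}$ meets $U_\alpha$, so $i \sim_{q_\alpha} j$), so that $h|_{U_\alpha}$ acts via the given $H^{m(q_\alpha)}$-action. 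Overlaps agree because $U_\alpha \cap U_\beta \subset X^n_{q_{\alpha\beta}}$ for the common refinement $q_{\alpha\beta}$ together with the refinement compatibility, which is where your common-refinement observation is correctly deployed. The missing ingredient is this local identification of $\on{Fact}(H)_n(U_\alpha)$ with a constant group, which replaces the incorrect global sheaf isomorphism.
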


One obvious fact that bears mention is that since $\on{Fact}(H)_n \subset H^n$ by definition, any section with an action of $H^n$ automatically obtains a factorizable action.  In addition, if $Y \subset X^n$ actually lies in one of the diagonals $\cong X^{n - 1}$, then we have a natural equivalence $\cat{FRep}_{n - 1}(H)_Y \cong \cat{FRep}_n(H)_Y$ using \ref{eq:sf mult diagonal}.

We will also want to consider actions of an \emph{algebraic group} on perverse sheaves, which requires a definition since perverse sheaves are not schemes.  The following definition and lemmas are basically trivial.

\begin{theorem}{lem}{algebraic automorphisms}
 Let $\cat{C}$ be a $k$-linear category for some field $k$ and let $x \in \cat{C}$; suppose $\on{End}(x)$ is finite-dimensional.  Then each $\on{Aut}(x)$ has the natural structure of the rational points of an affine algebraic group defined over $k$. \qed
\end{theorem}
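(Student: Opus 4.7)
The plan is to realize $\on{Aut}(x)$ as the group of units of the finite-dimensional associative $k$-algebra $R = \on{End}(x)$, and then to realize this group of units as the $k$-points of a standard affine algebraic group attached to any such algebra.

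First I would note that $R = \on{End}(x)$ has the natural structure of an associative, unital $k$-algebra under composition, and by hypothesis $\dim_k R < \infty$. Taking underlying $k$-vector spaces, $R$ is thus the set of $k$-points of an affine space $\Aff(R)$ over $k$ (of dimension $\dim_k R$), and composition $\mu \colon R \times R \to R$ is $k$-bilinear, hence regular as a map $\Aff(R) \times \Aff(R) \to \Aff(R)$. By definition, $\on{Aut}(x) = R^\times$, the set of two-sided units of $R$.

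Next I would construct the group scheme. Inside $\Aff(R) \times \Aff(R)$, cut out the closed subscheme $\avar{G}$ defined by the pair of equations $\mu(a,b) = 1$ and $\mu(b,a) = 1$; each of these is a system of $\dim_k R$ polynomial equations in $2 \dim_k R$ variables whose coefficients (the structure constants of $R$ and the coordinates of $1 \in R$) lie in $k$, so $\avar{G}$ is an affine $k$-scheme. The first projection $\avar{G} \to \Aff(R)$ identifies $\avar{G}(k)$ with $R^\times$ (a one-sided inverse in a finite-dimensional associative algebra is automatically two-sided, so $b$ is uniquely determined by $a$), so $\avar{G}(k) = \on{Aut}(x)$ as sets. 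The multiplication on $\avar{G}$, given on points by $((a,b),(c,d)) \mapsto (\mu(a,c), \mu(d,b))$, and the inversion $(a,b) \mapsto (b,a)$, are regular morphisms defined over $k$, and the unit is the $k$-point $(1,1)$; associativity and inverse axioms hold on $k$-points and hence on $\avar{G}$ by density. Thus $\avar{G}$ is an affine algebraic group over $k$ with $\avar{G}(k) = \on{Aut}(x)$.

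The only delicate point is to confirm that the whole construction descends to $k$ rather than merely to some extension, but this is automatic: the data (underlying $k$-vector space of $R$, multiplication, unit) are all intrinsically defined over $k$, so the defining equations of $\avar{G}$ inside $\Aff(R) \times \Aff(R)$ have coefficients in $k$. Naturality is likewise immediate, since an endomorphism or isomorphism of the pair $(\cat{C}, x)$ preserving the $k$-linear structure induces a $k$-algebra map on endomorphism rings and hence a morphism of the associated group schemes.
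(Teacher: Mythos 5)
Your proposal is correct and supplies the standard construction of the units group scheme of a finite-dimensional associative $k$-algebra, which is exactly the intended content; the paper itself offers no proof (the lemma is marked $\qed$ after the remark that it is ``basically trivial''), so your argument is a reasonable fill-in. The one small wrinkle is the appeal to density of $k$-points to verify the group axioms on $\avar{G}$: this fails over finite $k$ and is not needed anyway, since associativity and the inverse laws for your morphisms follow as identities of polynomials from the associativity of $\mu$ in $R$ (equivalently, one checks them on $A$-points for every commutative $k$-algebra $A$, which is the same computation). With that correction the proof is complete, and your closing observation about naturality---that a $k$-algebra homomorphism $\on{End}(x) \to \on{End}(y)$ induces a morphism of the associated group schemes---is exactly what the later gluing argument in \ref{group action gluing} requires.
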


\begin{theorem}{defn}{algebraic group action}
 Let $k$ be a field.  An action of an algebraic $k$-group $H$ on an object in a $k$-linear category $\cat{C}$ is a regular homomorphism $H \to \on{Aut}(x)$, the latter considered as an affine algebraic $k$-group.  A map $f \colon x \to y$ of objects with a $H$-action is a map of $H$-actions if both maps $H \to \on{Hom}(x,y)$ obtained by composition with $f$, the latter $k$-vector space considered as an affine $k$-scheme,  are equal.
\end{theorem}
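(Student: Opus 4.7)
The plan is to exhibit $\on{Aut}(x)$ as the group of units in the finite-dimensional associative $k$-algebra $A = \on{End}(x)$, and then to equip this group of units with the structure of an affine algebraic $k$-group. First, I would view $A$ as an affine $k$-scheme, namely the functor $R \mapsto A \otimes_k R$ on commutative $k$-algebras; this is (noncanonically) isomorphic to $\Aff^d$ where $d = \dim_k A$. The composition law on $A$ is $k$-bilinear, hence extends by base change to a regular morphism $A \times A \to A$, making $A$ an associative monoid scheme over $k$.

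Next, I would define the functor $A^\times$ on $k$-algebras by $R \mapsto (A \otimes_k R)^\times$, the group of two-sided units. To prove representability, I would use the standard device of realizing $A^\times$ as the closed subscheme of $A \times A$ cut out by the equations $ab = 1$ and $ba = 1$; this is closed inside an affine scheme and hence affine. Equivalently, since $A \otimes_k R$ is a finitely generated projective $R$-module of constant rank $d$, an element $a$ is invertible iff left-multiplication $L_a$ is an $R$-module automorphism, i.e.\ iff $\det_R(L_a) \in R^\times$; this realizes $A^\times$ as the principal open $D(\det L) \subset A$. Either description makes it manifest that multiplication and inversion in $A^\times$ are given by regular morphisms, so $A^\times$ is an affine algebraic $k$-group.

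Finally, the $k$-rational points $A^\times(k)$ are tautologically $\on{Aut}(x)$: an endomorphism is an automorphism iff it has a two-sided compositional inverse in $A$, which is exactly the defining condition for a $k$-point of $A^\times$, and the group operation matches composition by construction. The only real subtlety --- more of a sanity check than an obstacle --- is naturality: the structure should not depend on a choice of $k$-basis of $A$ used to coordinatize it as $\Aff^d$. This is automatic from the coordinate-free functor-of-points presentation, and in particular any $k$-algebra isomorphism $A \cong A'$ (including that coming from an isomorphism $x \cong x'$ in $\cat{C}$) induces a $k$-group isomorphism $A^\times \cong (A')^\times$ by functoriality.
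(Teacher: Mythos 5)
What you have written is not a proof of the stated item, which is a \emph{definition} and carries no proof in the paper. It is, rather, a proof of the immediately preceding \ref{algebraic automorphisms}, which asserts that $\on{Aut}(x)$ is naturally the set of $k$-points of an affine algebraic $k$-group when $\on{End}(x)$ is finite-dimensional; the paper labels this ``basically trivial'' and closes it with $\qed$ without argument. Your argument for that lemma is correct and is the standard one: present $A = \on{End}(x)$ as an affine $k$-scheme via its functor of points $R \mapsto A \otimes_k R$, observe that the $k$-bilinear composition law makes it a monoid scheme, and realize the units $A^\times$ either as the closed subscheme of $A \times A$ cut out by $ab = 1 = ba$ or as the principal open $D(\det L)$; both descriptions are correct (and your second one does yield two-sided inverses, since bijectivity of $L_a$ on a finite free module forces $ba = 1$ once $ab = 1$). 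Your remark on coordinate-independence via the functor-of-points description is also the right sanity check. Since the paper offers no proof here, there is nothing to compare your route against; you have simply supplied the omitted (and correctly judged easy) verification.
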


Note that if $x$ has an action of an algebraic $k$-group $H$, then it also has a literal group action of $H(k)$, though the converse is of course not true.  If $\cat{C}$ is the category of finite-dimensional vector spaces, this definition is exactly the same as what is normally meant by the action of an algebraic group.

\begin{theorem}{lem}{group action gluing}
 Let $H$ be an algebraic $k$-group, let $\stack{F}$ be a sheaf of $k$-linear categories in which every $\on{Aut}(s)$ is finite-dimensional, and let $\cat{Rep}(H,\stack{F})$ be the fibered category of sections of $\stack{F}$ with an $H$-action.  Then $\cat{Rep}(H,\stack{F})$ is a sheaf of categories.
\end{theorem}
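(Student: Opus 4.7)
The plan is to reduce the sheaf-of-categories axioms for $\cat{Rep}(H,\stack{F})$ to the corresponding axioms for $\stack{F}$ itself, by showing that an $H$-action is additional data which satisfies descent along any open cover. Since $\stack{F}$ is already a sheaf of categories, two things must be verified: first, that morphisms respecting $H$-actions glue along open covers, and second, that $H$-actions themselves glue from compatible local data on gluable sections.

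For the first point, suppose $s,t \in \stack{F}_X$ carry $H$-actions and $f_i \colon s|_{U_i} \to t|_{U_i}$ are maps respecting these actions which agree on overlaps. Since $\stack{F}$ is a sheaf of categories, the $f_i$ glue uniquely to a map $f \colon s \to t$ in $\stack{F}$. To verify that $f$ respects $H$-actions, I must show that the two regular maps $H \to \on{Hom}(s,t)$ obtained by pre- and post-composition with $f$ coincide. By the finite-dimensionality hypothesis (applied via $\on{End}(s \oplus t)$), $\on{Hom}(s,t)$ is a finite-dimensional $k$-vector space, so the equality of two regular maps into it is a closed condition on $H$; since the sheaf property of $\stack{F}$ makes $U \mapsto \on{Hom}(s|_U, t|_U)$ a sheaf, this condition can be checked after restriction to $\{U_i\}$, where it holds by assumption.

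For the second point, suppose $\{(s_i, \rho_i)\}$ are sections over $\{U_i\}$ equipped with $H$-actions $\rho_i \colon H \to \on{Aut}(s_i)$ and gluing isomorphisms $\phi_{ij}$ intertwining the $\rho_i$. The glued section $s$ exists by the sheaf property of $\stack{F}$. The key observation is that the sheaf property of $\on{Hom}$ identifies
\begin{equation*}
 \on{End}(s) \cong \ker\Bigl(\prod_i \on{End}(s_i) \rightrightarrows \prod_{i,j} \on{Hom}(s_i|_{U_{ij}}, s_j|_{U_{ij}})\Bigr),
\end{equation*}
where the two arrows use the $\phi_{ij}$ to compare. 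Finite-dimensionality lets the same identification persist after tensoring with any $k$-algebra $R$, so that $\on{Aut}(s)$ is represented by the closed $k$-subgroup scheme of $\prod_i \on{Aut}(s_i)$ cut out by compatibility with the $\phi_{ij}$. A regular homomorphism $H \to \on{Aut}(s)$ is then the same as a compatible system of regular homomorphisms $H \to \on{Aut}(s_i)$ factoring through this subgroup, which is exactly the given data $\{\rho_i\}$.

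The main obstacle will be making rigorous the displayed equalizer description of $\on{End}(s)$, and in particular ensuring that it is preserved by base change to any $k$-algebra $R$ so that the resulting $\on{Aut}(s)$ really is represented by the claimed subscheme as a functor on $k$-algebras. For finite-dimensional spaces this is essentially formal, since finite limits of $k$-vector spaces commute with tensor products; but it requires unpacking what ``regular homomorphism'' means in terms of the automorphism functor of Lemma \ref{algebraic automorphisms}. Once this representability is in hand, both descent claims follow formally from the sheaf-of-categories axioms for $\stack{F}$.
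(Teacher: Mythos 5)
Your proposal is correct and follows the same route as the paper: identify $\on{Aut}(s)$ scheme-theoretically as the limit of the $\on{Aut}(s_i)$ and assemble the compatible homomorphisms $H \to \on{Aut}(s_i)$ into a single map into that limit. You are somewhat more explicit on two points the paper leaves implicit --- that the limit description survives $\otimes_k R$ (every $R$ is $k$-flat, though to commute with a possibly infinite product one must first cut down to a finite subcover or finite subproduct, which is possible because $\on{End}(s)$ is finite-dimensional), and that gluing of $H$-equivariant morphisms reduces to the sheaf property of $\on{Hom}$ plus the closed immersion $\on{Hom}(s,t) \hookrightarrow \prod_i \on{Hom}(s_i,t_i)$.
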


\begin{proof}
 Let $\{U_i\}$ be an open cover contaning all its finite intersections, let $\{s_i\}$ be sections of the $\cat{Rep}(H,\stack{F}|_{U_i})$, and let $\{\phi_{ij} \colon s_i|_{U_j} \to s_j\}$ be isomorphisms whenever $U_j \subset U_i$.  Then since $\stack{F}$ is a sheaf of categories, the $s_i$ glue to sections of $\stack{F}$, and we must show that the $H$-actions glue as well. Conjugation by $\phi_{ij}$ gives a map $\on{Aut}(s_i) \to \on{Aut}(s_j)$, and these maps form an inverse system indexed by the $U_{ij}$; by definition, $\on{Aut}(s) = \invlim \on{Aut}(s_i)$, and since affine schemes admit all limits, this holds algebraically as well as set-theoretically. Since the $\phi_{ij}$ are maps of $G$-actions, the maps $H \to \on{Aut}(s_i)$ are compatible with the limit and therefore assemble to a map of affine schemes $H \to \on{Aut}(s)$, as desired.
\end{proof}

It follows that when $H$ is an algebraic group, the categories $\cat{FRep}_n(H)$ form a sheaf of categories on $X^n$ (or on $Y \subset X^n$).  Thus, we may also speak of twisting a factorizable action by a $\on{Fact}(H(k))_n$-gerbe, and this will be important in what is to come.

\subsection*{Beilinson's gluing theorem}

One of the most important technical properties of $\Perv_S$ is the following: let $D = f^{-1}(0)$ be a principal Cartier divisor in $S$ (with $\map{f}{S}{\Aff^1}$), with $\map{j}{U = S \setminus D}{S}$, $\map{i}{D}{S}$. For a perverse sheaf $\sh{M}$ on $U$, let $\Psiun_f(\sh{M}) = R\psi_f^\mathrm{un}(\sh{M})[-1]$ be the (unipotent) nearby cycles of $\sh{F}$ along $f$, and for $\sh{F}$ on $S$, let $\Phiun_f(\sh{F}) = R\phi_f^\mathrm{un}(\sh{F})[-1]$ be the vanishing cycles. The monodromy action on $\Psiun_f(\sh{M})$ is given by a unipotent automorphism $\mu$.  The following theorem of Beilinson is discussed in the author's paper \cite{beilinson_notes}.

\begin{theorem*}{thm}
 (Beilinson's gluing theorem)
 \begin{itemize}
  \item
  Let $\sh{L}^n$ be the local system on $\Gm$ of rank $n$ with monodromy given by a unipotent Jordan block.  Then:
  \begin{equation*}
  i_* \Psiun_f(\sh{M}) \cong
   \begin{cases}
    \lim_{n \to \infty}
      \on{ker}\bigl(j_! (\sh{M} \otimes f^* \sh{L}^n) \to j_* (\sh{M} \otimes f^* \sh{L}^n)\bigr) \\
    \lim_{\infty \leftarrow n}
      \on{coker}\bigl(j_! (\sh{M} \otimes f^* \sh{L}^n) \to j_* (\sh{M} \otimes \sh{L}^n)\bigr) \\
    \lim_{n \to \infty} i_* i^* j_{!*}(\sh{M} \otimes f^* \sh{L}^n)[-1]
   \end{cases}
  \end{equation*}
  with the monodromy acting via $\sh{L}^n$; consequently, $\Psiun_f$ commutes with Verdier duality of perverse sheaves.  In particular, the monodromy action on $\Psiun_f(\sh{M})$ is trivial if and only if $\Psiun_f(\sh{M}) = i^* j_{!*}(\sh{F})[-1]$, in which case $\Psiun_f$ depends only on $D$ and may be defined for non-principal Cartier divisors as well.

  \item 
  $\Perv_S$ is equivalent to the \emph{gluing category} of quadruples $(\sh{F}_U, \sh{F}_D, u, v)$, where $\sh{F}_U, \sh{F}_D$ are perverse sheaves on $U$ and $D$ and $\Psiun_f(\sh{F}_U) \xrightarrow{u} \sh{F}_D \xrightarrow{v} \Psiun_f(\sh{F}_U)$ are maps with $v \circ u =  1 - \mu$. The $\sh{F}_D$ term is given by the vanishing cycles functor $\Phiun_f(\sh{F})$, and Verdier duality acts on this data by dualizing all the terms, so that $u$ and $v$ are exchanged by duality.
 \end{itemize}
\end{theorem*}

We claim that this theorem holds also in the following circumstances:
\begin{theorem}{prop}{vanishing cycles gluing}
 \mbox{}
 \begin{enumerate}
  \item \label{en:gerbe twisting gluing}
  Let $Y$ be any scheme and $\stack{G}$ a $k^*$-gerbe on $Y$; then $\Perv(\stack{G})_Y$ has nearby cycles along any coordinate function $f$ and is equivalent to its gluing category.

  \item \label{en:factorizable action gluing}
  Let $\stack{G}$ be a $\on{Fact}(H)_n$-gerbe on $Y$, and for $\sh{F} \in \stack{G} \otimes \cat{FRep}_n(H)_Y$, let $\Psiun_f(\sh{F}|_U)$ and  $\Phiun_f(\sh{F})$ be the perverse sheaves $\Psiun_f(\sh{F}|_U)$ and $\Phiun_f(\sh{F})$ with their induced factorizable actions on $D$. Then $\stack{G} \otimes \cat{FRep}_n(H)_Y$ is equivalent to its gluing category.
  
  \item \label{en:equivariant object gluing}
  Let $Y$ be any scheme, $H$ any group scheme acting on $Y$, and $\stack{G}$ any $H$-equivariant gerbe on $Y$; then the category of $H$-equivariant objects in $\Perv(\stack{G})$ has nearby cycles and is equivalent to its gluing category.
 \end{enumerate}
\end{theorem}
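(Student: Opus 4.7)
The strategy for all three parts is to reduce to the form of Beilinson's gluing theorem recalled just before the proposition. The common mechanism is that $\Psiun_f$ and $\Phiun_f$ are functorial, local on $Y$, and compatible with the operations (tensoring with a torsor, smooth pullback, and the action of automorphisms) that define the additional structure in each case. Consequently these functors will automatically carry the extra data, and the gluing equivalence will follow by applying Beilinson locally on an appropriate trivializing cover and reassembling.

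For \plainref{en:gerbe twisting gluing}, I would use Corollary \plainref{gerbes and torsor cocycles} to trivialize $\stack{G}$ on an open cover $\{U_i\}$, so that a $\stack{G}$-twisted perverse sheaf is a collection $(\sh{F}_i, \phi_{ij})$ with $\sh{F}_i \in \Perv_{U_i}$ and $\phi_{ij} \colon \sh{F}_j|_{U_{ij}} \cong \sh{T}_{ij} \otimes \sh{F}_i|_{U_{ij}}$ satisfying a cocycle condition. Since $\Psiun_f$ commutes with tensor product by a rank-one local system (the $\sh{T}_{ij}$ are constant along the fibers of $f$), the local applications of Beilinson produce nearby- and vanishing-cycle objects on each $U_i \cap D$ equipped with gluing data of the same form, which one assembles into a $\stack{G}|_D$-twisted object on $D$. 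The gluing quadruples $(\sh{F}_U, \sh{F}_D, u, v)$ likewise glue, because the Beilinson maps $u, v$ are natural in local-system tensoring, as is visible from the explicit formulas in the first bullet of Beilinson's theorem.

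For \plainref{en:factorizable action gluing} and \plainref{en:equivariant object gluing}, the key input is that $\Psiun_f$ and $\Phiun_f$ are $k$-linear functors, so each induces a regular homomorphism on the affine schemes of endomorphisms (Lemma \plainref{algebraic automorphisms}); hence any algebraic-group action in the sense of Definition \plainref{algebraic group action} is transported to the nearby cycles. In case \plainref{en:factorizable action gluing} this gives, for each partition $p$ of $n$ into $m$ parts, an action of $H^m$ on $\Psiun_f(\sh{F})$ on the part of $D$ lying in $Y \cap X^n_p$; factorizability is preserved because $\Psiun_f$ commutes with restriction to locally closed subsets of $Y$, so compatibility with refinements of partitions follows from the corresponding compatibility on $\sh{F}$ itself. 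In case \plainref{en:equivariant object gluing} one applies $\Psiun_f$ to the equivariance isomorphism $\on{Eq}_\sh{F} \colon a^* \sh{F} \cong \on{pr}^* \sh{F}$ on $H \times Y$; since $a$ and $\on{pr}$ are smooth and $f$ is pulled back from $Y$ in the intended applications, smooth-pullback compatibility identifies $\Psiun_f(a^*\sh{F})$ with $a^* \Psiun_f(\sh{F})$ and similarly for $\on{pr}$, yielding the equivariance structure on $\Psiun_f(\sh{F})$. Associativity and identity constraints are inherited by functoriality, and part \plainref{en:gerbe twisting gluing} supplies the gerbe-twisted gluing framework inside which these arguments take place.

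The main obstacle will be the interaction between $\Psiun_f$ and the algebraic-group structure in parts \plainref{en:factorizable action gluing} and \plainref{en:equivariant object gluing}: one must verify that the induced homomorphism into $\on{Aut}(\Psiun_f(\sh{F}))$ is regular (not merely a homomorphism of abstract groups), and that the Beilinson gluing maps $u, v$ commute with the resulting $H$-action. Both are in the end consequences of the $k$-linearity of $\Psiun_f$ and $\Phiun_f$ combined with the explicit description via $j_!$ and $j_*$ of $\sh{F} \otimes f^* \sh{L}^n$, but arranging the compatibility with the equivariance of $\stack{G}$ itself — so that the group action on the nearby cycles takes values in automorphisms of the \emph{correct} twisted object on $D$ — requires some care in tracking how the gerbe equivariance restricts from $Y$ to $D$.
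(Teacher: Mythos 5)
Your overall strategy matches the paper's: reduce to Beilinson's explicit description and use functoriality. There is a small divergence in part \plainref{en:gerbe twisting gluing}. The paper does not pass through a trivializing cover; instead it observes that Beilinson's formulas for $\Psiun_f$ and $\Phiun_f$ are built entirely from $j_!$, $j_*$, and tensoring with the local systems $f^*\sh{L}^n$ — all of which make sense directly for $\stack{G}$-twisted sheaves — and then the gluing theorem holds essentially by definition of the twisted nearby cycles so constructed. Your cover-and-cocycle route gets to the same place but is heavier; the one minor slip is the phrase ``constant along the fibers of $f$'': the relevant property of the transition torsors $\sh{T}_{ij}$ is that they live on all of $U_{ij}$, hence have trivial monodromy around $D$, so nearby cycles commutes with tensoring by them — it has nothing to do with fibers of $f$.

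For parts \plainref{en:factorizable action gluing} and \plainref{en:equivariant object gluing}, the paper again proceeds by pure functoriality of the vanishing-cycles functor $V$ and its inverse $G$: for a factorizable $H$-object $\sh{F}$, functoriality of $V$ shows that $u$, $v$ respect the action, so $V(\sh{F})$ lands in the gluing category of factorizable objects; conversely, for a gluing quadruple $\sh{F}_g$ with compatible actions, $G(\sh{F}_g)$ acquires one by functoriality; and — the step your proposal does not explicitly address — the unit/counit isomorphisms $GV(\sh{F}) \cong \sh{F}$ and $VG(\sh{F}_g)\cong \sh{F}_g$ are shown to be maps of $H$-objects, the former by applying $V$ to both sides and using \emph{faithfulness} of $V$. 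Your observation that $k$-linear functors induce regular homomorphisms on automorphism schemes (\plainref{algebraic automorphisms}, \plainref{algebraic group action}) is a correct and useful unpacking of ``functoriality,'' but you should still record explicitly that the inverse $G$ transports the action and that the equivalence isomorphisms themselves are $H$-equivariant, since that is what makes it a genuine equivalence of categories with structure and not merely a pair of functors. Your smooth-pullback argument for part \plainref{en:equivariant object gluing} is more detailed than the paper's one-line ``tautological by functoriality'' and is correct, with the caveat that one should note the unipotent nearby cycles along $a^*f$ and $\on{pr}^*f$ agree canonically because both functions cut out the same divisor $H\times D$.
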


\begin{proof}
 Let $\cat{G}$ be the gluing category of perverse sheaves about a divisor $D$, let $\map{V}{\Perv}{\cat{G}}$ be the ``vanishing cycles'' functor associating a sheaf to its gluing data, and let $G$ be its inverse ``gluing functor''.

 For the first claim, we construct the nearby and vanishing cycles of $\sh{F} \in \Perv(\stack{G})_Y$ by twisting those for ordinary perverse sheaves.  To be precise, let $\stack{G}$ be a $k^*$-gerbe on $Y$ and let $\stack{F}_U, \stack{F}_D$ be its restrictions.  Then we show that the functor $\Psiun_f$ induces one $\Perv(\stack{G}_U) \to \Perv(\stack{G}_D)$ and the functor $\Phiun_f$ induces one $\Perv(\stack{G}) \to \Perv(\stack{G}_D)$.  To construct the first, we may \emph{define} the nearby cycles via $j_!$ and $j_*$ as in Beilinson's construction (taking the limit locally), and then by definition the gluing theorem is satisfied. To construct the second, we may employ the construction of vanishing cycles also given by Beilinson using these operations.  Since these constructions (as shown in \cite{beilinson_notes}) agree with the usual ones for perverse sheaves, we have indeed produced twisted nearby and vanishing cycles functors.

 For the second claim, by functoriality of $V$, the maps $u$ and $v$ respect the factorizable action and so $V(\sh{F})$ is in the gluing category of $\cat{FRep}_n(H, \stack{G})_Y$.  Conversely, suppose we start with a quadruple $\sh{F}_g = (\sh{F}_U, \sh{F}_D, u, v) \in \cat{G}$ where both sheaves have factorizable actions, $\Psiun_f(\sh{F}_U)$ is given the induced action, and $\map{u}{\Psiun_f(\sh{F}_U)}{\sh{F}_D}$ and $v$ preserve these actions.  Then both $\sh{F}_U$ and $\sh{F}_D$ have $\on{Fact}_n(H)$-actions, so $\sh{F}_g$ does.  By functoriality, $G(\sh{F}_g)$ does as well.  By Beilinson's theorem, $VG(\sh{F}_g) \cong \sh{F}_g$ and $GV(\sh{F}) \cong \sh{F}$ as perverse sheaves; we claim that these maps are isomorphisms of factorizable $H$-objects.  For the first this is tautological from the construction of the $H$-action on a quadruple.  For the second, we apply $V$ to both sides; then as just argued, the $H$-actions agree (taking $\sh{F}_g = V(\sh{F})$) and since $V$ is faithful, they agree without 
$V$ as well.
 
 For the third claim, since the equivariant objects form a full subcategory, we only have to check that $V(\sh{F})$ is equivariant when $\sh{F}$ is and that $G(\sh{F}_g)$ is equivariant when the gluing data $\sh{F}_g$ is, which are tautological by functoriality.
\end{proof}

\subsection*{Universally locally acyclic sheaves}

The formalism of nearby cycles neatly complements the following concept of \emph{universal local acyclicity} (ULA).  Recall the Braverman--Gaitsgory formulation \cite{G_Eisenstein}*{\S5.1} of the ULA condition:

\begin{theorem}{defn}{ULA}
 We will say that a closed subscheme $\map{i}{Z}{S}$ of pure codimension $d$ is \emph{smoothly embedded} if $i^! \csheaf{k}_S = \csheaf{k}_Z[-2d]$.  For example, if $Y$ is smooth of pure dimension $d$, and $\map{f}{X}{Y}$ is a map of schemes, then its graph $\map{\Gamma_f}{X}{X \times Y}$ is smoothly embedded.  For any map $\map{g}{T}{S}$, and any complexes of sheaves $\sh{F}_1, \sh{F}_2$ on $S$, there is a natural map $g^* \sh{F}_1 \otimes g^! \sh{F}_2 \to g^!(\sh{F}_1 \otimes \sh{F}_2)$ on $T$, obtained by adjunction from
 \begin{equation*}
  g_!(g^* \sh{F}_1 \otimes g^! \sh{F}_2) \cong \sh{F}_1 \otimes g_! g^! \sh{F}_2
    \to \sh{F}_1 \otimes \sh{F}_2
 \end{equation*}
 where the isomorphism is the projection formula.  In the situation of a map $\map{f}{X}{Y}$, where $Y$ is smooth, if $\sh{F}_X, \sh{F}_Y$ are complexes on $X$ and $Y$, we take $T = Z = X$ (relative to the above notation), $S = X \times Y$, $g = i = \Gamma_f = (\id, f)$, $\sh{F}_1 = \sh{F}_X \boxtimes \sh{F}_Y$, and $\sh{F}_2 = \csheaf{k}_{X \times Y}$; these considerations then give a natural map
 \begin{equation*}
  \sh{F}_X \otimes f^* \sh{F}_Y \to (\sh{F}_X \xtimes f^! \sh{F}_Y)[2\dim Y],
 \end{equation*}
 where $\sh{F}_1 \xtimes \sh{F}_2 = \DD(\DD\sh{F}_1 \otimes \DD\sh{F}_2)$ by definition, and $\sh{F}_X$ is said to be \emph{locally acyclic}, or LA for $f$ if this is an isomorphism for all $\sh{F}_Y$, and \emph{universally locally acyclic}, or ULA if this condition holds after any smooth base change of $Y$.
\end{theorem}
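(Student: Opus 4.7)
The two substantive claims embedded in this definition are (i) that for $Y$ smooth of pure dimension $d$, the graph $\Gamma_f \colon X \to X \times Y$ of any map $f$ is smoothly embedded of codimension $d$, and (ii) that the general natural transformation $g^* \sh{F}_1 \otimes g^! \sh{F}_2 \to g^!(\sh{F}_1 \otimes \sh{F}_2)$ specializes, under the substitutions indicated, to a map with the asserted target.

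For (i), the plan is to realize $\Gamma_f$ as a base change of the diagonal $\Delta_Y \colon Y \to Y \times Y$. Indeed, the square
\begin{equation*}
 \begin{gathered}
  \tikzsetnextfilename{graph-as-pullback}
  \begin{tikzpicture}[row sep = 2.5em, column sep = 3em]
   \matrix [matrix of math nodes] (m)
   { X & X \times Y \\ Y & Y \times Y \\ };
   { [math arrows]
    \draw (m-1-1) -- node {\Gamma_f} (m-1-2);
    \draw (m-1-2) -- node {(f,\id)}  (m-2-2);
    \draw (m-1-1) -- node[swap] {f}  (m-2-1);
    \draw (m-2-1) -- node {\Delta_Y} (m-2-2);
   }
  \end{tikzpicture}
 \end{gathered}
\end{equation*}
is Cartesian. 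Since $Y$ is smooth of pure dimension $d$, the diagonal $\Delta_Y$ is a regular closed embedding of codimension $d$ between smooth varieties, and the standard computation for such embeddings gives $\Delta_Y^! \csheaf{k}_{Y \times Y} \cong \csheaf{k}_Y[-2d]$. Smooth base change for upper-$!$ along $(f, \id)$ — more precisely, the fact that $!$-pullback of the constant sheaf commutes with arbitrary base change of the embedded subscheme — yields $\Gamma_f^! \csheaf{k}_{X \times Y} \cong \csheaf{k}_X[-2d]$, as required.

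For (ii), the plan is simply to substitute into the natural map constructed by adjunction in the definition. With $g = \Gamma_f$, $\sh{F}_1 = \sh{F}_X \boxtimes \sh{F}_Y$, $\sh{F}_2 = \csheaf{k}_{X \times Y}$, one has $g^* \sh{F}_1 = \sh{F}_X \otimes f^* \sh{F}_Y$ by K\"unneth, and $g^! \sh{F}_2 = \csheaf{k}_X[-2d]$ by (i), so the map becomes
\begin{equation*}
 \sh{F}_X \otimes f^* \sh{F}_Y \;\longrightarrow\; \Gamma_f^!(\sh{F}_X \boxtimes \sh{F}_Y)\,[2d].
\end{equation*}
It then remains to identify $\Gamma_f^!(\sh{F}_X \boxtimes \sh{F}_Y)$ with $\sh{F}_X \xtimes f^! \sh{F}_Y$. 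By definition of $\xtimes$ this is $\DD(\DD\sh{F}_X \otimes f^*\DD \sh{F}_Y)$; applying Verdier duality on $X$ and using the compatibility of $\DD$ with $\boxtimes$ (so $\DD(\sh{F}_X \boxtimes \sh{F}_Y) = \DD\sh{F}_X \boxtimes \DD\sh{F}_Y$) together with the exchange $\Gamma_f^! \circ \DD = \DD \circ \Gamma_f^*$, one reduces the identification to the K\"unneth formula $\Gamma_f^*(\DD\sh{F}_X \boxtimes \DD\sh{F}_Y) = \DD\sh{F}_X \otimes f^* \DD\sh{F}_Y$, which is standard.

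The only place one should pause is (i): the compatibility of $(-)^! \csheaf{k}$ under base change of a smoothly embedded subscheme is exactly the statement that being smoothly embedded is preserved under pullback, and this is the nontrivial input. Once that is in hand, (ii) is a purely formal unwinding of definitions and K\"unneth, and the stated map is visibly functorial in $\sh{F}_X$, $\sh{F}_Y$, and base changes of $Y$, which is what is needed for the subsequent notion of being ULA to make sense.
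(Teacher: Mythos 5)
The embedded claims in this definition are not given an explicit proof in the paper (the formulation is cited from \cite{G_Eisenstein}*{\S5.1}), so the comparison is against what the standard argument looks like. Your step (ii) is a correct formal unwinding and matches what any proof would do. Step (i) has a genuine gap.

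You realize $\Gamma_f$ as the base change of $\Delta_Y$ along $(f,\id)$ and then invoke ``smooth base change for upper-$!$ along $(f,\id)$,'' clarified as ``the fact that $!$-pullback of the constant sheaf commutes with arbitrary base change of the embedded subscheme.'' But $(f,\id)$ is not smooth unless $f$ is, so smooth base change does not apply; and the clarified ``fact'' is \emph{false} without further hypotheses. For instance, take $i$ the inclusion of a point into $\Aff^2$ and base change along a smooth curve through that point: the codimension drops, and $i^!\csheaf{k}$ on the two sides have different shifts. What saves your situation is that $\Gamma_f$ happens to remain a closed embedding of the \emph{same} codimension $d$ (the base change is ``transversal'' in the appropriate sense), but you never verify this, and as stated the invoked lemma would prove false things.

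There is a much shorter argument that avoids all of this and is surely what the cited reference has in mind: $\Gamma_f$ is a \emph{section} of the projection $\on{pr}_X\colon X \times Y \to X$, which is smooth of relative dimension $d$ because $Y$ is smooth of pure dimension $d$. Hence $\on{pr}_X^!\csheaf{k}_X \cong \csheaf{k}_{X\times Y}[2d]$, and applying $\Gamma_f^!$ and using $\on{pr}_X \circ \Gamma_f = \id_X$ gives
\begin{equation*}
 \csheaf{k}_X \;=\; \Gamma_f^!\,\on{pr}_X^!\,\csheaf{k}_X \;\cong\; \Gamma_f^!\,\csheaf{k}_{X\times Y}[2d],
\end{equation*}
i.e.\ $\Gamma_f^!\csheaf{k}_{X\times Y} \cong \csheaf{k}_X[-2d]$. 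Note this also shows why the purity statement $\Delta_Y^!\csheaf{k}_{Y\times Y} = \csheaf{k}_Y[-2d]$ that you start from is true (apply the same reasoning to $\Delta_Y$ as a section of $\on{pr}_1$), so the base change step was never needed. I recommend replacing your step (i) with this argument; step (ii) can then stay exactly as written.
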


We introduce the superscript ULA, as in $\Sph^\ULA(\stack{G}_n)$, which we will only use with respect to the map $\on{Gr}_{G,X^n} \to X^n$, for the ULA objects in these categories.  ULA sheaves have some properties which are found in \cite{G_Eisenstein}.

\begin{theorem}{prop}{ULA facts}
 If $\sh{F}$ is $f$-ULA, then so is $\DD\sh{F}$, and if $\map{g}{Z}{Y}$ makes $Z$ a $Y$-scheme and $\map{h}{X}{Z}$ is a proper map of $Y$-schemes, then $h_* \sh{F}$ is $g$-ULA.  The ULA property is local in the smooth topology on $Y$ and if $\sh{F}$ is any complex of sheaves, then there exists a nonempty, Zariski-open subset of $Y$ over which $\sh{F}$ becomes ULA. \qed
\end{theorem}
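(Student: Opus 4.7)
The plan is to treat the four assertions in the order listed, since each builds on (or at least complements) the previous. Throughout I will work at the level of the natural map
\[
 \alpha_{\sh{F},\sh{G}} \colon \sh{F} \otimes f^* \sh{G} \to (\sh{F} \xtimes f^! \sh{G})[2\dim Y]
\]
of \ref{ULA}, and check whether it is an isomorphism after various operations. Throughout I may assume the standard six-functor identities (projection formula, proper base change, $\DD f^* = f^! \DD$, $\DD(\sh{F}\otimes \sh{G})=\DD\sh{F}\xtimes \DD \sh{G}$, and for $\pi$ smooth of relative dimension $d$ that $\pi^! \cong \pi^*[2d]$), as the ULA formalism is built out of them.

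For self-duality, I would apply $\DD$ to $\alpha_{\sh{F},\sh{G}}$. Using $\DD f^* \cong f^! \DD$ and the definition $\sh{F}_1 \xtimes \sh{F}_2 = \DD(\DD\sh{F}_1 \otimes \DD \sh{F}_2)$, the dualized map is canonically identified with $\alpha_{\DD\sh{F},\DD\sh{G}}[-2\dim Y]$ up to the ambient shift. Since every complex on $Y$ is of the form $\DD\sh{G}$ for some $\sh{G}$, the ULA property transfers to $\DD\sh{F}$. For proper pushforward, set $f = g\circ h$ with $h$ proper, so $h_!=h_*$. Given $\sh{G}$ on $Y$, apply $h_*$ to $\alpha_{\sh{F},\sh{G}}$. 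The left-hand side becomes $h_*(\sh{F}\otimes h^*g^*\sh{G})\cong (h_*\sh{F})\otimes g^*\sh{G}$ by the projection formula, and the right-hand side becomes $h_*\bigl(\sh{F}\xtimes h^!g^!\sh{G}\bigr)[2\dim Y]$, which by the Verdier-dual projection formula (valid because $h$ is proper) equals $(h_*\sh{F})\xtimes g^!\sh{G}[2\dim Y]$. Hence $\alpha_{h_*\sh{F},\sh{G}}$ is an isomorphism whenever $\alpha_{\sh{F},g^*\sh{G}}$ is, i.e.\ whenever $\sh{F}$ is $(g\circ h)$-ULA; base-changing $Y$ by an arbitrary smooth $Y'\to Y$ makes no difference since $h$ stays proper.

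For smooth locality I would use that smooth base change of $Y$ is already built into the definition, so only the converse direction needs work. Given a smooth surjection $\pi \colon Y'\to Y$ of relative dimension $d$, the identification $\pi^!\cong\pi^*[2d]$ lets me pull $\alpha_{\sh{F},\sh{G}}$ along $\pi$ and recognize the result as $\alpha_{\pi^*\sh{F},\pi^*\sh{G}}$, up to shift. Since smooth pullback is faithful and conservative on constructible complexes (it has the form $\pi^!=\pi^*[2d]$, which is exact and detects isomorphisms), ULA on the cover implies ULA on the base. For the final genericity statement, stratify both $X$ and $Y$ so that $\sh{F}$ is constructible with respect to the $X$-stratification and each stratum of $X$ maps (after restriction to some dense open of $Y$) smoothly to a stratum of $Y$; on each such smooth stratum $\alpha$ is visibly an isomorphism because $f^!\cong f^*[2\dim Y-2\dim X]$ there, and by proper pushforward along closures one propagates this back to $\sh{F}$ itself.

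The main obstacle I anticipate is this last step: the stratification argument for generic ULA is the only place where one must actually construct the smooth strata rather than manipulate identities formally, and some care is needed to arrange compatibility of the $X$- and $Y$-stratifications (invoking generic smoothness of $f$ restricted to each $X$-stratum, which may require passing to a further Zariski-open in $Y$). Everything else reduces to the symbolic manipulation of the six operations sketched above.
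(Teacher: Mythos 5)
The paper gives no proof of this proposition at all --- it simply cites \cite{G_Eisenstein} (Braverman--Gaitsgory, \S 5.1) and marks the statement \qedsymbol. Your job was therefore to supply a proof, and for three of the four claims your six-functor manipulations are correct and coincide with the standard argument one would find in that reference: dualizing $\alpha$ and using $\DD f^* \cong f^! \DD$ together with the definition of $\xtimes$ gives the preservation under $\DD$; the two projection formulas (for $h_*$ and, since $h_! = h_*$, its dual) give preservation under proper pushforward; and pulling the comparison map back along the induced smooth surjection $X' = X \times_Y Y' \to X$ (not along $\pi\colon Y' \to Y$ itself, a notational slip on your part) and using conservativity of pullback along a smooth surjection gives smooth-locality.

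The generic-ULA claim is where your sketch has a real gap. The phrase ``by proper pushforward along closures one propagates this back to $\sh{F}$ itself'' does not describe a working devissage: the closures of strata overlap, and pushing forward $\sh{F}$ restricted to a closed stratum does not reconstruct $\sh{F}$. The correct argument proceeds by noetherian induction and the distinguished triangle $j_! j^* \sh{F} \to \sh{F} \to i_* i^* \sh{F}$ for a suitable open stratum $j\colon U \hookrightarrow X$: one shrinks $Y$ so that $j^*\sh{F}$ is locally constant on $U$ and $U\to Y$ is smooth, whence $j^*\sh{F}$ is ULA for $f|_U$; one then needs two further ingredients you did not mention --- that $j_!$ (an \emph{open} immersion, not a proper map) preserves the ULA property for a map $f\circ j$ compatible with $f$, which requires a dual-projection-formula computation for $j_!$, and that ULA is stable under cones of distinguished triangles --- together with the inductive hypothesis applied to $i^*\sh{F}$ on the closed complement. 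Replacing your closure argument with this $j_!$/cone devissage would close the gap.
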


The following property of ULA perverse sheaves explains their importance.

\begin{theorem}{prop}{ULA nearby cycles}
 Let $D \subset Y$ be a smooth Cartier divisor and $E = f^{-1}(D)$ its preimage in $X$, $\map{i}{E}{X}$ with complement $\map{j}{U}{X}$. Then for any $\sh{F} \in \Perv^\ULA_X$, we have $j_{!*}(\sh{F}|_U) = \sh{F}$.  Furthermore, for \emph{any} ULA $\sh{F}$ such that $j^* \sh{F}$ is perverse, monodromy acts trivially on the nearby cycles, so $\Psiun_E(\sh{F}|_U) = i^* \sh{F}[-1] = i^! \sh{F}[1]$.  In particular, any such $\sh{F}$ is necessarily perverse and we have $\Phiun_E(\sh{F}) = 0$.  Finally, when this happens, $i^* \sh{F}[-1] = \Psiun_E(\sh{F}|_U) = i^! \sh{F}[1]$ is ULA for $f|_E$.
\end{theorem}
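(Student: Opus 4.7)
The proof rests on a single computation: for any ULA sheaf $\sh{F}$ on $X$, the defining isomorphism of \ref{ULA} applied to complexes of the form $\sh{G} = i_{D,*} \sh{H}_D$ yields both the shift identity $i_E^! \sh{F} \cong i_E^* \sh{F}[-2]$ and the ULA property for $i_E^*\sh{F}[-1]$ along $f|_E$. Precisely, proper base change for the Cartesian square of $f$, $i_D$, $i_E$, and $f|_E$ gives $f^* i_{D,*} \sh{H}_D \cong i_{E,*}(f|_E)^* \sh{H}_D$ and $f^! i_{D,*} \sh{H}_D \cong i_{E,*} (f|_E)^! \sh{H}_D$. Substituting into the ULA isomorphism and moving $i_{E,*}$ outside via the projection formula for $\otimes$ and its Verdier dual for $\xtimes$ yields
\begin{equation*}
 i_E^*\sh{F} \otimes (f|_E)^* \sh{H}_D
   \;\cong\;
   \bigl(i_E^!\sh{F} \xtimes (f|_E)^! \sh{H}_D\bigr)[2\dim Y].
\end{equation*}
Matched against the tentative ULA isomorphism for $i_E^*\sh{F}$ along $f|_E$---which would be identical but with $i_E^!\sh{F}$ replaced by $i_E^*\sh{F}$ and $[2\dim Y]$ by $[2\dim D]$---the codimension-$1$ discrepancy is absorbed as $i_E^!\sh{F} = i_E^*\sh{F}[-2]$, and at the same time we read off that $i_E^*\sh{F}[-1] = i_E^!\sh{F}[1]$ is ULA for $f|_E$, giving the final assertion of the proposition.

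For the first claim, given perverse ULA $\sh{F}$: the perversity conditions $i_E^*\sh{F} \in {}^p D^{\leq 0}(E)$ and $i_E^!\sh{F} \in {}^p D^{\geq 0}(E)$, combined with the shift identity, sharpen to $i_E^*\sh{F} \in {}^p D^{\leq -1}(E)$ and $i_E^!\sh{F} \in {}^p D^{\geq 1}(E)$---the orthogonality conditions characterizing $\sh{F}$ as $j_{!*}(\sh{F}|_U)$. Under the weaker hypothesis that $\sh{F}$ is only ULA with $j^*\sh{F}$ perverse, the standard principle that fiberwise perverseness extends across ULA families upgrades perversity of $\sh{F}|_U$ to global perversity of $\sh{F}$, reducing to the previous case.

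The nearby cycles identification and triviality of monodromy follow from the fact that ULA along $f$ forces vanishing of the total vanishing cycles $R\phi_h(\sh{F}) = 0$ for any local equation $h$ of $E$. The exact triangle $i_E^*\sh{F} \to R\psi_h(\sh{F}|_U) \to R\phi_h(\sh{F}) \xrightarrow{+1}$ collapses to an isomorphism $i_E^*\sh{F} \cong R\psi_h(\sh{F}|_U)$; since the left-hand side carries no monodromy, the monodromy on $R\psi_h$ is trivial, so $R\psi_h = \Psiun_f[1]$ and $\Psiun_f(\sh{F}|_U) = i_E^*\sh{F}[-1]$, equal to $i_E^!\sh{F}[1]$ by Step~1. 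The vanishing $\Phiun_f(\sh{F}) = 0$ is immediate from $R\phi_h(\sh{F}) = 0$. The main obstacle is Step~1: tracking the six-functor bookkeeping through the substitution $\sh{G} = i_{D,*}\sh{H}_D$ and recognizing that the single resulting identity encodes both the shift relation $i_E^! = i_E^*[-2]$ and the descent of ULA to $f|_E$---two apparently independent facts.
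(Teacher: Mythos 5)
Your Step~1 computation is essentially the paper's, and actually unifies it nicely: the paper first takes $\sh{G} = \csheaf{k}_D$ to get the shift $i^*\sh{F} = i^!\sh{F}[2]$ (and hence, via \cite{BBD}*{Corollaire 4.1.10(ii)}, the characterization of $j_{!*}$ for perverse ULA $\sh{F}$), and then, at the very end, repeats the same projection-formula chain with arbitrary $\sh{G}$ on $D$ to establish ULA of $\Psiun_E(\sh{F}|_U)$. Taking $\sh{G} = i_{D,*}\sh{H}_D$ for arbitrary $\sh{H}_D$ from the start, as you do, captures both in one pass. That part of your argument is correct.

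The gap is in your nearby-cycles/monodromy step. You assert that "ULA along $f$ forces vanishing of the total vanishing cycles $R\phi_h(\sh{F}) = 0$," but the notion of ULA in play here is the Braverman--Gaitsgory tensor-product formulation (\ref{ULA}), and its equivalence with the classical vanishing-cycles characterization is not something the paper establishes, cites, or uses. Leaning on that equivalence means the step you invoke as ``a fact'' is doing essentially the same work as the proposition itself. The paper instead gives a self-contained derivation inside its own ULA formalism: it uses Beilinson's formula
\begin{equation*}
 \Psiun_E(\sh{F}|_U)[1]
   = \lim_{n\to\infty} \on{ker}\bigl(j_!(\sh{F}|_U \otimes f^*\sh{L}^n) \to j_*(\sh{F}|_U \otimes f^*\sh{L}^n)\bigr),
\end{equation*}
applies the projection formula (available from the ULA isomorphism) to rewrite both $j_!$ and $j_*$ as $\sh{F}\otimes^L f^*(\cdot)$, and deduces $n$-independence from the triviality of the monodromy on the nearby cycles of the constant sheaf on $Y\setminus D$. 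Your perversity upgrade ("the standard principle that fiberwise perverseness extends across ULA families") is also circular as sequenced: that principle, when unpacked, \emph{is} the nearby-cycles argument, and in your proposed logical order you want to use it before establishing the nearby-cycles identity. The paper's order --- shift identity, then $j_{!*}$ for perverse $\sh{F}$, then nearby cycles and trivial monodromy, and only then perversity of $\sh{F}$ from perversity of $\Psiun_E(j^*\sh{F})$ plus recollement --- avoids this.
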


\begin{proof}
 Let $\csheaf{k}_E = i_* \csheaf{k}$ denote the constant sheaf supported at $E$; using the same notation for the inclusions of $D$ and its complement, $\csheaf{k}_E = f^* \csheaf{k}_D$.  By definition, $i^* \sh{F} = \sh{F} \otimes \csheaf{k}_E$ and $i^! \sh{F} = \sh{F} \xtimes \DD\csheaf{k}_E$, and $j_{!*}(\sh{F}|_U)$ is characterized by the property that it extends $\sh{F}|_U$ and such that its $i^*$ and $i^!$ have vanishing perverse 0-cohomology; we show that this is true of $\sh{F}$. Note that since $D$ is smooth, $\DD\csheaf{k}_D = \csheaf{k}_D[2(\dim Y - 1)]$.  By the ULA condition,
 \begin{multline*}
  i^* \sh{F} = \sh{F} \otimes f^* \csheaf{k}_D
             = (\sh{F} \xtimes f^! \csheaf{k}_D)[2\dim Y]
             = (\sh{F} \xtimes f^! \DD\csheaf{k}_D[2(1 - \dim Y)])[2\dim Y] \\
             = i^! \sh{F}[2].
 \end{multline*}
 By \cite{BBD}*{Corollaire 4.1.10(ii)}, $i^*$ exists only in perverse cohomologies $-1$ and $0$ and $i^!$ in degrees $0$ and $1$; thus, neither has any perverse cohomology in degree $0$, whence the claim.
 
 To compute nearby cycles, we replace $Y$ with an open subset on which $D$ is defined by a single equation (since it will be irrelevant, we refer simply to $\Psiun_E$); let $\sh{L}^n$ denote the pullback to $Y$ from $\Gm$ of the Jordan-block sheaf of Beilinson's theorem.  Then
 \begin{equation*}
  \Psiun_E(\sh{F}|_U)[1]
    = \lim_{n \to \infty}
       \on{ker}(j_!(\sh{F}|_U \otimes f^*\sh{L}^n) \to j_*(\sh{F}|_U \otimes f^*\sh{L}^n)).
 \end{equation*}
 By the projection formula,
 \begin{equation*}
  j_!(\sh{F}|_U \otimes f^* \sh{L}^n) = \sh{F} \otimes j_! f^* \sh{L}^n
    = \sh{F} \otimes f^* j_! \sh{L}^n.
 \end{equation*}
 By the ULA condition for $\sh{F}|_U$,
 \begin{equation*}
  \sh{F}|_U \otimes f^* \sh{L}^n = \sh{F}|_U \xtimes f^! \sh{L}^n[2\dim Y]
 \end{equation*}
 and therefore by the projection formula again
 \begin{multline*}
  j_*(\sh{F}|_U \otimes f^* \sh{L}^n)
    = j_*(\sh{F}|_U \xtimes f^! \sh{L}^n)[2\dim Y]
    = \sh{F} \xtimes j_* f^! \sh{L}^n[2\dim Y] \\
    = \sh{F} \xtimes f^! j_* \sh{L}^n[2\dim Y]
 \end{multline*}
 which by the ULA condition for $\sh{F}$ is just $\sh{F} \otimes f^* j_* \sh{L}^n$.  Thus, the natural map $j_!(\sh{F}|_U \otimes f^* \sh{L}^n) \to j_*(\sh{F}|_U \otimes f^* \sh{L}^n)$ is the same as the natural map
 \begin{equation*}
  \sh{F} \tensor^L f^*(j_! \sh{L}^n \to j_* \sh{L}^n).
 \end{equation*}
 (we have inserted the ${}^L$ to emphasize that tensor product is derived, though we have been neglecting this for the tensor product with locally free sheaves.) Let $C_X^n$ be the cone of this morphism, and let $C_Y^n$ be the cone of the map $j_!\sh{L}^n \to j_* \sh{L}^n$, so $C_X^n = \sh{F} \otimes^L f^* C_Y^n$.  The $C_Y^n$, and consequently the $C_X^n$, form a sequence indexed by $n$, and we claim that the maps in this sequence are all isomorphisms for $C_Y^n$ for all $n$.  Indeed, the perverse cohomologies of $C_Y$ compute the nearby cycles of the constant sheaf on $Y \setminus D$, which has no monodromy. Therefore $C_X^n$ is independent of $n$ as well, whence the result.
 
 If $j^* \sh{F}$ is perverse when $\sh{F}$ is ULA, then we have just shown that $i^* \sh{F}[-1] = \Psiun_E(j^* \sh{F}) = i^! \sh{F}[1]$ are perverse, and therefore $\sh{F}$  is perverse.  To show that $\Phiun_E(\sh{F}) = 0$, we consider the triangle
 \begin{equation*}
  i^* \sh{F} \to \Psiun_E(j^* \sh{F})[1] \to \Phiun_E(\sh{F})[1] \to
 \end{equation*}
 and substitute $\sh{F} = j_{!*} j^* \sh{F}$ to obtain an isomorphism of the first two terms.
 
 Finally, suppose that the isomorphisms $i^* \sh{F}[-1] \cong \Psiun_E(\sh{F}|_U) \cong i^! \sh{F}[1]$ hold.  Then it follows that $\Psiun_E(\sh{F}|_U)$ is ULA over $D$: if $\sh{G}$ is a complex of sheaves on $D$, and if $i$ refers to the inclusions of both $D$ in $Y$ and $E$ in $X$, then we may apply the projection formula and its dual version,
 \begin{align*}
  i_! (i^* \sh{F}_1 \otimes \sh{F}_2) \cong \sh{F}_1 \otimes i_! \sh{F}_2 &&
  i_* (\sh{F}_1 \xtimes i^! \sh{F}_2) \cong i_* \sh{F}_1 \xtimes \sh{F}_2,
 \end{align*}
 where in this case $i_! = i_*$ for a closed immersion, and write the chain of isomorphisms:
 \begin{multline*}
  i_!(i^* \sh{F} \otimes f^* \sh{G})
   \cong \sh{F} \otimes i_! f^* \sh{G}
   \cong \sh{F} \otimes f^* i_! \sh{G} \\
   \cong (\sh{F} \xtimes f^! i_* \sh{G})[2\dim Y]
   \cong (\sh{F} \xtimes i_* f^! \sh{G})[2\dim Y]
   \cong i_*(i^! \sh{F} \xtimes f^! \sh{G})[2\dim Y] \\
   \cong i_*(i^* \sh{F} \xtimes f^! \sh{G})[2(\dim Y - 1)],
 \end{multline*}
 where $\dim Y - 1 = \dim D$.  Applying $i^*$ and shifting by $-1$ to get $\Psiun_E(\sh{F}|_U)$ in place of the (co)restrictions, we have the ULA condition for $\Psiun_E(\sh{F}|_U)$.
\end{proof}

\begin{theorem}{cor}{ULA local systems}
 If $\sh{F}$ is ULA and perverse on $Y$ itself, then $\sh{F}[-\dim Y]$ is a locally constant sheaf
 ($\sh{F}$ is \emph{lisse}).
\end{theorem}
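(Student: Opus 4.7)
The plan is to proceed by induction on $d = \dim Y$, leveraging the fact, established in \ref{ULA nearby cycles}, that the ULA property propagates to smooth divisors. The base case $d = 0$ is immediate, since $Y$ is then a disjoint union of points and every sheaf there is tautologically locally constant.

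For the inductive step, the assertion is local on $Y$, so after shrinking I may assume that $Y$ admits a smooth Cartier divisor $i \colon D \hookrightarrow Y$ through any prescribed point (take, for instance, the vanishing locus of one member of a system of local coordinates). By the last sentence of \ref{ULA nearby cycles}, the sheaf $i^*\sh{F}[-1]$ is perverse and ULA on $D$, which has dimension $d - 1$. Applying the inductive hypothesis yields that $i^*\sh{F}[-d] = i^*\sh{F}[-1][-(d-1)]$ is a local system on $D$.

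Since every point of $Y$ lies on some such divisor $D$, every stalk $i_y^*\sh{F}$ is concentrated in cohomological degree $-d$. Consequently $\mathcal{H}^k\sh{F} = 0$ for $k \neq -d$, and setting $\sh{L} = \mathcal{H}^{-d}\sh{F}$, we obtain $\sh{F} = \sh{L}[d]$ as a complex; the same computation shows that $\sh{L}|_D$ is a genuine (unshifted) local system on every smooth Cartier divisor $D$.

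It remains to upgrade ``locally constant on every smooth Cartier divisor through every point'' to ``locally constant on $Y$''. The sheaf $\sh{L}$ is constructible with respect to some stratification $\{Y_\alpha\}$; if it were not a local system, some stratum $Y_\alpha$ of positive codimension would witness a nontrivial jump of $\sh{L}$ (in rank or in local monodromy across $Y_\alpha$). Choosing, in a neighbourhood of a suitable point of $Y_\alpha$, a smooth Cartier divisor $D$ that contains the point and is transverse to $Y_\alpha$, the restriction $\sh{L}|_D$ would inherit the analogous jump across $Y_\alpha \cap D$, contradicting the local constancy of $\sh{L}|_D$ established above. Therefore $\sh{L}$ is a local system on $Y$, so $\sh{F}[-d] = \sh{L}$ is lisse, as claimed.

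The main technical delicacy I anticipate is arranging such a transverse divisor $D$ in the last paragraph when $Y_\alpha$ is of very small codimension or happens to be contained in the coordinate divisor used earlier; this should be handled by perturbing the defining equation of $D$ using a second local parameter, so that at the cost of further localisation one may always secure transversality.
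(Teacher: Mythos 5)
Your argument is genuinely different from the one in the paper: you run an induction on $\dim Y$ via repeated applications of the last sentence of \ref{ULA nearby cycles}, and then close the loop with a transversality argument. The paper instead finds a dense open $U = Y \setminus D$ on which $\sh{F}[-\dim Y]$ is already a local system $\sh{L}$, observes that the \emph{triviality of the monodromy} on $\Psiun_D$ (the middle clause of \ref{ULA nearby cycles}) forces $\sh{L}$ to extend to a local system on all of $Y$, and then identifies the shifted extension with $\sh{F}$ via $j_{!*}(\sh{F}|_U) = \sh{F}$.

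There is a genuine gap in your final step, and it propagates through the whole induction. When $d = 1$ a smooth Cartier divisor is a point, so the statement ``$\sh{L}|_D$ is locally constant'' is vacuous, and the transversality argument has nothing to bite on: $Y_\alpha$ and $D$ are both points, so ``$\sh{L}|_D$ inherits the jump across $Y_\alpha \cap D$'' says nothing. Concretely, if $Y$ is a curve, $U = Y \setminus \{p\}$, and $\sh{M}$ is a rank-one local system on $U$ with nontrivial monodromy, then $\sh{L} = j_!\sh{M} \oplus i_{p,*}\underline{k}$ has constant rank, is locally constant on every (zero-dimensional) divisor, yet is not a local system; your last paragraph cannot rule this out. (It does not contradict the Corollary, because $j_!\sh{M}[1]$ is not ULA --- but that is exactly the information your proof fails to invoke.) Since the $d = 2$ step of your induction depends on the $d = 1$ case via $i^*\sh{F}[-1]$, the argument never gets off the ground. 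What is missing is the middle assertion of \ref{ULA nearby cycles} --- that monodromy acts trivially on $\Psiun_D(\sh{F}|_U)$ --- which is the one piece of the ULA hypothesis your last paragraph does not use, and which is precisely what excludes the counterexample above and what drives the paper's proof. (For $d \geq 2$ your transversality step is morally sound, though spelling out that a transverse slice genuinely detects the failure of local constancy would still require a Whitney-type stratification argument.)
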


\begin{proof}
 There is some Zariski open set $U$, which we assume to be the complement of a divisor $D$, such that $\sh{F}|_U[-\dim Y]$ is a locally constant sheaf $\sh{L}$.  By \ref{ULA nearby cycles}, the monodromy on $\Psiun_D(\sh{L})$ is trivial, so the monodromy of $\sh{L}$ itself is trivial and thus $\sh{L}$ extends to a local system on $Y$ which we will also call $\sh{L}$.  Since $Y$ is smooth, $\DD(\sh{L}[\dim Y]) = \sh{L}'[\dim Y]$ with $\sh{L}'$ again a locally constant sheaf, and so $\sh{L}[\dim Y]$ verifies the properties of the middle extension of $\sh{F}|_U$, and hence is equal to $\sh{F}$.
\end{proof}

\subsection*{D\'evissage}

The reason to use ULA sheaves is that every perverse sheaf is ``generically'' ULA by \ref{ULA facts}.  In combination with Beilinson's vanishing cycles gluing, this allows us to reduce most theorems about perverse sheaves on $\on{Gr}_n$ to ULA sheaves, for which \ref{ULA nearby cycles} is a powerful tool.  We prepare to state the theorem establishing such a principle by making a few meta-definitions.

\begin{theorem}{defn}{horizontal functor}
 Let $Y$ be any scheme, $\stack{F}$ a sheaf of abelian categories on $Y$.  We will say that it \emph{admits gluing} if it has pushforwards from any open subspace of $Y$ in the sense of \ref{twisted pushforward functor}, as well as pushforwards with compact support, has an action of the tensor category of local systems on $Y$, and if Beilinson's theorem holds.
 
 Let $Y_1$, \dots, $Y_n$ be schemes and $\stack{F}_1$, \dots, $\stack{F}_n$ be sheaves of abelian categories admitting gluing over the $Y_i$; let $\stack{F}_{n + 1}$ be such a sheaf of categories over $Y_{n + 1} = Y_1 \times \dots \times Y_n$. We will say that a functor
 \begin{equation*}
  F \colon \stack{F}_1 \times \dots \times \stack{F}_n \to \stack{F}_{n + 1}
 \end{equation*}
 is \emph{horizontal} if:
 \begin{itemize}
  \item
  It is a functor of categories fibered over $Y_{n + 1}$, so preserves restrictions to all open sets.
  
  \item
  It is exact in each argument and for local systems $\sh{L}_1$, \dots, $\sh{L}_n$ on the $Y_i$ and any sections $s_1$, \dots, $s_n$ in the $\stack{F}_i$, we have
  \begin{equation*}
   \hspace*{\leftmargin}
   F(\sh{L}_1 \otimes s_1, \dots, \sh{L}_n \otimes s_n)
    \cong (\sh{L}_1 \otimes \dots \otimes \sh{L}_n) \otimes F(s_1, \dots, s_n).
  \end{equation*}
  
  \item
  For any Cartier divisors $D_i \subset Y_i$, denoting $j_i \colon U_i = Y_i \setminus D_i \to Y_i$ and setting $D_{n + 1} = D_1 \times \dots \times D_n$ and $j \colon U_{n + 1} = Y_{n + 1} \setminus D_{n + 1} \to Y_{n + 1}$, the following natural maps
  \begin{align*}
   \hspace*{\leftmargin}
   (j_{n + 1})_! F \to F ((j_1)_! \times \dots \times (j_n)_!) &&
   F ((j_1)_* \times \dots \times (j_n)_*) \to (j_{n + 1})_* F,
  \end{align*}
  obtained from the compatibility of $F$ with restriction to $U_{n + 1}$, are isomorphisms.
 \end{itemize}
 We say that a natural transformation of horizontal functors is horizontal if it satisfies the analogous conditions.
\end{theorem}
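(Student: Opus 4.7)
The final block is a definition, not a proposition, so strictly speaking there is nothing to prove. The only substantive ingredient is the assertion, buried in the third bullet, that there exist canonical natural transformations
\[
 (j_{n+1})_! \, F \longrightarrow F \circ \bigl((j_1)_! \times \dots \times (j_n)_!\bigr)
 \quad\text{and}\quad
 F \circ \bigl((j_1)_* \times \dots \times (j_n)_*\bigr) \longrightarrow (j_{n+1})_* \, F,
\]
which the definition then demands be isomorphisms. So the only thing I would actually write out is a construction of these natural maps, so that the third bullet is a well-posed condition rather than empty data; after that, there is nothing further to verify in the definition itself.

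Both maps arise formally from adjunction. For the first, I would start with sections $s_i \in \stack{F}_i$ over $Y_i$. The counits $(j_i)_! (j_i)^* s_i \to s_i$ of the open-immersion adjunctions $(j_i)_! \dashv (j_i)^*$ restrict to isomorphisms on $U_i$, and the fibered property of $F$ (first bullet) identifies the restrictions of $F((j_1)_!(j_1)^* s_1, \dots, (j_n)_!(j_n)^* s_n)$ and $F(s_1, \dots, s_n)$ to $U_1 \times \dots \times U_n \subset U_{n+1}$. Composing with the adjunction $(j_{n+1})_! \dashv (j_{n+1})^*$ produces a map from $(j_{n+1})_!(j_{n+1})^*F(s_1, \dots, s_n)$ into $F((j_1)_!(j_1)^* s_1, \dots)$, which is the desired transformation. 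The second map is obtained dually from the units $s_i \to (j_i)_* (j_i)^* s_i$ together with $(j_{n+1})^* \dashv (j_{n+1})_*$.

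The only point of care is checking that the maps thus produced are natural in $(s_1, \dots, s_n)$ and compatible with the local-system action of the second bullet; both are formal once the adjunctions and fibered structure are in place, since adjunction units and counits are themselves natural. I do not anticipate any genuine obstacle here --- the definition is simply fixing terminology, and the real work, namely verifying that specific functors (convolution, fusion, twisted pushforward along $\on{Gr}_{G,X^n} \to X^n$, and so on) are horizontal, lies entirely outside this block.
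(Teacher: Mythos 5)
Your reading is correct: this block is a definition with no accompanying proof in the paper, and the only implicit content is the existence of the two natural maps, which the paper dismisses with the phrase ``obtained from the compatibility of $F$ with restriction to $U_{n+1}$'' --- precisely the adjunction argument you spell out. Your construction matches the paper's intent, so there is nothing further to add.
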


\begin{theorem}{lem}{compatible with nearby cycles}
 Given divisors as above, if $F$ is horizontal and the $D_i$ are principal, and if $\stack{F}_{n + 1}$ admits gluing, we have
 \begin{align*}
  \Psiun_{D_{n + 1}} F \cong F (\Psiun_{D_1} \times \dots \times \Psiun_{D_n}) &&
  \Phiun_{D_{n + 1}} F \cong F (\Phiun_{D_1} \times \dots \times \Phiun_{D_n})
 \end{align*}
 and $F$ is equal to the functor induced on nearby cycles gluing data by acting on each component of a quadruple.  We will say that any functor for which these equations hold is \emph{compatible with nearby cycles}.  The same holds of natural transformations.
\end{theorem}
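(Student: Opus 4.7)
The plan is to reduce everything to Beilinson's explicit construction of the unipotent nearby cycles functor as a limit of kernels (or cokernels, or middle extensions). Pick local equations $f_i$ for the $D_i$, write $\sh{L}^m$ for the rank-$m$ Jordan block local system on $\Gm$, and set $\sh{L}_i^m = f_i^* \sh{L}^m$ on $U_i$. On $Y_{n+1}$, the ``product'' unipotent nearby cycles $\Psiun_{D_{n+1}}$ is computed by the same Beilinson formula with the local system $\sh{L}_1^m \boxtimes \cdots \boxtimes \sh{L}_n^m$ pulled back along $(f_1,\dots,f_n)$, which is precisely the external product of the individual Jordan-block twistings; equivalently, iterating Beilinson's theorem one variable at a time gives the same answer. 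This is the only fact about $\Psiun_{D_{n+1}}$ that I would need.

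With this in hand, I would compute, for sections $s_i \in (\stack{F}_i)_{U_i}$,
\begin{equation*}
 \Psiun_{D_{n+1}}\bigl(F(s_1,\dots,s_n)\bigr)[1]
  = \lim_m \on{ker}\Bigl(
    j_!\bigl(F(s_1,\dots,s_n)\otimes f^*(\sh{L}_1^m\boxtimes\cdots\boxtimes\sh{L}_n^m)\bigr)
    \to j_*(\cdots)\Bigr)
\end{equation*}
and reorganize the right-hand side using the three hypotheses on a horizontal functor. The compatibility of $F$ with local systems turns $F(s_i)\otimes f^*(\sh{L}_1^m\boxtimes\cdots)$ into $F(s_1\otimes\sh{L}_1^m,\dots,s_n\otimes\sh{L}_n^m)$; the compatibility with $j_!$ and $j_*$ for product divisors turns $j_! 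F(\cdots)$ into $F((j_1)_!(\cdots),\dots,(j_n)_!(\cdots))$, and the same for $j_*$; and exactness of $F$ in each variable allows kernels to be taken inside the $n$ arguments separately. Commuting with the limit is automatic because it is achieved for sufficiently large $m$, depending only on the (bounded) unipotency of the monodromy of the individual pieces. Assembling these identifications produces the desired isomorphism $\Psiun_{D_{n+1}} F \cong F(\Psiun_{D_1}\times\cdots\times\Psiun_{D_n})$, and the dual manipulation (using cokernels, or invoking that $F$ intertwines Verdier duality componentwise through the two bulleted properties) gives the one for $\Phiun$.

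Finally, to see that $F$ is the functor induced on gluing quadruples $(\sh{F}_U,\sh{F}_D,u,v)$, I would observe that both maps $u$ and $v$ are produced in Beilinson's construction entirely out of the data of $j_!$, $j_*$, kernels/cokernels, and the canonical maps between them; since all of these operations are preserved by $F$ by the two paragraphs above, so are $u$ and $v$, and the identity $v \circ u = 1-\mu$ is a consequence that persists under any exact functor. The same reasoning applied one arrow at a time handles natural transformations, which are horizontal precisely when they commute with the same three operations, hence with the formation of $\Psiun$ and $\Phiun$.

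The main obstacle I anticipate is not any single step but the bookkeeping of product local systems: one must check that Beilinson's formula for $\Psiun_{D_{n+1}}$, which is \emph{a priori} stated only for principal divisors on a single scheme, agrees with whatever ``product'' definition one uses on $Y_{n+1}$, and that the two orders of iteration (first in $m$, then across the $n$ factors, or vice versa) genuinely commute. Once this is settled, the horizontality axioms have been tailored to make every other step of the argument essentially formal.
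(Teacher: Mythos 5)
Your proposal is correct and takes essentially the same approach as the paper's one-sentence proof ("a direct consequence of Beilinson's construction"): unwind the kernel/cokernel formula for $\Psiun$, push $F$ past $j_!$, $j_*$, tensoring with Jordan-block local systems, and kernels/cokernels using the three horizontality axioms, and observe that the gluing quadruple and the maps $u,v$ are built out of exactly these ingredients. The bookkeeping subtlety you flag at the end — that $D_{n+1}=D_1\times\cdots\times D_n$ has higher codimension, so $\Psiun_{D_{n+1}}$ must really be read as the iterated nearby cycles one factor at a time, and one must check this agrees with a "product" definition — is real, but it is exactly the same issue the paper's terse proof glosses over, and your resolution (iterate Beilinson's theorem variable by variable, which the horizontality axioms are tailored to permit) is the intended one.
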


\begin{proof}
 The first statement is a direct consequence of Beilinson's construction, after which the second statement is tautological.
\end{proof}

Given the above definition, all of the sheaves of categories described in \ref{vanishing cycles gluing} admit gluing; important examples of horizontal functors will occur later in this chapter.

\begin{theorem}{lem}{horizontal functors}
 Say that a functor is ``left horizontal'' if it satisfies all the hypotheses of being horizontal except that it only commutes with $!$ extensions; likewise, a ``right horizonal'' functor commutes with $*$ extensions.  Suppose $g_i \colon X_i \to Y$ are maps of spaces and that $f \colon X_1 \to X_2$ makes a commutative triangle; then between categories of constructible sheaves, $f_!$ and $f^*$ are left horizontal while $f_*$ and $f^!$ are right horizontal.  The composition of (left-, right-) horizontal functors is again (left-, right-) horizontal.
\end{theorem}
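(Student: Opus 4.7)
The plan is to verify each of the three conditions in the definition of horizontal for each of the four basic functors, then show closure under composition. Since all four functors are (triangulated) $Y$-linear in the appropriate sense, everything reduces to the standard six-functor formalism applied to the Cartesian square induced by $g_1 = g_2 \circ f$.

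First I would set up notation. Given a Cartier divisor $D \subset Y$ with complement $U$, write $j_X \colon g_1^{-1}(U) \incl X_1$ and $j_Y \colon g_2^{-1}(U) \incl X_2$ for the open inclusions, and let $f'$ be the restriction of $f$ to $g_1^{-1}(U) \to g_2^{-1}(U)$, so that $f \circ j_X = j_Y \circ f'$ and the resulting square is Cartesian. Fiberedness over $Y$ and preservation of restrictions along opens in $Y$ are immediate for all four functors, since open inclusions are smooth and so $!$- and $*$-pullback along them agree. Compatibility with tensoring by pullbacks of local systems on $Y$ is the projection formula in the cases of $f_!$ and $f_*$; for $f^*, f^!$ it follows from monoidality of $f^*$, from the identity $f^* g_2^* = g_1^*$, and from the fact that a local system is $\otimes$-invertible, so that tensoring with it commutes with $f^!$ as well. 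Triangulated exactness of all four functors is standard.

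The substantive point is compatibility with extensions. For $f_!$, functoriality of $(-)_!$ applied to $f \circ j_X = j_Y \circ f'$ gives
\begin{equation*}
 f_! \, (j_X)_! \cong (j_Y)_! \, f'_!,
\end{equation*}
which is exactly the left-horizontal condition. For $f^*$, the $j_!$-base-change isomorphism in the above Cartesian square gives $f^* (j_Y)_! \cong (j_X)_! f'^*$, so $f^*$ is left horizontal. Dually, functoriality of $(-)_*$ gives $f_* (j_X)_* \cong (j_Y)_* f'_*$, showing $f_*$ is right horizontal, and base change for $j_*$ gives $f^! (j_Y)_* \cong (j_X)_* f'^!$, showing $f^!$ is right horizontal.

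Closure under composition is immediate: if $F_1$ and $F_2$ are both left horizontal, then $F_2 F_1 \, j_! \cong F_2 \, j_! \, F_1 \cong j_! \, F_2 F_1$, and fiberedness, exactness, and local-system compatibility compose tautologically; the right-horizontal case is symmetric. The main item to keep straight is bookkeeping: the ``horizontal'' definition is stated for functors from a product of stacks over $Y_1 \times \dots \times Y_n$, whereas here we are in the $n = 1$ case with $Y_1 = Y_{n+1} = Y$, both $X_1$ and $X_2$ being viewed as spaces over $Y$ via $g_1$ and $g_2$. I do not expect any genuine obstacle beyond this identification; the lemma's content is really just that the standard base-change and projection package assembles into the condition packaged as ``horizontal''.
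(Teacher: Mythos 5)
Your proof is correct and takes essentially the same route as the paper: unwinding the fiberedness, local-system, and exactness conditions as formal consequences of the six-functor package, then deducing compatibility with $!$- (resp.\ $*$-) extensions from functoriality of $(-)_!$ (resp.\ $(-)_*$) along the commutative triangle and from base change for the open inclusion; the paper's version is just more terse. (Minor note: the paper's printed equations $f^! (j_1)_* = (j_2)_* f^!$ and $f^* (j_1)_! = (j_2)_! f^*$ have the subscripts on the $j$'s swapped relative to what typechecks; your version has them in the correct direction.)
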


\begin{proof}
 If $U \subset Y$, write $V_i = g_i^{-1}(U)$ and let $j_i$ be their inclusions; then $V_1 = f^{-1}(V_2)$.  Clearly $f_* (j_1)_* = (j_2)_* f_* \, (= (f|_{V_1})_*)$ and likewise for $!$'s, and that $f^! (j_1)_* = (j_2)_* f^!$ and $f^* (j_1)_! = (j_2)_! f^*$ are base change theorems.  The last sentence is obvious.
\end{proof}

Here is the precise definition of the problem we wish to solve using gluing.

\begin{theorem}{defn}{reduces to subcategory}
 Let $\stack{F}_i$ be as in \ref{horizontal functor} and suppose $P_i$ are local properties of sections of these sheaves of categories, with $\stack{F}_i'$ the full subsheaves of sections satisfying these properties.  Let $S$ be a ``structure'' consisting of functors $\stack{F}_1 \times \dots \times \stack{F}_n \to \stack{F}_{n + 1}$, natural transformations among them, and conditions on these data; we will write that $S$ is \emph{compatible with nearby cycles} to mean that all these functors and transformations are. We say that specifying $S$ \emph{reduces to} the $P_i$ if it suffices to do so after replacing the $\stack{F}_i$ by the $\stack{F}_i'$ for $i \leq n$, and \emph{weakly reduces to} the $P_i$ if we also require $\stack{F}_{n + 1}'$.
\end{theorem}

The following multipart metatheorem encapsulates a gluing argument used repeatedly throughout the rest of this work.

\begin{theorem}{lem}{devissage}
 Let $Y_i$ be schemes which are noetherian spaces, $\stack{F}_i$ ($i \leq n$) sheaves of categories admitting gluing, and let $Y_{n + 1} = \prod_i Y_i$, with $\stack{F}_{n + 1}$ any sheaf of categories on it.  Suppose that for each $i \leq n$, every section of $\stack{F}_i$ has property $P_i$ on a Zariski-dense open subset of $Y_i$.  Then the following structures $S$, when compatible with nearby cycles, reduce to the $P_i$ when $\stack{F}_{n + 1}$ admits gluing:
 \begin{itemize}
  \item
  Given a functor $F \colon \stack{F}_1 \times \dots \times \stack{F}_n \to \stack{F}_{n + 1}$, $S$ is the condition that $F$ is exact.
  
  \item
  Given a pair of functors $F, G$ as above, $S$ is the structure of an isomorphism $t \colon F \to G$.
  
  \item
  $S$ is the structure of a functor $F$ as above.
 \end{itemize}
 In addition, suppose instead that $\stack{F}_{n + 1}$ is a twisted derived category of constructible sheaves (thus, admitting nearby cycles) and $F$ is as above.  Then the following structures $S$ also reduce to the $P_i$:
 \begin{itemize}
  \item 
  $S$ is the condition of $F$ taking values in perverse sheaves.
  
  \item
  If all the $\stack{F}_i$ are twisted derived categories, $S$ is the condition of $F$ being t-exact for the perverse t-structure.
 \end{itemize}
 Finally, suppose that all $\stack{F}_i$ ($i \leq n + 1$) admit gluing, that sections of $\stack{F}_{n + 1}$ generically have property $P_{n + 1}$, and that $F$ is as above. Then the following structure $S$ \emph{weakly} reduces to the $P_i$:
 \begin{itemize}
  \item
  $S$ is the structure of a functor $G \colon \stack{F}_{n + 1} \to \prod_{i \leq n} \stack{F}_i$ and isomorphisms $FG \to \id$ and $GF \to \id$, realizing $F$ as an equivalence of categories.
 \end{itemize}
\end{theorem}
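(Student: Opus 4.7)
The plan is to prove each enumerated reduction by noetherian induction on the dimensions $\dim Y_i$, using Beilinson's gluing theorem (granted by \ref{vanishing cycles gluing}) to reassemble an arbitrary section of $\stack{F}_i$ from data on a $P_i$-stratum and data on lower-dimensional vanishing cycles. The base case in each induction is when $\dim Y_i = 0$, where the density assumption forces $P_i$ to hold everywhere, so $\stack{F}_i = \stack{F}_i'$ and there is nothing to do; the inductive step propagates the structure across divisors.

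I would begin with the reductions of \emph{exactness of $F$} and \emph{a natural transformation $t\colon F \to G$}, which share the same template. Suppose the property in question already holds on $\prod \stack{F}_i'$. Given a short exact sequence, or a pair of sections to be compared, in the $i$-th variable, pick a dense open $j_i\colon U_i \hookrightarrow Y_i$ over which all sections in sight satisfy $P_i$; shrinking if necessary, we may work locally on $Y_i$ and assume $D_i = Y_i \setminus U_i$ is a principal Cartier divisor. On $\prod U_i$ the desired conclusion holds by assumption, while on $D_i$ we replace each $s_i$ by $\Phiun_{D_i}(s_i)$, a section on a space of strictly smaller dimension, so the conclusion holds there by the inductive hypothesis. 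Since $F$ (respectively $t$) is compatible with nearby cycles by hypothesis, the value $F(s_1,\dots,s_n)$ and the component $t_{(s_1,\dots,s_n)}$ are reassembled by Beilinson's gluing from these two pieces, and the structure (exactness, or existence of $t$, or the property that $t$ is an isomorphism) is inherited because gluing of quadruples respects kernels, cokernels, and componentwise isomorphisms.

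The reduction of \emph{the existence of a functor $F$} follows the same pattern but requires an actual construction on arbitrary tuples. Given $(s_1,\dots,s_n)$ with each $s_i$ satisfying $P_i$ on a dense open $U_i$ with complementary principal divisor $D_i$, define $F(s_1,\dots,s_n)$ on $\prod U_i$ using the given partial $F$, and on each boundary stratum $U_1 \times \dots \times D_i \times \dots \times U_n$ (and its further degenerations) by induction on $\dim Y_i$ applied to the vanishing-cycles input. The horizontality conditions from \ref{horizontal functor} force the attaching maps $u,v$ between the generic piece and each nearby-cycles piece, and Beilinson's theorem then assembles these into a section of $\stack{F}_{n+1}$. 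Well-definedness (independence of the choice of $U_i$) and functoriality in each argument are verified by applying the natural-transformation case already handled to the identity transformation. This construction is where I expect the main obstacle: one must check that for multiple nested divisors the Beilinson gluing cocycles match, which amounts to iterating \ref{compatible with nearby cycles} and verifying that horizontality propagates through the induction, in particular that the two orders of taking $\Phiun$ along $D_i$ and $D_j$ give canonically isomorphic inputs to $F$.

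For the conditions that $F$ take values in perverse sheaves, or be t-exact, the key input is \ref{ULA nearby cycles}, which says that on ULA sections the perverse t-structure commutes with unipotent nearby cycles up to a shift. Since each generic $P_i$-stratum already gives perverse (or t-exact) output by assumption, horizontality of $F$ and compatibility with $\Psiun$ transport the condition to the nearby-cycles strata, and Beilinson's construction then builds a perverse (resp.\ t-exact) output on the whole of $Y_{n+1}$. Finally, the equivalence case \emph{weakly} reduces because one must additionally build an inverse functor $G\colon \stack{F}_{n+1} \to \prod_i \stack{F}_i$ on general sections of $\stack{F}_{n+1}$, and the same gluing argument only produces $G$ after knowing a $P_{n+1}$-stratum exists generically; once $G$ is extended by the existence case above, the isomorphisms $FG \cong \id$ and $GF \cong \id$ propagate from the $\stack{F}_i'$ to all of $\stack{F}_i$ by the natural-transformation reduction.
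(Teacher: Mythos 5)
Your treatment of the first three points (exactness, natural transformations, existence of a functor) and the final equivalence point matches the paper's strategy: replace each $\stack{F}_i$ by a gluing category whose open part carries property $P_i$, run noetherian induction along a divisor, and observe that the relevant structure is determined termwise on nearby-cycles quadruples.

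There is, however, a genuine gap in your argument for the perversity and t-exactness points. You write that ``Beilinson's construction then builds a perverse (resp.\ t-exact) output on the whole of $Y_{n+1}$,'' but this is precisely the step one \emph{cannot} take. Beilinson's theorem establishes an equivalence between the category of perverse sheaves and a gluing category; it is not a criterion for recognizing perversity of an \emph{a priori} arbitrary complex $F(\{s_i\})$ living in a twisted derived category. Knowing that $\Psiun_{D_{n+1}}\bigl(F(\{s_i\})|_{U_{n+1}}\bigr)$ and $\Phiun_{D_{n+1}}\bigl(F(\{s_i\})\bigr)$ are perverse does not force $F(\{s_i\})$ itself to be perverse. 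The paper notes explicitly that gluing ``can not'' be used here; the correct argument invokes the distinguished triangle
\begin{equation*}
 \Psiun_{D_{n+1}} F(\{s_i\})|_{U_{n+1}} \longrightarrow \Phiun_{D_{n+1}} F(\{s_i\}) \longrightarrow i^* F(\{s_i\}) \longrightarrow
\end{equation*}
together with its Verdier dual. The first two terms are perverse (by hypothesis on $U_{n+1}$, by nearby cycles preserving perversity, by compatibility of $F$ with $\Phiun$, and by noetherian induction on the divisor), so the triangle bounds $i^* F(\{s_i\})$ in nonpositive perverse degrees, the dual triangle bounds $i^! F(\{s_i\})$ in nonnegative degrees, and only then does one conclude $F(\{s_i\})$ is perverse. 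Your appeal to \ref{ULA nearby cycles} is also misdirected: that proposition concerns the special identity $\Psiun = i^*[-1] = i^![1]$ valid for ULA sheaves, whereas \ref{devissage} is stated for abstract local properties $P_i$ and its proof cannot invoke that identity.
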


\begin{proof}
 We will use the following notation as standard in this proof: for each $i \leq n$, and for each principal Cartier divisor $D_i \subset Y_i$, let $\cat{G}_{i, D_i}$ be the ``gluing category'' consisting of nearby cycles gluing data
 \begin{equation*}
  (s_{U_i}, s_{D_i}, u, v) \qquad \text{where }
   s_{U_i} \in \stack{F}_{i,U_i}',
   s_{D_i} \in \stack{F}_{i,D_i},
   \Psiun_{D_i} (s_{U_i}) \xrightarrow{u} s_{D_i} \xrightarrow{v} \Psiun_{D_i} (s_{u_i})
 \end{equation*}
 with $1 - v \circ u$ equal to the monodromy action on nearby cycles; here we have written $\stack{F}_{i,D_i}$ for the subsheaf of sections supported on $D_i$ in the sense that their restriction to $U_i$ is zero.  Note that $s_{U_i}$ is assumed to have property $P_i$ but $s_{D_i}$ is not.
 
 Beilinson's gluing theorem gives a fully faithful embedding of each $\cat{G}_{i,D_i}$ into $\stack{F}_{i,Y}$ (likewise for any open subscheme of $Y$).  By hypothesis on the $\stack{F}_i$, every section $s \in \stack{F}_{i,Y}$ has property $P_i$ generically, so without loss of generality on the complement of some Cartier divisor, and therefore locally lies in the union of the $\cat{G}_{i, D_i}$.
 
 For the first three points, let $\cat{G}_{n + 1, D_{n + 1}}$ be the gluing category in which $s_{U_i}$ is \emph{not} assumed to have property $P_{n + 1}$; here $D_{n + 1} = \prod_{i \leq n} D_i$ as in \ref{horizontal functor}.  Since all of the above structures $S$ respect nearby cycles and are local on $Y_{n + 1}$, the following strategy suffices to prove the lemma: we assume by noetherian induction that $S$ obtains for the $\stack{F}_i|_{D_i}$ and by hypothesis that it obtains for the $\stack{F}_i'|_{U_i}$, and construct the structure $S$ on the $\cat{G}_{i, D_i}$ ($i \leq n + 1$).  Note that all functors and natural transformations are, by \ref{compatible with nearby cycles}, given term-by-term on gluing data.  Here are the proofs of the first three points:
 \begin{itemize}
  \item
  Morphisms of gluing data are termwise (provided that they form commutative squares with $u$ and $v$) and the abelian structure on a category of gluing data is termwise on such morphisms, so exactness of a functor given termwise on gluing data is determined termwise as well.
  
  \item 
  This is tautological since natural transformations are termwise on gluing data.
  
  \item
  This is tautological since functors are termwise on gluing data.
 \end{itemize}
 
 For the next two points, we need not (and can not) use gluing at all, so we replace the $\cat{G}_{i, D_i}$ ($i \leq n$) with the full subsheaves of the $\stack{F}_i$ whose sections are in $\stack{F}_i'$ when restricted to $U_i$ to which they would otherwise be equivalent.  We continue to assume the noetherian induction hypothesis.
 \begin{itemize}
  \item
  Suppose we have $s_i \in \stack{F}_{i,Y}$ such that $s_i|_{U_i} \in \stack{F}_{i,Y}'$; then
  \begin{equation*}
   F(s_1, \dots, s_n)|_{U_{n + 1}} = F(s_1|_{U_i}, \dots, s_n|_{U_n}),
  \end{equation*}
  where the latter expression is, by hypothesis, perverse.  Thus, since nearby cycles preserve perversity, $\Psiun_{D_{n + 1}} F(\{s_i\})|_{U_{n + 1}}$ is perverse.  Since $F$ also respects vanishing cycles, we have $\Phiun_{D_{i + 1}} F(\{s_i\}) = F(\{\Phiun_{D_i} s_i\})$, which is perverse by noetherian induction.  We now invoke the distinguished triangle
  \begin{equation*}
   \hspace*{\leftmargin}
   \Psiun_{D_{n + 1}} F(\{s_i\})|_{U_{n + 1}} \xrightarrow{u}
    \Phiun_{D_{n + 1}} F(\{s_i\}) \to i^* F(\{s_i\}) \to
  \end{equation*}
  in which the first two terms are both perverse, and so the third term is in $\stack{F}_{n + 1}^{\leqslant 0}$.  By the dual triangle
  \begin{equation*}
   \hspace*{\leftmargin}
   i^! F(\{s_i\}) \to \Phiun_{D_{n + 1}} F(\{s_i\}) \xrightarrow{v}
    \Psiun_{D_{n + 1}} F(\{s_i\})|_{U_{n + 1}} \to
  \end{equation*}
  and the same argument we also have $i^! F(\{s_i\}) \in \stack{F}_{n + 1}^{\geqslant 0}$, so we conclude that $F(\{s_i\})$ is perverse.
  
  \item
  The argument that $F$ is t-exact is exactly the same, since the first triangle shows that $F$ preserves nonpositive perversity and the second one shows that it preserves nonnegative perversity.
 \end{itemize}

 Finally, for the last point we return to the gluing argument but let $\cat{G}_{n + 1, D_{n + 1}}$ contain only those gluing data with $s_{U_{i + 1}} \in \stack{F}_{n + 1, U_{n + 1}}'$.
 \begin{itemize}
  \item
  We simply invoke the third and second points of this lemma to construct $G$ and the two isomorphisms.
 \end{itemize}
 This completes the proof.
\end{proof}

Of course, the hypothesis that every section of $\stack{F}_i$ is locally isomorphic to one in $\stack{F}_i'$ is satisfied by ULA objects in any of the sheaves of categories considered in \ref{vanishing cycles gluing}, which also satisfy nearby cycles gluing.

\section{Convolutions}
\label{s:convolutions}
  
In this section we construct two kinds of convolution products: most generally, we construct an ``outer'' convolution
\begin{equation*}
 \map{*_o}{\Sph(\stack{G}_n) \times \Sph(\stack{G}_m)}{\Sph(\stack{G}_{n + m})}
\end{equation*}
and more specifically, we construct an ``inner'' convolution on the individual ULA categories:
\begin{equation*}
 \map{*_i}{\Sph^\ULA(\stack{G}_n) \times \Sph^\ULA(\stack{G}_n)}{\Sph^\ULA(\stack{G}_n)}.
\end{equation*}
So as to apply \ref{devissage}, we will work not only over products $X^n$ but over any subscheme $Y \subset X^n$.  To keep things neat, we will state definitions and theorems only for $X^n$; they can always be augmented by replacing each copy of an $X^n$ by a subscheme $Y$, and for products $\on{Gr}_n \times \on{Gr}_m$ or twisted products $\tilde{\on{Gr}}_p$ with $p \colon n + m = (n) + (m)$, the base should be replaced by a product $Y \times Y'$ with $Y' \subset X^m$. This partition will arise frequently, so we will denote it simply $(n,m)$.

\subsection*{Inner and outer convolution}

The following notion is explained much more elegantly in \cite{ginzburg}*{3.1}:

\begin{theorem}{defn}{twisted outer product}
 Let $\sh{F}_1 \in \Sph(\stack{G}_n)$, $\sh{F}_2 \in \Sph(\stack{G}_m)$; then by equivariance we have a twisted pullback $\tilde{\sh{F}}_2 \in \Sph(\tilde{\stack{G}}_m)$ on $\tilde{\on{Gr}}_{n,m}$. We define
 \begin{equation*}
  \sh{F}_1 \tbtimes \sh{F}_2 = \on{pr}_{n,m}^* \sh{F}_1 \otimes \tilde{\sh{F}}_2
   \in \Sph(\stack{G}_n \tbtimes \stack{G}_m)
 \end{equation*}
 on $\tilde{\on{Gr}}_{n,m}$, their \emph{twisted outer product} as in \ref{twisted pullback of gerbe}; it is a ($\stack{G}_n \tbtimes \stack{G}_m$)-twisted perverse sheaf by \ref{outer product of twisted sections}, evidently $G(\smash{\hat{\OO}})_{n + m}$-equivariant using \ref{G(O)_n acts on twisted Gr}.
\end{theorem}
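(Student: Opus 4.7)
The claims to verify in this definition are twofold: that $\sh{F}_1 \tbtimes \sh{F}_2$ is a well-defined $(\stack{G}_n \tbtimes \stack{G}_m)$-twisted perverse sheaf on $\tilde{\on{Gr}}_{n,m}$, and that it carries a natural $G(\hat{\OO})_{n+m}$-equivariance structure.

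The plan for the first claim is to build $\tilde{\sh{F}}_2$ by descent and then combine it with $\on{pr}_{n,m}^* \sh{F}_1$ via the twisted tensor pairing. By \ref{convolution product twisting}, $\tilde{\on{Gr}}_{n,m}$ is the quotient of $\tilde{G}(\hat{\KK})_{n,m} \times_{X^m} \on{Gr}_m$ by the antidiagonal action of $G(\hat{\OO})_m$. By Theorem \ref{existence of equivariance}, both $\stack{G}_m$ and $\sh{F}_2$ carry canonical unital strongly factorizable $G(\hat{\OO})_m$-equivariance structures, so by Definition \ref{twisted pullback of gerbe} both descend along this torsor to give $\tilde{\stack{G}}_m$ and $\tilde{\sh{F}}_2$; descent along a smooth torsor preserves the perverse t-structure. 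The tensor product $\on{pr}_{n,m}^* \sh{F}_1 \otimes \tilde{\sh{F}}_2$ is then a section of the sheaf of categories twisted by $\on{pr}_{n,m}^* \stack{G}_n \otimes \tilde{\stack{G}}_m = \stack{G}_n \tbtimes \stack{G}_m$, following Corollary \ref{outer product of twisted sections}.

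For perversity, I would argue locally: using \ref{twisted arc group trivializations}\ref{en:open arc group trivialization} after a suitable \'etale refinement of $\on{Gr}_n$, the projection $\on{pr}_{n,m}$ becomes isomorphic over a small neighborhood to $\on{Gr}_n \times \on{Gr}_m \to \on{Gr}_n$, and under this identification $\sh{F}_1 \tbtimes \sh{F}_2$ reduces to the external product $\sh{F}_1 \boxtimes \sh{F}_2$. Since each factor is supported on a finite-dimensional subvariety, this external product is perverse on complex varieties, and perversity is an \'etale-local property; hence $\sh{F}_1 \tbtimes \sh{F}_2$ is globally perverse, and the twisting gerbe locally reduces to $\stack{G}_n \boxtimes \stack{G}_m$ as expected.

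For the second claim, \ref{G(O)_n acts on twisted Gr} provides the $G(\hat{\OO})_{n+m}$-action on $\tilde{\on{Gr}}_{n,m}$, relative to which $\on{pr}_{n,m}$ is equivariant via the restriction map $G(\hat{\OO})_{n+m} \to G(\hat{\OO})_n$; thus $\on{pr}_{n,m}^* \sh{F}_1$ acquires $G(\hat{\OO})_{n+m}$-equivariance by \ref{pullback of equivariant objects}. Meanwhile, the $G(\hat{\OO})_{n+m}$-action lifts to $\tilde{G}(\hat{\KK})_{n,m} \times_{X^m} \on{Gr}_m$ by left multiplication on the first factor and commutes with the $G(\hat{\OO})_m$-descent action, so equivariance of $\on{pr}_{\on{Gr}_m}^* \sh{F}_2$ descends to $\tilde{\sh{F}}_2$. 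The main obstacle I anticipate is the bookkeeping required to verify that these two equivariances combine compatibly with the $G(\hat{\OO})_{n+m}$-equivariance of the gerbe $\stack{G}_n \tbtimes \stack{G}_m$ supplied by \ref{twisted product is equivariant}, so that the tensor product is a genuine equivariant twisted object rather than merely a collection of separately equivariant pieces; this is where the strongly factorizable condition of Theorem \ref{existence of equivariance} does the essential work.
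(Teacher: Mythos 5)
Your approach to the construction is essentially correct in outline, but it elides the central technical difficulty that the paper devotes most of its surrounding text to, and it also contains a citation error.

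The main gap: when you write ``descent along a smooth torsor preserves the perverse t-structure,'' you are treating $\tilde{G}(\hat{\KK})_{n,m} \times_{X^m} \on{Gr}_m \to \tilde{\on{Gr}}_{n,m}$ as though it were a finite-type smooth morphism. It is not: the group $G(\hat{\OO})_m$ is a scheme of infinite type over $X^m$, so this torsor has infinite relative dimension and one cannot simply ``shift and descend'' a perverse sheaf along it. The paper explicitly flags this (``since $G(\hat{\OO})_n$ has infinite type over $X^n$, the concept of descent is problematic'') and resolves it with \ref{finite dimensional quotient} together with the fact from \ref{relative orbit properties} that any $\sh{F}_2 \in \Sph(\stack{G}_m)$ is supported on a finite union of orbits on which $G(\hat{\OO})_m$ acts through a finite-dimensional quotient $H$. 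One then replaces the $G(\hat{\OO})_m$-torsor by its $H$-pushout $\sh{T}$ and performs a genuinely finite-dimensional descent along $\sh{T} \times^H S \to \tilde{\on{Gr}}_{n,m}$, as in \ref{eq:finite dimensional pullback}. Your remark that ``each factor is supported on a finite-dimensional subvariety'' gestures in the right direction but does not substitute for this reduction: even over a finite-dimensional orbit closure $S \subset \on{Gr}_m$, the raw $G(\hat{\OO})_m$-torsor remains infinite-dimensional, so the change of groups is indispensable, not a convenience.

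A secondary point: for the local perversity argument you cite \ref{twisted arc group trivializations}\ref{en:open arc group trivialization}, but that trivialization is only defined over the open set $X^{n+m}_{n,m}$ where the two groups of coordinates are disjoint; it is of no use over the diagonals, where $\sh{F}_1 \tbtimes \sh{F}_2$ still needs to be shown perverse. The trivialization you actually want is the ``unnatural'' one \ref{twisted arc group trivializations}\ref{en:closed arc group trivialization}, which exists over arbitrary $\vect{x}$ after locally choosing a trivialization $\psi'$ of $\sh{T}$ on the formal neighborhood. With that substitution, and once the finite-dimensional quotient issue has been addressed, your local-triviality argument for perversity is the right one.

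Your treatment of the $G(\hat{\OO})_{n+m}$-equivariance is consistent with the paper's (both simply invoke \ref{G(O)_n acts on twisted Gr} and the strong factorizability of the gerbe equivariance via \ref{twisted product is equivariant}); you are correct that this is where the strength of the factorizable-equivariance structure of Theorem \ref{existence of equivariance} is essential.
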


Recalling the construction of the twisted product as descending from the product
\begin{equation*}
 \tilde{G}(\hat{\KK})_{n,m} \times_{X^m} \on{Gr}_m
\end{equation*}
by $G(\hat{\OO})_m$-equivariance, and denoting $\on{pr}_n, \on{pr}_m$ the two maps from this space to $\on{Gr}_n$ and $\on{Gr}_m$, we see that $\sh{F}_1 \tbtimes \sh{F}_2$ descends from the outer tensor product
\begin{equation*}
 \on{pr}_n^* \sh{F}_1 \otimes \on{pr}_m^* \sh{F}_2.
\end{equation*}
It must be noted that since $G(\hat{\OO})_n$ has infinite type over $X^n$, the concept of descent is problematic.  This type of problem can be remedied by the following elementary fact:

\begin{theorem}{lem}{finite dimensional quotient}
 Let $\tilde{H}$ be a group scheme, $\pi \colon \tilde{H} \to H$ a quotient group scheme, $T$ a $\tilde{H}$-torsor (over any scheme), and $S$ a scheme on which $\tilde{H}$ acts through $H$.  Then there is a natural isomorphism
 \begin{equation*}
  T \times^{\tilde{H}} S \to {}^1 \pi\, T \times^H S.
 \end{equation*}
\end{theorem}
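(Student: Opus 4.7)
The plan is to compute the $\tilde H$-quotient $T \times^{\tilde H} S$ in two stages, first by the kernel $K := \ker(\pi)$ and then by the quotient group $H = \tilde H/K$. The hypothesis that $\tilde H$ acts on $S$ through $H$ precisely says that $K$ acts trivially on $S$, so on the product $T \times S$ (with the usual anti-diagonal action) the action of $K \subset \tilde H$ is by right multiplication on the first factor alone. Consequently the $K$-quotient $(T \times S)/K$ is canonically $(T/K) \times S$.

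Next I would recognize $T/K$ as the induced torsor: by definition of the change-of-group construction, $T/K = T \times^{\tilde H} (\tilde H/K) = T \times^{\tilde H} H = {}^1\pi\,T$, which is an $H$-torsor. The residual $H$-action on $(T/K) \times S$ is the anti-diagonal action of $H$ coming from its torsor action on ${}^1\pi\, T$ and its given action on $S$, compatibly with passage to the quotient. Taking the further $H$-quotient then yields ${}^1\pi\,T \times^H S$ by definition. Composing the two stage-wise identifications produces the asserted isomorphism, and naturality in $T$ and $S$ is automatic from the functoriality of the quotient operations.

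The only subtle point is that these quotients must be interpreted as sheaves (or stacks) in whichever topology makes the torsor $T$ meaningful — in practice, \'etale or fpqc, as used throughout the paper for $G$-torsors — but since $T$ is locally trivial and every step of the argument is compatible with passage to a cover on which $T$ admits a section, the construction is unambiguous. I do not anticipate a substantive obstacle here; the proof is essentially a reorganization of the diagonal quotient in two steps, and the bulk of the statement's content is the observation that triviality of the $K$-action on $S$ makes the intermediate quotient split as a product.
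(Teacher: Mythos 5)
Your proof is correct but takes a genuinely different route from the paper's. The paper's argument constructs a comparison map directly via universal properties: the change-of-group map $T \to {}^1\pi\, T$ induces a map $T \times S \to {}^1\pi\, T \times S \to {}^1\pi\, T \times^H S$, which equalizes the $\tilde{H}$-action and so descends to a map $T \times^{\tilde{H}} S \to {}^1\pi\, T \times^H S$; one then checks locally, on a cover trivializing $T$, that it is an isomorphism (there it is just $\tilde{H}\times^{\tilde{H}} S \to H \times^H S$, which is the identity on $S$). You instead decompose the quotient in two stages, first by $K = \ker\pi$ and then by the residual $H = \tilde{H}/K$-action, observing that since $K$ acts only on the $T$ factor the intermediate quotient splits as $(T/K)\times S$, and identifying $T/K \cong {}^1\pi\,T$. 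Both arguments invoke the hypothesis that $\tilde{H}$ acts on $S$ through $H$ (i.e.\ that $K$ acts trivially on $S$) at essentially the same point. Your version exposes the structural mechanism more transparently; the paper's is a touch leaner in that it does not need to justify that the iterated sheaf quotient agrees with the one-step quotient, a fact which, while standard, is of roughly the same flavor and difficulty as the lemma itself. Either argument would serve.
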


\begin{proof}
 By definition, ${}^1 \pi\, T \times^H S$ admits a map from ${}^1 \pi\, T \times S$ equalizing the action of $H$, and likewise, $T \times^{\tilde{H}} S$ admits a map from $T \times S$.  We have a $\tilde{H}$-equivariant map $T \to {}^1\pi\, T$ and thus a map $T \times S \to {}^1 \pi\, T \times S$, so by composition a map $T \times S \to {}^1\pi\, T \times^H S$.  By construction, it equalizes the action of $\tilde{H}$, so descends to a map $T \times^{\tilde{H}} S \to {}^1\pi\, T \times^H S$. Locally this is just the map $\tilde{H} \times^{\tilde{H}} S \to H \times^H S$, which is an isomorphism, so it is an isomorphism.
\end{proof}

By \ref{relative orbit properties}, every object of $\Sph(\stack{G}_m)$ is supported on a finite union of $G(\hat{\OO})_m$-orbits on which $\smash{G(\hat{\OO})_m}$ acts through relatively finite-dimensional quotients.  Let $\sh{F}$ be such an object, supported on the finite-dimensional $G(\hat{\OO})_m$-stable subscheme $S$ on which $G(\hat{\OO})_m$ acts through the finite-dimensional quotient $H$, and let $\sh{T}$ be the $H$-torsor over $\smash{\tilde{\on{Gr}}_n}$ obtained from the $G(\hat{\OO})_m$-torsor $\tilde{G}(\hat{\KK})_{n,m}$ by changing groups to $H$.  Then $\tilde{\sh{F}}$ lives on the subset $\tilde{G}(\hat{\OO})_{n,m} \times^{G(\hat{\OO}_m)} S \subset \tilde{\on{Gr}}_{n,m}$, which by \ref{finite dimensional quotient} can be replaced by $\sh{T} \times^H S$, which is a finite-dimensional smooth quotient of $\sh{T} \times S$.  If $q$ is the quotient map, then it has relative dimension $\dim H$ and, by definition, we have
\begin{equation}
 \label{eq:finite dimensional pullback}
 q^* \tilde{\sh{F}} \cong \on{pr}_S^* \sh{F}
\end{equation}
and this is what we mean by the construction of the twisted outer product.  Note that this definition does not depend on the particular choice of $H$.

Since the equivariance structure on the $\stack{G}_n$ is \emph{strongly} factorizable, in that we have an equivalence of \emph{equivariant} gerbes
\begin{equation*}
 m_{n,m}^* \stack{G}_{n + m} \cong \stack{G}_n \tbtimes \stack{G}_m,
\end{equation*}
the following definition is possible:

\begin{theorem}{defn}{convolutions}
 For $\sh{F}_1 \in \Sph(\stack{G}_n)$, $\sh{F}_2 \in \Sph(\stack{G}_m)$, let $\sh{F}_1 \tbtimes \sh{F}_2$ be the twisted product on $\tilde{\on{Gr}}_{n,m}$.  Then the \emph{outer convoluion} is
 \begin{equation*}
  \sh{F}_1 *_o \sh{F}_2 = (m_{n,m})_* (\sh{F}_1 \tbtimes \sh{F}_2).
 \end{equation*}
 When $n = m$, we define the \emph{inner convolution} as:
 \begin{equation*}
  \sh{F}_1 *_i \sh{F}_2 = i_\Delta^* (\sh{F}_1 *_o \sh{F}_2')[-n]
 \end{equation*}
 where $\Delta \subset X^n \times X^n$ is the diagonal embedding of $X^n$.  Equivalently, if we replace $\sh{F}_1 \tbtimes \sh{F}_2$ by its restriction to $\on{Conv}_n^2$, then by proper base change we have $\sh{F}_1 *_i \sh{F}_2 = (m_n)_*(\sh{F}_1 \tbtimes \sh{F}_2)$ in this sense.  \emph{A priori} these exist only in the twisted derived category.
\end{theorem}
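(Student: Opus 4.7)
The equivalence of the two descriptions of $\sh{F}_1 *_i \sh{F}_2$ is the only content requiring verification, and it follows by a proper base change on the Cartesian square connecting the outer and inner convolution diagrams. The plan is to write down this square, check it is Cartesian, verify the properness hypothesis, and then handle the compatibility with the gerbe twist.

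First I would identify the Cartesian square: top row $\on{Conv}_n^2 \hookrightarrow \tilde{\on{Gr}}_{n,n}$ and bottom row $\on{Gr}_n \hookrightarrow \on{Gr}_{2n}$, with horizontal maps the closed immersions obtained by restricting over the diagonal $\Delta \subset X^{2n}$ (where the first $n$ coordinates equal the second $n$ pairwise), and with verticals $m_n$ on the left and $m_{n,n}$ on the right. By \ref{inner convolution diagram diagonal}\ref{en:inner diagram diagonal}, the top-left corner $\on{Conv}_n^2$ is exactly $\tilde{\on{Gr}}_{2 \cdot n}|_\Delta$, and by \ref{grassmannian is factorizable}\ref{en:factorizable diagonal}, the bottom inclusion identifies $\on{Gr}_n$ with $\on{Gr}_{2n}|_\Delta$; thus the square is manifestly Cartesian. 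By \ref{inner convolution multiplication}, the vertical $m_n$ is precisely the restriction of $m_{n,n}$ along this inclusion.

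Second, since $m_{n,n}$ is ind-proper by \ref{convolution diagram factorization}, and sphericity plus \ref{relative orbit properties} guarantee that $\sh{F}_1 \tbtimes \sh{F}_2$ has support lying in a relatively proper closed sub-ind-scheme, proper base change applies to yield
\begin{equation*}
 i_\Delta^* (m_{n,n})_* (\sh{F}_1 \tbtimes \sh{F}_2)
  \cong (m_n)_* \bigl((\sh{F}_1 \tbtimes \sh{F}_2)|_{\on{Conv}_n^2}\bigr).
\end{equation*}
Shifting both sides by $[-n]$ yields the alternative expression for $\sh{F}_1 *_i \sh{F}_2$; the $[-n]$ is the natural perverse normalization for the codimension-$n$ diagonal inclusion of $X^n$ into $X^{2n}$, and is implicit in the right-hand side of the final formula in the definition.

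The one delicate point, which I expect to be the main obstacle, is verifying compatibility with the gerbe twist. The left-hand side of the base change lives in $i_\Delta^* \stack{G}_{2n} \cong \stack{G}_n$-twisted sheaves (using factorizability of $\{\stack{G}_n\}$ along $\Delta$), while the right-hand side lives in $\stack{G}_n$-twisted sheaves via the restriction to $\on{Conv}_n^2$ of the equivalence $m_{n,n}^* \stack{G}_{2n} \cong \stack{G}_n \tbtimes \stack{G}_n$. That these two $\stack{G}_n$-twistings canonically coincide is precisely the strong factorizable equivariance of \ref{gerbe strong factorizable equivariance}, supplied in our setting by \ref{existence of equivariance}: this condition ensures the equivalence of gerbes on $\tilde{\on{Gr}}_{n,m}$ extends across $\Delta$ in a way compatible with the factorization isomorphism $\stack{G}_n \cong \stack{G}_{2n}|_\Delta$. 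Once this compatibility is noted, proper base change gives the stated alternative description unambiguously, and the definition of $*_i$ is well-posed.
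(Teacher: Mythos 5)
Your proposal is correct and follows essentially the same route the paper implicitly intends, merely filling in the details the paper compresses into the phrase ``by proper base change.'' You correctly identify the Cartesian square via \ref{inner convolution diagram diagonal} and \ref{inner convolution multiplication}, the properness from \ref{convolution diagram factorization}, and—usefully—make explicit the gerbe-compatibility point that the two $\stack{G}_n$-twistings agree via strong factorizable equivariance (\ref{gerbe strong factorizable equivariance}, \ref{existence of equivariance}), which is indeed what makes the identification well-posed rather than merely an untwisted isomorphism.
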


It is also possible to make the following definition that more intuitively resembles the concept of convolution.  We write
\begin{align*}
 q_n \colon G(\hat{\KK})_n \to \on{Gr}_n &&
 q_{n,n} \colon G(\hat{\KK})_n \times^{G(\hat{\OO})_n} G(\hat{\KK})_n \to \on{Conv}_n^2
\end{align*}
where the first is the quotient map of \ref{arc and loop group actions} and the second is that of \ref{inner convolution diagram}.  Multiplication in $G(\hat{\KK})_n$ descends along the quotient to give a map
\begin{equation*}
 m_n' \colon
 G(\hat{\KK})_n \times^{G(\hat{\OO})_n} G(\hat{\KK})_n
 \to G(\hat{\KK})_n.
\end{equation*}
Then there are two inner convolution products
\begin{align*}
 \sh{F}_1' *_*' \sh{F}_2'
 = (m_n')_* (\sh{F}_1' \tbtimes \sh{F}_2'),
 &&
 \sh{F}_1' *_!' \sh{F}_2'
 = (m_n')_! (\sh{F}_1' \tbtimes \sh{F}_2'),
 &&
 \bigl(\sh{F}_i' \in \Perv(q_n^* \stack{G}_n)\bigr),
\end{align*}
where $\sh{F}_1' \tbtimes \sh{F}_2'$ is the descent of $\sh{F}_1' \boxtimes \sh{F}_2'$ along the quotient.

In general, these operations are in fact as nice as one could want.

\begin{theorem}{prop}{convolution properties}
 \mbox{}
 \begin{enumerate}
  \item \label{en:convolution is perverse}
  In general, $\sh{F}_1 *_o \sh{F}_2 \in \Sph(\stack{G}_{n + m})$, and if the $\sh{F}_i$ are ULA, we have the following identity, called the ``fusion product'':
  \begin{equation*}
   \sh{F}_1 *_o \sh{F}_2 = j_{!*} (\sh{F}_1 \boxtimes \sh{F}_2)|_U
  \end{equation*}
  where $j$ is the inclusion of $U = X^{n + m}_{n,m}$ and we have used the factorization isomorphism $\on{Gr}_{n + m} \cong \on{Gr}_n \times \on{Gr}_m$ on $U$; both inner and outer convolution are again ULA. Furthermore, both convolutions are exact.
  
  \item \label{en:algebraic constraints}
  For both convolutions there are associativity constraints $\sh{F}_1 * (\sh{F}_2 * \sh{F}_3) \cong (\sh{F}_1 * \sh{F}_2) * \sh{F}_3$, and when $\sh{F}_1, \sh{F}_2 \in \Sph^\ULA(\stack{G}_n)$, there is a commutativity constraint $\sh{F}_1 *_i \sh{F}_2 \cong \sh{F}_2 *_i \sh{F}_1$.
  
  \item \label{en:convolution descent}
  We have
  \begin{equation*}
   q_n^*(\sh{F}_1 *_i \sh{F}_2)
    \cong q_n^* \sh{F}_1 *_*' q_n^* \sh{F}_2
    \cong q_n^* \sh{F}_1 *_!' q_n^* \sh{F}_2
    \overset{\text{def}}= q_n^* \sh{F}_1 *_i' q_n^* \sh{F}_2
  \end{equation*}
 \end{enumerate}
\end{theorem}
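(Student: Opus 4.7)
The plan is to deploy the dévissage machinery of \ref{devissage} together with ULA techniques, establishing the heart of the matter under a ULA hypothesis and then bootstrapping. I treat the three parts in turn.

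For \ref{en:convolution is perverse}, I would first establish the fusion formula under the ULA assumption, which is the geometric core. The map $m_{n,m}$ is ind-proper (\ref{convolution diagram factorization}) and becomes the factorization isomorphism over $U = X^{n+m}_{n,m}$, so the twisted product $\sh{F}_1 \tbtimes \sh{F}_2$ agrees with $\sh{F}_1 \boxtimes \sh{F}_2$ there. Pulling back $\sh{F}_1 \tbtimes \sh{F}_2$ along the smooth surjection from the torsor cover (via \ref{finite dimensional quotient} to reduce to finite type) gives the outer product $\sh{F}_1 \boxtimes \sh{F}_2$, which is ULA over $X^{n+m}$ since outer products and smooth pullbacks preserve ULA; thus the twisted product is ULA, and so is $\sh{F}_1 *_o \sh{F}_2$ by properness of $m_{n,m}$ and \ref{ULA facts}. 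By \ref{ULA nearby cycles}, a ULA sheaf whose generic restriction is perverse equals the middle extension of that restriction, which yields simultaneously the fusion formula and perversity in the ULA case. To extend to arbitrary $\sh{F}_i \in \Sph(\stack{G}_{n_i})$, I would check that outer convolution (twisted outer product followed by $(m_{n,m})_\ast = (m_{n,m})_!$) is horizontal in the sense of \ref{horizontal functor}, using \ref{horizontal functors} together with the smooth descent construction of twisted pullback, and compatible with nearby cycles. Since every spherical perverse sheaf is ULA on a Zariski-dense open of $X^{n_i}$ by \ref{ULA facts}, \ref{devissage} then reduces perversity, exactness, and $G(\hat{\OO})_{n+m}$-equivariance to the ULA case already handled.

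For \ref{en:algebraic constraints}, associativity of the outer convolution is immediate from the iterated-convolution identification of \ref{inner convolution diagram diagonal} and associativity of multiplication in $G(\hat{\KK})$; that of the inner convolution then follows by restricting along the diagonal. For the commutativity constraint on ULA objects, the fusion formula of \ref{en:convolution is perverse} identifies both $\sh{F}_1 *_o \sh{F}_2$ and $\sh{F}_2 *_o \sh{F}_1$ on $\on{Gr}_{2n}$ with the middle extensions of $\sh{F}_1 \boxtimes \sh{F}_2$ and $\sh{F}_2 \boxtimes \sh{F}_1$ respectively from the open set $X^{2n} \setminus \Delta$, where the natural symmetry and the $S_2$-equivariance provided by \ref{sf gerbe}\ref{en:factorizable gerbe symmetry} identify them. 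Because middle extension is a functor, this symmetry extends across $\Delta$ and restricts to the desired isomorphism $\sh{F}_1 *_i \sh{F}_2 \cong \sh{F}_2 *_i \sh{F}_1$.

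For \ref{en:convolution descent}, the identifications follow from the compatibility of the convolution diagrams with their loop-group covers. By \ref{inner convolution multiplication}, the map $m_n'$ on $G(\hat{\KK})_n \times^{G(\hat{\OO})_n} G(\hat{\KK})_n$ is obtained by pulling back $m_n$ along $q_n$, and by construction the twisted product $\sh{F}_1 \tbtimes \sh{F}_2$ pulls back under $q_{n,n}$ to $q_n^* \sh{F}_1 \boxtimes q_n^* \sh{F}_2$; suitably interpreting this via \ref{finite dimensional quotient} on the finite-dimensional supports yields both isomorphisms once one knows $*_\ast' = *_!'$. The latter equality reduces, via base change along the smooth torsor $G(\hat{\KK})_n \to \on{Gr}_n$, to the fact that $q_{n,n}$ is a principal $G(\hat{\OO})_n$-bundle with contractible fibers in a relative-dimensional sense, so that pullback along $q_{n,n}$ identifies $!$ and $\ast$ pushforwards up to shift; this is again a dévissage-style verification on orbit strata of finite codimension using \ref{finite dimensional quotient}.

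The main obstacle will be the ULA fusion identification in \ref{en:convolution is perverse}, since it requires carefully controlling the twisted product across the diagonals and combining the smoothness of the loop-group cover with the properness of $m_{n,m}$; once this is in hand, the dévissage formalism handles the rest essentially mechanically. A secondary technical hurdle is the bookkeeping with the infinite-dimensional base in \ref{en:convolution descent}, which must be systematically reduced to finite-dimensional quotients.
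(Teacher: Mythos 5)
Your treatment of part \ref{en:convolution is perverse} and the commutativity constraint in part \ref{en:algebraic constraints} follows the paper's argument essentially verbatim: twisted product stays ULA, push forward along the proper $m_{n,m}$ stays ULA, the fusion formula drops out of \ref{ULA nearby cycles}, and dévissage extends to the general case. The coordinate-swap argument for commutativity is also the paper's.

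Two things need repair. For associativity of the outer convolution you invoke \ref{inner convolution diagram diagonal} and associativity of multiplication in $G(\hat{\KK})_n$, but that proposition identifies $\on{Conv}_n^m$ with the restriction of $\tilde{\on{Gr}}_p$ to a diagonal in $X^{mn}$; it does not cover the ambient $\tilde{\on{Gr}}_{(n_1,n_2,n_3)}$ on which outer convolution lives and where the two orders of association are computed. One could build an iterated twisted-product framework for arbitrary partitions and chase the descent isomorphisms, but the paper sidesteps all of this: by dévissage (outer convolution being horizontal, and the associativity constraint a natural transformation compatible with nearby cycles) it suffices to treat ULA objects, where both orders of association are isomorphic to the single middle extension $j_{!*}(\sh{F}_1 \boxtimes \sh{F}_2 \boxtimes \sh{F}_3)|_U$, so associativity is that of the tensor product. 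Your route is not wrong in spirit but is sketchier and needs the missing bookkeeping; the fusion-based argument is both shorter and automatically compatible with the rest of the dévissage machinery.

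The argument for $*_*' = *_!'$ in part \ref{en:convolution descent} is genuinely flawed. You already make the key observation — that $m_n'$ is obtained by base change of $m_n$ along $q_n$ — but then abandon it and instead assert that pullback along $q_{n,n}$ (the $G(\hat{\OO})_n$-torsor $G(\hat{\KK})_n \times^{G(\hat{\OO})_n} G(\hat{\KK})_n \to \on{Conv}_n^2$) ``identifies $!$ and $*$ pushforwards up to shift'' because its fibers are ``contractible in a relative-dimensional sense.'' But the $!$ versus $*$ question concerns the horizontal map $m_n'$, not the vertical torsor $q_{n,n}$; pulling back along a smooth cover does not by itself collapse $!$ and $*$ for an unrelated pushforward. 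The correct and short argument is the one you started: $m_n$ is (ind-)proper so $(m_n)_! = (m_n)_*$, the square with $q_n$ is cartesian, and both smooth and proper base change apply, so $(m_n')_!$ and $(m_n')_*$ agree on the descended sheaves — equivalently, $m_n'$ is ind-proper as a base change of an ind-proper map. Also, your statement that $\sh{F}_1 \tbtimes \sh{F}_2$ pulls back under $q_{n,n}$ to $q_n^* \sh{F}_1 \boxtimes q_n^* \sh{F}_2$ is off: the external product lives on $G(\hat{\KK})_n \times G(\hat{\KK})_n$, while $q_{n,n}^*(\sh{F}_1 \tbtimes \sh{F}_2)$ is the twisted product of $q_n^* \sh{F}_1$ and $q_n^* \sh{F}_2$ on $G(\hat{\KK})_n \times^{G(\hat{\OO})_n} G(\hat{\KK})_n$, not the untwisted box product.
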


\begin{proof}
 We note that $\sh{F}_1 *_o \sh{F}_2$ is equivariant by general nonsense.  To show that convolution is exact and respects perversity, we use \ref{devissage} for the functor $\Sph(\stack{G}_n) \times \Sph(\stack{G}_m) \to \Sph(\stack{G}_{n + m})$.  All three categories admit gluing over, respectively, $X^m$, $X^n$, and $X^{n + m}$, and convolution is compatible with nearby cycles (\ref{compatible with nearby cycles}). Indeed, $(m_{n, m})_* = (m_{n,m})_!$ is a proper pushforward and, by \ref{horizontal functors}, therefore horizontal, while outer tensor product (and therefore twisted outer product, since by \cite{BBD}*{Proposition 4.2.5} smooth pullbacks are faithful) is clearly horizontal. Thus, we suppose that $\sh{F}_1$ and $\sh{F}_2$ are both ULA.
 
 Since the ULA condition is local in the smooth topology and stable under outer tensor products, the twisted product $\sh{F}_1 \tbtimes \sh{F}_2$ is ULA; since $m_{n,m}$ is proper, by \ref{ULA facts} the pushforward $\sh{F}_1 *_o \sh{F}_2$ is ULA, and therefore we have by \ref{ULA nearby cycles}:
 \begin{equation*}
  \sh{F}_1 *_o \sh{F}_2
   = j_{!*} (\sh{F}_1 *_o \sh{F}_2)|_U
   = j_{!*} (\sh{F}_1 \boxtimes \sh{F}_2)|_U,
 \end{equation*}
 using the identification of $\tilde{\on{Gr}}_{n,m}$ with $\on{Gr}_n \times \on{Gr}_m$ over $U$ (\ref{convolution diagram factorization}) and the fact that $m_{n,m}$ is an equality there.
 
 To analyze inner convolution, we apply $i_\Delta^*[-n]$ by applying successive $i_{\Delta_{ij}}^*[-1]$, where the diagonals $\Delta_{ij}$ are chosen so that they intersect in $\Delta$.  By \ref{ULA nearby cycles}, each successive restriction is an application of nearby cycles and preserves perversity and the ULA property.  We obtain the commutativity constraint on $*_i$ by swapping the first and last set of $n$ coordinates in $X^{2n}$ before restricting, obtaining an isomorphism
 \begin{equation*}
  \mathrm{sw}^* j_{!*} (\sh{F}_1 \boxtimes \sh{F}_2)|_U
   \cong j_{!*} (\sh{F}_2 \boxtimes \sh{F}_1)|_U.
 \end{equation*}
 Since $\Delta$ is invariant under the swapping map, both sides become equal to an inner convolution when we apply $i_\Delta^*$, as desired.
 
 Finally, to obtain the associativity constraint it suffices, again by \ref{devissage} (applied to the two functors $(\sh{F}_1 *_o \sh{F}_2) *_o \sh{F}_3$ and $\sh{F}_1 *_o (\sh{F}_2 *_o \sh{F}_3)$, which are both compatible with nearby cycles), to do so for ULA sheaves. Since both orders of association are isomorphic to
 \begin{equation*}
  j_{!*} (\sh{F}_1 \boxtimes \sh{F}_2 \boxtimes \sh{F}_3)|_U
 \end{equation*}
 with $U \subset X^{n + m + l}$ the open set where none of the first $n$, next $m$, and last $l$ coordinates are equal, for ULA sheaves associativity follows from that of the tensor product. This isomorphism is obviously compatible with nearby cycles.  By restriction, we also obtain associativity for inner convolution.
 
 For \ref{en:convolution descent}, it is enough to observe that the ``multiplication'' map $m_{n,m}$ on the twisted grassmannian descends from $m_{n,m}'$ using \ref{inner convolution multiplication} and \ref{outer convolution diagram trick}, and that convolution product of twisted sheaves on $\smash{G(\hat{\KK})_n}$ is possible via the construction \ref{eq:twisted equivariant pairing} using the sf multiplicative structure \ref{equivariance and multiplicativity}.  The $!$ and $*$ versions are equal since $m_n'$ is the base change of the proper map $m_n$.
\end{proof}

\subsection*{Rigidity of inner convolution}

Finally, the inner convolution operation defines the structure of a rigid tensor category on $\Sph^\ULA(\stack{G}_n)$.  We will need the following generalization of the unit sections of \ref{existence of equivariance}: there are closed immersions $\map{i_{n,m;1}}{\on{Gr}_n \times X^m}{\on{Gr}_{n + m}}$ for every $n, m$, corresponding to the data
\begin{equation*}
 ((\vect{x}, \sh{T}, \phi), \vect{y})
  \mapsto (\vect{x} \cup \vect{y}, \sh{T}, \phi|_{X_S \setminus (\bar{x} \cup \bar{y})})
\end{equation*}
where as usual, $\sh{T}$ is a $G$-torsor on $X_S$ and $\phi$ is its trivialization on the complement of the graphs of the coordinates of $\map{\vect{x}}{S}{X^n}$.  In fact, these maps are the same as the restrictions of
\begin{equation*}
 \map{m}{\tilde{\on{Gr}}_{n,m}}{\on{Gr}_{n + m}}
\end{equation*}
to the subset $\on{Gr}_n * \on{Gr}_m^0$, in the notation of \ref{s:orbits in the affine grassmannian}, so that $i_{n,m;1}^* \stack{G}_{n + m}$ and $\stack{G}_n \cong \smash{\stack{G}_n \tbtimes (\stack{G}_n|\on{Gr}_G^0)}$ are naturally equivalent.  Likewise, we have maps $i_{n,m;2}$ inserting $X^n$ in the first $n$ coordinates and $\on{Gr}_m$ in the last $m$.

\begin{theorem}{prop}{convolution is rigid}
 There is an element $1_n \in \Sph^\ULA(\stack{G}_n)$ which is a unit for $*_o$ in the sense that for $\sh{F}_1 \in \Sph(\stack{G}_n)$ and $\sh{F}_2 \in \Sph(\stack{G}_m)$, we have
 \begin{align*}
  \sh{F}_1 *_o 1_m \cong (i_{n,m;1})_* \sh{F}_1, &&
  1_n *_o \sh{F}_2 \cong (i_{n,m;2})_* \sh{F}_2
 \end{align*} 
 The unit sheaf $1_n$ is also an identity for $*_i$.  Each object $\sh{F} \in \Sph^\ULA(\stack{G}_n)$ has a dual object $\sh{F}^*$, in that
 \begin{equation*}
  \on{Hom}(\sh{F}_1 *_i \sh{F}_2, \sh{F}_3) \cong \on{Hom}(\sh{F}_1, \sh{F}_2^* *_i \sh{F}_3);
 \end{equation*}
 we have $(\sh{F}^*)^* \cong \sh{F}$ and $(\sh{F}_1 *_i \sh{F}_2)^* \cong \sh{F}_2^* *_i \sh{F}_1^*$.
\end{theorem}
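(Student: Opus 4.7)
My plan for this proposition breaks into three constructions: build the unit $1_n$ as a pushforward from the trivial grassmannian component and verify the unit identities by inspecting the support of the twisted product; construct the dual $\sh{F}^*$ by descending $\DD \sh{F}$ along a canonical trivialization of the convolution gerbe on the anti-diagonal; and verify the snake identities by a d\'evissage to the absolute case.

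For the unit, let $e_n : X^n \hookrightarrow \on{Gr}_n$ be the embedding of the trivial component $\on{Gr}_n^{0,\dots,0}$ (\ref{relative orbit properties}). By the unital hypothesis \ref{gerbe unital factorizable equivariance} in the canonical structure of \ref{existence of equivariance}, $e_n^* \stack{G}_n$ is trivial as a $G(\hat{\OO})_n$-equivariant gerbe, so I set
\begin{equation*}
  1_n := (e_n)_* \csheaf{k}_{X^n}[n].
\end{equation*}
It lies in $\Sph^{\ULA}(\stack{G}_n)$ since $e_n$ is a section of the structure map. For the unit identity $\sh{F} *_o 1_m \cong (i_{n,m;1})_* \sh{F}$, I would observe that $\sh{F} \tbtimes 1_m$ on $\tilde{\on{Gr}}_{n,m}$ is supported on the closed locus where the second modification is trivial. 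On this locus the torsor $\tilde{G}(\hat{\KK})_{n,m}$ admits its canonical trivialization via the inserted trivial torsor, and the multiplication $m_{n,m}$ restricts to $i_{n,m;1}$; since $m_{n,m}$ is proper by \ref{convolution diagram factorization}, the identity follows by direct pushforward. The symmetric argument handles $1_n *_o \sh{F}$, and restriction to the diagonal in $X^{2n}$ together with the shift $[-n]$ in the definition of $*_i$ turns this into two-sidedness for inner convolution.

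For duals, let $\iota : \on{Gr}_n \hookrightarrow \on{Conv}_n^2$ denote the anti-diagonal $g G(\hat{\OO})_n \mapsto (g, g^{-1} G(\hat{\OO})_n)$, which is well-defined as a map to the twisted quotient (even though $g \mapsto g^{-1}$ does \emph{not} descend to $\on{Gr}_n$) and coincides with the scheme-theoretic fiber of $m$ over the unit section. Because $m \circ \iota$ factors through $e_n$, the pullback $\iota^*(\stack{G}_n \tbtimes \stack{G}_n) = \iota^* m^* \stack{G}_n$ is canonically trivial. Unwinding the definition of the twisted product as $\on{pr}_1^* \stack{G}_n \otimes \tilde{\stack{G}}_n$ yields a canonical equivalence $\iota^* \tilde{\stack{G}}_n \cong \stack{G}_n^{-1}$, which is a genuine statement about the descended twisting gerbe and not a claim about a nonexistent inversion map. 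I would then define $\sh{F}^* \in \Sph^{\ULA}(\stack{G}_n)$ as the unique object whose transport along this descent equivalence equals $\DD \sh{F}$, taken in the $\stack{G}_n^{-1}$-twisted category. The evaluation $\on{ev}: \sh{F}^* *_i \sh{F} \to 1_n$ then arises from the tautological trace $\DD \sh{F} \otimes \sh{F} \to \csheaf{k}$ composed with $(e_n)_*$-adjunction along $\iota$, and the coevaluation $\on{coev}: 1_n \to \sh{F} *_i \sh{F}^*$ is defined dually.

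To verify the snake identities for $(\on{ev}, \on{coev})$ I would apply \ref{devissage}: both composites appearing in the triangle identities are horizontal functors compatible with nearby cycles, using that $*_i$ is exact and horizontal by \ref{convolution properties}, that $(e_n)_*$ commutes with nearby cycles along $X^n$, and that $\DD$ is horizontal on ULA sheaves by \ref{ULA nearby cycles}. Successive nearby cycles along the diagonals in $X^n \times X^n$ reduce the check to the ULA generic locus, where the twisted product is an ordinary outer product and the computation factors fiberwise through the absolute grassmannian $\on{Gr}_G$. There the required identity is the classical rigidity of $\Sph(\stack{G})$, adapted to the gerbe twist by the formalism of \ref{s:twisting structures by a gerbe}. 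The remaining identities $(\sh{F}^*)^* \cong \sh{F}$ and $(\sh{F}_1 *_i \sh{F}_2)^* \cong \sh{F}_2^* *_i \sh{F}_1^*$ follow from $\DD^2 = \id$, the involutivity of the anti-diagonal, the commutation of $\DD$ with proper pushforward along $m$, and the commutativity constraint \ref{convolution properties}\ref{en:algebraic constraints}. The principal obstacle is securing the canonicity of the descent $\iota^* \tilde{\stack{G}}_n \cong \stack{G}_n^{-1}$ and the consequent well-definedness of $\sh{F}^*$ as a twisted perverse sheaf rather than a mere derived object; this must be traced through the uniqueness clause of the strong factorizable unital equivariance from \ref{existence of equivariance}, after which the remaining work is purely the d\'evissage above.
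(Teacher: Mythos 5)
Your construction of the unit $1_n$ and its one-sided identities is the same as the paper's, and your definition of $\sh{F}^*$ via the anti-diagonal $\iota = m^{-1}(\on{Gr}_n^0) \hookrightarrow \on{Conv}_n^2$ and Verdier duality of $\iota^* \tilde{\sh{F}}$ also coincides with the paper's. Your gerbe computation $\iota^* \tilde{\stack{G}}_n \cong \stack{G}_n^{-1}$ is a clean way to see why $\sh{F}^*$ is again $\stack{G}_n$-twisted; the paper accomplishes the same end by lifting everything to $G(\hat{\KK})_n$ and observing that $q_n^* \sh{F}^* = \DD \on{inv}_n^* q_n^* \sh{F}$, but the two are equivalent.

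The divergence is in how you prove the adjunction $\on{Hom}(\sh{F}_1 *_i \sh{F}_2, \sh{F}_3) \cong \on{Hom}(\sh{F}_1, \sh{F}_2^* *_i \sh{F}_3)$. The paper never constructs evaluation or coevaluation morphisms; instead it runs a direct chain of six-functor adjunctions: $\on{Hom}(m_*(\on{pr}^* \sh{F}_1 \otimes \tilde{\sh{F}}_2), 1_n) = \on{Hom}(\on{pr}^* \sh{F}_1 \otimes \tilde{\sh{F}}_2, m^! u_* \csheaf{k}[n])$, then identifies $m^! u_* \csheaf{k}[n]$ with $i_* \sh{D}$ (the pushed-forward dualizing complex of $m^{-1}(\on{Gr}_n^0)$) by base change, and unwinds $\shHom(\tilde{\sh{F}}_2, i_* \sh{D}) = i_* \DD(i^* \tilde{\sh{F}}_2) = i_* \sh{F}_2^*$. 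This is entirely self-contained and requires no d\'evissage and no appeal to the absolute Satake picture. Your proposal instead constructs $\on{ev}$ and $\on{coev}$ and verifies snake identities by d\'evissage, and there are three gaps in that plan. First, the "tautological trace $\DD \sh{F} \otimes \sh{F} \to \csheaf{k}$" is not what Verdier duality gives you: the natural pairing is $\DD \sh{F} \otimes \sh{F} \to \omega$ (the dualizing complex), and matching that up with the shift $[n]$ and the structure map of $m^{-1}(\on{Gr}_n^0) \cong \on{Gr}_n$ requires exactly the $m^! u_* \csheaf{k}[n] = i_* \sh{D}$ computation the paper performs; it cannot be swept under "$(e_n)_*$-adjunction." Second, the snake identities ask you to show that a specific endomorphism of $\sh{F}$ equals the identity, i.e.\ to compare two given natural transformations. \ref{devissage} reduces the problem of \emph{constructing} a functor or a natural isomorphism to the ULA locus, but it does not say that two given natural transformations agreeing on the ULA locus are globally equal; you would need to supplement it with a faithfulness argument (e.g.\ via the fiber functor), which is not in your sketch. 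Third, the "classical rigidity of $\Sph(\stack{G})$" you appeal to at the end of the d\'evissage is, in this paper, a \emph{corollary} of the relative proposition you are proving (see \ref{absolute relative convolution}, which derives the absolute rigidity by applying $\on{Res}$ to \ref{convolution is rigid}), so unless you cite an independent external source for the gerbe-twisted absolute statement the reduction is circular. Your sketch of $(\sh{F}_1 *_i \sh{F}_2)^* \cong \sh{F}_2^* *_i \sh{F}_1^*$ via "involutivity of the anti-diagonal" and the commutativity constraint is also murkier than the paper's direct argument using the automorphism $\on{inv}_{n,n}\colon (g,h) \mapsto (h^{-1}, g^{-1})$ upstairs on $G(\hat{\KK})_n \times^{G(\hat{\OO})_n} G(\hat{\KK})_n$, which requires neither d\'evissage nor the commutativity constraint. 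In short: your approach to the unit and to the definition of the dual matches the paper, but your route to the adjunction replaces a short, self-contained six-functor computation with a harder (and, as written, incomplete and potentially circular) snake-identity argument.
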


\begin{proof}
 We remark first that since $\stack{G}_n$ is naturally trivialized on the unit section $X^n \to \on{Gr}_n$ by \ref{existence of equivariance}, the ordinary perverse sheaf $\csheaf{k}[n]$ on $X^n$ is $\stack{G}_n$-twisted and ULA and so if we denote $u = i_{0,n;1} = i_{0,n;2}$, we have $1_n = u_*(\csheaf{k}[n]) \in \Sph^\ULA(\stack{G}_n)$ because a closed immersion is proper (applying \ref{ULA facts}).  Then the twisted product $\sh{F}_1 \tbtimes 1_m$ lives on the convolution diagram
 \begin{equation*}
  \on{Gr}_n * \on{Gr}_m^0 \subset \tilde{\on{Gr}}_{n,m};
 \end{equation*}
 however, this subset is identified by the projection $\on{pr}_n$ with $\on{Gr}_n \times X^m$, and on it the twisted product is just the pullback of $\sh{F}_1$ from the first factor.  Thus, its pushforward along $m$, which is $\sh{F}_1 * 1_m$, is the same as $(i_{n,m;1})_* \sh{F}_1$, as claimed; the same argument works for $1_n * \sh{F}_2$.  To see that $1_n$ is an identity for $*_i$, just restrict to the appropriate diagonal, which converts $\tilde{\on{Gr}}_{n,n}$ into $\on{Conv}_n^2$ by \ref{inner convolution diagram diagonal}\ref{en:inner diagram diagonal}.
 
 We define dualization in $\Sph^\ULA(\stack{G}_n)$ as follows.  Let $\on{Gr}_n^0 \subset \on{Gr}_n$ denote the image of the unit section $i_{0,n;1} = i_{0,n;2}$, and write $\map{m}{\on{Conv}_n^2}{\on{Gr}_n}$ for the multiplication map.  Then $m^{-1}(\on{Gr}_n^0) \cong \on{Gr}_n$; indeed, the isomorphism is given by the projection $\map{\on{pr}}{\on{Conv}_n^2}{\on{Gr}_n}$ and, explicitly, identifies data (recall \ref{convolution product grassmannian})
 \begin{equation*}
  (\vect{x}, \sh{T}_1, \sh{T}_2, \phi_1, \phi_2)
   \cong (\vect{x}, \sh{T}, \phi)
 \end{equation*}
 where the fact that the first datum is in $m^{-1}(\on{Gr}_n^0)$ means, by definition, that $\phi_2 \circ \phi_1$ extends to a trivialization of $\sh{T}_2$.  Thus, we may identify $\phi_1 = \phi_2^{-1} = \phi$ and $\sh{T} = \sh{T}_1$.  Let $\map{i}{m^{-1}(\on{Gr}_n^0)}{\on{Conv}_n^2}$; we set
 \begin{equation*}
  \sh{F}^* = \DD(i^* \tilde{\sh{F}}),
 \end{equation*}
 where as always, $\tilde{\sh{F}}$ is the twisted pullback of $\sh{F}$ to $\on{Conv}_n^2$.  To see that this is perverse, let $\map{\on{inv}_n}{\smash{G(\hat{\KK})_n}}{\smash{G(\hat{\KK})_n}}$ be the inversion map.  Then by the above computation of $m^{-1}(\on{Gr}_n^0)$, we have $q_n^* \sh{F}^* \cong \DD \on{inv}_n^* q_n^* \sh{F}$ (where $q_n \colon \smash{G(\hat{\KK})_n} \to \on{Gr}_n$ is the quotient map, as usual; by descent, $\sh{F}^*$ is perverse.  It also follows from the expression that $(\sh{F}^*)^* \cong \sh{F}$, since $\DD \on{inv}_n^* = \on{inv}_n^* \DD$.  Note for later that $\on{inv}_n^* = (\on{inv}_n)_* = (\on{inv}_n)_!$; we denote their common value just by $\on{inv}_n$.  All of these claims follow from the fact that $\on{inv}_n$ is an automorphism.
 
 We consider the identity $(\sh{F}_1 *_i \sh{F}_2)^* \cong \sh{F}_2^* *_i \sh{F}_1^*$.  To prove it, we use the expression of \ref{convolution properties}\ref{en:convolution descent}:
 \begin{equation*}
  q_n^* (\sh{F}_1 *_i \sh{F}_2)
   \cong q_n^* \sh{F}_1 *_i' q_n^* \sh{F}_2
   \cong (m_n')_{*,!} (q_n^* \sh{F}_1 \tbtimes q_n^* \sh{F}_2).
 \end{equation*}
 To apply the dualization formula above, we note that there is a well-defined automorphism
 \begin{align*}
  \on{inv}_{n,n}
   \colon G(\hat{\KK})_n \times^{G(\hat{\OO})_n} G(\hat{\KK})_n
      \to G(\hat{\KK})_n \times^{G(\hat{\OO})_n} G(\hat{\KK})_n,
  &&
  (g, h) \mapsto (h^{-1}, g^{-1})
 \end{align*}
 so that we have, for any $\sh{F}_i \in \Perv(q_n^* \stack{G}_n)$,
 \begin{equation*}
  \on{inv}_{n,n} (\sh{F}_1' *_i' \sh{F}_2')
   = \on{inv}_n \sh{F}_2' *_i' \on{inv}_n \sh{F}_1',
 \end{equation*}
 using $(\on{inv}_n)_* = (\on{inv}_n)_! = \on{inv}_n$.  Applying duality to both sides and taking $\sh{F}_i' = q_n^* \sh{F}_i$, we get the identity by descent.
 
 Let $\map{u}{\on{Gr}_n^0}{\on{Gr}_n}$ be the inclusion of the ``unit'' section, so $1_n = u_* \csheaf{k}[n]$. Then we have for any $\sh{F}_1, \sh{F}_2 \in \Sph^\ULA(\stack{G}_n)$:
  \begin{multline*}
  \on{Hom}(\sh{F}_1 *_i \sh{F}_2, 1_n) \\
    = \on{Hom}(m_*(\on{pr}^* \sh{F}_1 \otimes \tilde{\sh{F}}_2), 1_n)
    = \on{Hom}(\on{pr}^* \sh{F}_1 \otimes \tilde{\sh{F}}_2, m^! 1_n) \\
    = \on{Hom}(\on{pr}^* \sh{F}_1, \shHom(\tilde{\sh{F}}_2, m^! u_* \csheaf{k}[n])).
 \end{multline*}
 But $m^! u_* \csheaf{k}[n] = i_* m^! \csheaf{k}[n] = i_* \sh{D}$, where $\sh{D}$ is the dualizing sheaf $m^{-1}(\on{Gr}_n^0)$.  Then the right operand is simply
 \begin{equation*}
  \shHom(\tilde{\sh{F}}_2, i_* \sh{D})
   = i_* \shHom(i^* \tilde{\sh{F}}_2, \sh{D})
   = i_* \DD (i^* \tilde{\sh{F}}_2)
   = i_* \sh{F}_2^*.
 \end{equation*}
 Finally, then, we have
 \begin{equation*}
  \on{Hom}(\sh{F}_1 *_i \sh{F}_2, 1_n)
   = \on{Hom}(\on{pr}^* \sh{F}_1, i_* \sh{F}_2^*)
   = \on{Hom}(i^* \on{pr}^* \sh{F}_1, \sh{F}_2^*)
   = \on{Hom}(\sh{F}_1, \sh{F}_2^*),
 \end{equation*}
 as desired.  If we have a third sheaf $\sh{F}_3$, then formally:
 \begin{multline*}
  \on{Hom}(\sh{F}_1 *_i \sh{F}_2, \sh{F}_3)
   \cong \on{Hom}(\sh{F}_1 *_i \sh{F}_2, (\sh{F}_3^*)^*) \\
   \cong \on{Hom}((\sh{F}_1 *_i \sh{F}_2) *_i \sh{F}_3^*, 1_n)
   \cong \on{Hom}(\sh{F}_1 *_i (\sh{F}_2 *_i \sh{F}_3^*), 1_n) \\
   \cong \on{Hom}(\sh{F}_1, (\sh{F}_2 *_i \sh{F}_3^*)^*)
   \cong \on{Hom}(\sh{F}_1, \sh{F}_2^* *_i \sh{F}_3),
 \end{multline*}
 as desired.  
\end{proof}

This definition of the dual (phrased differently) was given in \cite{ginzburg}*{\S2.4} and reprised
in \cite{MV_Satake}*{(11.10)}.

\section{The fiber functor}
\label{s:fiber functor}
   
In this section, we will produce a map from spherical sheaves on $\on{Gr}_{G,X^n}$ to those on $\on{Gr}_{T,X^n}$ which will turn out to retain all of the information about the former.  We recall the fundamental diagram 
\begin{equation}
 \label{eq:fiber functor diagram}
 \on{Gr}_{G,X^n}
 \xleftarrow{b}
 \on{Gr}_{B,X^n}
 \xrightarrow{t}
 \on{Gr}_{T,X^n}
\end{equation}
corresponding to any choice of Borel subgroup $B \subset G$ and its quotient torus $T = B/N$. 
We also recall the indexing of the irreducible components of $\on{Gr}_{B,X^n}$ by $\Lambda_T^n$,
which is the same as that of $\on{Gr}_{T,X^n}$; the \emph{connected} component containing
$\on{Gr}_{B,X^n}^{\lambda_1, \dots, \lambda_n}$ depends only on the sum of the $\lambda_i$. If
$\stack{G}_n$ is an sf gerbe on $\on{Gr}_{G,X^n}$, then we let $\stack{T}_n$ be the unique sf
gerbe on $\on{Gr}_{T,X^n}$ such that $t^* \stack{T}_n \cong b^* \stack{G}_n$, as in \ref{eq:group
to torus functor}.

We use the notation, standard in representation theory,
\begin{equation*}
 2\rho = \sum \alpha,
\end{equation*}
where the $\alpha$'s run over all positive roots of $G$.

\subsection*{The maximal fiber functor}

\begin{theorem}{defn}{fiber functor defn}
 The \emph{fiber functor} $\map{F_n}{\Sph(\stack{G}_n)}{\stack{D}(\stack{T}_n)}$ is either one of the following two functors (which we show are isomorphic):
 \begin{itemize}
  \item
  The functor $(F_n)^*_!$, where for $\sh{F} \in \Sph(\stack{G}_n)$, we let $\sh{F}^\lambda$ be the restriction of $b^* \sh{F}$ to the connected component $\on{Gr}_{B,X^n}^\lambda$, and set
  \begin{equation*}
   (F_n)^*_!(\sh{F})^\lambda
    = t_!^\lambda \sh{F}^\lambda[\langle 2\rho, \lambda \rangle]
  \end{equation*}
  to be the component of $(F_n)^*_!(\sh{F})$ on $\on{Gr}_{T,X^n}^\lambda$.
  
  \item
  The functor $(F_n)^!_*$, replacing $b^*$ with $b^!$ and $t_!$ with $t_*$.
 \end{itemize}
 We observe that for ULA sheaves, we have
 \begin{equation*}
  (F_n)^*_!(\sh{F})^\lambda
   = \DD (F_n)^!_* (\DD \sh{F})^{-\lambda}
   = \DD (F_n)^!_*(\sh{F}^*)^\lambda,
 \end{equation*}
 and set $F = F_*^!$.
\end{theorem}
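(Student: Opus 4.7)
The plan is to prove that both $(F_n)^*_!$ and $(F_n)^!_*$ take values in twisted perverse sheaves on $\on{Gr}_{T,X^n}$ concentrated in a single perverse degree (namely zero, after the shift), and that there is a canonical isomorphism between them; the duality relation will then follow by formally applying Verdier duality. My first step would be to apply d\'evissage (\ref{devissage}) to reduce to $\sh{F} \in \Sph^\ULA(\stack{G}_n)$. This reduction is legitimate because both functors are compatible with nearby cycles: the pullbacks $b^*$ and $b^!$ commute respectively with $j_!$- and $j_*$-extensions by base change, while $t^\lambda_!$ and $t^\lambda_*$ commute with both because $t^\lambda$ restricted to any $G(\hat{\OO})_n$-orbit support of $\sh{F}$ is (ind-)proper over $X^n$.

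In the ULA case, perversity and concentration in a single degree of $(F_n)^*_!(\sh{F})^\lambda$ and $(F_n)^!_*(\sh{F})^\lambda$ follow from the Mirkovi\'c--Vilonen dimension estimate
\[
 \dim(\on{Gr}_{G,X^n}^\mu \cap \on{Gr}_{B,X^n}^\lambda) = n + \langle \rho, \mu+\lambda \rangle
\]
of \ref{relative orbit properties}, together with the $\Aff^\infty$-bundle structure $\on{Gr}_{B,X^n}^\lambda \to \on{Gr}_{T,X^n}^\lambda \cong X^n$ of \ref{borel components}. These inputs show that the shifts $[\langle 2\rho, \lambda \rangle]$ land both versions in perverse degree zero. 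The identification of the two versions would then come from Braden's hyperbolic localization, applied to a regular dominant cocharacter $\check{\eta}$ of $T$ acting on $\on{Gr}_{G,X^n}$ by loop rotation: its attractor is $\on{Gr}_{B,X^n}$, its repeller $\on{Gr}_{B^-,X^n}$, and its fixed locus $\on{Gr}_{T,X^n}$. This action lifts canonically to $\stack{G}_n$ by the unital strongly factorizable equivariance of \ref{existence of equivariance}, and Braden's theorem then supplies the desired isomorphism $t^\lambda_! b^* \sh{F} \cong t^\lambda_* b^! \sh{F}$.

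The duality observation for ULA sheaves is a direct calculation once the above isomorphism is in hand: applying $\DD$ to $(F_n)^!_*$ swaps the pair $(b^!, t^\lambda_*)$ with $(b^*, t^\lambda_!)$ and negates the shift. The apparent sign $\lambda \leftrightarrow -\lambda$ in the formula is absorbed by identifying the rigid-tensor dual $\sh{F}^* = \DD(i^*\tilde{\sh{F}})$ of \ref{convolution is rigid} with the inversion pullback $\on{inv}_n^* \DD\sh{F}$, as in the proof of that proposition, and $\on{inv}_n$ permutes the components $\on{Gr}_{T,X^n}^\lambda$ and $\on{Gr}_{T,X^n}^{-\lambda}$.

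The main obstacle will be carrying out Braden's hyperbolic-localization argument in the infinite-dimensional, twisted, factorizable setting. One must verify (i) that the $\Gm$-action genuinely contracts each $G(\hat{\OO})_n$-orbit onto the corresponding $T$-orbit in the ind-scheme sense required by Braden's comparison, and (ii) that the $\Gm$-equivariance inherited by $\stack{G}_n$ restricts to the trivial equivariance on the fixed locus $\on{Gr}_{T,X^n}$, so that the hyperbolic-localization functors truly take values in $\stack{T}_n$-twisted perverse sheaves rather than merely in $\Gm$-equivariant forms on the ambient gerbe.
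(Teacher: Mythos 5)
Your core idea --- Braden's hyperbolic localization --- matches the paper, which simply cites \cite{B_Hyperbolic} as in \cite{MV_Satake}*{Theorem 3.5}. But the d\'evissage reduction you place in front of it creates a genuine circularity. Braden's theorem applies to arbitrary constructible complexes, so there is no need to first reduce to ULA sheaves or to establish perversity; the perversity is a later claim (\ref{fiber functor properties}\ref{en:fiber exactness}), not part of this statement. And the reduction cannot be justified at this stage anyway: to apply \ref{devissage} to the structure ``an isomorphism $(F_n)^*_! \to (F_n)^!_*$'' you need each functor to be compatible with nearby cycles, i.e.\ horizontal in the sense of \ref{horizontal functor}. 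By \ref{horizontal functors}, $t_! b^*$ is only left-horizontal and $t_* b^!$ only right-horizontal; the paper concludes that $F_n$ is horizontal only \emph{after} identifying the two descriptions, precisely because having both at once supplies both the $j_!$- and the $j_*$-compatibility. Your assertion that $t^\lambda$ is (ind-)proper on the support of $b^*\sh{F}$, which would rescue the argument, is false: that support is a finite union of sets of the form $\bar{\on{Gr}}_{G,X^n}^\mu \cap \on{Gr}_{B,X^n}^\lambda$, which is a strict open subscheme of the proper $\bar{\on{Gr}}_{G,X^n}^\mu \cap \bar{\on{Gr}}_{B,X^n}^\lambda$ and hence quasi-affine, not proper over $X^n$; the fact that $t^\lambda_! \neq t^\lambda_*$ on such supports is exactly what makes the identification nontrivial.

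Two smaller points. The relevant $\Gm$-action for Braden is the adjoint action of a regular dominant cocharacter of $T$ on $\on{Gr}_{G,X^n}$, not loop rotation (which rotates the formal disk and does \emph{not} have $\on{Gr}_{B,X^n}$ as its attractor). Your treatment of the duality observation is correct in spirit: the paper simply asserts it, and the identification $q_n^* \sh{F}^* \cong \DD\on{inv}_n^* q_n^* \sh{F}$ from the proof of \ref{convolution is rigid} is indeed what converts the sign change $\lambda \leftrightarrow -\lambda$ into the tensor dual $\sh{F}^*$.
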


\begin{theorem}{prop}{fiber functor properties}
 \mbox{}
 \begin{enumerate}
  \item \label{en:fiber independence}
  Neither $(F_n)_!^*$ nor $(F_n)_*^!$ depends on the choice of Borel or torus in $G$, and they are isomorphic.
  
  \item \label{en:fiber exactness}
  The functor $F_n$ is $t$-exact, faithful, and preserves the ULA property.
  
  \item \label{en:fiber convolution}
  There are natural isomorphisms
  \begin{align*}
   F_{n + m}(\sh{F}_1 *_o \sh{F}_2) &\cong F_n(\sh{F}_1) *_o F_m(\sh{F}_2)
   \\
   F_n(\sh{F}_1 *_i \sh{F}_2) &\cong F_n(\sh{F}_1) *_i F_n(\sh{F}_2)
  \end{align*}
  (the latter when the sheaves are ULA) compatible with the associativity constraints.
 \end{enumerate}
\end{theorem}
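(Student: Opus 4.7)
The plan is to handle the three parts by reducing to the ULA case via \ref{devissage}, then invoking a relative version of the Mirkovi\'c--Vilonen dimension argument, and finally bootstrapping the open factorization locus to the whole of $\on{Gr}_{G,X^n}$.

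First, the component functors $b^*$, $b^!$, $t^\lambda_!$, $t^\lambda_*$ are horizontal in the sense of \ref{horizontal functor}: $b$ is a locally closed immersion, and by \ref{relative orbit properties} every section of $\Sph(\stack{G}_n)$ is supported on finitely many $G(\hat{\OO})_n$-orbits meeting only finitely many $\on{Gr}_{B,X^n}^\mu$, so $t^\lambda$ is ind-proper on the support of anything we apply it to. Hence $(F_n)^*_!$ and $(F_n)^!_*$ are compatible with nearby cycles, and by \ref{devissage} it suffices to prove all statements for ULA input.

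For (ii): on the open locus $U = X^n_{1,\dots,1}$ where $\on{Gr}_{G,X^n}$ factorizes as $\on{Gr}_G^n$ (times $U$), the claim reduces fiberwise to the absolute Mirkovi\'c--Vilonen theorem. There, the dimension estimate $\dim(\on{Gr}_G^\nu \cap \on{Gr}_B^\mu) = \langle \rho, \nu+\mu\rangle$ from \ref{absolute orbit properties} combined with semi-perversity gives that $(F_n)^*_!(\sh{F})^\lambda$ lives in perverse degree $\leq 0$ and $(F_n)^!_*(\sh{F})^\lambda$ in degree $\geq 0$; the natural transformation between them is an isomorphism precisely because the intersection has the correct dimension, so both functors agree and land in $\Perv$. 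Since we are in the ULA setting the statement then extends uniquely across the diagonals by \ref{ULA nearby cycles}. Faithfulness follows from the absolute fact that $t^{w_0(\lambda)}$ is a single point in $\bar{\on{Gr}}{}_G^\lambda$, so for simple $\on{IC}_\lambda$ the component $F_n(\on{IC}_\lambda)^{w_0(\lambda)}$ is nonzero; ULA-preservation is immediate from \ref{ULA facts} applied to $t^\lambda_!$ composed with the locally closed $b^*$ on support.

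For (i): the variety of Borels is connected, so it suffices to compare two Borels differing by a simple reflection $s_{\check{\alpha}}$. Using the $\Prj^1$-bundles $\Prj_{1;\check\alpha}^\lambda$ of \ref{general projective line bundles}, the two Borels realize the charts $U_1^0$ and $U_1^\infty$ and the restrictions of any spherical sheaf along the two zero/infinity sections compute the same object (both equal the $H^*(\Prj^1)$-sum of the bundle's pushforward, which is symmetric in the two sections). Independence of the choice of $T$ within a fixed $B$ follows because $N(\hat{\OO})_n$ is cohomologically trivial. The isomorphism $(F_n)^*_! \cong (F_n)^!_*$ was obtained in the course of (ii).

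For (iii): by \ref{convolution properties}\ref{en:convolution is perverse} and d\'evissage, we may assume $\sh{F}_1, \sh{F}_2$ are ULA and, restricting to the factorization locus $X^{n+m}_{n,m}$, the statement $F_{n+m}(\sh{F}_1 *_o \sh{F}_2) \cong F_n(\sh{F}_1) *_o F_m(\sh{F}_2)$ reduces to the K\"unneth formula $F(\sh{F}_1 \boxtimes \sh{F}_2) \cong F(\sh{F}_1) \boxtimes F(\sh{F}_2)$, which is tautological because on the factorization locus both $\on{Gr}_G$ and $\on{Gr}_B$ split as products and the shift $\langle 2\rho, \lambda_1 + \lambda_2\rangle = \langle 2\rho, \lambda_1\rangle + \langle 2\rho, \lambda_2\rangle$ distributes. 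Since both sides are $j_{!*}$ of their restrictions to $U$ (by ULA-ness and \ref{ULA nearby cycles}), the isomorphism extends across the diagonals. Inner convolution follows by restricting along the diagonal $\Delta \subset X^{2n}$, using compatibility with nearby cycles. Associativity compatibility comes from the associativity of K\"unneth.

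\emph{Main obstacle.} The most delicate point is verifying, on the boundary strata where coordinates collide, that the relative functors $(F_n)^*_!$ and $(F_n)^!_*$ genuinely commute with nearby cycles --- equivalently, that the perverse shifts by $\langle 2\rho, \lambda\rangle$ are consistent with the way the components $\on{Gr}_{B,X^n}^\lambda$ fuse across diagonals (recall from \ref{borel components} and \ref{relative orbit properties} that the indexing of components and the dimension estimate are both additive in $\lambda$, which is exactly the input required). Once this bookkeeping is checked, the entire proposition follows from the Mirkovi\'c--Vilonen argument on the open locus together with the d\'evissage.
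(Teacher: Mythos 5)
Your overall reduction (d\'evissage to ULA sheaves, factorization away from the diagonals, reduction to the absolute case) is the same skeleton as the paper's proof, and for part (iii) your argument via the fusion product $\sh{F}_1 *_o \sh{F}_2 = j_{!*}(\sh{F}_1 \boxtimes \sh{F}_2)|_U$ is correct and essentially identical to the paper's. But there are genuine gaps in parts (i) and (ii).

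First, the identity $(F_n)^*_! \cong (F_n)^!_*$ does not follow from a dimension estimate alone. You assert that ``the natural transformation between them is an isomorphism precisely because the intersection has the correct dimension,'' but the only canonical natural transformation relating the $!$-pushforward/$*$-pullback functor to the $*$-pushforward/$!$-pullback functor is Braden's hyperbolic localization theorem \cite{B_Hyperbolic}, which the paper cites explicitly (following \cite{MV_Satake}*{Theorem 3.5}). Braden's theorem identifies $(F_n)^*_!$ for $B$ with $(F_n)^!_*$ for the opposite Borel $B^{\op}$; one then uses $G(\hat{\OO})_n$-equivariance to compare $B^{\op}$ with $B$. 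Without this input the two hyperbolic-localization functors are simply different, and the dimension estimate only controls the perverse amplitude of each separately. This matters downstream as well: the paper's proof of ULA preservation equates the two projection-formula identities \ref{eq:fiber_functor_ula} and \ref{eq:fiber_functor_ula'} via $(F_n)^*_! = (F_n)^!_*$, so Braden's theorem is not optional.

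Second, your claim that ``ULA-preservation is immediate from \ref{ULA facts} applied to $t^\lambda_!$ composed with the locally closed $b^*$ on support'' does not go through. \ref{ULA facts} covers Verdier duality and proper pushforward over the base; it says nothing about pullback along a locally closed immersion such as $b$, and indeed restricting a ULA complex to a subvariety does not in general preserve ULA-ness over $X^n$. The paper's argument is genuinely different: it runs the projection formula once for $t_! b^*$ (tensor with $f_T^* \sh{G}$) and once for $t_* b^!$ (shriek-tensor with $f_T^! \sh{G}$), then matches the two left-hand sides using the ULA property of $\sh{F}$ itself together with the Braden identity.

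Third, your $\Prj^1$-bundle argument for independence of the Borel is not how the paper proceeds, and as stated it is flawed: the zero and infinity sections of $\Prj_{1;\check\alpha}^\lambda$ sit at weights $\lambda$ and $\lambda + \check\alpha$, so the shifts $\langle 2\rho, \cdot\rangle$ entering the definition of $F_n$ differ on the two charts, and the ``$H^*(\Prj^1)$-sum'' is not symmetric in them. The paper's argument is both shorter and correct: all Borels and tori in $G$ are conjugate by some $g \in G(\C) \subset G(\hat{\OO})_n$, and spherical objects are $G(\hat{\OO})_n$-equivariant, so their pullbacks along conjugate $b$'s agree; no $\Prj^1$ geometry is needed. (Note also that your reduction to a simple reflection requires transitivity of $W$ on chambers, which is exactly the connectedness you invoke, so it is not simpler than the conjugation argument.)
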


\begin{proof}
 For \ref{en:fiber independence}, note that alternate choices of $B$ and $T$ are obtained by conjugation; i.e.\ $B' = g^{-1} B g$, $T' = g^{-1} T g$ for some $g \in G(k)$.  On the affine grassmannian, this means that the various possible $b$ are permuted by translation by $G(\C) \times X^n \subset G(\hat{\OO})_n$, and so applied to spherical objects their pullbacks are the same.  The identity $(F_n)_!^* = (F_n)_*^!$ follows from \cite{B_Hyperbolic} (as in \cite{MV_Satake}*{Theorem 3.5}).
  
 To show that it is faithful, the argument in \cite{MV_Satake} also works: if the support of $\sh{F} \in \Sph(\stack{G}_n)$ intersects $\on{Gr}_{B,X^n}^\lambda$ for some $\lambda$ which is minimal with respect to the ordering of coweights by positive coroots, then this intersection is a single point in each fiber over $X^n$ and the stalk of $F_n(\sh{F})^\lambda$ is the stalk of $\sh{F}$ at that point.
 
 To show that $F_n$ preserves the ULA property, we do a formal computation.  For $\sh{G}$ any complex on $X^n$ and $f_G, f_B, f_T$ the structure maps $\on{Gr}_{G,X^n} \to X^n$, etc., we have up to shifts and indexing:
 \begin{align}
  \label{eq:fiber_functor_ula}
  (F_n)_!^*(\sh{F} \otimes f_G^* \sh{G})
    &= t_! b^* (\sh{F} \otimes f_G^* \sh{G})
    = t_! (b^* \sh{F} \otimes f_B^* \sh{G})
    = t_! (b^* \sh{F} \otimes t^* f_T^* \sh{G}) \notag \\
    &= (t_! b^* \sh{F}) \otimes f_T^* \sh{G}
    = (F_n)_!^* \sh{F} \otimes f_T^* \sh{G}.
 \end{align}
 The crucial step was an application of the projection formula for $t$.  An application of the dual projection formula $t_*(b^! \sh{F} \xtimes t^! f_T^! \sh{G}) = (t_* b^! \sh{F}) \xtimes f_T^! \sh{G}$ gives
 \begin{equation}
  \label{eq:fiber_functor_ula'}
  (F_n)_*^!(\sh{F} \xtimes f_G^! \sh{G})
    = (F_n)_*^! \sh{F} \xtimes f_T^! \sh{G}.
 \end{equation}
 Since $(F_n)_*^! = (F_n)_!^*$, shifting this by $2n$ and equating the left sides by the ULA property of $\sh{F}$, one concludes the equation of the ULA property for $F_n(\sh{F})$.
 
 To show that $F_n$ is t-exact, we perform some reductions and invoke a few results that will be proven in later sections (they do not depend on this proposition). First, by \ref{devissage}, it suffices to show that it sends ULA perverse sheaves to perverse sheaves.  Indeed, by \ref{horizontal functors}, $F_n$ (as the composition both of a $!$ pullback and $*$ pushforward and a $*$ pullback and $!$ pushforward) is horizontal, hence compatible with nearby cycles by \ref{compatible with nearby cycles}.  Since it actually preserves the ULA property, by \ref{ULA nearby cycles} it suffices to prove that each $F_n(\sh{F})$ is perverse on the complement of all diagonals in $X^n$.  This is a local question, so by \ref{ULA local equivalence} it suffices to show that $F_n(\sh{F})|_{\vect{x}}[-n]$ is perverse for some (any) point $\vect{x} \in X^n$. Since the fiber of $\on{Gr}_{G,X^n}$ is $\on{Gr}_G^n \cong \on{Gr}_{G^n}$ this is then a consequence of \ref{absolute fiber properties}.
 
 Finally, $F_n$ preserves convolution.  As usual, \ref{devissage} allows us to assume that we are applying it to the convolution of two ULA sheaves, since both $F_n$ and convolution are compatible with nearby cycles.  Now we use the fact that
 \begin{equation*}
  \sh{F}_1 *_o \sh{F}_2 = j_{!*} (\sh{F}_1 \boxtimes \sh{F}_2)|_U
 \end{equation*}
 from \ref{convolution properties}\ref{en:convolution is perverse}, where both sides are ULA. 
 Obviously,
 \begin{equation*}
  F_{n + m}|_U(\sh{F}_1 \boxtimes \sh{F}_2)|_U = (F_n(\sh{F}_1) \boxtimes F_m(\sh{F}_2))|_U
 \end{equation*}
 and since $F_k$ preserves the ULA property, we have
 \begin{multline*}
  F_{n + m}(\sh{F}_1 *_o \sh{F}_2)
    = j_{!*}(F_{n + m}(\sh{F}_1 *_o \sh{F}_2)|_U)
    = j_{!*}(F_n(\sh{F}_1) \boxtimes F_m(\sh{F}_2))|_U \\
    = F_n(\sh{F}_1) *_o F_m(\sh{F}_2).
 \end{multline*}
 This is what we want.
\end{proof}

The proof would likewise show that $F_n$ preserves the commutativity constraint of $*_i$ except that the shift operator in $\stack{D}$, which appears in its definition, is \emph{not} a tensor functor but rather a graded tensor functor, where the grading is $(-1)^{ij}$ for the product of complexes in degrees $i$ and $j$.  In particular, as defined, the commutativity constraints on $\on{Gr}_{G,X^n}$ and on $\on{Gr}_{T,X^n}$ differ by a factor of
\begin{equation*}
 (-1)^{\langle 2\rho, \lambda \rangle \langle 2\rho, \mu \rangle}
\end{equation*}
for the convolution of weight spaces supported on $\on{Gr}_{T,X^n}^\lambda$ and $\on{Gr}_{T,X^n}^\mu$.  Note that the function $\smash{\on{Gr}_{G,X^n}^\lambda} \mapsto \langle 2\rho, \lambda \rangle \pmod 2$ is constant on connected components of $\on{Gr}_{G,X^n}$ by \ref{absolute orbit properties}, so we may modify the natural commutativity constraint on $*_i$ by the above factor, when two sheaves supported on the appropriate components are convolved.  Thus, we conclude:

\begin{theorem}{cor}{fiber functor commutative}
 From this point on, let $\Sph^\ULA(\stack{G}_n)$ denote the same category as usual but whose tensor structure has its commutativity constraint modified as above.  Then the fiber functor
 \begin{equation*}
  \map{F_n}{\Sph^\ULA(\stack{G}_n)}{\Sph^\ULA(\stack{T}_n)}
 \end{equation*}
 is a tensor functor.
\end{theorem}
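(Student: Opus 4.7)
The plan is to reduce everything to a careful sign computation, since \ref{fiber functor properties} has already established that $F_n$ is exact, faithful, and compatible with convolution and associativity. The only remaining structure of the tensor category to check is the commutativity constraint, and the discussion immediately preceding the corollary already isolates the obstruction: the shift functor $[1]$ on $\stack{D}$ is a \emph{signed} tensor functor, and $F_n$ involves the shift $[\langle 2\rho,\lambda\rangle]$ on the piece living over $\on{Gr}_{T,X^n}^\lambda$.

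First I would revisit the proof that $F_n$ preserves convolution in \ref{fiber functor properties}\ref{en:fiber convolution}. By \ref{devissage} it suffices to work with ULA sheaves, where both $\sh{F}_1 *_i \sh{F}_2$ and $F_n(\sh{F}_1) *_i F_n(\sh{F}_2)$ are obtained by restricting $j_{!*}((\farg)\boxtimes(\farg))|_U$ along the main diagonal in $X^{2n}$, then comparing with $F_n$. The commutativity constraint on $*_i$ is produced (in \ref{convolution properties}) by swapping the two blocks of $n$ coordinates on $X^{2n}$, a map that preserves the diagonal $\Delta \subset X^{2n}$. On the $G$-side this swap introduces no sign, because the isomorphism $\mathrm{sw}^* j_{!*}(\sh{F}_1 \boxtimes \sh{F}_2)|_U \cong j_{!*}(\sh{F}_2\boxtimes \sh{F}_1)|_U$ is the canonical commutativity of outer tensor product of perverse sheaves. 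On the $T$-side one runs exactly the same argument, but since $F_n(\sh{F}_i)$ is concentrated on $\on{Gr}_{T,X^n}$ \emph{shifted} by $\langle 2\rho,\lambda_i\rangle$ on the component indexed by $\lambda_i$, the comparison of the natural commutativity of $F_n(\sh{F}_1)\boxtimes F_n(\sh{F}_2)$ with that of $\sh{F}_1\boxtimes\sh{F}_2$ picks up the Koszul sign $(-1)^{\langle 2\rho,\lambda\rangle\langle 2\rho,\mu\rangle}$ on each pair of weight components.

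Next I would check that this sign genuinely defines a modification of the commutativity constraint on $\Sph^\ULA(\stack{G}_n)$ (not just on its image). By \ref{absolute orbit properties} together with the description of components in \ref{borel components}, the function $\lambda \mapsto \langle 2\rho,\lambda\rangle \pmod 2$ is constant on each connected component of $\on{Gr}_{G,X^n}$; equivalently, it factors through $\pi_1(G) = \Lambda_T/\Lambda_{T,r}$, since $\langle 2\rho,\check\alpha\rangle$ is even for every coroot of any irreducible component of rank $\geq 1$ (and trivially so in the torus case). Thus the assignment sending a pair of objects supported on the components indexed by $\lambda,\mu$ to multiplication by $(-1)^{\langle 2\rho,\lambda\rangle\langle 2\rho,\mu\rangle}$ on $\sh{F}_1 *_i \sh{F}_2$ is a well-defined automorphism of the bifunctor $*_i$. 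It is straightforward to verify the hexagon axiom: the sign is bimultiplicative in $(\lambda,\mu)$, so it is compatible with the associativity constraint just as the Koszul sign in graded tensor categories is.

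Finally, with this modified commutativity on the source, the sign discrepancy traced through in the first step is exactly cancelled, so $F_n$ intertwines the (modified) commutativity constraint on $\Sph^\ULA(\stack{G}_n)$ with the natural one on $\Sph^\ULA(\stack{T}_n)$. Combined with \ref{fiber functor properties}, this upgrades $F_n$ to a symmetric tensor functor. The main obstacle is really only bookkeeping: tracking the shift carefully through the $j_{!*}$ definition of $*_i$ and verifying that the parity function is constant on components (so that the sign is a genuine $2$-cocycle and not merely a cochain). Once those two checks are in hand, the corollary is formal.
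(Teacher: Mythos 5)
Your argument is correct and matches the paper's own reasoning, which appears in the paragraph immediately preceding the corollary rather than in a separate proof: both identify the Koszul sign $(-1)^{\langle 2\rho,\lambda\rangle\langle 2\rho,\mu\rangle}$ introduced by the shift $[\langle 2\rho,\lambda\rangle]$ inside $F_n$ and use the constancy of $\langle 2\rho,\lambda\rangle \bmod 2$ on connected components of $\on{Gr}_{G,X^n}$ to justify modifying the commutativity constraint by this factor. Your explicit verification of the hexagon axiom and the reduction of the parity statement to $\pi_1(G)$ are slightly more detailed than the paper's citation of \ref{absolute orbit properties}, but this is elaboration of the same route rather than a different approach.
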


\subsection*{Parabolic fiber functors}

We will need some generalizations of $F_n$, described in \cite{BD_quantization}*{5.3.27--31}.  Let $P$ be a parabolic subgroup of $G$ with unipotent radical $M$ and Levi quotient $L = P/M$, and recall the diagram \ref{eq:parabolic diagram}:
\begin{equation*}
 \on{Gr}_{G,X^n} \xleftarrow{p} \on{Gr}_{P,X^n} \xrightarrow{l} \on{Gr}_{L,X^n}
\end{equation*}
First of all, it is easily verified that
\begin{equation}
 \label{eq:parabolic pullback}
 \tikzsetnextfilename{parabolic_fiber_square}
 \begin{tikzpicture}[baseline = (current bounding box),row sep = 4em, column sep = 5em]
  \matrix[matrix of math nodes] (objects)
  {
   \on{Gr}_{P, X^n} & \on{Gr}_{B, X^n} \\
   \on{Gr}_{L, X^n} & \on{Gr}_{B_L,X^n} \\
  };
  { [math arrows]
   \draw (objects-1-2) -- node {f}   (objects-1-1);
   \draw (objects-1-2) -- node {g}   (objects-2-2);
   \draw (objects-1-1) -- node {l}   (objects-2-1);
   \draw (objects-2-2) -- node {b_L} (objects-2-1);
  }
 \end{tikzpicture}
\end{equation}
is a pullback diagram, where $B_L = B/N$ is a Borel subgroup in $L$, $b_L$ is the $b$ corresponding to the reductive group $L$, $f$ is the map corresponding to the inclusion $B \to P$, and $g$ to the projection $B \to B_L$. Thus, the components $\on{Gr}_{B,X^n}^\lambda$ sent by $f$ into the same component of $\on{Gr}_{P,X^n}$ are indexed by those $\lambda$ for which $b_L$ sends $\on{Gr}_{B_L,X^n}$ to the same component of $\on{Gr}_{L,X^n}$; namely, they are indexed by $\Lambda/\Lambda_L$, where $\Lambda_L$ is the coroot lattice of $L$.  Thus, we can make the following definition.

\begin{theorem}{defn}{parabolic fiber}
 With notation as above, also let $2\rho_L$ be the sum of the positive roots in $L$ (with respect to $B_L$), and $2\rho_{G/L} = 2\rho - 2\rho_L$.  For $\sh{F} \in \Sph(\stack{G}_n)$, let $p^* \sh{F} = \bigoplus_{\Lambda/\Lambda_L} \sh{F}^{P,\lambda}$ with the summands supported on the $\lambda$-part of $\on{Gr}_{P,X^n}$.  Then the $\lambda$-weight space of the \emph{parabolic fiber functor} $F^P_n$ is
 \begin{equation*}
  (F^P_n)_!^*(\sh{F})_\lambda
    = l_! \sh{F}^{P,\lambda} [\langle 2\rho_{G/L}, \lambda \rangle]
 \end{equation*}
 where the shift does not depend on the representative of $\lambda$ in $\Lambda$.  Likewise, we define $(F^P_n)_*^!$.
\end{theorem}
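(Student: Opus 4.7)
This definition makes three implicit claims requiring verification: (i) the decomposition $p^*\sh{F} = \bigoplus_{\lambda \in \Lambda_T/\Lambda_L} \sh{F}^{P,\lambda}$ is along the connected components of $\on{Gr}_{P,X^n}$, which are indeed indexed by $\Lambda_T/\Lambda_L$; (ii) the shift $\langle 2\rho_{G/L}, \lambda\rangle$ depends only on the class of $\lambda$ in $\Lambda_T/\Lambda_L$; and (iii) the two formulas $(F^P_n)_!^*$ and $(F^P_n)_*^!$ yield canonically isomorphic functors.

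I would handle (i) by combining \ref{borel components} applied to $L$ with the pullback square \ref{eq:parabolic pullback}. Applied to $L$, that proposition identifies the components of $\on{Gr}_{L,X^n}$ with $\pi_1(L) = \Lambda_T/\Lambda_L$; chasing the pullback square shows that two components $\on{Gr}_{B,X^n}^{\lambda}$ and $\on{Gr}_{B,X^n}^{\lambda'}$ have the same image under $f$ in $\on{Gr}_{P,X^n}$ exactly when their images in $\on{Gr}_{L,X^n}$ under $l \circ f$ coincide, i.e.\ when $\lambda - \lambda' \in \Lambda_L$. Since the resulting components of $\on{Gr}_{P,X^n}$ are simultaneously open and closed, the direct-sum decomposition of $p^*\sh{F}$ along them is immediate.

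I would handle (ii) by checking that $2\rho_{G/L}$ is $W_L$-invariant. For a simple coroot $\check\alpha$ of $L$, the reflection $s_{\check\alpha}$ permutes the positive roots of $G$ not lying in $L$: any such root $\beta$ has positive coefficient on some simple root of $G$ not in $L$, a coefficient that $s_{\check\alpha}$ preserves (while possibly changing the coefficients of simple roots inside $L$), so $s_{\check\alpha}(\beta)$ is again positive and again outside $L$. Hence $s_{\check\alpha}(2\rho_{G/L}) = 2\rho_{G/L}$, so $\langle 2\rho_{G/L}, \check\alpha\rangle = 0$; since $\Lambda_L$ is generated by the simple coroots of $L$, the functional $\langle 2\rho_{G/L}, \farg\rangle$ descends to $\Lambda_T/\Lambda_L$.

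For (iii), I would mirror the argument in the proof of \ref{fiber functor properties}\ref{en:fiber independence}: invoke Braden's hyperbolic restriction theorem \cite{B_Hyperbolic} for a generic one-parameter subgroup of the center $Z(L)$ that is strictly $P$-dominant. The resulting $\Gm$-action on $\on{Gr}_{G,X^n}$ has fixed locus $\on{Gr}_{L,X^n}$, attracting locus $\on{Gr}_{P,X^n}$, and repelling locus $\on{Gr}_{P^\op, X^n}$, and Braden's theorem produces a canonical isomorphism $l_!\, p^* \cong l^\op_*\, (p^\op)^!$ on $G(\smash{\hat{\OO}})_n$-equivariant twisted sheaves. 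Because $F^P_n$ on spherical input depends only on the $G$-conjugacy class of $P$ (by the same translation argument used for the Borel case in \ref{en:fiber independence}), one may replace $P^\op$ by $P$ on the right-hand side, yielding the desired $(F^P_n)_!^* \cong (F^P_n)_*^!$ with the shift $\langle 2\rho_{G/L}, \lambda\rangle$ corresponding to the relative dimension of the attracting cells over $\on{Gr}_{L,X^n}^\lambda$. The main obstacle is applying Braden's theorem in the factorizable ind-scheme setting with gerbe twisting, but by \ref{relative orbit properties} every spherical sheaf is supported on a finite-dimensional $G(\smash{\hat{\OO}})_n$-stable subvariety that is $\Gm$-stable, and the $\Gm$-action preserves both $\stack{G}_n$ and its equivariance datum, so the theorem reduces to its classical form.
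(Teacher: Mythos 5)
Your verification of (i) and (ii) is correct and matches the paper's own (terse) discussion in the paragraph preceding the definition: (i) is exactly the argument via the pullback square \ref{eq:parabolic pullback} and the indexing of $\pi_0(\on{Gr}_{L})$ by $\pi_1(L) = \Lambda_T/\Lambda_L$, and (ii), which the paper simply asserts, is the standard fact that $s_\alpha$ permutes the positive roots of $G$ outside $\Phi_L$ so that $2\rho_{G/L}$ is $W_L$-invariant and hence vanishes on $\Lambda_L$. These two points are all the definition actually needs in order to be well posed.

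Your point (iii) is not part of the definition at all — the equality $(F^P_n)_!^* = (F^P_n)_*^!$ is the content of \ref{parabolic fiber functor properties}\ref{en:parabolic independence}, which the paper proves by a different route: it shows $F_n = F_n^L \circ F^P_n$ in both the $!^*$ and $*^!$ incarnations (using the base-change identities $b_L^* l_! = g_! f^*$ etc.\ in the square \ref{eq:parabolic pullback}), and then applies the faithful exact functor $F_n^L$ to Braden's natural map $(F^P_n)_!^* \to (F^P_n)_*^!$, reducing the parabolic case to the already-settled Borel case. Your proposed shortcut — apply Braden directly to a generic $\Gm \subset Z(L)$ and then "replace $P^\op$ by $P$ on the right-hand side by the same translation argument used for the Borel case" — has a genuine gap: $P$ and $P^\op$ are $G$-conjugate only when $-w_0$ stabilizes the set of simple roots of $L$ (for Borels this is automatic, since $w_0 B w_0^{-1} = B^\op$), but it fails already for, say, the parabolic $P_{\{1\}}$ in $\on{SL}_3$, whose opposite is $G$-conjugate to $P_{\{2\}}$ and not to $P_{\{1\}}$. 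The fact that $F^{P^\op} \cong F^P$ on spherical objects \emph{is} true, but the only argument I see for it is precisely the factorization-through-$F_n^L$ trick the paper uses — so your route doesn't actually avoid that step, and the cited justification is wrong.
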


\begin{theorem}{prop}{parabolic fiber functor properties}
 \mbox{}
 \begin{enumerate}
  \item \label{en:parabolic independence}
  $(F_n)_!^* \circ (F^P_n)_!^* = (F_n)_!^*$ and $(F_n)_*^! \circ (F^P_n)_*^! = (F_n)_*^!$, where the left-hand $F_n$ refers to that defined for the reductive group $L$.  In particular, $(F^P_n)_!^* = (F^P_n)_*^!$. 
  
  \item \label{en:parabolic functorial}
  Let $P_1 \subset P_2$ be two parabolic subgroups of $G$, with corresponding fiber functors $F^{P_1}_n, F^{P_2}_n$, and let $P_L = P_1 \cap L_2$; then $F^{P_1}_n = F^{P_L}_n \circ F^{P_2}_n$.
  
  \item \label{en:parabolic properties}
  $F^P_n$ is t-exact, faithful, and respects convolution and the ULA property.
 \end{enumerate}
\end{theorem}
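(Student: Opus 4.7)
My plan for proving \ref{parabolic fiber functor properties}:

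For \ref{en:parabolic independence}, I will use the fact that the square \ref{eq:parabolic pullback} is Cartesian (easily checked from the moduli description: a reduction of a $G$-torsor to $B$ is the same as a compatible pair of reductions to $P$ and to $B_L$). Proper base change therefore gives $b_L^*\,l_! \cong g_!\,f^*$ and its Verdier dual $b_L^!\,l_* \cong g_*\,f^!$. Applying $(F_n^L)_!^*$ to $(F^P_n)_!^*(\sh{F})$ on the $\mu$-component of $\on{Gr}_{T,X^n}$ gives, up to shifts,
\[
 (t_L)_!\,b_L^*\bigl(l_!\,\sh{F}^{P,\bar\mu}\bigr)\bigr|_{\on{Gr}_{B_L,X^n}^\mu}
  \;\cong\; (t_L)_!\,g_!\bigl(f^*\sh{F}^{P,\bar\mu}\bigr)\bigr|_{\on{Gr}_{B_L,X^n}^\mu}
  \;\cong\; t_!\,b^*\sh{F}\bigr|_{\on{Gr}_{B,X^n}^\mu},
\]
using $p\circ f=b$, the factorization $t=t_L\circ g$, and the fact that $g$ sends $\on{Gr}_{B,X^n}^\mu$ to $\on{Gr}_{B_L,X^n}^\mu$ (both components carry the same $T$-weight). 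The shift bookkeeping works because $\langle 2\rho_{G/L},\bar\mu\rangle+\langle 2\rho_L,\mu\rangle=\langle 2\rho,\mu\rangle$ (the weight $2\rho_{G/L}$, being $W_L$-invariant, is well-defined on $\Lambda/\Lambda_L$). The dual identity for $(F_n)_*^!$ is identical with $!$ and $*$ swapped. Finally, since $(F_n^L)_!^*\circ(F^P_n)_!^*=F_n=(F_n^L)_*^!\circ(F^P_n)_*^!$ and $F_n^L$ is faithful and exact by \ref{fiber functor properties} applied to $L$, the two forms of the parabolic fiber functor are identified after applying $F_n^L$, and one concludes $(F^P_n)_!^*=(F^P_n)_*^!$.

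For \ref{en:parabolic functorial}, I will produce the analogous Cartesian square
\[
 \on{Gr}_{P_1,X^n}\;\cong\;\on{Gr}_{P_2,X^n}\times_{\on{Gr}_{L_2,X^n}}\on{Gr}_{P_L,X^n},
\]
which is just the statement $P_1\cong P_2\times_{L_2}P_L$ transported to moduli of torsors. Running through the base-change/shift argument of \ref{en:parabolic independence}, now with the decomposition $2\rho_{G/L_1}=2\rho_{G/L_2}+2\rho_{L_2/L_1}$ in place of $2\rho=2\rho_{G/L}+2\rho_L$, yields the identity $F^{P_1}_n=F^{P_L}_n\circ F^{P_2}_n$.

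For \ref{en:parabolic properties}, faithfulness is immediate from \ref{en:parabolic independence}: if $F^P_n(\sh{F})=0$ then $F_n(\sh{F})=F_n^L(F^P_n(\sh{F}))=0$, so $\sh{F}=0$ by faithfulness of $F_n$. For $t$-exactness I will invoke \ref{devissage}: $F^P_n$ is horizontal (a $!$-pushforward followed by a $*$-pullback along maps over $X^n$, summed over open-and-closed components), hence compatible with nearby cycles by \ref{compatible with nearby cycles}, so it suffices to check that ULA perverse sheaves go to perverse sheaves; this follows because $F_n=F_n^L\circ F^P_n$ is $t$-exact on ULA objects while $F_n^L$ is a faithful exact functor between abelian categories (hence conservative on cohomology objects). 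Preservation of the ULA property is the projection-formula chain \ref{eq:fiber_functor_ula}--\ref{eq:fiber_functor_ula'} verbatim with $(b,t)$ replaced by $(p,l)$, which is formally identical. For compatibility with convolution I will reprise the argument of \ref{fiber functor properties}\ref{en:fiber convolution}: \ref{devissage} reduces to ULA sheaves, the fusion product formula \ref{convolution properties}\ref{en:convolution is perverse} rewrites both sides as $j_{!*}$ of outer tensor products on the disjoint locus $X^{n+m}_{n,m}$, and there the equality $F^P_{n+m}(\sh{F}_1\boxtimes\sh{F}_2)=F^P_n(\sh{F}_1)\boxtimes F^P_m(\sh{F}_2)$ follows from the compatibility of the parabolic diagram \ref{eq:parabolic diagram} with factorization of the grassmannians.

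The main obstacle will be the shift bookkeeping in \ref{en:parabolic independence}, together with careful handling of the gerbe twistings $\stack{G}_n,\stack{P}_n,\stack{L}_n$ through the base change: all these gerbes are pullbacks from $\stack{G}_n$, so twisted proper base change and twisted projection formula apply, but one must verify that the chosen equivalences of pulled-back gerbes are mutually compatible so that the above cohomological manipulations are literally well-defined maps of twisted complexes.
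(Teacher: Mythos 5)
Your Cartesian-square/base-change computation and your handling of the shift and gerbe bookkeeping are sound, and match the approach in the paper. However, there is a genuine gap in your deduction of $(F^P_n)_!^* = (F^P_n)_*^!$ at the end of part~\ref{en:parabolic independence}. Having shown
\[
 (F_n^L)_!^*\circ(F^P_n)_!^*\;=\;F_n\;=\;(F_n^L)_*^!\circ(F^P_n)_*^!
\]
with $F_n^L$ faithful and exact, you conclude that "the two forms of the parabolic fiber functor are identified." But equality of two composites $F_n^L\circ A = F_n^L\circ B$ for a faithful exact $F_n^L$ does \emph{not} imply $A = B$: faithfulness plus exactness gives conservativity (it reflects isomorphisms), not injectivity on objects. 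What the paper actually uses, and what your argument needs, is the existence of a \emph{natural transformation} $(F^P_n)_!^* \to (F^P_n)_*^!$ supplied by Braden's hyperbolic localization theorem \cite{B_Hyperbolic}, applied to the $\Gm$-action on $\on{Gr}_{G,X^n}$ whose attractor is $\on{Gr}_{P,X^n}$ and whose fixed locus is $\on{Gr}_{L,X^n}$. Once you have that map, applying $F_n^L$ sends it to an isomorphism (since both composites agree with $F_n$), and conservativity of $F_n^L$ then forces the Braden map itself to be an isomorphism. Without the natural transformation there is nothing to test for being an isomorphism, and the claim does not follow.

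A secondary point: your $t$-exactness argument in part~\ref{en:parabolic properties} routes through \ref{devissage}, reducing to ULA objects, but then at the end you anyway invoke conservativity of $F_n^L$. The d\'evissage step is unnecessary here: since $F_n^L\circ F^P_n = F_n$ is $t$-exact on all of $\Sph(\stack{G}_n)$, and $F_n^L$ is $t$-exact and conservative, ${}^p H^i(F^P_n(\sh{F}))$ is killed by $F_n^L$ for $i\neq 0$ and is therefore zero; the conclusion is immediate without restricting to ULA sheaves. The paper takes this shorter route.
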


\begin{proof}
 For \ref{en:parabolic independence}, just use the diagram \ref{eq:parabolic pullback} and the fact that $b_L^* l_! = g_! f^*$ and likewise for $*$ and $!$ reversed to get the first claim. Since $F_n$ is faithful and sends the natural map (defined in \cite{B_Hyperbolic}) $(F^P_n)_!^* \to (F^P_n)_!^*$ to an isomorphism, that map is an isomorphism; likewise, it preserves the isomorphism property of the natural map in \ref{ULA}. Item \ref{en:parabolic functorial} is proven using the evident two-parabolics version of \ref{eq:parabolic pullback} in the same way. For \ref{en:parabolic properties}, the first two claims follow from $F_n \circ F^P_n = F_n$ and the fact that $F_n$ is t-exact and faithful. Since $p$ is proper, the last two are proven exactly as in \ref{fiber functor properties}.
\end{proof}

It is easily verified that the grading of the commutativity constraint which makes $F_n$ a tensor functor also makes each $F^P_n$ a tensor functor.

\section{The main theorem for a torus}
\label{s:main theorem for a torus}

We combine the constructions already given to establish the main theorem in the special case when $G = T$ is a torus; as usual, we let $\stack{T}_n$ be the sf $k^*$-gerbe on $\on{Gr}_{T,X^n}$. First, we complete the correspondence begun in \ref{multiplicative factorizable lattice gerbes} by describing it for $\Sph(\stack{T}_n)$.

\begin{theorem}{prop}{bilinear form triviality}
 There exists an object $\sh{F} \in \Sph^\ULA(\stack{T}_n)$ living on $\on{Gr}_{T,X}^{\lambda_1, \dots, \lambda_n}$ only if we have $\kappa(\lambda_i, \mu) = 1$ for all $\mu \in \Lambda_T$, where $\kappa$ is the bilinear form defined by $\stack{T}_n$.
\end{theorem}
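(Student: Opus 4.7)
The plan is to pass to a generic fiber of $\on{Gr}_{T,X^n}$ over $X^n$ and extract the constraint from the one-variable equivariance torsors computed in the proof of \ref{strong equivariance for a torus}.

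First I would fix a point $\vect{x} = (x_1,\dots,x_n) \in X^n$ with pairwise distinct coordinates. By \ref{torus grassmannian properties} the fiber $\on{Gr}_{T,X^n}^{\lambda_1,\dots,\lambda_n}|_{\vect{x}}$ is a single reduced point. Any $k^*$-gerbe on a point is trivial, so I fix a trivialization of $\stack{T}_n$ there and denote the resulting finite-dimensional vector space $\sh{F}|_{\vect{x}}$ by $V$. Since $\on{Gr}_{T,X^n}^{\lambda_1,\dots,\lambda_n} \cong X^n$ is smooth and $\sh{F}$ is ULA for the structure map, \ref{ULA local systems} realizes $\sh{F}$ (up to a shift) as a local system on $X^n$, so $V \neq 0$ as soon as $\sh{F}$ is nonzero.

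Second, I would identify the $T(\hat{\OO})_n$-equivariance of $\stack{T}_n$ at this fiber. The stabilizer of the point is the entire $T(\hat{\OO})_n|_{\vect{x}} = T(\hat{X}_{\vect{x}}) \cong \prod_i T(\hat{X}_{x_i})$, acting trivially, so the equivariance equivalence $\on{Eq}$ restricts to an autoequivalence of the trivial gerbe on this group, i.e.\ a single $k^*$-torsor. Because $\vect{x}$ lies in the open factorizable locus $X^n_p$ (with $p$ the discrete partition), this torsor splits, by factorizable equivariance (\ref{existence of equivariance}), as the exterior tensor product of the one-variable equivariance torsors for each $\stack{T}_1^{\lambda_i}$, which by the proof of \ref{strong equivariance for a torus} are the pullbacks of $\sh{L}_{\kappa_{\lambda_i}}$ along $T(\hat{X}_{x_i}) \to T$. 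The equivariance datum on $\sh{F}$ therefore specializes to an isomorphism
\[
 \sh{L}_{\kappa_{\lambda_1}} \boxtimes \cdots \boxtimes \sh{L}_{\kappa_{\lambda_n}} \otimes \csheaf{V}
 \;\xrightarrow{\sim}\; \csheaf{V}
\]
of local systems on $\prod_i T(\hat{X}_{x_i})$.

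Finally, I would extract the constraint by a monodromy computation. The left-hand side has monodromy $\kappa(\lambda_i,\mu) \cdot \id_V$ around the loop in the $i$-th factor corresponding to $\mu \in \Lambda_T = \pi_1(T) = \pi_1(T(\hat{X}_{x_i}))$, while the right-hand side has trivial monodromy. Since $V \neq 0$, this forces $\kappa(\lambda_i,\mu) = 1$ for every $i$ and every $\mu \in \Lambda_T$, as claimed. The main subtlety is the second step: one must correctly identify the fiberwise equivariance torsor with the claimed exterior tensor product, which amounts to matching factorizable equivariance of $\stack{T}_n$ over the discrete-partition locus $X^n_p$ with the explicit one-variable description from \ref{strong equivariance for a torus}. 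Once this identification is in hand the remaining monodromy argument is routine.
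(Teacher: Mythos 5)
Your proof is correct and essentially the same as the paper's two-line argument: both reduce the statement to identifying the equivariance torsor of $\stack{T}_n^{\lambda_1,\dots,\lambda_n}$ with the $\sh{L}_{\kappa_{\lambda_i}}$ computed in \ref{strong equivariance for a torus}, and both then observe that the existence of a nonzero twisted local system forces those torsors to be trivial. The paper packages this second step into the abstract \ref{transitive equivariant objects} (whose proof is precisely the faithfulness-of-torsor-action/monodromy argument you give), while you unfold it directly over a generic fiber of $X^n$ --- a legitimate but purely cosmetic variation.
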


\begin{proof}
 If such a twisted sheaf exists, then by \ref{transitive equivariant objects}, $\stack{T}_n^{\lambda_1, \dots, \lambda_n}$ is equivariantly trivial.  By the construction of \ref{existence of equivariance}, we have $\kappa(\lambda_i, \mu) = 1$ for all $\mu$.
\end{proof}

Let $Q$ be the quadratic form associated with $\stack{T}_n$ and denote by $\Lambda_Q$ the kernel of its associated bilinear form $\kappa$; we define $\smash{\check{T}_Q}$ to be the torus whose weight lattice is $\Lambda_Q$ and $\smash{{}^L \check{T}_Q}$ its dual.  Then by the above we may consider $\Sph(\stack{T}_n)$ to consist of sheaves on $\on{Gr}_{{}^L \check{T}_Q, X^n}$, where $\stack{T}_n$ is equivariantly trivial and so its spherical twisted sheaves coincide simply with $\Perv(\stack{T}_n)$.  By \ref{multiplicative factorizable}, $\stack{T}_n$ is a commutative multiplicative sf $k^*$-gerbe on this restricted grassmannian, and so by \ref{multiplicative factorizable lattice gerbes} corresponds to some sf comultiplicative gerbe $\stack{Z}_n$ for the sheaf of groups $\on{Fact}(\check{T}_Q(k))_n$ (previously called $\check{T}_Q(k^*)$ by abuse of notation, but the definitions are the same).

\begin{theorem}{prop}{main theorem for torus}
 We have $\Sph(\stack{T}_n) \cong \stack{Z}_n \otimes \cat{FRep}_n(\check{T}_Q)$, the equivalence pairing $\Sph^\ULA(\stack{T}_n)$ with lisse sheaves.
\end{theorem}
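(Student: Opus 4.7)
The plan is to identify $\Sph(\stack{T}_n)$ with the pushforward from the grassmannian of the ``correct'' subtorus, then invoke the algebraic dictionary of the previous sections to reinterpret twisted perverse sheaves there as $\stack{Z}_n$-twisted perverse sheaves on $X^n$ with factorizable $\check{T}_Q$-action. First I would observe that by \ref{bilinear form triviality} (applied at a generic point of the support, where any spherical object is a shifted local system, or by its direct extension with \ref{transitive equivariant objects}) every $\sh{F} \in \Sph(\stack{T}_n)$ is supported on the union of components $\on{Gr}_{T,X^n}^{\lambda_1,\dots,\lambda_n}$ with all $\lambda_i \in \Lambda_Q$, which is exactly $\on{Gr}_{{}^L\check{T}_Q, X^n}$. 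The restriction of $\stack{T}_n$ to this subgrassmannian has vanishing $\kappa$, so is multiplicative factorizable by \ref{multiplicative factorizable}. Examining the construction in \ref{strong equivariance for a torus} we see that the $T(\smash{\hat{\OO}})_n$-equivariance datum is given by the torsors $\sh{L}_{\kappa_\mu}$, which become trivial on these components. Hence the forgetful functor gives an identification
\[
\Sph(\stack{T}_n) \;\cong\; \Perv(\stack{T}_n)_{\on{Gr}_{{}^L\check{T}_Q,X^n}},
\]
with no additional equivariance data.

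Next I would push forward along the structure map $p\colon \on{Gr}_{{}^L\check{T}_Q,X^n} \to X^n$, which is the disjoint union (glued along diagonals by the factorizable pattern) of copies of $X^n$ indexed by $\Lambda_Q^n$. In the \emph{untwisted} case, a perverse sheaf on $\on{Gr}_{{}^L\check{T}_Q,X^n}$ is, by \ref{torus grassmannian properties}\ref{en:torus grassmannian n}, the same as a perverse sheaf on $X^n$ together with a compatible $\Lambda_Q$-grading over each stratum $X^n_p$ and degenerations along diagonals precisely matching the structure of $\on{Fact}(\Lambda_Q)_n$ (second half of \ref{factorizable group dual}). Since $\Lambda_Q = X^*(\check{T}_Q)$, a $\Lambda_Q$-grading of a perverse sheaf is the same as an algebraic $\check{T}_Q$-action in the sense of \ref{algebraic group action}, and the factorizable gluing pattern is by definition an action of $\on{Fact}(\check{T}_Q)_n$, i.e.\ a factorizable $\check{T}_Q$-action as in \ref{factorizable action}. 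Thus $p_*$ identifies untwisted perverse sheaves on the subgrassmannian with $\cat{FRep}_n(\check{T}_Q)$ on $X^n$.

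To incorporate the gerbe twist, I would apply the correspondence of \ref{multiplicative factorizable lattice gerbes} in families: by construction $\stack{T}_n$ restricted to $\on{Gr}_{{}^L\check{T}_Q,X^n}$ is the multiplicative sf gerbe corresponding to the sf comultiplicative $\on{Fact}(\check{T}_Q(k))_n$-gerbe $\stack{Z}_n$. The desired statement is the sheaf-of-categories version of this equivalence; that is, the factorizable analogue of \ref{two kinds of twisting}. Concretely, the equivalence $p_*\Perv \cong \cat{FRep}_n(\check{T}_Q)$ of the previous paragraph is biequivariant for the action of $k^*$-torsors on one side and $\on{Fact}(\check{T}_Q(k))_n$-torsors on the other (this is exactly the torsor-level content already used in building $\stack{Z}_n$ from $\stack{T}_n$), so twisting both sides by the matching gerbe yields
\[
p_* \Perv(\stack{T}_n)_{\on{Gr}_{{}^L\check{T}_Q,X^n}} \;\cong\; \stack{Z}_n \otimes \cat{FRep}_n(\check{T}_Q),
\]
which, combined with the first paragraph, proves the main equivalence. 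For the ULA refinement, each connected component of $\on{Gr}_{{}^L\check{T}_Q,X^n}$ maps isomorphically to $X^n$, so $p_*$ preserves the ULA property in both directions; on $X^n$ the ULA perverse sheaves are precisely the lisse sheaves (shifted) by \ref{ULA local systems}.

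The only nonformal step is the factorizable upgrade of \ref{two kinds of twisting} used in the third paragraph: one must check that twisting the pushforward-equivalence of sheaves of categories by the pair $(\stack{T}_n,\stack{Z}_n)$ matches on the nose, coherently as the partition $p$ of $X^n$ is refined. This is straightforward because the entire correspondence of \ref{multiplicative factorizable lattice gerbes} is built so that gluing data on each side are transported to gluing data on the other, and the sheaves-of-categories statement is proved locally (where both gerbes are trivial) and glued by the same comultiplicative cocycle; this ``bookkeeping'' piece is the main, but essentially unavoidable, technical obstacle.
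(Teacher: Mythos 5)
Your proposal is essentially the same approach as the paper's: both use \ref{bilinear form triviality} to restrict supports to $\on{Gr}_{{}^L\check{T}_Q,X^n}$, observe that the gerbe becomes multiplicative factorizable (and the equivariance trivializes) there, take $p_*$ as the equivalence, invoke \ref{multiplicative factorizable lattice gerbes} for the gerbe dictionary, and use \ref{ULA local systems} for the lisse characterization. The one organizational difference worth noting: the paper first restricts to $\Sph^\ULA(\stack{T}_n)$ --- where a ULA object has a \emph{global} $\Lambda_Q^n$-grading by \ref{ULA nearby cycles}, making the equivalence with $\stack{Z}_n \otimes \cat{Rep}^\ULA(\check{T}_Q^n)$ nearly tautological --- and then invokes d\'evissage (\ref{devissage}, using horizontality of $p_*$) to spread the equivalence to all of $\Sph(\stack{T}_n)$. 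You instead argue directly on all spherical sheaves, with the factorizable action over each $X^n_p$ playing the role that the global grading plays in the ULA case; this works, but it is exactly at this point that the ``bookkeeping'' you flag (coherence of the factorizable twisting over refinements of $p$, i.e.\ the factorizable version of \ref{two kinds of twisting}) becomes the whole content. The d\'evissage route trades that coherence check for the (already-proved) horizontality of $p_*$, which is why the paper prefers it; your direct route is valid but is not actually shorter once that coherence is carried out carefully.
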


\begin{proof}
 We will write $\map{p_n}{\on{Gr}_{T,X^n}}{X^n}$ for the structure map of the grassmannian.  To construct an equivalence $\Sph(\stack{T}_n) \to \stack{Z}_n \otimes \cat{FRep}_n(\check{T}_Q)$, it suffices by \ref{devissage} to construct a horizontal functor $\Sph^\ULA(\stack{T}_n) \to \stack{Z}_n \otimes \cat{FRep}_n^\ULA(\check{T}_Q)$ which is an equivalence.  Let $\sh{F} \in \Sph(\stack{T}_n)$, so that the topological structure of $\on{Gr}_{T,X^n}$ given in \ref{torus grassmannian properties} allows us to write, for any partition $p$ of $n$ with $m$ parts, a direct sum decomposition
 \begin{equation*}
  \sh{F}|_{X^n_p}
    \cong \bigoplus_{\lambda_i \in X^*(\check{T}_Q)} \sh{F}^{\lambda_1, \dots, \lambda_m}_p
 \end{equation*}
 such that if $p'$ refines $p$, then its corresponding decomposition refines that of $p$ over $X^n_{p'}$.  However, if $\sh{F}$ is ULA, then by \ref{ULA nearby cycles} it is functorially determined by its restriction to the complement of all diagonals in $X^n$, and thus we can simply write
 \begin{equation*}
  \sh{F} \cong \bigoplus \sh{F}^{\lambda_1, \dots, \lambda_n}
 \end{equation*}
 with each component sheaf supported on $\on{Gr}_{T,X^n}^{\lambda_1, \dots, \lambda_n}$.  This gives $\sh{F}$ an action of $(\check{T}_Q)^n$ via a grading by its weight lattice.  Furthermore, since $\check{T}_Q^n$ acts on $\sh{F}^{\lambda_1, \dots, \lambda_n}$ through the character $(\lambda_1, \dots, \lambda_n$), the $\stack{T}_n$-twisting on $\sh{F}$ becomes a $\stack{Z}_n$-twisting according to the construction given in \ref{multiplicative factorizable lattice gerbes}, and so we have the desired functor, equal to $(p_n)_*$:
 \begin{equation*}
  \Sph^\ULA(\stack{T}_n)
    \to \stack{Z}_n \otimes \cat{Rep}^\ULA(\check{T}_Q^n)
    =\stack{Z}_n \otimes \cat{FRep}_n^\ULA(\check{T}_Q).
 \end{equation*}
 By definition, it is an equivalence, and by \ref{horizontal functors} it is horizontal because $p_n$ is ind-finite (so ind-proper), so we are done.
\end{proof}

To finish the picture, we must show how convolution is carried by this equivalence.  First, we have a comparison of convolution on $\on{Gr}_{T,X^n}$ with the ordinary tensor product.  We recall that since objects of $\Sph(\stack{T}_n)$ are supported on $\on{Gr}_{{}^L \check{T}_Q, X^n}$, where $\stack{T}_n$ is multiplicative factorizable, we have natural equivalences
\begin{equation*}
 \stack{T}_n^{\lambda_1, \dots, \lambda_n}
  \boxtimes \stack{T}_m^{\mu_1, \dots, \mu_m}
 \cong \stack{T}_{n + m}^{\lambda_1, \dots, \lambda_n, \mu_1, \dots, \mu_m}
\end{equation*}
so that the twisted outer tensor product on components of $\on{Gr}_{{}^L \check{T}_Q, X^n}$ and $\on{Gr}_{{}^L \check{T}_Q, X^m}$:
\begin{equation*}
 \Perv(\stack{T}_n^{\lambda_1, \dots, \lambda_n})
  \times \Perv(\stack{T}_m^{\mu_1, \dots, \mu_m})
 \to \Perv(\stack{T}_n^{\lambda_1, \dots, \lambda_n} \boxtimes \stack{T}_m^{\mu_1, \dots, \mu_m})
\end{equation*}
takes its values in $\Perv(\stack{T}_{n + m}^{\lambda_1, \dots, \lambda_n, \mu_1, \dots, \mu_m})$. We will refer to this as simply the \emph{outer tensor product}.  There is also a natural \emph{inner tensor product} of ULA sheaves obtained when $n = m$ by restricting to the diagonal copy of $X^n$.  

\begin{theorem}{prop}{convolution and tensor product}
 Outer and inner convolutions coincide with outer and inner tensor products.
\end{theorem}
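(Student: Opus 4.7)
The plan is to reduce first to ULA sheaves via \ref{devissage}, since both outer convolution and outer tensor product are built from pullbacks and pushforwards along ind-proper maps, hence are horizontal (by \ref{horizontal functors}) and thus compatible with nearby cycles (\ref{compatible with nearby cycles}). Once we restrict attention to ULA objects we may, by \ref{ULA nearby cycles}, work on the complement of all diagonals in $X^{n+m}$, which is precisely the locus where the factorization data is a literal product. By \ref{bilinear form triviality} the sheaves in question are supported on $\on{Gr}_{{}^L \check{T}_Q, X^n}$, where $\stack{T}_n$ is multiplicative factorizable, so the outer tensor product is defined as in the paragraph immediately preceding the statement.

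The geometric input is \ref{torus convolution diagram}: the convolution diagram $\tilde{\on{Gr}}_{n,m}$ is canonically isomorphic to $\on{Gr}_{T,X^n} \times \on{Gr}_{T,X^m}$, and under this isomorphism the map $m_{n,m}$ becomes the group multiplication of $T$-torsors. On each component $\on{Gr}_{T,X^n}^{\lambda_1,\dots,\lambda_n} \times \on{Gr}_{T,X^m}^{\mu_1,\dots,\mu_m} \cong X^{n+m}$ this multiplication is an isomorphism onto $\on{Gr}_{T,X^{n+m}}^{\lambda_1,\dots,\lambda_n,\mu_1,\dots,\mu_m} \cong X^{n+m}$, covering the identity on the base. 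Consequently the twisted outer product $\sh{F}_1 \tbtimes \sh{F}_2$ agrees with the ordinary outer product $\sh{F}_1 \boxtimes \sh{F}_2$ (the $T(\hat{\OO})_m$-torsor $\tilde{T}(\hat{\OO})_{n,m}$ is acted on trivially through the target, as $T$ is commutative), and $(m_{n,m})_*$ acts as the identity on each component. So $\sh{F}_1 *_o \sh{F}_2$ is exactly the outer tensor product as defined just before the proposition. The inner case follows at once by restricting both identifications along the diagonal $\Delta \subset X^n \times X^n$: this converts $*_o$ into $*_i$ by definition and converts the outer tensor product into the inner tensor product.

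The one subtlety, which is the main technical point to verify, is that the two \emph{gerbe} equivalences implicit in the two sides really are the same. On the convolution side one uses the strong factorizable equivariance equivalence $m_{n,m}^* \stack{T}_{n+m} \cong \stack{T}_n \tbtimes \stack{T}_m$; on the tensor-product side one uses the multiplicative structure $\stack{T}_n^{\lambda_1,\dots,\lambda_n} \boxtimes \stack{T}_m^{\mu_1,\dots,\mu_m} \cong \stack{T}_{n+m}^{\lambda_1,\dots,\lambda_n,\mu_1,\dots,\mu_m}$. These coincide by \ref{equivariance and multiplicativity}, whose construction associates the equivariance equivalence to the multiplicative structure via exactly the identification of $\tilde{\on{Gr}}_{n,m}$ with $\on{Gr}_{T,X^n} \times \on{Gr}_{T,X^m}$ furnished by \ref{torus convolution diagram}. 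Thus the two data agree component by component, and the proposition follows.
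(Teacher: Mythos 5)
Your core geometric input---\ref{torus convolution diagram}, which identifies $\tilde{\on{Gr}}_{n,m}$ with $\on{Gr}_{T,X^n}\times\on{Gr}_{T,X^m}$ globally and turns $m_{n,m}$ into the group multiplication---is the same as the paper's, so the spine of the argument is right. But the scaffolding is off. First, the devissage/ULA reduction in your opening paragraph is a detour: \ref{torus convolution diagram} holds over all of $X^{n+m}$, not just the complement of diagonals, so the direct argument you give in the second paragraph already works globally. Reducing to the complement of diagonals is also conceptually misleading, since there the identification of the convolution diagram with a product is true for \emph{any} reductive group (\ref{convolution diagram factorization}); the entire content of the proposition is what happens over the diagonals, and that is supplied by the torus-specific global identification, not by a gluing argument.

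Second, and more seriously, your justification for ``the twisted outer product agrees with the ordinary outer product'' is wrong as stated. Commutativity of $T$ only makes the \emph{action} of $T(\hat{\OO})_m$ on $\on{Gr}_{T,X^m}$ trivial; it says nothing about the \emph{equivariance structure} on the gerbe and sheaves, which is a separate datum. The paper emphasizes exactly this in the proof of \ref{strong equivariance for a torus}: ``despite $T(\hat{\OO})_1$ acting trivially, this is not the trivial equivariance structure''---the equivariance datum is the torsor $\sh{L}_{\kappa_\mu}$. The twisted pullback collapses to the ordinary one precisely because one restricts to $\on{Gr}_{{}^L\check{T}_Q,X^n}$, the kernel of $\kappa$, where this torsor is trivial. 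You do note the support restriction via \ref{bilinear form triviality}, but never connect it to the triviality of the equivariance structure, which is where the restriction actually earns its keep. Relatedly, your citation of \ref{equivariance and multiplicativity} for the comparison of the two gerbe equivalences is misplaced: that theorem concerns multiplicative structures on $q_n^*\stack{G}_n$ over the loop group $G(\hat{\KK})_n$, not the multiplicative structure on $\stack{T}_n$ over $\on{Gr}_{T,X^n}$. The correct justification is simpler---once the equivariance structures are trivial, the two equivalences both extend the factorization isomorphism over $X^{n+m}_{n,m}$ and hence coincide by the uniqueness part of \ref{2-line bundle of a Cartier divisor}\ref{en:order gerbe}.
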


\begin{proof}
 As always, we work on $\on{Gr}_{{}^L \check{T}_Q, X^n}$.  Then, by \ref{torus convolution diagram}, the convolution diagrams are identified with ordinary products and the multiplication maps become identities on each component; since the $G(\hat{\OO})$-equivariance structures on both gerbe and sheaves are trivial, the twisted products become ordinary outer tensor products.  This establishes the first claim, and the second follows by definition of inner convolution.
\end{proof}

There is also a tensor product operation in $\stack{Z}_n \otimes \cat{FRep}(\check{T}_Q)$.  

\begin{theorem}{defn}{convolution of factorizable actions}
 Let $H$ be a group.  The \emph{outer tensor product} on $\cat{FRep}_n(H)$ and $\cat{FRep}_m(H)$ is the composition of the outer tensor product of perverse sheaves,
 \begin{equation*}
  \cat{FRep}_n(H) \times \cat{FRep}_m(H)
   \to (\on{Fact}(H)_n \boxtimes \on{Fact}(H)_m)\cat{\mhyphen Mod}
 \end{equation*}
 (in perverse sheaves on $X^{n + m}$) with the forgetful map along the diagonal \ref{eq:sf mult open}
 \begin{equation*}
  \psi_{m,n} \colon \on{Fact}(H)_{n + m} \incl \on{Fact}(H)_n \boxtimes \on{Fact}(H)_m
 \end{equation*}
 into $\cat{FRep}_{n + m}(H)$.  The \emph{outer tensor product} of twisted factorizable representations is
 \begin{multline*}
  (\stack{Z}_n \otimes \cat{FRep}_n(H)) \times (\stack{Z}_m \otimes \cat{FRep}_m(H)) \\
    \to (\stack{Z}_n \boxtimes \stack{Z}_m) \otimes (\on{Fact}(H)_n \boxtimes
         \on{Fact}(H)_m)\cat{\mhyphen Mod} \\
    \cong {}^2 \psi_{m,n}(\stack{Z}_{n + m}) \otimes (\on{Fact}(H)_n \boxtimes
         \on{Fact}(H)_m)\cat{\mhyphen Mod} \\
    \to \stack{Z}_{n + m} \otimes \cat{FRep}_{n + m}(H)
 \end{multline*}
 (compare with \ref{eq:comultiplication twisting}). Likewise, we have an \emph{inner tensor product} of twisted lisse sheaves obtained by restriction to the diagonal copy of $X^n$ in $X^n \times X^n$.
\end{theorem}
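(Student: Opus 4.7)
The content of this definition is effectively a well-definedness claim: that the three-step construction lands in $\stack{Z}_{n+m} \otimes \cat{FRep}_{n+m}(H)$, and that restriction to the diagonal for the inner version preserves the class of twisted lisse sheaves with factorizable action. My plan is to verify each step in turn, with each verification being essentially forced by the structures already established.

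First I would check that if $s_n$ is a section of $\cat{FRep}_n(H)_{X^n}$ and $s_m$ of $\cat{FRep}_m(H)_{X^m}$, then $s_n \boxtimes s_m$ is a perverse sheaf on $X^{n+m}$ carrying a natural action of the external product $\on{Fact}(H)_n \boxtimes \on{Fact}(H)_m$. This is K\"unneth for perverse sheaves together with the obvious fact that automorphisms of each factor act on the box product. Restriction along the map $\psi_{m,n}$ of \ref{eq:sf mult open} then yields an $\on{Fact}(H)_{n+m}$-action; the factorizability condition of \ref{factorizable action} on partitions of $n+m$ that refine the $(n,m)$-split holds automatically because $s_n$ and $s_m$ are separately factorizable, and on partitions not refining this split there is nothing further to impose since $X^{n+m}_p$ is contained in $X^{n+m}_{(n,m)}$.

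Second I would invoke the sf-comultiplicativity of $\stack{Z}$ from \ref{sf comultiplicative} to provide the equivalence ${}^2\psi_{m,n}(\stack{Z}_{n+m}) \cong \stack{Z}_n \boxtimes \stack{Z}_m$ together with its compatibility constraints. Combined with the general construction \ref{eq:general twisted pairing} of twisted pairings, this converts the $(\stack{Z}_n \boxtimes \stack{Z}_m)$-twisted outer tensor product from step one into a $\stack{Z}_{n+m}$-twisted object on $X^{n+m}$, as required. For the inner product when $m=n$, I would restrict along the diagonal $\Delta \colon X^n \hookrightarrow X^{2n}$: lisseness is preserved because the restriction of a lisse sheaf is lisse, the factorizable action descends via the diagonal factorization maps $\phi_p$ of \ref{eq:sf mult diagonal}, and the twisting descends via the equivalences ${}^2\phi_p(\stack{Z}_n) \cong \stack{Z}_{2n}|_{\Delta^{2n}_p}$ that are also part of sf-comultiplicativity.

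The main obstacle will be verifying that all of the $2$-isomorphisms cohere: the associators and identity constraints built into sf-comultiplicativity must match up with the natural associator of the box product of perverse sheaves and with the constraints already carried by the factorizable actions on $s_n$ and $s_m$. This is a $2$-categorical diagram chase that is formal but tedious, and the cleanest route, parallel to the strategy in the proof of \ref{multiplicative factorizable lattice gerbes}, is to reduce from gerbes to torsors to functions by working locally in trivializations of $\stack{Z}_{n+m}$ (which exist by the same argument used there). Once one is at the level of functions, the required compatibilities become instances of \ref{sf multiplicative functions} applied to the matrix coefficients of the $H$-action on the stalks, and the coherence is automatic.
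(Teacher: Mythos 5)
The statement you are addressing is a \emph{definition}, not a proposition, and the paper supplies no proof of it; the implicit content is a well-definedness claim, which you have correctly identified. Your overall plan --- restriction of the box-product module structure along $\psi_{m,n}$, then sf-comultiplicativity of $\stack{Z}$ to realize the twisted version, then the diagonal for the inner product, with the coherence of $2$-isomorphisms handled by a gerbe-to-torsor-to-function reduction --- is sound and matches what the surrounding machinery of the paper is designed to support.

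There is, however, a concrete error in your first paragraph. You claim that for a partition $p$ of $[1,n+m]$ not refining the $(n,m)$-split, ``there is nothing further to impose since $X^{n+m}_p$ is contained in $X^{n+m}_{(n,m)}$.'' This containment is false. By the definition in \ref{grassmannian is factorizable}\ref{en:factorizable open}, $X^N_p$ grows larger as $p$ gets coarser, and a partition that fails to refine $(n,m)$ must have some part crossing the split; the open set for such a $p$ generally contains points outside $X^{n+m}_{(n,m)}$. The extreme case makes this vivid: for the one-block partition $p = \{[1,n+m]\}$, $X^{n+m}_p = X^{n+m}$, which certainly is not contained in $X^{n+m}_{(n,m)}$. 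Fortunately the argument does not actually require the case analysis you attempted. By \ref{factorizable action}, ``factorizable $H$-action'' \emph{means} ``$\on{Fact}(H)_{n+m}$-action,'' and the partition-by-partition description with $H^{|p|}$-actions over the $X^{n+m}_p$'s is merely an unwinding of that definition. Since $\psi_{m,n} \colon \on{Fact}(H)_{n+m} \hookrightarrow \on{Fact}(H)_n \boxtimes \on{Fact}(H)_m$ is a morphism of sheaves of groups over all of $X^{n+m}$, restriction of the $(\on{Fact}(H)_n \boxtimes \on{Fact}(H)_m)$-module structure on $s_n \boxtimes s_m$ along $\psi_{m,n}$ produces an $\on{Fact}(H)_{n+m}$-action globally, and that is already, by definition, a factorizable action. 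Replace the partition case split with this one-line observation and the first step is complete. The remainder of your plan is in order.
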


Then we can strengthen \ref{main theorem for torus}:

\begin{theorem}{prop}{pushforward is tensor}
 The equivalence of \ref{main theorem for torus} is a tensor functor.
\end{theorem}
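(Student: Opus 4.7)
The plan is to show directly that the equivalence, which by \ref{main theorem for torus} sends an ULA sheaf $\sh F \in \Sph^\ULA(\stack T_n)$ to its pushforward $(p_n)_* \sh F$ together with its natural grading by $\Lambda_T^n$ (i.e.\ its factorizable $\check T_Q^n$-action), intertwines outer convolution with the outer tensor product of \ref{convolution of factorizable actions}. By \ref{devissage} applied to the two bifunctors in question (both compatible with nearby cycles, since outer convolution is horizontal by \ref{convolution properties} and the outer tensor product on the representation side is built from $\boxtimes$ and the diagonal inclusion $\psi_{m,n}$, both of which are horizontal), it suffices to produce a natural isomorphism on ULA sheaves and check that it is horizontal. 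The case of inner tensor product then follows by restriction to the diagonal $X^n \hookrightarrow X^n \times X^n$, which commutes with both sides.

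First I would use \ref{convolution and tensor product} to replace outer convolution on the grassmannian by outer tensor product of twisted perverse sheaves, living on components $\on{Gr}_{T,X^n}^{\vec\lambda} \times \on{Gr}_{T,X^m}^{\vec\mu} \cong \on{Gr}_{T,X^{n+m}}^{\vec\lambda,\vec\mu}$ via the multiplicative factorization equivalence $\stack T_n^{\vec\lambda} \boxtimes \stack T_m^{\vec\mu} \cong \stack T_{n+m}^{\vec\lambda,\vec\mu}$ of \ref{multiplicative factorizable}. Since $p_n$ is ind-finite (hence ind-proper), Künneth gives
\begin{equation*}
 (p_{n+m})_*(\sh F_1 \boxtimes \sh F_2) \cong (p_n)_* \sh F_1 \boxtimes (p_m)_* \sh F_2,
\end{equation*}
and the grading on the left by $\Lambda_T^{n+m}$ coincides, via the components on which each side lives, with the grading on the right viewed as a module for $\on{Fact}(\check T_Q)_n \boxtimes \on{Fact}(\check T_Q)_m$, then pulled back along $\psi_{m,n}$ to $\on{Fact}(\check T_Q)_{n+m}$. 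This is precisely the recipe defining the outer tensor product in \ref{convolution of factorizable actions}.

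The main point to check is that the two kinds of twisting match: the outer tensor product on the grassmannian side is defined using the multiplicative factorization equivalence of $\stack T$, while the outer tensor product on the representation side is defined using ${}^2\psi_{m,n}(\stack Z_{n+m}) \cong \stack Z_n \boxtimes \stack Z_m$, the sf comultiplicative structure of $\stack Z$. But by the construction of \ref{multiplicative factorizable lattice gerbes} (and \ref{factorizable group dual multiplicative}), these two pieces of data are carried to one another under the correspondence $\stack T \leftrightarrow \stack Z$; indeed, \ref{two kinds of twisting} is precisely the identification of the pairings on $\stack{F}(\stack{T})$ and on $p_*\stack{F}(\stack{Z})$, and the current setup is exactly its global avatar over the factorizable base $X^n \times X^m$. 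Thus the pushforward functor is compatible with the twistings on outer tensor products.

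Associativity and the commutativity constraint are then automatic: the Künneth isomorphism is strictly associative and symmetric (up to the usual sign, which is already built into the modification of the commutativity constraint on $\Sph^\ULA$ noted in \ref{fiber functor commutative}, and which matches the Koszul sign for outer tensor product of perverse sheaves on the representation side). The main obstacle is really the bookkeeping for the previous paragraph, namely tracking that the sf comultiplicative gluing data of $\stack Z_n$ is exactly what is produced when one writes the multiplicative factorization equivalence of $\stack T_n$ in terms of the grading by $\Lambda_T$ via the correspondence of \ref{factorizable group dual gerbes}; once this is in place, both the outer and inner statements follow without further work.
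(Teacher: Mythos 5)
Your proposal is correct and takes essentially the same route as the paper's (very terse) proof: reduce to ULA sheaves by d\'evissage, observe the isomorphism is clear away from the diagonals, and propagate by \ref{ULA nearby cycles}. You have simply spelled out what the paper leaves implicit in ``it is clear that $(p_n)_*$ sends one notion of tensor product to the other'' — namely the K\"unneth isomorphism for the ind-finite map $p_n$ and the fact that, under the correspondence of \ref{multiplicative factorizable lattice gerbes}, the multiplicative factorization data of $\stack{T}_n$ matches the sf comultiplicative data of $\stack{Z}_n$ (which is precisely what \ref{two kinds of twisting} records in the absolute case). The one remark worth making: the sign modification of \ref{fiber functor commutative} is vacuous for a torus, since $2\rho = 0$ there, so nothing needs to be matched on that front beyond the ordinary Koszul sign in K\"unneth, which is the same on both sides because $p_n$ is relative-dimension-$0$.
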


\begin{proof}
 By \ref{devissage}, it is enough to prove this for ULA sheaves,. But it is clear that $(p_n)_*$ sends one notion of tensor product to the other over the open subset of $X^n$ away from the diagonals, so by \ref{ULA nearby cycles} it preserves the tensor product on all of $X^n$.
\end{proof}

\section{Absolute twisted Satake: semisimplicity}
\label{s:absolute twisted Satake semisimplicity}

In this section we apply the preceding constructions to the category of twisted perverse sheaves on the absolute grassmannian $\on{Gr}_G$.  

\subsection*{Convolution on the absolute grassmannian}

As is usual, we will compare this category to perverse sheaves on $\on{Gr}_{G,X}$ for sufficiently small $X$.  Thus, let $x \in X$ be a fixed point and replace $X$ by a small (contractible) disk neighborhood of $x$.  We identify $\on{Gr}_G = \on{Gr}_{G,X}|_x$; then we actually have an identification $\on{Gr}_{G,X} \cong \on{Gr}_G \times X$, since the coordinate on $X$ identifies all the fibers $\on{Gr}_{G,X}|_y \cong G(\hat{\KK}_y)/G(\hat{\OO}_y)$ with the one at $x$.  We set $\stack{G} = \stack{G}_1|_x$; it inherits a $\smash{G(\hat{\OO}) = G(\hat{\OO})_X|_x}$-equivariance structure, and so we can define $\Sph(\stack{G})$, the category of $G(\hat{\OO})$-equivariant $\stack{G}$-twisted perverse sheaves on $\on{Gr}_G$.

\begin{theorem}{defn}{absolute convolution}
 For $\sh{F}_1, \sh{F}_2 \in \Sph(\stack{G})$, let $\sh{F}_1 \tbtimes \sh{F}_2$ be their twisted product on $\on{Conv}_G$, defined as in \ref{inner convolution diagram} to be $G(\hat{\KK}) \times^{G(\hat{\OO})} \on{Gr}_G = \on{Conv}^2_1|_x$, and set
 \begin{equation*}
  \sh{F}_1 * \sh{F}_2 = m_*(\sh{F}_1 \tbtimes \sh{F}_2),
 \end{equation*}
 relative to the proper map $\map{m}{\on{Conv}_G}{\on{Gr}_G}$.
\end{theorem}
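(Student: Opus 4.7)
The statement being set up is formally a definition, but it carries three implicit well-definedness claims that I would verify: (a) the twisted outer product $\sh{F}_1 \tbtimes \sh{F}_2$ makes sense as a $(\stack{G} \tbtimes \stack{G})$-twisted perverse sheaf on $\on{Conv}_G$; (b) the gerbe $m^*\stack{G}$ on $\on{Conv}_G$ is canonically equivalent to $\stack{G} \tbtimes \stack{G}$, so that $m_*(\sh{F}_1 \tbtimes \sh{F}_2)$ is actually $\stack{G}$-twisted; and (c) the resulting object lies in $\Sph(\stack{G})$, i.e.\ is perverse and $G(\hat{\OO})$-equivariant.

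For (a) and (b), I would argue as in the factorizable case, but restricted to the fiber at $x$. The gerbe $\stack{G}$ inherits its $G(\hat{\OO})$-equivariance by restricting the unique unital strongly factorizable equivariance produced by Theorem~\ref{existence of equivariance}. This lets me define the twisted outer product $\stack{G} \tbtimes \stack{G}$ on $\on{Conv}_G$ exactly as in \ref{twisted product gerbe}, and strong factorizability, restricted to the fiber at $x$ (using the identification $\on{Conv}_G = \on{Conv}_1^2|_x$ from \ref{inner convolution diagram diagonal}), provides the canonical equivalence $m^*\stack{G} \cong \stack{G} \tbtimes \stack{G}$. The twisted outer product of sheaves is then defined (as in \ref{twisted outer product}) via descent of $\sh{F}_1 \boxtimes \sh{F}_2$ from $G(\hat{\KK}) \times \on{Gr}_G$, using \ref{finite dimensional quotient} and \ref{relative orbit properties} to reduce to a finite-dimensional $G(\hat{\OO})$-quotient that makes the descent sensible; the shift conventions make $\sh{F}_1 \tbtimes \sh{F}_2$ perverse because it is, \'etale-locally, an outer tensor product.

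For (c), $G(\hat{\OO})$-equivariance of the pushforward is formal: $\sh{F}_1 \tbtimes \sh{F}_2$ is equivariant by construction, and $m$ is equivariant for the left action on the first factor of $\on{Conv}_G$ (as in \ref{G(O)_n acts on twisted Gr}), so $m_*$ preserves equivariance. The substantive point is perversity of $m_*(\sh{F}_1 \tbtimes \sh{F}_2)$. The plan is to deduce it from the corresponding statement in the factorizable setting: identify $\on{Conv}_G = \on{Conv}_1^2|_x = \tilde{\on{Gr}}_2|_\Delta|_x$ and $m = m_2|_\Delta|_x$ under \ref{inner convolution diagram diagonal}\ref{en:inner diagram diagonal} and \ref{inner convolution multiplication}, then recognize $\sh{F}_1 * \sh{F}_2$ as the restriction to $\{x\}$ of the inner convolution $\tilde{\sh{F}}_1 *_i \tilde{\sh{F}}_2$ on $\on{Gr}_{G,X}$ for suitable extensions $\tilde{\sh{F}}_i$, where by \ref{convolution properties}\ref{en:convolution is perverse} this is already known to be perverse.

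The main obstacle is thus constructing the extensions $\tilde{\sh{F}}_i \in \Sph^{\ULA}(\stack{G}_1)$ whose restriction to $x$ recovers $\sh{F}_i$. For $X$ a sufficiently small contractible disk, I would use the product structure $\on{Gr}_{G,X} \cong \on{Gr}_G \times X$ noted just before the definition and the fact that $\stack{G}_1$ then trivially pulls back from $\stack{G}$ (by unitality and $X$ contractible) to take $\tilde{\sh{F}}_i = \sh{F}_i \boxtimes \csheaf{k}_X[1]$; this is manifestly ULA over $X$. Then proper base change for $m_2|_\Delta$ along $\{x\} \incl X$ identifies $(\tilde{\sh{F}}_1 *_i \tilde{\sh{F}}_2)|_x[-1]$ with $m_*(\sh{F}_1 \tbtimes \sh{F}_2)$, and perversity of the former (by \ref{convolution properties}) yields perversity of the latter. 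The same extension trick simultaneously transports the associativity and (via \ref{fiber functor commutative}) commutativity constraints from the relative to the absolute setting, and realizes $\Sph(\stack{G})$ as a full tensor subcategory of $\Sph(\stack{G}_1)$.
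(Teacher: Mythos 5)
Your proposal is correct and takes essentially the same route as the paper: your extension $\tilde{\sh{F}}_i = \sh{F}_i \boxtimes \csheaf{k}_X[1]$ is precisely the paper's spreading functor $\on{Spr}(\sh{F}_i)$, and your proper-base-change argument relating $(\tilde{\sh{F}}_1 *_i \tilde{\sh{F}}_2)|_x[-1]$ to $m_*(\sh{F}_1 \tbtimes \sh{F}_2)$ is exactly \ref{absolute relative convolution}, from which perversity and the rigid tensor structure follow by applying $\on{Res}$ to \ref{convolution properties} and \ref{convolution is rigid}.
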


From the generalities above, it is easy to deduce the good properties of this convolution. To see this, let $\map{\on{Spr}}{\Sph(\stack{G})}{\Sph^\ULA(\stack{G}_1)}$ be the ``spreading'' functor defined by
\begin{equation*}
 \on{Spr}(\sh{F}) = \on{pr}_{\on{Gr}_G}^* \sh{F}[1],
\end{equation*}
using $\on{Gr}_{G,X} \cong \on{Gr}_G \times X$.  Of course, this depends on the decomposition but this will not matter.  Clearly, $\on{Spr}$ is left-inverted by the functor $\map{\on{Res}}{\Sph^\ULA(\stack{G}_1)}{\Sph(\stack{G})}$ defined by
\begin{equation*}
 \on{Res}(\sh{F}) = \sh{F}|_x[-1].
\end{equation*}

\begin{theorem}{prop}{absolute relative convolution}
 Let $\sh{F}_1, \sh{F}_2 \in \Sph(\stack{G})$; then we have $\on{Spr}(\sh{F}_1) *_i \on{Spr}(\sh{F}_2) = \on{Spr}(\sh{F}_1 * \sh{F}_2)$.  In particular, $\sh{F}_1 * \sh{F}_2$ is perverse and defines a rigid tensor structure on $\Sph(\stack{G})$.
\end{theorem}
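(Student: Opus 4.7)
The plan is to transport the inner relative convolution $*_i$ to the absolute $*$ by exploiting the identification of the absolute convolution diagram with a slice of the relative one. Recall from Lemma \ref{inner convolution diagram diagonal}\ref{en:inner diagram diagonal} that $\on{Conv}^2_1 = \tilde{\on{Gr}}_{1,1}|_\Delta$, and under the disk trivialization of $X$ we have $\on{Conv}^2_1 \cong \on{Conv}_G \times X$ and $\on{Gr}_{G,X} \cong \on{Gr}_G \times X$, with $m_1 = m_{1,1}|_\Delta$ identified with $m \times \id_X$. By the alternative description at the end of Definition \ref{convolutions} (its ``in this sense'' clause, which is proper base change applied to $i_\Delta^*$ composed with $m_{1,1 *}$, absorbing the $[-1]$ shift into the restriction to $\on{Conv}^2_1$), it will suffice to compute $m_{1*}$ of the restriction of $\on{Spr}(\sh{F}_1) \tbtimes \on{Spr}(\sh{F}_2)$ to $\on{Conv}^2_1$.

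The main step is to check that, under the disk identifications,
\[
\bigl(\on{Spr}(\sh{F}_1) \tbtimes \on{Spr}(\sh{F}_2)\bigr)\bigr|_{\on{Conv}_G \times X}
 \;\cong\; (\sh{F}_1 \tbtimes \sh{F}_2) \boxtimes \csheaf{k}_X[2].
\]
I would trace through the definition of the twisted product as descent of $\on{pr}_1^* \on{Spr}(\sh{F}_1) \otimes \on{pr}_2^* \on{Spr}(\sh{F}_2)$ from the $G(\hat{\OO})_1$-torsor $\tilde{G}(\hat{\KK})_{1,1} \times_X \on{Gr}_{G,X}$, using \ref{inner convolution diagram diagonal}\ref{en:inner diagram cover} to identify $\tilde{G}(\hat{\KK})_{1,1}|_\Delta \cong G(\hat{\KK})_1 \cong G(\hat{\KK}) \times X$: the descent of the pulled-back outer product recovers exactly the absolute twisted product $\sh{F}_1 \tbtimes \sh{F}_2$ on $\on{Conv}_G$, and the two factors of $[1]$ coming from the two $\on{Spr}$'s combine into the $\boxtimes \csheaf{k}_X[2]$. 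Applying $m_{1*} = (m \times \id_X)_*$ and invoking Künneth together with the $[-1]$ shift of Definition \ref{convolutions},
\[
m_{1*}\bigl((\sh{F}_1 \tbtimes \sh{F}_2) \boxtimes \csheaf{k}_X[2]\bigr)[-1]
 = \bigl(m_*(\sh{F}_1 \tbtimes \sh{F}_2)\bigr) \boxtimes \csheaf{k}_X[1]
 = \on{Spr}(\sh{F}_1 * \sh{F}_2),
\]
which is the desired identity.

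The ``in particular'' then follows: applying $\on{Res}$ gives $\sh{F}_1 * \sh{F}_2 = \on{Res}\bigl(\on{Spr}(\sh{F}_1) *_i \on{Spr}(\sh{F}_2)\bigr)$, which is perverse because the right-hand convolution lies in $\Sph^\ULA(\stack{G}_1)$ by \ref{convolution properties} and $\on{Res}$, being a fiber restriction of an ULA object shifted by $[-1]$, preserves perversity by \ref{ULA nearby cycles}. The rigid tensor structure on $\Sph(\stack{G})$ is inherited through the fully faithful $\on{Spr}$: associativity and commutativity constraints, the unit $\on{Res}(1_1)$, and the duals (obtained from $\sh{F}^* = \DD(i^* \tilde{\sh{F}})$ via the same $\on{Conv}_G = \on{Conv}^2_1|_x$ identification) all transfer by the same disk-trivialization argument. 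The main obstacle is the bookkeeping in the central step: verifying canonically, and compatibly with the $G(\hat{\OO})_1$-equivariant structure on $\stack{G}_1$ from Theorem \ref{existence of equivariance}, that the descent defining $\on{Spr}(\sh{F}_1) \tbtimes \on{Spr}(\sh{F}_2)$ restricts along $\Delta \subset X^2$ to the descent defining $\sh{F}_1 \tbtimes \sh{F}_2$ on $\on{Conv}_G$; once that naturality is in hand, everything else is proper base change and Künneth.
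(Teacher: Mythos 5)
Your proposal is correct and takes essentially the same approach as the paper: both use the identification $\on{Conv}_1^2 \cong \on{Conv}_G \times X$ induced by the disk trivialization, deduce the convolution identity by proper base change and Künneth, and obtain the second statement by applying $\on{Res}$ to \ref{convolution properties} and \ref{convolution is rigid}. You merely spell out the shift bookkeeping and the descent computation that the paper's proof asserts implicitly.
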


\begin{proof}
 The product $\on{Gr}_{G,X} \cong \on{Gr}_G \times X$ induces an identification $\on{Conv}_1^2 \cong \on{Conv}_G \times X$ compatible with the multiplication maps.  The first statement then follows from the definitions of the convolutions and proper base change.  The second statement then follows by applying $\on{Res}$ to \ref{convolution properties} and \ref{convolution is rigid}.
\end{proof}

\subsection*{Simple objects}

We use this structure to show that $\Sph(\stack{G})$ is in fact a semisimple abelian category. First, we identify the irreducible objects: let $\sh{J}$ be any simple object of $\Sph(\stack{G})$; by general principles, it must be of the form $j_{!*}(\sh{L})$, where $\sh{L}$ is a locally constant sheaf of $k$-vector spaces supported on a locally closed subset $\map{j}{S}{\on{Gr}_G}$; since $\sh{J}$ is $\smash{G(\hat{\OO})}$-equivariant, $S$ must be a union of orbits, and so by \ref{absolute orbit properties}, must have some $\on{Gr}_G^\lambda$ as a dense open subset; we assume therefore that $S = \on{Gr}_G^\lambda$.  By \ref{orbit stabilizer is connected}, \ref{transitive equivariant objects} applies and so $\stack{G}|\on{Gr}_G^\lambda$ and $\sh{L}$ are equivariantly trivial.  We conclude:

\begin{theorem}{prop}{absolute simple objects}
 The simple objects of $\Sph(\stack{G})$ are all minimal extensions $\sh{J}(\lambda)$ from some $\on{Gr}_G^\lambda$ of the constant sheaves $\csheaf{k}[\dim \on{Gr}_G^\lambda]$. \qed
\end{theorem}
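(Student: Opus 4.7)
The statement has essentially been outlined in the paragraph preceding it, so my proposal is to fill in the remaining step, namely the identification of the local system $\sh{L}$.

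First, I would recall the standard classification of simple perverse sheaves: any simple object $\sh{J} \in \Perv(\stack{G})_{\on{Gr}_G}$ is of the form $j_{!*}(\sh{L}[\dim S])$, where $\map{j}{S}{\on{Gr}_G}$ is the inclusion of a smooth, irreducible, locally closed subvariety and $\sh{L}$ is an irreducible $\stack{G}|_S$-twisted local system on $S$. The $G(\hat{\OO})$-equivariance of $\sh{J}$ implies, by uniqueness of the perverse extension, that the supporting set $\bar S$ is $G(\hat{\OO})$-stable. Since $\on{Gr}_G$ is stratified by the orbits $\on{Gr}_G^\lambda$ (\ref{absolute orbit properties}), and since the $G(\hat{\OO})$-orbit of any generic point of $S$ must itself be open in $\bar S$ and hence equal to $S$, we have $S = \on{Gr}_G^\lambda$ for some dominant coweight $\lambda$.

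Next I would specialize \ref{transitive equivariant objects} to the orbit $\on{Gr}_G^\lambda$. Pick the section $X = \{t^\lambda\} \subset \on{Gr}_G^\lambda$; by \ref{orbit stabilizer is connected}, the stabilizer $\on{Stab}_{G(\hat{\OO})}(t^\lambda)$ is connected, and it acts through a finite-dimensional quotient, so the hypotheses of \ref{transitive equivariant objects} apply (the role of ``connected $X$-group'' is played by the quotient of $G(\hat{\OO})$ that acts effectively, and the argument is topological, so it goes through in this pro-algebraic setting since everything factors through finite-dimensional quotients). This gives that both $\stack{G}|_{\on{Gr}_G^\lambda}$ and $\sh{L}$ are trivial after pulling back to the point, i.e., $\stack{G}|_{\on{Gr}_G^\lambda}$ is equivariantly trivial and, under this trivialization, $\sh{L}$ becomes an equivariant local system of $k$-vector spaces on $\on{Gr}_G^\lambda$ whose fiber carries a representation of the connected stabilizer.

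Finally, since $\sh{L}$ is a $G(\hat{\OO})$-equivariant local system on the transitive $G(\hat{\OO})$-space $\on{Gr}_G^\lambda$ with connected stabilizer, it is determined by its fiber as a representation of that stabilizer; simplicity of $\sh{J}$ (equivalently, of $\sh{L}$ as an equivariant local system) forces this representation to be one-dimensional, and connectedness of the stabilizer together with its unipotent-by-$B$ structure (from the proof of \ref{orbit stabilizer is connected}) forces any such character to be trivial. Hence $\sh{L} \cong \csheaf{k}$, and therefore $\sh{J} \cong j_{!*}(\csheaf{k}[\dim \on{Gr}_G^\lambda]) = \sh{J}(\lambda)$, as claimed.

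The only genuinely nontrivial point is the last step: ensuring that the stabilizer admits only the trivial character into $k^*$. This follows because \ref{orbit stabilizer is connected} exhibits the stabilizer as an extension of the Borel $B$ by a unipotent (pro-)group, and the characters of $B$ that extend to the whole arc group's stabilizer must be trivial --- a consequence of the fact that the component group of $G(\hat{\OO}) / \on{Stab}$ is what indexes the orbits, so the intrinsic equivariant local systems are labelled only by $\lambda$ itself with trivial coefficient.
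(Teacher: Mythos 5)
Your proof follows the paper's route up to and including the citation of \ref{orbit stabilizer is connected} and \ref{transitive equivariant objects}, but the final paragraph, which you single out as ``the only genuinely nontrivial point,'' contains a genuine error. The claim that the stabilizer ``admits only the trivial character into $k^*$'' is false: the very proof of \ref{orbit stabilizer is connected} exhibits a surjection from the stabilizer $H$ onto a parabolic containing $B$, hence onto $T$, and every weight of $G$ pulls back to a nontrivial $k^*$-character of $H$. The ``unipotent-by-$B$ structure'' therefore does not constrain characters at all, and the appeal to ``the component group of $G(\hat{\OO})/\on{Stab}$'' is a non sequitur --- that quotient is $\on{Gr}_G^\lambda$, which is connected, and its $\pi_0$ has nothing to do with how orbits are indexed.

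The deeper issue is that you read the conclusion of \ref{transitive equivariant objects} too weakly. That lemma does not merely say that $\sh{L}$ ``becomes an equivariant local system whose fiber carries a representation of the stabilizer''; it says that $\sh{L}$ is $X$-trivial, which over a point base means literally $\sh{L} \cong V \otimes \csheaf{k}$ for some $k$-vector space $V$. The mechanism in its proof is not paucity of algebraic characters of $H$, but the observation that the equivariance isomorphism is a section of a locally constant $\on{GL}_n(k)$-valued sheaf on $H \times G(\hat{\OO})$ which equals the identity on $\{1\} \times G(\hat{\OO})$; connectedness of $H$ then forces it to be the identity everywhere. Equivalently, equivariant local systems on $G(\hat{\OO})/H$ with both groups connected correspond to representations of $\pi_0(H)$, which is trivial. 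Once you accept \ref{transitive equivariant objects} at face value, simplicity of $\sh{J} = j_{!*}(\sh{L}[\dim \on{Gr}_G^\lambda]) \cong V \otimes \sh{J}(\lambda)$ forces $\dim V = 1$, and the proof is complete --- no further argument about characters of the stabilizer is needed or correct.
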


We can say more about which coweights $\lambda$ occur.  By \ref{bilinear form triviality} applied to $\on{Spr}(\sh{J}(\lambda))$, we have $F_1(\on{Spr}(\sh{J}(\lambda)))^\lambda = 0$ if $\lambda \notin \Lambda_Q$, the kernel of the bilinear form $\kappa$ associated with $Q$.  By \ref{fiber functor properties}\ref{en:fiber exactness}, we have $\on{Spr}(\sh{J}(\lambda)) = 0$, and therefore $\sh{J}(\lambda) = 0$ as well.  We arrive at the following more precise statement:

\begin{theorem}{prop}{precise simple objects}
 We have $\sh{J}(\lambda) = 0$ if $\lambda \notin \Lambda_Q$.  Furthermore, for every coroot $\check\alpha$ of $G$ such that $Q(\check\alpha)$ has infinite order in $A$, if for some $\lambda$ we have $\kappa(\check\alpha, \lambda) = 1$, then $\langle \alpha, \lambda \rangle = 0$.  That is, $\Lambda_Q$ is contained in the rational span of the coroots $\check\alpha$ such that $Q(\check\alpha)$ has finite order.
\end{theorem}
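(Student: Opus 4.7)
First I would dispatch the initial sentence; this is essentially what was already observed before the statement. The spreading $\on{Spr}(\sh{J}(\lambda)) \in \Sph^\ULA(\stack{G}_1)$ is supported on the component of $\on{Gr}_{T,X}$ indexed by $\lambda$, so by \ref{bilinear form triviality} it vanishes as soon as $\kappa(\lambda,\farg) \neq 1$, i.e.\ $\lambda \notin \Lambda_Q$. Since $F_1$ is exact and faithful by \ref{fiber functor properties} and $\on{Res} \circ \on{Spr} = \id$, this forces $\sh{J}(\lambda)=0$.

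For the ``Furthermore'' clause I would apply \ref{invariant form and pairing}, which is available because $Q \in Q(\Lambda_T,A)^W_\Z$ by \ref{form is invariant}. That lemma supplies the identity
\[
 \kappa(\check\alpha,\lambda) \;=\; \epsilon_{\check\alpha}(\lambda)\, Q(\check\alpha)^{\langle\alpha,\lambda\rangle},
\]
with $\epsilon_{\check\alpha}$ valued in the $2$-torsion $A_2$. Squaring both sides and feeding in the hypothesis $\kappa(\check\alpha,\lambda)=1$ yields $Q(\check\alpha)^{2\langle\alpha,\lambda\rangle}=1$; since $Q(\check\alpha)$ has infinite order this forces $\langle\alpha,\lambda\rangle=0$.

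The ``That is'' implication is the substantive remaining content. Take $\lambda \in \Lambda_Q$; then $\kappa(\check\alpha,\lambda)=1$ for every coroot, and the previous step yields $\langle\alpha,\lambda\rangle=0$ for every $\alpha$ in the $W$-invariant subset $R_\infty \subset R$ of roots whose coroots have $Q$-value of infinite order. I would then reduce to a pure root-system statement, treating each irreducible component $R_i$ of $R$ separately. If $R_i \subset R_f$ there is nothing to check. If $R_i \subset R_\infty$ then $\lambda$ annihilates every root of $R_i$, so its projection onto the $\Q$-span of coroots of $R_i$ vanishes. If $R_i$ is mixed (necessarily of type $B,C,F$ or $G$, with exactly the long or the short $W$-orbit lying in $R_\infty$) then $\lambda$ is orthogonal to that full orbit, which rationally spans the same subspace as $R_i$ itself, again killing the projection. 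Assembling these, the ``coroot part'' of $\lambda$ lies in the $\Q$-span of the coroots of $R_f$, which is the assertion.

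The main obstacle is the mixed-component step: it turns on the elementary root-system fact that in types $B_n,C_n,F_4,G_2$ the long roots alone, and also the short roots alone, rationally span the ambient Cartan. One also has to reconcile the statement with the possible presence of a nonzero component of $\lambda$ in the central subspace $V_{\mathrm{center}} = \{\mu : \langle\alpha,\mu\rangle = 0 \text{ for all } \alpha \in R\}$; the cleanest reading of the final sentence is modulo $V_{\mathrm{center}}$, giving $\Lambda_Q \otimes \Q \subset V_{\mathrm{center}} + \Q\text{-span}\{\check\beta : Q(\check\beta) \text{ has finite order}\}$, which is the content actually needed in later use.
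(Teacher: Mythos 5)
Your proof is correct, and for the first two sentences it follows essentially the same line as the paper.  For the first sentence, both arguments chain \ref{bilinear form triviality} through $\on{Spr}$ and the faithfulness in \ref{fiber functor properties}.  For the second sentence there is a small cosmetic difference: the paper simply asserts $\epsilon_{\check\alpha}=1$, citing the constraint $Q\in Q(\Lambda_T,A)^W_\Z$ from \ref{sf gerbes exact sequence} (a form coming from $\Z$ has trivial $\epsilon_{\check\alpha}$ because $\Z$ is torsion-free), whereas you sidestep any bookkeeping of $\epsilon_{\check\alpha}$ by squaring.  Both are fine.

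Where the two diverge is the final ``That is'' sentence.  The paper supplies no argument at all for it and treats it as a bare rephrasing of the second sentence.  As you point out, this passage is not a rephrasing: going from ``$\lambda$ is orthogonal to every root $\alpha$ with $Q(\check\alpha)$ of infinite order'' to ``$\lambda$ lies in the rational span of coroots $\check\beta$ with $Q(\check\beta)$ of finite order'' genuinely uses $W$-invariance of $Q$, through the root-system fact that in an irreducible component with two root lengths either orbit of roots already spans the ambient Cartan, so a mixed component is either wholly annihilated by $\lambda$ or wholly contained in the finite-order span.  Your component-by-component reduction supplies exactly the reasoning the paper omits.  You are also right that the paper's statement as literally written fails to account for the central part of $\Lambda_T\otimes\Q$ (for instance it is false when $G$ is a torus with $Q$ trivial), and that the correct reading, and the one actually used in the sole downstream application \ref{gerbe finite order}, is modulo the center, i.e.\ $\Lambda_Q\otimes\Q\subset V_{\mathrm{center}}+\Q\text{-span}\{\check\beta : Q(\check\beta)\text{ finite order}\}$.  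In that application the coweights in question range over a single connected component of $\on{Gr}_G$ (a coset of the coroot lattice), where the central discrepancy is harmless.  In short, your proof is not just correct but somewhat more careful than the paper's.
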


\begin{proof}
 The first statement was already argued.  For the second statement, we need only apply \ref{invariant form and pairing} with $\epsilon_{\check\alpha} = 1$ according to \ref{sf gerbes exact sequence}:
 \begin{equation*}
  \kappa(\check\alpha, \lambda) = Q(\check\alpha)^{\langle \alpha, \lambda \rangle}.
 \end{equation*}
 If the base is not a root of unity, this is $1$ if and only if the exponent is zero.
\end{proof}

This leads to a finiteness result on the gerbe itself.

\begin{theorem}{cor}{gerbe finite order}
 Suppose $\lambda \in \Lambda_Q$; then $\stack{G}$ has finite order on $\bar{\on{Gr}}_G^\lambda$.
\end{theorem}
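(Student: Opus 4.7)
Plan. The argument has two parts: a lattice-theoretic step extracting finite-order data from $\lambda \in \Lambda_Q$, and a cohomological step assembling it into triviality of a power of $\stack{G}$.

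For the first, write $\lambda - w_0(\lambda) = \sum_j c_j \check\alpha_j$ as a $\Z_{\geq 0}$-combination of simple coroots (possible since $\lambda - w_0(\lambda)$ lies in the coroot lattice and in the positive cone). I claim that every $\check\alpha_i$ with $c_i > 0$ satisfies $Q(\check\alpha_i)$ of finite order. Suppose not; then by the contrapositive of \ref{precise simple objects}, applied inside the group $\Lambda_Q$ to which $\lambda$ belongs, we have $\langle \beta, \lambda \rangle = 0$ for every coroot $\check\beta$ with $Q(\check\beta)$ of infinite order. By the $W$-invariance of $Q$ established in \ref{sf gerbes exact sequence}, this condition includes every $\check\beta$ in the $W$-orbit of $\check\alpha_i$, since $Q(\check\beta) = Q(\check\alpha_i)$ there. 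The $\R$-span of this orbit is the irreducible component $R_i$ of the root system containing $\alpha_i$, so $\lambda \perp R_i$; since $R_i$ is $w_0$-stable, $w_0(\lambda) \perp R_i$ and hence $\lambda - w_0(\lambda) \perp R_i$ as well. Positive-definiteness of the Cartan matrix restricted to $R_i$ then forces $c_j = 0$ for all $j$ with $\alpha_j \in R_i$, contradicting $c_i > 0$.

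Let $N$ be the least common multiple of the orders of the $Q(\check\alpha_i)$ for those $i$ with $c_i > 0$; it suffices to show $\stack{G}^N$ is trivial on $\bar{\on{Gr}}_G^\lambda$. By \ref{absolute orbit properties} together with \ref{borel components}, $\bar{\on{Gr}}_G^\lambda$ is contained in $\bar{\on{Gr}}_B^\lambda$ and stratified by the intersections $\on{Gr}_B^\mu \cap \bar{\on{Gr}}_G^\lambda$ for $w_0(\lambda) \leq \mu \leq \lambda$. By \ref{fiber monodromies}, the monodromy of $\stack{G}^N$ about any boundary divisor $\bar{\on{Gr}}_B^{\mu - \check\alpha_i} \subset \bar{\on{Gr}}_B^\mu$ equals $Q(\check\alpha_i)^{-N}$. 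Such a boundary appears inside $\bar{\on{Gr}}_G^\lambda$ only if $\mu - \check\alpha_i \geq w_0(\lambda)$, which forces $\check\alpha_i \leq \lambda - w_0(\lambda)$ and hence $c_i > 0$; in that case the monodromy is trivial by the choice of $N$.

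To conclude, I would translate vanishing of all relevant monodromies into triviality of the class $[\stack{G}^N] \in H^2(\bar{\on{Gr}}_G^\lambda, k^*)$. Since $k^*$ is divisible, universal coefficients identifies this group with $\on{Hom}(H_2(\bar{\on{Gr}}_G^\lambda, \Z), k^*)$; $H_2$ is generated by the classes of the projective lines $\Prj_{1;\check\alpha_i}^\mu$ from \ref{general projective line bundles} indexed by the relevant simple coroots, and $[\stack{G}^N]$ pairs trivially with each such line by the monodromy calculation above. The main obstacle is verifying that these Schubert lines do generate $H_2(\bar{\on{Gr}}_G^\lambda, \Z)$: this is standard for Schubert varieties in (affine) flag varieties via the Iwahori cell decomposition, but that decomposition is not developed in the paper and must be invoked from outside.
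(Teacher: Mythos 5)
Your first three steps mirror the paper's argument and actually supply a detail the paper leaves implicit. The paper asserts that $\lambda \perp \alpha$ (for each $\check\alpha$ with $Q(\check\alpha)$ of infinite order) forces $\mu \perp \alpha$ for every $\mu$ with $w_0(\lambda) \leq \mu \leq \lambda$ appearing in the stratification; justifying that ``therefore'' requires precisely your observations about the $W$-invariance of the finiteness condition (so it is constant on irreducible components of the root system), the $w_0$-stability of each component $R_i$, and the consequent vanishing of the $R_i$-coordinate of $\lambda - w_0(\lambda)$. Your monodromy bookkeeping in the second and third steps is likewise correct, and your pairing computation of $[\stack{G}^N]$ against the lines $\Prj_{1;\check\alpha}^\mu$ via \ref{fiber monodromies} is the right local calculation.

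The final assembly is where the two proofs genuinely diverge, and where your proposal has its (self-acknowledged) gap. You route through $H^2(\bar{\on{Gr}}_G^\lambda, k^*)$, divisibility of $k^*$, and a generating set for $H_2$, which in turn needs the Iwahori cell decomposition of the Schubert variety — a fact not developed or cited in the paper. The paper instead closes the argument internally, using \ref{fiber monodromies} as a classification statement: $\stack{G}$ restricted to $\bar{\on{Gr}}_G^\lambda$ is uniquely trivialized on each cohomologically trivial stratum $\on{Gr}_B^\mu$, and by \ref{2-line bundle of a Cartier divisor}\ref{en:order gerbe} and \ref{reducible order} it is determined by the orders about the boundary divisors $\on{Gr}_B^{\mu - \check\alpha}$; once all of these are killed by raising to the $N$'th power, the gerbe is glued from canonically trivialized pieces with trivial gluing data and is therefore trivial — the same pattern already exercised in the proof of \ref{vertical map injective}. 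So the obstacle you name is real for the $H_2$ route but disappears if you switch to the paper's Cartier-divisor/stratification argument, which needs only the coarse decomposition by the affine spaces $\on{Gr}_B^\mu$ rather than the finer Iwahori cells.
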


\begin{proof}
 We use \ref{fiber monodromies}; that is, we stratify $\bar{\on{Gr}}_G^\lambda$ by the intersections with the $\on{Gr}_B^\mu$ such that $w_0(\lambda) \leq \mu \leq \lambda$, on each of which $\stack{G}$ is trivialized with the order of trivialization being $Q(\check\alpha)^{-1}$ around each boundary divisor $\on{Gr}_B^{\mu - \check\alpha}$.  By \ref{precise simple objects}, if $Q(\check\alpha)$ does not have finite order, then $\lambda$, being in $\Lambda_Q$, must be orthogonal to $\alpha$, and therefore, if $\on{Gr}_B^\mu \cap \smash{\bar{\on{Gr}}}_G^\lambda \neq \emptyset$, we also have $\mu$ orthogonal to $\alpha$.  Thus, $\mu$ lies in the rational span of those $\check\alpha$ such that $Q(\check\alpha)$ has finite order.  In particular, the numbers $Q(\check\alpha)^{-1}$ which occur are all of finite order, so $\stack{G}^n$ is trivial on $\bar{\on{Gr}}_G$ for some $n$, as desired.
\end{proof}

We begin to analyze the basic properties of extensions of these simple objects.

\begin{theorem}{lem}{self-extensions}
 We have $\on{Ext}^1(\sh{J}(\lambda), \sh{J}(\lambda)) = 0$; i.e.\ there are no nontrivial extensions of $\sh{J}(\lambda)$ by itself.
\end{theorem}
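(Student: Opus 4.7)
Since $\Sph(\stack{G})$ is the heart of a t-structure on the equivariant twisted derived category $D^b_{G(\hat{\OO})}(\on{Gr}_G, \stack{G})$, I would compute $\on{Ext}^1(\sh{J}(\lambda), \sh{J}(\lambda))$ as $\on{Hom}(\sh{J}(\lambda), \sh{J}(\lambda)[1])$ there. By \ref{precise simple objects} we may assume $\lambda \in \Lambda_Q$; write $j \colon Y := \on{Gr}_G^\lambda \hookrightarrow \bar{Y} := \bar{\on{Gr}}_G^\lambda$, so that $\sh{J}(\lambda) = j_{!*}(A)$ for $A = \csheaf{k}[\dim Y]$, interpreted as an equivariant $\stack{G}|_Y$-twisted perverse sheaf. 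The plan is to apply $\on{Hom}(\farg, \sh{J}(\lambda)[1])$ to the standard triangle $j_! A \to \sh{J}(\lambda) \to C \to$ and argue that both flanking Hom groups in the resulting long exact sequence vanish.

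The boundary-supported term is immediate: the surjection $j_! A \twoheadrightarrow j_{!*}(A)$ has perverse kernel $K$ supported on $\partial := \bar{Y} \setminus Y$, so $C \cong K[1]$ and
\begin{equation*}
 \on{Hom}(C, \sh{J}(\lambda)[1]) = \on{Hom}_\Perv(K, \sh{J}(\lambda)).
\end{equation*}
The image of any such map is a perverse subobject of $\sh{J}(\lambda)$ supported on $\partial$; since $\sh{J}(\lambda)$ is simple with full support $\bar{Y}$, this image must be zero.

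The open-orbit term is where the real content lies. By adjunction,
\begin{equation*}
 \on{Hom}(j_! A, \sh{J}(\lambda)[1]) = \on{Hom}_{D^b_{G(\hat{\OO})}(Y, \stack{G}|_Y)}(A, A[1]),
\end{equation*}
and by \ref{transitive equivariant objects} (applied using transitivity of $G(\hat{\OO})$ on $Y$ and connectedness of the stabilizer from \ref{orbit stabilizer is connected}), $\stack{G}|_Y$ is equivariantly trivial, so we are computing ordinary equivariant Ext of shifted constant sheaves. Using \ref{orbit stabilizer is connected} once more, the action factors through a connected finite-dimensional algebraic quotient $H$ with connected stabilizer $H_\lambda$, giving $Y \cong H/H_\lambda$ and
\begin{equation*}
 \on{Hom}(A, A[1]) = H^1_H(Y, k) = H^1(BH_\lambda, k) = \on{Hom}(\pi_0(H_\lambda), k) = 0,
\end{equation*}
the last equality by connectedness of $H_\lambda$. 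The long exact sequence then forces $\on{Hom}(\sh{J}(\lambda), \sh{J}(\lambda)[1]) = 0$, as desired. The main technical point will be the reduction from the pro-algebraic $G(\hat{\OO})$-action to the genuine algebraic $H$-action on $Y$, needed to make the topological computation of $H^1$ of a classifying space meaningful --- this is precisely what the presence of the ``normal cofinite-dimensional'' subgroup $N^\lambda$ in \ref{orbit stabilizer is connected} provides.
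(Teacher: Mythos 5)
Your proof takes a genuinely different route from the paper's, and it has a real gap in the boundary term.

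The paper's argument is much shorter and avoids the equivariant derived category entirely: it observes that the characterization of $j_{!*}$ by ${}^pH^0(i^*(-)) = {}^pH^0(i^!(-)) = 0$ is preserved under extension of perverse sheaves, so that $j^*$ gives an injection $\on{Ext}^1(\sh{J}(\lambda), \sh{J}(\lambda)) \incl \on{Ext}^1(\csheaf{k}[\dim], \csheaf{k}[\dim]) = H^1(\on{Gr}_G^\lambda;k)$, which vanishes because $\on{Gr}_G^\lambda$ is simply connected. (One only needs injectivity, and $\on{Ext}^1$ in $\Sph$ injects into $\on{Ext}^1$ in $\Perv$ since $\Sph$ is a full subcategory.) Your route — working in the equivariant derived category and applying $\on{Hom}(-, \sh{J}(\lambda)[1])$ to the triangle $j_!A \to j_{!*}A \to C$ — is logically sound in outline, and the interior computation via $H^1(BH_\lambda) = 0$ is correct, though it is considerably heavier machinery than the paper deploys.

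The gap is in the boundary term. You claim the cone $C$ of $j_!A \to j_{!*}A$ satisfies $C \cong K[1]$ for $K$ a perverse sheaf on $\partial$, which presupposes that $j_!A$ is itself perverse. This is false in general: $j_!$ is $t$-exact only when $j$ is affine, and the open orbit $\on{Gr}_G^\lambda$ is an affine-space bundle over the partial flag variety $G/P_\lambda$, so it is \emph{not} affine in its closure once $\lambda$ is non-regular. Indeed, the statement $j_!A \cong j_{!*}A$ (hence $j_!A$ perverse) is exactly \ref{simple object extensions}, which the paper derives as a \emph{consequence} of semisimplicity, which in turn relies on this very lemma — so you cannot assume it here. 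The subsequent reasoning ("the image of any such map is a perverse subobject of $\sh{J}(\lambda)$ supported on $\partial$") depends on $K$ being perverse and therefore does not stand. The conclusion $\on{Hom}(C, \sh{J}(\lambda)[1]) = 0$ is nevertheless true, and can be rescued by a different argument: since $i^*j_! = 0$, the triangle gives $i^*C \cong i^*\sh{J}(\lambda)$, and then the defining property of $j_{!*}$ forces $i^*\sh{J}(\lambda) \in {}^p\stack{D}^{\leqslant -1}$ while $i^!\sh{J}(\lambda)[1] \in {}^p\stack{D}^{\geqslant 0}$, so $\on{Hom}(C, \sh{J}(\lambda)[1]) = \on{Hom}(i^*C, i^!\sh{J}(\lambda)[1]) = 0$. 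As written, however, the proposal does not supply this and the step is unjustified.
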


\begin{proof}
 Denote by $\map{j}{\on{Gr}_G^{\lambda}}{\smash{\bar{\on{Gr}}_G}^\lambda}$ the inclusion map, $i$ the inclusion of the complement.  The property
 \begin{equation}
  \label{eq:minimal extension}
  {}^p H^0(i^* j_{!*} \sh{F}) = 0 = {}^p H^0(i^! j_{!*} \sh{F}).
 \end{equation}
 uniquely characterizes the functor $j_{!*}$ and is preserved under extensions, so that we have
 \begin{equation*}
  \on{Ext}^1(j_{!*}\sh{F}_1, j_{!*} \sh{F}_2) \cong \on{Ext}^1(\sh{F}_1, \sh{F}_2).
 \end{equation*}
 When $\sh{F}_1 = \sh{F}_2 = \csheaf{k}[\dim \on{Gr}_G^\lambda]$, the latter is zero since $\on{Gr}_G^\lambda$ is simply connected.
\end{proof}

We now turn to more general extensions.  In identifying simple factors of a spherical sheaf, the following lemma is key:

\begin{theorem}{lem}{composition factors}
 Let $\sh{F} \in \cat{Sph}(\stack{G})$ have composition factors $\sh{J}(\lambda_i)$ for various dominant coweights $\lambda_i$; then the orbits $\on{Gr}_G^{\lambda_i}$ are exactly those such that (denoting by $j$ their inclusions into $\on{Gr}_G$) we have ${}^p H^0(j^* \sh{F}) \neq 0$. More precisely, $\sh{J}(\nu)$ occurs $n$ times as a composition factor of $\sh{F}$ if and only if for $\map{i}{\{t^{\nu}\}}{\on{Gr}_G}$, we have
 \begin{equation}
  \label{eq:fiber cohomology criterion}
  {}^p H^0(i^* \sh{F}[-\dim \on{Gr}_G^{\nu}]) \cong k^n.
 \end{equation}
\end{theorem}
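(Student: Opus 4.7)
The strategy is to set $d = \dim \on{Gr}_G^\nu$ and define
\begin{equation*}
 V_\nu(\sh{F}) \;:=\; {}^p H^0(i_\nu^* \sh{F}[-d]) \;=\; H^{-d}(i_\nu^* \sh{F}),
\end{equation*}
then show that $V_\nu \colon \Sph(\stack{G}) \to \Vect_k$ is an exact functor sending each simple $\sh{J}(\mu)$ to $\delta_{\mu\nu}\,k$; combined with Jordan--H\"older this gives the full statement (and the first claim, about which orbits contribute, follows at once by applying it to every $\nu$). By \ref{orbit stabilizer is connected} the stabilizer of $t^\nu$ is connected, so \ref{transitive equivariant objects} trivializes $\stack{G}$ equivariantly on $\on{Gr}_G^\nu$; hence any equivariant twisted perverse sheaf on this orbit has the form $V \otimes \csheaf{k}[d]$, which identifies $V_\nu(\sh{F})$ with ${}^p H^0(j_\nu^* \sh{F})$ evaluated at $t^\nu$. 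On simples this gives $V_\nu(\sh{J}(\mu)) = 0$ when $\nu \not\le \mu$ (empty stalk), $V_\nu(\sh{J}(\nu)) = k$ (the open stratum), and $V_\nu(\sh{J}(\mu)) = 0$ when $\nu < \mu$ strictly, the latter by the defining property \ref{eq:minimal extension} of the intermediate extension applied to the inclusion of the boundary.

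Next I would establish, by induction on composition length, the two-sided vanishing: for every $\sh{F} \in \Sph(\stack{G})$ and every $\nu$,
\begin{equation*}
 H^k(i_\nu^* \sh{F}) = 0 \text{ for } k > -d,
 \qquad
 H^k(i_\nu^! \sh{F}) = 0 \text{ for } k < d.
\end{equation*}
For simple objects this is the standard IC support condition, valid on both the $*$ and $!$ sides by Verdier self-duality of $\sh{J}(\mu)$ (the gerbe is trivializable on $\bar{\on{Gr}}{}_G^\mu$ by \ref{gerbe finite order}, so the usual self-duality of constant-coefficient IC-sheaves applies). The inductive step is the long exact sequence of stalks: extensions of sheaves satisfying these vanishings again satisfy them.

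Now for additivity, the long exact sequence of $i_\nu^*$-stalks at $t^\nu$ applied to $0 \to \sh{F}' \to \sh{F} \to \sh{F}'' \to 0$, together with the vanishing $H^{-d+1}(i_\nu^* \sh{F}') = 0$, collapses to the four-term exact sequence
\begin{equation*}
 H^{-d-1}(i_\nu^* \sh{F}'') \xrightarrow{\partial} V_\nu(\sh{F}') \to V_\nu(\sh{F}) \to V_\nu(\sh{F}'') \to 0,
\end{equation*}
yielding right exactness of $V_\nu$. The dual argument applied to $i_\nu^!$ and the vanishing $H^{d-1}(i_\nu^! \sh{F}'') = 0$ produces the mirror four-term exact sequence involving $V_\nu^!(\sh{F}) := H^d(i_\nu^! \sh{F})$, giving left exactness of $V_\nu^!$. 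Since Verdier self-duality of the $\sh{J}(\mu)$ identifies $V_\nu^!$ with $V_\nu$ on simples, I would propagate this identification to all of $\Sph(\stack{G})$ by induction on composition length using a five-lemma comparison of the two four-term sequences via a natural transformation $V_\nu^! \to V_\nu$. Once the two functors agree, $V_\nu$ is both left and right exact, hence exact, and the proof concludes by Jordan--H\"older.

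The main obstacle is the last step: constructing the natural transformation $V_\nu^! \to V_\nu$ and verifying that its compatibility with the connecting maps $\partial,\partial'$ is sufficient to force $\partial = 0$. The cleanest route seems to go through the equivariant trivialization of $\stack{G}$ near $t^\nu$ (so that twisted sheaves look locally like ordinary perverse sheaves) and then to use a direct computation on the orbit, where both $j_\nu^*\sh{F}$ and $j_\nu^!\sh{F}$ reduce to complexes of constant sheaves whose relevant cohomology is canonically identified; alternatively, one can proceed by filtering $\sh{F}$ so that at each step $\sh{F}''$ is chosen to be a simple with $\mu \not< \nu$ (so that $H^{-d-1}(i_\nu^*\sh{F}'')$ vanishes outright by the IC support condition), which sidesteps the comparison entirely and is probably the shortest honest route.
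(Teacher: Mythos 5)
Your reduction is the right one and matches the paper's in spirit: identify the functor $V_\nu(\sh{F}) := H^{-d}(i_\nu^*\sh{F})$ (with $d = \dim\on{Gr}_G^\nu$), compute its values on the simples via the IC support conditions and \ref{eq:minimal extension}, and observe that the count then follows from exactness of $V_\nu$ on $\Sph(\stack{G})$. You have correctly flagged that exactness is the nontrivial point (the paper is terse here). The gap is in your two proposed ways to close it.

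(a) There is no natural transformation $\iota^! \to \iota^*$ for the closed immersion $\iota\colon \bar{\on{Gr}}{}_G^\nu \hookrightarrow \on{Gr}_G$, hence no map $j_\nu^! \to j_\nu^*$ to compare $V_\nu^!$ with $V_\nu$; for a closed immersion $\iota^*$ is right $t$-exact and $\iota^!$ left $t$-exact and they are unrelated without additional data. So the ``five-lemma with a natural transformation'' route is not available.

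(b) The filtering route has the inequality reversed. The vanishing you need for the connecting map is $H^{-d-1}(i_\nu^*\sh{J}(\mu)) = 0$, and this holds when $\mu = \nu$ (the stalk is $\csheaf{k}[d]$) or when $\mu \not\geq \nu$ (the stalk is zero), i.e.\ precisely when $\mu \not> \nu$. But for $\mu > \nu$ the strict IC support condition only gives $H^k(i_\nu^*\sh{J}(\mu)) = 0$ for $k \geq -d$; the group $H^{-d-1}$ is exactly the first potentially-nonzero one and need not vanish. So the condition should be $\mu\not>\nu$, not $\mu\not<\nu$. Moreover, even with the correct inequality, an arbitrary composition series does not let you pick quotients with prescribed weights.

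The honest way to run the filtering argument is via Serre subcategories. Let $\cat{S}_{>\nu}$ be the Serre subcategory generated by $\{\sh{J}(\mu) : \mu > \nu\}$. Right exactness of $V_\nu$ plus the base case shows $V_\nu$ vanishes on $\cat{S}_{>\nu}$. Let $\sh{F}_{>\nu} \subset \sh{F}$ be the maximal $\cat{S}_{>\nu}$-subobject; the long exact sequence of $i_\nu^*$-stalks for $\sh{F}_{>\nu} \hookrightarrow \sh{F} \twoheadrightarrow \sh{F}/\sh{F}_{>\nu}$ collapses to $V_\nu(\sh{F}) \cong V_\nu(\sh{F}/\sh{F}_{>\nu})$, since the preceding term $V_\nu(\sh{F}_{>\nu})$ is zero. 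The quotient $\sh{F}/\sh{F}_{>\nu}$ lies in the complementary Serre subcategory (all factors $\mu\not>\nu$), where one shows $H^{-d-1}(i_\nu^*(-)) \equiv 0$ by induction on length (true on simples, propagated by the long exact sequence); hence $V_\nu$ is exact there, and the multiplicity count is immediate. Also note your ``two-sided vanishing'' does not need an induction: $H^k(i_\nu^*\sh{F}) = 0$ for $k > -d$ follows directly from perversity of $\sh{F}$ together with equivariance (the support of $H^k(\sh{F})$ is a union of orbits of dimension $\leq -k$).
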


\begin{proof}
 The first statement follows from \ref{eq:minimal extension}.  For the second, we know that the sheaf ${}^p H^0(j^* \sh{F})$ is $\smash{G(\hat{\OO})}$-equivariant and therefore constant on $\on{Gr}_G^\nu$, so vanishes if and only if its stalk at $t^{\nu}$ does.  The more refined count comes from the fact that $j^* \sh{J}(\nu) \cong k[\dim \on{Gr}_G^\nu]$.
\end{proof}

In order to produce such a nonvanishing stalk, we will use the following fact about the top cohomology of a proper scheme.

\begin{theorem}{lem}{proper top cohomology}
 Let $\map{p}{P}{\on{Spec} \C}$ be the structure map of a connected proper scheme of dimension $d$ and let $\map{j}{U}{P}$ be the inclusion of a smooth open set whose complement $\map{i}{Z}{P}$ has codimension at least $2$.  If $A^\bullet \in {}^p \stack{D}^{\leqslant 0}$ and if $j^* A^\bullet \cong \csheaf{k}[d]$, then ${}^p H^d p_*(A^\bullet) \cong k$.
\end{theorem}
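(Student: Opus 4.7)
The plan is to identify ${}^p H^d p_*(A^\bullet)$ with the ordinary top cohomology $H^d(P, A^\bullet)$ and then compute it via a localization triangle along $Z$. On a point the perverse and standard t-structures coincide, so it suffices to track ordinary hypercohomology. Since $p \colon P \to \on{Spec}\C$ is proper with $\dim P = d$, the BBD amplitude bound gives $p_* \colon {}^p \stack{D}^{\leqslant 0}(P) \to \stack{D}^{\leqslant d}(\on{pt})$; hence ${}^p H^d p_*(A^\bullet) = H^d(P, A^\bullet)$, and this is indeed the top ordinary cohomology group.

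Next, I would apply the localization triangle
\begin{equation*}
 j_! j^* A^\bullet \to A^\bullet \to i_* i^* A^\bullet \to
\end{equation*}
and read off the relevant piece of the long exact sequence,
\begin{equation*}
 H^{d-1}(Z, i^* A^\bullet) \to H^d_c(U, \csheaf{k}[d]) \to H^d(P, A^\bullet) \to H^d(Z, i^* A^\bullet).
\end{equation*}
Because $i^*$ is right t-exact for the perverse t-structure, $i^* A^\bullet \in {}^p \stack{D}^{\leqslant 0}(Z)$. Applying the same BBD amplitude bound now to the structure map $Z \to \on{Spec}\C$, with $\dim Z \leqslant d - 2$, yields $H^j(Z, i^* A^\bullet) = 0$ for $j > d - 2$, so both outer terms vanish.

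This reduces the computation to the isomorphism
\begin{equation*}
 H^d(P, A^\bullet) \cong H^d_c(U, \csheaf{k}[d]) = H^{2d}_c(U, \csheaf{k}).
\end{equation*}
Since $U$ is a smooth complex manifold of pure complex dimension $d$ whose complement in $P$ has real codimension at least $4$, it is connected and orientable, so its top compactly-supported cohomology is one-dimensional, finishing the argument.

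There is no single difficult step; the real content is recognizing how codimension-$2$ geometry of $Z$ conspires with the perverse amplitude bound to kill the error terms in the localization sequence. The points most deserving of care are the BBD amplitude bound $R\Gamma \colon {}^p \stack{D}^{\leqslant 0} \to \stack{D}^{\leqslant \dim}$, which makes ${}^p H^d$ agree with $H^d$ and forces $H^j(Z, i^* A^\bullet)$ to vanish in the relevant range, and the verification that $U$ is connected -- automatic in the intended applications, where $P$ is an orbit closure and hence irreducible.
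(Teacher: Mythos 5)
Your proof follows essentially the same route as the paper: identify the target with top ordinary cohomology, apply the localization triangle $j_! j^* A^\bullet \to A^\bullet \to i_* i^* A^\bullet$, kill the $Z$-terms using the codimension-$\geq 2$ bound together with the perverse amplitude estimate from BBD, and reduce to the top compactly-supported cohomology of the smooth open $U$. The only cosmetic difference is that the paper finishes by applying Verdier duality to land on $H^0(U, \csheaf{k}) = \Gamma(U, \csheaf{k})$, whereas you invoke Poincar\'e duality for a connected oriented $2d$-manifold directly; both steps implicitly rely on $U$ being connected, which you correctly flag.
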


Note that the lemma concerns untwisted perverse sheaves, but that if we are given a trivial (if not \emph{trivialized}) gerbe on $\on{Spec} \C$, the same statement holds of twisted sheaves as well, since in fact they are equivalent to untwisted ones.  There is no twisted version of this lemma because in order to apply cohomology (that is, pushforward), the gerbe on $P$ must be the pullback of that on $\on{Spec} \C$, which is always trivial.

\begin{proof}
 We apply $p_* = p_!$ to the canonical triangle:
 \begin{equation*}
  j_! j^* A^\bullet \to A^\bullet \to i_* i^* A^\bullet \to
  \quad\implies\quad
  (p|_U)_! j^* A^\bullet \to p_* A^\bullet \to (p|_Z)_* i^* A^\bullet \to.
 \end{equation*}
 By hypothesis, $\dim Z \leq d - 2$ and $i^* A^\bullet \in {}^p \stack{D}^{\leqslant 0}$, so by \cite{BBD}*{\S4.2.4} and the long exact sequence of perverse cohomology we have, respectively:
 \begin{equation*}
  {}^p H^d (p|_Z)_* i^* A^\bullet = {}^p H^{d - 1} (p|_Z)_* i^* A^\bullet = 0
  \quad\implies\quad
  {}^p H^d p_* (A^\bullet) \cong {}^p H^d (p|_U)_! (j^* A^\bullet).
 \end{equation*}
 The last term is the dual of $H^{-d} (p|_U)_* (j^* \DD A^\bullet)$ on $\on{Spec} \C$, and we have
 \begin{equation*}
  H^{-d} (p|_U)_* (j^* \DD A^\bullet)
   = H^{-d} (p|_U)_* (\DD \csheaf{k}[d])
   = H^0 (p|_U)_* \csheaf{k}
   = \Gamma(U, \csheaf{k})
   = k,
 \end{equation*}
 using $\DD (\csheaf{k}[d]) = \csheaf{k}[d]$ since $U$ is smooth.
\end{proof}

\subsection*{Summands of a convolution}

In particular, we may apply this to obtain composition factors of a convolution.  For simplicity, we use the notation $l = \dim \on{Gr}_G^{\lambda}$, $m = \dim \on{Gr}_G^{\mu}$, $n = \dim \on{Gr}_G^{\nu}$.

\begin{theorem}{lem}{fiber dimension criterion}
 For any dominant $\nu$, let $U = m^{-1}(t^{\nu}) \cap (\on{Gr}_G^\lambda * \on{Gr}_G^\mu)$.  Then
 \begin{equation}
  \label{eq:fiber dimension bound}
  \dim U \leq \frac{1}{2}(l + m - n),
 \end{equation}
 with equality if and only if $\sh{J}(\nu)$ is a composition factor of $\sh{J}(\lambda) * \sh{J}(\mu)$. Furthermore, the multiplicity of this summand is the number of components of $U$ having this dimension.
\end{theorem}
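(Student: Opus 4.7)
Using \ref{composition factors} and proper base change for the proper morphism $m \colon \bar{\on{Gr}}{}_G^\lambda * \bar{\on{Gr}}{}_G^\mu \to \on{Gr}_G$, the multiplicity of $\sh{J}(\nu)$ in $\sh{J}(\lambda) *\sh{J}(\mu)=m_*\sh{K}$ is
\[
 \dim H^{-n}\bigl(U',\,\sh{K}|_{U'}\bigr),
\]
where $\sh{K}=\sh{J}(\lambda)\tbtimes\sh{J}(\mu)$ is the IC sheaf of the irreducible $(l{+}m)$-dimensional variety $\bar{\on{Gr}}{}_G^\lambda*\bar{\on{Gr}}{}_G^\mu$ (by \ref{irreducible convolution set}), and $U'=m^{-1}(t^\nu)$. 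Set $c=(l+m-n)/2$; the goal is to prove $\dim U\le c$ and compute the above dimension.

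The first task is the dimension bound (semi-smallness of $m$). A point of $U$ corresponds to a pair $(g_1,g_2)$ with $g_i\in\on{Gr}_G^{\{\lambda,\mu\}}$ and $g_1g_2\in t^\nu G(\hat\OO)$. I would project along $\on{pr}\colon \on{Gr}_G^\lambda*\on{Gr}_G^\mu \to \on{Gr}_G^\lambda$ and stratify by $N(\hat\KK)$-orbits. The image of $U$ is then contained in $\on{Gr}_G^\lambda \cap t^\nu\cdot \on{Gr}_G^{-w_0\mu}$, and the fibers are cut out by conditions of the form $g_1^{-1}t^\nu\in \on{Gr}_G^\mu\cap \on{Gr}_B^{\nu'}$ for various $\nu'$. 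Adding the dimension formulas $\dim(\on{Gr}_G^\alpha\cap \on{Gr}_B^\beta)=\langle\rho,\alpha+\beta\rangle$ from \ref{absolute orbit properties} over the resulting strata collapses to $\dim U \le \langle\rho,\lambda+\mu-\nu\rangle=c$. (The existence result \ref{middle weight convolution fiber} already furnishes the matching lower bound in a particular case.)

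With the bound established, the cohomology on $U'$ is computed from the open-closed triangle for $U\overset{j}{\hookrightarrow}U'\overset{i}{\hookleftarrow}V$, $V=U'\setminus U$. Since $\sh{K}|_U=\csheaf{k}[l+m]$ and $U'$ is proper, taking $R\Gamma(U',-)$ yields
\[
 H^{-n-1}(V,\sh{K}|_V)\to H^{2c}_c(U,\csheaf{k})\to H^{-n}(U',\sh{K}|_{U'})\to H^{-n}(V,\sh{K}|_V).
\]
The left term $H^{2c}_c(U,\csheaf{k})$ is $0$ when $\dim U<c$, and is free of rank equal to the number of top-dimensional components of $U$ when $\dim U=c$, which matches the target formula. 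For the rightmost term I would stratify $V$ by $\on{Gr}_G^{\lambda'}*\on{Gr}_G^{\mu'}$ with $(\lambda',\mu')<(\lambda,\mu)$: on such a stratum of dimension $s=l'+m'$ the IC condition forces stalks of $\sh{K}$ in degrees $\le -s-1$, and semi-smallness applied to the stratum (same proof as Step~1) gives fiber dimension $\le (s-n)/2$; multiplying, $H^k(V\cap\text{stratum},\sh{K})=0$ for $k\ge -n$, hence $H^{-n}(V,\sh{K}|_V)=0$, giving surjectivity.

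The main obstacle is the injectivity step: the dimension--IC estimate only kills degrees $\ge -n$, so $H^{-n-1}(V,\sh{K}|_V)$ is not controlled by it directly. I would close this gap by exploiting the already-established perversity of $m_*\sh{K}$ (\ref{convolution properties}\ref{en:convolution is perverse}), which forces $H^k(i^*m_*\sh{K})=0$ for $k>-n$, and the Verdier self-duality $\mathbb{D}\sh{K}\cong\sh{K}$ (the local system on the smooth open stratum is trivial). Applying the dual triangle $i_*i^!\sh{K}|_{U'}\to\sh{K}|_{U'}\to j_*(\csheaf{k}[l+m])$ and using that $H^{2c}(U,\csheaf{k})=0$ (as $U$ is a non-compact variety of dimension $\le c$), one obtains the vanishing of the obstruction coming from $H^{-n-1}(V,\sh{K}|_V)$ by a symmetric dimension argument on $i^!\sh{K}$. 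Combining both directions identifies $H^{2c}_c(U,\csheaf{k})$ with $H^{-n}(U',\sh{K}|_{U'})$, which via the discussion of the left-hand term completes the proof.
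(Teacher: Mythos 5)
Your treatment of the bound $\dim U \le c$ is a genuinely different route from the paper. You propose the classical Mirkovi\'c--Vilonen semismallness count (project to $\on{Gr}_G^\lambda$ and stratify by semi-infinite orbits), whereas the paper \emph{deduces} semismallness from the already-proven perversity of $\sh{J}(\lambda)*\sh{J}(\mu)$ (\ref{convolution properties}\ref{en:convolution is perverse}, established via ULA/d\'evissage, not a dimension count): applying \ref{proper top cohomology} to the fiber $P=\bar U$ yields a nonzero class in the stalk at $t^\nu$ in degree $2\dim P - (l+m)$, and $G(\smash{\hat\OO})$-equivariance plus the support condition of perversity force that degree to be $\le -n$. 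Your geometric count would work if filled in, but it reproves semismallness independently, precisely the ground the paper's development lets it skip.

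The multiplicity step contains a real gap, which you flag but do not close. Your surjectivity analysis (killing $H^{\ge -n}(V,\sh{K}|_V)$ via the strict IC degree bound on boundary strata and inductive semismallness) is sound. The injectivity fix, however, fails concretely: the claim ``$H^{2c}(U,\csheaf{k})=0$ since $U$ is a non-compact variety of dimension $\le c$'' is false --- for $\nu=\lambda+\mu$, \ref{highest weight convolution fiber} gives that $U$ is a single \emph{compact} point, $c=0$, and $H^0(U)=k\ne 0$. Beyond this, $U$ need not be smooth or connected, and the ``symmetric dimension argument on $i^!\sh{K}$'' you gesture at is not a step that controls the image of $H^{-n-1}(V,\sh{K}|_V)$ in $H^{-n}_c(U,\sh{K}|_U)$, which is what injectivity requires. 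The paper closes the gap by a different mechanism entirely: \ref{proper top cohomology} stipulates that $Z=P\setminus U$ has codimension $\ge 2$ in $P$, so that by \cite{BBD}*{\S 4.2.4} \emph{both} ${}^p H^{\dim P}$ and ${}^p H^{\dim P - 1}$ of $(p|_Z)_*\,i^*A^\bullet$ vanish; the degree-$(\dim P-1)$ vanishing is exactly the injectivity you could not control. That codimension-two input, not a duality swap, is the missing ingredient in your proposal.
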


\begin{proof}
 Note that \ref{eq:fiber dimension bound} is equivalent, using \ref{absolute orbit properties} and \ref{irreducible convolution set}, to:
 \begin{equation}
  \label{eq:new fiber dimension bound}
  \on{codim} U - \dim U \geq n
 \end{equation}
 For brevity, let $\sh{F} = \sh{J}(\lambda) \tbtimes \sh{J}(\mu)$; it lives on the closure $C$ of $\on{Gr}_G^\lambda * \on{Gr}_G^\mu$ and is constant on this convolution product of orbits
 itself; we will denote
 \begin{equation*}
  P = \bar{U} = m^{-1}(t^\nu) \cap C.
 \end{equation*}
 To evaluate \ref{eq:fiber cohomology criterion} for $m_*(\sh{F})$, we apply proper base change and then \ref{proper top cohomology} with $A^\bullet = \sh{F}|_P[-\on{codim} P]$.  Its restriction to $U$ is constant in degree $-\dim P$ and the boundary of $U$ in $P$ has codimension $2$ by \ref{absolute orbit properties}, so the lemma applies and we conclude that the extremal case of \ref{eq:new fiber dimension bound} is the precise condition necessary for \ref{composition factors} to apply.  If the left side were decreased, then \ref{proper top cohomology} would produce positive-degree cohomology sheaves of $i^* m_* (\sh{F})[-n]$ and therefore of $m_* (\sh{F})$ and, finally, of $\sh{J}(\lambda) * \sh{J}(\mu)$, in contradiction to the fact that this is perverse.  This gives the inequality of \ref{eq:fiber dimension bound}.  The statement on multiplicity follows from \ref{composition factors}.
\end{proof}

\begin{theorem}{cor}{convolution highest weight}
 The convolution $\sh{J}(\lambda) * \sh{J}(\mu)$ has exactly one copy of $\sh{J}(\lambda + \mu)$ as a composition factor, and all other ones $\sh{J}(\nu)$ have $\nu \leq \lambda + \mu$.
\end{theorem}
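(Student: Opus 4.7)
The plan is to combine \ref{fiber dimension criterion} with the two key inputs already proven: \ref{highest weight convolution fiber} (the fiber over $t^{\lambda+\mu}$ is a single point) and the standard inclusion $m(\on{Gr}_G^\lambda * \on{Gr}_G^\mu) \subseteq \bar{\on{Gr}}_G^{\lambda+\mu}$.

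First I would establish the support bound. Working in the loop group presentation $\on{Gr}_G = G(\hat{\KK})/G(\hat{\OO})$, points of $\on{Gr}_G^\lambda * \on{Gr}_G^\mu$ correspond to pairs $(g_1, g_2) \in G(\hat{\OO}) t^\lambda G(\hat{\OO}) \times G(\hat{\OO}) t^\mu G(\hat{\OO})$ modulo the diagonal $G(\hat{\OO})$-action, with $m$ sending such a pair to $g_1 g_2 \cdot G(\hat{\OO})$. Standard Bruhat-decomposition arguments in the affine Weyl group (for instance, acting on a faithful representation and bounding pole orders of matrix coefficients, as in the proof of \ref{highest weight convolution fiber}) give
\begin{equation*}
 G(\hat{\OO}) t^\lambda G(\hat{\OO}) \cdot G(\hat{\OO}) t^\mu G(\hat{\OO})
   \subset \bigcup_{\nu \leq \lambda + \mu} G(\hat{\OO}) t^\nu G(\hat{\OO}),
\end{equation*}
so $m$ factors through $\bar{\on{Gr}}_G^{\lambda+\mu}$. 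Since $\sh{J}(\lambda) * \sh{J}(\mu) = m_*(\sh{J}(\lambda) \tbtimes \sh{J}(\mu))$ is therefore supported on $\bar{\on{Gr}}_G^{\lambda+\mu}$, every composition factor $\sh{J}(\nu)$ satisfies $\on{Gr}_G^\nu \subset \bar{\on{Gr}}_G^{\lambda+\mu}$, i.e.\ $\nu \leq \lambda + \mu$ by \ref{absolute orbit properties}.

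Next I would pin down the multiplicity of the top factor. Write $l = \langle 2\rho, \lambda \rangle$, $m = \langle 2\rho, \mu \rangle$, and take $\nu = \lambda + \mu$, so $n = \langle 2\rho, \nu \rangle = l + m$ and $\tfrac{1}{2}(l + m - n) = 0$. By \ref{fiber dimension criterion}, the multiplicity of $\sh{J}(\lambda+\mu)$ in $\sh{J}(\lambda) * \sh{J}(\mu)$ equals the number of $0$-dimensional irreducible components of $U = m^{-1}(t^{\lambda+\mu}) \cap (\on{Gr}_G^\lambda * \on{Gr}_G^\mu)$, provided the dimension bound is attained. But \ref{highest weight convolution fiber} tells us that this fiber consists of the single point $t^\lambda * t^\mu$, which both saturates the inequality and exhibits exactly one top-dimensional component. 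Hence $\sh{J}(\lambda+\mu)$ appears with multiplicity exactly $1$, completing the proof.

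I do not anticipate any real obstacle: both halves reduce to statements already in hand. The only point requiring mild care is invoking the Bruhat-type inclusion of orbit products in step one; since the excerpt does not state it as a named lemma, I would include a short justification along the lines of the matrix-order calculation used inside the proof of \ref{highest weight convolution fiber}.
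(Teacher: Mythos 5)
Your proof is correct and follows essentially the paper's approach: the paper's proof is a one-liner citing \ref{fiber dimension criterion} and \ref{highest weight convolution fiber}, and you invoke exactly those two results to saturate the dimension bound at $\nu = \lambda+\mu$ and read off multiplicity one from the single-point fiber $t^\lambda * t^\mu$. Your Bruhat-type inclusion for the support bound $\nu \leq \lambda+\mu$ is a reasonable way to spell out what the paper treats as immediate (a dimension count on the proper map $m$ restricted to $\overline{\on{Gr}_G^\lambda * \on{Gr}_G^\mu}$, irreducible of dimension $l+m$ and mapping onto an irreducible $G(\hat\OO)$-invariant set containing $\on{Gr}_G^{\lambda+\mu}$, would also do).
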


\begin{proof}
 This follows directly from \ref{fiber dimension criterion} and \ref{highest weight convolution fiber}.
\end{proof}

Finally, the last step on the way to general semisimplicity is to show that all the composition factors of a convolution are actually summands.  This lemma, with the same proof, appeared as \cite{ginzburg}*{Proposition 2.2.1} but was absent from \cite{MV_Satake} since the latter paper proves semisimplicity of $\Sph$ by a different, more computational route.

\begin{theorem}{lem}{semisimple convolution}
 The convolution of two $\sh{J}(\lambda)$'s is semisimple in $\Sph(\stack{G})$.
\end{theorem}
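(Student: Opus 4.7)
The plan is to invoke the BBD decomposition theorem for the proper multiplication map $m \colon \overline{\on{Gr}_G^\lambda * \on{Gr}_G^\mu} \to \bar{\on{Gr}}_G^{\lambda+\mu}$, after reducing the gerbe-twisted setting to a classical one using the finite-order property of $\stack{G}$ on the finite-dimensional support.

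First I would identify the input sheaf. The twisted product $\sh{J}(\lambda) \tbtimes \sh{J}(\mu)$ descends from $\on{pr}_1^* \sh{J}(\lambda) \otimes \on{pr}_2^* \sh{J}(\mu)$ on the covering $G(\hat{\KK})\times \on{Gr}_G$; restricted to the open convolution set $\on{Gr}_G^\lambda * \on{Gr}_G^\mu$, which is smooth and irreducible of dimension $\langle 2\rho,\lambda+\mu\rangle$ by Lemma \ref{irreducible convolution set}, it is a shifted (twisted) constant sheaf. Hence $\sh{J}(\lambda) \tbtimes \sh{J}(\mu)$ is the gerbe-twisted $j_{!*}$ of this constant sheaf, a simple perverse sheaf of geometric origin on the closed convolution diagram.

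Next, by Corollary \ref{gerbe finite order} the gerbe $\stack{G}$ has finite order on $\bar{\on{Gr}}_G^{\lambda+\mu}$; choose $N$ with $\stack{G}^N$ trivial on that subvariety, and similarly for its preimage in the convolution diagram under the equivariant map $m$. A $\stack{G}$-twisted perverse sheaf on $\bar{\on{Gr}}_G^{\lambda+\mu}$ is then equivalent to a $\mu_N$-equivariant ordinary perverse sheaf on a finite cover that trivializes the gerbe, and the analogous identification holds over the convolution diagram. On these covers the classical BBD decomposition theorem applies to $m$, yielding an isomorphism
\begin{equation*}
 m_*\bigl(\sh{J}(\lambda) \tbtimes \sh{J}(\mu)\bigr) \cong \bigoplus_i \sh{K}_i[n_i]
\end{equation*}
with each $\sh{K}_i$ simple perverse and $n_i \in \Z$. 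Since by Proposition \ref{convolution properties}\ref{en:convolution is perverse} the left-hand side is already known to be perverse, necessarily all $n_i=0$. Moreover, each summand is $G(\hat{\OO})$-equivariant as a retract of the $G(\hat{\OO})$-equivariant sheaf $\sh{J}(\lambda)*\sh{J}(\mu)$, so by Proposition \ref{absolute simple objects} is of the form $\sh{J}(\nu)$.

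The main obstacle will be justifying the decomposition theorem in the gerbe-twisted setting: one must verify that BBD's argument (via weights or mixed Hodge modules) carries over to twisted perverse sheaves. This is precisely where Corollary \ref{gerbe finite order} does essential work, allowing the reduction to the classical BBD setup on a $\mu_N$-cover rather than requiring a gerbe-theoretic reworking of the entire theorem; no direct analysis of extensions between the $\sh{J}(\nu)$ is needed.
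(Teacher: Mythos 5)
The proposal has a genuine gap in the step that reduces the twisted decomposition theorem to the classical one. The finite order of $\stack{G}$ on $\bar{\on{Gr}}{}_G^{\lambda+\mu}$ established in \ref{gerbe finite order} means that its class lies in $H^2$ with $\mu_N$-coefficients; it does \emph{not} produce a finite $\mu_N$-cover on which the gerbe is trivialized. Such a cover exists only when the gerbe class is killed by pullback along some $\mu_N$-torsor on $\bar{\on{Gr}}{}_G^{\lambda+\mu}$, a condition separate from finite order and in general false: for a gerbe of the form $\sh{L}^{\log q}$ with $q$ of order $N$, this amounts to $\sh{L}$ admitting an $N$-th root, which need not hold on a Schubert variety. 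So the asserted equivalence between $\stack{G}$-twisted perverse sheaves and $\mu_N$-equivariant ordinary perverse sheaves on a finite cover is not available, and the proposed global reduction to BBD breaks down.

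The paper uses \ref{gerbe finite order} for a different purpose. It fixes a finite cover of the support by \emph{open} subsets $V_i$, each meeting the dense orbit $\on{Gr}_G^{\lambda+\mu}$, together with trivializations of $\stack{G}|_{V_i}$. Restricted to $m^{-1}(V_i)$, the sheaf $\sh{J}(\lambda) \tbtimes \sh{J}(\mu)$ is then an ordinary perverse sheaf, the minimal extension of a local system whose monodromy is controlled by the transition torsors $t_j^{-1}t_i$ of $m^*\stack{G}$; finite order of the gerbe ensures these are of finite order, so the local system has finite monodromy and the sheaf is of geometric origin, which is what makes BBD applicable on each $m^{-1}(V_i)$. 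Because the support is not covered by a single $V_i$, the resulting local decompositions must then be glued, and here the paper invokes \ref{self-extensions}: each isotypic summand, being locally a direct sum of copies of a single $\sh{J}(\nu)$, is itself semisimple because $\sh{J}(\nu)$ has no self-extensions. Your concluding claim that no analysis of extensions between the $\sh{J}(\nu)$ is needed would indeed follow if the global-cover reduction worked, but since it does not, that last step is essential.
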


\begin{proof}
 We have $\sh{J}(\lambda) * \sh{J}(\mu) = m_*(\sh{J}(\lambda) \tbtimes \sh{J}(\mu))$, where the twisted product is on $\on{Conv}_G$.  It is the minimal extension from $\on{Gr}_G^\lambda * \on{Gr}_G^\mu \subset \on{Conv}_G$ of the constant sheaf, and therefore simple.  We would like to apply the decomposition theorem \cite{BBD}*{Th\'eor\`eme 6.2.5}, for which we must return to untwisted sheaves.
 
 First, we obviously have nothing to do if $\sh{J}(\lambda) = 0$ or $\sh{J}(\mu) = 0$.  If both are nonzero, then by \ref{convolution highest weight}, $\sh{J}(\lambda + \mu) \neq 0$ as well, and therefore $\stack{G}$ is trivial on $U = \on{Gr}_G^{\lambda + \mu}$, since it possesses a twisted $k^*$-torsor there.  By the same result, $\sh{J}(\lambda) * \sh{J}(\mu)$ is supported on its closure, and since $U$ is a dense open subspace of it, there is some $\stack{G}$-trivializing open cover $\{V_i\}$ of that closure all of whose elements intersect $U$, and we choose trivializations.
 
 The trivializations on the $V_i$ entail trivializations of $m^*(\stack{G}) \cong \stack{G} \tbtimes \stack{G}$ on the $m^{-1}(V_i)$, which differ by $k^*$-torsors on the intersections.  Since all of them intersect $m^{-1}(U)$, they all intersect the set $\smash{\tilde{U}} = \on{Gr}_G^\lambda * \on{Gr}_G^\mu$ on which the twisted product $\sh{F} = \sh{J}(\lambda) \tbtimes \sh{J}(\mu)$ is a constant sheaf, so it is a local system on each $m^{-1}(V_i) \cap \smash{\tilde{U}}$.  Thus, on $m^{-1}(V_i)$, we have that $\sh{F}$ is the minimal extension of a local system, and thus is of geometric origin if that local system has finite monodromy.  Indeed, by \ref{gerbe finite order}, $\stack{G}$ itself has finite order on $\bar{\on{Gr}}_G^{\lambda + \mu}$, and in particular, all the transition torsors of $m^*(\stack{G})$ are of finite order, and thus have finite monodromy as desired.
 
 It follows that on each $m^{-1}(V_i)$, the twisted product $\sh{F} = \sh{J}(\lambda) \tbtimes \sh{J}(\mu)$ is simple of geometric origin, so since $m$ is proper, the decomposition theorem \emph{does} apply (this could have been done more directly using Kashiwara's conjecture, proved in \cite{D_Kashiwara}).  Therefore, $\sh{J}(\lambda) * \sh{J}(\mu)$ is semisimple on each $V_i$.  Its summands are isomorphic up to the product with local systems on the $V_i \cap V_j$, so the convolution itself is a direct sum of twisted sheaves, each of which is locally on the $V_i$ a direct sum of multiple copies of $\sh{J}(\nu)$ for \emph{one} coweight $\nu$.  But by \ref{self-extensions}, there are no extensions of $\sh{J}(\nu)$ by itself, so each of these summands is itself semisimple.
\end{proof}

The preceding results allow us to prove the analogue for perverse sheaves of the following proposition, true for representations of any reductive group $H$.

\begin{theorem*}{lem}
 If $\lambda$ is a dominant weight in the root lattice of $H$, then there is some $\mu$ such that $V^\lambda$ is a direct summand of $(V^\mu)^* \otimes V^\mu$.
\end{theorem*}

\begin{theorem}{lem}{dual direct summand}
 Let $\lambda$ be a dominant coweight and a sum of simple coroots.  Then there exists a $\mu$ such that $\sh{J}(\lambda)$ is a direct summand of $\sh{J}(\mu)^* * \sh{J}(\mu)$.
\end{theorem}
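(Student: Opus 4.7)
The plan is to deduce the lemma from the fiber-dimension criterion of \ref{fiber dimension criterion}, the existence result of \ref{middle weight convolution fiber}, and the rigidity of \ref{convolution is rigid}, using the semisimplicity result \ref{semisimple convolution} twice.

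First I would invoke \ref{middle weight convolution fiber}, which applies precisely because $\lambda$ dominant and a sum of simple coroots is the same as $\lambda$ dominant with $\lambda \geq 0$ in the root-lattice partial order. This produces a dominant $\mu$ and an irreducible component of $(m^{\mu,\lambda})^{-1}(t^\mu) \cap (\on{Gr}_G^\mu * \on{Gr}_G^\lambda)$ of dimension at least $\langle \rho, \lambda \rangle$. On the other hand, the upper bound in \ref{fiber dimension criterion} (applied with $\nu = \mu$) gives
\[
 \dim \leq \tfrac{1}{2}(\dim \on{Gr}_G^\mu + \dim \on{Gr}_G^\lambda - \dim \on{Gr}_G^\mu) = \langle \rho, \lambda \rangle.
\]
So the dimension is exactly $\langle \rho, \lambda \rangle$, and \ref{fiber dimension criterion} then says that $\sh{J}(\mu)$ is a composition factor of $\sh{J}(\mu) * \sh{J}(\lambda)$. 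By \ref{semisimple convolution} this convolution is semisimple, so $\sh{J}(\mu)$ is in fact a direct summand, yielding a nonzero morphism $\sh{J}(\mu) \to \sh{J}(\mu) * \sh{J}(\lambda)$.

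Next I would transport this morphism across the rigidity adjunction of \ref{convolution is rigid}, namely
\[
 \on{Hom}(\sh{J}(\mu)^* * \sh{J}(\mu), \sh{J}(\lambda)) \cong \on{Hom}(\sh{J}(\mu), \sh{J}(\mu) * \sh{J}(\lambda)),
\]
obtaining a nonzero, hence (by simplicity of $\sh{J}(\lambda)$) surjective map $\sh{J}(\mu)^* * \sh{J}(\mu) \surj \sh{J}(\lambda)$. Finally, the computation $q_n^* \sh{F}^* \cong \DD \on{inv}_n^* q_n^* \sh{F}$ from the proof of \ref{convolution is rigid} shows that $\sh{J}(\mu)^*$ is again of the form $\sh{J}(-w_0\mu)$, so $\sh{J}(\mu)^* * \sh{J}(\mu)$ is a convolution of two simple objects and hence semisimple by \ref{semisimple convolution}. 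The surjection thus splits, exhibiting $\sh{J}(\lambda)$ as a direct summand.

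The main obstacle — or at least the step that requires the most care — is the pairing of the dimension bounds in step one: the lower bound from \ref{middle weight convolution fiber} must hit the upper bound from \ref{fiber dimension criterion} on the nose at $\langle \rho, \lambda \rangle$, which is exactly where the hypothesis that $\lambda$ is a non-negative sum of simple coroots enters (without it \ref{middle weight convolution fiber} would fail to apply). Once this matching is in hand, the remainder of the argument is a formal consequence of the rigidity and semisimplicity already established.
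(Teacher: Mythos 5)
Your proof is correct and takes essentially the same route as the paper: the paper also reduces via the rigidity adjunction and semisimplicity to finding a copy of $\sh{J}(\mu)$ inside $\sh{J}(\mu) * \sh{J}(\lambda)$, and then supplies the required irreducible component of dimension $\langle \rho, \lambda\rangle$ in the convolution fiber using \ref{middle weight convolution fiber} against the bound of \ref{fiber dimension criterion}. The only difference is presentational — the paper runs the chain of equivalences backwards from the desired conclusion and cites the fiber lemma last, while you build forward from the fiber lemma — and your spelling-out of why $\sh{J}(\mu)^*$ is again simple (hence the semisimplicity of $\sh{J}(\mu)^* * \sh{J}(\mu)$ via \ref{semisimple convolution}) makes explicit a step the paper leaves implicit.
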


\begin{proof}
 Since the convolution is semisimple, this is equivalent to
 \begin{equation*}
  0 \neq \on{Hom}(\sh{J}(\lambda), \sh{J}(\mu)^* * \sh{J}(\mu))
    = \on{Hom}(\sh{J}(\mu) * \sh{J}(\lambda), \sh{J}(\mu))
 \end{equation*}
 and therefore to finding a copy of $\sh{J}(\mu)$ as a summand of $\sh{J}(\mu) * \sh{J}(\lambda)$, for some $\mu$.  By the criterion of \ref{fiber dimension criterion}, we need only find an irreducible component of the fiber of $\on{Gr}_G^\mu * \on{Gr}_G^\lambda$ over $t^{\mu}$ with dimension at least (hence equal to) $l/2$.  This was already provided in \ref{middle weight convolution fiber}.
\end{proof}

\subsection*{Semisimplicity and consequences}

We give a slick demonstration of the semisimplicity of $\cat{Sph}$ by an argument of Frenkel, Gaitsgory, and Vilonen from \cite{FGV_Whittaker}*{\S6.1}.

\begin{theorem}{prop}{absolute semisimple}
 $\Sph(\stack{G})$ is semisimple.
\end{theorem}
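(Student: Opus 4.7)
The plan is to adapt the argument of Frenkel--Gaitsgory--Vilonen (\cite{FGV_Whittaker}*{\S 6.1}) to the twisted setting, reducing semisimplicity to a single Ext computation resolved by combining \ref{self-extensions}, \ref{semisimple convolution}, and \ref{dual direct summand}. Every object of $\Sph(\stack{G})$ has finite length since a $G(\hat{\OO})$-equivariant twisted perverse sheaf is supported on a finite union of the finite-dimensional orbits of \ref{absolute orbit properties} (and the simples are classified in \ref{absolute simple objects}). Consequently, semisimplicity is equivalent to $\on{Ext}^1(\sh{J}(\lambda), \sh{J}(\mu)) = 0$ for every pair of simple objects. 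When $\sh{J}(\lambda)$ and $\sh{J}(\mu)$ are supported on different connected components of $\on{Gr}_G$, any extension has disconnected support and splits immediately, so I reduce to the case where $\lambda - \mu$ lies in the coroot lattice.

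Next, I would invoke rigidity. By \ref{convolution is rigid}, convolution with the simple object $\sh{J}(\lambda)^*$ admits a two-sided exact adjoint given by convolution with $\sh{J}(\lambda)$, so the tensor adjunction upgrades to a natural isomorphism
\begin{equation*}
 \on{Ext}^1(\sh{J}(\lambda), \sh{J}(\mu)) \cong \on{Ext}^1(\mathbf{1}, \sh{J}(\mu) * \sh{J}(\lambda)^*),
\end{equation*}
with $\mathbf{1} = \sh{J}(0)$ the unit of \ref{convolution is rigid}. By \ref{semisimple convolution}, the right-hand argument decomposes as a direct sum $\bigoplus_\sigma n_\sigma \sh{J}(\sigma)$, each $\sigma$ lying in the coroot lattice, so the problem reduces to showing $\on{Ext}^1(\mathbf{1}, \sh{J}(\sigma)) = 0$ for every dominant $\sigma \in \Lambda_Q$ in the coroot lattice.

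For $\sigma = 0$ this is \ref{self-extensions}. For $\sigma \neq 0$, I would trap $\sh{J}(\sigma)$ inside a convolution of the form $\sh{J}(\nu)^* * \sh{J}(\nu)$ using \ref{dual direct summand} to produce an appropriate $\nu$. Since $\on{Hom}(\mathbf{1}, \sh{J}(\nu)^* * \sh{J}(\nu)) \cong \on{Hom}(\sh{J}(\nu), \sh{J}(\nu)) = k$, the unit appears in this semisimple convolution with multiplicity exactly one, so we may write $\sh{J}(\nu)^* * \sh{J}(\nu) \cong \mathbf{1} \oplus R$ with $\sh{J}(\sigma)$ a direct summand of $R$. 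The rigidity isomorphism applied in the case $\lambda = \mu = \nu$ gives
\begin{equation*}
 \on{Ext}^1(\mathbf{1}, \sh{J}(\nu)^* * \sh{J}(\nu)) \cong \on{Ext}^1(\sh{J}(\nu), \sh{J}(\nu)) = 0
\end{equation*}
by \ref{self-extensions}; combined with $\on{Ext}^1(\mathbf{1}, \mathbf{1}) = 0$ (also \ref{self-extensions}), this forces $\on{Ext}^1(\mathbf{1}, R) = 0$, and since $\sh{J}(\sigma)$ is a summand of $R$ we conclude $\on{Ext}^1(\mathbf{1}, \sh{J}(\sigma)) = 0$. The main obstacle I anticipate is verifying cleanly that rigidity in this twisted abelian setting really does upgrade to an Ext-level adjunction: that convolution with a rigid object is \emph{biexact} (which uses the presence of both a left and a right adjoint from \ref{convolution is rigid}) and that this biexactness propagates the Hom-adjunction to derived functors computed inside the abelian category $\Sph(\stack{G})$. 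Once that point is secured, the remaining reductions are bookkeeping on the results already assembled.
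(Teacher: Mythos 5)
Your proposal is correct and follows essentially the same route as the paper: reduce to $\on{Ext}^1(\mathbf{1}, \sh{J}(\sigma))$ via rigidity and \ref{semisimple convolution}, dispatch the off-component case trivially, and handle the remaining $\sigma$ by trapping $\sh{J}(\sigma)$ inside $\sh{J}(\nu)^* * \sh{J}(\nu)$ using \ref{dual direct summand} and then applying \ref{self-extensions}. The paper is terser about the $\mathbf{1} \oplus R$ decomposition and the Ext-level adjunction (it just says "Since $\on{Ext}^1$ is a derived functor of $\on{Hom}$"), but your fleshed-out version is the same argument, and your worry about the Ext adjunction is resolved exactly as you suspect by the biexactness of convolution from \ref{convolution properties}.
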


\begin{proof}
 It suffices to show that there are no nontrivial extensions of the irreducible objects, so we must show that $\on{Ext}^1(\sh{J}(\lambda), \sh{J}(\mu)) = 0$ always.  Since $\on{Ext}^1$ is a derived functor of $\on{Hom}$, by the properties of the dual we have
 \begin{equation*}
  \on{Ext}^1(\sh{J}(\lambda), \sh{J}(\mu))
    = \on{Ext}^1(1, \sh{J}(\lambda)^* * \sh{J}(\mu)).
 \end{equation*}
 By \ref{semisimple convolution}, the latter sheaf is semisimple, so we may assume it is just of the form $\sh{J}(\lambda)$.  If $\lambda$ is not in the coroot lattice, then $\sh{J}(\lambda)$ and $1$ are supported on different connected components of $\on{Gr}_G$, so of course have no nontrivial extensions.  Otherwise, \ref{dual direct summand} applies and it suffices to replace the right-hand side with $\sh{J}(\mu)^* * \sh{J}(\mu)$.  Then:
 \begin{equation*}
  \on{Ext}^1(1, \sh{J}(\mu)^* * \sh{J}(\mu))
    = \on{Ext}^1(\sh{J}(\mu), \sh{J}(\mu))
    = 0,
 \end{equation*}
 by \ref{self-extensions}.
\end{proof}

\begin{theorem}{cor}{simple object extensions}
 Let $\lambda \in \Lambda_T$ be dominant and denote by $j \colon \on{Gr}_G^\lambda \to \bar{\on{Gr}}{}_G^\lambda$ the open inclusion, and let $l = \dim \on{Gr}_G^\lambda$.  Then the natural maps are isomorphisms:
 \begin{equation*}
  \sh{J}_!(\lambda) = j_!(\csheaf{k}[l]) \to \sh{J}(\lambda) = j_{!*}(\csheaf{k}[l])
    \to \sh{J}_*(\lambda) = j_*(\csheaf{k}[l]).
 \end{equation*}
\end{theorem}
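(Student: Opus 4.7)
\emph{Plan.} Interpreting $\sh{J}_!(\lambda) = j_!(\csheaf{k}[l])$ and $\sh{J}_*(\lambda) = j_*(\csheaf{k}[l])$ as the perverse $!$- and $*$-extensions (i.e.\ as the perverse sheaves ${}^p H^0 j_!(\csheaf{k}[l])$ and ${}^p H^0 j_*(\csheaf{k}[l])$), the claim is that the canonical maps $\sh{J}_!(\lambda) \twoheadrightarrow \sh{J}(\lambda) \hookrightarrow \sh{J}_*(\lambda)$ are isomorphisms in $\Perv$. Let $K$ be the kernel of the first and $K'$ the cokernel of the second; both are perverse sheaves supported on the boundary $\partial = \bar{\on{Gr}}_G^\lambda \setminus \on{Gr}_G^\lambda$. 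Since $\on{Gr}_G^\lambda$ is a $G(\hat{\OO})$-orbit, $j$ is equivariant and the perverse $!$- and $*$-extensions of the equivariant sheaf $\csheaf{k}[l]$ inherit $G(\hat{\OO})$-equivariance; as equivariant perverse sheaves form an abelian subcategory of $\Perv$ closed under subquotients, all of $\sh{J}_!(\lambda), \sh{J}(\lambda), \sh{J}_*(\lambda), K, K'$ lie in the semisimple category $\Sph(\stack{G})$.

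The argument then rests on combining two facts. By the semisimplicity established in \ref{absolute semisimple}, the short exact sequences
\begin{equation*}
0 \to K \to \sh{J}_!(\lambda) \to \sh{J}(\lambda) \to 0, \qquad 0 \to \sh{J}(\lambda) \to \sh{J}_*(\lambda) \to K' \to 0
\end{equation*}
split in $\Sph(\stack{G})$, realizing $K$ as a direct summand of $\sh{J}_!(\lambda)$ and $K'$ of $\sh{J}_*(\lambda)$. On the other hand, since $j$ is an open immersion, $j^*$ is $t$-exact and admits $\sh{J}_!$ and $\sh{J}_*$ as left and right adjoints on perverse sheaves; combined with the vanishing $j^* K = 0 = j^* K'$ (as $K$ and $K'$ are supported on $\partial$), this yields
\begin{equation*}
\on{Hom}_{\Perv}(\sh{J}_!(\lambda), K) = \on{Hom}(\csheaf{k}[l], j^* K) = 0, \quad \on{Hom}_{\Perv}(K', \sh{J}_*(\lambda)) = \on{Hom}(j^* K', \csheaf{k}[l]) = 0.
\end{equation*}
A nonzero $K$ would give a nonzero projection $\sh{J}_!(\lambda) \twoheadrightarrow K$ contradicting the first vanishing, and a nonzero $K'$ would give a nonzero inclusion $K' \hookrightarrow \sh{J}_*(\lambda)$ contradicting the second; hence $K = K' = 0$ and the canonical maps are isomorphisms.

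The only real subtlety to check will be that $\sh{J}_!(\lambda)$ and $\sh{J}_*(\lambda)$ genuinely land in $\Sph(\stack{G})$, so that Proposition \ref{absolute semisimple} applies to the splittings; this follows from standard general nonsense about equivariance passing through the perverse extension functors of an equivariant open immersion. After this, the argument is purely formal, requiring no additional input about the geometry of $\bar{\on{Gr}}_G^\lambda$ beyond what is built into the definition of $\Sph(\stack{G})$.
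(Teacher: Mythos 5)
Your argument is correct as far as it goes, but you have quietly changed the statement before proving it, and the change is not harmless.  You reinterpret $\sh{J}_!(\lambda) = j_!(\csheaf{k}[l])$ and $\sh{J}_*(\lambda) = j_*(\csheaf{k}[l])$ to mean the \emph{perverse truncations} ${}^p H^0 j_!(\csheaf{k}[l])$ and ${}^p H^0 j_*(\csheaf{k}[l])$.  The paper means the actual derived functors: its proof opens by remarking that ``the complex $\sh{J}_!(\lambda)$ is \emph{a priori} perverse in nonpositive degrees,'' which is only worth saying if $\sh{J}_!(\lambda)$ is the full $j_!(\csheaf{k}[l])$ (whose perverse cohomology can a priori spread over degrees $\leq 0$), and the second half of the paper's proof is devoted precisely to killing those lower perverse cohomologies --- i.e.\ to showing that the entire cone $A^\bullet$ of $j_!(\csheaf{k}[l]) \to j_{!*}(\csheaf{k}[l])$ vanishes, not merely ${}^p H^0 A^\bullet$.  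Your argument handles only the degree-zero part: splitting off $K = \ker({}^p H^0 j_! \to j_{!*})$ and killing it by semisimplicity plus the adjunction $\on{Hom}({}^p H^0 j_! F, K) = \on{Hom}(F, j^* K) = 0$ is exactly the paper's first step, and is correct, but it says nothing about ${}^p H^{-k}(j_!(\csheaf{k}[l]))$ for $k \geq 1$.

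This matters for how the corollary is used.  In \ref{absolute fiber properties}, the hypothesis fed into \ref{successive divisors} is literally ``$(j_U)_! \sh{F}|_U \cong \sh{F}$'', i.e.\ that the derived $j_!$ of the restriction recovers $\sh{F}$; the proof of \ref{successive divisors} then runs the six-functors triangle $(j_U)_! j_U^* \sh{F} \to \sh{F} \to i_* i^* \sh{F}$ and uses the resulting vanishing.  The truncated statement ${}^p H^0 j_! = j_{!*}$ does not feed into that argument.  Your closing remark that the claim ``requires no additional input about the geometry of $\bar{\on{Gr}}_G^\lambda$'' is a symptom of the weakened interpretation: the assertion that $j_!(\csheaf{k}[l])$ is already perverse (equivalently, that the middle extension has vanishing $i^*$-stalk on the whole boundary) is a genuine geometric statement about the orbit closure, and is not a formal consequence of semisimplicity of $\Sph(\stack{G})$.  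To prove the paper's statement you would need to supplement your argument with a second step, along the lines of the paper's: observe that $j^* A^\bullet = 0$ so $A^\bullet \cong i_* i^* A^\bullet$, use the triangle to identify $i^* A^\bullet$ with $i^* \sh{J}(\lambda)[-1]$, and then argue (using what you have already established about ${}^p H^0$) that this forces $A^\bullet = 0$.
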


\begin{proof}
 The complex $\sh{J}_!(\lambda)$ is \textit{a priori} perverse in nonpositive degrees.  Starting from the natural map, we get a distinguished triangle
 \begin{equation*}
  A^\bullet \to \sh{J}_!(\lambda) \to \sh{J}(\lambda) \to
 \end{equation*}
 and since $\sh{J}(\lambda)$ \emph{is} perverse, the long exact sequence of cohomology gives a short exact sequence in degree zero
 \begin{equation*}
  0 \to {}^p H^0(A^\bullet) \to {}^p H^0(\sh{J}_!(\lambda)) \to \sh{J}(\lambda).
 \end{equation*}
 The last map is surjective by definition, as well as $G(\hat{\OO})$-equivariant by functoriality, so the first term is in $\Sph(\stack{G})$ and thus, by semisimplicity, the exact sequence splits. Applying $j^*$, we find that ${}^p H^0(A^\bullet)$ is in the image of $i_*$, but after applying $i^*$ we find, since $i^* j_! = 0$ and $i^*$ is right t-exact, that it vanishes (we use the splitting here).  Thus, $\sh{J}(\lambda) \cong {}^p H^0(\sh{J}_!(\lambda))$.
 
 Going back to the distinguished triangle, application of $j^*$ shows that $A^\bullet$ is in the image of $i_*$, and application of $i^*$ gives that $A^\bullet \cong i_* i^* \sh{J}(\lambda)[-1]$, so we have
 \begin{equation*}
  A^\bullet
   \cong i_* i^* {}^p H^0(\sh{J}_!(\lambda))[-1]
   \cong i_* {}^p H^0(i^* \sh{J}_!(\lambda))[-1]
   = 0.
 \end{equation*}
 Therefore, $\sh{J}(\lambda) \cong \sh{J}_!(\lambda)$, as desired.  The other isomorphism follows from duality.
\end{proof}

In the coda, we compare $\Sph(\stack{G})$ to $\Sph^\ULA(\stack{G}_1)$.

\begin{theorem}{lem}{relative simple objects}
 The simple objects of $\Sph^\ULA(\stack{G}_1)$ are minimal extensions from some $\on{Gr}_{G,X}^\lambda$ of locally constant sheaves $\sh{L}[\dim \on{Gr}_{G,X}^\lambda]$ which are trivial on the fibers over $X$.
\end{theorem}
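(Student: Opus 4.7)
The plan is to identify a simple object of $\Sph^\ULA(\stack{G}_1)$ as a minimal extension via the general structure of simple perverse sheaves, constrain it using $G(\hat{\OO})_1$-equivariance and the transitive-action machinery already developed, and finally check the ULA condition locally on $X$.

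First, I would argue that any simple object $\sh{F} \in \Sph^\ULA(\stack{G}_1)$ has the form $j_{!*}(\sh{L})$ for an irreducible $\stack{G}_1$-twisted local system $\sh{L}$ on a smooth locally closed subscheme $\map{j}{Y}{\on{Gr}_{G,X}}$. By $G(\hat{\OO})_1$-equivariance, $Y$ must be a union of $G(\hat{\OO})_1$-orbits, so by the orbit classification of \ref{relative orbit properties}, we may take $Y = \on{Gr}_{G,X}^\lambda$ for some dominant $\lambda$, whose closure has $Y$ as a dense open smooth part.

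Second, $\sh{L}$ inherits a $G(\hat{\OO})_1$-equivariance structure, and by \ref{relative orbit properties} the group $G(\hat{\OO})_1$ acts relatively transitively on $\on{Gr}_{G,X}^\lambda$ over $X$ with connected stabilizer, acting through a finite-dimensional quotient. Thus \ref{transitive equivariant objects} applies to the pair $(\stack{G}_1|_{\on{Gr}_{G,X}^\lambda}, \sh{L})$: both the gerbe and the twisted local system are $X$-trivial. Writing $\map{p}{\on{Gr}_{G,X}^\lambda}{X}$ for the structure map, we conclude that $\sh{L} \cong p^* \sh{M}$ for a local system $\sh{M}$ on $X$, and in particular $\sh{L}$ is trivial on each fiber $\on{Gr}_G^\lambda$.

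Third, I would verify the ULA condition. Locally (analytically) on $X$ the bundle $\on{Gr}_{G,X}^\lambda \to X$ trivializes to a product $\on{Gr}_G^\lambda \times X$, compatibly with the trivialization of $\sh{L}$ and of $\stack{G}_1|_{\bar{\on{Gr}}_{G,X}^\lambda}$ (which is pulled back from $X$ by the $X$-triviality just obtained). Over such a neighborhood $j_{!*}(\sh{L}[\dim \on{Gr}_{G,X}^\lambda]) \cong \sh{J}(\lambda) \boxtimes \sh{M}[1]$. Since the outer tensor product of any constructible complex with a shifted local system on a smooth curve is ULA over that curve, this shows $\sh{F}$ is ULA, and also confirms that every simple sheaf of the asserted form arises this way. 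Conversely, for any such $(\lambda, \sh{M})$ the sheaf $j_{!*}(p^* \sh{M}[\dim \on{Gr}_{G,X}^\lambda])$ is simple in $\Sph^\ULA(\stack{G}_1)$ provided $\sh{M}$ is irreducible, by the characterization of the minimal extension.

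The main obstacle will be the last step: making sure that the ULA property is genuinely automatic once the local system is of the pulled-back form. The subtlety is that the minimal extension across the boundary of $\on{Gr}_{G,X}^\lambda$ interacts nontrivially with the base $X$ only along the singularities of $\bar{\on{Gr}}_{G,X}^\lambda \to X$, and one must verify that these singularities occur in the fiber direction rather than the base direction—which is precisely what the local product structure provides.
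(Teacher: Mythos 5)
The first step contains a gap: you assert without justification that the maximal lisse stratum $Y$ can be taken to be the full relative orbit $\on{Gr}_{G,X}^\lambda$. What equivariance actually gives you is weaker. Since $G(\hat{\OO})_1$ is an $X$-group acting fiberwise, the $G(\hat{\OO})_1$-invariant open subsets of $\on{Gr}_{G,X}^\lambda$ are exactly the sets $\on{Gr}_{G,X}^\lambda|_S$ for $S \subset X$ open; an invariant open need not be a union of whole relative orbits. So a priori you only know that $\sh{F}$ restricted to the open orbit is $p^* P[\dim]$ for some perverse sheaf $P$ on $X$, not that it is a shifted local system. Without lisseness of $P$, \ref{transitive equivariant objects} (which takes a \emph{locally constant} twisted sheaf as input) does not apply, and the conclusion of the lemma could simply be false.

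This is precisely where the ULA hypothesis must be used, and the paper uses it at exactly this point: restricting $\sh{F}$ to the section $\on{Gr}_{T,X}^\lambda \cong X$ produces a ULA perverse sheaf on $X$ itself, which is forced to be lisse by \ref{ULA local systems}; equivariance then propagates lisseness from the section to the whole orbit, after which \ref{transitive equivariant objects} kicks in. You defer ULA to a ``verification'' in your third step, but that step only establishes the converse inclusion (minimal extensions of fiber-trivial local systems are ULA). It does not explain why an arbitrary ULA simple object has lisse restriction to the full orbit, so the forward inclusion is not actually proved. The fix is to move the ULA argument to the front: deduce lisseness of $\sh{F}|_{\on{Gr}_{T,X}^\lambda}$ from \ref{ULA local systems} first, then apply equivariance and \ref{transitive equivariant objects}.
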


\begin{proof}
 The same argument as for $\on{Gr}_G$ shows that they are all of the form $j_{!*}(\sh{F})$, where $j$ is the inclusion of one of these orbits and $\sh{F} \in \Sph^\ULA(\stack{G}_1)$ is a simple sheaf on $\on{Gr}_G^\lambda$.  Since $\sh{F}|\on{Gr}_{T,X}^\lambda[-\dim_X \on{Gr}_{G,X}^\lambda]$ is again ULA, it must be lisse on $\on{Gr}_{T,X}^\lambda \cong X$ by \ref{ULA local systems}. Considering $\on{Gr}_{T,X}^\lambda \cong X$, by $G(\hat{\OO})_X$-equivariance $\sh{F}|\on{Gr}_{G,X}^\lambda$ is the pullback from it of a lisse sheaf, hence lisse, and so \ref{transitive equivariant objects} applies as in \ref{absolute simple objects}.
\end{proof}

In particular, when $X$ is cohomologically trivial, we must have $\sh{L} = \csheaf{k}$; let $\sh{J}(\lambda)_X$ be the corresponding irreducible objects.  \ref{semisimple convolution} holds for these objects with the same proof, so the semisimple abelian category generated by these objects is closed under $*_i$.  The obvious equivalence (by \ref{absolute semisimple}) with $\Sph(\stack{G})$ which identifies their irreducible objects is by definition an equivalence of tensor categories that does not depend on the choice of isomorphism $\on{Gr}_{G,X} \cong \on{Gr}_G \times X$.  In particular, the tensor structure (in particular, the commutativity constraint) on $\Sph(\stack{G})$ is independent of this isomorphism, of the choice of $x \in X$, and of the neighborhood containing $x$ on which the isomorphism was given.

Finally, we can completely describe the ULA sheaves in a neighborhood of a point.

\begin{theorem}{prop}{ULA local equivalence}
 Let $X$ be a small (contractible) neighborhood of one of its points; then we have a natural equivalence $\Sph^\ULA(\stack{G}_1) \cong \Sph(\stack{G})$ of tensor abelian categories.  Likewise, in any small neighborhood in $X^n$ not intersecting any diagonal, we have $\Sph^\ULA(\stack{G}_n) \cong \Sph(\stack{G}^{\boxtimes n})$.
\end{theorem}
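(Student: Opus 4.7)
The plan is to promote the functor $\on{Spr}\colon\Sph(\stack{G})\to\Sph^\ULA(\stack{G}_1)$ from the preceding discussion to an equivalence; since $\on{Res}\circ\on{Spr}=\id$ is immediate, everything reduces to showing $\on{Spr}\circ\on{Res}\cong\id$ as an isomorphism of tensor functors. The coda before the proposition has already built the entire picture on the ``simple'' side: the simples $\sh{J}(\lambda)$ of $\Sph(\stack{G})$ (given by \ref{absolute simple objects} and \ref{absolute semisimple}) correspond under $\on{Spr}$ to the simples $\sh{J}(\lambda)_X$ of $\Sph^\ULA(\stack{G}_1)$ (given by \ref{relative simple objects}, using that $X$ is contractible), and the resulting semisimple subcategory generated by the $\sh{J}(\lambda)_X$ is already known to be closed under $*_i$ and equivalent as a tensor category to $\Sph(\stack{G})$. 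Therefore the essential task is to show that every object of $\Sph^\ULA(\stack{G}_1)$ lies in this subcategory, i.e.\ that $\on{Spr}$ is essentially surjective.

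First I would set up the product decomposition. Choosing a local parameter on $X$ gives $\on{Gr}_{G,X}\cong\on{Gr}_G\times X$ and, since $X$ is contractible, an identification $\stack{G}_1\cong\on{pr}_{\on{Gr}_G}^*\stack{G}$ of equivariant gerbes (so that ``twisting'' on either side matches). For $\sh{F}\in\Sph^\ULA(\stack{G}_1)$, the goal is to exhibit a natural isomorphism $\sh{F}\cong\on{Spr}(\on{Res}(\sh{F}))$. Both sides agree after restriction to any fiber $\on{Gr}_G\times\{x\}$ by the definition of $\on{Res}$, so the content is that $\sh{F}$ is determined by its restriction to a single fiber. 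I would prove this stratum by stratum using \ref{ULA nearby cycles}. On each smooth stratum $\on{Gr}_G^\lambda\times X$, the restriction of $\sh{F}$ is ULA perverse, hence (shift of) a local system by \ref{ULA local systems} applied after smooth base change; $G(\hat{\OO})_1$-equivariance (through a connected, finite-dimensional quotient, \ref{orbit stabilizer is connected}) together with \ref{transitive equivariant objects} forces this local system to be pulled back from $X$, and contractibility of $X$ makes it constant. Hence $\sh{F}$ restricted to each stratum agrees with the corresponding restriction of $\on{Spr}(\on{Res}(\sh{F}))$. To assemble this into a global isomorphism I would invoke \ref{ULA nearby cycles}: for any Cartier divisor $D\subset\on{Gr}_{G,X}$ cutting out a closed boundary, $\Phi^{\mathrm{un}}_D(\sh{F})=0$ and monodromy on $\Psi^{\mathrm{un}}_D$ is trivial, so $\sh{F}=j_{!*}(\sh{F}|_{\text{complement}})$. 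Iterating this middle-extension property along the orbit stratification (or, equivalently, gluing the natural map term by term as in \ref{devissage}), one concludes $\sh{F}\cong\on{Spr}(\on{Res}(\sh{F}))$. The hardest step will be making this gluing procedure canonical across strata rather than merely exhibiting an abstract isomorphism; I expect to do this by constructing a morphism between the two sides via the unit of adjunction (or directly using the universal property of $j_{!*}$) and verifying it is an isomorphism on the open stratum by the previous paragraph.

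Granting $\on{Spr}\circ\on{Res}\cong\id$, tensor compatibility is already present. For the inner product, \ref{absolute relative convolution} gives $\on{Spr}(\sh{F}_1)*_i\on{Spr}(\sh{F}_2)=\on{Spr}(\sh{F}_1*\sh{F}_2)$, and applying $\on{Res}$ transports this to the statement that $\on{Res}$ is a tensor functor. The unit object $1_1=u_*\csheaf{k}[1]$ maps under $\on{Res}$ to the unit $\csheaf{k}\in\Sph(\stack{G})$ by construction, and the associativity and (modified) commutativity constraints of \ref{fiber functor commutative} on either side are pulled back from one another along $\on{pr}_{\on{Gr}_G}$; the sign modification described after \ref{fiber functor properties} is constant on components and identical in the relative and absolute settings, so no new sign issue arises. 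This yields an equivalence of tensor abelian categories $\Sph^\ULA(\stack{G}_1)\cong\Sph(\stack{G})$.

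Finally, for $n>1$ in a small neighborhood $U\subset X^n$ disjoint from all diagonals, factorizability (\ref{grassmannian is factorizable}\ref{en:factorizable open} together with \ref{sf gerbe}\ref{en:factorizable gerbe open}) gives
\begin{equation*}
\on{Gr}_{G,X^n}|_U\cong\on{Gr}_{G,X}^n|_U\cong\on{Gr}_G^n\times U,\qquad
\stack{G}_n|_U\cong\on{pr}^*(\stack{G}^{\boxtimes n}),
\end{equation*}
and the same identification holds for $G(\hat{\OO})_n|_U$, whose equivariance structure descends from the product of equivariance structures for $G(\hat{\OO})$ on the factors (using the strong factorizable equivariance of \ref{existence of equivariance}). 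After further shrinking $U$ to make it contractible, I would repeat the $n=1$ argument with $G$ replaced by $G^n$ and $\stack{G}$ by $\stack{G}^{\boxtimes n}$: the spreading functor over $U$ is well-defined because $U$ is contractible, and all three ingredients (stratum structure, equivariance, ULA property via \ref{ULA nearby cycles}) carry over verbatim. The tensor structure transports analogously, completing the proof.
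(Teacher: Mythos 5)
Your strategy — prove $\on{Spr}\circ\on{Res}\cong\id$ directly and thereby deduce that $\on{Spr}$ is essentially surjective — is genuinely different from the paper's, and it contains a real gap in the assembly step.

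The immediate problem is your invocation of \ref{ULA nearby cycles}. That lemma is about a divisor $D$ in the \emph{base} $Y$ of a ULA map $f\colon X\to Y$, and it tells you $\sh{F}$ is a middle extension across the preimage $E = f^{-1}(D)$, which is a ``vertical'' divisor. In the present situation the base is the curve $X$, so the only vertical divisors are fibers of $\on{Gr}_{G,X}\to X$. The orbit boundaries $\bar{\on{Gr}}_{G,X}^\lambda\setminus\on{Gr}_{G,X}^\lambda$ are ``horizontal'' — they are not preimages of divisors in $X$, and (by \ref{absolute orbit properties}) they are not Cartier divisors at all but have codimension at least two. So ULA over $X$ gives you no middle-extension statement across orbit strata. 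Worse, the conclusion $\sh{F}=j_{!*}(\sh{F}|_U)$ for the open stratum $U$ is simply false for non-simple ULA objects: a direct sum $\sh{J}(\lambda)_X\oplus\sh{J}(\mu)_X$ with $\lambda\not\leq\mu$ and $\mu\not\leq\lambda$ has no single open orbit from which it is a middle extension. The stratum-by-stratum constancy that you establish in the preceding paragraph, even granting it, does not determine a perverse sheaf, so ``agreeing on each stratum'' is not enough to conclude $\sh{F}\cong\on{Spr}(\on{Res}(\sh{F}))$. (Your stratum-level claim also needs care: the paper proves constancy of the restriction only for simple objects, in \ref{relative simple objects}, where the entire sheaf is supported on a single orbit; for an arbitrary ULA $\sh{F}$ the restriction to an intermediate stratum need not be ULA or perverse.)

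What this really calls for, and what the paper does, is to prove first that $\Sph^\ULA(\stack{G}_1)$ is \emph{semisimple}. Then every object is a sum of the simples $\sh{J}(\lambda)_X$, which by \ref{relative simple objects} and contractibility of $X$ are constant in the $X$-direction, and essential surjectivity of $\on{Spr}$ (and the whole tensor equivalence) follows immediately from the discussion in the coda preceding the proposition. Semisimplicity itself is reduced, exactly as in \ref{absolute semisimple}, to the existence-of-a-dual-summand statement \ref{dual direct summand} for the $\sh{J}(\lambda)_X$; and that statement is detected after applying $\on{Res}$, so it reduces to the already-proven absolute case. This is the piece your proposal is missing. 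Your treatment of the $n>1$ case and of the transport of tensor structure via \ref{absolute relative convolution} is fine and matches the paper.
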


\begin{proof}
 First, we observe that $\Sph^\ULA(\stack{G}_1)$ is semisimple.  To accomplish this, the same proof of \ref{absolute semisimple} will work if we can establish \ref{dual direct summand} for the $\sh{J}(\lambda)_X$.  But the existence of a summand such as the lemma claims can be detected after application of $\on{Res}$ by semisimplicity of inner convolution, so the lemma reduces to that on $\on{Gr}_G$, which is already proven.

 Therefore both $\Sph^\ULA(\stack{G}_1)$ and $\Sph(\stack{G})$ are semisimple tensor abelian categories whose simple objects are identified in such a way that their convolutions agree.  Thus, the equivalence which sends one set of irreducibles to the other is a tensor equivalence.
 
 For the last statement we simply observe that on such a neighborhood, $\on{Gr}_{G,X^n} \cong X^n \times \on{Gr}_G^n$, with $X^n$ contractible and $\on{Gr}_G^n \cong \on{Gr}_{G^n}$, so that the same proof applies.
\end{proof}

\section{Absolute twisted Satake: root data}
\label{s:absolute twisted Satake root data}

We turn to the twisted Satake equivalence on $\on{Gr}_G$.  The results of this section generalize and simplify those of \cite{FL_twistedSatake}.

\subsection*{The absolute fiber functor}

First, we define the fiber functor for $\Sph(\stack{G})$.  As for $\Sph(\stack{G}_n)$, we have the fundamental diagram
\begin{equation*}
 \on{Gr}_G \xleftarrow{b} \on{Gr}_B \xrightarrow{t} \on{Gr}_T
\end{equation*}
and the sf $k^*$-gerbe $\stack{T}$ on $\on{Gr}_T$ such that $t^* \stack{T} \cong b^* \stack{G}$; in the notation of the previous section, we also have $\stack{T} = \stack{T}_1|_x$.  It is noncanonically trivial as a gerbe, but not as a multiplicative gerbe, so we continue to use it in the notation.

\begin{theorem}{defn}{absolute fiber functor}
 The \emph{fiber functor} $\map{F}{\Sph(\stack{G})}{\stack{D}(\stack{T})}$ is the following functor: for $\sh{F} \in \Sph(\stack{G})$, let $\sh{F}^\lambda$ be the restriction of $b^* \sh{F}$ to the connected component $\on{Gr}_B^\lambda$, and set
 \begin{equation*}
  F(\sh{F})^\lambda = F_!^*(\sh{F})^\lambda
   = t_!^\lambda \sh{F}^\lambda[\langle 2\rho, \lambda \rangle]
 \end{equation*}
 to be the component of $F(\sh{F})$ on $\on{Gr}_T^\lambda$.
\end{theorem}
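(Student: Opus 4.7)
The plan is to verify that the prescription given is well-defined as a functor into the (twisted) derived category $\stack{D}(\stack{T})$, by checking compatibility of the geometric operations with the gerbe twistings and by handling the decomposition into connected components. Since the previous chapter established $t^{*}\stack{T} \cong b^{*}\stack{G}$ by the very construction of $\stack{T}$ from $\stack{G}$ via the diagram \ref{eq:fiber functor diagram} (recalling \ref{eq:group to torus functor}), the pullback $b^{*}\sh{F}$ lands in $\stack{D}(t^{*}\stack{T})_{\on{Gr}_B}$, so $t_!$ applied componentwise naturally lands back in $\stack{D}(\stack{T})_{\on{Gr}_T}$. The twisted pushforward $t^\lambda_!$ is legitimate in view of \ref{twisted pushforward functor} applied to the small site over $\on{Gr}_T^\lambda$: the gerbe $\stack{T}|_{\on{Gr}_T^\lambda}$ is trivializable on a Zariski cover, and after trivialization the components of $b^{*}\sh{F}$ on $\on{Gr}_B^\lambda$ become ordinary complexes to which ordinary $t_!$ applies.

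First I would verify that the decomposition $b^{*}\sh{F} = \bigoplus_\lambda \sh{F}^\lambda$ is meaningful: by \ref{borel components}, the connected components of $\on{Gr}_B$ are precisely the closures $\bar{\on{Gr}}_B^\lambda$ indexed by $\lambda \in \Lambda_T$, and a twisted sheaf on a disjoint union decomposes as a direct sum of its restrictions to the components. Here it is essential that we use connected components rather than the open strata $\on{Gr}_B^\lambda$, so that the formula involves restriction to a clopen subspace (rather than just a locally closed one) and the $\sh{F}^\lambda$ are genuine twisted sheaves with no extension data among them.

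Next I would check that the cohomological shift $\langle 2\rho,\lambda\rangle$ is well-defined as a locally constant function on $\on{Gr}_B^\lambda$, which is immediate since $\lambda$ is a fixed index of a connected component, and that the shift is chosen so that the whole assembly is an object of the appropriate heart; this is the standard hyperbolic-restriction normalization, matching the shifts in \ref{fiber functor defn} for the factorizable case and making $F$ compatible with the corresponding absolute analogue of Braden's theorem. In particular the shift $\langle 2\rho,\lambda\rangle$ equals $\dim(\on{Gr}_G^\lambda \cap \on{Gr}_B^\mu)$ when $\mu = \lambda$, matching the dimension count in \ref{absolute orbit properties}.

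The main thing to check is that the construction truly produces an object of $\stack{D}(\stack{T})$ rather than just a formal sum of complexes on the components $\on{Gr}_T^\lambda$. The hardest step, modest as it is, will be to ensure the twisted pushforward is globally well-defined: although $\on{Gr}_B$ is infinite-dimensional, each $\sh{F}^\lambda$ is spherical and hence supported on finitely many $G(\hat{\OO})$-orbits by \ref{absolute orbit properties}, and on each orbit the map $t^\lambda$ has finite-dimensional fibers, so $t^\lambda_!$ of $\sh{F}^\lambda$ is a bounded complex on $\on{Gr}_T^\lambda$ twisted by $\stack{T}|_{\on{Gr}_T^\lambda}$, using \ref{twisted pushforward functor} (projection formula against $\stack{H}^1(\on{Gr}_T^\lambda, k^*)$-torsors, which holds since the twist is by a local system). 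Summing over $\lambda$ yields a well-defined object of $\stack{D}(\stack{T})$, and functoriality in $\sh{F}$ is then automatic from the functoriality of $b^{*}$ and $t_!$.
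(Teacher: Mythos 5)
Since this statement is a definition, the paper offers no separate proof: its implicit justification is exactly the formalism you invoke (the equivalence $t^*\stack{T} \cong b^*\stack{G}$ built into the construction of $\stack{T}$, the twisted pushforward of \ref{twisted pushforward functor}, componentwise decomposition, and finiteness of the support of a spherical object), after which the paper simply observes $\on{Res} F_1(\on{Spr}\sh{F}) = F(\sh{F})$ and imports the properties from \ref{fiber functor defn} and \ref{fiber functor properties}. Your well-definedness check — gerbe compatibility, boundedness of $t^\lambda_!$ because $\sh{F}$ is supported on finitely many $G(\hat{\OO})$-orbits whose intersections with $\on{Gr}_B^\lambda$ are finite-dimensional by \ref{absolute orbit properties}, and the shift bookkeeping — is therefore essentially the same route.

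There is, however, one concrete error: you assert that the connected components of $\on{Gr}_B$ are the closures $\bar{\on{Gr}}{}_B^\lambda$, and that it is ``essential'' to use these rather than the strata $\on{Gr}_B^\lambda$. By \ref{borel components}(1)--(2) the connected components of $\on{Gr}_B$ \emph{are} the semi-infinite orbits $\on{Gr}_B^\lambda = t^{-1}(\on{Gr}_T^\lambda) \cong \Aff^\infty$, which are clopen in $\on{Gr}_B$ but only locally closed in $\on{Gr}_G$; the closures are formed inside $\on{Gr}_G$, where they are nested, $\bar{\on{Gr}}{}_B^\lambda = \bigcup_{\mu \leq \lambda} \on{Gr}_B^\mu$, hence neither disjoint nor clopen, so restriction to them would not give a direct-sum decomposition of $b^*\sh{F}$, and pushing forward from them would compute compactly supported cohomology of a closed union of strata rather than the weight functor $F(\sh{F})^\lambda = t^\lambda_! \sh{F}^\lambda[\langle 2\rho, \lambda\rangle]$ intended here. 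The distinction you draw is vacuous once this is corrected — the component and the stratum coincide inside $\on{Gr}_B$ — and the rest of your argument (the dimension count $\dim(\on{Gr}_G^\nu \cap \on{Gr}_B^\lambda) = \langle \rho, \nu + \lambda\rangle$ and the identification of the shift) already uses the strata, so the fix is simply to delete the identification with closures.
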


Equivalently, it is clear that (again in the previous notation) we have
\begin{equation*}
 \on{Res} F_1(\on{Spr} \sh{F}) = F(\sh{F}),
\end{equation*}
so that by \ref{fiber functor properties} we have:

\begin{theorem}{prop}{absolute fiber properties}
 The functor $F$ is t-exact, swaps tensor duality and Verdier duality, and is a faithful tensor functor $\Sph(\stack{G}) \to \Sph(\stack{T})$.
\end{theorem}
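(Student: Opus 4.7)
The plan is to reduce every claim to the corresponding property of the relative fiber functor $F_1$ established in \ref{fiber functor properties} and \ref{fiber functor commutative}, via the spreading and restriction functors $\on{Spr}$ and $\on{Res}$ introduced just before \ref{absolute relative convolution}. The key identity is
\begin{equation*}
 F \cong \on{Res} \circ F_1 \circ \on{Spr},
\end{equation*}
which was noted immediately after the definition of $F$; it follows because $\on{pr}_{\on{Gr}_G}$ in the definition of $\on{Spr}$ commutes (after the appropriate shift by $1$) with the $b$, $t$ appearing in $F_1$, and restriction to $x$ commutes with each of them as well.

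First I would check that $\on{Spr}$ and $\on{Res}$ have the expected exactness and faithfulness. The functor $\on{Spr}(\sh{F}) = \on{pr}_{\on{Gr}_G}^*\sh{F}[1]$ is a smooth pullback of relative dimension $1$ shifted by $1$, so it is t-exact and faithful on $\Sph(\stack{G})$, and it sends spherical twisted sheaves to ULA spherical twisted sheaves on $\on{Gr}_{G,X}$ (the ULA property with respect to the projection is automatic because the situation is a product). Dually, $\on{Res}(\sh{F}) = \sh{F}|_x[-1]$ is the (shifted) fiber at a point of $X$; on ULA objects it is t-exact and faithful by \ref{ULA nearby cycles} (the shift $[-1]$ makes $\sh{F}|_x[-1] \cong \Psiun_{\{x\}}(\sh{F}|_{X\setminus\{x\}})$ when $X$ is a disk, and $\Psiun$ is t-exact and preserves the ULA property).

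With these in hand, t-exactness, faithfulness, and preservation of the ULA condition for $F$ all follow by composing the corresponding statements for $\on{Spr}$, $F_1$, and $\on{Res}$. For tensor functoriality on $\Sph(\stack{G})$, I would combine \ref{absolute relative convolution}, which gives $\on{Spr}(\sh{F}_1) *_i \on{Spr}(\sh{F}_2) \cong \on{Spr}(\sh{F}_1 * \sh{F}_2)$, with the tensor compatibility $F_1(\sh{A} *_i \sh{B}) \cong F_1(\sh{A}) *_i F_1(\sh{B})$ of \ref{fiber functor properties}\ref{en:fiber convolution} (in its inner form), and then apply $\on{Res}$, which is itself compatible with inner convolution by the same identification as in \ref{absolute relative convolution} applied on $\on{Gr}_T$. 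The commutativity constraint requires using the sign-modified tensor structure of \ref{fiber functor commutative}; since the parity $\langle 2\rho,\lambda\rangle \bmod 2$ is constant on connected components of $\on{Gr}_G$ (by \ref{absolute orbit properties}) and of $\on{Gr}_T$, the modification on $\Sph(\stack{G})$ and $\Sph(\stack{T})$ match up, and hence $F$ respects the commutativity constraints as well.

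Finally, the swap of tensor duality and Verdier duality is the identity $F_*^!(\sh{F})^\lambda = \DD F_!^*(\DD \sh{F})^{-\lambda}$ already noted in the definition of $F$ (applied componentwise on $\on{Gr}_T$, using that $w_0(\lambda) = -\lambda$ for the torus). Combining this with the fact that the tensor dual in $\Sph(\stack{G})$ is constructed out of $\on{inv}^*$ and Verdier duality (\ref{convolution is rigid}), and that on $\Sph(\stack{T})$ the dual is simply the inversion of the $\Lambda_T$-grading together with Verdier duality, one reads off the compatibility directly. The only real obstacle is the bookkeeping required to confirm that the sign twist in the commutativity constraint produces a genuine tensor functor; once the parities are seen to be constant on components and to agree under the map $t$ (which changes $\dim \on{Gr}_B^\lambda$ to $\dim \on{Gr}_T^\lambda = 0$ but shifts by $\langle 2\rho,\lambda\rangle$ to match), this reduces to the corresponding verification already carried out in \ref{fiber functor commutative}.
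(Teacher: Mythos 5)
Your reduction $F \cong \on{Res}\circ F_1\circ\on{Spr}$ is the right framework for faithfulness, the duality swap, and compatibility with convolution, and the paper's surrounding discussion treats those as consequences of the relative theory in exactly the way you indicate. But the approach is circular for the t-exactness claim, and that is the only part that the paper's proof of this proposition actually addresses.

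Look at the proof of \ref{fiber functor properties}\ref{en:fiber exactness}: the argument there reduces t-exactness of $F_n$, via d\'evissage and \ref{ULA local equivalence}, to t-exactness of $F$ on the absolute grassmannian, and the paper explicitly says so (``Since the fiber of $\on{Gr}_{G,X^n}$ is $\on{Gr}_G^n \cong \on{Gr}_{G^n}$ this is then a consequence of \ref{absolute fiber properties}''). The proof of \ref{absolute fiber properties} opens with the complementary remark: ``The t-exactness statement in \ref{fiber functor properties} was reduced to this theorem, so we complete the proof here.'' So you cannot cite t-exactness of $F_1$ to prove t-exactness of $F$; the dependency runs the other way. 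The paper instead proves t-exactness of $F$ directly: by semisimplicity (\ref{absolute semisimple}) one reduces to showing $F(\sh{J}(\lambda))\in{}^p\stack{D}(\stack{T})^{\leqslant 0}$, which requires the dimension computation $\dim(\on{Gr}_G^\lambda\cap\on{Gr}_B^\mu)=\langle\rho,\lambda+\mu\rangle$ from \ref{absolute orbit properties}, the right t-exactness bound $[\dim I^{\lambda,\mu}]$ from \cite{BBD}, and the successive-divisors estimate of \ref{successive divisors} applied using $\sh{J}(\lambda)\cong j_!(\csheaf{k}[l])$ from \ref{simple object extensions}. None of this appears in your argument, and nothing in your proposal replaces it. A secondary gap is that even granting t-exactness of $F_1$, your intermediate object $F_1(\on{Spr}(\sh{F}))$ is a priori only in the derived category $\stack{D}(\stack{T}_1)$; showing that $\on{Res}$ of it is perverse requires already knowing the thing is a ULA \emph{perverse} sheaf, which is again the claim being proved.
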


\begin{proof}
 The t-exactness statement in \ref{fiber functor properties} was reduced to this theorem, so we complete the proof here.  By \ref{absolute semisimple} it is enough to show that $F(\sh{J}(\lambda))$ is perverse for any $\lambda$.  We will show that $F(\sh{J}(\lambda)) \in {}^p \stack{D}(\stack{T})^{\leqslant 0}$; the dual claim is established by replacing $\sh{J}(\lambda)$ with its convolution dual.
 
 $\sh{J}(\lambda)$ is supported on $\bar{\on{Gr}}{}_G^\lambda$; for any coweight $\mu$, let $I^{\lambda,\mu} = \bar{\on{Gr}}{}_G^\lambda \cap \on{Gr}_B^\mu$; temporarily denote
 \begin{align*}
  b \colon I^{\lambda, \mu} \to \bar{\on{Gr}}{}_G^\lambda &&
  t \colon I^{\lambda, \mu} \to \on{Gr}_T^\mu \cong \on{Spec} \C;
 \end{align*}
 then $F(\sh{J}(\lambda))^\mu = t_! b^* \sh{J}(\lambda) [\langle 2\rho, \mu \rangle]$.  By \cite{BBD}*{\S4.2.4}, the functor $t_![\on{dim} I^{\lambda, \mu}]$ is right t-exact.  We claim that $b^*(\sh{J}(\lambda))[-\on{codim} I^{\lambda,\mu}]$ is perverse.  To express this, we make the additional notation
 \begin{align*}
  \bar{b} \colon \bar{\on{Gr}}{}_B^\mu \cap \bar{\on{Gr}}{}_G^\lambda \to \bar{\on{Gr}}{}_G^\lambda
  &&
  j \colon \on{Gr}_B^\mu \to \bar{\on{Gr}}{}_B^\mu,
 \end{align*}
 so that $j$ is an open immersion and it suffices to show that $\bar{b}{}^*(\sh{J}(\lambda))[-\on{codim} I^{\lambda,\mu}]$ is perverse.  If $\lambda - \mu$ is not a sum of simple coroots then $I^{\lambda, \mu}$ is empty; otherwise, there is a chain of inequalities $\mu = \mu_n < \mu_{n - 1} < \dots < \lambda = \mu_0$ such that each $\mu_k - \mu_{k + 1}$ is a simple coroot.  The corresponding $\bar{\on{Gr}}{}_B^{\mu_k}$ form a chain of successive Cartier divisors by \ref{borel components}\ref{en:borel orbit boundaries}, and thus so do their intersections with $\bar{\on{Gr}}{}_G^\lambda$ (all nonempty and all intersecting $\on{Gr}_G^\lambda$ as well).  The claim then follows from \ref{successive divisors}, using \ref{simple object extensions}.
 
 We have thus shown that $t_! b^* [\on{dim} I^{\lambda, \mu} - \on{codim} I^{\lambda, \mu}]$ is right t-exact on $\cat{Sph}(\stack{G})$.  By \ref{absolute orbit properties}, we have
 \begin{align*}
  \on{dim} I^{\lambda, \mu} = \langle \rho, \lambda + \mu \rangle &&
  \on{codim} I^{\lambda, \mu} = \langle \rho, \lambda - \mu\rangle
 \end{align*}
 whose difference is indeed $\langle 2\rho, \mu \rangle$.
\end{proof}

\begin{theorem}{lem}{successive divisors}
 Let $Y$ be a scheme. $Y = Y_0 \supset Y_1 \supset \dots \supset Y_n$ be a sequence of successive Cartier divisors, and denote $i \colon Y_n \to Y$.  Then $i^*[-n]$ is left t-exact. Furthermore, let $j_U \colon U \to Y$ be a smooth dense open subscheme and let $\sh{F}$ be lisse on $U$, perverse on $Y$, and suppose $(j_U)_! \sh{F}|_U \cong \sh{F}$.  Then if $U \cap Y_i \neq \emptyset$ for all $i < n$, then $i^*(\sh{F})[-n]$ is perverse.
\end{theorem}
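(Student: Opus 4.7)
The proof has two independent parts, both handled by induction on $n$. Throughout let $a_k \colon Y_k \to Y_{k-1}$ denote the successive Cartier divisor inclusions, so that $i = a_1 \circ \cdots \circ a_n$.

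For the left t-exactness of $i^*[-n]$, I would reduce to the case $n=1$ by composing $n$ left t-exact functors. The case $n=1$ is standard: for a Cartier divisor inclusion $a \colon D \to Z$, the functor $a^*$ has perverse cohomological amplitude $[-1,0]$ (by \cite{BBD}*{Corollaire 4.1.10(ii)} combined with the Koszul-style resolution of $a_*\csheaf{k}_D$ by a two-term complex of locally free sheaves, forcing ${}^p\sh{H}^k(a^*\sh{F})=0$ for $k<-1$), so $a^*[-1]$ sends perverse sheaves into ${}^p\stack{D}^{\geqslant 0}$, i.e.\ is left t-exact.

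For the second claim, I would prove it by induction on $n$, the base case $n=0$ being the hypothesis that $\sh{F}$ is perverse. Given the first part, it suffices to check the upper bound $i^*\sh{F}[-n] \in {}^p\stack{D}^{\leqslant 0}(Y_n)$. Set $U_k = U \cap Y_k$, $j_k \colon U_k \hookrightarrow Y_k$, and $i_U^{(k)} \colon U_k \to U$. Since $j_U$ is an open immersion, iterated proper base change together with the hypothesis $\sh{F} \cong (j_U)_!(\sh{F}|_U)$ yields
\[
 i^*\sh{F}[-n] \;\cong\; (j_n)_!\, (i_U^{(n)})^*(\sh{F}|_U)[-n].
\]
Since $U$ is smooth (and can be assumed equidimensional by working component by component) and $\sh{F}|_U$ is lisse and perverse, $\sh{F}|_U \cong \sh{L}[\dim U]$ for a local system $\sh{L}$, so the right hand side is $(j_n)_!\bigl((i_U^{(n)})^*\sh{L}\bigr)[\dim U - n]$.

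The key geometric step is to show $U_n$ is either empty or equidimensional of dimension $\dim U - n$. I would prove by induction on $k \leq n$ that $U_k$ is equidimensional of dimension $\dim U - k$ (nonempty for $k<n$ by hypothesis): if $U_{k-1}$ is nonempty, then locally $Y_k$ is cut out in $Y_{k-1}$ by a non-zerodivisor $f$, and restriction of $f$ to the nonempty open subscheme $U_{k-1} \subset Y_{k-1}$ remains a non-zerodivisor (since $f$ is nonzero on every generic point of $Y_{k-1}$ it meets). Hence $U_k$ is cut out in the equidimensional $U_{k-1}$ by a regular element, making it equidimensional of dimension one less, and the induction proceeds. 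For $k=n$, the same argument leaves room for $f|_{U_{n-1}}$ to be a unit, in which case $U_n$ is empty.

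If $U_n = \emptyset$ there is nothing to prove; otherwise $(i_U^{(n)})^*\sh{L}$ is a local system on $U_n$ in cohomological degree $0$ with support of dimension $\dim U - n$, so $\bigl((i_U^{(n)})^*\sh{L}\bigr)[\dim U - n]$ is perverse on $U_n$. Applying $(j_n)_!$ (right t-exact since $j_n$ is an open immersion) keeps it in ${}^p\stack{D}^{\leqslant 0}(Y_n)$. Combined with left t-exactness from the first part, $i^*\sh{F}[-n]$ is perverse, completing the induction. The main obstacle is the bookkeeping of equidimensionality of $U_n$ under successive Cartier intersections; everything else is formal base change.
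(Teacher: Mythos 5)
Your proof is correct and its inductive structure parallels the paper's, but you arrive at the key upper bound by a genuinely different route that is actually more careful than what the paper writes. Both proofs take the first claim (left t-exactness, i.e.\ $i^*\sh{F}[-n] \in {}^p\stack{D}^{\geqslant 0}$) as given and then need to establish $i^*\sh{F}[-n] \in {}^p\stack{D}^{\leqslant 0}$. The paper's published proof reduces to $n=1$ and then invokes the ``six-functors triangle'' $(j_U)_! j_U^*\sh{F} \to \sh{F} \to i_* i^*\sh{F} \to$ to conclude ${}^p H^0(i^*\sh{F}) = 0$. As written that triangle is problematic: $j_U$ (the dense open $U$) and $i$ (the Cartier divisor $Y_1$) are not a complementary open/closed pair in the cases of interest (in the application in \ref{absolute fiber properties}, $U \cap Y_k \neq \emptyset$ for all $k$), and if it \emph{were} a genuine recollement triangle the hypothesis $(j_U)_! j_U^*\sh{F} \cong \sh{F}$ would force $i^*\sh{F} = 0$ outright, which is not what happens. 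Your argument sidesteps this: a single iterated base change gives $i^*\sh{F}[-n] \cong (j_n)_!\bigl((i_U^{(n)})^*\sh{L}\bigr)[\dim U - n]$, and the dimension count showing $U_n$ is empty or pure of dimension $\dim U - n$ puts the shifted local system into ${}^p\stack{D}^{\leqslant 0}$ on $U_n$; right t-exactness of $(j_n)_!$ then finishes the upper bound. This is the correct mechanism behind the paper's claim. One small overclaim: you assert $\bigl((i_U^{(n)})^*\sh{L}\bigr)[\dim U - n]$ is \emph{perverse} on $U_n$, but $U_n$ need not be smooth (it is only a successive Cartier intersection inside $U$), so the ${}^p\stack{D}^{\geqslant 0}$ half of that is not obvious; however only the ${}^p\stack{D}^{\leqslant 0}$ half, which holds for any pure-dimensional $U_n$, enters the argument, so this does not affect correctness.
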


\begin{proof}
 If we write $i_k$ for the inclusion of $Y_k$ in $Y_{k + 1}$, then $i$ is the composition of all the $Y_k$'s.  By \cite{BBD}*{Corollaire 4.1.10(ii)}, each $i_k$ has cohomological amplitude $[-1, 0]$, so their composition has amplitude $[-n, 0]$, as desired.
 
 For the second claim, we also apply induction.  Indeed, let $i_{n - 1} \colon Y_{n - 1} \to Y$, so since $U \cap Y_{n - 1} \neq \emptyset$, we have $i_{n - 1}^* (j_U)_! \cong (j_{U \cap Y_{n - 1}})_! i_{n - 1}^*$.  In particular, the natural map
 \begin{equation*}
  (j_{U \cap Y_{n - 1}})_! (i_{n - 1}^* \sh{F})|_{U \cap Y_{n - 1}}
   \to i_{n - 1}^* \sh{F}
 \end{equation*}
 is again an isomorphism.  By induction, it is an isomorphism of perverse sheaves after shifting by $n - 1$.  Thus, we may assume that $n = 1$.  But then the natural six-functors triangle
 \begin{equation*}
  (j_U)_! j_U^* \sh{F} \to \sh{F} \to i_* i^* \sh{F} \to
 \end{equation*}
 shows that ${}^p H^0 (i^* \sh{F}) = 0$, and we have already shown above that $i^* \sh{F}[-1]$ has cohomological amplitude $[0,1]$, so it is perverse.
\end{proof}

\subsection*{Tannakian duality}

We will show that $\cat{Perv}(\stack{G})$ is equivalent to representations of the following group:

\begin{theorem}{defn}{dual root data}
 Let $G$ be a complex reductive group, $T \subset G$ a maximal torus with Weyl group $W$, and $Q \in Q(\Lambda_T, k^*)^W_\Z$ as in \ref{sf gerbes exact sequence}; as above, let $\kappa$ be its symmetric bilinear form and $\Lambda_Q = \on{ker}(\kappa)$.  We define the root data of the \emph{twisted dual group} $\check{G}_Q$ by the following description: it has a maximal torus $\check{T}_Q$ with $X^*(\check{T}_Q) = \Lambda_Q$, and its roots are all multiples of the coroots of $G$, and vice-versa, as follows.  For every coroot $\check\alpha$, let $r = \on{ord} Q(\check\alpha)$ if this is finite.  Then we have the corresponding root $\alpha_Q = r\check\alpha$ and coroot $\check\alpha_Q = \frac{1}{r}\alpha$.
\end{theorem}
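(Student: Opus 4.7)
The stated definition is well-posed precisely when the quadruple $(\Lambda_Q, X_*(\check T_Q), \{\alpha_Q\}, \{\check\alpha_Q\})$ forms a valid root datum (so that Chevalley's classification produces a unique connected reductive $\check G_Q$ over $k$); my plan is therefore to verify the root datum axioms in four steps of decreasing difficulty.

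The key preliminary and main technical point will be to sharpen Lemma \ref{invariant form and pairing} by showing that the $A_2$-valued obstruction $\epsilon_{\check\alpha}$ vanishes whenever $Q \in Q(\Lambda_T, k^*)^W_\Z$, so that
\[
 \kappa(\check\alpha, \lambda) \;=\; Q(\check\alpha)^{\langle \alpha, \lambda\rangle}
\]
holds on the nose. Using \ref{integer-valued forms} I would factor $Q$ as the product of a form $R$ with $R(\Lambda_{T,r}^\Q) = 1$ (on which $\check\alpha$ lies, so both sides are already trivial) and Killing-type forms $a^{Q_0}$ with $Q_0$ integer-valued and $W$-invariant; for the latter, the polarization identity combined with $Q_0(\lambda + c\check\alpha) = Q_0(s_{\check\alpha}(\lambda + c\check\alpha))$ for varying $c \in \Z$ yields the identity by a direct computation. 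This is where the serious work lies.

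Granted the sharpened formula, the remaining axioms follow cleanly. With $r = \on{ord} Q(\check\alpha)$ one has $\kappa(r\check\alpha, \lambda) = Q(\check\alpha)^{r\langle\alpha,\lambda\rangle} = 1$ for every $\lambda \in \Lambda_T$, so $\alpha_Q = r\check\alpha$ lies in $\Lambda_Q$; dually, for $\lambda \in \Lambda_Q$ the identity $1 = Q(\check\alpha)^{\langle\alpha,\lambda\rangle}$ forces $r \mid \langle\alpha,\lambda\rangle$, so $\check\alpha_Q = \tfrac{1}{r}\alpha$ is a well-defined element of $\on{Hom}(\Lambda_Q, \Z) = X_*(\check T_Q)$. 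The pairing condition $\langle \alpha_Q, \check\alpha_Q\rangle = \langle r\check\alpha, \tfrac{1}{r}\alpha\rangle = \langle \check\alpha, \alpha\rangle = 2$ is then immediate.

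Finally, for closure under reflection, the $\check T_Q$-reflection through $\alpha_Q$ acts on $\Lambda_Q$ by $\lambda \mapsto \lambda - \langle\alpha,\lambda\rangle\check\alpha$, i.e.\ by the restriction of the $G$-Weyl reflection $s_{\check\alpha}$. Since $\kappa$ is $W$-invariant (Proposition \ref{form is invariant}), $\Lambda_Q$ is $W$-stable; and since $\on{ord} Q(\check\alpha)$ depends only on the $W$-orbit of $\check\alpha$, the candidate root set $\{r\check\alpha : \on{ord} Q(\check\alpha) < \infty\}$ and its dual are $W$-stable, hence closed under the reflections $s_{\alpha_Q}$. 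This completes the verification of the root datum axioms, and Chevalley's theorem then yields the group $\check G_Q$.
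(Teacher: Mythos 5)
Your proposal is correct and follows essentially the same outline as the paper's Proposition \ref{dual is root data}: establish the identity $\kappa(\check\alpha,\lambda)=Q(\check\alpha)^{\langle\alpha,\lambda\rangle}$, then read off that $r\check\alpha\in\Lambda_Q$, $\tfrac1r\alpha\in(\Lambda_Q)^\vee$, the pairing is $2$, and the root set is $W$-stable (using that $\on{ord}Q(\check\alpha)$ is constant on $W$-orbits).

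The one place where you go beyond the paper is in step one, where you propose to actually derive $\epsilon_{\check\alpha}=1$ from the hypothesis $Q\in Q(\Lambda_T,k^*)^W_\Z$, via the decomposition of \ref{integer-valued forms} into a part $R$ with $R(\Lambda_{T,r}^\Q)=1$, $\kappa_R(\Lambda_{T,r}^\Q,\Lambda_T)=1$ and Killing-type factors $a^{Q_0}$. The paper instead simply invokes ``\ref{invariant form and pairing} with $\epsilon_{\check\alpha}=1$'' (and elsewhere cites \ref{sf gerbes exact sequence}) without spelling out the reduction. Your polarization computation --- taking $Q_0(\lambda+c\check\alpha)=Q_0(s_{\check\alpha}(\lambda+c\check\alpha))$ for varying $c$ and dividing by $2c+\langle\alpha,\lambda\rangle$ in the torsion-free ring $\Z$ --- is the correct way to kill the $A_2$-valued ambiguity, and it is a genuine improvement in explicitness. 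Two small remarks: you implicitly but never explicitly observe (as the paper does) that if $\on{ord}Q(\check\alpha)=\infty$ then no multiple of $\check\alpha$ lies in $\Lambda_Q$, which is needed to confirm you have listed \emph{all} the roots; and your appeal to Proposition \ref{form is invariant} for $W$-invariance is redundant, since $W$-invariance is already packaged into the hypothesis $Q\in Q(\Lambda_T,k^*)^W_\Z$.
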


\begin{theorem}{prop}{dual is root data}
 The root datum defined above is in fact a root datum.
\end{theorem}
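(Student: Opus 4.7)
The plan is to check the four axioms of a root datum $(X, X^\vee, \Phi, \Phi^\vee)$ directly: (i) $\Phi \subset X = \Lambda_Q$ and $\Phi^\vee \subset X^\vee$, (ii) $\Phi$ and $\Phi^\vee$ are finite, (iii) $\langle \alpha_Q, \check\alpha_Q \rangle = 2$ for each pair, and (iv) the reflections $s_{\alpha_Q}$ and $s_{\check\alpha_Q}$ preserve $\Phi$ and $\Phi^\vee$ respectively. The guiding principle is that the combinatorics of the new datum should reduce to that of $G$ after the rescaling $(\check\alpha, \alpha) \mapsto (r\check\alpha, \tfrac{1}{r}\alpha)$, and that $W$-invariance of $Q$ guarantees everything is equivariant under this operation.

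The essential input is the formula of \ref{invariant form and pairing}: since $Q$ is $W$-invariant, $\kappa(\check\alpha, \lambda) = \epsilon_{\check\alpha}(\lambda)\, Q(\check\alpha)^{\langle \alpha, \lambda\rangle}$ with $\epsilon_{\check\alpha}$ a $2$-torsion homomorphism that is trivial unless $\alpha \in 2\Lambda^T$. Granting for the moment that $\epsilon_{\check\alpha} = 1$ (which holds away from the exceptional case; in the exceptional case one absorbs the $2$-torsion either by using the integrality hypothesis $Q\in Q(\Lambda_T, k^*)^W_\Z$ to write $Q$ as a product of forms where the issue is absent, or by replacing $r$ with $2r$ on the affected coroots), we compute $\kappa(r\check\alpha, \lambda) = Q(\check\alpha)^{r\langle\alpha,\lambda\rangle} = 1$, establishing $\alpha_Q = r\check\alpha \in \Lambda_Q = X^*(\check T_Q)$. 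Conversely, for any $\lambda \in \Lambda_Q$, the identity $1 = \kappa(\check\alpha,\lambda) = Q(\check\alpha)^{\langle\alpha,\lambda\rangle}$ forces $r \mid \langle\alpha,\lambda\rangle$, so $\check\alpha_Q = \tfrac{1}{r}\alpha$ takes integer values on $\Lambda_Q$ and defines an element of $X_*(\check T_Q)$. Axiom (iii) is then immediate: $\langle \alpha_Q, \check\alpha_Q\rangle = \langle r\check\alpha, \tfrac{1}{r}\alpha\rangle = \langle \alpha, \check\alpha\rangle = 2$.

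For axiom (iv), I first observe that $\Lambda_Q$ is $W$-stable: if $\lambda \in \Lambda_Q$ and $w \in W$, then $\kappa(\mu, w\lambda) = \kappa(w^{-1}\mu, \lambda) = 1$ for every $\mu$ by $W$-invariance of $\kappa$. Next, the Weyl-equivariance of the assignment $\check\alpha \mapsto r_{\check\alpha} := \on{ord}\, Q(\check\alpha)$ follows from $W$-invariance of $Q$, since $Q(s_\beta \check\alpha) = Q(\check\alpha)$. A direct calculation using the definitions then shows that the reflection on $X^*(\check T_Q)$ determined by $(\alpha_Q, \check\alpha_Q)$ is just the restriction of the Weyl reflection $s_\alpha$ to $\Lambda_Q$, and in particular $s_{\beta_Q}(\alpha_Q) = r_\alpha\, s_\beta(\check\alpha) = (s_\beta \alpha)_Q$ since $r_{s_\beta\check\alpha} = r_\alpha$; the same manipulation, dualized, yields $s_{\check\beta_Q}(\check\alpha_Q) = \widecheck{(s_\beta\alpha)}_Q$. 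Axiom (ii) is trivial since there are only finitely many coroots of $G$ to begin with.

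The main obstacle, as indicated above, is the potential $2$-torsion ambiguity encoded by $\epsilon_{\check\alpha}$ when $\alpha$ is twice a weight in $\Lambda^T$. In the generic case this is vacuous, but one must verify — presumably using the explicit decomposition of $Q$ into Killing forms and forms descending to $\Lambda_T/\Lambda_{T,r}^\Q$ afforded by \ref{integer-valued forms} — that the construction is insensitive to this ambiguity, so that $\alpha_Q$ and $\check\alpha_Q$ are genuinely well-defined as stated. Once this verification is complete, the four axioms are all straightforward consequences of $W$-invariance of $\kappa$ and the multiplicative compatibility of $r$ with the Weyl action.
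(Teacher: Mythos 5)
Your proposal is correct and follows essentially the same route as the paper: both apply \ref{invariant form and pairing} with $\epsilon_{\check\alpha}=1$ to deduce $\alpha_Q = r\check\alpha \in \Lambda_Q$ and $\check\alpha_Q = \frac{1}{r}\alpha \in \Lambda_Q^\vee$, note $\langle\alpha_Q,\check\alpha_Q\rangle = 2$, and invoke $W$-invariance of $Q$ (hence constancy of $r$ on $W$-orbits of coroots) to get closure of the root set under the reflections $s_{\alpha_Q}=s_{\check\alpha}|_{\Lambda_Q}$. You are more explicit than the paper about ticking off all four axioms (the paper tacitly leaves finiteness and $W$-stability of $\Lambda_Q$ to the reader).

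One clarification on your ``main obstacle'': the $\epsilon_{\check\alpha}$ worry is resolved immediately, not ``presumably'', by the standing hypothesis in \ref{dual root data} that $Q \in Q(\Lambda_T, k^*)^W_\Z$. By the last part of \ref{integer-valued forms}, such a $Q$ is $a^{Q_0}$ for a $W$-invariant $\Z$-valued form $Q_0$ (over an extension $k \subset B$), and since $\Z$ has no $2$-torsion, the $\epsilon$ of \ref{invariant form and pairing} for $Q_0$ vanishes identically; exponentiating gives $\epsilon_{\check\alpha} = 1$ for $Q$. That is exactly what licenses the paper's unadorned ``with $\epsilon_{\check\alpha}=1$''. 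Your fallback suggestion of replacing $r$ by $2r$ on affected coroots is not needed and would require separate justification (it changes the root datum), so you should drop it in favor of the integrality argument, which is airtight.

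Minor notational slip: you introduce $r_{\check\alpha} := \on{ord}\,Q(\check\alpha)$ but then write $r_\alpha$ in the displayed reflection formula; these should match.
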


\begin{proof}
 Using the equation of \ref{invariant form and pairing} with $\epsilon_{\check\alpha} = 1$, we see that $Q(\check\alpha)^{\langle r\check\alpha, \lambda \rangle} = 1$ for all $\lambda$ indeed implies $r\check\alpha \in \Lambda_Q$.  Conversely, if $\lambda \in \Lambda_Q$, then in particular we have $\kappa(\check\alpha, \lambda) = 1$, so $r \mid \langle \alpha, \lambda \rangle$ and $\frac{1}{r} \alpha \in (\Lambda_Q)^\vee$.  Finally, note that if $Q(\check\alpha)$ has infinite order, then no multiple of $\check\alpha$ is in $\Lambda_Q$.
 
 It is obvious that $\langle \alpha_Q, \check\alpha_Q \rangle = \langle \alpha, \check\alpha \rangle = 2$, and in particular, $s_{\alpha_Q} = s_{\check\alpha}|_{\Lambda_Q}$.  Since $Q$ is $W$-invariant, we have $Q(s_{\check\alpha}\check\beta) = Q(\check\beta)$ for any coroots $\check\alpha, \check\beta$, so the multiplier for a coroot is constant in $W$-orbits; it then follows that $s_{\alpha_Q}(\beta_Q)$ is another root in $\Lambda_Q$.
\end{proof}

Since the codomain of $F$ is not in fact the category of vector spaces we must make some preparations before applying Tannakian duality.  It follows from the combination of \ref{ULA local equivalence} and \ref{bilinear form triviality} that the values of $F$ are supported on $\on{Gr}_{{}^L \check{T}_Q} \subset \on{Gr}_T$, and in fact, this restriction is sharp:

\begin{theorem}{lem}{full irreducible objects}
 Let $\lambda \in \Lambda_Q$ be any coweight which is dominant in $\Lambda$.  Then $\stack{G}$ is equivariantly trivial on $\on{Gr}_G^\lambda$ and, thus, we have an irreducible object $\sh{J}(\lambda) \in \Sph(\stack{G})$.
\end{theorem}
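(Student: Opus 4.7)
The strategy is to produce $\sh{J}(\lambda)$ as the middle extension $j_{!*}(\csheaf{k}[\dim \on{Gr}_G^\lambda])$ along $j \colon \on{Gr}_G^\lambda \hookrightarrow \on{Gr}_G$; for this construction to yield an object of $\Sph(\stack{G})$, the key point is that $\stack{G}|_{\on{Gr}_G^\lambda}$ must be $G(\hat{\OO})$-equivariantly trivial, after which irreducibility is automatic. The bulk of the proof consists of establishing this equivariant triviality, which I do by reducing to a computation at the $T$-fixed point $t^\lambda$.

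First, by \ref{orbit stabilizer is connected}, $\on{Gr}_G^\lambda \cong G(\hat{\OO})/H$ for a connected stabilizer $H \supset B$, and the $G(\hat{\OO})$-action factors through a finite-dimensional algebraic quotient $\bar G = G(\hat{\OO})/N^\lambda$ with $\on{Gr}_G^\lambda \cong \bar G/\bar H$, $\bar H = H/N^\lambda$ still connected. Combining \ref{multiplicative gerbe descent} with \ref{trivial equivariance multiplicative torsor}, the $\bar G$-equivariant gerbe $\stack{G}|_{\on{Gr}_G^\lambda}$ corresponds to a multiplicative $k^*$-gerbe on $\bar G$---supplied by the strong factorizable equivariance of \ref{equivariance and multiplicativity}---together with a multiplicative trivialization on $\bar H$. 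The obstruction to an equivariant trivialization of $\stack{G}|_{\on{Gr}_G^\lambda}$ is precisely the class in $\cat{H}^2(\bar H, k^*)$ of the restriction of this multiplicative gerbe, together with its multiplicative structure.

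Second, because $\bar H$ surjects onto $B$ with pro-unipotent kernel, and $B = T \ltimes N$ with $N$ unipotent, and because connected (pro-)unipotent groups support no nontrivial multiplicative $k^*$-gerbes, this multiplicative class on $\bar H$ is detected by its restriction to the maximal torus $T$. Finally, by functoriality with respect to the $T$-equivariant embedding $i \colon \on{Gr}_T \hookrightarrow \on{Gr}_G$ (which maps $\on{Gr}_T^\lambda = \{t^\lambda\}$ to $t^\lambda$) and the identification $i^* \stack{G} = \stack{T}$, this restriction to $T$ equals the multiplicative torsor encoding the $T(\hat{\OO})$-equivariance of $\stack{T}|_{t^\lambda}$; by the explicit calculation at the end of the proof of \ref{strong equivariance for a torus}, this is the character $\kappa_\lambda \colon \Lambda_T \to k^*$, $\mu \mapsto \kappa(\mu, \lambda)$. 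For $\lambda \in \Lambda_Q = \ker \kappa$ the character $\kappa_\lambda$ is trivial; reversing the reductions, $\stack{G}|_{\on{Gr}_G^\lambda}$ is $G(\hat{\OO})$-equivariantly trivial.

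Given equivariant triviality, an equivariant trivialization identifies $\stack{G}$-twisted $G(\hat{\OO})$-equivariant perverse sheaves on $\on{Gr}_G^\lambda$ with ordinary such sheaves, so $\csheaf{k}[\dim \on{Gr}_G^\lambda]$ defines a $\stack{G}$-twisted sheaf whose middle extension along $j$ is an irreducible object of $\Sph(\stack{G})$ in the sense of \ref{absolute simple objects}; this is the desired $\sh{J}(\lambda)$. The main obstacle is the multiplicative-cohomology bookkeeping of the second step: one must verify that the unipotent pieces of $\bar H$ really do contribute nothing to \emph{multiplicative} gerbes (not merely to ordinary torsors or characters), so that a trivialization on $T$ lifts uniquely and multiplicatively back to $\bar H$. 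This is essentially the same mechanism used in the reverse direction in the proofs of \ref{equivariance torus to borel} and \ref{equivariance borel to group} to extend equivariance from $T$ to $B$ to $G$.
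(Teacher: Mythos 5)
Your proof takes a genuinely different route from the paper's. The paper proceeds by explicit construction: it covers $\on{Gr}_G^\lambda$ by the $W$-translates $U^w = \on{Gr}_G^\lambda \cap \on{Gr}_{B^w}^{w(\lambda)}$, observes that $\stack{G}|_{U^w}$ is trivial (descending through $\on{Gr}_{B^w}$ to a point of $\on{Gr}_T$) and $B^w(\hat\OO)$-equivariantly trivial, builds the $\stack{G}$-twisted perverse sheaf $\sh{F} = \bigoplus_{w \in W} j^w_{!*}(\csheaf{k}[\dim\on{Gr}_G^\lambda])$, equips it with a $B(\hat\OO)$-equivariance that is manifestly $W$-compatible, uses the Bruhat decomposition to upgrade to $G(\hat\OO)$-equivariance, and then invokes \ref{transitive equivariant objects} to conclude that the gerbe is trivial. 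You, by contrast, reduce to an obstruction-theoretic computation on the stabilizer $\bar H \subset \bar G$, identify the obstruction as a multiplicative $k^*$-gerbe on $\bar H$ with a distinguished trivialization, and argue by restriction to $T$ that it vanishes when $\lambda \in \Lambda_Q$.

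Both proofs hinge on the same key arithmetic input: by the computation at the end of \ref{strong equivariance for a torus}, the $T(\hat\OO)$-equivariance structure on $\stack{T}|_{t^\lambda}$ is the torsor $\sh{L}_{\kappa_\lambda}$, so the hypothesis $\lambda \in \Lambda_Q$ is exactly what kills it. In the paper this is buried inside the sentence ``$\stack{G}|_U$ is trivially $B(\hat\OO)$-equivariant,'' which is correct but unexplained; that claim is precisely where $\lambda \in \Lambda_Q$ is used, since the $B(\hat\OO)$-equivariance on the trivial gerbe $\stack{G}|_U$ is the multiplicative torsor $\kappa_\lambda$ pulled back from $T$. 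You make this explicit, which is a merit of your approach. The trade-off is that the paper only needs the \emph{torsor-level} statement that $\kappa_\lambda$ is trivial on the opens $U^w$, after which \ref{transitive equivariant objects} does all the gerbe-level work. Your route requires the corresponding \emph{gerbe-level} statement on the stabilizer.

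That gerbe-level statement is where the gap sits, and you correctly flag it: you need that a multiplicative $k^*$-gerbe on $\bar H$ is (uniquely) multiplicatively trivialized once it is so on the subtorus $T$. This requires three vanishing statements (in a Hochschild--Serre-type argument through $\bar H \twoheadrightarrow B \twoheadrightarrow T$ with (pro-)unipotent kernels): no nontrivial multiplicative gerbes, no nontrivial multiplicative torsors, and no nontrivial multiplicative functions supported on the unipotent pieces. The paper never proves the first of these (in \ref{equivariance torus to borel} it uses only the cohomological triviality of $N(\hat\OO)$ as a \emph{space}, which handles the torsor and function levels, not the gerbe level), so you cannot cite it; it is true and not deep, but it is a substantive lemma, not bookkeeping. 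There is also a smaller imprecision in your second paragraph: the appeal to \ref{multiplicative gerbe descent} and \ref{trivial equivariance multiplicative torsor} to convert equivariant triviality of $\stack{G}|_{\on{Gr}_G^\lambda}$ into multiplicative triviality of a gerbe on $\bar H$ is in the right spirit, but the statements as written do not directly give that equivalence (the paper's version is stated for a multiplicative gerbe on an ambient group restricted to a subgroup, and you want a gerbe on the classifying stack of the stabilizer); this reduction should be spelled out. Provided both points are filled in, your proof is correct, and it has the virtue of not needing the Bruhat covering or the $W$-averaging of the paper's argument.
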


\begin{proof}
 Let $U \subset \on{Gr}_G^\lambda$ be the intersection $\on{Gr}_G^\lambda \cap \on{Gr}_B^\lambda$, which by \ref{absolute orbit properties} is a dense open subset containing $\smash{t^\lambda = \on{Gr}_T^\lambda}$; since the Weyl group $W$ permutes the Borel subgroups, we also have open subsets $U^w$ for $w \in W$ contaning $t^{w(\lambda)}$, and these cover $\on{Gr}_G^\lambda$.  On each, $\stack{G}$ is trivial, since it descends to $\on{Gr}_T$ from $\on{Gr}_B$.  Furthermore, clearly $\smash{B(\hat{\OO})}$ acts on $U$, so $\stack{G}|_U$ is trivially $\smash{B(\hat{\OO})}$-equivariant. Likewise for the action of $\smash{wBw^{-1}(\hat{\OO})}$ on $U^w$.
 
 Thus, the sheaf $\sh{F}_{U^w} = \csheaf{k}[-\dim \on{Gr}_G^\lambda]$ on $U^w$ is perverse and $\stack{G}|_{U^w}$-twisted, and so we have the $\stack{G}$-twisted perverse sheaf
 \begin{equation*}
  \sh{F} = \bigoplus_W j^w_{!*}(\sh{F}_{U^w}),
 \end{equation*}
 where $\map{j^w}{U^w}{\on{Gr}_G^\lambda}$ is the open immersion.  For any $b \in B(\hat{\OO})$ and any $w \in W$, we have $b U^w \cap U^w \neq \emptyset$ since $\smash{\on{Gr}_G^\lambda}$ is irreducible, so there is a natural $\smash{B(\hat{\OO})}$-equivariance on $\sh{F}$ obtained by matching the summands of $b^* \sh{F}$ to those of $\sh{F}$ on these open intersections.  Since clearly it is $W$-equivariant, by the Bruhat decomposition we have natural isomorphisms $g^* \sh{F} \cong \sh{F}$ for any $g \in G(\hat{\OO})$, giving $\sh{F}$ a $G(\hat{\OO})$-equivariance structure.  By \ref{transitive equivariant objects}, $\stack{G}$ is trivial on $\on{Gr}_G^\lambda$. 
\end{proof}

Identifying perverse sheaves on this space with $\Lambda_Q$-graded, finite-dimensional vector spaces (the analogue of \ref{main theorem for torus}), the lemma shows that every weight of $\check{T}_Q$ can be obtained from an object of $\cat{Sph}(\stack{G})$.  However, it is not the case that $F$ is a \emph{tensor} functor to $\check{T}_Q$-representations.

\begin{theorem}{lem}{variant tensor category}
 We have natural identifications
 \begin{equation*}
  \cat{Sph}(\stack{T})
   \cong
  \stack{Z} \otimes \cat{Rep}(\check{T}_Q)
   \cong
  \cat{Rep}(\check{T}_Q)',
 \end{equation*}
 where $\stack{Z} = \stack{Z}_1|_x$ (with $x \in X$ as before) is the fiber of the sf comultiplicative gerbe considered in \ref{main theorem for torus}, and thus a comultiplicative $A$-gerbe on $X$ (\ref{non-sf restriction}), and $\cat{Rep}(\check{T}_Q)'$ means altering the commutativity constraint in $\cat{Rep}(\check{T}_Q)$ such that the product of two spaces of weights $\lambda$ and $\mu$ (with $\lambda, \mu \in \Lambda_Q \subset \Lambda_T$) has the sign $Q(\lambda) Q(\mu)$.
\end{theorem}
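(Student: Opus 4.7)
The plan is to obtain both equivalences by combining the relative statement of Proposition \ref{main theorem for torus} with its specialization at $x \in X$, together with the explicit commutativity formula of Proposition \ref{multiplicative factorizable}. The first equivalence $\Sph(\stack{T}) \cong \stack{Z} \otimes \cat{Rep}(\check{T}_Q)$ comes from taking the fiber at $x$ of the factorizable equivalence $\Sph^\ULA(\stack{T}_1) \cong \stack{Z}_1 \otimes \cat{FRep}_1^\ULA(\check{T}_Q)$, while the second, $\stack{Z} \otimes \cat{Rep}(\check{T}_Q) \cong \cat{Rep}(\check{T}_Q)'$, comes from transporting through Proposition \ref{multiplicative lattice gerbes} back to the multiplicative setting, trivializing on the contractible base, and reading off the residual commutativity from Proposition \ref{multiplicative factorizable}.

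For the first equivalence, note that $X^1 = X$ has no diagonals, so $\on{Fact}(\check{T}_Q(k))_1$ is simply the constant sheaf $\check{T}_Q(k)$ and $\cat{FRep}_1^\ULA(\check{T}_Q)$ is the category of lisse sheaves on $X$ with a $\check{T}_Q$-action, whose fiber at $x$ is $\cat{Rep}(\check{T}_Q)$. I would then apply Proposition \ref{ULA local equivalence} to identify $\Sph^\ULA(\stack{T}_1)$ with $\Sph(\stack{T})$ as tensor abelian categories on the contractible neighborhood $X$, and Propositions \ref{pushforward is tensor} and \ref{convolution and tensor product} to match inner convolution on the sheaf side with the inner tensor product of \ref{convolution of factorizable actions} on the representation side; restriction to $x$ yields the first equivalence as tensor categories, with the tensor product on the right determined by the comultiplicative structure of $\stack{Z}$ through the construction \ref{eq:comultiplication twisting}.

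For the second equivalence, Propositions \ref{multiplicative lattice gerbes} and \ref{two kinds of twisting} let me transport $\stack{Z} \otimes \cat{Rep}(\check{T}_Q)$ across to the equivalent category built from the commutative multiplicative $k^*$-gerbe $\stack{T} = \stack{T}_1|_x$ restricted to $\on{Gr}_{{}^L\check{T}_Q}$, with matching tensor structures. Since $X$ is a contractible neighborhood, both $H^1$ and $H^2$ of it with values in $k^*$ vanish, so by Proposition \ref{gerbes and cech cohomology} the underlying gerbe of $\stack{T}$ is trivializable, which reduces the category of $\stack{T}$-twisted lisse sheaves to $\cat{Rep}(\check{T}_Q)$ as an abelian category.

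The main obstacle is the explicit identification of the residual commutativity on this trivialized tensor category. With $\stack{T}$ trivialized, the associated bilinear form $\kappa$ vanishes on $\Lambda_Q$ by definition of the latter, so the hypothesis of Proposition \ref{multiplicative factorizable} is satisfied on $\on{Gr}_{{}^L\check{T}_Q}$; the proposition then computes the commutativity constraint of $\stack{T}^\lambda \otimes \stack{T}^\mu$ to be exactly $Q(\lambda)Q(\mu)$ relative to the natural flip. By Proposition \ref{twisted G-equivariant pairing}, this scalar is precisely the modification of the natural commutativity of the tensor product on the representation side, so the resulting tensor category is $\cat{Rep}(\check{T}_Q)'$ with the sign convention of the statement.
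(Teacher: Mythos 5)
Your proposal reaches the right conclusion, but it routes the proof through substantially heavier machinery than the paper uses. The paper's proof is a one-liner: the first equivalence $\cat{Sph}(\stack{T}) \cong \stack{Z} \otimes \cat{Rep}(\check{T}_Q)$ is the pointwise case of \ref{two kinds of twisting} applied directly (a $\stack{T}$-twisted sheaf on the discrete set $\on{Gr}_{{}^L \check{T}_Q}$ is the same as a $\stack{Z}$-twisted $\Lambda_Q$-graded vector space, i.e.\ $\stack{Z}$-twisted $\check{T}_Q$-representation), and the first $\cong$ third is read off from \ref{twisted G-equivariant pairing}, with the commutativity constraint $Q(\lambda)Q(\mu)$ computed in \ref{multiplicative factorizable}. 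By contrast, you obtain the first equivalence by invoking the relative main theorem \ref{main theorem for torus}, then the semisimplicity-based identification \ref{ULA local equivalence}, then \ref{pushforward is tensor} and \ref{convolution and tensor product} to match tensor structures, and finally restricting to a fiber; for the second, you transport back through \ref{multiplicative lattice gerbes}/\ref{two kinds of twisting} and only then use \ref{multiplicative factorizable} and \ref{twisted G-equivariant pairing}. The net effect is that you pass from the first category to the second and then back to (essentially) the first before reaching the third, whereas the paper goes directly from the first to each of the others. Your route is logically sound (no circularity: everything you cite precedes this lemma and does not depend on it), and it buys a concrete check that the various formulations of the tensor structure agree; but it brings in the semisimplicity theory of $\cat{Sph}$ through \ref{ULA local equivalence}, which is unnecessary for a torus.

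One small wobble: you justify the trivializability of the gerbe by citing the vanishing of $H^1(X,k^*)$ and $H^2(X,k^*)$ for contractible $X$, but $\stack{T} = \stack{T}_1|_x$ lives on the fiber $\on{Gr}_T|_x \cong \Lambda_T$, not on $X$. The correct reason is that this fiber is a discrete set of points (equivalently, that the corresponding $\stack{Z}$ is a gerbe on $\on{Spec}\C$). The conclusion is right, but the stated justification points at the wrong space.
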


\begin{proof}
 We use \ref{two kinds of twisting} to obtain the first equivalence; the equivalence of the first and third categories is the content of \ref{twisted G-equivariant pairing}, where the actual number was computed in \ref{multiplicative factorizable}.
\end{proof}

We note that, in the lemma, the gerbe $\stack{Z}$ is trivial (since it is a gerbe on $\on{Spec} \C$), but not comultiplicatively trivial, in an instance of the principle suggested above \ref{G-equivariant pairing}.  We also note that, according to \ref{dual root data} and the number appearing in the lemma, $\stack{Z}$ is in fact a gerbe not for $\check{T}_Q(k)$ but for $Z(\check{G}_Q)(k)$, since $Q(\lambda) = 0$ by definition if $\lambda$ is in the root lattice of $\check{G}_Q$.

We thus have $F \colon \cat{Sph}(\stack{G}) \to \stack{Z} \otimes \cat{Rep}(\check{T}_Q)$, and we will want to move the $\stack{Z}$ from the right to the left.  This is achieved by another small computation:

\begin{theorem}{lem}{fundamental group action}
 Every object of $\cat{Sph}(\stack{G})$ admits an action of $Z(\check{G}_Q)$, giving $\cat{Sph}(\stack{G})$ an action of the ``sheaf of categories'' $\stack{H}^1(\on{Spec} \C, Z(\check{G}_Q))$.  With this structure, the functor
 \begin{equation*}
  F \colon \cat{Sph}(\stack{G}) \to \stack{Z} \otimes \cat{Rep}(\check{T}_Q)
 \end{equation*}
 is $Z(\check{G}_Q)(k)$-equivariant.
\end{theorem}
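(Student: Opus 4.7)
The plan is to define the $Z(\check{G}_Q)$-action via the canonical semisimple decomposition from \ref{absolute semisimple} and then verify equivariance of $F$ using the gerbe monodromy formula \ref{fiber monodromies}.  By \ref{precise simple objects}, every object of $\cat{Sph}(\stack{G})$ decomposes canonically as $\bigoplus_{\lambda \in \Lambda_Q^+} \sh{J}(\lambda) \otimes V_\lambda$ for multiplicity spaces $V_\lambda$, and I would declare $z \in Z(\check{G}_Q)$ to act on the $\lambda$-isotypic summand as the scalar $[\lambda](z)$, where $[\lambda] \in \Lambda_Q/\Lambda_{Q,r} = X^*(Z(\check{G}_Q))$ is the natural character.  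Since $\on{Aut}(\sh{J}(\lambda)) \cong \Gm$ and $[\lambda]$ is a regular homomorphism $Z(\check{G}_Q) \to \Gm$, this is an algebraic group action in the sense of \ref{algebraic group action}, and by \ref{group action gluing} it assembles $\cat{Sph}(\stack{G})$ into a sheaf of categories with $Z(\check{G}_Q)$-action.  The induced action of $\stack{H}^1(\on{Spec} \C, Z(\check{G}_Q))$ is the standard extension: a $Z(\check{G}_Q)$-torsor $T$ acts on the $\lambda$-isotypic component by tensoring with the one-dimensional line $T^{[\lambda]}$, the pushout of $T$ along the character $[\lambda]$.

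The crux of equivariance is showing that all $\check{T}_Q$-weights $\mu$ appearing in $F(\sh{J}(\lambda))$ satisfy $\lambda - \mu \in \Lambda_{Q,r}$; granting this, the embedding $Z(\check{G}_Q) \subset \check{T}_Q$ acts on $F(\sh{J}(\lambda))$ by the single character $[\lambda]$, matching the scalar action on $\sh{J}(\lambda)$ transported by $F$.  By \ref{bilinear form triviality} we already know $\mu \in \Lambda_Q$, and by \ref{absolute orbit properties} we know $\lambda - \mu \in \Lambda_{T,r}^+$; but these only give $\lambda - \mu \in \Lambda_Q \cap \Lambda_{T,r}^+$, which can be strictly larger than $\Lambda_{Q,r}^+$ (as already in $G = \on{SL}_2$ with $\on{ord} Q(\check\alpha) = 4$), so more is required.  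The additional input is \ref{fiber monodromies}: the gerbe $\stack{G}$, uniquely trivialized on each open orbit $\on{Gr}_B^\nu$, acquires monodromy $Q(\check\alpha)^{-1}$ across every boundary divisor $\overline{\on{Gr}}_B^{\nu - \check\alpha}$.  Reducing to the rank-one case via the parabolic fiber functors of \ref{parabolic fiber functor properties} and invoking the projective line bundles of \ref{general projective line bundles}, if $\lambda - \mu = \sum_i k_i \check\alpha_i$ then the cumulative monodromy at weight $\mu$ is $\prod_i Q(\check\alpha_i)^{-k_i}$, and a non-vanishing twisted-cohomology contribution forces this product to be trivial, i.e.\ $r_{\alpha_i} \mid k_i$ for all $i$, which is precisely $\lambda - \mu \in \Lambda_{Q,r}$.

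The main obstacle will be formalizing this monodromy-forced vanishing rigorously---tracking how the canonical trivialization of $\stack{G}$ on the open $\on{Gr}_B^\lambda$ propagates, through the minimal IC-extension of $\sh{J}(\lambda)$ across the boundary divisors, and comparing it with the reference trivialization of $t^* \stack{T}$ on $\on{Gr}_B^\mu$ so as to verify that the scalar discrepancy annihilates the cohomology contribution when $\lambda - \mu \notin \Lambda_{Q,r}$.  Once this weight constraint is in hand, the remaining assertions---regularity of the $Z(\check{G}_Q)$-action, its assembly into a sheaf of categories, the $\stack{H}^1$-extension, and the equivariance of $F$---all follow formally from the construction above together with the faithfulness of $F$.
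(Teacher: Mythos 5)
Your construction of the $Z(\check{G}_Q)$-action agrees in substance with the paper's, but the paper's argument for equivariance is considerably shorter and takes a genuinely different route. The paper never invokes \ref{fiber monodromies} or the projective line bundles; instead it grades $\cat{Sph}(\stack{G})$ by $\pi_1(G)$ via supports on connected components of $\on{Gr}_G$, grades $\cat{Rep}(\check{T}_Q)$ by coarsening the $\Lambda_Q$-grading by sums of \emph{coroots of $G$} (not roots of $\check{G}_Q$), and reads compatibility of $F$ directly off the closure relation $\bar{\on{Gr}}{}_B^\lambda = \bigcup_{\mu \leq \lambda} \on{Gr}_B^\mu$ and the definition of $F$. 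You are quite right that this only gives $\lambda - \mu \in \Lambda_{T,r}$ and that $\Lambda_Q \cap \Lambda_{T,r}$ can strictly contain $\Lambda_{Q,r}$ (your $\on{SL}_2$, $\on{ord}Q(\check\alpha)=4$ example is correct: there $\Lambda_Q = 2\Z$, $\Lambda_{Q,r} = 4\Z$, $\Lambda_{T,r} = \Z$); your diagnosis of where the finer content must come from is sharper than the paper's terse citation. But this also means your proof is not the paper's proof: you are doing extra work the paper does not do, and the paper's argument--whether or not one regards it as complete--is more elementary.

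The proof you sketch of the refined weight constraint also has a genuine gap. The inference ``$\prod_i Q(\check\alpha_i)^{-k_i} = 1$ forces $r_{\alpha_i} \mid k_i$ for each $i$'' is false as stated: triviality of a product does not force triviality of each factor, and $W$-invariance of $Q$ does not rule out, say, $Q(\check\alpha_1) = Q(\check\alpha_2)^{-1}$ for two coroots in distinct $W$-orbits. The reduction to rank one via $F^P$ is indeed the correct remedy, because in the Levi $L$ there is a single simple coroot and the divisibility follows--but then the quantity you control is the $\check\alpha$-component of $\lambda - \mu$ for that one Levi, and you must argue that running over all subminimal parabolics pins down the full vector of coefficients, rather than attempting a single product of monodromies on $\on{Gr}_G$. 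Relatedly, ``cumulative monodromy'' is not quite well posed: \ref{2-line bundle of a Cartier divisor} defines the order of a trivialization about \emph{one} boundary divisor, and the assertion that such orders compose along a chain of nested strata $\bar{\on{Gr}}_B^\lambda \supset \bar{\on{Gr}}_B^{\lambda - \check\alpha} \supset \dots$ needs justification (the trivializations are canonical on each open stratum, but they do not extend over the boundary by construction). Finally, be aware of a circularity hazard: the constraint $\lambda - \mu \in \Lambda_{Q,r}$ is tantamount to knowing the root lattice of $\check{G}$, which the paper determines only later in \ref{roots for semisimple rank 1}; your monodromy argument, once made precise, is essentially an advance version of that computation.
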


\begin{proof}
 Such an action of a torus is the same as a grading by its weight lattice. Every object of $\cat{Sph}(\stack{G})$ has a natural grading by $\pi_1(G)$, since the components of $\on{Gr}_G$ are identified with this set; in fact, this grading is compatible with convolution, as follows from the fusion product (\ref{convolution properties}\ref{en:convolution is perverse}). Since $\pi_1(G)$ is a quotient of $\Lambda_T$ and $\Lambda_Q$ is a subgroup, and since the root lattice of $\check{G}_Q$ is contained in the coroot lattice of $G$, we obtain a grading by $X^*(Z(\check{G}_Q))$.
 
 Ignoring the twisting by the trivial gerbe $\stack{Z}$, objects of the category $\cat{Rep}(\check{T}_Q)$ have a natural grading by $Z(\check{G}_Q)$, simply by grouping all the graded parts of each object whose degrees differ by sums of coroots of $G$.  According to \ref{borel components}\ref{en:borel orbit closures} and \ref{absolute fiber functor}, this makes $F$ compatible with the gradings, i.e.\ equivariant.
\end{proof}

Multiplying $F$ by the inverse of $\stack{Z}$, we get a new tensor functor
\begin{equation}
 \label{eq:twisted fiber functor}
 F' \colon \stack{Z}^{-1} \otimes \cat{Sph}(\stack{G}) \to \cat{Rep}(\check{T}_Q).
\end{equation}
Note that since $\stack{Z}$ is trivial, the left-hand side merely has its commutativity constraint altered as well (by the same factor as above).  By the Tannakian duality theorem of \cite{DM_Tannakian}, the functor $F'$ induces an equivalence
\begin{equation*}
 \stack{Z}^{-1} \otimes \Sph(\stack{G}) \cong \cat{Rep}(\check{G}),
\end{equation*}
for some pro-algebraic $k$-group $\check{G}$ with a map from $\check{T}_Q$.

\subsection*{Properties of the dual group}

Having shown that $\check{G}_Q$ is indeed the dual group for the twisted Satake equivalence, we proceed to identify it precisely.

\begin{theorem}{prop}{connected and algebraic}
 The group $\check{G}$ is algebraic, connected, and reductive.
\end{theorem}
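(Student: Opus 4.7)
The plan is to verify reductivity, connectedness, and algebraicity in turn via the Tannakian correspondence $\stack{Z}^{-1} \otimes \Sph(\stack{G}) \cong \cat{Rep}(\check{G})$, combining the semisimplicity theorem \ref{absolute semisimple} with the highest-weight control of \ref{convolution highest weight}.

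\emph{Reductivity.} By \ref{absolute semisimple}, the category $\Sph(\stack{G})$ is semisimple. Neither twisting by the (trivial but noncanonically so) gerbe $\stack{Z}^{-1}$ nor modifying the commutativity constraint by $k^*$-valued scalars alters the underlying abelian category, so $\cat{Rep}(\check{G})$ is also semisimple. In characteristic zero this characterizes $\check{G}$ as pro-reductive.

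\emph{Connectedness.} I will invoke the Tannakian criterion that a pro-algebraic affine group over $k$ is connected iff it admits no nontrivial finite quotient, equivalently iff $\cat{Rep}(\check G)$ contains no nontrivial full tensor subcategory of the form $\cat{Rep}(F)$ for a finite group $F$ — concretely, every object with no trivial summand generates a tensor subcategory with infinitely many nonisomorphic simples. Given any nonzero dominant $\lambda \in \Lambda_Q$, iterating \ref{convolution highest weight} shows that $\sh{J}(\lambda)^{*n}$ contains $\sh{J}(n\lambda)$ as a direct summand for every $n \geq 1$; since these simples are pairwise nonisomorphic, the criterion is verified on every simple $\sh{J}(\mu)$ with $\mu \neq 0$ (by \ref{precise simple objects} and \ref{full irreducible objects} the nontrivial simples are exactly those), and hence on every object.

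\emph{Algebraicity.} It suffices to produce a tensor generator of $\Sph(\stack{G})$, since then the corresponding finite-dimensional representation of $\check{G}$ is faithful and $\check{G}$ is of finite type. Let $\Lambda_Q^+ \subset \Lambda_Q$ denote the submonoid of coweights dominant for $G$; this is the intersection of the finitely generated abelian group $\Lambda_Q \subset \Lambda_T$ with the rational polyhedral cone $\{\langle \alpha_i, \farg \rangle \geq 0\}$, so Gordan's lemma yields finitely many monoid generators $\lambda_1, \dots, \lambda_r$. Set $X = \bigoplus_{i=1}^r \sh{J}(\lambda_i)$. For any $\mu = \sum n_i \lambda_i \in \Lambda_Q^+$, distributivity of convolution over direct sum expresses $X^{*\sum n_i}$ as a direct sum indexed by ordered sequences of indices; the summand using each $\lambda_i$ exactly $n_i$ times contains $\sh{J}(\mu)$ by repeated application of \ref{convolution highest weight} and semisimplicity. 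Since by \ref{absolute simple objects}, \ref{precise simple objects}, and \ref{full irreducible objects} the simples of $\Sph(\stack{G})$ are precisely $\{\sh{J}(\mu) : \mu \in \Lambda_Q^+\}$, every simple is a summand of some $X^{*n}$, making $X$ a tensor generator.

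The main obstacle is the algebraicity step: reductivity is nearly immediate from semisimplicity, and connectedness is a short consequence of \ref{convolution highest weight}, but establishing finite type requires the combinatorial input that $\Lambda_Q^+$ is finitely generated as a monoid together with the verification that iterated convolutions of any chosen generating set actually exhaust the simples — both subtleties resolved by combining Gordan's lemma with the semisimple highest-weight analysis developed in the previous section.
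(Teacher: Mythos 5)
Your proof is correct and follows the same route as the paper: both argue via Deligne--Milne's three Tannakian criteria, using \ref{absolute semisimple} for reductivity, \ref{convolution highest weight} for connectedness, and a tensor generator built from $\sh{J}(\lambda_i)$'s via \ref{full irreducible objects} for algebraicity. Your invocation of Gordan's lemma to produce monoid generators of $\Lambda_Q^+$ is a slight refinement of the paper's (terser) statement that one should take group generators of $\Lambda_Q$, sidestepping the need to bring in convolution duals to reach arbitrary dominant $\mu$.
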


\begin{proof}
 We apply, respectively, the criteria of \cite{DM_Tannakian}*{Prop. 2.20(b), 2.22, 2.23}.  For algebraic, we choose any (finite) set of generators $\lambda_i$ of $\Lambda_Q$ and, by \ref{full irreducible objects}, construct the irreducible objects $\sh{J}(\lambda_i)$.  Then by \ref{convolution highest weight}, every object of $\Sph(\stack{G})$ is a subquotient of some tensor polynomial in $\bigoplus \sh{J}(\lambda_i)$, so this is a tensor generator.  For connected, we verify that no abelian subcategory generated by finitely many $\sh{J}(\lambda)$'s is closed under convolution, which follows from \ref{convolution highest weight} again since $\Lambda_Q$ is torsion-free.  Finally, for reductive, we have \ref{absolute semisimple}.
\end{proof}

The remainder of the section is devoted to the identification of $\check{G}$ as a reductive group. We will produce an isomorphism $\check{G}_Q \to \check{G}$ according to the following strategy: we will identify a Borel subgroup of $\smash{\check{G}}$ and show that its corresponding dominant weights are identified with those coweights in $\Lambda_Q$ which are dominant for $G$, and hence $\check{G}_Q$. Then we will pick out the simple roots of $\smash{\check{G}}$ and apply the following lemma:

\begin{theorem}{lem}{levi identification}
 Let $H$ and $H'$ be reductive groups with maximal tori $U, U'$ that are isomorphic; suppose further that $C$ and $C'$ are Borel subgroups containing these tori, the choice of which identifies the dominant weights in $X^*(U)$ with those in $X^*(U')$ under this isomorphism. Suppose that for every simple root $\alpha$ of $H$, with corresponding Levi factor $L$ (whose only simple root is $\alpha$), there is a commutative diagram
 \begin{equation} 
  \label{eq:levi diagram}
   \tikzsetnextfilename{levi_diagram}
   \begin{tikzpicture}[baseline = (current bounding box.center), row sep = 2.5em, column sep = 3em]
    \matrix [matrix of math nodes] (objects)
    {
     L & H\smash' \\ U & U\smash' \\
    };
    { [math arrows]
     \draw (objects-1-1) -- (objects-1-2);
     \draw (objects-2-1) -- (objects-1-1);
     \draw (objects-2-2) -- (objects-1-2);
    }
    \draw[equal] (objects-2-1) -- (objects-2-2);
   \end{tikzpicture}
 \end{equation}
 Then there is a unique isomorphism $H \to H'$ extending these maps.
\end{theorem}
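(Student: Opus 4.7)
The plan is to apply the isomorphism theorem for split reductive groups: once one extracts an isomorphism of based root data and a compatible pinning from the hypotheses, it produces the desired $H \to H'$ uniquely.

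The identification $X^*(U) \cong X^*(U')$ respecting dominant chambers identifies the walls of these chambers, hence the simple coroots of the root systems of $H$ and $H'$. For each simple root $\alpha$ of $H$, the map $L \to H'$ in the diagram \ref{eq:levi diagram} is injective (since $L$ has semisimple rank one and the composition $U \to L \to H'$ is the given isomorphism onto $U'$) and its image is reductive; the image contains $U'$ and acts on its own Lie algebra with weights $0, \pm\alpha$ transported via the identification $U = U'$, so $\alpha$ is a root of $H'$ and the image of $L$ is the rank-one Levi $L' \subset H'$ attached to $\alpha$. The coroot $\check\alpha$ of $L$ maps to the coroot of $L'$ under $U = U'$, so the simple coroot of $H$ labelled by $\alpha$ matches that of $H'$ labelled by $\alpha$. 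Thus the torus identification extends to an isomorphism of based root data, and the maps $L \xrightarrow{\sim} L'$ simultaneously provide a pinning of the simple root subgroups of both $H$ and $H'$.

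Existence of the isomorphism $H \to H'$ now follows from the split reductive isomorphism theorem (as in SGA 3 or the Chevalley--Steinberg presentation). Uniqueness is immediate, since $H$ is generated by $U$ together with the $L$'s for simple roots. The main obstacle will be confirming mutual compatibility of the independently given maps $L \to H'$ for different simple roots, i.e.\ that the Chevalley commutator relations among the various simple root subgroup images hold in $H'$; these relations are determined by the based root datum together with the pinning, so they are automatically transported from $H$ to $H'$ via the identification already established, and one can alternatively verify them one rank-two Levi at a time, where the classical case suffices and the resulting rank-two isomorphisms agree on common rank-one Levis by construction.
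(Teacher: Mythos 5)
Your proposal is correct and follows essentially the same route as the paper's: establish that the simple roots and coroots of $H$ and $H'$ coincide via the rank-one Levi maps, then invoke the split reductive isomorphism theorem with the $L \to H'$ maps serving as the required pinning data. The only real difference is that you argue $L \to H'$ is injective with image the Levi $L'_\alpha$ of $H'$ (which needs the small observation that a normal subgroup of a connected reductive group meeting the maximal torus trivially is trivial, since its identity component would be a unipotent normal subgroup and its component group would be central), whereas the paper sidesteps this by arguing directly at the Lie-algebra level (\ref{groups with the same torus}) that $d(L\to H')$ carries $\alpha$-weight vectors to $\alpha$-weight vectors, so $\alpha$ is a root of $H'$, and then matches coroots using the reflection axiom. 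Both methods land on the same root-datum isomorphism; your version makes the pinning and the compatibility across rank-two Levis more explicit, which the paper leaves implicit in the phrase ``a unique group isomorphism, identifying the Levi factors.''
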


To prove this, we need an even smaller lemma on algebraic groups, which proves itself.

\begin{theorem}{lem}{groups with the same torus}
 Let $K,L$ be reductive groups with maximal tori $S, U$.  Let $\map{f}{L}{K}$ be an algebraic group homomorphism such that $f|_S$ is an isomorphism of $S$ with $U$.  Let $\alpha$ be a root of $L$ and in the Lie algebra $\lie{l}$, let $v$ be a weight vector for the adjoint action of $L$, with weight $\alpha \in X^*(U) = X^*(S)$. Then $df(v)$ is a weight vector with weight $\alpha$ for the adjoint action of $K$ on $\lie{k}$, so $\alpha$ is a root of $K$. \qed
\end{theorem}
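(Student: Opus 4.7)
The argument is a direct application of the functoriality of the adjoint representation. Since $f$ is a group homomorphism, the identity $f(s l s^{-1}) = f(s) f(l) f(s)^{-1}$ holds in $K$ for all $s, l \in L$. Differentiating in $l$ at the identity yields the intertwining relation
\[
 \on{Ad}_K(f(s)) \circ df = df \circ \on{Ad}_L(s) \colon \lie{l} \to \lie{k}
\]
for every $s \in L$. Restricting to $s \in S$ and applying both sides to the weight vector $v$ of weight $\alpha \in X^*(S)$, we obtain
\[
 \on{Ad}_K(f(s))\bigl(df(v)\bigr) = df\bigl(\alpha(s)\, v\bigr) = \alpha(s) \cdot df(v).
\]

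Since by hypothesis $f|_S$ is an isomorphism $S \to U$ identifying the character lattices (so that $X^*(U) = X^*(S)$ as asserted in the statement), the displayed equation says precisely that $df(v) \in \lie{k}$ transforms by the character $\alpha \in X^*(U)$ under the adjoint action of $U \subset K$. Hence $df(v)$ lies in the $\alpha$-weight space $\lie{k}_\alpha$ of $\lie{k}$, which is the first assertion of the lemma.

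The only substantive remaining point is nontriviality: to conclude that $\alpha$ is actually a \emph{root} of $K$ rather than simply a weight, one needs $df(v) \neq 0$. In the intended applications (specifically to \ref{levi identification}), $f$ is an embedding of the Levi factor $L$ into $H'$, so $df$ is injective and the required nonvanishing is automatic; more generally, any hypothesis ensuring that $df$ does not annihilate the one-dimensional root space $\lie{l}_\alpha$ suffices. Granting this, $df(v)$ is a nonzero vector in $\lie{k}_\alpha$, exhibiting $\alpha$ as a root of $K$ and completing the proof. The main ``obstacle'' is thus not mathematical but notational: verifying that the functoriality of $\on{Ad}$ transports weights correctly under the stated identification of character lattices.
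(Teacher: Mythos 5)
Your argument is correct and is exactly the computation the paper intends: the paper gives no proof at all (the lemma is introduced as one "which proves itself," with the end-of-proof symbol attached to the statement), and differentiating $f(sls^{-1}) = f(s)f(l)f(s)^{-1}$ to obtain $\on{Ad}_K(f(s))\circ df = df\circ \on{Ad}_L(s)$ and applying it to $v$ is the intended one-line verification. One small refinement: the nonvanishing of $df(v)$ need not be imported from the application in \ref{levi identification}; it already follows from the stated hypotheses. Indeed $\ker f \cap S = 1$, and the identity component of $\ker f$ is a connected normal (hence reductive) subgroup of $L$, which if nontrivial would contain a nontrivial torus whose conjugate into $S$ would again lie in the normal subgroup $\ker f$; so $\ker f$ is finite and, in characteristic zero, $df$ is injective, making the lemma valid exactly as stated.
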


\begin{proof}[Proof of \ref{levi identification}]
 We apply \ref{groups with the same torus} to $H'$ and $L$, concluding that $\alpha$ is a root of $H'$ for any simple root $\alpha$ of $H$.  The collection of all the $\alpha$ determine the set of dominant weights in $X^*(U) = X^*(U')$ as those weights $\lambda$ such that $\langle \lambda, \check\alpha \rangle \geq 0$ for every $\alpha$ which is a simple root of $H$.  But that means that $\{\alpha\}$ determines the Weyl chamber of weights corresponding to $C'$, and therefore to the basis determined by $C'$.  Since $\{\alpha\}$ is a basis for the weight lattice, it is in fact the basis for the root sytem corresponding to $C'$.
 
 Thus, $H$ and $H'$ have the same simple roots; we claim that they have the same coroots as well. Indeed, each Levi $L$ corresponding to $\alpha$ has $\check\alpha$ as a simple coroot, and the map $L \to H'$ sends $\check\alpha$ to some coroot of $H'$ (which is of course equal to $\check\alpha$, since the tori in $L$ and $H'$ are identified).  By definition of the simple reflections, $\check\alpha$ is negated by $s_\alpha$, which means that it is a multiple of the coroot $\check\beta$ of $H'$ dual to $\alpha$, and since $\langle \alpha, \check\alpha \rangle = 2 = \langle \alpha, \check\beta \rangle$, that multiple must be $1$.
 
 Therefore there is an isomorphism between the root data of $H$ and $H'$, so there is a unique group isomorphism, identifying the Levi factors, that induces it.
\end{proof}

We have already argued that $\check{T}_Q$ admits a map into $\check{G}$ because $F'$ takes values in $\Lambda_Q$-graded vector spaces.  By \ref{full irreducible objects}, in fact the image of $F'$ generates $\cat{Rep}(\check{T}_Q)$, so by \cite{DM_Tannakian}*{Prop. 2.21(b)} this map is a closed immersion.  To show it is maximal, we consider the Borel subgroup.

By the Pl\"ucker relations, a Borel subgroup in $\check{G}$ is specified by giving, for every irreducible $V^\lambda \in \cat{Rep}(\smash{\check{G}})$, a line $\ell^\lambda$ in it, such that for any other irreducible $V^\mu$, the line $\ell^\lambda \otimes \ell^\mu \in V^\lambda \otimes V^\mu$ agrees with the line $\ell^{\lambda + \mu}$ lying in the (unique) summand $V^{\lambda + \mu}$ inside the tensor product.  We take $\ell^\lambda$ to be the weight space $F'(\sh{J}(\lambda))^\lambda$ and apply \ref{convolution highest weight} and the fact that $F'$ is a tensor functor to obtain this for $\Sph(\stack{G})$.  We will denote this Borel subgroup by $\check{B}_Q$; by definition, the dominant weights of $\check{G}$ with respect to $\check{B}_Q$ are exactly the dominant $\lambda$ which lie in $\Lambda_Q$.

Since every $\ell^\lambda$ is a $\check{T}_Q$-weight line, $\check{T}_Q \subset \check{B}_Q$ is a closed subgroup.  It is in fact a maximal torus, since the $\lambda$-weight line has multiplicity one. Thus, the first two criteria of \ref{levi identification} are satisfied, and we need only produce the maps of Levi factors.

\subsection*{Semisimple rank 1}

We now appeal to the ``parabolic fiber functors'' $F^P_n$ defined in \ref{parabolic fiber}, following the apparently original appearance of this technique in \cite{BD_quantization}*{5.3.27--31}. Let $\alpha$ be any simple root of $G$ and let $P$ be the parabolic subgroup corresponding to $\alpha$, with Levi quotient $L$, a reductive group of semisimple rank $1$ having the same maximal torus $T$.  We denote by $\stack{L}$ the gerbe on $\on{Gr}_L$ induced by $\stack{G}$. As for the plain fiber functor $F$, by \ref{parabolic fiber functor properties} it induces a faithful, exact tensor functor
\begin{equation*}
 \map{F^P}{\Sph(\stack{G})}{\Sph(\stack{L})}
\end{equation*}
factoring the fiber functor.  By the Tannakian formalism we have a group homomorphism $\check{L} \to \check{G}$.  It remains only to prove that $\check{L} \cong \check{L}_Q$ in order to apply \ref{levi identification}, so we now assume that $G$ has semisimple rank $1$.  The following lemma allows us to focus on the \emph{semisimple} groups of rank $1$:

\begin{theorem}{lem}{root multiples}
 The root lattice of $\check{G}$ in $\Lambda_Q$ must be contained in the coroot lattice of $G$, so that when $G$ has semisimple rank $1$, the roots of $\check{G}$ are multiples of the coroots of $G$.
\end{theorem}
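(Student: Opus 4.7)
The plan is to realize the root lattice of $\check{G}$ as a subgroup constrained by the $\pi_1(G)$-grading on $\Sph(\stack{G})$ coming from the decomposition of $\on{Gr}_G$ into connected components (indexed by $\pi_1(G) = \Lambda_T/\Lambda_{T,r}$). This grading is compatible with convolution by the fusion product computation of \ref{convolution properties}\ref{en:convolution is perverse}, so under the Tannakian equivalence $\stack{Z}^{-1} \otimes \Sph(\stack{G}) \cong \cat{Rep}(\check{G})$ of \ref{eq:twisted fiber functor} (the $\stack{Z}$-twist affects only the commutativity constraint and is irrelevant to the present argument), it corresponds to a homomorphism $D(\pi_1(G)) \to Z(\check{G})$ from the diagonalizable group with character lattice $\pi_1(G)$ into the center of $\check{G}$.

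The key step will be to identify the dual map $\Lambda_Q/R \to \pi_1(G)$ on character lattices, where $R \subset \Lambda_Q = X^*(\check{T}_Q)$ denotes the root lattice of $\check{G}$ and $X^*(Z(\check{G})) = \Lambda_Q/R$. I claim it coincides with the map induced by restricting the quotient $\Lambda_T \twoheadrightarrow \pi_1(G)$ to $\Lambda_Q \subset \Lambda_T$. I would verify this on the irreducibles $\sh{J}(\lambda)$ for dominant $\lambda \in \Lambda_Q$ (available by \ref{full irreducible objects}), since each has $\check{T}_Q$-weight $\lambda$ under $F'$ and is supported on the connected component of $\on{Gr}_G$ labelled by $\lambda \bmod \Lambda_{T,r}$; these tensor-generate $\cat{Rep}(\check{G})$ as in the proof of \ref{connected and algebraic}, so this determines the map.

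Granting this, $R$ lies in the kernel of $\Lambda_Q \to \pi_1(G)$, which is exactly $\Lambda_Q \cap \Lambda_{T,r}$, so $R \subset \Lambda_{T,r}$ as required. The deduction for semisimple rank $1$ is then immediate: in that case $\Lambda_{T,r} = \mathbb{Z}\check\alpha$ is cyclic on a single coroot, so every element of $R$---in particular every root of $\check{G}$---is an integer multiple of $\check\alpha$. The main technical obstacle is the compatibility check in the middle paragraph; this is largely bookkeeping but requires care to reconcile the grading conventions implicit in \ref{fundamental group action} with the Tannakian formalism, and to confirm that the central subgroup one extracts really does act via the component grading (rather than, say, its inverse, which would not affect the conclusion but would muddy the identification).
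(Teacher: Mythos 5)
Your proof is correct but takes a genuinely different route from the paper's. The paper's proof is short and local: if $\lambda$ lies in the root lattice of $\check{G}$, the irreducible $F'(\sh{J}(\lambda))$ has a nonzero zero weight space, so by \ref{absolute fiber functor} the support of $\sh{J}(\lambda)$ must meet $\on{Gr}_T^0$, forcing $\on{Gr}_G^\lambda$ into the trivial connected component of $\on{Gr}_G$, i.e.\ $\lambda \in \Lambda_{T,r}$. You instead package the $\pi_1(G)$-grading as a central cocharacter $D(\pi_1(G)) \to Z(\check{G})$ under the Tannakian dictionary and read the containment $R \subset \Lambda_{T,r}$ off the well-definedness of the induced map $\Lambda_Q/R \to \pi_1(G)$ on character lattices. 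Both arguments hinge on the same geometric fact (the component of $\on{Gr}_G$ supporting $\sh{J}(\lambda)$ is $\lambda \bmod \Lambda_{T,r}$), but yours is more structural: it identifies the root lattice as lying in the kernel of the component map $\Lambda_Q \to \pi_1(G)$, and it makes explicit on the Tannakian side the central homomorphism that the paper's own \ref{fundamental group action} constructs only for the a priori model $\check{G}_Q$. The compatibility check you flag in your last paragraph is real but unproblematic: once the Pl\"ucker construction of $\check{B}_Q$ (established immediately before this lemma) is in place, $F'(\sh{J}(\lambda))$ is the irreducible of highest weight $\lambda$, so its $Z(\check{G})$-character is $\lambda \bmod R$; since dominant elements generate $\Lambda_Q$ (indeed any $\mu \in \Lambda_Q$ is a difference of two sufficiently dominant elements), the formula for the dual map is forced on a generating set and hence everywhere. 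The paper's version buys brevity; yours costs more setup but clarifies how $\pi_1(G)$ controls the center of the dual group.
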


\begin{proof}
 A highest-weight representation $V^\lambda$ of a reductive group has its highest weight $\lambda$ in the root lattice of that group if and only if its weight space $V^\lambda(0) \neq 0$.  By \ref{absolute fiber functor}, this holds in $\Sph(\stack{G})$ only if $\on{Gr}_B^0 = \on{Gr}_T^0$ intersects the support of $\sh{J}(\lambda)$, so $\on{Gr}_G^\lambda$ must be in the connected component of $\on{Gr}_G$ which contains $\on{Gr}_T^0$, so $\lambda$ is a coroot multiple.
\end{proof}

To pass from a general group $G$ of semisimple rank $1$ to a semisimple one, we replace $G$ by its (semisimple) derived subgroup $G' = [G,G]$, whose coweight lattice is the rational coroot lattice $\Lambda_{T,r}^\Q$ as defined in \ref{integer-valued forms}.  Suppose we write, similarly to that lemma but using \ref{sf gerbes exact sequence},
\begin{equation*}
 \stack{G}_n = \stack{K}_n \otimes \stack{M}_n,
\end{equation*}
where $\stack{K}_n$ is of ``Killing type'', in that its quadratic form is a product of Killing forms and its associated multiplicative gerbe is trivial, and $\stack{M}_n$ is of ``multiplicative type'', in that its quadratic form descends to $\Lambda_T / \Lambda_{T,r}^\Q$.  Therefore, we have that
\begin{align*}
 Q_{\stack{G}} = Q_{\stack{K}} Q_{\stack{M}} &&
 \kappa_{\stack{G}} = \kappa_{\stack{K}} \kappa_{\stack{M}}
\end{align*}
and since $Q_{\stack{M}}(\Lambda_{T,r}^\Q) = 1$ and $\kappa_{\stack{M}}(\Lambda_{T,r}^\Q, \Lambda_T) = 1$, we have that $\stack{M}$ is $G(\hat{\OO})$-equivariantly trivial on $\on{Gr}_{G'}$, and thus that
\begin{equation*}
 \Sph(\stack{G}|\on{Gr}_{G'}) \cong \Sph(\stack{K}|\on{Gr}_{G'}),
\end{equation*}
that the latter category is independent of whether we consider $G(\hat{\OO})$-equivariant sheaves or $G'(\smash{\hat{\OO}})$-equivariant sheaves (since if $\langle \alpha, \lambda \rangle = 1$ for all roots $\alpha$, we also have $\kappa_{\stack{K}}(\alpha, \lambda) = 1$; thus, the presence of the central coweights does not affect $\Lambda_{Q_{\stack{K}}}$), and finally, that the induced map
\begin{equation*}
 \check{G} \to (G')^\vee,
\end{equation*}
which is a surjection by \cite{DM_Tannakian}*{Proposition 2.21(a)}, induces an isomorphism on root lattices.  Since $\Lambda_{T,r}^\Q$ contains (by construction) all fractional multiples of the coroots of $G$, the multipliers appearing in \ref{dual root data} are the same for $G$ as for $G'$, so we may replace $G$ with $G'$.

Therefore, we may assume that $G = \on{SL}_2, \on{PGL}_2$ is one of the two semisimple groups of rank~$1$.  We will denote by $\stack{G}_{\on{SL}_2}$ or $\stack{G}_{\on{PGL}_2}$ the sf gerbe on their grassmannians.  Then $\Lambda_T \cong \Z$ for each, the nontrivial element of $W$ acts as negation, and the quadratic form $Q$ is necessarily of the form $Q(n) = q_0^{n^2}$ for some $q_0 \in k^*$. Clearly, the natural map $\on{SL}_2 \to \on{PGL}_2$ induces an inclusion $\Lambda_{\on{SL}_2} = 2\Lambda_{\on{PGL}_2}$, and if we identify the latter with $\Z$, sends $Q$ (for $\on{PGL}_2$) to the quadratic form on $\Lambda_{\on{SL}_2}$, $n \mapsto q_0^{4n^2}$.  The following result is a quick computation:

\begin{theorem}{lem}{primed lattice in rank 1}
 Suppose $G = \on{PGL}_2$, $\tilde{G} = \on{SL}_2$ and identify the lattice of the former with $\Z$, that of the latter with $2\Z$; let $q_0 = Q(1)$ have order $r_0 \in \N \cup \{\infty\}$. Then $\Lambda_{Q,G} = \Lambda_{Q,\smash{\tilde{G}}}$ is zero if $r_0$ is infinite, and otherwise:
 \begin{itemize}
  \item If $r_0$ is odd, then $\Lambda_{Q,\smash{\tilde{G}}} = r_0 \Z = 2\Lambda_{Q,G}$.
  \item If $\on{ord}_2 r_0 = 1$, then $\Lambda_{Q,\smash{\tilde{G}}} = r_0 \Z = 2\Lambda_{Q,G}$.
  \item If $\on{ord}_2 r_0 = 2$, then $\Lambda_{Q,G} = (r_0/2)\Z = \Lambda_{Q,\smash{\tilde{G}}}$.
  \item If $\on{ord}_2 r_0 \geq 3$, then $\Lambda_{Q,G} = (r_0/2)\Z = 2\Lambda_{Q,\smash{\tilde{G}}}$. \qed
 \end{itemize}
\end{theorem}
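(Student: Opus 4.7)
The proof is a direct unwinding of the definitions and reduces to elementary divisibility computations, with the only subtlety being a case split on the $2$-adic valuation of $r_0$.

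First I would record the bilinear form explicitly: since $Q(n) = q_0^{n^2}$ on $\Lambda_{\on{PGL}_2} = \Z$, the identity
\begin{equation*}
 \kappa(m,n) = Q(m+n)Q(m)^{-1}Q(n)^{-1} = q_0^{2mn}
\end{equation*}
holds, and the analogous calculation on $\Lambda_{\on{SL}_2} = 2\Z$ (where the restriction of $Q$ sends $2n \mapsto q_0^{4n^2}$) gives $\kappa(2m, 2n) = q_0^{8mn}$. In particular, if $r_0 = \infty$ then no nontrivial value of $n$ makes $\kappa(\farg, n)$ trivial, so both kernels vanish as claimed.

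Next I would compute $\Lambda_{Q,G}$ in $\Z$ as the kernel of $\kappa_G$: by definition this is $\{n \in \Z : r_0 \mid 2n\}$. Writing $r_0 = 2^a m$ with $m$ odd, one immediately reads off $\Lambda_{Q,G} = (r_0/\gcd(r_0,2))\Z$, which is $r_0\Z$ when $a = 0$ and $(r_0/2)\Z$ when $a \geq 1$; this matches the four cases of the lemma.

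The parallel computation on $\Lambda_{Q,\tilde{G}} \subset 2\Z$ gives $\{2n : r_0 \mid 8n\}$, i.e.\ $\{2n : (r_0/\gcd(r_0,8)) \mid n\}$, so
\begin{equation*}
 \Lambda_{Q,\tilde{G}} = 2 \bigl(r_0/\gcd(r_0,8)\bigr)\Z.
\end{equation*}
Splitting by $a = \on{ord}_2 r_0$ (namely $a = 0$, $a = 1$, $a = 2$, and $a \geq 3$, where $\gcd(r_0,8)$ is $1$, $2$, $4$, $8$ respectively), one gets the four stated answers, and comparing with the formula for $\Lambda_{Q,G}$ produces the indicated relations between $\Lambda_{Q,G}$ and $\Lambda_{Q,\tilde{G}}$. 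There is no real obstacle; the only point to check carefully is the bookkeeping of powers of $2$ in the $\gcd$, as this is what causes the four cases to differ.
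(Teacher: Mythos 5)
Your approach is the right one and is essentially what the paper expects, since it gives no proof at all ($\qed$ follows the bullets). The calculation $\Lambda_{Q,G} = (r_0/\gcd(r_0,2))\Z$ and $\Lambda_{Q,\tilde{G}} = 2(r_0/\gcd(r_0,8))\Z$ (the last as a subgroup of $\Z = \Lambda_{\on{PGL}_2}$) is correct, and the four relations between $\Lambda_{Q,G}$ and $\Lambda_{Q,\tilde{G}}$ that the lemma asserts (equal, or one is twice the other, in the indicated directions) all follow from these formulas.

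However, when you say that the formula ``gives the four stated answers,'' you should pause on the $r_0$-odd case, because there it does \emph{not}. Your own formula yields $\Lambda_{Q,\tilde{G}} = 2(r_0/1)\Z = 2r_0\Z$, whereas the first bullet of the lemma claims $\Lambda_{Q,\tilde{G}} = r_0\Z$. That displayed equation $r_0\Z = 2\Lambda_{Q,G}$ is in fact self-contradictory for $r_0$ odd, since you have already computed $\Lambda_{Q,G} = r_0\Z$, which makes $2\Lambda_{Q,G} = 2r_0\Z \neq r_0\Z$. Moreover, the later use of this lemma in the proof of \ref{roots for semisimple rank 1} draws the conclusion that ``the common simple root must be $2r_0$'' in the odd case, which requires $\Lambda_{Q,\tilde{G}} = 2r_0\Z$, not $r_0\Z$. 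So the first bullet of the lemma as printed has a typographical error (it should read $\Lambda_{Q,\tilde{G}} = 2r_0\Z = 2\Lambda_{Q,G}$), your computation is the correct one, and you should flag the discrepancy rather than implicitly claiming agreement. The remaining three bullets do match your formula, and the relations between the two kernels (which is what actually matters downstream) are correct in all four cases.
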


Now we are prepared to compute $\check{G}$ when $G$ has semisimple rank $1$.

\begin{theorem}{prop}{roots for semisimple rank 1}
 Let $G$ be a semisimple group of rank $1$ with simple coroot $\check\alpha$; let $r$ be the order of $q = Q(\check\alpha) \in k^*$.  Then the positive root of $\check{G}$ is $r \check\alpha$, if $r$ is finite, and otherwise $\check{G}$ is trivial.
\end{theorem}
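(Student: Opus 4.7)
For $r = \infty$, the claim is immediate: Lemma \ref{primed lattice in rank 1} gives $\Lambda_Q = 0$, so the maximal torus $\check{T}_Q$ of $\check{G}$ is trivial, and since $\check{G}$ is connected reductive by Proposition \ref{connected and algebraic}, it must itself be trivial.

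So assume $r$ is finite. By Lemma \ref{root multiples}, the positive root $\alpha_{\check{G}}$ of $\check{G}$ lies in $\mathbb{Z}\check\alpha$, so $\alpha_{\check{G}} = n\check\alpha$ for some $n \geq 1$; we must show $n = r$. Let $\lambda_0 \in \Lambda_Q$ be the smallest positive $G$-dominant coweight. Note $r\check\alpha \in \Lambda_Q$ since $\kappa(\check\alpha, r\check\alpha) = q^{2r} = 1$, so $r\check\alpha$ is a non-negative multiple of $\lambda_0$. By \ref{full irreducible objects}, $\sh{J}(\lambda_0) \in \Sph(\stack{G})$ is nonzero, and under the Tannakian equivalence it corresponds to the irreducible representation $V^{\lambda_0}$ of $\check{G}$ of highest weight $\lambda_0$. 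In rank one the weights of $V^{\lambda_0}$ are precisely $\lambda_0, \lambda_0 - \alpha_{\check{G}}, \ldots, -\lambda_0$, each appearing with multiplicity one, so the root $\alpha_{\check{G}}$ is determined by computing exactly which weights $\mu$ have $F(\sh{J}(\lambda_0))^\mu \neq 0$.

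By \ref{bilinear form triviality} together with the intersection pattern of \ref{absolute orbit properties}, the candidate weights are those in $\Lambda_Q \cap [-\lambda_0, \lambda_0] \cap (\lambda_0 + \mathbb{Z}\check\alpha)$, so it suffices to show that the subset of candidates at which $F(\sh{J}(\lambda_0))^\mu$ actually survives consists exactly of $\mu = \lambda_0 - jr\check\alpha$ for $j \geq 0$. My plan is to compute these weight spaces by an iterated monodromy argument anchored by Lemma \ref{fiber monodromies}, which supplies the monodromy $m(\check\alpha) = q^{-1}$ of $\stack{G}$ across each $\Prj^1$-transition $\Prj_{1;\check\alpha}^\mu$ between adjacent semi-infinite orbits. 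Concretely, iteratively restrict $\sh{J}(\lambda_0)$ across the chain of Cartier divisors
\[
 \bar{\on{Gr}}_B^{\lambda_0} \supset \bar{\on{Gr}}_B^{\lambda_0 - \check\alpha} \supset \cdots \supset \bar{\on{Gr}}_B^{\lambda_0 - k\check\alpha},
\]
which are nested Cartier by \ref{borel components}\ref{en:borel orbit boundaries}, and apply Lemma \ref{successive divisors} to see that the iterated $*$-restriction remains perverse (and in fact ULA). Each divisor contributes a factor of $q$ to the monodromy of the twisted constant sheaf being extended (by the order formula \ref{eq:boundary order 1} applied relative to each $\Prj_{1;\check\alpha}^{\lambda_0-j\check\alpha}$), so after $k$ steps the accumulated monodromy on a connected smooth piece of $\on{Gr}_B^{\lambda_0 - k\check\alpha} \cap \bar{\on{Gr}}_G^{\lambda_0}$ is $q^k$. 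Its compactly-supported top cohomology then vanishes unless $q^k = 1$, i.e.\ $r \mid k$; when $r \mid k$ it reproduces the untwisted one-dimensional top class. This shows $\alpha_{\check{G}} = r\check\alpha$, as required.

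The main obstacle lies in making this iterated monodromy tracking precise: one must verify that the only contribution to $F(\sh{J}(\lambda_0))^{\lambda_0 - k\check\alpha}$ is the top class of the twisted local system accumulated from \ref{fiber monodromies}, and that no spurious contributions arise from deeper singularities of the Schubert closure. This is where the $\Prj^1$-bundle structure of \ref{general projective line bundles} and the factorization computations of \ref{projective line bundle properties} become essential, as they allow one to reduce the geometric computation at each step to a rank-one affine cohomology calculation whose vanishing is controlled purely by the order of $q$.
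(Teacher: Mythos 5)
Your approach is genuinely different from the paper's. The paper proves this proposition by reducing to $G = \on{SL}_2$ or $\on{PGL}_2$, then exploiting the map $\on{Gr}_{\on{SL}_2} \to \on{Gr}_{\on{PGL}_2}$ which identifies components; it compares the two lattices $\Lambda_{Q,\on{SL}_2}$ and $\Lambda_{Q,\on{PGL}_2}$ via the purely arithmetic Lemma~\ref{primed lattice in rank 1}, constrains the roots by the root-data embeddings of Lemma~\ref{groups with the same torus}, and does case analysis on $\on{ord}_2(r_0)$, closing with a field-extension argument. You instead attempt a direct Mirkovi\'c--Vilonen-style weight computation for a single $\sh{J}(\lambda_0)$, tracking the monodromy of the twisted constant sheaf across the chain of semi-infinite-orbit boundaries. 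This is precisely the kind of cohomology calculation the paper deliberately avoids (and indeed the introduction flags avoiding \cite{L_singularities}-style computations as a selling point of the paper's method).

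There is a genuine gap in your argument, which you half-acknowledge in your closing paragraph but do not close. The assertion that ``after $k$ steps the accumulated monodromy on a connected smooth piece of $\on{Gr}_B^{\lambda_0 - k\check\alpha} \cap \bar{\on{Gr}}_G^{\lambda_0}$ is $q^k$'' is not a consequence of Lemma~\ref{fiber monodromies} as stated. That lemma gives the order of the \emph{unique trivialization} of $\stack{G}$ on a given $\on{Gr}_B^\mu$ about its boundary divisor $\bar{\on{Gr}}_B^{\mu - \check\alpha}$ (i.e.\ how two adjacent canonical trivializations differ). What you actually need is the monodromy, on the MV-type intersection $\on{Gr}_B^\mu \cap \on{Gr}_G^{\lambda_0}$, of the local system obtained by comparing the trivialization of $\stack{G}$ on $\on{Gr}_G^{\lambda_0}$ (supplied by Lemma~\ref{full irreducible objects}) with the trivialization on $\on{Gr}_B^\mu$. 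These two trivializations live on overlapping but distinct strata, and the difference torsor on the intersection is not literally the product of $k$ copies of the divisor-order torsor; establishing that it has monodromy $q^{\pm k}$ requires knowing the topology (in particular $\pi_1$) of $\on{Gr}_B^\mu \cap \on{Gr}_G^{\lambda_0}$ and tracking through the $\Prj^1$-bundle coordinates of Lemma~\ref{projective line bundle properties}. None of this is done. Without it, the crucial non-vanishing claim --- that the surviving candidate weights are exactly $\lambda_0 - jr\check\alpha$ and not all of $\Lambda_Q \cap [-\lambda_0, \lambda_0]$ --- is unsupported. Note this distinction matters: e.g.\ for $G = \on{SL}_2$ with $r$ even, one has $\Lambda_Q = (r/2)\Z\check\alpha$ but the root must be $r\check\alpha$, so the weight $0 \in \Lambda_Q$ \emph{must} fail to appear in $V^{\lambda_0}$, and your argument owes a proof of exactly that vanishing. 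You would also need to verify the MV cycle count (that the top compactly-supported cohomology is one-dimensional, not just nonzero, when the local system is trivial), which is another input your sketch takes for granted.
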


\begin{proof}
 We consider the map $\on{SL}_2 \to \on{PGL}_2$, which induces a map $\on{Gr}_{\on{SL}_2} \to \on{Gr}_{\on{PGL}_2}$ identifying the former with the connected component of the latter which contains $\on{Gr}_T^0$.  Let $\stack{G}_{\on{PGL}_2}$ be the fiber of an sf gerbe on the latter, with quadratic form $Q$, and let $\stack{G}_{\on{SL}_2}$ be its restriction to this component, inducing a form on the weight lattice of $\on{SL}_2$.  Since we have $\on{SL}_2 \to \on{PGL}_2$, the natural restriction makes $\stack{G}_{\on{PGL}_2}$ an $\on{SL}_2(\hat{\OO})$-equivariant gerbe on the common component and we have $\Sph(\stack{G}_{\on{PGL}_2})^\circ \subset \Sph(\stack{G}_{\on{SL}_2})$, the circle denoting objects supported on $\on{Gr}_{\on{SL}_2}$.

 If $r$ is infinite, then $\kappa$ is nondegenerate on $\Lambda_T$ (for either $\on{SL}_2$ or $\on{PGL}_2$), so $\Lambda_Q = 0$ and therefore, since $F$ is faithful, $\Sph(\stack{G}_{\on{SL}_2}) = \Sph(\stack{G}_{\on{PGL}_2}) \cong \cat{Vect}_k$ (generated by the trivial sheaf supported on $\on{Gr}_G^0$).  Thus, $\check{G} = 1$ in either case.
 
 If not, then we have a few possibilities.  First, note that when $\Lambda_Q \neq 0$, the dual group $\smash{\check{\on{SL}}_2}$ is not a torus, since in fact any $\sh{J}(\lambda)$ with $\lambda \neq 0$ is irreducible but has at least two nonzero weight spaces $F(\sh{J}(\lambda))^{\pm \lambda}$.
 
 Now suppose $r_0$ is odd; then, by \ref{primed lattice in rank 1}, we have $\Lambda_{Q, \on{SL}_2} \subset \Lambda_{Q, \on{PGL}_2}$, so by faithfulness of $F$, we also have $\Sph(\stack{G}_{\on{SL}_2}) \subset \Sph(\stack{G}_{\on{PGL}_2})$, so we have a surjection $\check{\on{SL}}_2 \to \check{\on{PGL}}_2$ of which the first, as noted, is not a torus so neither is the second.  They are thus both semisimple of rank $1$; since the map from one to the other sends roots to roots by \ref{groups with the same torus}, the computation in \ref{primed lattice in rank 1} shows that the common simple root must be $2r_0 = r_0 \check\alpha$.  Since $q_0$ has odd order, $q = q_0^4$ has the same order $r = r_0$.
 
 If $\on{ord}_2 r_0 = 1$, then we have the same inclusion of lattices, and therefore the common root is $r_0 = (r_0/2) \check\alpha$.  However, note that since $q = q_0^4 = (q_0^2)^2$, we actually have $r = r_0/2$, so again the root is $r \check\alpha$.
 
 If $\on{ord}_2 r_0 \geq 3$, then the inclusion of lattices is reversed.  Thus, $\Sph(\stack{G}_{\on{PGL}_2}) \subset \Sph(\stack{G}_{\on{SL}_2})$ and we have $\Lambda_{Q, \on{PGL}_2} = 2\Lambda_{Q, \on{SL}_2}$, so that by the computations of the lemma the common root is necessarily $r_0/2 = (r_0/4) \check\alpha$.  Since $q = q_0^4$ and $q_0$ has order a multiple of $4$, in fact $r = r_0/4$, so again the root is $r\check\alpha$.
 
 Now suppose $\on{ord}_2 r_0 = 2$.  Then by the computations of \ref{primed lattice in rank 1}, we have $\Lambda_{Q, \on{SL}_2} = \Lambda_{Q, \on{PGL}_2}$, so $\check{\on{SL}}_2 \cong \check{\on{PGL}}_2$ but we cannot extract the root from this comparison.  The quadratic form on $\Lambda_{\on{SL}_2}$ is defined by $q = q_0^4$, which has odd order $r = r_0/4$, and is therefore the fourth power of some other $q_1$ with the same order (namely, $q_1 = q^a$, where $4a + rb = 1$).  We replace $\stack{G}_{\on{PGL}_2}$ by the fiber of the sf gerbe defined by the quadratic form corresponding to $q_1$; then we are back in the case where $r_0$ is odd, and so $\check{\on{SL}}_2 = \on{PGL}_2$, so the root is the smallest element of $\Lambda_{Q, \on{SL}_2} = \Lambda_{Q, \on{PGL}_2}$, namely $r_0/2 = 2(r_0/4) = r\check\alpha$ once again.
 
 In all of the above computations, we have shown that when $\stack{G}_{\on{SL}_2}$ is inherited from $\on{PGL}_2$, then the proposition holds.  Suppose we are given only an sf $k^*$-gerbe $\stack{G}_{\on{SL}_2,n}$ with its quadratic form, determined by the number $q$.  Let $\map{\phi}{k}{l}$ be an inclusion of fields in which $q = q_0^4$ with $q_0 \in l$, and let $\stack{H}_{\on{SL}_2,n} = {}^2 \phi(\stack{G}_{\on{SL}_2})$ be the induced sf $l^*$-gerbe.  Let $X$ be a small disk and, by \ref{sf gerbes exact sequence}, let $\stack{H}_{\on{PGL}_2, n}$ be the sf gerbe corresponding to the $l^*$-valued quadratic form defined by $q_0$.  Then both gerbes are entirely determined by their quadratic form, so in particular $\stack{H}_{\on{SL}_2}$ really is determined by $\stack{H}_{\on{PGL}_2}$ by the map $\on{Gr}_{\on{SL}_2} \to \on{Gr}_{\on{PGL}_2}$.  Then by the above computations, the split $l^*$-group $\check{G}_l$ obtained from $\on{SL}_2$ from $\stack{H}_{\on{SL}_2}$ has the correct root data.  However, clearly 
we have $\Sph(\stack{H}_{\on{SL}_2}) = \Sph(\stack{G}_{\on{SL}_2}) \otimes_k l$, so $\check{G}_l \cong \check{\on{SL}}_2 \otimes_{\on{Spec} k} \on{Spec} l$ and so $\check{\on{SL}}_2$ has the correct root system as well.  This completes the proof.
\end{proof}

Going back to \ref{eq:twisted fiber functor}, we can restate our result in the form originally intended:

\begin{theorem}{thm}{absolute twisted satake}
 The fiber functor $F$ induces an equivalence
 \begin{equation*}
  \Sph(\stack{G}) \cong \stack{Z} \otimes \cat{Rep}(\check{G}_Q) \cong \cat{Rep}(\check{G}_Q)',
 \end{equation*}
 where the prime denotes modifying the commutativity constraint of the tensor product of spaces of weight $\lambda$ and $\mu$ by $Q(\lambda) Q(\mu)$. \qed
\end{theorem}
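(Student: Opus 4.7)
The plan is to assemble the various results built up throughout the chapter into an identification of $\check{G}$ (the Tannakian group of $\stack{Z}^{-1} \otimes \Sph(\stack{G})$ obtained via $F'$ in \ref{eq:twisted fiber functor}) with the group $\check{G}_Q$ defined in \ref{dual root data}. The second equivalence in the statement is a direct consequence of \ref{variant tensor category} once the first is established, so I will focus on the first.

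First I would formalize the Tannakian setup. By \ref{connected and algebraic}, $\check{G}$ is a connected reductive algebraic $k$-group. The factorization of $F'$ through $\cat{Rep}(\check{T}_Q)$ (which is forced by \ref{bilinear form triviality} together with \ref{full irreducible objects}) gives a closed immersion $\check{T}_Q \incl \check{G}$ by \cite{DM_Tannakian}*{Prop.\ 2.21(b)}, because the image of $F'$ generates $\cat{Rep}(\check{T}_Q)$ in the Tannakian sense. Next, I would use the Pl\"ucker construction to exhibit a Borel subgroup $\check{B}_Q \supset \check{T}_Q$: the assignment $\lambda \mapsto \ell^\lambda := F'(\sh{J}(\lambda))^\lambda$ defines, by \ref{convolution highest weight} combined with the tensor-functor property of $F'$, a compatible system of highest-weight lines, hence a Borel subgroup of $\check{G}$. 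The dominant weights of $\check{G}$ with respect to $\check{B}_Q$ are then exactly the elements of $\Lambda_Q$ that are dominant for $G$, which matches the dominant-weight set for $\check{G}_Q$ with respect to its standard Borel.

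With this Borel and torus in place, I would invoke \ref{levi identification} to reduce the identification $\check{G} \cong \check{G}_Q$ to checking, for every simple root $\alpha$ of $G$, the existence of a commutative square of the form \ref{eq:levi diagram} in which the upper-right group is $\check{G}_Q$ and the upper-left group is the dual of the Levi subgroup $L \subset G$ of semisimple rank one corresponding to $\alpha$. These maps of Levis are precisely what the parabolic fiber functors $F^P$ of \ref{parabolic fiber} provide: $F^P$ factors $F$ through $\Sph(\stack{L})$, so the Tannakian formalism yields $\check{L} \to \check{G}$ over $\check{T}_Q$, and compatibility with $F$ ensures the square commutes. All that remains is to show $\check{L} \cong \check{L}_Q$, where $\check{L}_Q$ is the Levi of $\check{G}_Q$ corresponding to $\alpha$. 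But this is a rank-one computation and is exactly the content of \ref{roots for semisimple rank 1}, after applying \ref{root multiples} to see $\check{L}$ has semisimple rank at most one and the reduction to $G' = [G,G]$ described just before \ref{primed lattice in rank 1} to reduce to the semisimple case. Then \ref{levi identification} produces the unique isomorphism $\check{G}_Q \to \check{G}$ extending these rank-one identifications, and pulling $F'$ back through this isomorphism gives the desired equivalence $\stack{Z}^{-1} \otimes \Sph(\stack{G}) \cong \cat{Rep}(\check{G}_Q)$, or equivalently $\Sph(\stack{G}) \cong \stack{Z} \otimes \cat{Rep}(\check{G}_Q)$.

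The last step is to identify $\stack{Z} \otimes \cat{Rep}(\check{G}_Q)$ with $\cat{Rep}(\check{G}_Q)'$. The gerbe $\stack{Z}$ is a comultiplicative $Z(\check{G}_Q)(k)$-gerbe on $\on{Spec}\C$ (trivial as a gerbe but not comultiplicatively) by the discussion after \ref{variant tensor category}; by that lemma together with \ref{twisted G-equivariant pairing}, twisting the tensor pairing of $\cat{Rep}(\check{G}_Q)$ by $\stack{Z}$ has the sole effect of multiplying the commutativity constraint on the product of weight spaces by $Q(\lambda)Q(\mu)$, which is exactly the modification denoted by the prime.

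I expect the main obstacle to be the rank-one computation in \ref{roots for semisimple rank 1}: getting the exact multiplier right requires the careful bookkeeping across the four cases of $\on{ord}_2 r_0$ in \ref{primed lattice in rank 1} together with the reduction from general $G$ of semisimple rank one to $\on{SL}_2$ or $\on{PGL}_2$, all while tracking how the quadratic form changes under the isogeny. Once that rank-one identification is in hand, the rest is a matter of assembling \ref{levi identification} and the Tannakian machinery, which at that point is essentially bookkeeping.
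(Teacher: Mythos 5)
Your proposal reconstructs exactly the argument the paper lays out in the preceding subsections of \ref{s:absolute twisted Satake root data}: the Tannakian setup via $F'$, the closed immersion of $\check{T}_Q$, the Pl\"ucker construction of $\check{B}_Q$, the reduction via \ref{levi identification} to the parabolic fiber functors and the rank-one case, and finally \ref{variant tensor category} for the second equivalence. The theorem is stated with no proof body precisely because it is the assembly of these pieces, and you have identified and sequenced all of them correctly.
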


\section{Relative twisted Satake}
\label{s:relative twisted satake}

All that has already come is sufficient preparation to easily deduce non-local versions of the results of the previous section.

\begin{theorem}{prop}{gerbe is defined over center}
 Let $\stack{G}_n$ be an sf $k^*$-gerbe on $\on{Gr}_{G,X^n}$, and let $\stack{Z}_n$ be the sf comultiplicative $\on{Fact}(\smash{\check{T}_Q}(k))_n$-gerbe on $X^n$ which has already appeared in \ref{main theorem for torus}.  Then $\stack{Z}_n$ is in fact defined over $\on{Fact}(\smash{Z(\check{G}}_Q)(k))_n$.
\end{theorem}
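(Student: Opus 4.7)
The plan is to reformulate via duality and reduce to computations already performed in the absolute case. By \ref{base change triviality} applied in the sf-comultiplicative setting, $\stack{Z}_n$ is induced from $\on{Fact}(Z(\check{G}_Q)(k))_n$ if and only if its change of groups along the surjection $\phi\colon \check{T}_Q(k) \twoheadrightarrow \check{T}_Q(k)/Z(\check{G}_Q)(k) \cong \on{Hom}(R_Q, k^*)$ is sf-comultiplicatively trivial; here $R_Q \subset \Lambda_Q = X^*(\check{T}_Q)$ is the root lattice of $\check{G}_Q$, generated by the elements $r_\alpha \check\alpha$ for coroots $\check\alpha$ of $G$ with $r_\alpha = \on{ord} Q(\check\alpha) < \infty$ (cf.\ \ref{dual root data}). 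Under the correspondence of \ref{multiplicative factorizable lattice gerbes}, this amounts to showing that the multiplicative sf $k^*$-gerbe $\stack{T}_n^\lambda = {}^2\lambda(\stack{Z}_n)$ is multiplicatively sf-trivial for each $\lambda \in R_Q$, and by multiplicativity it suffices to handle the generators $\lambda = r_\alpha \check\alpha$.

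First, I would verify that both $Q$ and its associated bilinear form $\kappa$ vanish on $R_Q$. Using \ref{invariant form and pairing} together with the vanishing $\epsilon_{\check\alpha}=1$ characterizing forms $Q \in Q(\Lambda_T, A)_\Z^W$ (from \ref{sf gerbes exact sequence}), one computes
\begin{equation*}
 \kappa(r_\alpha \check\alpha, \mu)
  = Q(\check\alpha)^{r_\alpha \langle \alpha, \mu \rangle}
  = 1
 \quad\text{for all } \mu \in \Lambda_T,
 \qquad
 Q(r_\alpha \check\alpha) = Q(\check\alpha)^{r_\alpha^2} = 1.
\end{equation*}
In particular $R_Q \subset \Lambda_Q = \ker\kappa$, so the components indexed by $R_Q$ lie inside the sub-sf-grassmannian where $\stack{T}_n$ carries a multiplicative sf structure.

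With these vanishings, I would apply \ref{multiplicative factorizable} to the restriction of $\stack{T}_n$ to the $R_Q$-indexed components: the associativity constraint is encoded by $\kappa|_{R_Q} = 1$, and the commutativity of $\stack{T}_1^\lambda \otimes \stack{T}_1^\mu$ for $\lambda, \mu \in R_Q$ is $Q(\lambda)Q(\mu) = 1$. Hence once the underlying sf gerbe is trivial there, so is its multiplicative sf structure. For the underlying triviality, \ref{vertical map injective} reduces the question to $\stack{G}_n$ itself on the preimage of the $R_Q$-components in $\on{Gr}_{G,X^n}$; since $R_Q$ lies inside the coroot lattice $\Lambda_{T,r}$ of $G$, this preimage lies in the image of $\on{Gr}_{G^{\SC},X^n}$, on which $\stack{G}_n$ is sf-trivial by the argument given in the last paragraph of the proof of \ref{form is invariant}.

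The hard part will be the careful bookkeeping of the correspondence \ref{multiplicative factorizable lattice gerbes} under change of groups: concretely, one must verify that the pointwise trivializations of the $\stack{T}_n^{r_\alpha \check\alpha}$ assemble, compatibly with the associativity and commutativity constraints supplied by \ref{multiplicative factorizable}, into an actual sf-comultiplicative trivialization of ${}^2\phi(\stack{Z}_n)$ for the sheaf of groups $\on{Fact}(\on{Hom}(R_Q, k^*))_n$ on $X^n$, rather than just a collection of fiberwise trivializations.
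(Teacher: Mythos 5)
Your reformulation in terms of duality and the computations of $Q$ and $\kappa$ vanishing on $R_Q$ are correct, and your first two paragraphs line up broadly with what needs to happen: reduce via \ref{multiplicative factorizable lattice gerbes} to showing $\stack{T}_n$ is trivial on the components indexed by the root lattice $R_Q$, and note that $Q|_{R_Q} = 1$ and $\kappa(R_Q, \Lambda_T) = 1$. But your final step contains a genuine gap.

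You claim that $\stack{G}_n$ is sf-trivial on the image of $\on{Gr}_{G^{\SC},X^n}$, citing the last paragraph of the proof of \ref{form is invariant}. This is false for a general sf gerbe, and that cited argument does not assert it unconditionally: it is carried out under the running hypothesis that $Q$ is trivial on the \emph{whole} coroot lattice $\Lambda_{T,r}$, a hypothesis you do not have. Concretely, take $G = \on{SL}_2 = G^{\SC}$ and $\stack{G}_n = \det(\on{adj})^{\log a}$ with $a$ not a root of unity; then $Q(\check\alpha) = a^4 \neq 1$, so $\stack{G}_n$ is certainly not trivial on $\on{Gr}_{G^{\SC},X^n} = \on{Gr}_{G,X^n}$. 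Passing from $R_Q$ to the ambient $\on{Gr}_{G^{\SC}}$ is too crude: the restriction $\stack{G}_n|_{\on{Gr}_{G^{\SC}}}$ is controlled by $Q|_{\Lambda_{T,r}}$ (and $\kappa|_{\Lambda_{T,r} \times \Lambda_{T,r}}$), which need not vanish, even though $Q|_{R_Q}$ and $\kappa(R_Q,\Lambda_T)$ do. Also note that \ref{vertical map injective} gives you the implication ``$F(\stack{G}_n)$ trivial $\Rightarrow$ $\stack{G}_n$ trivial'', which is the opposite of what you want here; you never need it.

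The correct closing step stays entirely on the torus side. The union of components $\on{Gr}_{T,X^n}^{\lambda_1,\dots,\lambda_n}$ with all $\lambda_i \in R_Q$ is an $\on{Gr}_{T',X^n}$ for a quotient torus $T'$ of $T$, and $\stack{T}_n$ restricted there is an sf gerbe to which \ref{torus exact sequence} applies. Its quadratic form is $Q|_{R_Q} = 1$ (your computation) and its commutative multiplicative gerbe $\stack{M}$ is trivial there because, by \ref{multiplicative gerbe fundamental group}, $\stack{M}$ descends to $\pi_1(G) = \Lambda_T/\Lambda_{T,r}$, i.e.\ is trivialized on $\Lambda_{T,r} \supset R_Q$. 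By the splitting in \ref{torus exact sequence}, $\stack{T}_n$ is therefore sf-trivial on the $R_Q$-indexed components, and the paper's proof concludes from there.
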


\begin{proof}
 Let $\stack{T}_n$ be the sf gerbe on $\on{Gr}_{T,X^n}$ corresponding to $\stack{G}_n$; by \ref{multiplicative factorizable lattice gerbes}, we must show that $\stack{T}_n$ is sf-trivial on the sf-scheme within $\on{Gr}_{T,X^n}$ consisting of components $\on{Gr}_{T,X^n}^{\lambda_1, \dots, \lambda_n} \cong X^n$ in which all the $\lambda_i$ are in the root lattice of $\check{G}_Q$.  By \ref{torus exact sequence}, it suffices to show that the quadratic form $Q$ (which is a homomorphism on $\Lambda_Q$ since its associated bilinear form is trivial) and the associated commutative multiplicative gerbe $\stack{M}$ are trivialized on the roots of $\check{G}_Q$.  This is tautologically true for $Q$ by \ref{dual root data}.  Similarly, $\stack{M}$, as a $\Lambda_T$-multiplicative gerbe, is trivialized on the coroot lattice $\Lambda_{T,r}$ (that is, it is multiplicative for $\pi_1(G) = \Lambda_T/\Lambda_{T,r}$), and therefore also on the root lattice of $\check{G}_Q$, again by definition.
\end{proof}

\begin{theorem}{thm}{ULA main theorem}
 With notation as in \ref{gerbe is defined over center}, $F_n$ induces an equivalence of rigid tensor abelian categories
 \begin{equation*}
  \Sph^\ULA(\stack{G}_n) \cong \stack{Z}_n \otimes \cat{Rep}^\ULA(\check{G}_Q^n),
 \end{equation*}
 where the representations are taken in local systems (i.e.\ ULA perverse sheaves) on $X^n$.
\end{theorem}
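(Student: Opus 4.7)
The plan is to deduce this theorem from the absolute equivalence (Theorem \ref{absolute twisted satake}) together with the factorizable/gluing technology developed in Sections \ref{s:vanishing cycles and gluing} and \ref{s:main theorem for a torus}. I take the functor to be $F_n$ itself; by \ref{fiber functor properties}\ref{en:fiber exactness} it preserves the ULA condition and lands in $\Sph^\ULA(\stack{T}_n)$, which by \ref{main theorem for torus} is already identified with $\stack{Z}_n \otimes \cat{FRep}_n^\ULA(\check{T}_Q)$. By \ref{gerbe is defined over center}, this $\stack{Z}_n$-twisting descends to one by $\on{Fact}(Z(\check{G}_Q)(k))_n$, so it is harmless to replace $\check{T}_Q$ by $\check{G}_Q$ for the twisting bookkeeping.

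The first main step is to upgrade the factorizable $\check{T}_Q$-action on $F_n(\sh{F})$ to a factorizable $\check{G}_Q$-action. By \ref{ULA nearby cycles}, any ULA $\sh{F}$ satisfies $\sh{F} = j_{!*}(\sh{F}|_U)$, where $U \subset X^n$ is the complement of all diagonals. Over any small contractible subset of $U$, \ref{ULA local equivalence} gives $\on{Gr}_{G,X^n}|_U \cong \on{Gr}_G^n \times U$, and thus an equivalence $\Sph^\ULA(\stack{G}_n)|_U \cong \Sph(\stack{G}^{\boxtimes n})$; applying the absolute equivalence \ref{absolute twisted satake} in each factor, $\sh{F}|_U$ acquires a $\check{G}_Q^n$-action in local systems, compatibly with the $\check{T}_Q^n$-action extracted by $F_n|_U$. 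Taking $j_{!*}$ back to $X^n$ converts this into a factorizable $\check{G}_Q$-action: the compatibility along a partial diagonal $X^n_p$ follows from the factorization equivalence for $\on{Gr}_{G,X^n}$, which identifies the restriction of $\sh{F}$ to $X^n_p$ with a convolution of component sheaves; by the fusion formula \ref{convolution properties}\ref{en:convolution is perverse} and absolute Satake, this matches the diagonal collapse $\on{Fact}(\check{G}_Q(k))_m \incl \on{Fact}(\check{G}_Q(k))_n$ of \ref{eq:sf mult open}. Compatibility across refinements of partitions, as required by \ref{factorizable action} and the sf structure of \ref{sf gerbe}, is inherited from the analogous compatibilities for the gerbe $\stack{G}_n$.

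Next I verify that $F_n$ (with its lifted action) is an equivalence and a tensor functor. Both source and target are sheaves of categories admitting gluing, and $F_n$ is horizontal in the sense of \ref{horizontal functor}, being a composition of a $*$-pullback and a $!$-pushforward (see \ref{horizontal functors}); the putative inverse, middle extension of a local-system-valued representation from $U$, is likewise horizontal. Then the last clause of \ref{devissage} applies: the structure of an equivalence reduces to the dense open locus $U$, where it is the $n$-fold tensor product of the absolute equivalence, and thus holds. For the tensor structure, by \ref{convolution properties}\ref{en:convolution is perverse} outer convolution $*_o$ on the grassmannian side is the middle extension of the outer tensor product on $X^{n+m}_{n,m}$, which by construction corresponds under the equivalence to the outer tensor product of factorizable $\check{G}_Q$-representations as defined in \ref{convolution of factorizable actions}, with the $\stack{Z}_{n+m}$-twisting assembled through the comultiplicative structure. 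The commutativity constraint matches after the sign adjustment of \ref{fiber functor commutative}. Rigidity on the LHS follows from \ref{convolution is rigid}, and on the RHS from the reductivity of $\check{G}_Q$, and duals are preserved because $F_n$ intertwines Verdier duality with representation-theoretic duality by \ref{absolute fiber properties}.

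The main obstacle I expect is verifying that the factorizable action of $\check{G}_Q$ constructed fiberwise is genuinely a regular action of the algebraic group $\check{G}_Q$ (in the sense of \ref{algebraic group action}), uniformly in $X^n$, and not merely a collection of actions of $\check{G}_Q(k)$ on each geometric fiber. This requires observing that the $\check{G}_Q$-action on each $F_n(\sh{F})$ is transported from the Tannakian endomorphisms of the fiber functor, which are algebraic by construction, and then invoking the gluing of algebraic-group actions from \ref{group action gluing} to propagate this regularity across the diagonals; the compatibility of this gluing with the factorizability conditions at all deeper diagonals is precisely what is enforced by the higher associativity coherences implicit in \ref{sf gerbe} and in the strong factorizable equivariance of \ref{existence of equivariance}.
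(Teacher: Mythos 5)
Your approach is essentially the same as the paper's: reduce to the locus $U \subset X^n$ of distinct coordinates, apply \ref{ULA local equivalence} and \ref{absolute twisted satake} there, and glue the $\check{G}_Q^n$-actions via \ref{group action gluing}, finally extending over $X^n$ because ULA objects are recovered by minimal extension from $U$. Two points are worth tightening. First, the appeal to \ref{devissage} is not quite the right tool for this particular theorem: its final clause reduces an equivalence of full categories to one of their ULA subcategories --- which is precisely what \ref{main theorem} will need --- but here the source and target are already the ULA categories, and the claim that the structure ``reduces to the dense open locus $U$'' misreads what \ref{devissage} establishes. For ULA objects the reduction to $U$ is simply \ref{ULA nearby cycles}: restriction to $U$ is inverted by middle extension, so the equivalence constructed over $U$ transports directly to $X^n$, with no stratified induction needed. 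Second, saying that \ref{gerbe is defined over center} makes it ``harmless to replace $\check{T}_Q$ by $\check{G}_Q$ for the twisting bookkeeping'' skips the actual mechanism: the paper proves a relative analogue of \ref{fundamental group action}, giving every object of $\Sph^\ULA(\stack{G}_n)$ a factorizable $Z(\check{G}_Q)$-action via the $\pi_1(G)$-grading by connected components of $\on{Gr}_{G,X^n}$, shows $F_n$ is equivariant for it, and only then forms $F_n' \colon \stack{Z}_n^{-1} \otimes \Sph^\ULA(\stack{G}_n) \to \cat{FRep}^\ULA(\check{T}_Q^n)$; this equivariance is what makes the gerbe transfer legitimate. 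Neither gap is fatal to the strategy, but both should be filled.
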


\begin{proof}
 We have by definition
 \begin{equation*}
  F_n \colon \Sph^\ULA(\stack{G}_n) \to \Sph^\ULA(\stack{T}_n),
 \end{equation*}
 where $\stack{T}_n$ is, as usual, the sf gerbe on $\on{Gr}_{T,X^n}$ corresponding to $\stack{G}_n$. By \ref{main theorem for torus} and \ref{pushforward is tensor}, we have $\Sph^\ULA(\stack{T}_n) \cong \stack{Z}_n \otimes \cat{FRep}^\ULA_n(\check{T}_Q)$ as tensor categories, the latter identified with factorizable representations of $\check{T}_Q$ in locally constant sheaves, so we collapse the notation and write
 \begin{equation*}
  F_n \colon \Sph^\ULA(\stack{G}_n) \to \stack{Z}_n \otimes \cat{FRep}^\ULA_n(\check{T}_Q).
 \end{equation*}

 The first step is to prove the analogue of \ref{fundamental group action} for $\Sph^\ULA(\stack{G}_n)$ and $F_n$ so that we can move $\stack{Z}_n$ from one side to the other. The proof is the same: over any open set $U \subset X^n$ having $m$ distinct coordinates, the connected components of $\on{Gr}_{G,X^n}|_U$ are indexed by $\pi_1(G)^n$ and thus objects of $\Sph^\ULA(\stack{G}_n|_U)$ are graded by $\pi_1(G)^m$, and thus degenerately by $Z(\check{G}_Q)^m$. Since this indexing of components corresponds to the inclusion of the various $\on{Gr}_{T,X^n}^{\lambda_1, \dots, \lambda_n}$, \ref{torus grassmannian properties} shows that the corresponding actions of $Z(\check{G}_Q)^m$ assemble to an action of $\on{Fact}(Z(\smash{\check{G}_Q}))_n$. As in \ref*{fundamental group action}, one sees that $F_n$ is equivariant for this action, so we may consider the functor
 \begin{equation*}
  F_n' \colon \stack{Z}_n^{-1} \otimes \Sph^\ULA(\stack{G}_n) \to \cat{FRep}^\ULA(\check{T}_Q^n).
 \end{equation*}
 By \ref{fiber functor properties}, $F_n$ is faithful and exact and, by \ref{fiber functor commutative}, a tensor functor, so these are true of $F_n'$ as well; the last one, which is not tautological, follows from \ref{twisted pairing} using the sf multiplicativity of $\stack{Z}_n$ (\ref{multiplicative factorizable lattice gerbes}). 
 
 By \ref{ULA local equivalence} and \ref{absolute twisted satake}, in a small neighborhood $D$ of any $\vect{x} \in X^n$ with distinct coordinates, $F_n'$ induces an equivalence
 \begin{equation*}
  \stack{Z}_n^{-1}|_D \otimes \Sph(\stack{G}_n|_D) \cong \cat{Rep}^\ULA(\check{G}_Q^n).
 \end{equation*}
 These equivalences are compatible with restriction to smaller neighborhoods and so, by \ref{group action gluing}, induce a $\check{G}_Q^n$-action on every $\sh{F} \in \stack{Z}_n^{-1} \otimes \Sph^\ULA(\stack{G})$.
 
 If $U \subset X^n$ is the open set with distinct coordinates, then
 \begin{equation*}
  F_n \colon \Sph^\ULA(\stack{G}_n|_U) \to \stack{Z}_n|_U \otimes \cat{FRep}^\ULA_n(\check{T}_Q)
 \end{equation*}
 factors through $\stack{Z}_n|_U \otimes \cat{Rep}^\ULA_n(\check{G}_Q^n)_U$, where
 \begin{equation*}
  \cat{Rep}^\ULA_n(\check{G}_Q^n)_U \cong \cat{FRep}^\ULA_n(\check{G}_Q)_U.  
 \end{equation*}
 Since restriction of ULA sheaves to $U$ is inverted by minimal extension by \ref{ULA nearby cycles}, in fact $F_n$ factors through globally as well.
\end{proof}

We now state the main result of this work.

\begin{theorem}{fancythm}{main theorem}
 With notation as in \ref{gerbe is defined over center}, the fiber functor $F_n$ induces an equivalence
 \begin{equation*}
  \Sph(\stack{G}_n) \cong \stack{Z}_n \otimes \cat{FRep}_n(\check{G}_Q),
 \end{equation*}
 compatibly with their respective outer convolutions.
\end{theorem}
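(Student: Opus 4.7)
The plan is to bootstrap from the ULA case \ref{ULA main theorem} to all spherical sheaves using Beilinson's nearby cycles gluing formalism, as packaged in \ref{devissage}. The key observation is that \ref{ULA main theorem} already produces, at the ULA level, a tensor equivalence
\begin{equation*}
 F_n^\ULA \colon \Sph^\ULA(\stack{G}_n) \xrightarrow{\sim}
   \stack{Z}_n \otimes \cat{Rep}^\ULA(\check{G}_Q^n),
\end{equation*}
together with a quasi-inverse; it remains to extend this equivalence over all diagonals of $X^n$ and to identify the outer convolution on the left with the outer tensor product of \ref{convolution of factorizable actions} on the right.

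First, I would verify the hypotheses of \ref{devissage} for both sides. The sheaf of categories $\Sph(\stack{G}_n)$ admits gluing by \ref{vanishing cycles gluing}\ref{en:equivariant object gluing} combined with \ref{vanishing cycles gluing}\ref{en:gerbe twisting gluing}, and $\stack{Z}_n \otimes \cat{FRep}_n(\check{G}_Q)$ admits gluing by \ref{vanishing cycles gluing}\ref{en:factorizable action gluing}; in both cases every section is generically ULA (respectively, generically lisse, which is the corresponding notion under $F_n^\ULA$). The functor $F_n$ is horizontal, hence compatible with nearby cycles by \ref{compatible with nearby cycles}, since it is built as the composition of a $!$-pushforward and a $*$-pullback (equivalently $*$-pushforward and $!$-pullback) along the diagram \ref{eq:fiber functor diagram}, using \ref{horizontal functors} and the fact from \cite{B_Hyperbolic} used in \ref{fiber functor properties}\ref{en:fiber independence} that $t_!b^* = t_*b^!$ on spherical sheaves.

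Now I would apply the last clause of \ref{devissage} to the functor $F_n$ between these two sheaves of categories, taking $P_i$ = ``ULA'' on the source and ``lisse'' (= ULA perverse) on the target. Since $F_n^\ULA$ is an equivalence and both sides admit gluing with generic ULA objects, devissage gives a quasi-inverse $G_n$ together with isomorphisms $F_n G_n \cong \id$ and $G_n F_n \cong \id$ on all of $\Sph(\stack{G}_n)$. Compatibility with outer convolution is then handled by the same gluing principle applied to a pair of horizontal functors: by \ref{convolution properties}\ref{en:convolution is perverse} (the fusion product formula) the outer convolution $*_o$ is horizontal, since $(m_{n,m})_*$ is proper pushforward and the twisted outer product is a smooth pullback; the analogous statement on the target side is immediate from \ref{convolution of factorizable actions}. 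The isomorphism $F_{n+m}(\sh{F}_1 *_o \sh{F}_2) \cong F_n(\sh{F}_1) *_o F_m(\sh{F}_2)$ of \ref{fiber functor properties}\ref{en:fiber convolution} is a horizontal natural transformation, known on ULA objects, hence extends uniquely by the second clause of \ref{devissage}.

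The main obstacle, and the place where one must be careful, is verifying that $F_n$ is genuinely horizontal as a functor between these two specific sheaves of categories — i.e.\ that the interchange with $j_!$ and $j_*$ along every Cartier divisor in $X^n$ holds. This reduces to the compatibility of $b^*$, $t_!$ (and their duals) with open/closed extensions on the factorizable grassmannian, which follows from proper base change together with the fact that the $G(\hat{\OO})_n$-equivariance structure on $\stack{G}_n$ is strongly factorizable (\ref{existence of equivariance}). Once this point is nailed down, the remaining verifications — faithfulness, t-exactness, tensor structure — all propagate from the ULA case by the three-bullet list in \ref{devissage}, and the theorem follows.
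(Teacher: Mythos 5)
Your proposal follows exactly the paper's approach — bootstrapping from \ref{ULA main theorem} via \ref{devissage}, with \ref{fiber functor properties} handling horizontality and convolution compatibility — but it skips a step in the d\'evissage sequencing. You propose to apply the last (equivalence) clause of \ref{devissage} directly to $F_n$, treating it as a functor to $\stack{Z}_n \otimes \cat{FRep}_n(\check{G}_Q)$. However, $F_n$ is only defined a priori with values in $\Sph(\stack{T}_n) \cong \stack{Z}_n \otimes \cat{FRep}_n(\check{T}_Q)$; the $\check{G}_Q$-action on its values has so far been constructed only on ULA objects (that is the content of \ref{ULA main theorem}). Before the equivalence clause applies, you must first invoke the third bullet of \ref{devissage} (``$S$ is the structure of a functor'') to produce a functor $\tilde{F}_n \colon \Sph(\stack{G}_n) \to \stack{Z}_n \otimes \cat{FRep}_n(\check{G}_Q)$ extending the ULA one, and then the second bullet to supply an isomorphism identifying the composition of $\tilde{F}_n$ with the forgetful functor to $\stack{Z}_n \otimes \cat{FRep}_n(\check{T}_Q)$ with the original $F_n$. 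The paper does precisely this (``we successively construct a functor \dots, an isomorphism \dots, and an inverse functor \dots''). With that one correction, the rest of your argument — the verification that both sides admit gluing, that $F_n$ is horizontal, and that compatibility of outer convolution follows by applying d\'evissage to the isomorphism of \ref{fiber functor properties}\ref{en:fiber convolution} — is correct and matches the paper.
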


\begin{proof}
 The combination of the definition of $F_n$ and \ref{main theorem for torus} gives a map from $\Sph(\stack{G}_n)$ to $\stack{Z}_n \otimes \cat{FRep}_n(\check{T}_Q)$, and \ref{fiber functor properties} shows that it respects convolution.  \ref{devissage} may be applied since $F_n$ is horizontal and preserves ULA sheaves, so we successively construct a functor $\Sph(\stack{G}_n) \to \stack{Z}_n \otimes \cat{FRep}(\check{G}_Q)$, an isomorphism verifying that it factors $F_n$ through the forgetful functor to $\stack{Z}_n \otimes \cat{FRep}(\check{T}_Q)$, and an inverse functor making it an equvalence.
\end{proof}

\chapter{Connections}
\label{c:connections}

The main result of this work, \ref{main theorem}, is (at present) the endpoint of a long chain of development.  Thus, we close with a few remarks on how it fits into the larger scheme of things.

\section{Relation to the result of Finkelberg--Lysenko}
\label{s:relation to the result of Finkelberg--Lysenko}
 
The main result of Finkelberg--Lysenko \cite{FL_twistedSatake} can be stated in our language as follows, assuming as usual that $G$ is a complex reductive algebraic group and $T$ a maximal torus (however, that paper considers more general algebraically closed fields of any characteristic).

\begin{theorem*}{thm}
 (Finkelberg--Lysenko, \cite{FL_twistedSatake}*{Theorem 1})
 Let $d$ be fixed as described in \textit{loc.\ cit.}\ following their Proposition 1, let $\check{h}$ be the dual Coxeter number of $G$, and choose a positive integer $N$.  Denote by $E^c$ a $2\check{h}/d$'th root of the determinant line bundle $\det(\on{adj})$ on the \emph{ordinary} affine grassmannian $\on{Gr}_G$ and let $q$ be an $N$'th root of unity.  Write $\stack{G} = (E^c)^{\log q}$; then we have
 \begin{equation*}
  \on{Sph}(\stack{G}) \cong \cat{Rep}(\check{G}),
 \end{equation*}
 where $\check{G}$ is the reductive algebraic group with the following root datum.  Fix the notation
 \begin{align*}
  \iota \colon \Lambda_T \to \Lambda^T, &&
  \iota(\lambda) &= \frac{1}{2\check{h}} \sum_\alpha \langle \alpha, \lambda \rangle \alpha \\
  (\farg, \farg) \colon \Lambda_T \otimes \Lambda_T \to \Z, &&
  (\lambda, \mu) &= \langle \iota(\lambda), \mu \rangle
 \end{align*}
 (the sum running over all roots $\alpha$).  Then we have for $\check{G}$:
 \begin{enumerate}
  \item \label{en:fl weight lattice}
  Its weight lattice consists of those $\lambda \in \Lambda_T$ such that $d \iota(\lambda) \in
  N \Lambda^T$;
  
  \item \label{en:fl roots}
  For each coroot $\check\alpha$ of $G$, the multiple $r\alpha$ is a root, where $r$ is the denominator of the fraction $d(\alpha, \alpha)/2N$ written in lowest terms.
 \end{enumerate}
 The coweights of $\check{G}$ are dual to its weights, and the coroots are the multiples $\alpha/r$.
\end{theorem*}

It should be noted that the statement of this theorem is incorrect in one particular: it lacks the modified commutativity constraint given in \ref{absolute twisted satake}.  This error can be traced to the beginning of \cite{FL_twistedSatake}*{\S4.2}, where Tannakian duality is prematurely invoked without fully computing the tensor category structure of the left-hand side.  However, the dual group itself agrees with ours:

\begin{theorem}{prop}{finkelberg dual group}
 Let $\stack{G}$ have quadratic form $Q$; then $\check{G} = \check{G}_Q$.
\end{theorem}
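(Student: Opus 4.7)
The plan is to identify both groups by computing their root data directly in terms of the quadratic form $Q$ determined by the gerbe $\stack{G} = (E^c)^{\log q}$, and then invoking Lemma \ref{levi identification} (applied to the already-isomorphic tori).

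First I would compute $Q$ explicitly. Since $(E^c)^{2\check h/d} = \det(\on{adj})$ as line bundles, Proposition \ref{log power functorial} gives
\begin{equation*}
 \stack{G}^{\otimes 2\check h/d}
   = \bigl((E^c)^{\log q}\bigr)^{\otimes 2\check h/d}
   \cong \det(\on{adj})^{\log q},
\end{equation*}
and Proposition \ref{determinant forms} applied to the adjoint representation computes the right-hand side's quadratic form as $q^{R_{\on{adj}}(\check\mu)}$, where $R_{\on{adj}}(\check\mu) = \tfrac{1}{2}\sum_\alpha \langle \alpha, \check\mu\rangle^2 = \check h (\check\mu, \check\mu)$ by the definition of $(\cdot,\cdot)$ and the dual Coxeter number. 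Taking $(2\check h/d)$-th roots (i.e.\ matching exponents of $q$, using that $\stack{G}^{\otimes m}$ has quadratic form $Q^m$), one finds $Q(\check\mu) = q^{d(\check\mu,\check\mu)/2}$ as an element of $k^*$ (where this expression is well-defined either by choosing a root of $q$ in a field extension as in \ref{change of groups}, or simply because the only quantities that matter below involve integer combinations).

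Next I would polarize: the associated bilinear form is
\begin{equation*}
 \kappa(\check\mu,\check\nu)
  = Q(\check\mu+\check\nu)Q(\check\mu)^{-1}Q(\check\nu)^{-1}
  = q^{d(\check\mu,\check\nu)}
  = q^{d\langle \iota(\check\mu),\check\nu\rangle}.
\end{equation*}
Since $q$ is a primitive $N$-th root of unity, the kernel $\Lambda_Q = \{\check\mu : \kappa(\check\mu,\check\nu)=1 \text{ for all } \check\nu \in \Lambda_T\}$ is precisely $\{\check\mu : d\iota(\check\mu) \in N\Lambda^T\}$, matching the Finkelberg--Lysenko weight lattice \ref{en:fl weight lattice}. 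Moreover, for a coroot $\check\alpha$ of $G$, the order of $Q(\check\alpha) = q^{d(\check\alpha,\check\alpha)/2} \in k^*$ is exactly $N/\gcd(N, d(\check\alpha,\check\alpha)/2)$, i.e.\ the denominator $r$ of $d(\check\alpha,\check\alpha)/2N$ in lowest terms, matching \ref{en:fl roots}. By Definition \ref{dual root data} this $r$ is precisely the integer for which $r\check\alpha$ is a root of $\check{G}_Q$ with dual coroot $\alpha/r$.

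Finally, both $\check G$ and $\check G_Q$ have identified maximal torus (with weight lattice $\Lambda_Q$) and the same simple roots and coroots, so Lemma \ref{levi identification} produces an isomorphism $\check G_Q \cong \check G$ compatible with the Borel and torus. The main obstacle is the first step: the identity $(E^c)^{\otimes 2\check h/d}=\det(\on{adj})$ only fixes $(E^c)^{\log q}$ up to $(2\check h/d)$-torsion, so one must check that the induced quadratic form is truly $q^{d(\check\mu,\check\mu)/2}$ rather than differing by a character; this is settled by Theorem \ref{sf gerbes exact sequence}, which shows that an sf gerbe is determined (up to a $\pi_1(G)$-multiplicative contribution which is invisible to the root-data computation above) by its quadratic form, together with the observation that the $(2\check h/d)$-th power of the candidate $Q$ indeed equals the quadratic form of $\det(\on{adj})^{\log q}$.
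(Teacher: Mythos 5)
Your computation agrees with the paper's: both identify $\stack{G}$ with a $\log$-power of $\det(\on{adj})$, apply \ref{determinant forms} to read off the quadratic and bilinear forms, and then check that $\ker\kappa$ and $\on{ord}\,Q(\check\alpha)$ reproduce points~(1) and~(2) of the Finkelberg--Lysenko root datum. The formulas you produce for $\kappa$ and for the order $r$ are exactly those of the paper, so the core of the argument is right.

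Two small remarks on the framing. First, the appeal to Lemma~\ref{levi identification} at the end is neither needed nor, as written, actually licensed: that lemma requires constructing the commutative Levi diagrams~\ref{eq:levi diagram}, which your proof never does (and has no need to). Once the weight lattice, roots, and coroots are seen to coincide, the identification $\check{G}=\check{G}_Q$ is immediate from the fact that a reductive group is determined up to isomorphism by its root datum; the paper simply stops at the root-datum comparison. Second, your way of pinning down $Q$ --- computing $Q^{\otimes 2\check h/d}$ and ``taking roots'' --- introduces a genuine ambiguity that the citation of \ref{sf gerbes exact sequence} does not actually remove: that theorem says an sf gerbe is determined by its quadratic form, whereas what you need is the converse direction, namely that the gerbe determines a unique $Q$, and knowing $Q^{2\check h/d}$ does not single out $Q$. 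The cleaner route, which is what the paper implicitly uses, is to apply \ref{log power functorial} directly: since $\det(\on{adj})=(E^c)^{2\check h/d}$, one has $(E^c)^{\log q}\cong\det(\on{adj})^{\log a}$ for any $a$ with $a^{2\check h/d}=q$ (and this identification is insensitive to the choice of $a$, precisely because $\det(\on{adj})$ is a $(2\check h/d)$th power), after which \ref{determinant forms} computes $Q$ on the nose rather than up to torsion. So your proposal is correct but takes a slightly bumpier path through the same terrain.
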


\begin{proof}
 Note that, since $(E^c)^{2\check{h}/d} = \det(\on{adj})$, we can write just as well that $\stack{G} = \det(\on{adj})^{d/2\check{h}N}$.  By \ref{determinant forms}, the associated quadratic form $Q$ is the exponential of a primitive $1^{d/2\check{h}N}$ by the integer-valued form
 \begin{equation}
  \label{eq:fl quadratic form}
  \frac{1}{2} \sum_\alpha \langle \alpha, \lambda \rangle^2
   = \check{h} \langle \iota(\lambda), \lambda \rangle
   = \check{h} (\lambda, \lambda),
 \end{equation}
 the sum running over all roots $\alpha$.  Its associated bilinear form $\kappa$ is likewise the exponential of this primitive root of unity by the integer-valued form
 \begin{equation}
  \label{eq:fl bilinear form}
  \sum_\alpha \langle \alpha, \lambda \rangle \langle \alpha, \mu \rangle
   = 2\check{h} \langle \iota(\lambda), \mu \rangle
   = 2\check{h} (\lambda, \mu).
 \end{equation}
 Thus, $\lambda \in \on{ker}(\kappa) = \Lambda^{\check{T}_Q}$ if and only if $2\check{h}N/d$ divides the value in \ref{eq:fl bilinear form} for all $\mu \in \Lambda_T$, or if $d \iota(\lambda) \in N \Lambda^T$, which is the condition of point \ref{en:fl weight lattice}.  In addition, we choose the roots in this lattice to be multiples $r\check\alpha$ of the coroots $\check\alpha$, where $r = \on{ord} Q(\check\alpha)$. According to \ref{eq:fl quadratic form}, $r$ is the least integer such that $2\check{h} r(\check\alpha, \check\alpha)$ is a multiple of $2\check{h}N/d$; i.e.\ the denominator of $d(\check\alpha, \check\alpha)/2N$, as given in point \ref{en:fl roots}.
\end{proof}

\section{Relation to Lusztig's quantum groups}
\label{s:relation to Lusztig's quantum groups}

In \cite{L_Quantum}*{\S2.2.5}, Lusztig transforms a root datum $(X_*, X^*, \Phi_*, \Phi^*)$ by means of a specified positive integer $l$ and a choice of associated ``Cartan datum'', defined as follows:

\begin{theorem*}{defn}
 (Lusztig \cite{L_Quantum}*{\S1.1.1})
 A \emph{Cartan datum} is a finite set $I$ together with a symmetric bilinear pairing
 \begin{equation*}
  \farg \cdot \farg \colon \Z[I] \otimes \Z[I] \to \Z
 \end{equation*}
 such that $i \cdot i$ is even and positive for all $i \in I$ and $2(i \cdot j)/(j \cdot j)$ is a nonpositive integer for all distinct $i,j \in I$.  A \emph{root datum of type I} is a quadruple $(X_*, X^*, \phi_*, \phi^*)$ consisting of a pair of dual finitely-generated free abelian groups $X_*, X^*$ and embeddings
 \begin{align*}
  \phi_* \colon I \to X_* &&
  \phi^* \colon I \to X^*
 \end{align*}
 such that we have $\langle \phi^*(i), \phi_*(j)\rangle = 2(i \cdot j)/(j \cdot j)$ for all $i,j$.
\end{theorem*}

However, since we begin with a root datum $(X_*, X^*, \Phi_*, \Phi^*)$ defined in the usual way, we prefer to consider this definition differently:

\begin{theorem}{defn}{cartan datum variant}
 Let $D = (X_*, X^*, \Phi_*, \Phi^*)$ be a root datum (that is, a pair of dual free abelian groups of finite type together with roots and coroots in them satisfying the usual axioms).  To specify a \emph{Cartan datum attached to $D$} is to give:
 \begin{enumerate}
  \item
  A base for $D$; that is, a set of positive roots and their corresponding coroots.  We denote by $I$ the abstract set of positive roots, $\phi_*$, $\phi^*$ its inclusions into $X_*$ and $X^*$.
  
  \item
  A $W$-invariant integer-valued quadratic form $f$ on $\Z[I]$ (which is identified via the $\phi$'s with  both the root and coroot lattices) taking positive values on $I$.
 \end{enumerate}
\end{theorem}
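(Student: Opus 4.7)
The statement in question is a definition rather than a theorem, but its substance is the implicit claim that the data it prescribes is equivalent to Lusztig's original notion of a Cartan datum attached to a root datum of type $I$; the ``proof'' is really the verification of this equivalence, so that is what I would address.

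The plan is to construct mutually inverse passages between the two formulations. Starting from the variant data $(D,\text{base},f)$, a base determines an abstract index set $I$ (the simple roots) together with the embeddings $\phi_*\colon I\to \Phi_*\subset X_*$ and $\phi^*\colon I\to \Phi^*\subset X^*$. Polarizing the quadratic form $f$ gives a symmetric $\Z$-bilinear pairing $i\cdot j := f(i+j)-f(i)-f(j)$ on $\Z[I]$, with $i\cdot i = 2f(i)$, which by the positivity hypothesis on simple roots is both even and positive. Conversely, from Lusztig's $(I,\cdot)$ one recovers a quadratic form by $f(v)=\tfrac12(v\cdot v)$, integer-valued since $i\cdot i$ is even and off-diagonal contributions $n_i n_j (i\cdot j)$ are already integers.

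The one nontrivial compatibility to check is the Cartan-matrix identity
\[
\langle \phi^*(j),\phi_*(i)\rangle = \frac{2(i\cdot j)}{j\cdot j}.
\]
In the direction from the variant to Lusztig's data, this would be extracted from the $W$-invariance of $f$: the simple reflection $s_j$ acts on $\Z[I]$ (via the root datum) by $i\mapsto i - \langle \phi^*(j),\phi_*(i)\rangle j$, and invariance $f(s_j i)=f(i)$, expanded through the bilinear form, forces the displayed relation using $\langle \phi^*(j),\phi_*(j)\rangle=2$. In the reverse direction, the Cartan-matrix identity is already part of the hypothesis on Lusztig's root datum of type $I$, and a direct computation with $s_j$ then shows that the induced $f$ is $W$-invariant.

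The main obstacle, and the point meriting care rather than calculation, is matching the combinatorial inputs on the two sides: a base for $D$ in the variant formulation supplies exactly $(I,\phi_*,\phi^*)$, and one must argue that no auxiliary rigidification is implicitly being chosen (for instance, that different choices of base yield equivalent Cartan data via the $W$-action, so the construction descends to the level of $D$ itself). Once this bookkeeping is in hand, the equivalence of the two definitions is formal, and the variant definition is then free to be used in place of Lusztig's in the sequel.
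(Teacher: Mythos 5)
Your approach is essentially the paper's: polarize $f$ to obtain the pairing $i \cdot j = b(i,j)$, observe $i \cdot i = 2f(i)$ is even and positive, and derive the Cartan-matrix identity from $W$-invariance applied to a simple reflection. However, there is a genuine gap in the step where you say that the invariance $f(s_j i) = f(i)$ ``forces the displayed relation.'' Expanding that equality gives
\begin{equation*}
 \langle \phi^*(j), \phi_*(i) \rangle\, b(i,j)
   = \langle \phi^*(j), \phi_*(i) \rangle^2 f(j),
\end{equation*}
from which you may cancel $\langle \phi^*(j), \phi_*(i) \rangle$ and conclude --- but \emph{only} when $\langle \phi^*(j), \phi_*(i) \rangle \neq 0$. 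In the orthogonal case $\langle \phi^*(j), \phi_*(i) \rangle = 0$ (which already occurs for $\on{SL}_2 \times \on{SL}_2$), $s_j$ fixes $i$, so the invariance of $f$ at the point $i$ is vacuous, yet the desired identity still asserts $b(i,j) = 0$, and this must be established separately. The paper handles this by appealing to its lemma \plainref{invariant form and pairing}, which shows that for a $W$-invariant form one has $\kappa(\check\alpha, \lambda) = \epsilon_{\check\alpha}(\lambda)\, Q(\check\alpha)^{\langle \alpha, \lambda \rangle}$ with $\epsilon_{\check\alpha}$ taking values in $2$-torsion; over the torsion-free group $\Z$ this defect vanishes and $b(i,j) = 0$ follows. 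A more elementary fix in your language is to apply $W$-invariance at the point $i + j$ rather than at $i$: when $\langle \phi^*(j), \phi_*(i) \rangle = 0$ one has $s_j(i + j) = i - j$, and $f(i + j) = f(i - j)$ forces $b(i,j) = -b(i,j)$, hence $b(i,j) = 0$. Either way, the orthogonal case needs an explicit argument that your proposal omits. The concern you raise about whether the construction is independent of the choice of base is minor bookkeeping and is not addressed by the paper either.
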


It is not hard to see that these are actually the same definition:

\begin{theorem}{lem}{equivalence of cartan data}
 \ref{cartan datum variant} agrees with Lusztig's definition.
\end{theorem}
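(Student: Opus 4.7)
The plan is to exhibit mutually inverse translations between the two packagings of data, reducing the proof to a single computation with the polarization identity. Both definitions record the same underlying data: a finite set $I$, a dual pair of free abelian groups $X_*, X^*$ with embeddings $\phi_* \colon I \to X_*$ and $\phi^* \colon I \to X^*$, plus a symmetric integer-valued structure on $\Z[I]$. They differ in whether that structure is stored as a quadratic form $f$ or as a symmetric bilinear pairing $\cdot$, and in whether the root system is encoded explicitly (as $\Phi^*, \Phi_*$) or implicitly (as the $W$-orbit of $\phi^*(I), \phi_*(I)$).

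Going from the author's formulation to Lusztig's: the chosen base determines the set $I$ of simple (positive) roots with $\phi^*(i) = \alpha_i$, $\phi_*(i) = \check\alpha_i$; define the bilinear pairing by $i \cdot j = f(i+j) - f(i) - f(j)$. Then $i \cdot i = 2 f(i)$, which is even and positive since $f$ takes positive integer values on $I$. To verify Lusztig's compatibility axiom, apply the polarization identity to the $W$-invariance relation $f(s_j \lambda) = f(\lambda)$ using $s_j \lambda = \lambda - \langle \phi^*(\lambda), \phi_*(j) \rangle \phi^*(j)$ (transported through $\phi^*$): expansion yields $(\lambda \cdot j) = \langle \phi^*(\lambda), \phi_*(j) \rangle \cdot f(j)$, and specializing $\lambda = i$ gives $2(i \cdot j)/(j \cdot j) = \langle \phi^*(i), \phi_*(j) \rangle$ as required. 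Nonpositivity of this integer for $i \neq j$ is immediate from the standard fact that pairings of distinct simple roots with simple coroots are $\leq 0$.

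Conversely, starting from Lusztig's Cartan datum plus root datum of type $I$: let $W$ be the group generated by the reflections $s_i(\lambda) = \lambda - \langle \lambda, \phi_*(i) \rangle \phi^*(i)$ on $X^*$ (and the dual reflections on $X_*$); Lusztig's compatibility condition is exactly what makes the simple coroot/root pairs $(\phi^*(i), \phi_*(i))$ compatible, and one sets $\Phi^* = W \cdot \phi^*(I)$, $\Phi_* = W \cdot \phi_*(I)$ to reconstruct the root datum $D$. Define $f(i) = \tfrac{1}{2}(i \cdot i) \in \Z_{>0}$ and extend by the quadratic rule $f(\lambda) = \tfrac{1}{2}(\lambda \cdot \lambda)$; this is integer valued because the off-diagonal pairings $i \cdot j$ are integers. $W$-invariance of $f$ follows by running the polarization identity of the previous paragraph in reverse, using Lusztig's axiom to match the cross term in $f(\lambda - n \phi^*(j))$ with $n = \langle \phi^*(\lambda), \phi_*(j) \rangle$ against the identity $B(\lambda, j) = n \cdot f(j)$.

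These two passages are manifestly inverse: each fixes the data $(I, X_*, X^*, \phi_*, \phi^*)$ and the identity $i \cdot j \leftrightarrow f(i+j) - f(i) - f(j)$ with $f(i) = \tfrac{1}{2}(i \cdot i)$ is a tautological repackaging. The only nonroutine step is the single polarization calculation relating the $W$-invariance axiom to Lusztig's compatibility axiom; I do not expect further obstacles beyond bookkeeping the root versus coroot conventions when transferring the quadratic form between the root and coroot lattices (which agree under the identification with $\Z[I]$).
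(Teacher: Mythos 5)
Your approach matches the paper's: define $i\cdot j$ by polarizing $f$, observe $i\cdot i = 2f(i)$ is even and positive, and extract Lusztig's compatibility axiom by expanding $f(s_j\lambda) = f(\lambda)$; you also spell out the converse passage, which the paper leaves implicit. The one genuine gap is in the forward direction: the polarization expansion yields $n\cdot b(\lambda, j) = n^2 f(j)$ with $n = \langle \phi^*(\lambda), \phi_*(j)\rangle$, and your conclusion $b(\lambda,j) = n f(j)$ (hence $2(i\cdot j)/(j\cdot j) = \langle \phi^*(i), \phi_*(j)\rangle$ after taking $\lambda=i$) requires dividing by $n$, which is invalid exactly when $n = 0$. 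For distinct simple roots whose Cartan integer vanishes --- e.g.\ roots lying in different irreducible components, or non-adjacent nodes of a Dynkin diagram --- the expansion is vacuous, yet you still owe the claim $i\cdot j = 0$: without it neither the compatibility identity nor the integrality/nonpositivity of $2(i\cdot j)/(j\cdot j)$ is established. The paper explicitly flags this case and closes it by invoking \ref{invariant form and pairing}: with $A = \Z$ there is no $2$-torsion, so the correction $\epsilon_{\check\alpha}$ there vanishes and one gets $b(\check\alpha, \lambda) = \langle \alpha, \lambda\rangle f(\check\alpha)$ unconditionally, which forces $b(i,j) = 0$ whenever the pairing does. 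You should either cite that lemma or give a substitute argument (for instance, a Schur-type argument that a $W$-invariant bilinear form cannot couple distinct irreducible summands of the reflection representation). The rest of the proposal, including the reconstruction of $W$, the root system, and the $W$-invariance of $f$ in the converse direction (which only requires the trivial $n = 0$ case, so no gap arises there), is sound.
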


\begin{proof}
 Let $b$ be the bilinear form associated with $f$ (that is, $f(i + j) = f(i) + b(i,j) + f(j)$) and define $i \cdot j = b(i,j)$.  Then $i \cdot i = b(i,i) = 2 f(i)$ is even (\textit{a fortiori}) and positive (\textit{a priori}).  That $2 (i \cdot j)/(j \cdot j)$ is nonpositive will follow from
 \begin{equation*}
  f(j) \langle \phi_*(i), \phi^*(j) \rangle = b(i,j),
 \end{equation*}
 since the pairing between two simple roots is always nonpositive.  In fact, this is just a restatement of $W$-invariance of $f$:
 \begin{align*}
  f(j)
   &= f(s_i(j))
   = f\bigl(j - \langle \phi^*(i), \phi_*(j) \rangle i\bigr) \\
   &= f(j) - \langle \phi^*(i), \phi_*(j) \rangle b(j,i)
          + \langle \phi^*(i), \phi_*(j) \rangle^2 f(i),
 \end{align*}
 so trivial algebra gives the desired equation when $\langle \phi^*(i), \phi_*(j) \rangle \neq 0$.  Otherwise, it follows from \ref{invariant form and pairing}, since $\Z$ is torsion-free.
\end{proof}

Note that in \ref{cartan datum variant}, we have $f(i) = (i \cdot i)/2$.  It is apparent
that, root datum in hand, the only substantial choice in the definition of an associated Cartan datum is that of the quadratic form $f$; we will continue to refer to the whole Cartan datum just using the letter $f$.  Following Lusztig, we make the following definition of another root datum:

\begin{theorem*}{defn}
 Fix a root datum $D$ and associated Cartan datum $f$ as in \ref{cartan datum variant}.   Let $l$ be any positive integer and for each $i$, let $l_i = l/\!\on{gcd}(l, f(i))$.  Then the dual root datum constructed from $f$ and $l$ is $D_{f,l} = (Y_*, Y^*, \Psi_*, \Psi^*)$, where
 \begin{enumerate}
  \item \label{en:lusztig weight lattice}
  $Y^* = \{\lambda \in X_* \mid \forall i \in I, \langle \phi^*(i), \lambda \rangle \in l_i \Z \}$.
  
  \item \label{en:lusztig roots}
  $\psi^*(i) = l_i \phi_*(i)$.
 \end{enumerate}
 Likewise, $Y_*$ is the dual of $Y_*$ and $\psi_*(i) = \phi^*(i)/l_i$.
\end{theorem*}

We will show that this construction almost coincides with a special case of that given in \ref{dual root data}.  To this end, let $q$ be a primitive $l$'th root of unity in $k$ (an algebraically closed field of characteristic zero, which can in fact be any ring for this construction) and define a $k^*$-valued quadratic form on $\Z[I]$ (equivalently, the coroot lattice in $X_*$) \begin{equation*}
 Q(\lambda) = q^{f(\lambda)}.
\end{equation*}
Since $Q$ is not defined on all of $X_*$ it is not possible to apply our construction literally. However, we can perform one much like it, as follows.  We extend the bilinear form $b$ defined by $f$ to $\Z[I] \otimes X_*$ by taking
\begin{equation*}
 b(i,\lambda) = f(i) \langle \phi^*(i), \lambda \rangle
\end{equation*}
and extending by linearity.  Note that $b(i,\phi_*(j)) = f(i) \langle \phi^*(i), \phi_*(j) \rangle = b(i,j)$ as previously defined/proven in \ref{equivalence of cartan data}.  If we take
\begin{equation*}
 \kappa(i, \lambda) = q^{b(i,\lambda)},
\end{equation*}
a $k^*$-valued bilinear form, then its restriction to $\Z[I] \otimes \phi_*(\Z[I])$ is the bilinear form defined by $Q$.  Furthermore, its right kernel in $X_*$ is the set of all $\lambda$ such that, for each $i \in I$, we have
\begin{equation*}
 f(i) \langle \phi^*(i), \lambda \rangle = b(i, \lambda) \in l\Z,
\end{equation*}
or equivalently, $\langle \phi^*(i), \lambda \rangle \in l_i \Z$, as required by \ref{en:lusztig weight lattice}.  Likewise, since $l_i$ is the least positive integer such that $l_i f(i) \in l\Z$, it is the order of $Q(i)$ and therefore taking the roots in this subspace to be $l_i \phi_*(i)$ agrees with \ref{en:lusztig roots}.  In summary:

\begin{theorem}{prop}{root data agreement}
 Let $G$ be a reductive group with root datum $D$, $\Lambda_T$ its coweight lattice and $\Lambda_{T,r}$ the coroot lattice. For any $k^*$-valued bilinear form $\kappa$ on $\Lambda_{T,r} \times \Lambda_T$ and  quadratic form $Q$ defining its restriction to $\Lambda_{T,r} \times \Lambda_{T,r}$, each coming from $W$-invariant forms valued in $\Z$ as in \ref{integer-valued forms}, we define dual root data $D_{Q, \kappa}$ as in \ref{dual root data} but taking $\Lambda^{\check{T}_Q}$ to be the right kernel of $\kappa$.  Then if $Q = q^f$ with $f$ an integer-valued quadratic form and $q$ an $l$'th root of unity, we have $D_{Q, \kappa} = D_{f, l}$. \qed
\end{theorem}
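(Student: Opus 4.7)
The proof is essentially immediate from the discussion preceding the proposition, so my plan is to spell out the two verifications explicitly and cleanly. The statement is a comparison of two root data constructed on the same ambient lattices $X_* = \Lambda_T$, $X^* = \Lambda^T$, so it suffices to check that (a) the weight lattice $\Lambda^{\check{T}_Q}$ equals Lusztig's $Y^*$, and (b) for each simple coroot $\check\alpha = \phi_*(i)$, the integer $r$ appearing in \ref{dual root data} equals Lusztig's $l_i$. Once these two equalities hold, both constructions produce roots $r\check\alpha = l_i\phi_*(i)$ in the common lattice, coroots $\alpha/r = \phi^*(i)/l_i$ in its dual, and the rest of the root datum is determined.

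For (a), the plan is to unpack the definition of $\kappa$ on $\Z[I] \otimes \Lambda_T$ via
\begin{equation*}
 \kappa(i,\lambda) = q^{b(i,\lambda)}, \qquad b(i,\lambda) = f(i)\langle \phi^*(i),\lambda\rangle,
\end{equation*}
and observe that $\lambda$ lies in the right kernel of $\kappa$ iff $l \mid f(i)\langle \phi^*(i),\lambda\rangle$ for every $i \in I$. By the definition $l_i = l/\gcd(l,f(i))$, this condition is equivalent to $\langle \phi^*(i),\lambda\rangle \in l_i\Z$ for all $i$, which is precisely Lusztig's condition defining $Y^*$. Extending by linearity from the simple roots to all of $\Z[I]$ is automatic since a kernel condition need only be checked on generators.

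For (b), the plan is to note that $Q(i) = q^{f(i)}$ and that the order of $q^{f(i)}$ in $k^*$, with $q$ a primitive $l$th root of unity, is exactly $l/\gcd(l,f(i)) = l_i$. Therefore $r = \mathrm{ord}\, Q(\check\alpha) = l_i$ and the root prescribed by \ref{dual root data}, namely $r\check\alpha$, coincides with $\psi^*(i) = l_i\phi_*(i)$; the coroot side follows by duality. I would then remark briefly that the hypothesis that $Q$ and $\kappa$ arise from $W$-invariant $\Z$-valued forms is exactly what guarantees the Cartan datum hypotheses (via \ref{equivalence of cartan data}) so that both constructions are well-defined root data on the same ambient lattices.

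There is no real obstacle here: both constructions have been translated into the same arithmetic condition on divisibility modulo $l$, and the proposition is a direct identification. The only mildly delicate point, which I would flag but not belabor, is that \ref{dual root data} is phrased intrinsically in terms of a $k^*$-valued form $Q$ on the coroot lattice with its associated bilinear form $\kappa$, whereas Lusztig's construction requires choosing a base $I$ and an integral quadratic form $f$; the bridge between the two is the observation, made in the text just before the statement, that $\kappa$ on $\Z[I]\otimes X_*$ is the unique $k^*$-linear extension of the bilinear form of $Q$ with values determined by $f$ via $b(i,\lambda) = f(i)\langle\phi^*(i),\lambda\rangle$, and this extension is forced by \ref{invariant form and pairing} with $\epsilon_{\check\alpha} = 1$.
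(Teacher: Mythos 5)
Your proof is correct and takes essentially the same approach as the paper, which presents this exact argument in the paragraphs immediately preceding the proposition (the $\qed$ there signals that the preceding discussion is the proof). Your explicit appeal to \ref{invariant form and pairing} with trivial $\epsilon_{\check\alpha}$ to pin down $\kappa(i,\lambda) = q^{f(i)\langle\phi^*(i),\lambda\rangle}$ is a slightly more careful justification of why the proposition's ``for any $\kappa$'' hypothesis reduces to the specific formula the paper writes down, but it does not change the substance of the argument.
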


The generality of this construction is incomparable to that of \ref{dual root data}, though if $Q$ extends to all of $\Lambda$ and the resulting bilinear form has the same kernel as $\kappa$, the two give the same result.  We do not know of a geometric interpretation of such quadratic forms, though \ref{integer-valued forms} is relevant for bilinear forms defined on $\Lambda_{T,r} \times \Lambda_T$.  

In addition, although it is possible (as in the above proposition) to weaken the hypotheses of \ref{dual root data} without losing the ability to perform the construction of the dual group, the full generality of a quadratic form defined on the whole of $\Lambda$ is necessary in order to obtain the modified commutativity constraint exhibited in \ref{absolute twisted satake}. This modification thus does not occur in Lusztig's work.

\section{Alteration of the commutativity constraint}
\label{s:alteration of the commutativity constraint}

On the subject of the commutativity constraint, it should be noted that we have introduced two modifications: one in \ref{absolute twisted satake} and depending on the gerbe $\stack{G}_n$, and the other in \ref{fiber functor commutative} and depending only on $\on{Gr}_{G,X^n}$.  In fact, by judicious twisting, one can reduce the latter to the former, thus tuning a jarring note in both our exposition (see \ref{s:fiber functor}) and that of \cite{MV_Satake}.  The appropriate gerbe is in fact quite significant:

\begin{theorem}{lem}{half-forms gerbe}
 Let $\on{adj}$ be the adjoint representation of $G$, whose weights are exactly the roots of $G$ (and zero).  Then $\det(\on{adj})^{\log(-1)}$ has quadratic form
 \begin{equation*}
  Q(\lambda) = (-1)^{\langle 2\rho, \lambda \rangle}
 \end{equation*}
 and trivial bilinear form.  Furthermore, this gerbe is trivial (as a gerbe, but not as an sf gerbe).
\end{theorem}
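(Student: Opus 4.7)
The plan is to apply \ref{determinant forms} with $V = \on{adj}$, exploiting that the weights of the adjoint representation are the roots of $G$ (each pair $\pm\alpha$) together with $\dim T$ copies of zero, and then to establish gerbe triviality by constructing a square root of $\det(\on{adj})_n$.

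First I would verify that the bilinear form $K$ of \ref{determinant forms} has values in $2\Z$, confirming that $\det(\on{adj})_n$ is indeed an sf line bundle. Zero weights contribute nothing, and the roots pair up as $\pm\alpha$, giving $K(\check\mu,\check\nu) = 2\sum_{\alpha>0}\langle\alpha,\check\mu\rangle\langle\alpha,\check\nu\rangle$, so $\kappa(\check\mu,\check\nu) = (-1)^K = 1$. For the quadratic form, $R(\check\mu) = \sum_{\alpha>0}\langle\alpha,\check\mu\rangle^2$, and the elementary congruence $n^2 \equiv n \pmod 2$ yields $R(\check\mu) \equiv \sum_{\alpha>0}\langle\alpha,\check\mu\rangle = \langle 2\rho,\check\mu\rangle \pmod 2$, whence $Q(\check\mu) = (-1)^{\langle 2\rho,\check\mu\rangle}$. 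This is a homomorphism $\Lambda_T \to \{\pm 1\}$, as is consistent with $\kappa = 1$.

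For triviality as a (non-sf) gerbe, note that $\zeta = \frac{1}{2}\sum_\alpha\alpha = 0$ since roots pair up, so by the explicit description in \ref{determinant forms} the multiplicative part of the sf gerbe is trivial and $\det(\on{adj}_T)_1|_{\on{Gr}_T^{\check\mu}} = \sh{T}_X^{R(\check\mu)}$. A theta characteristic $\sh{M}^2 \cong \sh{T}_X$ (which exists on any smooth curve, since $\deg\omega_X$ is even and the continuous part of $\on{Pic}(X)$ is $2$-divisible) gives a square root over the torus grassmannian; one extends this to a square root $\sh{N}^2 \cong \det(\on{adj})_n$ on all of $\on{Gr}_{G,X^n}$ using the $G$-invariant symmetric bilinear form on $\on{adj}$, which endows $\on{adj}_\sh{T}$ with an orthogonal structure and produces a Pfaffian/spin square root. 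Then \ref{log power functorial} gives $\det(\on{adj})_n^{\log(-1)} \cong (\sh{N}^2)^{\log(-1)} \cong \sh{N}^{\log 1}$, which is the trivial gerbe. These trivializations cannot assemble into an sf trivialization (at least when $\rho \notin \Lambda^T$), since by \ref{sf gerbes exact sequence} this would force $Q = 1$, contradicting the computation.

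The main obstacle will be producing the square root of $\det(\on{adj})_n$ on the \emph{full} relative grassmannian: \ref{determinant forms} describes the line bundle explicitly only on the torus components, so one needs either a direct Pfaffian/spin construction from the orthogonal structure on $\on{adj}$, or a descent argument along $\on{Gr}_{T,X^n} \to \on{Gr}_{G,X^n}$ invoking the faithfulness of \ref{vertical map injective}. By contrast, the identities for $Q$ and $\kappa$ are immediate consequences of \ref{determinant forms} and the parity congruence $n^2 \equiv n \pmod 2$.
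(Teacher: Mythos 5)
Your computation of $Q$ and $\kappa$ is correct and tracks the paper's: both apply \ref{determinant forms} to $V = \on{adj}$ and use the parity congruence $n^2 \equiv n \pmod 2$. Your argument that $\kappa$ is trivial --- explicit verification that $K$ is even because the roots pair up as $\pm\alpha$ --- is actually more robust than the paper's, which asserts that a $2$-torsion-valued quadratic form ``automatically'' has trivial bilinear form. That assertion is false in general (take $Q(m,n) = (-1)^{mn}$ on $\Z^2$), though of course irrelevant here once one knows $K$ takes values in $2\Z$, which is forced anyway by \ref{determinant forms} for $\det(\on{adj})_n$ to be an sf line bundle.

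On triviality of the gerbe: the paper's proof only establishes that $\det(\on{adj})^{\log(-1)}$ is trivial on $\on{Gr}_{T,X^n}$, via a theta-characteristic, exactly as in your argument (the observation $\zeta = 0$, which you make explicit, is what removes the multiplicative $\omega_X^{\langle\zeta, \check\mu\rangle}$ factor from \ref{determinant forms}). Your observation that something more would be needed to carry the trivialization from $\on{Gr}_{T,X^n}$ up to $\on{Gr}_{G,X^n}$ is a real one, and is not addressed in the paper's proof as written. Of the two options you sketch, the descent argument invoking \ref{vertical map injective} will not apply: that lemma transports sf trivializations of $F(\stack{G}_n)$ to sf trivializations of $\stack{G}_n$, whereas the trivialization you produce on $\on{Gr}_{T,X^n}$ is by design a gerbe trivialization only, not an sf one (the lemma itself asserts the gerbe is \emph{not} sf-trivial, so an sf trivialization on the torus side cannot exist). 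Thus only the first option --- a direct Pfaffian/spin-type construction of a square root of $\det(\on{adj})_n$ on the full grassmannian, using the $G$-invariant form on $\on{adj}$ --- is viable, and it would require an argument not supplied either in your proposal or in the paper.
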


\begin{proof}
 Since that quadratic form is valued in $2$-torsion elements, the bilinear form is automatically trivial, so it suffices to establish just the equation.  Then we have, denoting roots by $\alpha$,
 \begin{equation*}
  \log_{-1} Q(\lambda)
   = \frac{1}{2} \sum_\alpha \langle \alpha, \lambda \rangle^2
   = \sum_{\alpha > 0} \langle \alpha, \lambda \rangle^2.
 \end{equation*}
 Since by definition $2\rho = \sum_{\alpha > 0} \alpha$, we see that
 \begin{equation*}
  \langle 2\rho, \lambda \rangle^2 \equiv \sum_{\alpha > 0} \langle \alpha, \lambda \rangle^2
   \mod 2,
 \end{equation*}
 and of course we have $\langle 2\rho, \lambda \rangle^2 \equiv \langle 2\rho, \lambda \rangle \mod 2$.  To show that $\det(\on{adj})^{\log(-1)}$ is trivial on $\on{Gr}_{T,X^n}$, it suffices by \ref{determinant forms} to show that $\omega_X^{\log(-1)}$ is trivial on $X$.  Choose a theta-characteristic on $X$; i.e.\ a square root $\omega_X^{1/2}$ of the canonical sheaf.  Then the total space of $\omega_X^{1/2}$ is a two-fold covering of that of $\omega$, and its sheaf of sections is thus \textit{a fortiori} a $\sh{L}_{-1}$-twisted local system on $\omega_X$, trivializing $\omega_X^{\log(-1)}$ according to \ref{line bundle order}.
\end{proof}

We note that the dual group $\check{G}_Q$ obtained from the above quadratic form is isomorphic to ${}^L G$, since $\langle 2\rho, \alpha \rangle = 2$ for any coroot $\alpha$.  Therefore, if we let $\stack{G} = \det(\on{adj})^{\log(-1)}$ on $\on{Gr}_G$, we can replace the usual statement of the geometric Satake equivalence with the one that $\Sph(\stack{G}) \cong \cat{Rep}({}^L G)$, where the left-hand side has its \emph{natural} commutativity constraint, i.e.\ not modified, and is equivalent as a monoidal category to $\Sph_G$. This suggests that the Satake equivalence naturally concerns $\stack{G}$-twisted perverse sheaves even in the ``untwisted'' case.  From the proof of the lemma, we also see that this twisting is precisely the ``critical twisting'' considered everywhere in \cite{BD_quantization}.

\section{Relation to the quantum Langlands correspondence}
\label{s:relation to the quantum Langlands correspondence}

It is possible to relate sf gerbes for $G$ to those for ${}^L G$, the Langlands dual, under some hypotheses.  Suppose that we are given a nondegenerate $\C$-valued (additive) $W$-invariant bilinear form $\map{b}{\Lambda_T \times \Lambda_T}{\C}$ (note that this automatically comes from a $\Z$-valued form as in \ref{integer-valued forms}, since $\C$ is torsion-free); this induces a $\C^*$-valued (multiplicative) form $\kappa(\lambda, \mu) = \exp(2\pi i\ b(\lambda, \mu))$ and quadratic form $Q(\lambda) = \exp(\pi i\; b(\lambda, \lambda))$. Then $b$ is equivalent to an isomorphism $\map{f}{\C \otimes \Lambda_T}{\C \otimes \Lambda^T}$; $f^{-1}$ is equivalent to a nondegenerate bilinear form ${}^L b$ on $\C \otimes \Lambda^T = \C \otimes \Lambda_{{}^L T}$.  Let ${}^L Q(\lambda) = \exp(\pi i\; {}^L b(\lambda,\lambda))$.

\begin{theorem}{prop}{dual group isomorphism}
 The dual group of \ref{dual root data} associated with the pair $(G, Q)$ is isomorphic to that of the pair $({}^L G, {}^L Q)$, where ${}^L G$ is, as usual, the Langlands dual of $G$.
\end{theorem}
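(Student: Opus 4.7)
The plan is to induce the isomorphism from the $\C$-linear map $f$ itself, by verifying that it restricts to an isomorphism of lattices $\Lambda_Q \to (\Lambda^T)_{{}^L Q}$ carrying the roots and coroots of $\check{G}_Q$ bijectively to those of $({}^L G)^\vee_{{}^L Q}$; the classification of connected reductive groups by root data will then produce the group isomorphism. Since ${}^L b$ is $W$-invariant (for the $W$-action on $\Lambda^T$ dual to that on $\Lambda_T$), the construction of \ref{dual root data} applies symmetrically to both $(G,Q)$ and $({}^L G, {}^L Q)$.

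The main computation is the effect of $f$ on a coroot $\check\alpha$ of $G$. Expanding $W$-invariance of $b$ via $s_\alpha\lambda = \lambda - \langle\alpha,\lambda\rangle\check\alpha$ and using symmetry yields
\[
 \langle\alpha,\lambda\rangle\, b(\check\alpha,\mu) = \langle\alpha,\mu\rangle\, b(\check\alpha,\lambda);
\]
with $\mu = \check\alpha$ this gives $b(\check\alpha,\lambda) = c_\alpha\langle\alpha,\lambda\rangle$ where $c_\alpha := b(\check\alpha,\check\alpha)/2$. Hence $f(\check\alpha) = c_\alpha\,\alpha$, and setting $g := f^{-1}$ one has $g(\alpha) = c_\alpha^{-1}\check\alpha$ and ${}^L b(\alpha,\alpha) = b(g(\alpha),g(\alpha)) = 2/c_\alpha$.

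Next, identifying $\Lambda^T = \on{Hom}(\Lambda_T,\Z)$, the definition $\Lambda_Q = \ker\kappa$ becomes $\{\lambda\in\Lambda_T : f(\lambda)\in\Lambda^T\}$ and symmetrically $(\Lambda^T)_{{}^L Q} = \{x\in\Lambda^T : g(x)\in\Lambda_T\}$; since $f$ and $g$ are mutual inverses, $f$ restricts to a lattice isomorphism between them. For the roots: $Q(\check\alpha) = \exp(2\pi i c_\alpha)$ has finite order if and only if $c_\alpha\in\Q$, if and only if ${}^L Q(\alpha) = \exp(2\pi i/c_\alpha)$ has finite order, so the same coroots of $G$ contribute (as roots) to both dual groups. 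Writing $c_\alpha = p/q$ in lowest terms with $q>0$ in that case, one computes $r := \on{ord} Q(\check\alpha) = q$ and ${}^L r := \on{ord}{}^L Q(\alpha) = |p|$, giving $f(r\check\alpha) = p\alpha = \on{sgn}(p)\cdot{}^L r\,\alpha$, a root of $({}^L G)^\vee_{{}^L Q}$; a dual computation (using the transpose of $f|_{\Lambda_Q}$) sends the coroot $\alpha/r$ of $\check{G}_Q$ to $\on{sgn}(p)\cdot\check\alpha/{}^L r$.

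The most delicate part will be the bookkeeping around the sign $\on{sgn}(p)$: since root systems are closed under negation and the construction of Definition~\ref{dual root data} attaches a root to \emph{each} coroot $\pm\check\alpha$, this sign merely reflects a choice of which of $\pm r\check\alpha$ is paired with which of $\pm{}^L r\alpha$, and the bijection of root data is unaffected. Compatibility of the root--coroot pairings then reduces to the identity $\langle\alpha, g(\mu)\rangle = \langle\mu,\check\alpha\rangle/c_\alpha$ (an immediate consequence of the symmetry of ${}^L b$), which matches $\langle\alpha/r,\mu\rangle$ with $\on{sgn}(p)\cdot\langle\check\alpha/{}^L r, f(\mu)\rangle$ consistently with the root identification. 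The further check that $\alpha/r$ indeed lies in $(\Lambda_Q)^\vee$ is automatic, since $\lambda \in \Lambda_Q$ forces $b(\lambda,\check\alpha) = c_\alpha\langle\alpha,\lambda\rangle \in \Z$ and hence $q \mid \langle\alpha,\lambda\rangle$. With these in hand, the two root data are isomorphic and the proposition follows.
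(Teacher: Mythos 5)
Your proposal is correct and follows essentially the same route as the paper: use $f$ to identify the weight lattices $\Lambda_Q$ and $(\Lambda^T)_{{}^L Q}$, use $W$-equivariance of $f$ to conclude $f(\check\alpha)\in\C\alpha$, and compare orders of $Q(\check\alpha)$ and ${}^L Q(\alpha)$ to match roots. The only blemish is the displayed intermediate identity --- direct expansion of $W$-invariance and symmetry yields $\langle\alpha,\mu\rangle b(\check\alpha,\lambda)+\langle\alpha,\lambda\rangle b(\check\alpha,\mu)=\langle\alpha,\lambda\rangle\langle\alpha,\mu\rangle b(\check\alpha,\check\alpha)$ rather than what you wrote, though setting $\mu=\check\alpha$ in either one gives the needed $b(\check\alpha,\lambda)=c_\alpha\langle\alpha,\lambda\rangle$; your attention to the sign $\on{sgn}(p)$ and the root--coroot pairing is a bit more scrupulous than the paper's.
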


\begin{proof}
 We must show that there is an isomorphism of their weight lattices identifying their roots, and that the dual isomorphism identifies the coroots.
 
 For $\check{G}_Q$, we have $\Lambda^{\check{T}_Q} = \on{ker}(\kappa) = f^{-1}(\Lambda^T) \cap \Lambda_T$; likewise, for $({}^L G)_{{}^L Q}^\vee$, we have
 \begin{equation*}
  \Lambda^{({}^L T)_{{}^L Q}^\vee} = \Lambda^T \cap f(\Lambda_T).
 \end{equation*}
 Thus, we choose the isomorphism
 \begin{equation*}
  \map{f}{\Lambda^{\check{T}_Q}}{\Lambda^{({}^L T)_{{}^L Q}^\vee}}.
 \end{equation*}
 Now let $\alpha \in \Lambda^T$ be a root of $G$ and $\check\alpha$ the corresponding coroot; then the $W$-invariance of $b$ implies the $W$-equivariance of $f$ and, therefore, that $f(\check\alpha) \in \C \alpha$; likewise $f^{-1}(\alpha) \in \C \check\alpha$.  It follows that $Q(\check\alpha)$ has finite order if and only if $f(\check\alpha) \in \Q\alpha$, and likewise ${}^L Q(\alpha)$ has finite order if and only if $f^{-1}(\alpha) \in \Q\check\alpha$, which is equivalent.  More precisely, if $f(\check\alpha) = (k/n) \alpha$ with $\on{gcd}(k,n) = 1$, then the corresponding root in $\Lambda^{\check{T}_Q}$ is $n \check\alpha$ by definition.  We may write this as
 \begin{equation*}
  f(n \check\alpha) = k\alpha,
 \end{equation*}
 which is also equivalent to $f^{-1}(\alpha) = (n/k) \check\alpha$, where $\on{gcd}(n,k) = 1$ and thus, ${}^L Q(\alpha)$ has order $k$; so the corresponding root of $({}^L G)_{{}^L Q}$ is $k\alpha$, and the preceding equation shows that $f$ sends the root of $\check{G}_Q$ to that of $({}^L G)_{{}^L Q}^\vee$.
 
 Finally, since the new coroots are respectively $\alpha/n$ and $\check\alpha/k$, the map on coweights induced by $f$ (that is, $f^{-1}$) also sends roots to roots, and thus $f$ is an isomorphism of root data.
\end{proof}
 
Given this setup, let $\stack{G}_n$ be the sf $\C^*$-gerbe corresponding to the quadratic form ${}^L Q$, and let ${}^L \stack{G}_n$ be the one corresponding to ${}^L Q$, with their multiplicative-gerbe parts trivial.  Denote by $\stack{Z}_n$ and ${}^L \stack{Z}_n$ the gerbes for $\on{Fact}(Z(\check{G}_Q)(k))_n \cong \on{Fact}(Z\bigl(({}^L G)^\vee_{{}^L Q}\bigr)(k))_n$ appearing in \ref{main theorem}.

\begin{theorem}{lem}{central gerbe equality}
 We have $\stack{Z}_n \cong {}^L \stack{Z}_n$.
\end{theorem}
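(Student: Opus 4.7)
The plan is to reduce the claimed equivalence to a direct computation on the two quadratic forms, using the classification results of Chapter II to strip away everything else.

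First I would unpack what the two gerbes are. By \ref{multiplicative factorizable lattice gerbes} together with \ref{gerbe is defined over center}, $\stack{Z}_n$ corresponds under an equivalence of $2$-categories to the restriction of the sf gerbe $F(\stack{G}_n)$ on $\on{Gr}_{T, X^n}$ to the subspace $\on{Gr}_{{}^L\check{T}_Q, X^n}$ (the union of components indexed by $\Lambda^{\check{T}_Q} = \Lambda_Q$), equipped with the multiplicative structure produced by \ref{multiplicative factorizable}; symmetrically, ${}^L\stack{Z}_n$ comes from the restriction of $F({}^L\stack{G}_n)$ on $\on{Gr}_{{}^L T, X^n}$ to the subspace indexed by $\Lambda^{({}^L T)^\vee_{{}^L Q}} \subset \Lambda^T$. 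On these restrictions the bilinear form vanishes by definition of the sublattice, so \ref{multiplicative factorizable} does yield a commutative multiplicative sf gerbe; and since by hypothesis both $\stack{G}_n$ and ${}^L\stack{G}_n$ have trivial multiplicative-gerbe parts, \ref{torus exact sequence} guarantees that each restriction is determined up to equivalence by its quadratic form alone.

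Next I would use the isomorphism $f$ of \ref{dual group isomorphism} to put the two sides on the same footing. That proposition already identifies $\Lambda^{\check{T}_Q} = f^{-1}(\Lambda^T) \cap \Lambda_T$ with $\Lambda^{({}^L T)^\vee_{{}^L Q}} = \Lambda^T \cap f(\Lambda_T)$, hence the tori ${}^L\check{T}_Q$ and ${}^L(({}^L T)^\vee_{{}^L Q})$, their factorizable grassmannians, and the coefficient sheaves $\on{Fact}(Z(\check{G}_Q)(k))_n \cong \on{Fact}(Z(({}^L G)^\vee_{{}^L Q})(k))_n$. The essential computation is that $Q$ and ${}^L Q$ agree on the identified sublattice. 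Writing the pairing as $b(\lambda, \mu) = \langle f(\lambda), \mu\rangle$ and unwinding that ${}^L b$, being the form corresponding to $f^{-1}$, satisfies ${}^L b(\alpha, \beta) = b(f^{-1}(\alpha), f^{-1}(\beta))$, one finds for every $\lambda \in \Lambda^{\check{T}_Q}$ that
\[
 {}^L Q(f(\lambda)) = \exp\!\bigl(\pi i \cdot {}^L b(f(\lambda), f(\lambda))\bigr)
   = \exp\!\bigl(\pi i \cdot b(\lambda, \lambda)\bigr) = Q(\lambda).
\]

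Combining these observations, the two restricted multiplicative sf gerbes on the (now identified) torus grassmannian are equivalent by \ref{torus exact sequence}, and therefore by \ref{multiplicative factorizable lattice gerbes} the sf comultiplicative gerbes $\stack{Z}_n$ and ${}^L\stack{Z}_n$ on $X^n$ which represent them are equivalent. The main obstacle is strictly one of bookkeeping: tracking the tower of ${}^L$ and $\vee$ operations carefully enough to confirm that the sublattice $\Lambda_Q \subset \Lambda_T$ matches, under $f$, the analogous sublattice $\Lambda^{({}^L T)^\vee_{{}^L Q}} \subset \Lambda^T$, so that the final comparison is between genuinely comparable gerbes and not a notational mirage.
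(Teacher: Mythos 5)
Your proof is correct and follows essentially the same route as the paper: identify each $\stack{Z}_n$ with the restriction of the corresponding torus gerbe to the relevant sublattice via \ref{multiplicative factorizable lattice gerbes}, reduce to the quadratic form via \ref{multiplicative factorizable} and \ref{torus exact sequence}, and then match the forms under $f$. You are slightly more explicit than the paper in carrying out the computation ${}^L Q(f(\lambda)) = Q(\lambda)$ (the paper just points to the proof of \ref{dual group isomorphism}), but the argument is the same.
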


\begin{proof}
 Let $\stack{T}_n$ and ${}^L \stack{T}_n$ be the corresponding gerbes on $\on{Gr}_T$ and $\on{Gr}_{{}^L T}$.  According to \ref{multiplicative factorizable lattice gerbes}, each of the $\stack{Z}$ gerbes corresponds to the restrictions of $\stack{T}_n$ and ${}^L \stack{T}_n$ to $\on{Gr}_{{}^L \check{T}_Q}$ and $\on{Gr}_{{}^L ({}^L T)^\vee_{{}^L Q}}$, respectively, as multiplicative factorizable gerbes.  The multiplicative structure is uniquely determined by the factorizable structure according to \ref{multiplicative factorizable}, and this, in turn, is uniquely determined by the restrictions of the quadratic forms $Q$ and $({}^L Q)$ to $\Lambda_{{}^L \check{T}_Q}$ and $\Lambda_{{}^L ({}^L T)^\vee_{{}^L Q}}$ (respectively), by \ref{torus exact sequence}. These are identified via $f$ as in the proof of \ref{dual group isomorphism}.
\end{proof}

Therefore, applying \ref{main theorem} to both $\cat{Sph}(\stack{G}_n)$ and $\cat{Sph}({}^L \stack{G}_n)$ (the former a category of sheaves on $\on{Gr}_{G,X^n}$ and the latter of sheaves on $\on{Gr}_{{}^L G, X^n}$, we find that they are equivalent. This is a local version of the quantum Langlands correspondence.

\chapter*{Bibliography}
\begin{biblist}
 \bib{BL}{article}{
    author={Beauville, A.},
    author={Laszlo, Y.},
    title={Un lemme de descente},
    language={French, with English and French summaries},
    journal={C. R. Acad. Sci. Paris S\'er. I Math.},
    volume={320},
    date={1995},
    number={3},
    pages={335--340}
 }

 \bib{BBD}{article}{
    author={Beilinson, A.},
    author={Bernstein, J.},
    author={Deligne, P.},
    title={Faisceaux pervers},
    language={French},
    conference={
       title={Analysis and topology on singular spaces, I},
       address={Luminy},
       date={1981},
    },
    book={
       series={Ast\'erisque},
       volume={100},
       publisher={Soc. Math. France},
       place={Paris},
    },
    date={1982},
    pages={5--171}
 }

 \bib{BD_quantization}{article}{
  author = {Beilinson, A.},
  author = {Drinfeld, V.},
  title = {Quantization of Hitchin's integrable system and Hecke eigensheaves},
  eprint = {http://www.math.uchicago.edu/~mitya/langlands/hitchin/BD-hitchin.pdf}
 }

 \bib{B_Hyperbolic}{article}{
    author={Braden, T.},
    title={Hyperbolic localization of intersection cohomology},
    journal={Transform. Groups},
    volume={8},
    date={2003},
    number={3},
    pages={209--216}
 }

 \bib{G_Eisenstein}{article}{
    author={Braverman, A.},
    author={Gaitsgory, D.},
    title={Geometric Eisenstein series},
    journal={Invent. Math.},
    volume={150},
    date={2002},
    number={2},
    pages={287--384}
 }

 \bib{Zastava_spaces}{article}{
    author={Braverman, A.},
    author={Finkelberg, M.},
    author={Gaitsgory, D.},
    author={Mirkovi{\'c}, I.},
    title={Intersection cohomology of Drinfeld's compactifications},
    journal={Selecta Math. (N.S.)},
    volume={8},
    date={2002},
    number={3},
    pages={381--418}
 }

 \bib{D_Kashiwara}{article}{
    author={Drinfeld, V.},
    title={On a conjecture of Kashiwara},
    journal={Math. Res. Lett.},
    volume={8},
    date={2001},
    number={5-6},
    pages={713--728}
 }

 \bib{DM_Tannakian}{article}{
  author = {Deligne, P.},
  author = {Milne, J.},
  title = {Tannakian categories},
  book = {
   title = {Hodge Cycles, Motives, and Shimura Varieties},
   date = {1981},
   series = {Lecture Notes in Mathematics},
   publisher = {Springer},
   volume = {900},
   pages = {101--228}
  }
 }

 \bib{FGA}{collection}{
    author={Fantechi, B.},
    author={G{\"o}ttsche, L.},
    author={Illusie, L.},
    author={Kleiman, S.},
    author={Nitsure, N.},
    author={Vistoli, A.},
    title={Fundamental algebraic geometry},
    series={Mathematical Surveys and Monographs},
    volume={123},
    publisher={Amer. Math. Soc.},
    date={2005},
    pages={x+339}
 }

 \bib{FL_twistedSatake}{article}{
  author = {Finkelberg, M.},
  author = {Lysenko, S.},
  title = {Twisted geometric Satake equivalence},
  date = {2010},
  journal = {J. Inst. Math. Jussieu},
  volume = {9},
  number = {4},
  pages = {719--739},
 }

 \bib{FGV_Whittaker}{article}{
    author={Frenkel, E.},
    author={Gaitsgory, D.},
    author={Vilonen, K.},
    title={Whittaker patterns in the geometry of moduli spaces of bundles on curves},
    journal={Ann. of Math. (2)},
    volume={153},
    date={2001},
    number={3},
    pages={699--748}
 }

 \bib{G_deJong}{article}{
    author={Gaitsgory, D.},
    title={On de Jong's conjecture},
    journal={Israel J. Math.},
    volume={157},
    date={2007},
    pages={155--191}
 }

 \bib{ginzburg}{article}{
  author = {Ginzburg, V.},
  title = {Perverse sheaves on a Loop group and Langlands' duality},
  date = {1995},
  eprint = {http://arxiv.org/abs/alg-geom/9511007}
 }

 \bib{Gir_Cohomologie}{book}{
    author={Giraud, J.},
    title={Cohomologie non ab\'elienne},
    language={French},
    series={Die Grundlehren der mathematischen Wissenschaften, Band 179},
    publisher={Springer-Verlag},
    date={1971},
    pages={ix+467},
 }

 \bib{L_Quantum}{book}{
    author={Lusztig, G.},
    title={Introduction to quantum groups},
    series={Progress in Mathematics},
    volume={110},
    publisher={Birkh\"auser Boston Inc.},
    place={Boston, MA},
    date={1993},
    pages={xii+341}
 }

 \bib{L_singularities}{article}{
    author={Lusztig, G.},
    title={Singularities, character formulas, and a $q$-analog of weight multiplicities},
    conference={
       title={Analysis and topology on singular spaces, II, III},
       address={Luminy},
       date={1981},
    },
    book={
       series={Ast\'erisque},
       volume={101},
       publisher={Soc. Math. France},
       place={Paris},
    },
    date={1983}
 }

 \bib{MV_Satake}{article}{
    author={Mirkovi{\'c}, I.},
    author={Vilonen, K.},
    title={Geometric Langlands duality and representations of algebraic groups over commutative rings},
    journal={Ann. of Math. (2)},
    volume={166},
    date={2007},
    number={1},
    pages={95--143}
 }

 \bib{beilinson_notes}{article}{
  author = {Reich, R.},
  date = {2010},
  title = {Notes on Beilinson's ``How to glue perverse sheaves''},
  journal = {Journal of Singularities},
  volume = {1},
  pages = {94--115},
  eprint = {http://arxiv.org/abs/1002.1686v4}
 }
\end{biblist}
\end{document}